\title{Pseudo-real multiplication and an application to Teichm\"uller curves}
\author{Kolja Hept}
\begin{document}

\maketitle

\newtheorem{defi}{Definition}[section]
\newtheorem{cor}[defi]{Corollary}
\newtheorem{lem}[defi]{Lemma}
\newtheorem{prop}[defi]{Proposition}
\newtheorem{rem}[defi]{Remark}
\newtheorem{ex}[defi]{Example}
\newtheorem{theo}[defi]{Theorem}

\newcommand{\NN}{\mathbb{N}}
\newcommand{\ZZ}{\mathbb{Z}}
\newcommand{\QQ}{\mathbb{Q}}
\newcommand{\RR}{\mathbb{R}}
\newcommand{\CC}{\mathbb{C}}
\newcommand{\HH}{\mathbb{H}}
\newcommand{\bM}{\mathbb{M}}
\newcommand{\PQ}{\mathbb{P}^1(\mathbb{Q})}
\newcommand{\PK}{\mathbb{P}^1(K)}
\newcommand{\PF}{\mathbb{P}(F)}
\newcommand{\PC}{\mathbb{P}^1(\mathbb{C})}
\newcommand{\EE}{\mathcal{E}}
\newcommand{\OO}{\mathcal{O}}
\newcommand{\OD}{\mathcal{O}_D}
\newcommand{\OK}{\mathcal{O}_K}
\newcommand{\ODual}{\mathcal{O}_D^{\vee}}
\newcommand{\OA}{\mathcal{O}_\mathfrak{a}}
\newcommand{\I}{\mathcal{I}}
\newcommand{\R}{\mathcal{R}}
\newcommand{\A}{\mathfrak{a}}
\newcommand{\ADual}{\mathfrak{a}^{\vee}}
\newcommand{\B}{\mathfrak{b}}
\newcommand{\G}{\mathfrak{g}}
\newcommand{\M}{\mathfrak{m}}
\newcommand{\PP}{\mathfrak{p}}
\newcommand{\GL}{{\rm GL}}
\newcommand{\SL}{{\rm SL}}
\newcommand{\PSL}{{\rm PSL}}
\newcommand{\SO}{{\rm SO}}
\newcommand{\Sp}{{\rm Sp}}
\newcommand{\rk}{{\rm rk}}
\newcommand{\tr}{{\rm tr}}
\newcommand{\coker}{{\rm coker}}
\newcommand{\Hol}{{\rm Hol}}
\newcommand{\Mer}{{\rm Mer}}
\newcommand{\Hom}{{\rm Hom}}
\newcommand{\End}{{\rm End}}
\newcommand{\Aut}{{\rm Aut}}
\newcommand{\Sym}{{\rm Sym}}
\newcommand{\Prym}{{\rm Prym}}
\newcommand{\bS}{{\rm \mathbf{S}}}
\newcommand{\Ann}{{\rm Ann}}
\newcommand{\II}{\mathcal{I}}
\newcommand{\Ag}{\mathcal{A}_g}
\newcommand{\MM}{\mathcal{M}}
\newcommand{\cMM}{\overline{\mathcal{M}}}
\newcommand{\PM}{\mathbb{P}\Omega\mathcal{M}}
\newcommand{\PcM}{\mathbb{P}\Omega\overline{\mathcal{M}}}
\newcommand{\RM}{\mathcal{R}\mathcal{M}}
\newcommand{\cRM}{\overline{\mathcal{R}\mathcal{M}}}
\newcommand{\cRMO}{\overline{\mathcal{R}\mathcal{M}_{\mathcal{O}}}}
\newcommand{\cRMF}{\overline{\mathcal{R}\mathcal{M}_{F}}}
\newcommand{\RA}{\mathcal{R}\mathcal{A}}
\newcommand{\TT}{\mathcal{T}}
\newcommand{\WW}{\mathcal{W}}
\newcommand{\cTT}{\overline{\mathcal{T}}}
\newcommand{\id}{{\rm Id}}
\newcommand{\Jac}{{\rm Jac}}
\newcommand{\diag}{{\rm diag}}
\newcommand{\Mod}{{\rm Mod}}
\newcommand{\res}{{\rm res}}
\newcommand{\Fix}{{\rm Fix}}

\begin{abstract}
 In this paper, we classify three-dimensional complex Abelian varieties isogenous to a product $A_1 \times A_2$, where one of the factors admits real multiplication by a real quadratic order $\OD$ of discriminant~$D$. We show that the moduli space $X_D^{(3)}$ of these varieties essentially is the disjoint union of certain Hilbert modular varieties $X_\A^{(3)}$,
 each component depending on the choice of an ideal $\A$ of $\OD$. We give an explicit construction of these varieties.

 We show that the boundary of the eigenform locus for pseudo-real multiplication by an order $\OO$ in $\QQ(\sqrt{D}) \oplus \QQ$ over geometric genus zero stable curves is contained in the union of subvarieties defined by equations involving cross-ratios of projective coordinates.
 Moreover, restricted to certain topological types of stable curves relevant to the classification of primitive Teichm\"uller curves, we show that the boundary of the eigenform locus coincides with the subspace cut out by these cross-ratio equations.
 We compute these equations for the example of genus three Prym Teich\-m\"uller curves.
\end{abstract}
\clearpage

\tableofcontents
\clearpage

\section{Introduction}

Let~$\MM_g$ be the moduli space of Riemann surfaces of a fixed genus~$g$. Teich\-m\"uller curves in~$\MM_g$ arise naturally in the study of billiard tables and combine such various topics as algebraic
geometry, ergodic theory, flat geometry and number theory.
Much progress was made in classifying Teichm\"uller curves in the last two decades. We summarize the state of the art in Section~2.
Teichm\"uller curves can roughly be distinguished by their trace field, a totally real number field of degree at most~$g$.
Consider the bundle $\Omega\MM_g \to \MM_g$ of non-zero holomorphic one-forms over~$\MM_g$. If any pair~$(X,\omega) \in \Omega\MM_g$ generates a Teichm\"uller curve~$C$, then by \cite{MartinVHS} the
Jacobian~$\Jac(X)$ of~$X$ lies in the locus of principally polarized Abelian varieties isogenous to a product, where the one factor admits real multiplication by the trace field
$K = \QQ(\tr(\SL(X,\omega)))$ of~$X$ (and is consequently of dimension~$[K:\QQ]$).
If the trace field of a Teichm\"uller curve is~$\QQ$, then the curve comes from certain covering constructions of a torus.

Now consider the case $g=3$. It is shown in \cite{MartinMattPhilipp}, that there are only finitely many Teichm\"uller curves with $[K:\QQ] = 3$.
Many partially results are known for the case $[K:\QQ] = 2$. In particular for the Prym eigenform locus, there are several classification statements in \cite{LanneauNguyen2} and
\cite{LanneauNguyen1} similar to those by McMullen in the genus two case. Furthermore, it is shown in \cite{WrightOdd} and \cite{WrightHyp}, that there are only finitely many Teichm\"uller curves with $[K:\QQ] = 2$ in the minimal stratum outside of the Prym eigenform locus.

\bigskip
In this paper, we are dealing with the three-dimensional case where the trace field of $(X,\omega) \in \Omega\MM_3$ has degree two over~$\QQ$.
Thus $A=\Jac(X)$ is isogenous to a product of an elliptic curve and a complex Abelian surface that admits real multiplication by a quadratic order~$\OD$ of some positive, non-square discriminant~$D$.
The choice of such an Abelian subsurface $S$ together with the choice of real multiplication $\QQ(\sqrt{D}) \hookrightarrow \End^+(S) \otimes_\ZZ \QQ$ is equivalent to the choice of an endomorphism
\[ \QQ(\sqrt{D}) \oplus \QQ \hookrightarrow \End^+(A) \otimes \QQ, \] what we will call \emph{pseudo-real multiplication}.

Since we do not want to restrict ourselves to the Prym eigenform locus, the subvariety of $\Jac(X)$ admitting real multiplication can be of any type~$(1,d)$.
Our first goal will be to classify all three-dimensional complex Abelian varieties admitting pseudo-real multiplication by a \emph{pseudo-cubic numberfield} $\QQ(\sqrt{D}) \oplus \QQ$
together with the choice of such an endomorphism.
We will introduce in Section~\ref{Tracepairing} a certain condition on $d$ depending on the discriminant~$D$, which we will call the \emph{prime factor condition}.
Let $d \in \NN$ be relatively prime to the conductor of~$D$ and satisfying the prime factor condition.
Then for each ideal $\A \subseteq \OD$ of norm $d$ we will construct explicitly a Hilbert modular variety $X_\A^{(3)}$.
Denoting by $X_{D,d}^{(3)}$ the moduli space of three-dimensional complex Abelian varieties together with some choice of pseudo-real multiplication by $\QQ(\sqrt{D}) \oplus \QQ$ of type~$(1,d)$,
we will prove the following theorem.

\begin{theo}\label{Theorem1}
 Let $D \in \NN$ be a non-square discriminant and let $d \in \NN$ be relatively prime to the conductor of $D$.
 Then $X_{D,d}^{(3)}$ is non-empty if and only if $d$ satisfies the prime factor condition in $\OD$.
 Moreover, in this case $X_{D,d}^{(3)}$ consists of the $2^s$ irreducible components $X_\A^{(3)}$, where $s$ is the number of splitting prime divisors of $d$.
\end{theo}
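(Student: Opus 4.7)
The plan is to reduce the theorem to a classification of ideals of norm $d$ in $\OD$. A point of $X_{D,d}^{(3)}$ is by definition a pair $(A, \iota)$ with $A$ a complex Abelian threefold and $\iota: \QQ(\sqrt{D}) \oplus \QQ \hookrightarrow \End^+(A) \otimes \QQ$ such that the isotypic subvariety for the $\QQ(\sqrt{D})$-factor is an Abelian surface $S$ of polarization type $(1,d)$ with real multiplication by $\OD$. Using the uniformization of polarized $\OD$-Abelian surfaces developed in Section~\ref{Tracepairing}, I would first show that $(S, \iota|_S)$ together with the complementary elliptic factor is classified by an ideal $\A \subseteq \OD$ of norm $d$ and a point of the associated Hilbert modular variety $X_\A^{(3)}$. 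This identifies $X_{D,d}^{(3)}$ set-theoretically with a disjoint union of the spaces $X_\A^{(3)}$ indexed by the ideals of norm $d$, modulo the equivalence that identifies ideals yielding isomorphic moduli.

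For the non-emptiness criterion, I would use the hypothesis that $d$ is coprime to the conductor of $\OD$, which forces the primes of $\OD$ above each $p \mid d$ to coincide with those in the maximal order $\OK$. The factorization of $d\OD$ is then governed by the splitting behaviour of rational primes $p \mid d$ in $K = \QQ(\sqrt{D})$: split primes allow local ideals of any norm, ramified primes contribute a unique local ideal, and an inert prime $p$ admits an ideal of norm $p^a$ only when $a$ is even. Hence an ideal of norm $d$ exists in $\OD$ if and only if every inert prime divisor of $d$ appears with even multiplicity; this should be precisely the prime factor condition defined in Section~\ref{Tracepairing}.

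Under that condition I would enumerate: at each split prime $p$ with $p\OD = \PP\bar\PP$ appearing to exponent $a$ in $d$ there are $a+1$ local ideals $\PP^j\bar\PP^{a-j}$, while inert and ramified primes each contribute one local factor. To get the claimed count $2^s$, one must identify which of these global ideals give isomorphic components $X_\A^{(3)}$. The natural candidates for equivalence are Galois conjugation $\A \mapsto \bar\A$ and multiplication by principal ideals arising from the $\GL_2^+(\OD)$-symmetry implicit in the construction of $X_\A^{(3)}$. Once these symmetries collapse the local choices at each split prime to a binary one (essentially an orientation distinguishing $\PP$ from $\bar\PP$), one obtains $2^s$ components. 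Irreducibility of each $X_\A^{(3)}$ then follows from the connectedness of the underlying Hilbert modular variety for a fixed $\OD$-lattice.

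The main obstacle I foresee is the last step: precisely pinning down the equivalence that produces exactly $2^s$ classes, rather than a finer or coarser count. This requires both the explicit moduli interpretation of $X_\A^{(3)}$ from Section~\ref{Tracepairing} and a careful tracking of how $\A$ enters simultaneously as a lattice invariant of $S$ and as part of the polarization form. Proving non-isomorphism between the surviving components should then amount to recovering $\A$, up to the identified equivalence, from intrinsic invariants of $(A, \iota)$.
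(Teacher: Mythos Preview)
Your overall strategy---reduce the classification to ideals of $\OD$ via the symplectic $\OD$-module structure on the lattice of the Abelian subsurface~$S$---is the same as the paper's. However, two of your key steps are wrong, and both stem from overlooking the role of \emph{primitivity}.

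First, your guess at the prime factor condition is incorrect. The paper's condition (Definition preceding Proposition~\ref{Primfaktorbedingung}) requires that \emph{no} prime divisor of $d$ be inert, and that ramified primes occur with multiplicity exactly one. This is not the condition for existence of some ideal of norm $d$; it is the condition for existence of a \emph{primitive} ideal of norm $d$. The polarization type $(1,d)$ forces the ideal~$\A$ to be primitive: if $p\OD \mid \A$ for any rational prime~$p$, then the induced trace pairing on $\OD \oplus \tfrac{1}{\sqrt{D}}\A$ has type $(p d_1', d_2')$ rather than $(1,d)$ (see the proof of Proposition~\ref{Primfaktorbedingung}). Your condition ``inert primes with even multiplicity'' allows non-primitive ideals and is strictly weaker.

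Second, and consequently, your mechanism for obtaining the count $2^s$ is wrong. You enumerate $a+1$ local ideals $\PP^j\bar\PP^{a-j}$ at each split prime and then propose to collapse them via Galois conjugation and a $\GL_2^+(\OD)$-symmetry. In the paper no such quotient is taken: the structure theorem (Theorem~\ref{Struktursatz}) shows that the ideal~$\A$ is \emph{uniquely} determined by the isomorphism class of the symplectic $\OD$-module, so distinct ideals give distinct components. The count $2^s$ is simply the number of primitive ideals of norm~$d$: among the $a+1$ local choices at a split prime, only $\PP^a$ and $\bar\PP^a$ are primitive (the mixed products $\PP^j\bar\PP^{a-j}$ with $0<j<a$ are divisible by $p\OD$), giving two choices per split prime and one per ramified prime. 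Your proposed equivalences are neither needed nor correct here; in particular, $\A$ and $\A^\sigma$ generally yield non-isomorphic components.
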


Let $\Omega\cMM_g \to \cMM_g$ be the bundle of stable forms over the Deligne-Mumford compactification of $\MM_g$.
If $\I$ is a lattice in a totally real number field $F$ of degree~$g$, then each boundary stratum $\mathcal{S} \subset \cMM_g(\I)$ of $\I$-weighted geometric genus zero curves can be
embedded into the boundary of $\Omega\cMM_g$.
Let $\Omega^\iota\RM_\OO$ be the locus of stable forms $(X,\omega) \in \Omega\cMM_g$, where $\omega$ is some choice of a $\iota$-eigenform for real multiplication on~$X$ by an order~$\OO \subset F$.
In \cite{MartinMatt}, Bainbridge and M\"oller gave a certain condition for $\I$-weighted boundary strata called \emph{admissibility}.
They showed that each stable form $(X,[\omega]) \in \mathbb{P}\Omega\MM_g$ in the boundary of $\mathbb{P}\Omega^\iota\RM_\OO$ lies in the image of some~$\mathcal{S}(h)$, a certain subvariety of an
admissible boundary stratum~$\mathcal{S}$ defined by some~$h \in \End_\QQ^+(F)$.
Using the techniques developed in this paper, we will show that an analogous statement also holds in the pseudo-cubic case.

Furthermore, Bainbridge and M\"oller defined in \cite{MartinMatt} for the genus three case the quadratic function $Q$ mapping each $x \in F$ to the number $\mathcal{N}_\QQ^F(x)/x \in F$.
They showed that an~$\I$-weighted boundary stratum with weights $w_1,...,w_n$ is admissible if and only if the set $\{ Q(w_1),...,Q(w_n) \}$ is not contained in a closed half-space of its~$\QQ$-span.

Using coordinates, we will also define a quadratic function \[ Q:~ \QQ(\sqrt{D}) \oplus \QQ \to \QQ^3. \]
After the preliminary work in the sections before, we can construct this function precisely in that way, that the analogous statement like in the cubic case holds.
A boundary stratum $\mathcal{S}$ of $\II$-weighted stable curves is isomorphic to a product of certain $\MM_{0,n_i}$, the moduli spaces of genus zero curves with $n_i$ marked points.
Thus each weighted stable curve in $\mathcal{S}$ can be represented by projective coordinates. Using cross-ratios of these coordinates, it is shown in \cite{MartinMatt}, that each subvariety
$\mathcal{S}(h)$ of an admissible stratum $\mathcal{S}$ is cut ot by a single equation in these cross-ratios.
We will show the analogous statement for the stratum of irreducible stable curves with cross-ratios~$p_{ij}$ as follows.
\begin{theo}
 Let $r=(r_1,r_2,r_3)$ be a $\ZZ$-basis of $\II$ and let $s=(s_1,s_2,s_3)$ be its dual basis of~$\II^\vee$ with respect to the pseudo-trace pairing.
 The subvariety $\mathcal{S}_r(h)$ of the stratum $\mathcal{S}_r$ of trinodal curves with weights $r$ is cut out by the equations
 \[ p_{23}^{a_1} \cdot p_{13}^{a_2} \cdot p_{12}^{a_3} = \exp(-2\pi i (a_1b_{23}+a_2b_{13}+a_3b_{12})), \]
 where the $b_{ij}$ are certain rational numbers defined by $h$ and where $(a_1,a_2,a_3)$ runs over the integral solutions of
 \begin{equation}\label{Loesungsraum} a_1s_2s_3+a_2s_1s_3+a_3s_1s_2=0. \end{equation}
\end{theo}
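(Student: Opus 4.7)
The plan is to adapt the Bainbridge--Möller strategy used in \cite{MartinMatt} for the cubic case to the pseudo-cubic algebra $F = \QQ(\sqrt{D}) \oplus \QQ$. Each trinodal curve in $\mathcal{S}_r$ has normalization $\mathbb{P}^1$ equipped with six marked points paired into the three nodes. After normalizing three of these six points to $0, 1, \infty$, the remaining three give coordinates on the stratum, and the cross-ratios $p_{ij}$ provide canonical parameters.

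First I would fix a smoothing family $(X_t, \omega_t) \to \Delta^3$ whose plumbing parameters $q_1, q_2, q_3$ open the three nodes, and write down the local period matrix. The stable form $\omega_0$ extends to $\omega_t$ with residues determined by the weights $r_i$, and a symplectic basis can be chosen so that the $A$-periods equal the $r_i$ to leading order while the $B$-periods admit a logarithmic expansion of the shape
\[ \pi_i(q) = \tfrac{1}{2\pi i}\bigl(r_i \log q_i\bigr) + c_i(q), \]
where each $c_i$ is holomorphic at the origin. This step is parallel to the classical plumbing description reviewed in \cite{MartinMatt}.

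Next, the eigenform condition for pseudo-real multiplication by $h$ forces $h \cdot \pi_i$ to agree, modulo the period lattice spanned over $\OO$ by the $r_j$, with a fixed vector determined by $h$. Pairing against the dual basis via the pseudo-trace turns this into three linear conditions on the $\log q_i$ whose coefficients are products of the $s_j$. An integer triple $(a_1, a_2, a_3)$ produces a relation among the $\log q_i$ that is realized inside $\ZZ$ precisely when $\sum_i a_i s_j s_k = 0$ in $F$, i.e.\ when $(a_1, a_2, a_3)$ solves~(\ref{Loesungsraum}). Summing the corresponding $\ZZ$-combination of the $\pi_i$ and exponentiating produces the monomial equation in the stated form, with $b_{ij}$ read off from the holomorphic shifts $c_i$ and from the rational matrix representing $h$ in the basis $r$. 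Finally, I would translate plumbing parameters to cross-ratios: for an irreducible arithmetic-genus-three stable curve the map $(q_1,q_2,q_3) \mapsto (p_{12},p_{13},p_{23})$ is a biholomorphism of local coordinate systems agreeing to leading order, so the two monomial loci coincide under the substitution.

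The main obstacle will be the bookkeeping of the dual basis through the pseudo-trace pairing and verifying that the kernel of the leading-order map from $\ZZ^3$ to the relevant quotient of the cohomology lattice is cut out \emph{exactly} by~(\ref{Loesungsraum}), not merely up to finite index. This is the place where the non-field structure of $F = \QQ(\sqrt{D}) \oplus \QQ$ — in particular its zero divisors — plays a genuinely new role compared to the cubic case and forces the use of the pseudo-trace pairing rather than the usual trace; matching the constants $b_{ij}$ to the holomorphic terms $c_i$ in the period expansion constitutes the remaining explicit calculation.
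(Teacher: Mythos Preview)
Your approach differs substantially from the paper's, and one step in it is not justified as written.

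In the paper, $\mathcal{S}_r(h)$ is already \emph{defined} (Section~\ref{admissibility}) as the locus where $\Psi(a)(X)=\exp(-2\pi i\langle a,h\rangle)$ for all $a\in\mathcal{N}(\mathcal{S}_r)\cap\Ann(\Lambda_1)\cap\bS_\ZZ(\II^\vee)$, so the theorem is a computation, not a degeneration argument. The proof has two short pieces. First, $\Psi(s_j\otimes s_k)$ is evaluated directly on the nodal curve $X_p$ by a single residue integral (Lemma~\ref{Psi1}), giving exactly $p_{jk}$; no smoothing family or plumbing is used, and the right-hand side is identified as $\langle a,h\rangle=a_1b_{23}+a_2b_{13}+a_3b_{12}$ by inspection. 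Second, $\Ann(\Lambda_1)$ is characterized purely algebraically: since $\Lambda_1\cong\Hom_F(F,F)$ is generated as an $F$-module by the identity element $\lambda=r_1\otimes s_1+r_2\otimes s_2+r_3\otimes s_3$, one has $\langle a,x\lambda\rangle=\tr_p\bigl(x(a_1s_2s_3+a_2s_1s_3+a_3s_1s_2)\bigr)$, which vanishes for all $x\in F$ iff the parenthesized element of $F$ is zero. This one-line argument dispatches precisely the point you flag as the main obstacle, and the zero-divisors of $F$ cause no difficulty at all.

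Your route through a plumbing family and the eigenform condition is in effect re-deriving Corollary~\ref{Gleichung} and the construction of $\Psi$ simultaneously, which is redundant once that machinery is in place. More seriously, your final step --- passing from plumbing parameters $q_i$ to cross-ratios $p_{jk}$ via a biholomorphism that ``agrees to leading order'' --- is insufficient as stated: an arbitrary local biholomorphism does not carry a monomial locus to a monomial locus unless it is itself monomial, so leading-order agreement does not let you conclude that the equations in the $q_i$ become the stated equations in the $p_{jk}$. The paper sidesteps this entirely because $\Psi(a)$ is a globally defined meromorphic function on $\cMM_3(\II)$ whose boundary value is computed intrinsically on the stable curve, yielding the cross-ratio on the nose rather than asymptotically.
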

Contrary to the cubic case in~\cite{MartinMatt}, where the subvariety $\mathcal{S}_r(h)$ is cut out by a single equation, note that the $\ZZ$-module of solutions of Equation~\eqref{Loesungsraum}
may have rank two, see Example~\ref{ranktwo}. This is a consequence of the fact, that in the pseudo-cubic case the trinodal boundary stratum is in general not a maximal dimensional boundary stratum.
Furthermore, in~\cite{MartinMatt} it turns out, that for genus three, admissibility is also a sufficient condition, i.e. that the boundary of the real multiplication locus is precisely the union of all
images in $\mathbb{P}\Omega\cMM_3$ of the varieties~$\mathcal{S}(h)$, where~$\mathcal{S}$ runs over all admissible boundary strata.
We show that this statement holds in the pseudo-cubic case at least for those boundary strata relevant for classifying Teichm\"uller curves.

The quadratic function $Q$ and the cross-ratio equations could be used in future work for constructing an algorithm to classify Teichm\"uller curves by their cusps in a similar spirit as the algorithm
given in~\cite{MartinMatt}.
Fixing the discriminant~$D$ and the type~$(1,d)$, one first uses the function~$Q$ to list admissible bases for all lattices $\I \subset F$ defining admissible $\II$-weighted boundary strata
and then solve the cross-ratio equations.
Presumably, one can use the techniques from~\cite{MartinMattPhilipp}, to get a finiteness result for primitive (but not algebraically primitive) Teichm\"uller curves in~$\MM_3$.
\\[1em]
{\bf Acknowledgements.}
This paper is my PhD-thesis at the Goethe Universit\"at Frankfurt am Main.
First of all I would like thank my adivsor Martin M\"oller for making all this work possible and for his excellent guidance.
I am also grateful to my colleagues Matteo Constantini, Quentin Gendron, Thorsten J\"orgens, \c{S}evda Kurul, David Torres Teigell and Jonathan Zachhuber for proofreading this paper.
Finally I would like to thank my family, especially my parents, and my friends for their continuous support.
\clearpage

\section{Teichm\"uller curves: state of the art}

The moduli space $\MM_g$ of genus $g$ compact Riemann surfaces carries a natural metric induced by the Teichm\"uller metric on its universal covering orbifold, the Teichm\"uller space $\TT_g$.
There is a natural action of $\SL_2(\RR)$ on $\mathcal{Q}\TT_g$, the space of genus $g$ marked Riemann surfaces together with a non-zero quadratic differential.
This action is induced by linear transformations on the quadratic differential.
Fixing any point $(X,q) \in \mathcal{Q}\TT_g$, post-composition of $A \mapsto A.(X,q)$ with the forgetful map $\mathcal{Q}\TT_g \to \TT_g$ factors through the quotient $\SO_2(\RR) \backslash \SL_2(\RR)$
and yields an isometric embedding \[ \HH = \SO_2(\RR) \backslash \SL_2(\RR) \hookrightarrow \TT_g. \]
Post-composing with the covering map $\TT_g \to \MM_g$, we obtain a complex geodesic \[ f:~ \HH \to \MM_g, \] and we say that $(X,q)$ generates $f(\HH)$.
Conversely, by a consequence of Teich\-m\"uller's Theorem, every complex geodesic in $\MM_g$ is generated by a quadratic differential in this way.
Of special interests are the complex geodesics generated by squares of a holomorphic one-form.
One reason is, that by \cite{Kra}, Theorem~1, this happens if and only if the composed map \[ \HH \to \MM_g \to \mathcal{A}_g \] is a geodesic (here $\MM_g \to \mathcal{A}_g$ denotes the Torelli map),
where $\mathcal{A}_g$ denotes the moduli space of $g$-dimensional, principally polarized Abelian varieties.
Another reason is, that any complex geodesic is commensurable to a complex geodesic generated by the square of a holomorphic one-form.
There is also a natural action of $\SL_2(\RR)$ (respectively $\GL_2(\RR)$) on $\Omega\MM_g$, the space of Riemann surfaces of genus $g$ together with a non-zero holomorphic one-forms.
If $(X,q)=(X,\omega^2)$ for some non-zero $\omega \in \Omega(X)$, and setting $F(A)=A.(X,\omega)$ for all $A \in \SL(X,\omega)$, then the diagram
\[ \begin{CD}  \SL_2(\RR) @>F>> \Omega\MM_g \\  @VV V @VV V \\ \HH = {\rm SO}_2(\RR) \backslash \SL_2(\RR) @>f>> \MM_g \end{CD} \] commutes.
Thus one can look at holomorphic one-forms instead of quadratic differentials.
Any Riemann surface with a holomorphic one-form $(X,\omega)$ can also be seen as a translation surface with singularities.
The translation atlas is given by integrating $\omega$, which allows a geometric approach to this topic. For this reason, any such pair $(X,\omega) \in \Omega\MM_g$ is called a \emph{flat surface}.
\\[1em]
{\bf Teichm\"uller curves.}
A complex geodesic in~$\MM_g$, which is simultaneously an algebraic curve (which is usually not the case), is called a \emph{Teichm\"uller curve}.
It is well known that a flat surface $(X,\omega)$ generates a Teichm\"uller curve if and only if its $\GL_2(\RR)$-orbit is closed in~$\MM_g$.
Furthermore, $(X,\omega)$ generates a Teichm\"uller curve if and only if ${\rm Stab}(f)$ or equivalently its Veech group $\SL(X,\omega)$
(the image in $\SL_2(\RR)$ of orientation-preserving affine diffeomorphisms under the derivative map) is a lattice in $\SL_2(\RR)$. Such a flat surface is called a \emph{Veech surface}.
In every stratum there are infinitely many Teichm\"uller curves, each of them generated by a torus covering ramified only over one point (also known as \emph{square-tiled surfaces}).

In his famous work \cite{Veech89}, Veech constructed first examples of Teichm\"uller curves not of this type.
He showed that the Veech group of any translation surface is a discrete subgroup of $\SL_2(\RR)$. Moreover, if it is a lattice, then it satisfies the Veech dichotomy:
The geodesic flow in any direction is either periodic or uniquely ergodic.
These surfaces with a lattice Veech group became an object of fruitful research, as they are combining such various topics as algebraic geometry, ergodic theory, flat geometry, number theory and
Teichm\"uller theory.
\\[1em]
{\bf Primitivity.}
A compact Riemann surface together with a quadratic differential~$(X,q)$ (respectively a flat surface~$(X,\omega)$) is said to be \emph{geometrically primitive},
if it is not the pullback of a quadratic differential (respectively a flat surface) of lower genus.
A complex geodesic is said to be \emph{geometrically primitive}, if any one (and thus each) generating differential is geometrically primitive.
M\"oller showed in \cite{MartinPeriodic}, Theorem~2.6 that each flat surface $(X,\omega)$ is the pullback of a primitive flat surface $(X_0,\omega_0)$.
The primitive flat surface $(X_0,\omega_0)$ is also unique provided that the genus of~$X_0$ is at least two.
Moreover, he showed that $(X,\omega)$ and $(X_0,\omega_0)$ are commensurable if $(X,\omega)$ generates a Teichm\"uller curve.
Thus, for classification of Teichm\"uller curves it makes sense to classify the primitive ones. 
\\[1em]
{\bf Classification.}
During the last twelve years, much effort was made to classify all Teichm\"uller curves generated by flat surfaces.
A complete classification for primitive Teichm\"uller curves in $\MM_2$ was achieved by McMullen in a series of papers in 2003 and 2004
(\cite{McMBilliards}, \cite{McMSpin}, \cite{McMDecagon}, \cite{McMSines}).
The central role here plays the Weierstrass locus $\WW_D$, where $D$ is a positive discriminant (i.e. $D \equiv 0,1 \mod 4$).
It consists of those genus two Riemann surfaces~$X$ such that the Jacobian~$\Jac(X)$ of~$X$ admits real multiplication by the real quadratic order~$\OD$ and such that there exists an
eigenform~$\omega \in \Omega(X)$ for some choice of real multiplication which has a double zero. The classification can be summarized as follows.
\newtheorem*{theoA}{Theorem A}
\begin{theoA}[McMullen]\label{McMKlassifikation}
 Let $D \geq 5$ be a non-square discriminant.
 \begin{enumerate}
  \item If $D \not\equiv 1 \mod 8$, then $\WW_D$ is a single primitive Teichm\"uller curve, generated by a certain $L$-shaped translation surface.
  \item If $D \equiv 1 \mod 8$, then $\WW_D$ consists of two irreducible components, distinguished by their spin invariant.
        Each of the two components is a primitive Teichm\"uller curve generated by a certain $L$-shaped translation surface.
  \item The regular decagon is an eigenform in $\Omega\MM(1,1)$ for real multiplication, and its orbit projects to a primitive Teichm\"uller curve in $\MM_2$.
 \end{enumerate}
 These curves are all primitive Teichm\"uller curves in $\MM_2$, and they are pairwise different.
\end{theoA}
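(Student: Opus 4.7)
The plan is to follow McMullen's multi-paper program, which splits into four tasks: (a) show that eigenforms for real multiplication with a double zero generate Teichmüller curves; (b) exhibit concrete L-shaped models of $\WW_D$; (c) separate $\WW_D$ into components via a spin invariant when $D \equiv 1 \mod 8$; and (d) prove completeness, ruling out any other primitive Teichmüller curve in $\MM_2$.

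For (a) the key observation is that the $\SL_2(\RR)$-action on $\Omega\MM_2$ preserves the real multiplication structure on the Jacobian (it acts trivially on $\End(\Jac X) \otimes \QQ$), hence the eigenform locus for $\OD$ is $\GL_2(\RR)^+$-invariant. When $(X,\omega) \in \Omega\MM_2(2)$ has a double zero there are no relative periods, and absolute period coordinates on the Hilbert modular surface $X_D$ force this locus to be two (complex) dimensional. $\GL_2(\RR)^+$-invariance together with algebraicity then implies that it projects to an algebraic curve in $\MM_2$, which is by definition $\WW_D$.

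For (b) and (c) I would write down an explicit L-shaped table whose side lengths are derived from a fundamental unit of $\OD$ (the Pell-type equation cutting out $\OD$); a direct period calculation places it in $\WW_D$ and identifies its Veech group with a non-arithmetic lattice commensurable to a triangle group. The spin invariant is defined via the Arf invariant of a quadratic refinement of the intersection form determined by the zero of $\omega$. An elementary $\mod 8$ computation on the L-shape, varied over admissible parameters, shows that both parities are realized precisely when $D \equiv 1 \mod 8$, and otherwise only one parity occurs. Since spin is $\GL_2(\RR)^+$-invariant and hence constant on components, this yields one or two components as claimed, each containing an L-shaped representative and hence being a single Teichmüller curve by step (a).

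The hardest step is (d), completeness. The key input is the theorem cited here as~\cite{MartinVHS}: every primitive Teichmüller curve in $\MM_g$ forces the Jacobian to admit real multiplication by the trace field. In genus two this means any primitive Teichmüller curve lies in the eigenform locus of some $\OD$. In the minimal stratum this locus is exactly $\WW_D$, accounted for by (a)--(c). In $\Omega\MM_2(1,1)$ the eigenform locus is three-dimensional, and one must show that only the $\GL_2(\RR)^+$-orbit of the regular decagon carries a lattice Veech group. The decisive argument is a cusp count and rigidity analysis on the boundary of the eigenform locus over stable curves (the genus two analogue of the degeneration techniques developed in the body of this paper), constraining the associated Pell data so severely that only $D=5$ admits such a configuration and only the decagon realizes it. Pairwise distinctness of the listed curves is then automatic: $D$ and the spin are $\GL_2(\RR)^+$-invariant, and the decagon lies in $\Omega\MM_2(1,1)$ whereas all $\WW_D$ lie in $\Omega\MM_2(2)$.
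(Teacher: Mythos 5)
This is Theorem~A in the survey section: the paper does not prove it but cites it to McMullen's four papers \cite{McMBilliards}, \cite{McMSpin}, \cite{McMDecagon}, \cite{McMSines}, so there is no in-paper proof to compare against; I can only assess your sketch against the actual published argument. Your steps (a)--(c) follow McMullen's program in outline, with two caveats. First, in (c) the spin invariant being locally constant and taking both values only gives a \emph{lower} bound of two components when $D \equiv 1 \bmod 8$; the matching \emph{upper} bound (that $\WW_D$ has at most one component per spin class) is the hard combinatorial content of \cite{McMSpin}, proved by showing all L-shaped prototypes of a given spin are connected by ``butterfly moves.'' Your sketch silently assumes this. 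Second, the Veech groups of these curves are generally not commensurable to triangle groups; that claim is false for most $D$ but is not load-bearing.

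The genuine gap is in (d), the completeness statement for the stratum $\Omega\MM_2(1,1)$. You propose a ``cusp count and rigidity analysis on the boundary of the eigenform locus over stable curves'' to force $D=5$ and the decagon. That is not how the classification goes, and there is no reason to expect a degeneration argument alone to work: the eigenform locus $\Omega\EE_D^2(1,1)$ is $\GL_2^+(\RR)$-invariant of one dimension too many, and for every $D$ it contains a dense set of non-lattice directions, so no amount of boundary bookkeeping distinguishes the decagon. The decisive input of \cite{McMSines} is interior, not boundary: if $(X,\omega)\in\Omega\MM_2(1,1)$ generates a Teichm\"uller curve, then the difference $[p]-[q]$ of the two zeros of $\omega$ is a \emph{torsion} point of $\Jac(X)$. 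Combining this torsion condition with real multiplication forces certain ratios of sines of rational angles to be rational, and the resulting Diophantine analysis eliminates every case except $D=5$ realized by the regular decagon. Without the torsion-divisor theorem and the ratios-of-sines computation, your step (d) does not close, and this is precisely the hardest and most original part of the classification.
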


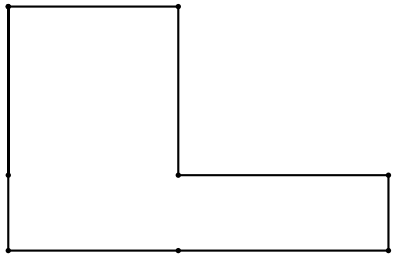
\begin{figure}[ht]
\centering
\begin{tikzpicture}[scale=1]
 \coordinate (A) at (0,0); 
 \coordinate (B) at (2.236,0);
 \coordinate (C) at (5,0); 
 \coordinate (D) at (5,1); 
 \coordinate (E) at (2.236,1); 
 \coordinate (F) at (2.236,3.236); 
 \coordinate (G) at (0,3.236);
 \coordinate (H) at (0,1);
 \coordinate (I) at (0,3.236);
 \fill (A) circle (1pt); \fill (B) circle (1pt); \fill (C) circle (1pt); \fill (D) circle (1pt); \fill (E) circle (1pt);
 \fill (F) circle (1pt); \fill (G) circle (1pt); \fill (H) circle (1pt); \fill (I) circle (1pt);
 \draw[thick] (A) -- (B) -- (C) -- (D) -- (E) -- (F) -- (G) -- (H) -- (I) -- (A);
\end{tikzpicture}
\label{L-Flaeche} \caption{L-shaped translation surface, opposite edges are identified}
\end{figure}

The curves in $\WW_D$ were indepently discovered by Calta in \cite{Calta}.
\\[1em]
{\bf The Prym-eigenform locus.}
The next step was to look for primitive Teichm\"uller curves in higher genera, using a generalization of $\WW_D$, namely the Prym-Weierstrass locus.
This locus, denoted by $\WW_D^g$, is defined to be the locus of Riemann surfaces $X \in \MM_g$, such that
\begin{enumerate}
 \item there is a holomorphic involution $\tau: X \to X$, such that the \emph{Prym variety} \[ \Jac(X)^- := (\Omega(X)^-)^* / H_1(X;\ZZ)^- \subset \Jac(X) \] has dimension two
       (where $\Omega(X)^-$ denotes the eigenspace of $\tau$ to the eigenvalue $-1$),
 \item $\Jac(X)^-$ admits real multiplication by $\OD$, and
 \item there exists an eigenform $\omega \in \Omega(X)^-$ for some choice of real multiplication which has only one zero.
\end{enumerate}
By the Riemann-Hurwitz formula, $\WW_D^g$ is empty if $g \notin \{ 2,3,4,5 \}$.
In genus two, this definition coincides with the old one, since each genus two Riemann surface is hyperelliptic.
In \cite{McMPrym}, McMullen constructed infinitely primitive Teichm\"uller curves contained in $\bigcup \WW_D^g \subset \MM_g$ for $g \in \{ 3,4,5 \}$.
He also showed that for $D$ not a square, the locus $\WW_D^g$ is a finite union of primitive Teichm\"uller curves.

Removing the 'only one zero'-condition in the definition of~$\WW_D^g$, we define~$\Omega \EE_D^g$ to be the locus of those $(X,\omega) \in \Omega\MM_g$, such that there is a holomorphic involution
$\tau: X \to X$, with $\Jac(X)^-$ a two-dimensional subvariety of $\Jac(X)$ that admits real multiplication by the quadratic order~$\OD$, and such that~$\omega$ is in $\Omega(X)^-$ and an eigenform
for some choice of real multiplication.
For any partition $\alpha = (\alpha_1,...,\alpha_k)$ of $2g-2$, there is the natural stratum $\Omega \MM_g(\alpha)$ in $\Omega\MM_g$ consisting of those flat surfaces~$(X,\omega)$, where~$\omega$
has exactly $k$ zeroes $z_1,...,z_k$ with $z_i$ of multiplicity $\alpha_i$.
We set $\Omega \EE_D^g(\alpha) = \Omega \EE_D^g \cap \Omega \MM_g(\alpha)$ and finally we denote by $\EE_D^g$ respectively $\EE_D^g(\alpha)$ the image of $\Omega \EE_D^g$ respectively
$\Omega \EE_D^g(\alpha)$ in~$\MM_g$ under the forgetful map.
Note that $\WW_D^g = \EE_D^g(2g-2)$ and that the decagon form in Theorem~\ref{McMKlassifikation} generates the only primitive Teichm\"uller curve in $\EE_D^2(1,1)$.
\\[1em]
{\bf The trace field and splitting varieties.}
The trace field $\QQ(\SL(X,\omega))$ of a holomorphic form $(X,\omega) \in \Omega\MM_g$ is defined to be the field extension of $\QQ$ generated by the traces of all the elements in $\SL(X,\omega)$.
By \cite{McMBilliards}, Theorem~5.1 we have \[ [\QQ(\SL(X,\omega)) : \QQ] \leq g. \]
Flat surfaces $(X,\omega)$ with $[\QQ(\SL(X,\omega)) : \QQ] = g$ are called \emph{algebraically primitive}, and the Teichm\"uller curve generated by such a form is also called algebraically primitive.
Since the trace field is preserved by translation coverings, each algebraically primitive Teichm\"uller curve is also geometrically primitive.
All the primitive Teichm\"uller curves mentioned above are generated by flat surfaces $(X,\omega)$, such that $\Jac(X)$ is isogenous to a product of Abelian varieties, and where the one
factor admits real multiplication by $\QQ(\SL(X,\omega))$, in these cases a real quadratic number field.

This had to happen, as M\"oller has shown in \cite{MartinVHS}:
\newtheorem*{theoB}{Theorem B}
\begin{theoB}[M\"oller]\label{MartinVHS}
 Let $(X,\omega) \in \Omega\MM_g$ be a flat surface generating a Teichm\"uller curve~$C$ in $\MM_g$. Then we have the following.
 \begin{enumerate} 
  \item The trace field $K = \QQ(\tr(\SL(X,\omega)))$ is totally real.
  \item The image of $C \to \mathcal{A}_g$ under the Torelli map is contained in the locus of Abelian varieties isogenous to $A_1 \times A_2$,
        where $A_1$ has dimension $[K:\QQ]$ and real multiplication by~$K$. The generating differential $\omega \in \Omega(X)$ is an eigenform for real multiplication by~$K$.
 \end{enumerate}
\end{theoB}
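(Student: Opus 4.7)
The plan is to deduce both assertions simultaneously from the variation of Hodge structure (VHS) that the Teichm\"uller curve carries, together with Galois conjugation on the trace field. Let $\pi \colon \mathcal{X} \to C$ be the universal family of Riemann surfaces over $C$; its first relative cohomology $R^1\pi_*\QQ$ carries a weight $1$ polarized $\QQ$-VHS of rank $2g$, whose monodromy is essentially the Veech group $\SL(X,\omega)$ acting on $H^1(X;\QQ)$. The central geometric input is that the real two-plane spanned by $\mathrm{Re}(\omega)$ and $\mathrm{Im}(\omega)$ inside $H^1(X;\RR)$ is preserved by affine diffeomorphisms, and that the Veech group acts on this tautological plane through the tautological embedding $\SL(X,\omega) \hookrightarrow \SL_2(\RR)$.

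Next, I would use Galois conjugation to produce further subsystems. For every embedding $\sigma \colon K \hookrightarrow \CC$ of the trace field, the $\sigma$-conjugate representation of the Veech group acts on another rank $2$ piece $V^\sigma$ of the VHS; the direct sum $V_1 := \bigoplus_\sigma V^\sigma$ is Galois invariant and hence defined over $\QQ$. Because the ambient VHS admits a polarization, each $V^\sigma$ must inherit a real polarization, which forces $\sigma(K) \subset \RR$, proving that $K$ is totally real (assertion (1)). This step simultaneously realizes $K$ as a subalgebra of $\End_\QQ(V_1)$ acting by permutation of the Galois components, and thus as real multiplication on the corresponding isogeny factor.

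From here assertion (2) follows by translating this Hodge-theoretic decomposition into an isogeny decomposition of Jacobians. Deligne semisimplicity (or the Poincar\'e reducibility theorem for the generic fibre) upgrades the splitting $V = V_1 \oplus V_2$ of the rational VHS to an isogeny decomposition $\Jac(X) \sim A_1 \times A_2$ with $\dim A_1 = [K:\QQ]$, on which $K$ acts as real multiplication. Finally, since $\omega$ generates the $(1,0)$-part of the tautological summand $V^{\mathrm{id}}$, and since that summand is precisely the eigenspace for the identity embedding $K \hookrightarrow \RR$, the form $\omega$ is an eigenform for the $K$-action on $A_1$.

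The main obstacle I anticipate is the totally real conclusion. Galois conjugation is a purely algebraic operation on the field $K$, but to transport it to an honest decomposition of a real polarized VHS, one has to show that each $\sigma$-conjugate representation really stabilizes a rational, polarized, rank $2$ sub-VHS. This is the content of the Deligne-style semisimplicity argument combined with the geometric fact that the Veech group is a lattice in $\SL_2(\RR)$, and is where the rigidity coming from the Teichm\"uller curve hypothesis (as opposed to an arbitrary complex geodesic) is genuinely used. Once that subspace structure is in place, both the totally realness and the isogeny statement come out in parallel.
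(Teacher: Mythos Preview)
The paper does not prove Theorem~B at all; it is quoted as a result of M\"oller from \cite{MartinVHS} and used as input for the rest of the paper. So there is no ``paper's own proof'' to compare against here.

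That said, your sketch is a faithful outline of the argument in M\"oller's original paper: the rank~$2$ tautological sub-VHS generated by $\omega$, its Galois conjugates, the polarization forcing $\sigma(K)\subset\RR$, and Deligne semisimplicity yielding the isogeny splitting. One point worth sharpening is your explanation of why each Galois-conjugate representation actually occurs as a sub-VHS of $R^1\pi_*\QQ$ rather than merely as an abstract representation of the Veech group; in M\"oller's argument this comes from the fact that the full local system is defined over $\QQ$, so the Galois orbit of the rank~$2$ summand must already sit inside it. You gesture at this with ``Galois invariant and hence defined over $\QQ$'', which is the right idea but could be stated more carefully.
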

Thus, we have three possibilities for a flat surface $(X,\omega) \in \Omega\MM_3$ generating a Teichm\"uller curve $C$.
The first one is, that the trace field $K$ is $\QQ$ (these curves are called \emph{arithmetic}). Then $(X,\omega)$ covers a torus, in particular $C$ is not primitive.
The second one is, that $K$ is a totally real cubic number field, i.e. $C$ is algebraically primitive.
In~\cite{MartinMattPhilipp}, Bainbridge, Habegger and M\"oller have shown that there are only finitely many algebraically primitive Teichm\"uller curves in $\MM_3$. 
The remaining case, where $K$ is a real quadratic number field, is what we are dealing with in this paper.
Therefore, we are interested in three-dimensional principally polarized complex Abelian varieties isogenous to a product $A_1 \times A_2$, where $A_1$ is a one-dimensional polarized complex Abelian
variety of type~$d$ and $A_2$ is a two-dimensional polarized complex Abelian variety of type~$(1,d)$ admitting real multilplication by a real quadratic order~$\OD$.

In \cite{LanneauNguyen1}, Lanneau and Nguyen proved the following classification of the Prym-Weierstra{\ss} locus.
\newtheorem*{theoC}{Theorem C}
\begin{theoC}[Lanneau, Nguyen]
 For $D \geq 17$, $\Omega \EE_D^3(4) \subset \Omega\MM_3$ is non empty if and only if $D \equiv 0,1,$ or $4 \mod 8$. All the loci $\Omega \EE_D^3(4)$ are pairwise disjoint.
 Moreover, for the values $0$,$1$,$4$ of discriminants, the following dichotomy holds. Either
\begin{enumerate}
 \item $D$ is odd and then $\Omega \EE_D^3(4)$ has exactly two connected components,
 \item $D$ is even and $\Omega \EE_D^3(4)$ is connected.
\end{enumerate}
In addition, each component of $\Omega \EE_D^3(4)$ corresponds to a closed $\GL_2^+(\RR)$-orbit.
For $D<17$, $\Omega \EE_D^3(4)$ is non-empty if and only if $D \in \{ 8,12 \}$ and in this case it is connected.
\end{theoC}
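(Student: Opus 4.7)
The plan is to classify $\Omega \EE_D^3(4)$ via prototype data: for each $(X,\omega) \in \Omega \EE_D^3(4)$, I would fix a periodic direction, decompose $X$ into horizontal cylinders, and extract a finite tuple of integer parameters (widths, heights, moduli, twists) analogous to McMullen's prototypes in genus two. First I would observe that the Prym involution $\tau$ permutes the horizontal cylinders; combined with the fact that $\omega$ is a $\tau$-antiinvariant eigenform for real multiplication by $\OD$ with a single zero of order four, this forces the cylinder decomposition into a short list of combinatorial types. On each type, the $\OD$-action together with the polarization on $\Jac(X)^-$ determines $D$ as an explicit quadratic expression in the prototype parameters.

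The congruence obstruction $D \equiv 0,1,4 \mod 8$ would then follow by reducing this quadratic expression modulo $8$ and checking that the excluded residues $\{2,3,5,6,7\}$ never arise. For the converse direction, I would construct explicit translation surfaces (staircase-shaped polygons with opposite-side identifications, directly generalising McMullen's $L$-shaped prototypes) realising every admissible prototype, and verify case by case that each carries an involution with two-dimensional Prym admitting real multiplication by $\OD$. The step I expect to be hardest is ensuring that the constructed surfaces genuinely have a single zero of order exactly four rather than several lower-order zeros, which requires careful bookkeeping of saddle connections and the vertex cycles of the polygon identifications.

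For the component count, the key tool is a $\ZZ/2$-valued spin-type invariant defined on prototypes: essentially the parity of a distinguished combination of the prototype parameters. I would verify that it is invariant under the $\GL_2^+(\RR)$-action and under the natural $\SL_2(\OD)$-action that changes the periodic direction. When $D$ is odd, both parities occur and are realised by disjoint orbits, giving the two components. When $D$ is even, a reduction-theoretic argument (again modeled on McMullen's genus two proof) shows that $\SL_2(\OD)$ acts transitively on the prototype set, hence only one orbit survives. Closedness of each $\GL_2^+(\RR)$-orbit then follows from the algebraicity of the eigenform locus together with the fact that the minimal stratum condition "one zero of order four" cuts out a closed $\GL_2^+(\RR)$-invariant subvariety.

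Finally, the exceptional small cases $D < 17$ are treated by direct enumeration: for each residue-admissible $D$ one lists all prototypes satisfying the positivity and integrality inequalities, exhibits the surfaces geometrically when solutions exist (yielding $D \in \{8,12\}$), and rules out the remaining values by showing the prototype system has no integer solution. The overall structure of the argument is thus parallel to McMullen's classification in genus two, with the extra difficulty that the Prym condition imposes additional symmetry constraints on the admissible cylinder decompositions and that the spin computation has to be redone for the order-four zero; these are the places where I would expect the heaviest technical work.
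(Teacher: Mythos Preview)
This theorem is not proved in the paper. It appears in Section~2 (``Teichm\"uller curves: state of the art'') as a cited background result from \cite{LanneauNguyen1}, stated without proof. There is therefore no proof in the paper to compare your proposal against.

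For what it is worth, your outline is a reasonable sketch of the Lanneau--Nguyen strategy in their original paper: prototype enumeration for Prym eigenforms in the minimal stratum, a congruence obstruction on $D$ coming from the quadratic form in the prototype parameters, explicit polygonal constructions for existence, and a spin-type invariant to separate components when $D$ is odd. But none of that argument is reproduced or needed in the present paper, which only quotes the result to situate its own work on the pseudo-real multiplication locus.
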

In \cite{LanneauNguyen2}, they showed that the analogous satement holds for the loci $\Omega \EE_D^3(2,2)^{\rm odd}$ and $\Omega \EE_D^3(1,1,2)$ for $D\geq 8$.
Note that the Prym variety $\Jac(X)^-$ for a genus three Riemann surface always carries a polarization of type $(1,2)$.
Beyond the curves of Prym loci, there are currently no classification results known for Teichm\"uller curves in genus three that are primitive but not algebraically primitive.
\clearpage

\section{Complete statements of the results}

As mentioned at the end of the last section, we are interested in principally polarized complex Abelian varieties isogenous to a product $A_1 \times A_2$, where $A_1$ is a one-dimensional polarized
complex Abelian variety of type~$d$ and $A_2$ is a two-dimensional polarized complex Abelian variety of type~$(1,d)$ admitting real multilplication by a real quadratic order~$\OD$.
We will see in Section~\ref{pseudorealmult}, that this is equivalent to the concept of \emph{pseudo-real multiplication}.
\\[1em]
{\bf Moduli-space of Abelian varieties with pseudo-real multiplication.}
Our first goal is to parameterize all these polarized Abelian varieties together with a choice of some real multiplication.
With this goal in mind, we first consider the two-dimensional part. For a non-square discriminant $D$ and $d_1 \mid d_2 \in \NN$ we denote by $X_{D,(d_1,d_2)}$
the space of all isomorphism classes $[(A,H,\rho)]$, where $(A,H)$ is a polarized Abelian surface of type $(d_1,d_2)$ that admits real multiplication by $\OD$
and $\rho : \OD \hookrightarrow \End^+(A)$ is some choice of real multiplication.
If $d_1=d_2=1$, it is well known that $X_{D,(1,1)}$ is isomorphic to the \emph{Hilbert modular surface} \[ X_D = \PSL_2(\OD \oplus \ODual) \backslash \HH \times \HH. \]
The reason that $X_{D,(1,1)}$ consists only of one irreducible component, is that every rank two symplectic $\OD$-module (the lattice defining the Abelian surface) is isomorphic to
$\OD \oplus \ODual$, where $\OD \oplus \ODual$ is equipped with the natural trace pairing.
This does not hold in the non-principal case. It turns out that, provided that the quotient $d=\tfrac{d_2}{d_1}$ is relatively prime to the conductor of $D$, the lattice
defining the surface is as a symplectic $\OD$-module isomorphic to $\OD \oplus \tfrac{1}{\sqrt{D}}\A$ for some ideal $\A \subset \OD$ of norm~$d_1d_2$.
The locus of such Abelian surfaces is denoted by \[ X_\A \subset X_{D,(d_1,d_2)}. \]
To count the irreducible components of $X_{D,(d_1,d_2)}$, consider the prime factorization $\prod p_i^{k_i}$ of~$d$.
If $d$ is relatively prime to the conductor of $D$, then we say that the pair $(d_1,d_2)$ satisfies the \emph{prime factor condition in $\OD$}, if the following two conditions hold.
 \begin{enumerate}
  \item No $p_i$ is inert over $\QQ(\sqrt{D})$ and
  \item If $p_i$ is ramified over $\QQ(\sqrt{D})$, then $k_i=1$.
 \end{enumerate}

In Section~\ref{dim2}, we use the prime ideal factorization for $(d) \subseteq \OD$ to prove the following.

\newtheorem*{ModulraumDim2}{Theorem \ref{ModulraumDim2}}
\begin{ModulraumDim2}
 Let $D \in \NN$ be a non-square discriminant and let $d_1,d_2 \in \NN$ with $d_1 \mid d_2$ and $\tfrac{d_2}{d_1}$ relatively prime to the conductor of $D$.
 Then $X_{D,(d_1,d_2)}$ is non-empty if and only if $(d_1,d_2)$ satisfies the prime factor condition in $\OD$.
 In this case $X_{D,(d_1,d_2)}$ consists of $2^s$ irreducible components $X_\A$, where $s$ is the number of splitting prime divisors of $\tfrac{d_2}{d_1}$.
 For each component $X_\A$ defined by the ideal $\A$, we have an isomorphism \[ \Phi:~ {\rm PSL}_2(\OD \oplus \tfrac{1}{\sqrt{D}}\A) \backslash \HH^2 \cong X_\A,\quad [z] \mapsto [(A_z,H_z,\rho_z)] \]
 between the corresponding Hilbert modular surface and $X_\A$.
\end{ModulraumDim2}
Here we have seen why it makes sense to presuppose that~$d=\tfrac{d_2}{d_1}$ is relatively prime to the conductor of~$D$.
Otherwise we would not have a unique prime ideal factorization for the ideal~$(d)$.

\bigskip
Now let us go back to dimension three. Similar to the lower dimensional case above, we denote by $X_{D,d}^{(3)}$ the space of all isomorphism classes $[(A,H,S,\rho)]$, where~$(A,H)$ is a principally
polarized Abelian variety of dimension three, $S$ is a two dimensional subvariety of~$A$ of type $(1,d)$ and $\rho : \OD \to {\rm End}(S)$ is some choice of real multiplication by $\OD$ on $S$.
Again, we require that the degree~$d$ is relatively prime to the conductor of $D$.
We say that $(A,H,S,\rho)$ is of type $\A$ for an ideal $\A \subseteq \OD$, if $[(S,H|_S,\rho)]$ is in $X_\A$.
We denote the locus of such classes $[(A,H,S,\rho)]$ by \[ X_\A^{(3)} \subset X_{D,d}^{(3)}. \]
Analogously to the dimension two case, in Section~\ref{dim3} we construct for each $z \in \HH^3$ such a variety $(A_z,H_z,S_z,\rho_z) \in X_\A$ and get the following theorem.

\newtheorem*{modulispacedim3}{Theorem \ref{modulispacedim3}}
\begin{modulispacedim3}
 Let $D \in \NN$ be a non-square discriminant and let $d \in \NN$ be relatively prime to the conductor of~$D$.
 Then $X_{D,d}^{(3)}$ is non-empty if and only if~$d$ satisfies the prime factor condition in~$\OD$.
 In this case $X_{D,d}^{(3)}$ consists of $2^s$ irreducible components $X_\A^{(3)}$, where~$s$ is the number of splitting prime divisors of~$d$.
 For each component $X_\A^{(3)}$ defined by the ideal $\A$ we have an isomorphism \[ \Phi:~ \Gamma_{D,d}(\eta_1,\eta_2) \backslash \HH^3 \cong X_\A^{(3)},\quad [z] \mapsto [(A_z,H_z,s_z,\rho_z)] \]
 between the corresponding Hilbert modular variety and $X_\A^{(3)}$.
\end{modulispacedim3}
We will give an explicit description of the group $\Gamma = \Gamma_{D,d}(\eta_1,\eta_2)$, a finite-index subgroup of ${\rm SL}_2(\OD \oplus \tfrac{1}{\sqrt{D}}\A) \times {\rm SL}_2(\ZZ)$.
Unfortunately, $\Gamma$ depends not only on the ideal~$\A$, but also on the choice of a $\ZZ$-basis $(\eta_1,\eta_2)$ of~$\A$.
Furthermore, $\Gamma$ does not have the structure of a direct product.
We can construct a universal family \[ \Gamma_{D,d}(\eta_1,\eta_2) \ltimes \ZZ^6 \backslash (\HH^3 \times \CC^3) \to X^{(3)}_\A \] for those components,
but for many other problems (like counting the cusps of $\Gamma \backslash \HH^3$) the group is much too unwieldy.
Nevertheless, we can give a lower and an upper bound for $\Gamma$ (with both inclusions of finite index), which do not dependent on any choice of basis and which are direct products.
More precisely, we have the inclusion \[ \SL\begin{pmatrix} 1+\A & \sqrt{D} \OD \\ \tfrac{1}{\sqrt{D}}\A^2 & 1+\A \end{pmatrix} \times \Gamma(d) \subseteq \Gamma_{D,d}(\eta_1,\eta_2) \subseteq
{\rm SL}_2(\OD \oplus \tfrac{1}{\sqrt{D}}\A) \times {\rm SL}_2(\ZZ) \] and we interpret the quotients of $\HH^3$ by these groups as moduli spaces of polarized
complex Abelian varieties with real multiplication and certain level structures.
\\[1em]
{\bf Generalized period coordinates and cross-ratio equations.}
Before the final statement in \cite{MartinMattPhilipp} mentioned in the previous section, Bainbridge and M\"oller have proven in \cite{MartinMatt} the finiteness of algebraically primitive
curves in~$\MM_3$ generated by flat surfaces in the stratum $\Omega\MM_3(3,1)$, using a generalization of the classical period coordinates.

Let $\overline{\MM}_g(L)$ be the space of arithmetic genus~$g$ stable curves together with a Lagrangian marking (weighting) by a rank~$g$ free Abelian group~$L$. 
There is a natural stratification of~$\overline{\MM}_g(L)$ defined by weight preserving homeomorphisms. If  $\II$ is a lattice in a totally real number field of degree~$g$, then each boundary stratum
in~$\overline{\MM}_g(L)$ can be embedded into the boundary of~$\PcM_g$, where~$\Omega\cMM_g$ is the bundle of stable forms over the Deligne-Mumford compactification of~$\MM_g$.
Bainbridge and M\"oller showed that a necessary condition for a boundary point~$X$ for lying in the closure of the real multiplication locus $\mathcal{R}\MM_\OO \subset \MM_g$ in $\cMM_g$
is the following.
There is some subvariety $\bS(h)$ defined by $h \in \Sym_\QQ(F)$ of a so-called \emph{admissible} $\II$-weighted boundary stratum $\bS \subset \PcM_g$, such that~$X$ lies in the image of $\bS(h)$
under the forgetful map $\PcM_g \to \cMM_g$.
They also showed, that this condition is sufficient in genus three.

The key was a description of the closure of the eigenform locus $\Omega\mathcal{E}_\OO$ (where~$\OO$ is an order in a totally real number field~$F$ of degree~$g$) in $\Omega\overline{\MM}_g$.
For any free Abelian group $L$ of rank $g$, they constructed a homomorphism \[ \Psi:~ \bS_{\ZZ}(\Hom(L,\ZZ)) \to \Hol^*(\MM_g(L)), \]
where $\bS_\ZZ(\cdot)$ denotes the symmetric square and $\MM_g(L)$ the moduli space of Riemann surfaces together with a Lagrangian marking by~$L$.
This homomorphism can be seen as a coordinate free version of exponentials of the classical period coordinates. They proved that for each $a \in \bS_{\ZZ}(\Hom(L,\ZZ))$,
the function~$\Psi(a)$ extends to a meromorphic function on~$\overline{\MM}_g(L)$ and computed the orders of vanishing.

In genus three, they constructed a quadratic function $Q: F \to F$ and showed that a weighted boundary stratum is admissible if and only if the $Q$-images of the weights are not contained in a closed
half-space of their $\QQ$-span. This geometric condition is called the \emph{no-half-space condition}.
For each $n \in \NN$, let $\MM_{0,n}$ be the moduli space of~$n$ labeled points on $\PC$.
Each $\II$-weighted boundary stratum $\bS$ is isomorphic to a product of certain $\MM_{0,n_i}$, one factor for each irreducible component.
Thus, any point of $\bS$ can be represented by an ordered tuple of elements in $\PC$. They described the function~$\Psi(a)$ in terms of cross-ratios of these projective coordinates, and constructed
single explicit equations cutting out the subvarieties $\bS(h)$. They used these so-called \emph{cross-ratio equations} to prove the finiteness statement for algebraically primitive Teichm\"uller
curves in the stratum $\Omega\MM_3(3,1)$.
Moreover, they constructed an algorithm that searches for possible cusps (necessarily represented by irreducible stable forms) of algebraically primitive Teichm\"uller curves in the stratum
$\Omega\MM_3(4)^{\rm hyp}$. This algorithm works as follows.
Fixing an order $\OO$ in a totally real cubic number field, first one lists all admissible bases of ideals in $\OO$. For each admissible basis, there are just finitely many irreducible stable forms
having these residues and a single zero. Then one checks, if the cross-ratio equation holds.

Our goal in Section~\ref{boundstrat} is, to construct analogues for the locus of those genus three Riemann surfaces~$X$, where $\Jac(X)$ admits multiplication by a \emph{pseudo-cubic number field}
$F = \QQ(\sqrt{D}) \oplus \QQ$. We first show that an isomorphism class of $(A,\rho)$, where $\rho$ is such a choice of pseudo-real multiplication on a three-dimensional principally polarized Abelian
variety, is the same as a class $[(A,H,S,\rho)]$ in $X_{D,d}^{(3)}$.
We formulate admissibility for the pseudo-cubic case and give the following necessary condition for lying in the boundary of the pseudo-real multiplication locus.

\newtheorem*{admissible1}{Corollary \ref{admissible1}}
\begin{admissible1}
 Let $\OO$ be a pseudo-cubic order. Each geometric genus zero stable curve $X \in \overline{\mathcal{R}\MM_\OO}$ lies in the image under the forgetful map of some admissible
 $\II$-weighted boundary stratum $\mathcal{S} \subset \cMM_3(\II)$.
\end{admissible1}

We construct a quadratic function $Q: F \to \QQ^3$ and give a similar geometric reformulation of admissibility as in the cubic case.
To be more precise, we define \[ Q:~ F \to \QQ^3,\quad  (x,q) \mapsto \begin{pmatrix} \mathcal{N}(\sqrt{D}x) \\ \tr(x)q \\ \tr(\sqrt{D}x)q \end{pmatrix}. \]
Then we show the following criterion.

\newtheorem*{Q}{Theorem \ref{Q}}
\begin{Q}
 An $\mathcal{I}$-weighted boundary stratum $\mathcal{S}$ is admissible if and only if the set \[ Q(\mathcal{S}) := \{ Q(w) : w \in {\rm Weight}(\mathcal{S}) \} \]
 is not contained in a closed half-space of its $\QQ$-span.
\end{Q}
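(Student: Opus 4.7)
The plan is to reduce both directions of the equivalence to the same linear-algebraic condition in $\QQ^3$. By a standard Farkas-type alternative, a finite set $T \subset \QQ^3$ fails to lie in any closed half-space of its $\QQ$-span if and only if there exist strictly positive rationals $\lambda_v > 0$, $v \in T$, satisfying $\sum_v \lambda_v v = 0$. Applied to $T = Q(\mathcal{S})$, the theorem then reduces to the equivalence: $\mathcal{S}$ is admissible if and only if there exist positive rationals $\lambda_w > 0$, one per weight $w \in {\rm Weight}(\mathcal{S})$, with $\sum_w \lambda_w Q(w) = 0$ in $\QQ^3$. This is the pseudo-cubic analogue of the role played by $Q(x) = \mathcal{N}(x)/x$ in the cubic case of~\cite{MartinMatt}.

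To establish this equivalence I would first unwind admissibility in the pseudo-cubic setting as introduced earlier in Section~\ref{boundstrat}. By analogy with the cubic case, the definition asserts the existence of positive rationals $\lambda_w$ for which an $F$-valued residue vector $\sum_w \lambda_w R(w)$ vanishes, where $F = \QQ(\sqrt{D}) \oplus \QQ$ and $R(w) \in F$ records the eigenform residue data attached to the node carrying the weight~$w$. Choosing the $\QQ$-basis $\{1, \sqrt{D}, (0,1)\}$ of $F$, this single $F$-valued vanishing condition splits into three $\QQ$-linear equations, giving a system in $\QQ^3$ of the required shape.

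The core step is the explicit computation of these three coordinates of $R(w)$ for $w = (x,q) \in F$. Using the pseudo-trace pairing from Section~\ref{Tracepairing} together with the compatibility of the residue with the $\sqrt{D}$-action on $\QQ(\sqrt{D})$ and with multiplication in the $\QQ$-factor, the coordinates should come out, up to a strictly positive rational prefactor that can be absorbed harmlessly into $\lambda_w$, as $\mathcal{N}(\sqrt{D}x)$, $\tr(x)q$ and $\tr(\sqrt{D}x)q$, which is precisely $Q(w)$. The main obstacle is this residue computation: one must carefully track how the pseudo-trace pairing, the $\sqrt{D}$-action on the first summand of $F$, and the product decomposition of $F$ conspire so that the resulting quadratic expressions match the three components of~$Q$ on the nose, and so that the prefactors are strictly positive (otherwise signs would spoil the Farkas reformulation). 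A secondary subtlety is that $\mathrm{span}_\QQ(Q(\mathcal{S}))$ need not be all of $\QQ^3$, consistent with the rank-two phenomenon mentioned after Equation~\eqref{Loesungsraum}, so the no-half-space condition must be read relative to this span; the Farkas alternative remains valid verbatim in this generality, and the two directions of the theorem then follow from the identification $\sum_w \lambda_w R(w) = 0 \Leftrightarrow \sum_w \lambda_w Q(w) = 0$.
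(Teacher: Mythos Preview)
Your proposal rests on a misreading of the definition of admissibility. In Section~\ref{admissibility} of this paper (and in the cubic case of~\cite{MartinMatt}), admissibility is \emph{not} the assertion that some positive $F$-valued combination $\sum_w \lambda_w R(w)$ vanishes. It is the inclusion $\mathcal{C}(\mathcal{S}) \cap \Ann(\Lambda_1) \subset \mathcal{N}(\mathcal{S})$, i.e.\ the statement that every $a$ in the three-dimensional subspace $\Ann(\Lambda_1) \subset \bS_\QQ(F)$ satisfying $\langle a, w \otimes w \rangle \geq 0$ for all weights $w$ actually satisfies $\langle a, w \otimes w \rangle = 0$ for all $w$. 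There is no ``residue vector $R(w) \in F$'' in this definition, and your reference to Section~\ref{Tracepairing} (which treats symplectic $\OD$-modules, not residues) is misplaced. Because your starting point is the wrong definition, the ``core step'' you describe --- extracting three $\QQ$-coordinates of $R(w)$ and matching them to $Q$ --- is not the computation that is actually required.

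The paper's proof is a direct coordinate identification. It writes down an explicit $\QQ$-basis $(\mu_1,\mu_2,\mu_3)$ of $\Lambda_1^\perp \cong \Ann(\Lambda_1)$ inside $\Sym_\QQ(F)$, expresses a general $a \in \Ann(\Lambda_1)$ as $a = q_1\mu_1 + q_2\mu_2 + q_3\mu_3$, and then computes $\langle a, w \otimes w \rangle = \langle (q_1,q_2,q_3), Q(w) \rangle_s$ by an elementary expansion with $w = (r + s\sqrt{D}, q)$. Once this identity is established, the admissibility condition becomes, verbatim, the statement that no $q \in \QQ^3$ pairs non-negatively with every $Q(w)$ unless it pairs trivially with every $Q(w)$ --- which is the no-half-space condition. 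No Farkas alternative is needed: the two formulations coincide on the nose after choosing coordinates on $\Ann(\Lambda_1)$. The work you must actually do is to identify $\Ann(\Lambda_1)$ explicitly and evaluate the pairing $\langle a, w \otimes w \rangle$ there; that is the whole content of the proof.
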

Then we give also explicit equations in terms of cross-ratios that cut out the subvarieties $\bS(h) \subset \bS$.
In the boundary stratum of irreducible stable curves of geometric genus zero, which we also call the stratum of trinodal type, this is the following.

\newtheorem*{CRequation}{Theorem \ref{CRequation}}
\begin{CRequation}
 Let $r=(r_1,r_2,r_3)$ be a $\ZZ$-basis of $\II$, let $s=(s_1,s_2,s_3)$ be its dual basis of $\II^\vee$ with respect to the pseudo-trace pairing and let $\mathcal{S}_r$ be the $\II$-weighted boundary
 stratum of trinodal curves with weigths~$r$. Moreover, if $h$ is an element of $\Sym_{\QQ}(F) / (\Lambda_1+\Sym_{\ZZ}(\II))$, then we write
 \[ h = \sum_{i,j = 1}^3 b_{ij}(r_i \otimes r_j)\] with $b_{ij} \in \QQ$ and $b_{ij}=b_{ji}$. We identify $\mathcal{S}_r$ with $\MM_{0,6}$, the moduli space of six labeled points in $\PC$,
 and denote the cross-ratios by \[ p_{jk} := (p_j,q_j;q_k,p_k) \in \CC \setminus \{ 0,1 \} \] for $p = (p_1,p_2,p_3,q_1,q_2,q_3)  \in \PC^6$ with pairwise distinct entries.
 Then, the subvariety $\mathcal{S}_r(h)$ of $\mathcal{S}_r$ is cut out by the equations \[ p_{23}^{a_1} \cdot p_{13}^{a_2} \cdot p_{12}^{a_3} = \exp(-2\pi i (a_1b_{23}+a_2b_{13}+a_3b_{12})), \]
 where $(a_1,a_2,a_3)$ runs over the integral solutions of \begin{equation*} a_1s_2s_3+a_2s_1s_3+a_3s_1s_2=0. \end{equation*}
\end{CRequation}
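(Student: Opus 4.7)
The plan is to imitate the strategy of Bainbridge and M\"oller from \cite{MartinMatt} in the pseudo-cubic setting, via the generalized period map $\Psi : \bS_\ZZ(\II^\vee) \to \Hol^*(\MM_g(\II))$ introduced in Section~\ref{boundstrat}. By construction, $\mathcal{S}_r(h)$ is cut out in $\mathcal{S}_r$ by the equations $\Psi(a) = \exp(-2\pi i\langle a,h\rangle)$, where $a$ ranges over those tensors in $\bS_\ZZ(\II^\vee)$ whose image $\Psi(a)$ descends to a well-defined, non-constant element of $\Hol^*(\mathcal{S}_r)$. The proof has two tasks: (i) characterize this set of admissible $a$ arithmetically, and (ii) express each $\Psi(a)|_{\mathcal{S}_r}$ in terms of cross-ratios under the identification $\mathcal{S}_r \cong \MM_{0,6}$.

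For task (i), I would parametrize a general off-diagonal tensor in the form $a = a_1(s_2 \cdot s_3) + a_2(s_1 \cdot s_3) + a_3(s_1 \cdot s_2)$ and compute the orders of the meromorphic extension of $\Psi(a)$ at the three node divisors meeting $\mathcal{S}_r$, using the order-of-vanishing formula of \cite{MartinMatt}. The key observation is that these three orders assemble into the image of $a$ under the multiplication map $\Sym^2(F) \to F$, namely into the element $a_1 s_2 s_3 + a_2 s_1 s_3 + a_3 s_1 s_2 \in F$. Therefore $\Psi(a)$ is holomorphic and nowhere-zero on $\mathcal{S}_r$ exactly when this element vanishes, which is equation~\eqref{Loesungsraum}. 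The diagonal contributions $a = s_i \otimes s_i$ give functions that are constant along $\mathcal{S}_r$ and contribute no new equations, as follows from the same order computation.

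For task (ii), I would use the plumbing description of a neighborhood of $\mathcal{S}_r$ in $\PcM_3$: near an irreducible trinodal curve with normalization $(\mathbb{P}^1;p_1,q_1,p_2,q_2,p_3,q_3)$ the three nodes are opened with parameters $t_j$, and the logarithmic contribution of the $j$-th plumbing to $\log\Psi(a)$ restricts along $t_j=0$ to a multiple of $\log p_{jk}$, where $p_{jk}$ is the cross-ratio at the remaining two nodes. Plugging the specific $a$ from (i) into the explicit formula for $\Psi$ then collapses the three plumbing contributions to the product $p_{23}^{a_1}\cdot p_{13}^{a_2}\cdot p_{12}^{a_3}$. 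The constant $\exp(-2\pi i(a_1 b_{23}+a_2 b_{13}+a_3 b_{12}))$ on the right-hand side is nothing but the evaluation of the pairing $\langle a,h\rangle$ in the dual bases $(r_i)$, $(s_j)$.

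The main obstacle, and the feature that distinguishes this case from \cite{MartinMatt}, is that the $\ZZ$-module of solutions of~\eqref{Loesungsraum} can have rank two rather than one (cf.\ Example~\ref{ranktwo}), so that $\mathcal{S}_r(h)$ may have codimension two in $\mathcal{S}_r$ and is no longer cut out by a single equation. One must therefore show that the family of equations above cuts out $\mathcal{S}_r(h)$ \emph{precisely}. I would argue by a rank calculation: equations coming from a $\ZZ$-basis of the solution lattice are independent as holomorphic conditions on $\MM_{0,6}$, because the corresponding cross-ratio monomials are multiplicatively independent unless the exponent triples are proportional; equations from non-basic solutions are then redundant by multiplicativity of $\Psi$. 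Finally, well-definedness modulo $\Lambda_1+\Sym_\ZZ(\II)$ is immediate: integral shifts of $h$ alter each right-hand side by an integer power of $e^{2\pi i}=1$, and shifts by $\Lambda_1$ correspond exactly to diagonal tensors, which were already shown to drop out in task~(i).
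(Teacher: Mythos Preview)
Your task (i) misidentifies where the arithmetic condition comes from. For the off-diagonal tensor $a = a_1(s_2\otimes s_3)+a_2(s_1\otimes s_3)+a_3(s_1\otimes s_2)$, the order of $\Psi(a)$ along the divisor $D_{r_i}$ is $\langle a, r_i\otimes r_i\rangle$, and by duality this is simply zero for every $i$. So every such $a$ already lies in $\mathcal{N}(\mathcal{S}_r)$; the vanishing orders do \emph{not} assemble into $a_1s_2s_3+a_2s_1s_3+a_3s_1s_2\in F$. The equation $a_1s_2s_3+a_2s_1s_3+a_3s_1s_2=0$ is instead the condition $a\in\Ann(\Lambda_1)$, which enters not through regularity of $\Psi(a)$ on the stratum but through the very definition of $\mathcal{S}_r(h)$: since $h$ is only given modulo $\Lambda_1+\Sym_\ZZ(\II)$, the right-hand side $\exp(-2\pi i\langle a,h\rangle)$ is well-defined only for $a$ annihilating $\Lambda_1$. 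In the paper this is obtained by observing that $\lambda=\sum_i r_i\otimes s_i$ corresponds to $\id_F$ and hence generates $\Lambda_1$ as an $F$-module, so $a\in\Ann(\Lambda_1)$ iff $\langle a,x\lambda\rangle=\tr_p\!\big(x\,(a_1s_2s_3+a_2s_1s_3+a_3s_1s_2)\big)=0$ for all $x\in F$. Your final paragraph compounds the confusion: elements of $\Lambda_1$ are not ``diagonal tensors'' in any basis-free sense, and the way they ``drop out'' is precisely via the annihilator condition, not via the order computation.

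For task (ii) the plumbing route is unnecessary. Once you know $a\in\mathcal{N}(\mathcal{S}_r)\cap\bS_\ZZ(\II^\vee)$ is spanned by the $s_j\otimes s_k$ with $j\neq k$, you can evaluate $\Psi(s_j\otimes s_k)$ on a trinodal curve by a direct residue-and-integration argument on $\mathbb{P}^1$: normalize so that $p_j=0$, $q_j=\infty$, $q_k=1$, take $\omega_j=\frac{1}{2\pi i}\frac{dz}{z}$, and integrate from $p_k$ to $1$ to get $p_k^{-1}=p_{jk}$. Multiplying these and pairing with $h=\sum b_{ij}\,r_i\otimes r_j$ gives the stated equation immediately. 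No limiting or degeneration argument is needed, and there is nothing further to prove about ``independence'' of the equations: $\mathcal{S}_r(h)$ is by definition the common zero locus of $\Psi(a)-q(h)(a)$ over all $a$ in $\mathcal{N}(\mathcal{S}_r)\cap\Ann(\Lambda_1)\cap\bS_\ZZ(\II^\vee)$, so the description in terms of the solution lattice of~\eqref{Loesungsraum} is exact as stated.
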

As mentioned in the introduction, note that contrary to the cubic case, in general we do not get a single equation anymore.
We get an analogous result for a second type of stratum, which we will call to be of \emph{nice non-trinodal type}.

A \emph{cusp packet} for a pseudo-cubic order~$\OO$ is a pair $(\II,E_h(\II))$ with a lattice~$\II$ in~$F$ and a certain symplectic extension class~$E_h(\II)$.
We will see that any boundary point of the pseudo-real multiplication locus corresponds to a cusp packet.
Hence, at least for the strata relevant for Teichm\"uller curves, we can use the cross-ratio equations to get the following description of the boundary of the eigenform locus for
pseudo-real multiplication.
\newtheorem*{hinreichend}{Theorem \ref{hinreichend}}
\begin{hinreichend}
 Let $\OO \subset F$ be a pseudo-cubic order of degree relatively prime to the conductor of $D$, and let $(\II,E_h(\II)) \in \mathcal{C}(\OO)$ be a cusp packet for $\OO$.
 Furthermore, let $\iota: F \hookrightarrow \RR$ be one of the two real quadratic pseudo-embeddings.
 
 Then the intersection of the closure of the cusp of $\mathbb{P}\overline{\Omega^\iota\RM_\OO}$ associated to $(\II,E_h(\II))$ with the union of the strata of trinodal and nice non-trinodal type is equal
 to the union of all $p_\iota(\mathcal{S}(h))$, the $\iota$-pseudo-embeddings of the subvarieties~$\mathcal{S}(h)$ of~$\mathcal{S}$, where~$\mathcal{S}$ runs over all admissible $\I$-weighted boundary
 strata of trinodal and nice non-trinodal type.
\end{hinreichend}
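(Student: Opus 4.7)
The strategy is to establish the equality by proving both inclusions separately. The inclusion ``$\subseteq$'' will follow directly from the results already developed, while the reverse inclusion ``$\supseteq$'' requires the construction of explicit degenerating families of eigenforms and is where the bulk of the work lies.

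For ``$\subseteq$'', take a stable form $(X,[\omega])$ in the closure of the cusp associated to the packet $(\II,E_h(\II))$ whose underlying stable curve has trinodal or nice non-trinodal topological type. By Corollary~\ref{admissible1}, $(X,[\omega])$ lies in the image under the forgetful map of some admissible $\II$-weighted boundary stratum $\mathcal{S} \subset \cMM_3(\II)$. The extension class data $E_h(\II)$ of the packet, carried into the boundary via the generalized period coordinate map $\Psi$, translates into the cross-ratio conditions. In the trinodal case these are exactly the equations in Theorem~\ref{CRequation}, while in the nice non-trinodal case the analogous cross-ratio equations apply component by component. Hence $(X,[\omega]) \in p_\iota(\mathcal{S}(h))$.

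For ``$\supseteq$'', fix an admissible $\II$-weighted stratum $\mathcal{S}$ of trinodal or nice non-trinodal type and a point $(X_0,[\omega_0]) \in p_\iota(\mathcal{S}(h))$. The plan is to exhibit an analytic family $(X_t,[\omega_t]) \in \mathbb{P}\Omega^\iota\RM_\OO$ converging to $(X_0,[\omega_0])$ as $t$ runs to the cusp. Using Theorem~\ref{modulispacedim3}, the component $X_\A^{(3)}$ of the moduli space containing the relevant eigenforms is uniformized by $\Gamma_{D,d}(\eta_1,\eta_2) \backslash \HH^3$, and cusps of this uniformization match cusp packets. Starting from projective coordinates for $(X_0,[\omega_0])$ satisfying the cross-ratio equations of Theorem~\ref{CRequation}, I would lift to a path $z(t) \in \HH^3$ whose imaginary parts tend to infinity in the direction dictated by $(\II,E_h(\II))$ and whose real parts encode the phases $\exp(-2\pi i(a_1 b_{23}+a_2 b_{13}+a_3 b_{12}))$. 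Applying $\Psi$ along this path and comparing with the plumbing description of $\cMM_3$ near the boundary, the resulting stable forms will be checked to converge to $(X_0,[\omega_0])$, using the vanishing orders of $\Psi(a)$ already computed by Bainbridge--M\"oller adapted to the pseudo-cubic trace pairing.

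The main obstacle will be verifying that the construction is surjective onto $\mathcal{S}(h)$, and in particular handling the rank-two phenomenon of Equation~\eqref{Loesungsraum} highlighted in Example~\ref{ranktwo}. In the cubic case of~\cite{MartinMatt} a single equation cuts out $\mathcal{S}(h)$, and dimension counting is immediate; here several equations may be needed, so one must check that the integral-solution lattice of Equation~\eqref{Loesungsraum} accounts exactly for all independent monodromy conditions around the cusp, no more and no less. A second subtlety is that the stratum of trinodal curves is not of maximal boundary dimension, so the nice non-trinodal strata must be treated in parallel: here one must combine the cross-ratio equations on each irreducible component with the matching condition along nodes, and verify that the degenerate families constructed from $\HH^3$ reach every such compatible configuration. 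Restricting to trinodal and nice non-trinodal strata, as in the statement, is precisely what makes this matching tractable while still covering the cases relevant to primitive Teichm\"uller curves.
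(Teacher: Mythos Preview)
Your inclusion ``$\subseteq$'' is essentially the paper's argument, though you should invoke Theorem~\ref{admissible} (or Corollary~\ref{Gleichung}) rather than the weaker Corollary~\ref{admissible1}: the latter only places the limit in some admissible stratum, while you need it in the specific subvariety $\mathcal{S}(h)$ determined by the cusp packet.

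For ``$\supseteq$'' your route diverges sharply from the paper's. You propose to lift to explicit paths in $\HH^3$ via the uniformization of $X_\A^{(3)}$ from Theorem~\ref{modulispacedim3} and then compare with plumbing coordinates on $\cMM_3$. The paper instead works entirely inside $\cMM_3(\II)$: it observes that suitable $\Psi(\tau_1),\ldots,\Psi(\tau_n)$ furnish local coordinates on an open neighbourhood $U(\mathcal{S})$ of the stratum (using the cross-ratio descriptions of Lemma~\ref{Psi1} and Lemma~\ref{Psi2}), and then the closure of $\mathcal{R}\MM(\II,h)$, which by Corollary~\ref{Gleichung} is cut out by equations $\Psi(a)=q(h)(a)$, is read off from a straightforward statement about algebraic tori. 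This is the argument already carried out in \cite{MartinMatt}, Section~8, and the paper simply cites it.

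Your approach is not wrong in principle, but it is substantially harder: it requires matching the Baily--Borel boundary of the Hilbert modular variety to the Deligne--Mumford boundary through the extended Torelli map, which is precisely the delicate comparison the paper's method avoids. Moreover, your worry about the rank-two phenomenon of Example~\ref{ranktwo} is largely an artefact of your chosen method; in the paper's coordinate-and-tori argument the number of defining equations is absorbed automatically and no separate surjectivity check is needed.
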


Finally, for doing a crosscheck, for every $n \in \NN$ we take the Teichm\"uller curve $C_n$ generated by a specific $S$-shaped surface $T_n$ (known as Thurston-Veech-construction) in the locus
$\Omega E_{4(2n+1)}$.
We show that a cusp indeed lies in the image of some admissible $\II$-weighted boundary stratum and we compute the (in this case single) defining cross-ratio equation for the subvariety $\bS(h)$
containing $C_n$.
\clearpage

\section{Complex Abelian varieties}

Compact Riemann surfaces of a fixed genus $g$ can be parameterized via the Torelli map by their Jacobians, principally polarized complex Abelian varieties of dimension $g$.
Although this is our main motivation, arbitrary (not necessarily principally) polarized complex Abelian varieties are objects also worth for studying them in detail.
In this paper non-principal polarized Abelian varieties appear as subvarieties of higher dimensional principally polarized ones.
Here we summarize standard notations for complex Abelian varieties and a section with technical lemmata on symplectic forms on lattices.

\subsection{Complex tori and Abelian varieties}\label{AbelianVarieties}

We recall some definitions and properties concerning complex Abelian varieties. If not other stated, we follow \cite{LangeBirke} and refer the reader there for more details.
\\[1em]
{\bf Complex tori and period matrices.}
A \emph{lattice} in a finite-dimensional complex vector space $V$ is a discrete subgroup $\Lambda$ of $V$ of maximal rank.
Thus, if~$V$ has complex dimension~$g$, then $\Lambda$ is a free Abelian group of rank~$2g$.
The quotient $A = V/\Lambda$, equipped with the Lie group structure inherited by $V$, is called a \emph{complex torus}, a connected compact complex Lie group of dimension~$g$.
Conversely, each connected compact complex Lie group $A$ of dimension $g$ is isomorphic to a complex torus of dimension~$g$, since the universal cover $V$ of $A$ is a complex vector space of
dimension~$g$ and the kernel of the universal covering map $\pi: V \to A$ is a lattice in $V$.

Any subset $A_1$ of a complex torus $A = V/\Lambda$ is a subtorus if and only if it is of the form $A_1 = V_1/\Lambda_1$,
where $V_1 \subseteq V$ is a subspace such that the intersection $\Lambda_1 = V_1 \cap \Lambda$ is a lattice in $V_1$.
A complex torus is said to be \emph{simple}, if it does not have any non-trivial subtorus.

A \emph{homomorphism} of complex tori is a holomorphic map compatible with the group structures.
Each homomorphism $f: V/\Lambda \to V'/\Lambda'$ of complex tori can be lifted to a unique $\CC$-linear map $F: V \to V'$. We call $\rho_a(f):=F$ the \emph{analytic representation} of $f$ and
its restriction to the lattices $\rho_r(f):=F|_{\Lambda} : \Lambda \to \Lambda'$ the \emph{rational representation of} $f$.
Conversely, a $\CC$-linear map $F: V \to V'$ descends to a homomorphism of complex tori if and only if $F(\Lambda) \subseteq \Lambda'$.
A homomorphism of complex tori is called an \emph{isogeny}, if it is surjective with finite kernel, and this holds if and only if its analytic representation is an isomorphism of vector spaces.
Consequently, an isomomorphism is an isogeny with trivial kernel, respectively $\rho_r(f)(\Lambda)=\Lambda'$. 

If $(e_1,...,e_g)$ is a $\CC$-basis of $V$ and $(\lambda_1,...,\lambda_{2g})$ is a $\ZZ$-basis of $\Lambda$, then we can write $\lambda_j = \sum_{i=1}^g \alpha_{ij} e_i$ and call the matrix
\[ \Pi = \begin{pmatrix}  \alpha_{1,1} & \cdots & \cdots & \alpha_{1,2g} \\ \vdots & & & \vdots \\ \alpha_{g,1} & \cdots & \cdots & \alpha_{g,2g} \end{pmatrix} \in \bM_{g,2g}(\CC) \]
a \emph{period matrix} for $A$. Let $\Pi$ and $\Pi'$ be period matrices for the tori $A=V/\Lambda$ and $A'=V'/\Lambda'$ with respect to certain bases and let $f: A \to A'$ be a homomorphism.
Then $\rho_a(f)$ is represented by a matrix $M \in \bM_{g,g}(\CC)$ with respect to the chosen $\CC$-bases and $\rho_r(f)$ is represented by a matrix $R \in \bM_{2g,2g}(\ZZ)$ with respect to the chosen
$\ZZ$-bases and the relation \[ M\Pi=\Pi'R \] holds.
\\[1em]
{\bf Complex Abelian varieties and polarizations.}
From the viewpoint of algebraic geometry, a polarization on a complex torus $A=V/\Lambda$ is by definition the first Chern class of a positive definite holomorphic line bundle on $A$.
For our purposes it is more convenient to take an equivalent definition in terms of Hermitian forms.
\begin{defi}
 Let $A=V/\Lambda$ be a complex torus. A \emph{polarization} on $A$ is a positive definite Hermitian form on $H: V \times V \to \CC$, such that ${\rm Im}(\Lambda \times \Lambda) \subseteq \ZZ$.
\end{defi}
If there exists a polarization $H$ on a complex torus $A$, then $A$ is called a \emph{complex Abelian variety} and the pair $(A,H)$ is called a \emph{polarized complex Abelian Variety}.
A \emph{homomorphism of polarized complex Abelian varieties} is a homomorphism of complex tori, such that the analytic representation preserves the Hermitian forms.
For simplicity, we will ommit the term 'complex' often.

Any positive definite Hermitian form $H$ on $V$ gives rise to a non-degenerated alternating $\RR$-bilinearform \[ E_H:~ V \times V \to \RR,\quad E_H(v,w):={\rm Im}(H(v,w)) \] with the property that
$E_H(iv,iw)=E_H(v,w)$ holds for all $v,w \in V$. Conversely, any non-degenerated alternating $\RR$-bilinearform $E$ on $V$ with this property defines a positive definite Hermitian form
\[ H_E:~ V \times V \to \CC,\quad H_E(v,w):=E(iv,w)+iE(v,w). \]
The assignment $H \mapsto H_E$ is a group isomorphism between the \emph{N\'{e}ron-Severi group} $NS(A)$, the group of Hermitian forms on $V$ with ${\rm Im}(\Lambda \times \Lambda) \subseteq \ZZ$,
and the group of alternating $\RR$-bilinearforms on $V$ with $E(\Lambda \times \Lambda) \subseteq \ZZ$ and $E_H(iv,iw)=E_H(v,w)$ for all $v,w \in V$.
Its inverse is $E \mapsto H_E$ and it induces a bijection between the positive definite Hermitian forms and the non-degenerated alternating forms.

A \emph{symplectic form} on a free $\ZZ$-module $\Lambda$ is a non-degenerated alternating bilinearform $E$ with integer values.
A \emph{symplectic module homomorphism} of modules with symplectic forms is a module homomorphism preserving the forms.
If the module with symplectic form $E$ is of finite rank, then it is of even rank $2g$ and there exists a $\ZZ$-basis $(\lambda_1,...,\lambda_g,\mu_1,...,\mu_g)$ of $\Lambda$, such that $E$ is given by
a matrix of the form \[ \begin{pmatrix} 0 & D \\ -D & 0 \end{pmatrix} \] with respect to this basis, where $D = \diag(d_1,...,d_g)$ with $d_i \in \NN$ and $d_i \mid d_{i+1}$ for all $i$.
Such a basis is called a \emph{symplectic basis} and the tuple $(d_1,...,d_g)$, as well as the diagonal matrix $D$, is called the \emph{type} of $E$.
The type does not depend on the choice of the basis, and so does the \emph{degree} of $E$, defined as $\deg(E) := \prod_{i=1}^g d_i$.
If $H$ is a polarization of an Abelian variety $V/\Lambda$ and if we speak of the type, respectively the degree of $H$, we mean the type, respectively the degree of $E_H|_{\Lambda \times \Lambda}$.
We also refer to a symplectic basis for $E_H$ as a symplectic basis for $H$.
A polarization $H$ is said to be \emph{principal}, if it is of type~$(1,...,1)$. That is, if and only if $E_H|_{\Lambda \times \Lambda}$ is unimodular.

Each subtorus $A_1$ of an Abelian variety $A$ is also an Abelian variety, because any polarization $H$ on $A=V/\Lambda$ induces the polarization $H_1=H|_{V_1 \times V_1}$ on~$A_1$.
Let $V_2$ be the orthogonal complement of $V_1$ in $V$ with respect to $H$. Then $\Lambda_2=V_2 \cap \Lambda$ is a lattice in $V_2$ and $A_2=V_2/\Lambda_2$ is called the complementary subvariety
of~$A_1$ (with respect to $H$). In the case that $H$ is principal, we have the following relation between the types of the induced polarizations on complementary subvarieties.
\begin{prop}\label{complementarytoritypes}
 Let $(A,H)$ be a principally polarized Abelian variety and let $A_1,A_2 \subseteq A$ be complementary subvarieties with $\dim(A_1) \leq \dim(A_2)$.
 If $(A_1,H_1)$ is of type $(d_1,...,d_h)$, then $(A_2,H_2)$ is of type $(1,...,1,d_1,...,d_h)$.
\end{prop}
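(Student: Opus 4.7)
The plan is to compare $\Lambda_1$ and $\Lambda_2$ through their $E$-duals, and then read off the types via the invariant factor theorem. Write $E = E_H|_{\Lambda \times \Lambda}$ and, for a full-rank sublattice $M$ of $V_i$, write $M^\vee$ for its $E$-dual inside $V_i$.

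First I would record two inclusions. Since $V_1$ and $V_2$ are $H$-orthogonal they are $E$-orthogonal, so $\Lambda_1 \oplus \Lambda_2 \hookrightarrow \Lambda$. Conversely, for any $x = x_1 + x_2 \in \Lambda$ and any $\lambda \in \Lambda_1$, the integrality of $E(x,\lambda) = E(x_1,\lambda)$ forces $x_1 \in \Lambda_1^\vee$; symmetrically $x_2 \in \Lambda_2^\vee$, so
\[ \Lambda_1 \oplus \Lambda_2 \;\subseteq\; \Lambda \;\subseteq\; \Lambda_1^\vee \oplus \Lambda_2^\vee. \]

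Next I would examine the projections $\pi_i : \Lambda \to \Lambda_i^\vee / \Lambda_i$. A short computation using $V_i \cap \Lambda = \Lambda_i$ gives $\ker(\pi_i) = \Lambda_1 \oplus \Lambda_2$, so both $\pi_i$ factor through injections $\overline{\pi_i}:\Lambda/(\Lambda_1 \oplus \Lambda_2) \hookrightarrow \Lambda_i^\vee/\Lambda_i$. To promote the $\overline{\pi_i}$ to isomorphisms, I would use the principality $\Lambda = \Lambda^\vee$. Dualizing $\Lambda_1 \oplus \Lambda_2 \subseteq \Lambda = \Lambda^\vee \subseteq \Lambda_1^\vee \oplus \Lambda_2^\vee$ inverts indices and yields
\[ [\Lambda_1^\vee : \Lambda_1]\,[\Lambda_2^\vee : \Lambda_2] \;=\; [\Lambda_1^\vee \oplus \Lambda_2^\vee : \Lambda_1 \oplus \Lambda_2] \;=\; [\Lambda : \Lambda_1 \oplus \Lambda_2]^2. \]
Combined with $[\Lambda : \Lambda_1 \oplus \Lambda_2] \leq [\Lambda_i^\vee : \Lambda_i]$ coming from the two injections, this identity forces equality throughout, so both $\overline{\pi_i}$ are isomorphisms and hence $\Lambda_1^\vee/\Lambda_1 \cong \Lambda_2^\vee/\Lambda_2$ as finite abelian groups.

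Finally I would read off the types. Starting from a symplectic basis for $E_i$ of type $(d_1,\dots,d_h)$ one checks directly that $\Lambda_i^\vee/\Lambda_i \cong \bigoplus_{j} (\ZZ/d_j\ZZ)^2$. The isomorphism just obtained therefore identifies $\bigoplus_{j} (\ZZ/d_j\ZZ)^2$ with $\bigoplus_{j} (\ZZ/e_j\ZZ)^2$, where $(e_1,\dots,e_k)$ denotes the type of $E_2$. Uniqueness of invariant factors of a finite abelian group identifies the nontrivial $d_j$ and $e_j$ as sorted lists; together with $h \leq k$ and the divisibility $e_1 \mid \cdots \mid e_k$, this forces $(e_1,\dots,e_k) = (1,\dots,1,d_1,\dots,d_h)$ with $k-h$ leading ones. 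The only genuinely non-formal step is the index identity obtained by dualizing, which is what converts the injections $\overline{\pi_i}$ into isomorphisms; everything else is bookkeeping with symplectic bases and the elementary divisor theorem.
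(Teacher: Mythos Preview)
Your proof is correct. Both you and the paper reduce the problem to showing that the finite groups $\Lambda_1^\vee/\Lambda_1$ and $\Lambda_2^\vee/\Lambda_2$ are isomorphic (the paper calls these $K_1$ and $K_2$), and then read off the types by invariant factors. The difference lies in how this isomorphism is established. The paper, in its proof of Lemma~\ref{complementarytypes}, works on the torus side: it uses the isomorphism $\Phi:T\to\widehat{T}$ coming from principality together with exactness of the dual-torus functor to identify each $K_i$ with the intersection $T_1\cap T_2$ inside $T=V/\Lambda$. You instead stay entirely on the lattice side, using the sandwich $\Lambda_1\oplus\Lambda_2\subseteq\Lambda=\Lambda^\vee\subseteq\Lambda_1^\vee\oplus\Lambda_2^\vee$ and an index count to show that both projections $\Lambda/(\Lambda_1\oplus\Lambda_2)\hookrightarrow\Lambda_i^\vee/\Lambda_i$ are bijections. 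The two intermediary objects are in fact the same group---one checks $T_1\cap T_2\cong\Lambda/(\Lambda_1\oplus\Lambda_2)$ via $\lambda\mapsto[\pi_1(\lambda)]$---so the arguments are dual viewpoints on one computation. Your route is more elementary: it avoids rational tori and the exactness lemma entirely, trading them for the single standard fact that dualizing a lattice inclusion inverts the index. The paper's route is more conceptual and fits naturally into its later use of dual tori, but yours is shorter and self-contained.
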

This is Corollary~12.1.5. in \cite{LangeBirke}. In the next subsection, we will give a similar proof for the slightly more general situation, where we do not have a complex structure on our objects.
\\[1em]
{\bf The moduli space of polarized Abelian varieties.}
Given any such type $D=\diag(d_1,...,d_g)$, polarized Abelian varieties of type $D$ can be constructed in the following way. We denote by \[ \HH_g := \{ Z \in \bM_g(\CC) : Z^t=Z,~ {\rm Im}(Z)>0 \} \]
the \emph{upper Siegel half space}, i.e. the space of symmetric complex $g\times g$-matrices with positive definite imaginary part. For each $Z \in \HH_g$ we define $\Lambda_Z$ to be the lattice in
$\CC^g$ generated by the columns of~$\Pi_Z:=(Z,D) \in \bM_{g \times 2g}(\CC)$. Then the bilinearform $H_Z$ on $\CC^g$ defined by the matrix ${\rm Im}(Z)^{-1}$ with respect to the standard basis is a
polarization on the complex torus $A_Z := \CC^g/\Lambda_Z$ of type~$D$. A symplectic basis $B_Z$ is given by the columns of~$\Pi_Z$.

An isomorphism between two triples $(A_1,H_1,B_1)$ and $(A_2,H_2,B_2)$ where $(A_i,H_i)$ is a polarized Abelian variety and $B_i$ a symplectic basis for $H_i$, is an isomorphism between $(A_1,H_1)$
and $(A_2,H_2)$ such that the rational representation maps the ordered tuple $B_1$ onto $B_2$. It is a consequence of the Riemann Relations (see \cite{LangeBirke}, Section~4.2)
that any such triple is isomorphic to $(A_Z,H_Z,B_Z)$ for some $Z \in \HH_g$. Thus the upper Siegel half space $\HH_g$, a complex manifold of dimension $\tfrac{1}{2}g(g+1)$,
is a moduli space for polarized Abelian varieties of type $D$ together with the choice of a symplectic basis.
To get rid of the choice of a basis, let $\Lambda_D$ be the $\ZZ$-module generated by the columns of the matrix $\left( \begin{smallmatrix} I_g & 0 \\ 0 & D \end{smallmatrix} \right)$ and let
$G_D$ be the subgroup of the symplectic group $\Sp_{2g}(\QQ)$ consisting of those matrices $M$ with $M^t\Lambda_D \subset \Lambda_D$.
Then $G_D$ acts transitively, properly and discontinously on $\HH_g$ via
\[ \left( \begin{smallmatrix} \alpha & \beta \\ \gamma & \delta \end{smallmatrix} \right).Z := (\alpha Z+\beta)(\gamma Z+\delta)^{-1}. \]
Given any two $Z_1,Z_2 \in \HH_g$, the polarized Abelian varieties $(A_{Z_1},H_{Z_1})$ and $(A_{Z_2},H_{Z_2})$ are isomorphic if and only if $Z_1$ and $Z_2$ are in the same $G_D$-orbit.
\begin{theo}[\cite{LangeBirke}, Theorem~8.2.6.]
 The normal complex analytic space $\mathcal{A}_D := \HH_g/G_D$ is a moduli space for polarized Abelian varieties of type $D$.
\end{theo}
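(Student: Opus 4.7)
The plan is to follow the classical moduli construction outlined implicitly in the preceding discussion, invoking the Riemann relations as the substantive input. I would proceed in three steps. First, spell out the bijection — essentially a reformulation of the Riemann relations stated earlier — between points $Z \in \HH_g$ and isomorphism classes of triples $(A,H,B)$, where $(A,H)$ is a polarized Abelian variety of type $D$ and $B$ is a symplectic basis for $H$. Surjectivity is exactly the Riemann-relations statement referenced above (each such triple is isomorphic to $(A_Z,H_Z,B_Z)$ for some $Z$), while injectivity follows from the fact that the period matrix is uniquely determined once $B$ has been fixed.

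Next, I would analyze the forgetful map $(A,H,B) \mapsto (A,H)$. If $f:(A_{Z_1},H_{Z_1}) \to (A_{Z_2},H_{Z_2})$ is an isomorphism of polarized Abelian varieties, then using the bases $B_{Z_i}$ to identify both lattices with $\Lambda_D$, its rational representation becomes a matrix $R \in \bM_{2g}(\ZZ)$ preserving $\Lambda_D$ and symplectic with respect to the standard form of type~$D$, i.e.\ $R \in G_D$. Combining the period relation $\rho_a(f)\,\Pi_{Z_1} = \Pi_{Z_2}\,R$ with the normal form $\Pi_Z=(Z,D)$ and the requirement that $\rho_a(f)$ be $\CC$-linear, a short block-matrix computation forces $Z_2 = R.Z_1$ under the $G_D$-action specified in the theorem. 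Conversely, any $R \in G_D$ produces such an isomorphism. Hence $G_D$-orbits in $\HH_g$ correspond bijectively to isomorphism classes of polarized Abelian varieties of type $D$.

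Finally, I would verify that $G_D$ acts holomorphically and properly discontinuously on $\HH_g$ with finite stabilizers (by comparison with the well-known action of $\Sp_{2g}(\ZZ)$ in the principally polarized case, since $G_D$ is commensurable with a conjugate of $\Sp_{2g}(\ZZ)$), so that the quotient inherits the structure of a Hausdorff normal complex analytic space. The main obstacle, and the only step that goes beyond bookkeeping, is upgrading the set-level bijection to the moduli property in families: given an analytic family of polarized Abelian varieties of type $D$ over a base $S$, one has to produce a canonical holomorphic classifying map $S \to \mathcal{A}_D$. I would handle this by choosing symplectic markings on simply connected opens of $S$, obtaining local holomorphic maps into $\HH_g$, and then checking that a change of marking is effected by an element of $G_D$ so that the composition with the quotient map glues to a globally defined holomorphic map to $\mathcal{A}_D$. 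Constructing the universal family as $G_D \ltimes \ZZ^{2g} \backslash (\HH_g \times \CC^g)$ and verifying its universal property then completes the argument, as carried out in detail in \cite{LangeBirke}.
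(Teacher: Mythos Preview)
The paper does not supply its own proof of this statement; it simply cites it as Theorem~8.2.6 in \cite{LangeBirke}. Your sketch is the standard argument and matches the set-up the paper provides immediately before the theorem (the construction of $(A_Z,H_Z,B_Z)$, the invocation of the Riemann Relations for surjectivity, and the description of the $G_D$-action), so there is nothing to compare against beyond the cited reference.
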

\noindent
{\bf Endomorphism structures and real multiplication.}
In this paper, we are interested in Abelian varieties with a specific endomorphism structure. In general, the endomorphism $\QQ$-algebra $\End_\QQ(A):=\End(A) \otimes_\ZZ \QQ$ of an Abelian variety~$A$
is determined by the endomorphism structure of its simple subvarieties. The reason is that for any two complementary subvarieties $A_1,A_2$, there is an isogeny between $A$ and $A_1 \times A_2$.
Then, by induction, we get the following decomposition.
\begin{theo}[Poincar\'e's Complete Reducibility Theorem]
 Every complex Abelian variety $A$ is isogenous to a product \[ A_1^{n_1} \times ... \times A_r^{n_r}, \] with pairwise not isogenous simple Abelian varieties $A_i$.
 Moreover, the natural numbers $n_i$ are uniquely determined by $A$ and the Abelian varieties $A_i$ are uniquely determined by $A$ up to isogeny.
\end{theo}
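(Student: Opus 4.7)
The plan is to prove existence by induction on $g = \dim(A)$ using the complementary subvariety construction, and to deduce uniqueness via a Hom-space dimension argument.

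For existence, proceed by induction on $g$. If $A$ is simple (in particular if $g \leq 1$), take $r=1$, $n_1=1$ and $A_1=A$. Otherwise, choose a proper non-zero Abelian subvariety $A' \subset A$. Since $A$ admits a polarization $H$ by definition of Abelian variety, the orthogonal complement of the subspace defining $A'$ with respect to $H$ descends to a complementary subvariety $A'' \subset A$ with $A' \cap A''$ finite and $A' + A'' = A$, so that the addition map $A' \times A'' \to A$ is an isogeny, exactly as noted just before the theorem. Since $\dim(A'), \dim(A'') < g$, the inductive hypothesis decomposes each of $A'$ and $A''$ into simple factors up to isogeny; concatenating these factors and grouping them by isogeny class yields the desired decomposition of $A$.

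For uniqueness, suppose $A$ is isogenous both to $\prod_{i=1}^{r} A_i^{n_i}$ and to $\prod_{j=1}^{s} B_j^{m_j}$, with the $A_i$ (respectively the $B_j$) pairwise non-isogenous simple Abelian varieties. The key input is that $\Hom(S,T) = 0$ whenever $S,T$ are simple and non-isogenous, since the kernel of any non-zero homomorphism would have to be a proper subvariety of $S$, contradicting simplicity of $S$. Applying $\Hom_\QQ(A_i,-)$ to both isogeny decompositions and using that isogenies induce isomorphisms on Hom after tensoring with $\QQ$ yields
\[ \bigoplus_{k} n_k \Hom_\QQ(A_i, A_k) \;\cong\; \Hom_\QQ(A_i, A) \;\cong\; \bigoplus_{j} m_j \Hom_\QQ(A_i, B_j). \]
The left side is non-zero as it contains $n_i \End_\QQ(A_i)$, so $\Hom_\QQ(A_i, B_{j(i)}) \neq 0$ for some (unique) $j(i)$, forcing $A_i \sim B_{j(i)}$. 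By the symmetric argument, the map $i \mapsto j(i)$ is a bijection, so $r = s$ and after reindexing $A_i \sim B_i$. Comparing $\QQ$-dimensions of the two sides of the displayed isomorphism then gives $n_i \dim_\QQ \End_\QQ(A_i) = m_i \dim_\QQ \End_\QQ(B_i)$, and since $A_i \sim B_i$ implies $\End_\QQ(A_i) \cong \End_\QQ(B_i)$, we conclude $n_i = m_i$.

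The main obstacle is the existence of the complementary subvariety $A''$, which is the true content of Poincar\'e's reducibility and depends essentially on the polarization $H$ together with the positivity of its associated Hermitian form; this has already been arranged in the preceding material on complementary subvarieties. Granted that, the induction step for existence is immediate, and the Schur-type Hom-space argument for uniqueness proceeds with no further obstructions.
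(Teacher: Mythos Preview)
The paper does not actually prove this theorem; it merely states it, preceded by a one-sentence sketch of existence (``for any two complementary subvarieties $A_1,A_2$, there is an isogeny between $A$ and $A_1 \times A_2$. Then, by induction, we get the following decomposition''). Your existence argument is precisely this sketch made explicit and is correct.

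Your uniqueness argument via $\Hom_\QQ$-spaces is the standard one and is sound, filling in what the paper omits entirely. One small imprecision: your justification that $\Hom(S,T)=0$ for non-isogenous simple $S,T$ invokes only the simplicity of $S$, but you need simplicity of $T$ as well. A non-zero $f\colon S\to T$ has finite kernel because the identity component $(\ker f)_0$ is an Abelian subvariety of $S$, hence zero by simplicity of $S$; then the image $f(S)$ is a positive-dimensional Abelian subvariety of $T$, hence all of $T$ by simplicity of $T$, so $f$ is an isogeny. With that adjustment the Schur-type argument goes through, and the dimension comparison $n_i\dim_\QQ\End_\QQ(A_i)=m_i\dim_\QQ\End_\QQ(B_i)$ together with $\End_\QQ(A_i)\cong\End_\QQ(B_i)$ (induced by any isogeny $A_i\to B_i$) gives $n_i=m_i$ as you claim.
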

Given such an isogeny $A \to A_1^{n_1} \times ... \times A_r^{n_r}$ as in the theorem, there is an isomorphism of $\QQ$-algebras \[ \End_\QQ(X) \cong \bM_{n_1}(F_1) \oplus ... \oplus \bM_{n_r}(F_r), \]
where each $F_i=\End_\QQ(A_i)$ is a skew field of finite dimension over $\QQ$. Any choice of a polarization $H$ on an Abelian variety $A$ determines an anti-involution on $\End_\QQ(A)$,
the \emph{Rosati involution}. On the level of analytic representations, the Rosati involution just maps a $\CC$-linear map to its adjoint with respect to~$H$.
If $A$ is simple and the Rosati involution is the identity on the centre of the skew field $F=\End_\QQ(A)$, then there are three possibilities for~$F$.
It can be a totally real number field, a totally indefinite quaternion algebra, or a totally definite quaternion algebra.
In the first case, the endomorphism ring $\End(A)$ coincides with the subring \[ \End^+(A) := \{ f \in \End(A) :~ H(\rho_a(f)(\cdot),\cdot) = H(\cdot,\rho_a(f)(\cdot)) \} \]
of endomorphisms self-adjoint with respect to the polarization, and this leads to the definition of real multiplication.
\begin{defi}
 Let $F$ be a totally real number field of degree $g$ and let $(A,H)$ be a polarized Abelian variety of dimension $g$. \emph{Real multiplication} on $(A,H)$ by~$F$ is a $\QQ$-algebra monomorphism
 \[ \rho :~ F \hookrightarrow \End^+(A) \otimes \QQ. \]
 If such a monomorphism exists, we say that $A$ \emph{admits real multiplication} by~$F$.
\end{defi}
This is the definition from \cite{McMDynamics}, without the condition of the polarization being principal.
A \emph{lattice} in a finite dimensional $\QQ$-vector space is a free Abelian subgroup of rank equal to the dimension of the vector space.
An \emph{order} in a number field is a subring containing $1$ which is also a lattice.
If $\rho$ is real multiplication on $(A,H)$ by a totally real number field~$F$, then the preimage $\rho^{-1}(\End(A))$ is an order in~$F$.
\begin{defi}\label{RealMultO}
 Let $\OO$ be an order in a totally real number field $F$ of degree $g$ and let $(A,H)$ be a polarized Abelian variety of dimension $g$.
 \emph{Real multiplication} on $(A,H)$ by~$\OO$ is a ring-monomorphism \[ \rho :~ \OO \hookrightarrow \End^+(A) \]
 that is \emph{proper}, i.e. it does not extend to a larger order in $F$.
 If such a monomorphism exists, we say that $A$ \emph{admits real multiplication} by~$\OO$.
\end{defi}
It is well known that the moduli space of principally polarized Abelian varieties admitting real multiplication by $\OO$ together with a choice of such real multiplication is the disjoint union
of Hilbert modular varieties $\Gamma \backslash \HH^g$, see \cite{MartinMatt}.
\\[1em]
{\bf Jacobians and the Torelli map.}
For each $g \in \NN$, we denote by $\Ag$ the moduli space of principally polarized Abelian varieties of dimension $g$ and by $\MM_g$ the moduli space of compact Riemann surfaces of genus $g$.
Note that, throughout this paper, Riemann surfaces are always meant to be connected.
There is a natural embedding $\MM_g \hookrightarrow \Ag$ defined as follows. Given any compact Riemann surface $X$ of genus $g$, the $\CC$-vector space $\Omega(X)$ of holomorphic one-forms is of
dimension~$g$. The first singular homology group $H_1(X;\ZZ)$ can be embedded into the dual vector space $\Omega(X)^* := \Hom_\CC(\Omega(X);\CC)$ via
\[ H_1(X;\ZZ) \hookrightarrow \Omega(X)^*,\quad \gamma \mapsto \left( \omega \mapsto \int_\gamma \omega \right). \]
We identify $H_1(X;\ZZ)$ with its image under this map, a lattice in $\Omega(X)^*$.
The complex torus \[ \Jac(X) := \Omega(X)^* / H_1(X;\ZZ) \] is an Abelian variety and is called the \emph{Jacobian of $X$}. A polarization is given by the intersection form.
To be more precise, recall that for each homology class $\gamma \in H_1(X;\ZZ)$, there exists a unique class $\gamma^* \in H_{\rm dR}^1(X)$ in the first de Rham cohomology group satisfying
$\smallint_\gamma \omega = \smallint_X \gamma^* \wedge \omega$ for all $\omega \in H_{\rm dR}^1(X)$, the \emph{Poincar\'e dual} of~$\gamma$.
The \emph{intersection pairing} on $X$ is defined by
\[ \langle \cdot,\cdot \rangle:~ H_1(X;\ZZ) \times H_1(X;\ZZ) \to \ZZ,\quad \langle \gamma_1,\gamma_2 \rangle := \int_X \gamma_1^* \wedge \gamma_2^*, \]
an unimodular symplectic form on $H_1(X;\ZZ)$. Moreover, its $\RR$-linear extension to $\Omega(X)^*$ satisfies $\langle iv,iw \rangle = \langle v,w \rangle$ for all $v,w \in \Omega(X)^*$.
Thus it induces a principal polarization $\Theta_X := H_{\langle \cdot,\cdot \rangle}$ on $\Jac(X)$.
We have the following well known result.
\begin{theo}[Torelli]\label{Torelli}
 The \emph{Torelli map} \[ t:~ \MM_g \to \Ag,\quad X \mapsto (\Jac(X),\Theta_X) \] is injective.
\end{theo}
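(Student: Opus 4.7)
The strategy is to reconstruct the Riemann surface $X$ intrinsically from the polarized pair $(\Jac(X),\Theta_X)$, so that any isomorphism of polarized Jacobians forces an isomorphism of the underlying curves. The key idea is to realize $X$ inside its Jacobian via the Abel--Jacobi embedding and then recover this embedding from the geometry of the theta divisor, which the principal polarization determines up to translation.

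First I would fix a base point $p_0 \in X$ and consider the Abel--Jacobi map $u:~ X \hookrightarrow \Jac(X)$, $p \mapsto [\omega \mapsto \int_{p_0}^p \omega]$, which is an embedding for $g \geq 2$. More generally, the Abel--Jacobi map on the $k$-fold symmetric product $X^{(k)}$ produces subvarieties $W_k \subset \Jac(X)$, with $W_1 = u(X)$. By Riemann's theorem there exists $\kappa \in \Jac(X)$ such that the translate $W_{g-1} + \kappa$ equals $\Theta_X$, so up to translation the divisor $W_{g-1}$ is recovered from the polarized pair alone.

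The remaining, and central, task is to reconstruct $X$ from $W_{g-1}$, and here I would follow Andreotti. On the smooth locus $\Theta^{\rm sm}$ the Gauss map $\mathcal{G}:~ \Theta^{\rm sm} \to \mathbb{P}(\Omega(X))$ assigns to each smooth point the projective hyperplane tangent to $\Theta$ there, after identifying the cotangent space at the origin of $\Jac(X)$ with $\Omega(X)$. The classical argument of Andreotti shows that, via projective duality, the branch locus of $\mathcal{G}$ is exactly the image of the canonical map $\varphi_K:~ X \to \mathbb{P}(\Omega(X))$. Since a non-hyperelliptic curve is determined by its canonical image, this reconstructs $X$ in that case.

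The main obstacle is the hyperelliptic case, where the canonical map is two-to-one onto a rational normal curve and Andreotti's argument cannot directly produce $X$. Here one exploits instead the singular locus of $\Theta$, which for hyperelliptic curves is unusually large and admits an explicit description via translates of $W_{g-3}$ compatible with the hyperelliptic involution. Extracting from this stratification the hyperelliptic involution on $\Jac(X)$ and the branch points on the associated $\mathbb{P}^1$ recovers $X$ as a branched double cover. Combining the two cases, any isomorphism of polarized Jacobians descends to an isomorphism of curves, giving injectivity of $t$.
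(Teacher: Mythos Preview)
The paper does not give its own proof of this theorem; it is stated as a ``well known result'' and left without proof. Your proposal sketches Andreotti's classical argument, which is a correct and standard route to Torelli's theorem: recover $W_{g-1}$ from the polarization via Riemann's theorem, use the Gauss map on the theta divisor to obtain the canonical image of $X$ by duality of the branch locus, and handle the hyperelliptic case separately. This is perfectly appropriate for a result the paper merely cites.

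One minor point: your treatment of the hyperelliptic case is somewhat vague. The phrase ``extracting from this stratification the hyperelliptic involution'' glosses over real work; in practice one either argues via the singular locus of $\Theta$ (Riemann's singularity theorem gives $\dim\Theta_{\mathrm{sing}} = g-3$ precisely in the hyperelliptic case, and the structure of $\Theta_{\mathrm{sing}}$ then encodes the Weierstrass points), or one observes that Andreotti's Gauss-map argument still produces the canonical image, which is a rational normal curve, and then shows that the branch points of $X \to \mathbb{P}^1$ can be read off from the ramification of the Gauss map. Either way this step needs more than a sentence. You should also note the trivial case $g=1$ (where the map is a bijection for elementary reasons) and $g=0$ separately, since the Abel--Jacobi embedding requires $g \geq 1$ and is only an embedding for $g \geq 2$.
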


\subsection{Symplectic forms}

Here, we prove some technical lemmata about symplectic forms on lattices needed throughout this paper. Except for the last one, they are propably scattered throughout the literature.
For the readers convenience, we recall them at this point.

\bigskip
The first one concerns the type of the form induced by a sublattice.
\begin{lem}\label{Gradindex}
 Let $\Lambda$ be a free $\ZZ$-module of finite rank and let $E$ be a symplectic form on $\Lambda$.
 Then, for every submodule $\Lambda' \leq \Lambda$ of maximal rank and the induced form $E'$ on $\Lambda'$ we have \[ \deg E' = |\Lambda / \Lambda'| \cdot \deg E. \]
\end{lem}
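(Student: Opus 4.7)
The plan is to exploit the relation between $\deg E$ and the determinant of the Gram matrix of $E$ in any $\ZZ$-basis of $\Lambda$. Choose a symplectic basis for $E$ in $\Lambda$; then the Gram matrix of $E$ is block-diagonal with $2\times 2$ blocks $\bigl(\begin{smallmatrix} 0 & d_i \\ -d_i & 0 \end{smallmatrix}\bigr)$, so its determinant equals $\prod_{i=1}^g d_i^2 = (\deg E)^2$. Since the determinant transforms by $(\det P)^2$ under a change of $\ZZ$-basis, we have $\det M = (\deg E)^2$ in \emph{any} $\ZZ$-basis of $\Lambda$, where $M$ denotes the Gram matrix of $E$ in that basis. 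The same argument applied to $(\Lambda',E')$ shows $\det M' = (\deg E')^2$ for the Gram matrix $M'$ of $E'$ in any $\ZZ$-basis of $\Lambda'$.

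Next I would compare the two Gram matrices. Fix a $\ZZ$-basis $b$ of $\Lambda$ and any $\ZZ$-basis $b'$ of $\Lambda'$, and let $P \in \bM_{2g,2g}(\ZZ)$ be the matrix expressing $b'$ in terms of $b$. Because $\Lambda'$ has maximal rank in $\Lambda$, the structure theorem for finitely generated Abelian groups gives $|\det P| = |\Lambda/\Lambda'|$. Since $E'$ is the restriction of $E$, the Gram matrices satisfy $M' = P^t M P$, hence
\[ (\deg E')^2 = \det M' = (\det P)^2 \cdot \det M = |\Lambda/\Lambda'|^2 \cdot (\deg E)^2. \]
Taking positive square roots (both degrees are positive integers) yields the claim $\deg E' = |\Lambda/\Lambda'| \cdot \deg E$.

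There is no real obstacle here; the only point that requires a small check is the initial identity $\det M = (\deg E)^2$, which is a direct computation in the symplectic normal form. The rest is the standard transformation rule for bilinear forms under a change of basis together with the determinant characterization of the index of a full-rank sublattice.
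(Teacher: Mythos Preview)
Your proof is correct and follows essentially the same approach as the paper: both compare the Gram matrices of $E$ and $E'$ via the change-of-basis matrix $P$ expressing a basis of $\Lambda'$ in terms of a basis of $\Lambda$, use $|\det P| = |\Lambda/\Lambda'|$, and extract $\deg E$ as the square root of the absolute value of the Gram determinant. The only cosmetic difference is that you first justify $\det M = (\deg E)^2$ via the symplectic normal form, whereas the paper simply writes $\deg E = |\det Z|^{1/2}$ without further comment.
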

\begin{proof}
 Let $(\lambda_1,...,\lambda_{2g})$ and $(\lambda_1',...,\lambda_{2g}')$ be bases of the free modules $\Lambda$ and $\Lambda'$.
 Then, for some $A \in \ZZ^{2g \times 2g}$ with $\det A \not= 0$ we have \[ (\lambda_1',...,\lambda_{2g}')^t = A (\lambda_1,...,\lambda_{2g})^t, \] in particular $|\Lambda / \Lambda'| = |\det A|$.
 Let $Z, Z' \in \ZZ^{2g \times 2g}$ be the matrices representing the symplectic forms $E, E'$ with respect to the chosen bases. Then $Z' = AZA^t$ and therefore
 \[ \deg E' = |\det Z'|^{\frac{1}{2}} = |\det A| \cdot |\det Z|^{\frac{1}{2}} = |\Lambda / \Lambda'| \deg E. \]
\end{proof}

\noindent
{\bf Rational tori.}
Analogous to the concept of complex tori, a \emph{rational torus} is just a quotient $V/\Lambda$, where $V$ is a finite dimensional vector space over $\QQ$ and $\Lambda$ a lattice in $V$, i.e.
a free Abelian subgroup with $\rk_\ZZ(\Lambda) = \dim_\QQ(V)$.
A \emph{homomorphism of complex tori} $T_1=V_1/\Lambda_1$, $T_2=V_2/\Lambda_2$ is a group homomorphism $f:~ T_1 \to T_2$ induced by a $\QQ$-linear map $f_a:~ V_1 \to V_2$ that maps $\Lambda_1$ into
$\Lambda_2$. Analogous to the complex tori, we call $f_a$ the \emph{analytic representation} of $f$ and $f_r:= f_a|_{\Lambda_1}:~ \Lambda_1 \to \Lambda_2$ the \emph{rational representation} of $f$.
Given any such rational torus $T=V/\Lambda$, its \emph{dual torus} is defined by $\widehat{T} = \widehat{V} / \widehat{\Lambda}$, with $\widehat{V} := \Hom_\QQ(V,\QQ)$ and
$\widehat{\Lambda} := \{ \alpha \in \widehat{V}: \alpha(\Lambda) \subseteq \ZZ \}$.
Any homomorphism $f:~ T_1 \to T_2$ induces a well-defined homomorphism $\widehat{f}:~ \widehat{T}_2 \to \widehat{T}_1$ on the dual tori by
$[\alpha]_{\widehat{\Lambda}_2} \mapsto [\alpha \circ f_a]_{\widehat{\Lambda}_1}$.
Thus, we get a contravariant functor from the category of rational tori to itself, and this functor turns out to be exact.
\begin{lem}
 Let \[ 0 \to T_1 \to T_2 \to T_3 \to 0  \] be an exact sequence of rational tori. Then, the dual sequence \[ 0 \to \widehat{T}_3 \to \widehat{T}_2 \to \widehat{T}_1 \to 0  \]
 is also exact.
\end{lem}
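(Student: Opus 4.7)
The plan is to split each rational torus $T_i = V_i/\Lambda_i$ into its underlying $\QQ$-vector space and lattice, dualize each separately using standard exactness properties, and reassemble via the nine ($3\times 3$) lemma.

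First, I would show that exactness of $0 \to T_1 \to T_2 \to T_3 \to 0$ at the torus level induces short exact sequences
\[ 0 \to V_1 \to V_2 \to V_3 \to 0 \quad\text{and}\quad 0 \to \Lambda_1 \to \Lambda_2 \to \Lambda_3 \to 0 \]
on analytic and rational representations. Writing $f_a$ and $g_a$ for the two analytic representations, injectivity of $T_1 \to T_2$ gives $f_a$ injective with $\Lambda_1 = f_a^{-1}(\Lambda_2)$, so I may identify $V_1$ with a subspace of $V_2$ and $\Lambda_1$ with $V_1 \cap \Lambda_2$. Surjectivity of $T_2 \to T_3$ translates to $g_a(V_2) + \Lambda_3 = V_3$, while exactness at $T_2$ translates to $V_1 + \Lambda_2 = g_a^{-1}(\Lambda_3)$. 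The basic trick is then to use repeatedly that a $\QQ$-vector space which is finitely generated as an abelian group must vanish. Applied to $V_3/g_a(V_2)$ this yields $g_a$ surjective; applied to $\ker(g_a)/V_1$, where finite generation comes from $\ker(g_a) \subseteq V_1 + \Lambda_2$, it yields $V_1 = \ker(g_a)$ (noting first that $g_a(V_1)$ is a $\QQ$-subspace contained in $\Lambda_3$, hence zero). The same relation $V_1 + \Lambda_2 = g_a^{-1}(\Lambda_3)$ implies that $\Lambda_2 \to \Lambda_3$ is surjective with kernel $V_1 \cap \Lambda_2 = \Lambda_1$.

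Next, I would dualize. Applying $\Hom_\QQ(-,\QQ)$ to the vector-space sequence preserves exactness because every $\QQ$-vector space is injective, and applying $\Hom_\ZZ(-,\ZZ)$ to the lattice sequence preserves exactness because $\Lambda_3$ is free of finite rank and the sequence splits. Under the identifications $\widehat{V}_i = \Hom_\QQ(V_i,\QQ)$ and $\widehat{\Lambda}_i = \Hom_\ZZ(\Lambda_i,\ZZ) \hookrightarrow \widehat{V}_i$, these assemble into a commutative diagram of short exact sequences with the inclusions $\widehat{\Lambda}_i \hookrightarrow \widehat{V}_i$ as vertical maps. The nine lemma then yields exactness of the sequence of cokernels $0 \to \widehat{T}_3 \to \widehat{T}_2 \to \widehat{T}_1 \to 0$.

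The main obstacle is the first step: abstractly, a surjection of rational tori need not induce a surjection on underlying lattices, and the equality $V_1 = \ker(g_a)$ is not automatic either. Both rest crucially on the hypothesis that the kernel $T_1$ is itself a rational torus, which forces the relation $V_1 + \Lambda_2 = g_a^{-1}(\Lambda_3)$ and hence the finite-generation arguments above.
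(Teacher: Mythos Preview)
Your proposal is correct and follows essentially the same route as the paper: extract short exact sequences of lattices and of vector spaces from the torus sequence, dualize each (using that the lattice sequence splits), and deduce exactness on cokernels via the nine/snake lemma. The only difference is one of detail: the paper dispatches the first step in a single sentence (``by considering the $\QQ$-span of each point in $\Lambda_i$''), whereas you spell out carefully why $g_a$ is surjective, why $V_1 = \ker(g_a)$, and why $\Lambda_2 \to \Lambda_3$ is onto, each time via the ``a finitely generated $\QQ$-vector space vanishes'' trick.
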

\begin{proof}
 The rational representation of the sequence \[ 0 \to T_1 \to T_2 \to T_3 \to 0  \] yields a sequence \[ 0 \to \Lambda_1 \to \Lambda_2 \to \Lambda_3 \to 0. \]
 By considering the $\QQ$-span of each point in $\Lambda_i$, one can see that this sequence of the lattices is exact.
 This also holds for the dual sequence \[ 0 \to \Hom(\Lambda_3;\ZZ) \to \Hom(\Lambda_2;\ZZ) \to \Hom(\Lambda_1;\ZZ) \to 0, \] where exactness at the last position follows from the fact that the sequence
 of the free Abelian $\ZZ$-modules $\Lambda_i$ splits.
 Identifying each $\Hom(\Lambda_i;\ZZ)$ with $\widehat{\Lambda}_i$, we get a commutative diagramm 
 \[ \begin{CD}
  0 @>>> \widehat{\Lambda}_3 @>>> \widehat{\Lambda}_2 @>>> \widehat{\Lambda}_1 @>>> 0 \\
  @. @VVV @VVV @VVV @. \\
  0 @>>> \widehat{V}_3 @>>> \widehat{V}_2 @>>> \widehat{V}_1 @>>> 0,
 \end{CD} \]
 where the vertical arrows are the natural inclusions. Since these inclusions are injective, the induced sequence of the cokernels
 \[ 0 \to \widehat{T}_1 \to \widehat{T}_2 \to \widehat{T}_3 \to 0 \] is exact.
\end{proof}
Now we can proof the more general version of Proposition~\ref{complementarytoritypes} which does not make use of a complex structure.
\begin{lem}\label{complementarytypes}
 Let $V$ be a $2g$-dimensional vector space over $\QQ$ with an alternating non-degenerated $\QQ$-bilinearform $E$. Furthermore, let $V_1$ be a subspace of $V$ of dimension at most $g$, such that $E$
 is non-degenerated on $V_1$. Furthermore, let $\Lambda$ be a lattice in $V$, such that $(\Lambda,E)$ is of type $(1,..,1)$.
 Then, denoting the orthogonal complement of $V_1$ in $V$ by $V_2$ and writing $\Lambda_i = V_i \cap \Lambda$ for $i \in \{ 1,2 \}$, we have:
 If $(\Lambda_1,E)$ is of type $(d_1,...,d_h)$, then $(\Lambda_2,E)$ is of type $(1,...,1,d_1,...,d_h)$.
\end{lem}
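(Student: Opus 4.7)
\emph{Plan.} My approach is to compare the kernels of the polarization isogenies $\phi_{E_i}: T_i \to \widehat{T}_i$ of the complementary rational tori $T_i = V_i/\Lambda_i$. Since a symplectic basis of $(\Lambda_i, E|_{\Lambda_i})$ of type $(d_{i,1}, \ldots, d_{i,g_i})$ exhibits an isomorphism $\ker(\phi_{E_i}) \cong \bigoplus_j (\ZZ/d_{i,j}\ZZ)^2$, the isomorphism class of this finite abelian group records exactly the non-trivial entries of the type. Consequently, once I can exhibit an isomorphism $\ker(\phi_{E_1}) \cong \ker(\phi_{E_2})$, uniqueness of invariant factors will yield the statement.

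To this end, I first apply the previous lemma to the exact sequence $0 \to T_1 \to T \to T/T_1 \to 0$ and obtain the dual exact sequence $0 \to \widehat{T/T_1} \to \widehat{T} \to \widehat{T}_1 \to 0$. Unimodularity of $E$ on $\Lambda$ makes $\phi_E: T \to \widehat{T}$ an isomorphism. The key identification is $\phi_E^{-1}(\widehat{T/T_1}) = T_2$. On analytic representations this is the tautology $V_2 = V_1^\perp$, and the unimodularity upgrades it on lattices: the preimage of $\widehat{\Lambda/\Lambda_1}$ inside $V_2$ under $\phi_E^a(v) = E(v,\cdot)$ is $\{v \in V_2 : E(v,\Lambda) \subseteq \ZZ\} = V_2 \cap \Lambda^\vee = V_2 \cap \Lambda = \Lambda_2$. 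Thus I arrive at the short exact sequence $0 \to T_2 \to T \to \widehat{T}_1 \to 0$.

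Now $\phi_{E_1}$ factors as the composition $T_1 \hookrightarrow T \twoheadrightarrow \widehat{T}_1$, simply because $E(v_1,w_1)$ is computed identically in $V$ or in $V_1$. Hence $\ker(\phi_{E_1}) = T_1 \cap T_2$ as subgroups of $T$, and by the symmetric argument also $\ker(\phi_{E_2}) = T_1 \cap T_2$. The resulting isomorphism $\bigoplus_j(\ZZ/d_j\ZZ)^2 \cong \bigoplus_j(\ZZ/e_j\ZZ)^2$, where $(d_1, \ldots, d_h)$ and $(e_1, \ldots, e_{g-h})$ denote the divisibility-ordered types of $E_1$ and $E_2$, forces the non-trivial $e_j$'s to coincide with $d_1, \ldots, d_h$; the hypothesis $\dim V_1 \leq g$ (equivalently $h \leq g-h$) leaves exactly $g-2h$ extra entries, which must therefore be ones, giving the asserted type $(1, \ldots, 1, d_1, \ldots, d_h)$.

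The main obstacle is the lattice-level identification $\phi_E^{-1}(\widehat{T/T_1}) = T_2$ in the second step. Its analytic counterpart is immediate from $V_2 = V_1^\perp$, but the equality for lattices is precisely where the principal polarization hypothesis $\phi_E(\Lambda) = \widehat{\Lambda}$ is essential: without it, the preimage would merely contain $T_2$ as a finite-index subtorus, and the chain of identifications producing the isomorphism of kernels would collapse. Once this identification is secured, the remainder of the argument is formal diagram-chasing combined with the classification of finite abelian groups.
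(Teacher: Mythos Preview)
Your proof is correct and follows essentially the same strategy as the paper: both identify $\ker(\phi_{E_i})$ (the paper's $K_i$) with $T_1 \cap T_2$ via the unimodular isomorphism $\phi_E:T\to\widehat T$ and the dual exact sequence, then conclude by comparing invariant factors. The only difference is in the key step: you verify $\phi_E^{-1}(\widehat{T/T_1})=T_2$ directly by checking that $\phi_E^a$ carries $V_2$ onto $V_1^{\mathrm{ann}}$ and $\Lambda_2$ onto $V_1^{\mathrm{ann}}\cap\widehat\Lambda$ (using $\Lambda^\vee=\Lambda$), whereas the paper proves the equivalent identity $\ker(\widehat{\iota}_1\circ\Phi)=T_2$ by a connectedness argument---your direct computation is a bit cleaner and makes the use of unimodularity more explicit.
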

\begin{proof}
 A priori, $(\Lambda_2,E)$ is of some type $(e_1,...,e_{g-h})$.
 For each $i \in \{ 1,2 \}$, consider the group \[ K_i := \{ [v]_{\Lambda_i} \in V_i / \Lambda_i :~ E(v,\Lambda_i) \subseteq \ZZ \}. \]
 By choosing symplectic bases, we immediately see \[ K_1 \cong \bigoplus_{i=1}^h \ZZ/d_i\ZZ \quad {\rm and} \quad K_2 \cong \bigoplus_{i=1}^{g-h} \ZZ/e_i\ZZ. \]
 Hence, it suffices to show that $K_1$ is isomorphic to $K_2$.
 Let $T := V / \Lambda$ and $T_i := V_i / \Lambda_i$ be the resulting rational tori.
 Since $(\Lambda,E)$ is of type $(1,...,1)$, the map \[ \Phi :~ T \to \widehat{T},\quad [v]_\Lambda \mapsto [E(v,\cdot)]_{\widehat{\Lambda}} \] is an isomorphism.
 Denoting by $\iota_i:~ T_i \hookrightarrow T$ the natural inclusions, we get \[ K_i = \ker(\widehat{\iota}_i \circ \Phi \circ \iota_i) = \iota_i^{-1} ( \ker(\widehat{\iota}_i \circ \Phi) ). \]
 We claim that it suffices to show $\ker(\widehat{\iota}_1 \circ \Phi) = T_2$. Indeed, then $K_1 = \iota_1^{-1}(T_2) = T_1 \cap T_2$ would follow and thus by symmetry $K_1=K_2$.\\
 To prove $\ker(\widehat{\iota}_1 \circ \Phi) = T_2$, we first note that we have
 \[ \ker(\widehat{\iota}_1 \circ \Phi) = \{ [v]_\Lambda \in V / \Lambda :~ E(v,\Lambda_i) \subseteq \ZZ \}. \]
 Hence, we have $T_2 \subseteq \ker(\widehat{\iota}_1 \circ \Phi)$, as $V_2$ is the orthogonal complement of $V_1$. Denoting the connected component containing zero with the subscript~'$0$',
 we get by equality of the dimensions of the rational tori \[ T_2 =  (\ker(\widehat{\iota}_1 \circ \Phi))_0. \]
 Therefore, we have to show that $\ker(\widehat{\iota}_1 \circ \Phi) \cong  \ker(\widehat{\iota}_1)$ is connected.
 Consider the exact sequence \[ 0 \to T_1 \xrightarrow[]{\iota_1} T \to T/T_1 \to 0, \] where $T/T_1 = V /(\Lambda+V_1) \cong V_2/\pi_2(\Lambda)$ is indeed a rational torus
 (here, the map $\pi_2:~ V \to V_2$ denotes the natural projection).
 Then, by the preceeding lemma, the dual sequence \[ 0 \to \widehat{T/T_1} \to \widehat{T} \xrightarrow[]{\widehat{\iota}_1} \widehat{T}_1 \to 0 \] is also exact.
 It follows that $\ker(\widehat{\iota}_1) = \widehat{T/T_1}$ is a rational torus, in particular it is connected.
\end{proof}

\noindent
{\bf Lagrangian subgroups.}
Any symplectic basis of a symplectic $\ZZ$-module determines a decomposition into convenient submodules defined as follows.
\begin{defi}
 Let $\Lambda$ be a free $\ZZ$-module of rank $2g$ and let $E$ be a symplectic form on $\Lambda$.
 \begin{enumerate}
  \item A subgroup $\Lambda_1 \leq \Lambda$ is called a \emph{Lagrangian subgroup} of $\Lambda$, if $\Lambda / \Lambda_1 \cong \ZZ^g$ (or equivalently,
        $\Lambda_1 \cong \ZZ^g$ and $\Lambda / \Lambda_1$ is torsion-free) and $E|_{\Lambda_1 \times \Lambda_1} \equiv 0$.
  \item A pair $(\Lambda_1,\Lambda_2)$ of two Lagrangian subgroups of $\Lambda$ is called a \emph{decomposition} for $\Lambda$, if $\Lambda = \Lambda_1 \oplus \Lambda_2$.
 \end{enumerate}
\end{defi}
Obviously, any choice of a symplectic basis determines a decomposition. Conversely, the next lemma shows that each decomposition is induced by a suitable symplectic basis.
\begin{lem}\label{Lagrange1}
 If $(\Lambda_1,\Lambda_2)$ is a decomposition for $(\Lambda,E)$, then there exists a symplectic basis $(\lambda_1,...\lambda_g,\mu_1,...,\mu_g)$ for $(\Lambda,E)$
 with $\lambda_i \in \Lambda_1$ and $\mu_i \in \Lambda_2$ for all $i \in \{ 1,...,g \}$.
\end{lem}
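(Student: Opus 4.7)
The plan is to reduce the statement to Smith normal form applied to the pairing matrix between $\Lambda_1$ and $\Lambda_2$. Since both summands have rank $g$ (because $\Lambda / \Lambda_1 \cong \Lambda_2$ has rank $g$, and $\Lambda$ has rank $2g$), and the restriction of $E$ to each of them vanishes, the only nontrivial values of $E$ on $\Lambda_1 \oplus \Lambda_2$ are of the form $E(\lambda,\mu)$ with $\lambda \in \Lambda_1$, $\mu \in \Lambda_2$. Non-degeneracy of $E$ on $\Lambda$ then forces the induced $\ZZ$-bilinear pairing $\Lambda_1 \times \Lambda_2 \to \ZZ$ to be non-degenerate as well.

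First I would fix arbitrary $\ZZ$-bases $(e_1,\dots,e_g)$ of $\Lambda_1$ and $(f_1,\dots,f_g)$ of $\Lambda_2$ and form the matrix $B \in \bM_{g}(\ZZ)$ with entries $B_{ij} = E(e_i, f_j)$. By Smith normal form, there exist $U,V \in \GL_g(\ZZ)$ with $UBV = D = \diag(d_1,\dots,d_g)$ where $d_1 \mid d_2 \mid \cdots \mid d_g$, and $\det B \neq 0$ by non-degeneracy so all $d_i$ are nonzero. I would then define the new bases
\[
\lambda_i := \sum_{k=1}^{g} U_{ik} e_k \in \Lambda_1, \qquad \mu_j := \sum_{k=1}^{g} V_{kj} f_k \in \Lambda_2.
\]
Since $U,V$ are unimodular, these are indeed $\ZZ$-bases of $\Lambda_1$ and $\Lambda_2$ respectively, and together they form a $\ZZ$-basis of $\Lambda = \Lambda_1 \oplus \Lambda_2$.

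A short direct calculation then gives $E(\lambda_i,\mu_j) = (UBV)_{ij} = d_i \delta_{ij}$, while $E(\lambda_i,\lambda_j) = 0$ and $E(\mu_i,\mu_j) = 0$ follow from the Lagrangian condition on $\Lambda_1$ and $\Lambda_2$. Hence, with respect to the basis $(\lambda_1,\dots,\lambda_g,\mu_1,\dots,\mu_g)$, the form $E$ is represented by the matrix $\left(\begin{smallmatrix} 0 & D \\ -D & 0 \end{smallmatrix}\right)$, which together with $d_1 \mid \cdots \mid d_g$ is exactly the definition of a symplectic basis of $(\Lambda,E)$.

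I don't anticipate a serious obstacle: the only subtlety is noticing that the natural pairing between the two Lagrangian summands is non-degenerate over $\QQ$ (so that Smith normal form can be applied to an invertible integer matrix), and that the divisibility condition in the statement of Smith normal form coincides with the divisibility condition defining a symplectic basis. One could optionally add a line matching the resulting $(d_1,\dots,d_g)$ with the type of $E$ via Lemma~\ref{Gradindex}, but this is not required for the conclusion of the lemma.
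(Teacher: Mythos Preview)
Your proof is correct and follows essentially the same approach as the paper: pick bases of $\Lambda_1$ and $\Lambda_2$, observe that the matrix of $E$ has the block shape $\left(\begin{smallmatrix} 0 & Z \\ -Z^t & 0 \end{smallmatrix}\right)$ with $\det Z \neq 0$, and then apply Smith normal form (the paper calls it the elementary divisor theorem) to bring $Z$ to $\diag(d_1,\dots,d_g)$ by unimodular base changes within $\Lambda_1$ and $\Lambda_2$. Your explicit discussion of why the pairing $\Lambda_1 \times \Lambda_2 \to \ZZ$ is non-degenerate is a slight expansion of what the paper leaves implicit.
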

\begin{proof}
 First, choose bases $(\lambda_1,...\lambda_g)$ and $(\mu_1,...,\mu_g)$ of $\Lambda_1$ and $\Lambda_2$. Then the matrix of $E$ with respect to the basis $(\lambda_1,...\lambda_g,\mu_1,...,\mu_g)$
 is of the form \[ \begin{pmatrix} 0 & Z \\ -Z^t & 0 \end{pmatrix} \] for some $Z \in \ZZ^{g \times g}$ with $\det(Z) \not= 0$. If $A_1 (\lambda_1,...\lambda_g)^t$ and $A_2 (\mu_1,...,\mu_g)^t$
 with $A_i \in GL_2(\ZZ)$ are new bases of $\Lambda_1$ and $\Lambda_2$, then the matrix of $E$ with respect to the new basis of $\Lambda$ is \[ \begin{pmatrix} A_1 & 0 \\ 0 & A_2 \end{pmatrix}
 \begin{pmatrix} 0 & Z \\ -Z^t & 0 \end{pmatrix} \begin{pmatrix} A_1 & 0 \\ 0 & A_2 \end{pmatrix}^t = \begin{pmatrix} 0 & A_1 Z A_2^t \\ -(A_1 Z A_2^t)^t & 0 \end{pmatrix}. \]
 By the elementary divisor theorem, one can choose $A_1,A_2$, such that $A_1 Z A_2^t$ is of the form diag$(d_1,...d_g)$ with $d_i \mid d_{i+1}$ for all $i \in \{ 1,...,g-1 \}$.
\end{proof}
If the form is unimodular and a submodule is given by a submodule of a Lagrangian subgroup, then the type of the induced form is given by the elementary divisors.
\begin{lem}\label{Lagrange2}
 Let $E$ be of type $(1,...,1)$ and let $(\Lambda_1,\Lambda_2)$ be a decomposition for $(\Lambda,E)$. Moreover, let $\Lambda_1'$ be a subgroup of $\Lambda_1$ of $\ZZ$-rank $g$ and let $(d_1,...,d_g)$
 be the type of $E|_{\Lambda' \times \Lambda'}$ with $\Lambda' = \Lambda_1' \oplus \Lambda_2$. Then, there is a basis $(\eta_1,...\eta_g)$ of $\Lambda_1$ such that $(d_1\eta_1,...,d_g\eta_g)$ is a basis
 of $\Lambda_1'$.
\end{lem}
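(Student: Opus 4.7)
The plan is to apply the elementary divisor theorem to the inclusion $\Lambda_1' \hookrightarrow \Lambda_1$, get elementary divisors $e_1 \mid \dots \mid e_g$ together with an adapted basis $(\eta_1, \dots, \eta_g)$ of $\Lambda_1$, and then exhibit these very integers as the type of $(\Lambda', E)$. By uniqueness of the type (mentioned in the subsection on symplectic forms), this forces $e_i = d_i$ and finishes the proof.

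First, I would apply the elementary divisor theorem to choose a basis $(\eta_1, \dots, \eta_g)$ of $\Lambda_1$ and integers $e_1 \mid \dots \mid e_g$ such that $(e_1 \eta_1, \dots, e_g \eta_g)$ is a basis of $\Lambda_1'$. Second, I would construct a basis $(\mu_1, \dots, \mu_g)$ of $\Lambda_2$ that is $E$-dual to $(\eta_1, \dots, \eta_g)$, i.e. $E(\eta_i, \mu_j) = \delta_{ij}$. For this, Lemma~\ref{Lagrange1} applied to the decomposition $(\Lambda_1, \Lambda_2)$ together with the hypothesis that $E$ is of type $(1, \dots, 1)$ yields some symplectic basis $(\lambda_1, \dots, \lambda_g, \nu_1, \dots, \nu_g)$ with $\lambda_i \in \Lambda_1$, $\nu_i \in \Lambda_2$ and $E(\lambda_i, \nu_j) = \delta_{ij}$. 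Writing $(\eta_i) = A (\lambda_i)^t$ with $A \in \GL_g(\ZZ)$, the transformed tuple $(\mu_j) = A^{-t} (\nu_j)^t$ still lies in $\Lambda_2$ (since $A^{-t}$ is integral) and is dual to $(\eta_i)$ by design.

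Third, $(\eta_1, \dots, \eta_g, \mu_1, \dots, \mu_g)$ is then a symplectic basis of $(\Lambda, E)$ because the Lagrangian condition kills the diagonal blocks. Restricting to $\Lambda' = \Lambda_1' \oplus \Lambda_2$, the ordered basis $(e_1 \eta_1, \dots, e_g \eta_g, \mu_1, \dots, \mu_g)$ represents $E|_{\Lambda' \times \Lambda'}$ by the matrix
\[ \begin{pmatrix} 0 & \diag(e_1, \dots, e_g) \\ -\diag(e_1, \dots, e_g) & 0 \end{pmatrix}, \]
since $E(e_i \eta_i, \mu_j) = e_i \delta_{ij}$. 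Because $e_1 \mid \dots \mid e_g$, this is already in the canonical symplectic normal form, so the type of $(\Lambda', E)$ is precisely $(e_1, \dots, e_g)$. Uniqueness of the type now gives $e_i = d_i$, as desired.

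I do not expect a genuine obstacle here; the only point one needs to be slightly careful about is that the $E$-dual basis of $\Lambda_1$ actually sits inside $\Lambda_2$ rather than merely in its $\QQ$-span, which is guaranteed by the unimodularity of $E$ together with Lemma~\ref{Lagrange1}.
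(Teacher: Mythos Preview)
Your proof is correct. Both your argument and the paper's use Lemma~\ref{Lagrange1} and the unimodularity of $E$ to link $\Lambda_1$ with the $\ZZ$-dual of $\Lambda_2$, but the order of operations differs. You start from the elementary divisor basis of $\Lambda_1' \subset \Lambda_1$, then twist the symplectic basis from Lemma~\ref{Lagrange1} by a $\GL_g(\ZZ)$-change to make the $\Lambda_2$-part dual to your chosen $(\eta_i)$, and read off the type of $E|_{\Lambda'}$ directly from the resulting explicit symplectic basis. The paper instead applies Lemma~\ref{Lagrange1} twice --- once to $(\Lambda,E)$ and once to $(\Lambda',E)$ --- and then compares the two resulting bases of $\Lambda_2$ through the isomorphism $\phi:\Lambda_1 \to \Hom_\ZZ(\Lambda_2,\ZZ)$, $\lambda \mapsto E(\lambda,\cdot)$, concluding only that $\Lambda_1/\Lambda_1' \cong \bigoplus_i \ZZ/d_i\ZZ$ and appealing implicitly to the elementary divisor theorem for the existence of $(\eta_i)$. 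Your route is slightly more constructive in that the desired basis $(\eta_i)$ is produced from the outset rather than inferred at the end; the paper's route avoids the explicit $\GL_g(\ZZ)$-transformation but needs two invocations of Lemma~\ref{Lagrange1}.
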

\begin{proof}
 By the previous lemma, there is a symplectic basis $(\lambda_1,...,\lambda_g,\mu_1,...,\mu_g)$ of $\Lambda$ with $\lambda_i \in \Lambda_1, \mu_i \in \Lambda_2$ and a symplectic basis
 $(\lambda_1',...,\lambda_g',\mu_1',...,\mu_g')$ of $\Lambda'$ with $\lambda_i' \in \Lambda_1',\mu_i' \in \Lambda_2$ for all $i \in \{ 1,...,g \}$.
 By dualizing these two $\ZZ$-bases of~$\Lambda_2$, we get \[ {\rm Hom}_{\ZZ}(\Lambda_2,\ZZ) = \langle \mu_1^*,...,\mu_g^* \rangle_{\ZZ} = \langle (\mu_1')^*,...,(\mu_g')^* \rangle_{\ZZ}. \]
 Now, consider the homomorphism \[ \phi :~ \Lambda_1 \to {\rm Hom}_{\ZZ}(\Lambda_2,\ZZ),\quad \lambda \mapsto (\mu \mapsto E(\lambda, \mu)). \]
 Since $\phi(\lambda_i) = \mu_i^*$ for all $i$, it follows that~$\phi$ is an isomorphism, and together with $\phi(\lambda_i') = d_i (\mu_i')^*$ we get
 \[ \Lambda_1 / \Lambda_1' \cong \phi(\Lambda_1) / \phi(\Lambda_1') \cong \bigoplus_{i=1}^g (\ZZ / d_i\ZZ). \]
\end{proof}

We conclude with the proof of a very technical statement needed in Section~\ref{dim3}.
Roughly speaking, it states that for each sublattice $\Lambda \leq \underline{\Lambda}$ and each 'basis' of the quotient $\underline{\Lambda}/\Lambda$, the represantives of this basis can be chosen in a
way such that they differ only by rational multiples from a $\ZZ$-basis of $\Lambda$.

\begin{lem}\label{Rationalbasis}
 Let $\underline{\Lambda}$ be a free $\ZZ$-module of rank $n$, let $\Lambda \leq \underline{\Lambda}$ be a submodule of the same rank $n$ and let $d_1|...|d_n \in \NN$ be the elementary divisors of
 $\Lambda$ in $\underline{\Lambda}$. Moreover, let $(\underline{\lambda}_1,...,\underline{\lambda}_n)$ be a tuple in $\underline{\Lambda}$,
 such that \[ \underline{\Lambda}/\Lambda = \langle [\underline{\lambda}_1] \rangle \oplus ... \oplus \langle [\underline{\lambda}_n] \rangle \] with ${\rm ord}([\underline{\lambda}_i])=d_i$. \\
 Then, there is a basis $(\lambda_1,...,\lambda_n)$ of $\Lambda$ such we can choose for every $i \in \{ 1,...,n \}$ a representative
 $\underline{\lambda}_i' \in [\underline{\lambda}_i]$ with \[ d_i\underline{\lambda}_i'=a_i\lambda_i \] for some $a_i \in \NN$ with $a_i \leq d_i$ and $(a_i,d_i)=1$.
\end{lem}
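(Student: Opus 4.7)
The plan is to induct on $n$, splitting off the last cyclic summand $M_n := \langle [\underline{\lambda}_n] \rangle$ at each step. The base case $n=1$ is immediate: if $\underline{\Lambda} = \ZZ f_1$ and $\Lambda = d_1 \ZZ f_1$, then $\underline{\lambda}_1 = c f_1$ with $\gcd(c, d_1) = 1$, and taking the unique $a_1 \in \{1, \ldots, d_1\}$ with $a_1 \equiv c \pmod{d_1}$ together with $\lambda_1 := d_1 f_1$ and $\underline{\lambda}_1' := a_1 f_1$ satisfies the required relation.

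For the inductive step, write $W := \bigoplus_{i<n} \langle [\underline{\lambda}_i] \rangle$, so that $\underline{\Lambda}/\Lambda = M_n \oplus W$, and aim for a compatible splitting $\underline{\Lambda} = \ZZ u_n \oplus \underline{\Lambda}'$. First, I produce a primitive element $u_n \in \underline{\Lambda}$ whose image $[u_n]$ is a generator of $M_n$, writing $[u_n] = k \cdot [\underline{\lambda}_n]$ with $k \in \ZZ$ coprime to $d_n$. The existence of such a primitive lift follows from the observation that $[\underline{\lambda}_n] \notin p(\underline{\Lambda}/\Lambda)$ for every prime $p \mid d_n$ (since otherwise the order of $[\underline{\lambda}_n]$ would be a proper divisor of $d_n$), combined with a Chinese Remainder Theorem argument handling the primes $p \nmid d_n$. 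I then extend $u_n$ to a $\ZZ$-basis $(e_1, \ldots, e_{n-1}, u_n)$ of $\underline{\Lambda}$ and adjust each $e_i$ by a suitable integer multiple of $u_n$ so that the image of the new $e_i$ in $\underline{\Lambda}/\Lambda$ lies in $W$; setting $\underline{\Lambda}' := \bigoplus_{i<n} \ZZ e_i$ then gives $\pi(\underline{\Lambda}') = W$, where $\pi : \underline{\Lambda} \to \underline{\Lambda}/\Lambda$ is the projection. Since $M_n \cap W = 0$, the sublattice $\Lambda$ splits compatibly as $\Lambda = d_n \ZZ u_n \oplus (\Lambda \cap \underline{\Lambda}')$, and $\Lambda \cap \underline{\Lambda}'$ has elementary divisors $d_1, \ldots, d_{n-1}$ in $\underline{\Lambda}'$.

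Writing $\underline{\lambda}_i = \delta_i + \gamma_i u_n$ for $i < n$ with $\delta_i \in \underline{\Lambda}'$ and $\gamma_i \in \ZZ$, the condition $[\underline{\lambda}_i] \in W$ forces $d_n \mid \gamma_i$, so $[\delta_i] = [\underline{\lambda}_i]$ in $\underline{\Lambda}/\Lambda$, and the tuple $(\delta_1, \ldots, \delta_{n-1})$ satisfies the hypothesis of the lemma for the pair $(\underline{\Lambda}', \Lambda \cap \underline{\Lambda}')$. The inductive hypothesis then supplies a basis $(\lambda_1, \ldots, \lambda_{n-1})$ of $\Lambda \cap \underline{\Lambda}'$ together with representatives $\delta_i' \in [\delta_i]$ and scalars $a_i$ satisfying $d_i \delta_i' = a_i \lambda_i$. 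For the last index, pick $a_n \in \{1, \ldots, d_n\}$ with $a_n k \equiv 1 \pmod{d_n}$ and set $\lambda_n := d_n u_n$ and $\underline{\lambda}_n' := a_n u_n$; then $[\underline{\lambda}_n'] = a_n k [\underline{\lambda}_n] = [\underline{\lambda}_n]$ and $d_n \underline{\lambda}_n' = a_n \lambda_n$, so that setting $\underline{\lambda}_i' := \delta_i'$ for $i < n$ completes the induction, with the bounds $a_i \le d_i$ and $\gcd(a_i, d_i) = 1$ automatic by construction. The hardest part of this plan will be the compatible splitting step, in particular producing the primitive lift $u_n$ and verifying that after the adjustment of the $e_i$ the sublattice $\Lambda$ itself decomposes cleanly along $\ZZ u_n \oplus \underline{\Lambda}'$.
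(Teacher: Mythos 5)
Your proof is correct, but it takes a genuinely different route from the paper's. You split off the cyclic summand of maximal order $d_n$: you lift a generator of $\langle[\underline{\lambda}_n]\rangle$ to a primitive vector $u_n$ (writing $\underline{\lambda}_n = m u_n$ one checks $\gcd(m,d_n)=1$, exactly because $[\underline{\lambda}_n]\notin p(\underline{\Lambda}/\Lambda)$ for $p\mid d_n$, and since $d_n$ is the exponent of $\underline{\Lambda}/\Lambda$, multiplication by $m$ is an automorphism, so $[u_n]=m^{-1}[\underline{\lambda}_n]$ does land in $\langle[\underline{\lambda}_n]\rangle$), extend to a basis, normalize the remaining basis vectors modulo $\ZZ u_n$ so that $\underline{\Lambda}=\ZZ u_n\oplus\underline{\Lambda}'$ is compatible with $\underline{\Lambda}/\Lambda = M_n\oplus W$, deduce $\Lambda = d_n\ZZ u_n\oplus(\Lambda\cap\underline{\Lambda}')$ from directness of $M_n\oplus W$, and induct. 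All of these steps check out; the only place you are slightly vague is the existence of the primitive lift, but the argument just sketched closes that gap, and the degenerate case $d_n=1$ (where $\Lambda=\underline{\Lambda}$) is trivial. The paper instead fixes from the outset a basis $(\mu_1,\dots,\mu_n)$ of $\Lambda$ making the relations $d_i\underline{\lambda}_i=\sum_{j\ge i}a_{ij}\mu_j$ upper triangular, proves $\gcd(d_i,a_{ii})=1$ by a reverse induction on $i$ using explicit B\'ezout manipulations, and then clears the off-diagonal coefficients by changing the basis of $\Lambda$ and the class representatives. Your structural decomposition is conceptually cleaner and makes the role of the maximal elementary divisor transparent; the paper's computation is more self-contained (no appeal to primitive lifts or to the compatibility of two direct-sum decompositions) and produces the basis and the integers $a_i=a_{ii}$ in one pass from a single matrix.
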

\begin{proof}
 We can choose a basis $(\mu_1,...,\mu_n)$ of $\Lambda$ such that for all $i \in \{ 1,...,n \}$ we have \[ d_i\underline{\lambda}_i = \sum_{j=i}^na_{ij}\mu_j \] for some $a_{ij} \in \ZZ$. \\
 First, we will show by induction that for all $i \in \{ 1,...,n \}$ we have
 \[ \sum_{k=i+1}^n a_{ik}\mu_k \in d_{i+1} \Lambda + d_{i+1} \langle \underline{\lambda}_{i+1} \rangle + ... + d_n \langle \underline{\lambda}_n \rangle \mbox{ and } (d_i,a_{ii})=1. \]
 We will do this induction in reverse order, so starting with $i=n$. For the first statement, there is nothing to proof and $(d_n,a_{nn})=1$ holds, because the order of
 $[\underline{\lambda}_n]$ is $d_n$ and $d_n\underline{\lambda}_n=a_{nn}\mu_n$. \\
 Now, let $i \in \{ 1,...,n-1 \}$ and let us assume that for all $j>i$ we have
 \[ \nu_j := \sum_{k=j+1}^n a_{jk}\mu_k \in d_{j+1}\Lambda + d_{j+1} \langle \underline{\lambda}_{j+1} \rangle + ... + d_n \langle \underline{\lambda}_n \rangle \mbox{ and } (d_j,a_{jj})=1. \]
 In particular, for all $j>i$ there are $x_j,y_j \in \ZZ$ with $x_jd_j+y_ja_{jj}=1$ and this yields
 \begin{eqnarray*}
  \sum_{j=i+1}^n a_{ij}\mu_j & = & \sum_{j=i+1}^n a_{ij}x_j d_j \mu_j + \sum_{j=i+1}^n a_{ij} y_j a_{jj} \mu_j \\
  & = & \sum_{j=i+1}^n d_j a_{ij} x_j \mu_j + \sum_{j=i+1}^n a_{ij} y_j (d_j \underline{\lambda}_j - \sum_{k=j+1}^n a_{jk}\mu_k) \\
  & = & \sum_{j=i+1}^n d_j a_{ij} x_j \mu_j + \sum_{j=i+1}^n d_j a_{ij} y_j \underline{\lambda}_j - \sum_{j=i+1}^n a_{ij} y_j \nu_j.
 \end{eqnarray*}
 By the induction hypothesis and the fact that $d_k$ divides $d_l$ for $k<l$, it follows
 \[ \nu_i \in d_{i+1}\Lambda + d_{i+1} \langle \underline{\lambda}_{i+1} \rangle + ... + d_n \langle \underline{\lambda}_n \rangle. \]
 In particular, we have \[ \nu_i \in (d_i,a_{ii}) (\Lambda + \langle \underline{\lambda}_{i+1} \rangle + ... + \langle \underline{\lambda}_n \rangle) \] and therefore
 \[ d_i\underline{\lambda}_i = a_{ii}\mu_i + \nu_i \in (d_i,a_{ii}) (\Lambda + \langle \underline{\lambda}_{i+1} \rangle + ... + \langle \underline{\lambda}_n \rangle). \]
 Hence, in $\underline{\Lambda}/\Lambda$ we have \[ \tfrac{d_i}{(d_i,a_{ii})}[\underline{\lambda}_i] \in \langle [\underline{\lambda}_{i+1}] \rangle + ... + \langle [\underline{\lambda}_n] \rangle, \]
 and as the sum of $\langle [\underline{\lambda}_i] \rangle ,..., \langle [\underline{\lambda}_n] \rangle$ is direct in $\underline{\Lambda}/\Lambda$ and the order of $[\underline{\lambda}_i]$ is $d_i$,
 it follows $(d_i,a_{ii})=1$.
 \\[1em]
 Now we know that for all $i \in \{ 1,...,n \}$, there are $x_i,y_i \in \ZZ$ with $x_i d_i + y_i a_{ii} = 1$, so we can write \[ a_{ij}x_id_i = a_{ij} - y_ia_{ii}a_{ij} \] for all $j>i$.
 Thus, we can choose the basis $(\mu_1,...,\mu_n)$ of $\Lambda$ such that $d_i$ divides~$a_{ij}$ for all $i<j$. Changing the class representatives of $[\underline{\lambda}_i]$ allows us to add
 an element of $d_i\ZZ$ to each $a_{ij}$. Therefore, we can assume without loss of generality \[ d_i\underline{\lambda}_i = a_{ii}\mu_i \] with $0 < a_{ii} \leq d_i$.
\end{proof}
\clearpage

\section{Quadratic orders and the trace pairing}

The defining lattice of a two-dimensional principally polarized complex Abelian variety with real multiplication by a real quadratic order $\OO$ is a proper torsion-free $\OO$-module of rank two
together with a unimodular symplectic form compatible with the $\OO$-module structure.
It is well known that any such module is isomorphic to $\OO \oplus \OO^\vee$ as a symplectic $\OO$-module, where $\OO^\vee$ denotes the inverse different and $\OO \oplus \OO^\vee$ is equipped with a
certain symplectic form, the \emph{trace pairing}.
In preparation to parameterize two-dimensional polarized complex Abelian varieties of arbitrary type, we classify in this section such modules where the symplectic form is not unimodular anymore.

\subsection{Quadratic orders and their ideals}

First, we collect some facts about quadratic orders with non-square discriminants and their ideals.
Our focus will be on orders which are not maximal and we assume that the reader is familiar with the ring of integers in a quadratic number field.
If not, we refer to \cite{Schmidt} for all the results we need about quadratic number fields and their maximal orders.
\\[1em]
{\bf Two viewpoints of quadratic orders.}
The usual definition of a quadratic order is the following.
\begin{defi}\label{order}
 For $b,c \in \ZZ$, we call the ring \[ \OO_{b,c} := \ZZ[X] / (X^2+bX+c) \] a \emph{quadratic order}.
 The \emph{discriminant} of $\OO_{b,c}$ is defined to be the integer \[ D := b^2-4c. \]
\end{defi}

For our purposes it will be more convenient to work with quadratic orders as subrings of quadratic number fields instead of quotients of $\ZZ[X]$.
So, let $K$ be a quadratic number field and let \[ \sigma :~ K \to K,\quad x \mapsto x^{\sigma} \] be the Galois-conjugation, i.e. the unique non-trivial automorphism of $K$.
For all $x \in K$ we denote by $\mathcal{N}(x) := xx^{\sigma}$ the \emph{norm of $x$} and by ${\rm tr}(x) := x+x^{\sigma}$ the \emph{trace of $x$}.

Now let $\OO \subset K$ be a ring with $1 \in \OO$ and $\OO \cong \ZZ^2$ as a $\ZZ$-module.
Note that the last condition implies $\OO \otimes_{\ZZ} \QQ = K$. For a $\ZZ$-basis $(\omega_1,\omega_2)$, we call the number
\[ \left( \det \begin{pmatrix} \omega_1 & \omega_2 \\ \omega_1^{\sigma} & \omega_2^{\sigma} \end{pmatrix} \right)^2 \] the \emph{discriminant} of $\OO$,
which is independent of the choice of the basis. We denote by $\OK$ the ring of integers of $K$ and by $\Delta_K$ its discriminant.
Now let $D$ be the discriminant of $\OO$. Elementary computations show the following properties: 
\begin{enumerate}
 \item $K=\QQ(\sqrt{D})$
 \item $D \in \ZZ$, $D \equiv 0 \mod 4$ or $D \equiv 1 \mod 4$, $D$ is not a square
 \item $\OO = \langle 1,\tfrac{D+\sqrt{D}}{2} \rangle_{\ZZ}$, in particular $\OO$ is uniquely determined by $D$.
 \item If we write $K = \QQ(\sqrt{d})$ with $d \in \ZZ$ squarefree, then there is a natural number $f$, called the \emph{conductor of $\OO$}, with
 \[ D=f^2 \Delta_{K}=\left \{ \begin{array}{ll} f^2d & \mbox{ if } d \equiv 1 \mod 4 \\ f^2 4d & \mbox{ if } d \equiv 2,3 \mod 4 \end{array} \right. . \]
 \item If $f$ is the conductor of $\OO$, then for every $\omega \in K$ with $\OK = \langle 1,\omega \rangle_{\ZZ}$ we have
 \[ \OO = \ZZ + f\OK = \langle 1,f\omega \rangle_{\ZZ} \subset \OK, \] and therefore $|\OK : \OO| = f$ and $\OO^{\sigma}=\OO$.
\end{enumerate}

On the other hand, for any non-square integer $D \equiv 0,1 \mod 4$, we set \[ \gamma_D :=\tfrac{D+\sqrt{D}}{2} \] and see that the $\ZZ$-module \[ \OD := \langle 1, \gamma_D \rangle_{\ZZ} \]
is a subring of $K=\QQ(\sqrt{D})$ with $1 \in \OD$ and isomorphic to $\ZZ^2$ as a $\ZZ$-module.

\bigskip
Now let $\OO_{b,c}$ be a quadratic order in the sense of Definition~\ref{order} and let $D=b^2-4c$ be its discriminant. Again we can see by elementary computations:
\begin{enumerate}
 \item $D \equiv 0 \mod 4$ or  $D \equiv 1 \mod 4$
 \item If $D$ is any integer with $D \equiv 0 \mod 4$ or  $D \equiv 1 \mod 4$, then the discriminant of the quadratic order $\OO_{D,\frac{D^2-D}{4}}$ is $D$.
 \item $\OO_{b,c}$ is an integral domain if and only if $D$ is not a square.
 \item $\OO_{b,c} = \langle 1,X \rangle_{\ZZ} \cong \ZZ^2$ as a $\ZZ$-module
 \item If $D$ is not a square, then \[ \mathcal{O}_{b,c} \cong \OD \] as rings, with an isomorphism defined by \[ \quad X \mapsto \gamma_D - \tfrac{D+b}{2}. \]
 \item If $D$ is a square, say $D=d^2$, then we have an injective ring homomorphism \[ \OO_{b,c} \hookrightarrow \ZZ^2 \] defined by \[ \quad X \mapsto (\tfrac{-b+d}{2},\tfrac{-b-d}{2}). \]
       The image of $\OO_{b,c}$ is \[ \OO_{d^2} := \{ (x,y) \in \ZZ^2 : x \equiv y \mod d \}. \]
\end{enumerate}

So, for every $D \in \ZZ$ with $D \equiv 0,1 \mod 4$ it makes sense to call $D$ a \emph{discriminant} and $\OD$ a \emph{quadratic order}.
If $D$ is not a square, then $\OD$ is an order in a quadratic number field $K$, and if in addition $D$ is positive, then $\OD$ is called a \emph{real quadratic order}.
The ring of integers $\OK$ is called the \emph{maximal order in $K$} (and indeed it is the unique maximal order with respect to inclusion)
and finally a discriminant is called \emph{fundamental} if it is the discriminant of a maximal order.
For the rest of this paper, we deal with the case that $D$ is not a square. 
\\[1em]
{\bf Ideals.}
We are interested in ideals of non-maximal orders. Many properties, like the multiplicativity of the norm function or the factorization of ideals into prime ideals,
work quite different than in maximal orders. Our aim is to restrict ourselves to those ideals which 'behave' like ideals of maximal orders.
If not other stated, proofs for all the statements below about quadratic orders which are not maximal can be found in \cite{Cox}, \S7.
The statements in \cite{Cox} are mostly formulated for imaginary quadratic number fields, but note that all these proofs work also without the condition 'imaginary'.

\bigskip
The first big difference to maximal orders is that the set of fractional ideals with respect to the multiplication is in general not a group, but just a monoid.
The invertible ideals are precisely the proper ones, defined as follows.
\begin{defi}
 Let $\OO$ be an order in a quadratic number field $K$. A fractional ideal $\A$ of $\OO$ is said to be \emph{proper} if and only if \[ \OO = \{ x \in K : x\A \subset \A \}. \]
\end{defi}

\begin{lem}
 Let $\OO$ be an order in a quadratic number field $K$ and let $\A$ be a fractional ideal of $\OO$. Then the following conditions are equivalent.
 \begin{enumerate}
  \item $\A$ is proper.
  \item $\A$ is invertible.
  \item $\A$ is projective as an $\OO$-module.
 \end{enumerate}
 Moreover, in this case the inverse of $\A$ is given by \[ \A^{-1} = \{ x \in K : x\A \subset \OO \}. \]
\end{lem}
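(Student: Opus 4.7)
The plan is to prove the chain $(2) \Rightarrow (3) \Rightarrow (1) \Rightarrow (2)$, and then deduce the formula for $\A^{-1}$.

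For $(2) \Rightarrow (3)$, I would invoke the dual basis lemma: an invertibility relation $1 = \sum_i a_i b_i$ with $a_i \in \A$ and $b_i \in \A^{-1}$ produces generators $a_i$ and coordinate maps $x \mapsto x b_i$ in $\Hom_\OO(\A, \OO)$ exhibiting $\A$ as a direct summand of a free module. For $(3) \Rightarrow (1)$, suppose $\A$ is projective and let $\OO' := \{ x \in K : x\A \subset \A \}$. Clearly $\OO \subseteq \OO'$, and $\OO'$ is an order in $K$, so it is either $\OO$ itself or $\OK$. If $\OO' = \OK$, then $\A$ is an $\OK$-module of rank one, hence extended from $\OO$, so the $\OO$-module $\A$ would decompose through $\OK$; I would then observe that $\OK$ is not projective over the non-maximal order $\OO$ (it has non-trivial torsion in $\OK/\OO$ after tensoring with suitable quotients), contradicting $(3)$. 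Hence $\OO' = \OO$ and $\A$ is proper.

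For the key implication $(1) \Rightarrow (2)$, I would use the explicit structure of fractional ideals in a quadratic order. Writing $\A = \alpha\ZZ + \beta\ZZ$ and letting $\bar\A = \alpha^\sigma \ZZ + \beta^\sigma \ZZ$ denote its Galois conjugate, a direct computation of the generators of the product $\A \bar\A$ shows that
\[ \A\bar\A = \mathcal{N}(\A) \cdot \{ x \in K : x\A \subset \A \}, \]
where $\mathcal{N}(\A) \in \QQ_{>0}$ is the appropriate norm. If $\A$ is proper, the right-hand side is $\mathcal{N}(\A) \cdot \OO$, so $\mathcal{N}(\A)^{-1} \bar\A$ is a two-sided inverse of $\A$, proving $(2)$.

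Finally, for the formula $\A^{-1} = \{ x \in K : x\A \subset \OO \}$, one inclusion is trivial from $\A^{-1}\A = \OO$. For the other, given $x \in K$ with $x\A \subset \OO$, multiplying by $\A^{-1}$ on the right gives $x\OO = x\A\A^{-1} \subset \A^{-1}$, hence $x \in \A^{-1}$. The main technical obstacle is the computation of $\A\bar\A$ in the non-maximal case, where the usual maximal-order argument needs to be checked carefully because $\OO$ may fail to be Dedekind; this is the reason the proposition isolates \emph{proper} ideals as the correct analogue of all fractional ideals in $\OK$.
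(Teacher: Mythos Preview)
The paper does not actually prove this lemma; it cites Cox, \S 7 for the equivalence $(1)\Leftrightarrow(2)$ and Ribenboim, Chapter~I.6~(e) for the equivalence with $(3)$. Your outline is essentially the standard argument found in those references, and the implications $(2)\Rightarrow(3)$, $(1)\Rightarrow(2)$, and the formula for $\A^{-1}$ are correct as sketched (the computation $\A\A^\sigma = \mathfrak{N}(\A)\cdot\{x\in K:x\A\subset\A\}$ is exactly Cox's approach, writing $\A=\alpha[1,\tau]$ and using the minimal polynomial of the quadratic irrational $\tau$).

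There is, however, a genuine gap in your argument for $(3)\Rightarrow(1)$. You claim that $\OO':=\{x\in K:x\A\subset\A\}$ is ``either $\OO$ itself or $\OK$'', but this dichotomy is false: the orders in $K$ containing $\OO$ are exactly the $\ZZ+e\OK$ for $e$ dividing the conductor $f$ of $\OO$, so when $f$ is composite there are intermediate orders. Your subsequent sketch (``$\OK$ is not projective over $\OO$, torsion after tensoring'') also does not immediately handle the case where $\A$ is a module over some intermediate $\OO'$. A clean fix is to argue directly that a rank-one projective $\OO$-module has endomorphism ring $\OO$: since $\A$ is locally free of rank one, $\End_\OO(\A)\cong\A^{-1}\otimes_\OO\A\cong\OO$, and multiplication by any $x\in\OO'$ is an $\OO$-endomorphism of $\A$, forcing $\OO'\subseteq\OO$.
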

\noindent
The equivalence of the last condition to the previous ones can be found in \cite{Ribenboim}, Chapter~I.6~(e).

In maximal orders all fractional ideals are proper.
As an example of a non-proper ideal, let $\OO$ be the order of conductor $2$ in $K=\QQ(\sqrt{-3})$, i.e. $\OO = \ZZ[\sqrt{-3}]$.
Setting $\A := \langle 2,1+\sqrt{-3} \rangle_{\ZZ}$, we have \[ \{ x \in K : x\A \subset \A \} = \OK \not= \OO. \] Hence, $\A$ is an example for an ideal which is not proper and thus not invertible.

By ${\rm I}(\OO)$ we denote the group of all proper fractional ideals of $\OO$ and by~$I_K$ the group of all fractional ideals of $\OK$.
For an ideal $\A \trianglelefteq \OO$ we denote by $\mathfrak{N}(\A) := [\OO : \A]$ the \emph{norm} of $\A$.

\begin{lem}
 Let $\OO$ be an order in a quadratic number field. The norm function has the following properties.
 \begin{enumerate}
  \item $\mathfrak{N}(x\OO) = \mathcal{N}(x)$ for all $x \in \OO \setminus \{ 0 \}$.
  \item $\mathfrak{N}(\A \B) = \mathfrak{N}(\A) \mathfrak{N}(\B)$ if at least one of the ideals $\A,\B \trianglelefteq \OO$ is proper.
  \item $\A \A^{\sigma} = \mathfrak{N}(\A) \OO$ for all proper ideals $\A \trianglelefteq \OO$.
 \end{enumerate}
\end{lem}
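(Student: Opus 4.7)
I would prove the three statements in the order (1), (3), (2). Property (1) is a direct matrix computation, (3) is the key structural claim, and (2) then follows from (3) by localization.

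For (1), I would fix the $\ZZ$-basis $(1,\gamma_D)$ of $\OO$. Using $\gamma_D^2 = D\gamma_D - \tfrac{D^2-D}{4}$, multiplication by $x = a+b\gamma_D$ on $\OO$ is represented by an explicit $2\times 2$ integer matrix $M_x$ whose determinant equals $\mathcal{N}(x)$. Since the index of any full-rank sublattice equals the absolute value of this determinant, we get $\mathfrak{N}(x\OO) = |\det M_x| = |\mathcal{N}(x)|$, which is the stated identity (understood up to sign in the cases where $\mathcal{N}$ is negative).

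For (3), I would first note that $\A^\sigma$ is again proper, since the defining condition $\OO = \{y \in K : y\A \subseteq \A\}$ is Galois-invariant ($\OO^\sigma = \OO$). Then $\A\A^\sigma$ is an invertible integral ideal, and it is $\sigma$-stable: $(\A\A^\sigma)^\sigma = \A^\sigma \A = \A\A^\sigma$. The heart of the argument is showing that every Galois-stable invertible integral ideal $\mathfrak{c}$ of $\OO$ has the form $n\OO$ for some $n \in \NN$. For any $\alpha \in \mathfrak{c}$ both $\tr(\alpha)$ and $\mathcal{N}(\alpha)$ lie in $\mathfrak{c} \cap \ZZ$, and invertibility lets us upgrade this to say $\mathfrak{c}$ is generated as an $\OO$-ideal by $\mathfrak{c}\cap\ZZ$. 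Once $\A\A^\sigma = n\OO$ is established, I would identify $n = \mathfrak{N}(\A)$ either by using the standard normal form $\A = a\ZZ + \tfrac{b+\sqrt{D}}{2}\ZZ$ of a proper ideal (with $4a \mid b^2 - D$ and the primitivity condition enforcing properness) and computing the product explicitly, or by comparing $\ZZ$-indices via (1).

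For (2), I would reduce to the principal case by localizing: since proper equals invertible equals locally principal, at each prime $\mathfrak{p}$ of $\OO$ we have $\A_\mathfrak{p} = \alpha_\mathfrak{p}\OO_\mathfrak{p}$. Then multiplication by $\alpha_\mathfrak{p}$ is a $\ZZ$-module isomorphism $\OO_\mathfrak{p} \xrightarrow{\sim} \A_\mathfrak{p}$ carrying $\B_\mathfrak{p}$ to $\A_\mathfrak{p}\B_\mathfrak{p}$, so $[\OO_\mathfrak{p} : \A_\mathfrak{p}\B_\mathfrak{p}] = [\OO_\mathfrak{p} : \A_\mathfrak{p}][\OO_\mathfrak{p} : \B_\mathfrak{p}]$. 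Taking the product over all primes yields $\mathfrak{N}(\A\B) = \mathfrak{N}(\A)\mathfrak{N}(\B)$. The main obstacle in the whole argument is the structural claim in (3) that a Galois-stable invertible ideal of a (possibly non-maximal) quadratic order is generated by a single rational integer; once that is in hand, identifying this integer as $\mathfrak{N}(\A)$ and deducing (2) by localization are routine.
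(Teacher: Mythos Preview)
The paper does not actually prove this lemma: it refers generally to Cox, \S7 for statements about non-maximal orders and cites Cohen, Proposition~4.6.8 specifically for part~(2). So there is no paper proof to compare against, and your outline for (1) and (2) is a perfectly reasonable way to supply one.

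There is, however, a genuine gap in your argument for (3). The intermediate claim ``every Galois-stable invertible integral ideal $\mathfrak{c}$ of $\OO$ has the form $n\OO$'' is false. Take $K=\QQ(\sqrt{5})$ with its maximal order $\OK$ and let $\mathfrak{p}=(\sqrt{5})$ be the ramified prime above~$5$. Then $\mathfrak{p}^\sigma=\mathfrak{p}$, $\mathfrak{p}$ is invertible (every nonzero ideal in $\OK$ is), and $\mathfrak{p}\cap\ZZ=5\ZZ$, but $5\OK=\mathfrak{p}^2\neq\mathfrak{p}$. So your step ``invertibility lets us upgrade this to say $\mathfrak{c}$ is generated as an $\OO$-ideal by $\mathfrak{c}\cap\ZZ$'' fails precisely for ramified primes. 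The ideal $\A\A^\sigma$ still equals $\mathfrak{N}(\A)\OO$, but not because it is Galois-stable and invertible; it has the extra structure of being a \emph{norm}.

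A clean fix, which is how Cox handles it and which the paper's subsequent Lemma~\ref{Idealnormalform}(4) makes explicit, is to work directly with a $\ZZ$-basis $\A=\langle\alpha,\beta\rangle_\ZZ$: then $\A\A^\sigma$ is generated over $\OO$ by $\mathcal{N}(\alpha)$, $\mathcal{N}(\beta)$, $\alpha\beta^\sigma$, $\alpha^\sigma\beta$, hence contains $\gcd\bigl(\mathcal{N}(\alpha),\mathcal{N}(\beta),\tr(\alpha^\sigma\beta)\bigr)$, and the invertibility criterion identifies this gcd as $\mathfrak{N}(\A)$. One then checks $\A\A^\sigma\subseteq\mathfrak{N}(\A)\OO$ by a short direct computation with the normal form, or by comparing indices once (2) is available. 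Your alternative suggestion of using the normal form $\A=a\ZZ+\tfrac{b+\sqrt{D}}{2}\ZZ$ and computing the product explicitly would also work and bypasses the false structural claim entirely.
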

The second statement can be found in \cite{Cohen}, Proposition~4.6.8..
As examples where the last two equations do not hold, let $\OO$ be an order in a quadratic number field $K$ with conductor $f \not= 1$, that is $\OO = \ZZ + f\OK$, and
let $\A$ be the ideal of $\OO$ defined as $\A := f\OK$, so $\mathfrak{N}(\A)=f$. Then, we have $\A^2 = f^2\OK$ and hence $\mathfrak{N}(\A^2) = f^3 \not= \mathfrak{N}(\A)^2$.
We also have $\A^{\sigma} = \A$ and therefore $\A \A^{\sigma} = f^2\OK \not= \mathfrak{N}(\A)\OO$.
By the previous lemma, it follows that $\A$ is not proper for $f \not= 1$. Therefore, we can always find non-proper ideals of non-maximal orders.

It turns out that the condition 'proper' is still to weak for the factorization into prime ideals. This leads to the following definition.

\begin{defi}
 Let $\OO$ be an order in a quadratic number field, $\A \trianglelefteq \OO$ an ideal which is not zero and $n \in \ZZ$.
 Then $\A$ is called \emph{relatively prime to $n$} if \[ \A + n\OO = \OO. \]
\end{defi}
This is a stronger condition than being proper, and it can be easily verfied by computing the norm.
\begin{lem}
 Let $\OO$ be an order in a quadratic number field with conductor $f$ and let $\A \trianglelefteq \OO$ be an ideal which is not zero.
 Then, $\A$ is relatively prime to $f$ if and only if $\mathfrak{N}(\A)$ is relatively prime to $f$. Furthermore, if $\A$ is relatively prime to $f$, then $\A$ is proper.
\end{lem}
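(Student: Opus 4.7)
The plan is to handle the two assertions separately, starting with the norm-theoretic equivalence and then deducing properness from coprimality.

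For the first equivalence, I would translate the condition $\A + f\OO = \OO$ into a statement about the finite abelian group $\OO/\A$. Since $\OO/\A$ has order $\mathfrak{N}(\A)$, Lagrange's theorem gives the inclusion $\mathfrak{N}(\A)\OO \subseteq \A$ for free. Now $\A + f\OO = \OO$ holds precisely when the image of $f\OO$ generates $\OO/\A$, which is the same as saying that multiplication by $f$ is surjective on $\OO/\A$. Because $\OO/\A$ is finite, surjectivity of this endomorphism is equivalent to bijectivity, which in turn is equivalent to $\gcd(f,\mathfrak{N}(\A))=1$ (a prime $p$ dividing both $f$ and $|\OO/\A|$ would force $p$-torsion in $\OO/\A$, obstructing injectivity; conversely, coprimality makes $f$ a unit modulo the exponent of $\OO/\A$). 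This gives both directions of the first claim with essentially no computation.

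For the properness statement, assume $\A + f\OO = \OO$ and pick a Bezout-style decomposition $1 = a + fb$ with $a \in \A$ and $b \in \OO$. Given $x \in K$ with $x\A \subseteq \A$, the plan is two steps. First, I would observe that $x$ stabilizes the fractional $\OK$-ideal $\A\OK$: indeed $x\cdot(\A\OK) \subseteq \A\OK$, and since $\OK$ is Dedekind the only ring of $K$ stabilizing a nonzero fractional $\OK$-ideal is $\OK$ itself, so $x \in \OK$. (Alternatively, $\A$ is a finitely generated faithful $\ZZ[x]$-module, so $x$ is integral over $\ZZ$.) Second, I write $x = xa + f(xb)$: the term $xa$ lies in $x\A \subseteq \A \subseteq \OO$, and $xb \in \OK$ because both factors lie in $\OK$, hence $f(xb) \in f\OK \subseteq \OO$ using $\OO = \ZZ + f\OK$. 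Therefore $x \in \OO$, proving properness.

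The only delicate point is the passage from $x\A \subseteq \A$ to $x \in \OK$, since this is where one leaves the world of the non-maximal order $\OO$. Once that is in place, the Bezout identity $a + fb = 1$ converts the properness question into a one-line check using $f\OK \subseteq \OO$. No further input beyond the explicit description $\OO = \ZZ + f\OK$ and the standard fact that fractional $\OK$-ideals are invertible should be needed.
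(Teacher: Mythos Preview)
Your argument is correct. The paper does not supply its own proof of this lemma; it simply defers to \cite{Cox}, \S7, so there is no in-text argument to compare against. Your treatment of both parts is clean: the first equivalence via the action of $f$ on the finite group $\OO/\A$ is the standard (and shortest) route, and your properness argument is sound --- the key step $x \in \OK$ is justified either by invertibility of fractional $\OK$-ideals or by the integrality criterion via a faithful finitely generated module, and the Bezout decomposition $1 = a + fb$ together with $f\OK \subseteq \OO$ then finishes immediately.
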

Considering just ideals relatively prime to a given conductor, one can switch between the ideals in the corresponding order and the ideals in the maximal order in the following sense.
\begin{lem}\label{Idealkorrespondenz}
 Let $\OO$ be an order in a quadratic number field $K$ with conductor~$f$.
 \begin{enumerate}
  \item If $\A$ is an ideal of $\OO$ relatively prime to $f$, then $\A \OK$ is an ideal of $\OK$ relatively prime to $f$ of the same norm and $\A \OK \cap \OO = \A$.
  \item If $\A$ is an ideal of $\OK$ relatively prime to $f$, then $\A \cap \OO$ is an ideal of $\OO$ relatively prime to $f$ of the same norm and $(\A \cap \OO)\OK = \A$.
 \end{enumerate}
\end{lem}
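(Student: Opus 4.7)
The guiding observation is that $\OO = \ZZ + f\OK$, so $f\OK \subseteq \OO$, which lets us shuttle ideals between $\OO$ and $\OK$. I would prove both halves simultaneously by showing that in each direction the natural inclusion induces a ring isomorphism $\OO/\A \cong \OK/\A\OK$, respectively $\OO/(\A \cap \OO) \cong \OK/\A$; once this is in hand, the norm equalities and the claimed identities essentially fall out.

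For part~(1), I first note that multiplying $\A + f\OO = \OO$ by $\OK$ gives $\A\OK + f\OK = \OK$, so $\A\OK$ is relatively prime to $f$ in $\OK$. To identify $\OO/\A$ with $\OK/\A\OK$ via the inclusion, I want to check that $\OK = \OO + \A\OK$ (surjectivity) and that $\OO \cap \A\OK = \A$ (injectivity, and simultaneously the last claim of~(1)). Surjectivity comes from $\OK = \A\OK + f\OK \subseteq \A\OK + \OO$. For injectivity, the key trick is to pick a B\'ezout expression $1 = a + fb$ with $a \in \A$ and $b \in \OO$: for any $x \in \OO \cap \A\OK$ we then write $x = ax + fbx$, where $ax \in \A\OO \subseteq \A$, and $fbx \in b \cdot \A \cdot (f\OK) \subseteq b\A \subseteq \A$ since $fk \in \OO$ whenever $k \in \OK$. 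Hence $x \in \A$. The resulting isomorphism yields $\mathfrak{N}(\A) = [\OO:\A] = [\OK:\A\OK] = \mathfrak{N}(\A\OK)$.

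For part~(2), the ideal $\A \cap \OO$ is automatic, and the natural map $\OO/(\A \cap \OO) \to \OK/\A$ is injective by construction. Surjectivity is immediate from $\OK = \A + f\OK \subseteq \A + \OO$. This proves $\mathfrak{N}(\A \cap \OO) = \mathfrak{N}(\A)$, which is coprime to $f$, and so $\A \cap \OO$ is relatively prime to $f$ by the preceding lemma. For $(\A \cap \OO)\OK = \A$, the inclusion $\subseteq$ is trivial. For $\supseteq$, I extract a B\'ezout expression $1 = a' + fk$ with $a' \in \A$, $k \in \OK$: since $fk \in f\OK \subseteq \OO$, in fact $a' = 1 - fk \in \A \cap \OO$. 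Then for any $a \in \A$, writing $a = aa' + (fa)k$ displays $a$ as a sum of two elements of $(\A \cap \OO)\OK$, because $a' \in \A \cap \OO$ and $fa \in \A \cap f\OK \subseteq \A \cap \OO$.

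The main obstacle is the injectivity step in part~(1), namely showing $\OO \cap \A\OK \subseteq \A$; this is where the definition of "relatively prime to $f$" (which is stronger than $\A$ being merely proper) must really be used through a B\'ezout identity. Everything else is a bookkeeping exercise with the inclusion $f\OK \subseteq \OO$ and the second isomorphism theorem; in particular, once the isomorphism $\OO/\A \cong \OK/\A\OK$ is established, one could alternatively derive $(\A \cap \OO)\OK = \A$ in part~(2) by a norm comparison using part~(1) applied to $\A \cap \OO$, avoiding the explicit B\'ezout computation.
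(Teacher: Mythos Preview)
Your proof is correct. The paper does not supply its own proof of this lemma; it states it without proof, having noted earlier that ``proofs for all the statements below about quadratic orders which are not maximal can be found in \cite{Cox}, \S7.'' Your argument is the standard one and matches what one finds in Cox: the B\'ezout identity coming from $\A + f\OO = \OO$ together with the inclusion $f\OK \subseteq \OO$ is exactly the mechanism Cox uses to establish $\A\OK \cap \OO = \A$ and the isomorphism $\OO/\A \cong \OK/\A\OK$.
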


For an order $\OO$ in a quadratic number field $K$ with conductor~$f$, we denote by ${\rm I}^+(\OO,f)$ the set of ideals in $\OO$ relatively prime to $f$ and
by ${\rm I}_K^+(f)$ the set of ideals in $\OK$ relatively prime to $f$.

For all $\A,\B \in I(\OO)$ we have $(\A\B)\OK = (\A\OK)(\B\OK)$ and for all $\A \in I^+(\OO,f)$ the map \[ \OO / \A \to \OK / \A \OK ,\quad [x] \mapsto [x] \] is a ring isomorphism.
Together with Lemma~\ref{Idealkorrespondenz}, this yields the following corollary.

\begin{cor}
 Let $\OO$ be an order in a quadratic number field $K$ with conductor~$f$.
 The map \[ {\rm I}^+(\OO,f) \to {\rm I}_K^+(f),\quad \A \mapsto \A \OK  \] is a norm preserving monoid isomorphism.
 Its inverse map is \[ {\rm I}_K^+(f) \to {\rm I}^+(\OO,f),\quad \A \mapsto \A \cap \OO. \]
 An ideal $\A$ of $\OO$ relatively prime to $f$ is a prime ideal if and only if $\A \OK$ is a prime ideal of $\OK$.
\end{cor}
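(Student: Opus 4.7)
The plan is to assemble the corollary from Lemma~\ref{Idealkorrespondenz} together with the two facts stated in the sentence immediately preceding the corollary. Most of the work is already done; what remains is essentially a bookkeeping exercise, so I would keep the argument short.

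First I would invoke Lemma~\ref{Idealkorrespondenz} directly: part~(i) shows that the map $\A \mapsto \A\OK$ sends $I^+(\OO,f)$ into $I_K^+(f)$, preserves norms, and satisfies $\A\OK \cap \OO = \A$; part~(ii) shows that $\B \mapsto \B \cap \OO$ sends $I_K^+(f)$ into $I^+(\OO,f)$, preserves norms, and satisfies $(\B \cap \OO)\OK = \B$. These two identities say precisely that the two maps are mutually inverse, so $\A \mapsto \A\OK$ is a norm-preserving bijection.

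Next I would verify multiplicativity, which is where the monoid structure enters. Given $\A,\B \in I^+(\OO,f)$, the identity $(\A\B)\OK = (\A\OK)(\B\OK)$ follows from the general computation $(\A\B)\OK = \A\B\OK\OK = (\A\OK)(\B\OK)$, using only $\OK \cdot \OK = \OK$. Combined with the previous step this gives the claimed monoid isomorphism; no new idea is needed.

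Finally, for the prime-ideal statement I would use the ring isomorphism $\OO/\A \cong \OK/\A\OK$ recorded just before the corollary. Since $K$ is a number field, both quotients are finite rings, and $\A$ (respectively $\A\OK$) is a prime ideal if and only if the corresponding quotient is an integral domain. The isomorphism transports this property back and forth, which yields the equivalence.

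There is no real obstacle here; the only place one has to be a little careful is that all three ingredients (Lemma~\ref{Idealkorrespondenz}, the extension–contraction formula for products, and the quotient isomorphism) genuinely use the hypothesis that the ideals are coprime to the conductor~$f$. Once one flags this, the corollary is immediate.
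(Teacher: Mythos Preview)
Your proposal is correct and matches the paper's approach exactly: the paper states the corollary as an immediate consequence of Lemma~\ref{Idealkorrespondenz} together with the multiplicativity $(\A\B)\OK = (\A\OK)(\B\OK)$ and the ring isomorphism $\OO/\A \cong \OK/\A\OK$, which are precisely the three ingredients you assemble.
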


For this paper, the most important consequences of this corollary are the following.
Every ideal of $\OO$ relatively prime to $f$ has a unique factorization into prime ideals, we get this factorization in a natural way by the prime factorization in $\OK$,
and we can use the ramification law for prime integers not dividing $f$.
Moreover, for two ideals $\A,\B$ of $\OO$ relatively prime to $f$, the inclusion $\A \subset \B$ holds if and only if $\B \mid \A$.

\bigskip
How do ideals of orders in a quadratic number field look like?
For any ideal $\A \trianglelefteq \OD$ we have $\A \cap \NN \not= \emptyset$, thus we can always find a $\ZZ$-basis for $\A$ of the form $(n,a+b\gamma_D)$
with $n,a,b \in \ZZ$. We can even choose $n,b>0$ and $0 \leq a < n$, and in this case the basis is unique.
Elementary computations lead to the following technical lemma needed in Section~\ref{dim2}.

\begin{lem}\label{Idealnormalform}
 Let $\OD$ be an order in a quadratic number field, let $n,a,b \in \ZZ$ with $n,b>0$ and let $\omega := a+b\gamma_D$. Then, we have for $\A := \langle n,\omega \rangle_{\ZZ}$ the following.
 \begin{enumerate}
  \item $\A$ is an ideal of $\OD$ if and only if $b \mid a$, $b \mid n$ and $n \mid \tfrac{\mathcal{N}(\omega)}{b}$.
  \item $\mathfrak{N}(\A) = nb$.
  \item If $\A$ is an ideal of $\OD$, then $\A$ is \emph{primitive} 
        (that is, if $m$ is an integer, then $\tfrac{1}{m}\A \subset \OD$ implies $m \in \{ -1,1 \}$) if and only if $b=1$.
  \item The primitive ideal $\A = \langle n,a+\gamma_D \rangle_{\ZZ}$ is invertible if and only if \[ \gcd(n,\tfrac{\mathcal{N}(a+\gamma_D)}{n},\tr(a+\gamma_D)) = 1. \]
        In this case, we have $\A^{-1} = \tfrac{1}{n}\A^{\sigma}$. \\
        More generally, an arbitrary ideal $\B = \langle \alpha,\beta \rangle_{\ZZ}$ of $\OD$ is invertible if and only if
        \[ \gcd(\mathcal{N}(\alpha),\mathcal{N}(\beta),\tr(a^\sigma\beta)) = \mathfrak{N}(\B). \] In this case, we have \[ \B^{-1} = \tfrac{1}{\mathfrak{N}(\B)}\B^{\sigma}. \]
 \end{enumerate}
\end{lem}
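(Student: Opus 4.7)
For parts (1) and (2), the plan is to check $\OD$-closure of the $\ZZ$-module $\A = \langle n, \omega \rangle_\ZZ$. Since $\OD = \ZZ + \gamma_D \ZZ$, it suffices to verify that $\gamma_D \cdot n$ and $\gamma_D \cdot \omega$ lie in $\A$. Using the relation $\gamma_D^2 = D\gamma_D - (D^2-D)/4$, I express both products in the basis $(1,\gamma_D)$ and solve for integer coefficients in the basis $(n,\omega)$. The first product forces $b\mid n$ and $b\mid a$; the second, after substituting the explicit formula $\mathcal{N}(\omega) = a^2 + abD + b^2(D^2-D)/4$, reduces to the condition $n\mid \mathcal{N}(\omega)/b$. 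For the norm claim, the change-of-basis matrix from $(1,\gamma_D)$ to $(n,a+b\gamma_D)$ is upper triangular with diagonal $(n,b)$, hence $[\OD:\A] = nb$.

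For part (3), if $b>1$ then the divisibilities $b\mid n$ and $b\mid a$ established in (1) show that $\tfrac{1}{b}\A \subseteq \OD$, so $\A$ is not primitive. Conversely, if $b=1$ and $\tfrac{1}{m}\A\subseteq\OD$ for some integer $m$, then the $\gamma_D$-coefficient of $\omega/m$ must be integral, forcing $m\mid 1$.

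For part (4) in the primitive case, I invoke the equivalence "proper $\Longleftrightarrow$ invertible" recalled in the preceding lemma. The most transparent route is to compute the multiplier ring $\OO_\A := \{x\in K : x\A\subseteq\A\}$ directly and verify $\OO_\A = \OD$ exactly under the stated gcd condition. Writing $x = u + v\gamma_D$ with $u,v\in\QQ$ and requiring $xn\in\A$ and $x\omega\in\A$, the inclusion conditions translate into the constraints $nv\in\ZZ$, $u-av\in\ZZ$, and, after using $\omega^2 = t\omega - N$ (with $t=\tr(\omega)$ and $N=\mathcal{N}(\omega)$), the further divisibilities $n\mid nv\cdot t$ and $n\mid nv\cdot(N/n)$. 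These pin down $v$ to the fractional ideal $\tfrac{1}{d}\ZZ$ where $d = \gcd(n,t,N/n)$, giving $\OO_\A = \ZZ + (\omega/d)\ZZ$. Comparing with $\OD = \ZZ + \omega\ZZ$ yields $\OO_\A = \OD$ iff $d=1$, which is the invertibility criterion. The formula $\A^{-1} = \tfrac{1}{n}\A^\sigma$ then follows from $\A\A^\sigma = \mathfrak{N}(\A)\OD = n\OD$ together with $\mathfrak{N}(\A)=n$ (from part (2) with $b=1$).

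For the general ideal $\B = \langle \alpha,\beta\rangle_\ZZ$, the same strategy via $\B\B^\sigma = \mathfrak{N}(\B)\OD$ works. Expanding all four products of basis elements shows that $\B\B^\sigma$ contains the rational integers $\mathcal{N}(\alpha)=\alpha\alpha^\sigma$, $\mathcal{N}(\beta)=\beta\beta^\sigma$, and $\tr(\alpha^\sigma\beta) = \alpha\beta^\sigma + \alpha^\sigma\beta$, and that together with their $\gamma_D$-translates these already generate $\B\B^\sigma$; in particular $\B\B^\sigma\cap\ZZ$ equals the $\ZZ$-span of these three integers. Invertibility is then equivalent to $\B\B^\sigma\cap\ZZ = \mathfrak{N}(\B)\ZZ$, i.e.\ to $\gcd(\mathcal{N}(\alpha),\mathcal{N}(\beta),\tr(\alpha^\sigma\beta)) = \mathfrak{N}(\B)$, with $\B^{-1}=\tfrac{1}{\mathfrak{N}(\B)}\B^\sigma$ as before. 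The main obstacle throughout is the bookkeeping in part (4): translating $\OD$-stability of fractional ideals into rational linear constraints and identifying the resulting gcd as precisely the obstruction to invertibility requires careful tracking of the $\gamma_D$-action on products, since, unlike in the maximal-order (Dedekind) setting, one cannot appeal to unique factorization into prime ideals.
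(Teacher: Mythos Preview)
The paper itself supplies no proof for this lemma, deferring to ``elementary computations,'' so there is no reference argument to compare against. Your treatment of parts (1)--(3) and of the primitive case in part (4) is correct and well organised; the determination of the multiplier ring as $\OO_\A = \ZZ + (\omega/d)\ZZ$ with $d = \gcd(n,\tr\omega,\mathcal{N}(\omega)/n)$ is exactly the right computation, and the identity $\A\A^\sigma = \mathfrak{N}(\A)\OD$ for proper ideals (quoted just before the lemma) then yields $\A^{-1} = \tfrac{1}{n}\A^\sigma$.

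There is, however, a genuine gap in your handling of the general case $\B = \langle \alpha,\beta\rangle_\ZZ$. You assert that the three integers $\mathcal{N}(\alpha)$, $\mathcal{N}(\beta)$, $\tr(\alpha^\sigma\beta)$ ``together with their $\gamma_D$-translates already generate $\B\B^\sigma$.'' This would say $\B\B^\sigma = g\OD$ where $g = \gcd(\mathcal{N}\alpha,\mathcal{N}\beta,\tr(\alpha^\sigma\beta))$, but that is false whenever $\B$ is \emph{not} invertible: in the paper's own example $\A = \langle 2, 1+\sqrt{-3}\rangle_\ZZ \subset \ZZ[\sqrt{-3}]$ one finds $g=4$ while $\A\A^\sigma = 2\A \supsetneq 4\OD$. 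What \emph{is} correct is that $\B\B^\sigma = \langle g,\xi\rangle_\ZZ$ with $\xi := \alpha\beta^\sigma$ (since $\xi^\sigma = \tr\xi - \xi$), so your identification $\B\B^\sigma\cap\ZZ = g\ZZ$ stands. The missing ingredient is the index: a direct coordinate check gives $\xi - \xi^\sigma = \pm\mathfrak{N}(\B)\sqrt{D}$, so the $\gamma_D$-coefficient of $\xi$ equals $\pm\mathfrak{N}(\B)$ and hence $[\OD:\B\B^\sigma] = g\cdot\mathfrak{N}(\B)$. Because $g \in \B\B^\sigma$ and $\B\B^\sigma$ is an $\OD$-ideal, one has $g\OD \subseteq \B\B^\sigma$, whence $g^2 \geq g\,\mathfrak{N}(\B)$, i.e.\ $g \geq \mathfrak{N}(\B)$ always. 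Equality holds precisely when the inclusion $g\OD \subseteq \B\B^\sigma$ is an equality, i.e.\ when $\B\cdot\bigl(\tfrac{1}{\mathfrak{N}(\B)}\B^\sigma\bigr) = \OD$, which is exactly the invertibility of $\B$ together with the stated formula for $\B^{-1}$. With this index comparison inserted, your argument is complete.
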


For many problems it suffices to consider ideals just up to multiples in $K$.
Two fractional ideals $I,J$ of a quadratic order $\OO$ are said to be \emph{equivalent}, if there is some $x \in K$, such that $xI = J$.
Furthermore, they are said to be \emph{strictly equivalent}, if there is some $x \in K$ with $\mathcal{N}(x)>0$, such that $xI = J$.
Then, we have by \cite{Halter-Koch}, Theorem~5.5.7.2(b) the following very useful lemma.

\begin{lem}\label{HalterKoch}
 Let $I$ be an invertible, fractional ideal of $\OO$. Then, for every positive integer $M$, there is an invertible, primitive ideal $\A$ of $\OD$ with
 $\gcd(\mathfrak{N}(\A),M) = 1$ and $\mathfrak{N}(\A) \geq 1$, such that $I$ and $\A$ are strictly equivalent.
\end{lem}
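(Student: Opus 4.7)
The plan is to produce $\A$ by three successive normalisations of $I$, of which only the last one is substantive.

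First I would clear denominators by choosing a positive integer $m$ with $mI \subseteq \OO$; because $m$ is a totally positive element of $K$, this does not change the strict ideal class, and we may assume $I \subseteq \OO$ is integral and invertible.

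Next I would arrange primitivity. Writing $I = \langle n, a + b\gamma_D\rangle_\ZZ$ in the normal form of Lemma~\ref{Idealnormalform}, part~(1) of that lemma gives $b \mid n$ and $b \mid a$, so $\tfrac{1}{b}I = \langle n/b, a/b + \gamma_D\rangle_\ZZ$ is still an integral ideal, invertible because $I$ is, and primitive by part~(3). Since $\tfrac{1}{b} \in \QQ_{>0}$, replacing $I$ by $\tfrac{1}{b}I$ preserves the strict equivalence class, so we may assume $I$ is already primitive, integral and invertible.

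Now the real step: I need to find a totally positive $\lambda \in K^\times$ for which $\lambda I$ is an integral, primitive, invertible ideal of norm coprime to $M$. My plan is to find a prime ideal $\A$ of $\OO$ in the strict class of $I$ whose prime norm $p$ avoids both $M$ and the conductor $f$ of $\OO$. Such a prime is automatically primitive (in $\mathfrak{N}(\A) = nb$ with $n, b \in \NN$, the primality of $nb = p$ forces $b = 1$), and, being coprime to $f$, is invertible by Lemma~\ref{Idealnormalform}; finally $\mathfrak{N}(\A) = p \geq 1$ is automatic. To produce such a prime I would transport the question to the maximal order $\OK$ via the norm-preserving isomorphism $\mathrm{I}^+(\OO,f) \cong \mathrm{I}_K^+(f)$ of Lemma~\ref{Idealkorrespondenz}, which descends to the corresponding narrow class groups; then I would invoke Chebotarev's density theorem (equivalently Dirichlet's theorem on primes in a narrow ray class) to find a degree-one prime ideal of $\OK$ coprime to $Mf$ in the strict class of $I\OK$, and contract it back to $\OO$ via $\mathfrak{p} \mapsto \mathfrak{p} \cap \OO$.

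The hard part is precisely this density statement: the existence of a prime ideal of $\OK$ of prescribed narrow class with prime norm avoiding a prescribed finite set of rational primes. This is the entire analytic content of the lemma, and is the reason the paper cites it to Halter-Koch rather than proving it; steps one and two are pure bookkeeping inside Lemma~\ref{Idealnormalform}.
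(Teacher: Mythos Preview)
The paper does not give its own proof; it cites Halter-Koch, Theorem~5.5.7.2(b), and remarks that the proof there uses the continued fraction expansion of quadratic irrationals. That is an elementary, constructive argument (reduction theory of quadratic forms / ideals), not an analytic one. So your closing remark that ``this is the entire analytic content of the lemma'' misidentifies the method actually being cited.

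Your normalisation steps 1 and 2 are fine; the problem is step 3. The monoid isomorphism $\mathrm{I}^+(\OO,f)\cong \mathrm{I}_K^+(f)$ does \emph{not} descend to an isomorphism of the narrow class groups ${\rm Cl}^+(\OO)$ and ${\rm Cl}^+(\OK)$: the natural map ${\rm Cl}^+(\OO)\to{\rm Cl}^+(\OK)$ has a kernel in general, so finding a prime of $\OK$ in the narrow class of $I\OK$ and contracting to $\OO$ need not land you back in the narrow class of $I$. What you actually need is the identification of ${\rm Cl}^+(\OO)$ with a quotient of the narrow ray class group of $\OK$ modulo $f$ (the ring class group), after which Chebotarev does the job. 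But the standard route to that identification (as in Cox, \S7) passes through the statement that every class in ${\rm Cl}^+(\OO)$ contains a representative coprime to $f$ --- and in this paper that statement is Corollary~\ref{Idealaequiv}, which is \emph{deduced from} Lemma~\ref{HalterKoch}. As written, your argument is circular at exactly the point you flag as ``the hard part''.

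The circularity can be broken (for instance by a direct local-generator / CRT argument that every invertible class has a representative coprime to $f$, independent of the lemma), and then your Chebotarev route goes through. But you have not supplied that step, and it is not as routine as your sketch suggests. The continued-fraction proof in Halter-Koch sidesteps all of this and is considerably lighter.
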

\noindent
The proof uses the continued fraction expansion of quadratic irrationals.
There are two immediate consequences we are going to use.
\begin{cor}\label{Idealaequiv}
 Let $\OO$ be a quadratic order.
 \begin{enumerate}
  \item Every invertible ideal $\OO$ is strictly equivalent to an ideal relatively prime to the conductor of $\OO$.
  \item Let $\A,\B$ be ideals relatively prime to the conductor of $\OO$. Then, $\A$ is strictly equivalent to an ideal $\A'$ of $\OO$ such that $\A'$ and $\B$ are relatively prime.
 \end{enumerate}
\end{cor}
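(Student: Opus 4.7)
The plan is to derive both parts of Corollary~\ref{Idealaequiv} directly from Lemma~\ref{HalterKoch} by choosing the auxiliary integer $M$ appropriately, and then using the norm dictionary developed in this subsection to translate ``norm is coprime to $n$'' into ``ideal is relatively prime to $n$''.

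For part~(1), I would apply Lemma~\ref{HalterKoch} to the given invertible fractional ideal with $M=f$, the conductor of $\OO$. The lemma yields an invertible primitive ideal $\A$ strictly equivalent to the original whose norm satisfies $\gcd(\mathfrak{N}(\A),f)=1$. The criterion stated just before Lemma~\ref{Idealkorrespondenz} --- namely that an ideal is relatively prime to $f$ if and only if its norm is --- then finishes the argument at once.

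For part~(2), I would apply Lemma~\ref{HalterKoch} to $\A$ with $M = f\cdot\mathfrak{N}(\B)$. This produces an invertible primitive ideal $\A'$ strictly equivalent to $\A$ with $\gcd(\mathfrak{N}(\A'),f\mathfrak{N}(\B))=1$. The factor $f$ in $M$ again guarantees that $\A'$ is relatively prime to the conductor, and the remaining task is to upgrade the coprimality of norms $\gcd(\mathfrak{N}(\A'),\mathfrak{N}(\B))=1$ into coprimality of ideals $\A'+\B=\OO$. For this I would use that both $\A'$ (invertible, hence proper) and $\B$ (relatively prime to $f$, hence proper by the earlier norm criterion) satisfy the identity $\mathfrak{I}\mathfrak{I}^\sigma=\mathfrak{N}(\mathfrak{I})\OO$, which implies $\mathfrak{N}(\A')\in\A'$ and $\mathfrak{N}(\B)\in\B$. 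A B\'ezout combination of the coprime integers $\mathfrak{N}(\A')$ and $\mathfrak{N}(\B)$ then exhibits $1\in\A'+\B$, giving $\A'+\B=\OO$ as desired.

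The argument is essentially bookkeeping and I do not expect a genuine obstacle. The only subtlety is insisting that Lemma~\ref{HalterKoch} produces an \emph{invertible} (rather than merely proper-in-a-weaker-sense) representative, so that the norm identity $\mathfrak{I}\mathfrak{I}^\sigma=\mathfrak{N}(\mathfrak{I})\OO$ is available to inject the integer norms back into the ideals for the final B\'ezout step; this is precisely why the lemma was formulated with the stronger conclusion ``invertible and primitive''.
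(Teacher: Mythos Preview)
Your proposal is correct and follows exactly the route the paper intends: the corollary is stated as an immediate consequence of Lemma~\ref{HalterKoch}, and you have spelled out precisely how, by choosing $M=f$ for part~(1) and $M=f\cdot\mathfrak{N}(\B)$ for part~(2). One small simplification: in the final B\'ezout step you do not actually need the identity $\mathfrak{I}\mathfrak{I}^\sigma=\mathfrak{N}(\mathfrak{I})\OO$ or properness of $\B$; since $\mathfrak{N}(\B)=[\OO:\B]$ is the order of the additive group $\OO/\B$, one has $\mathfrak{N}(\B)\cdot\OO\subseteq\B$ and hence $\mathfrak{N}(\B)\in\B$ for \emph{any} nonzero ideal $\B$.
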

\subsection{Symplectic modules over real quadratic orders}\label{Tracepairing}

We are interested in polarized complex Abelian surfaces with real multiplication (in the sense of Definition~\ref{RealMultO}).
Real multiplication by a real quadratic order $\OD$ on the surface $X/\Lambda$ makes the lattice $\Lambda$ to a proper, torsion-free $\OD$-module of rank two carrying the symplectic form induced by
the polarization. Here the rank of $\Lambda$ is the dimension of $\Lambda \otimes_\ZZ \QQ$ as a $K$-vector space.
This form is \emph{compatible with the $\OD$-module structure}, i.e. the action of $\OD$ is self-adjoint with respect to the form.

The goal of this section is, to classify symplectic $\OD$-modules arising from polarized complex Abelian surfaces with real multiplication.
We will see that for a given type and a given real quadratic order $\OD$, the isomorphism classes of these modules correspond to the ideals of $\OD$ with norm equal to the quotient of the type of the
polarization.

\begin{defi}\label{symplecticmodule}
 Let $D>0$ be a non-square discriminant. A \emph{symplectic $\OD$-module} is a pair $(\Lambda,E)$, where $\Lambda$ is a torsion-free rank two $\OD$-module and $E$ is a symplectic form on~$\Lambda$
 compatible with the $\OD$-module structure.\\
 An isomorphism of symplectic $\OD$-modules is an $\OD$-module isomorphism preserving the forms.
\end{defi}

Now we introduce the most important example for symplectic forms on rank two $\OD$-modules, those induced by the trace pairing in $\QQ(\sqrt{D}) \oplus \QQ(\sqrt{D})$.

\bigskip
Recall that, if $\OD$ is the real quadratic order of discriminant $D$, then we have $\OD \subset \QQ(\sqrt{D})$ and $\OD = \langle 1,\gamma \rangle_{\ZZ}$ with
$\gamma := \gamma_D = \tfrac{D+\sqrt{D}}{2}$. From now on, we fix $D$ to be a positive, non-square discriminant and set $K=\QQ(\sqrt{D})$.
Writing $K = \QQ (\sqrt{d})$ with $d$ a square-free integer and $x=a+b\sqrt{d} \in K$ with $a,b \in \QQ$, we call $\R(x):=a$ the \emph{invariant part of $x$} and $\I(x):=b$ the
\emph{antiinvariant part of $x$}. Again we denote for all $x \in K$ by $x^{\sigma}$ the Galois-conjugate, by $\mathcal{N}(x) := xx^{\sigma}$ the norm and by
${\rm tr}(x):=x+x^{\sigma} = 2 \R(x)$ the trace of $x$. For any ideal $\A$ of $\OD$, we define the \emph{inverse different of $\A$} as the fractional ideal
\[ \ADual := \{ x \in K : {\rm tr}(xy) \in \ZZ \mbox{ for all } y \in \A \}. \] Using Lemma~\ref{Idealnormalform}, an elementary computation yields
\[ \ADual = \tfrac{1}{\sqrt{D}} \tfrac{1}{\mathfrak{N}(\A)} \A^{\sigma} \] for any ideal $\A$. Thus, in the case of $\A = \OD$ we get $\ODual = \tfrac{1}{\sqrt{D}} \OD$.
We will now define a symplectic form on the $\OD$-module $\OD \oplus \ODual$.

\begin{defi}
 The \emph{trace pairing} on $\OD \oplus \ODual$ is defined by
 \[ \langle \cdot , \cdot \rangle :~ (\OD \oplus \ODual)^2 \to \ZZ,\quad \langle (x,y),(\tilde x,\tilde y) \rangle := {\rm tr}(\tilde x y-x \tilde y). \]
\end{defi}

One can easily check that the trace pairing is a symplectic form of type~$(1,1)$.
A symplectic basis is given by \[ ((1,0),(\gamma,0),(0,\tfrac{1}{\sqrt{D}}\gamma^{\sigma}),(0,\tfrac{-1}{\sqrt{D}})), \]
which we call the \emph{standard symplectic basis of $(\OD \oplus \ODual,\langle \cdot,\cdot \rangle)$}.
In particular, $(\OD \oplus \{ 0 \},\{ 0 \} \oplus \ODual)$ is a decomposition for $(\OD \oplus \ODual,\langle \cdot,\cdot \rangle)$.
Moreover, the action of $\OD$ on $\OD \oplus \ODual$ is self-adjoint with respect to the trace pairing, i.e.
\[ \langle x \lambda,\lambda' \rangle = \langle \lambda,x \lambda' \rangle \] for all $x \in \OD$ and $\lambda, \lambda' \in \OD \oplus \ODual$.

In order to get symplectic forms of arbitrary types $(d_1,d_2)$, we consider submodules of $\OD \oplus \ODual$.
We will show that not every type will appear, and this leads to the following definition.

\begin{defi}
 Let $d_1,d_2 \in \NN$ with $d_1 \mid d_2$ and $\frac{d_2}{d_1}$ relatively prime to the conductor of $D$.
 Let $\prod p_i^{k_i}$, with $k_i \in \NN$, be the prime factorization of the quotient $\frac{d_2}{d_1}$.
 We say that the pair $(d_1,d_2)$ satisfies the \emph{prime factor condition in~$\OD$}, if the following conditions hold.
 \begin{enumerate}
  \item No $p_i$ is inert over $K$ and
  \item If $p_i$ is ramified over $K$, then $k_i=1$.
 \end{enumerate}
 We say that $d \in \NN$ satisfies the \emph{prime factor condition in $\OD$}, if $(1,d)$ satisfies the prime factor condition in $\OD$. \\
 For a pair $(d_1,d_2) \in \NN^2$ with $d_1 \mid d_2$, we call $\tfrac{d_2}{d_1}$ the \emph{quotient of $(d_1,d_2)$}.
\end{defi}
This is precisely the condition for the type that guarantees the existence of an ideal inducing this type with respect to the trace pairing.
\begin{prop}\label{Primfaktorbedingung}
 Let $d_1,d_2 \in \NN$ with $d_1 \mid d_2$ and $\frac{d_2}{d_1}$ relatively prime to the conductor of $D$.
 Then, there exists an ideal $\A \trianglelefteq \OD$ such that the trace pairing on $\A \oplus \ODual$ is of type $(d_1,d_2)$ if and only if $(d_1,d_2)$ satisfies  the prime factor condition in $\OD$.
 The same holds for $\OD \oplus \tfrac{1}{\sqrt{D}}\A$ instead of $\A \oplus \ODual$.
\end{prop}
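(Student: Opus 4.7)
\medskip

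The plan is to translate the existence question into a statement about the elementary divisors of an ideal of $\OD$, and then into one about the splitting behaviour of $d := d_2/d_1$ in $\OK$. First I would apply Lemma~\ref{Lagrange2} to the Lagrangian decomposition $(\OD, \ODual)$ of $\OD \oplus \ODual$, which is unimodular for the trace pairing. The lemma shows that for any sublattice $\Lambda_1' \leq \OD$, the type of the trace pairing restricted to $\Lambda_1' \oplus \ODual$ equals the sequence of elementary divisors of $\Lambda_1'$ inside $\OD$. Thus the problem reduces to: for which pairs $(d_1,d_2)$ does there exist an ideal $\A \trianglelefteq \OD$ with $\OD/\A \cong \ZZ/d_1\ZZ \oplus \ZZ/d_2\ZZ$?

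Next, Lemma~\ref{Idealnormalform} lets me write every ideal uniquely as $\A = b \cdot \B$ with $b \in \NN$ and $\B$ primitive. Since $\B$ has a $\ZZ$-basis of the form $(\mathfrak{N}(\B),\, a+\gamma_D)$, the ideal $\A$ has basis $(b\mathfrak{N}(\B),\, ba + b\gamma_D)$ in $\OD = \ZZ \oplus \ZZ\gamma_D$, and the Smith normal form of the corresponding $2 \times 2$ coefficient matrix $\begin{pmatrix} b\mathfrak{N}(\B) & ba \\ 0 & b \end{pmatrix}$ shows directly that the elementary divisors of $\A$ in $\OD$ are $(b,\, b\mathfrak{N}(\B))$. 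Hence realising the type $(d_1, d_2)$ on $\A \oplus \ODual$ is equivalent to finding a primitive ideal $\B \trianglelefteq \OD$ of norm $d = d_2/d_1$ and taking $\A = d_1 \B$.

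The main point is thus to characterise when $\OD$ admits a primitive ideal of norm $d$ coprime to the conductor $f$. By Lemma~\ref{Idealkorrespondenz} and the ring isomorphism $\OD/\B \cong \OK/\B\OK$, the assignment $\B \mapsto \B\OK$ gives a norm-preserving bijection reducing the question to finding $\mathfrak{B} \trianglelefteq \OK$ of norm $d$ with $\OK/\mathfrak{B}$ cyclic. Factoring $d = \prod p_i^{k_i}$ and using the Chinese Remainder Theorem, this splits into a prime-by-prime analysis: inert primes $p_i$ would force a quotient of the shape $(\ZZ/p_i^{k_i/2}\ZZ)^2$, never cyclic; split primes $p_i\OK = \mathfrak{p}\mathfrak{p}^\sigma$ admit exactly the two cyclic choices $\mathfrak{p}^{k_i}$ and $(\mathfrak{p}^\sigma)^{k_i}$; ramified primes $p_i\OK = \mathfrak{p}^2$ give a cyclic $\OK/\mathfrak{p}^{k_i}$ only when $k_i = 1$. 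This is precisely the prime factor condition on $d$ and so proves both directions. The statement for $\OD \oplus \tfrac{1}{\sqrt{D}}\A$ follows verbatim, because multiplication by $\tfrac{1}{\sqrt{D}}$ identifies $\A \hookrightarrow \OD$ with $\tfrac{1}{\sqrt{D}}\A \hookrightarrow \ODual$ as $\OD$-modules, giving identical elementary divisors, and Lemma~\ref{Lagrange2} applied to the other Lagrangian summand yields the same list of admissible types. The subtlest step will be the ramified case, where a careful description of the $\ZZ$-module structure of $\OK/\mathfrak{p}^{k}$ for $k \geq 2$ is needed to rule out cyclicity.
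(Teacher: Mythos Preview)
Your proposal is correct and takes a route that is close in spirit to the paper's but is organised more structurally. Both arguments hinge on Lemma~\ref{Lagrange2} and on the prime factorisation of ideals coprime to the conductor, but you make the translation into elementary divisors explicit: the type of the trace pairing on $\A \oplus \ODual$ is exactly the pair of elementary divisors of $\A$ in $\OD$, and via the decomposition $\A = b\B$ with $\B$ primitive these are $(b, b\,\mathfrak{N}(\B))$. This immediately reduces the question to the existence of a primitive ideal of norm $d = d_2/d_1$, which you then recast as cyclicity of $\OK/\mathfrak{B}$ and settle by a direct prime-by-prime analysis of the additive structure of $\OK/\mathfrak{p}^k$.

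The paper, by contrast, does not phrase things in terms of elementary divisors or cyclic quotients. For the forward direction it argues by contradiction: if the prime factor condition failed at some $p_i$, unique factorisation would force $\B \subset p_i\OD$, which is incompatible with type $(1,d)$. For the converse it writes down $\A = d_1 \prod \PP_i^{k_i}$ and checks the type by a norm count together with another containment contradiction. Your approach is tidier and more conceptual; the paper's is more hands-on but avoids the local computation of $\OK/\mathfrak{p}^k$ in the ramified case. Both are complete, and your identification ``primitive $\Leftrightarrow$ cyclic quotient $\Leftrightarrow$ first elementary divisor equal to $1$'' is a useful unifying observation that the paper leaves implicit.
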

\begin{proof}
 Let $\prod p_i^{k_i}$ be the prime factorization of $\frac{d_2}{d_1}$.
 The prime ideal factorization of $p_i \OD$ is either  $p_i \OD$ (if $p_i$ is inert), $\PP_i^2$ (if $p_i$ is ramified) or $\PP_i \PP_i^{\sigma}$ with $\PP_i \not= \PP_i^{\sigma}$ (if $p_i$ splits). \\
 Let $\A \trianglelefteq \OD$ be an ideal such that the trace pairing on $\A \oplus \ODual$ is of type $(d_1,d_2)$.
 By Lemma~\ref{Lagrange2}, the $\OD$-module $\B := \tfrac{1}{d_1}\A$ is still an ideal of $\OD$, and on $\B \oplus \ODual$ the trace pairing is of type $(1,\tfrac{d_2}{d_1})$.
 Hence, the norm of $\B$ is relatively prime to the conductor of $D$. In particular, $\B$ is proper.
 Now let us assume that $(d_1,d_2)$ does not satisfy the prime factor condition in $\OD$.
 Then, there is an inert or a ramified prime number $p_i$ such that $p_i^{\varepsilon} \mid \tfrac{d_2}{d_1}$ with $\varepsilon = 1$ if $p_i$ is inert and $\varepsilon = 2$ if $p_i$ is ramified.
 Lemma~\ref{Gradindex} implies $\mathfrak{N}(\B) = \tfrac{d_2}{d_1}$, where $\mathfrak{N}$ is the norm function of ideals of $\OD$, thus $p_i^{\varepsilon} \mid \mathfrak{N}(\B)$.
 Since $\B$ is proper, we have $p_i^{\varepsilon} \OD \mid \B \B^{\sigma}$ and therefore, using the unique prime ideal factorization for ideals relatively prime to the conductor, $\B \subset p_i \OD$.
 But, as the trace pairing on $p_i \OD \oplus \ODual$ is of type $(p_i, p_i)$ and on $\B \oplus \ODual$ of type $(1,\tfrac{d_2}{d_1})$, we have a contradiction.

 To prove the converse, suppose that $(d_1,d_2)$ satisfies the prime factor condition in $\OD$.
 The ideal \[ \A := (\prod \PP_i^{k_i})(d_1 \OD) \] has norm $\mathfrak{N}(\A) =d_1d_2$. As $\A \subset d_1\OD$ and as the trace pairing on $d_1\OD \oplus \ODual$ is of type $(d_1,d_1)$,
 the trace pairing on $\A \oplus \ODual$ is of type $(d_1',d_2')$ with $d_1 \mid d_1'$ and, again by Lemma~\ref{Gradindex}, $d_1d_2=d_1'd_2'$. It remains to show that $d_1=d_1'$.
 Assume that there is a prime number $p$ with $p \mid \frac{d_1'}{d_1}$. Let $(\lambda_1,\lambda_2,\mu_1,\mu_2)$ be the standard symplectic basis of $(\OD \oplus \ODual,\langle \cdot,\cdot \rangle)$.
 Then, we have for every $a \in \A$, that $(a,0) = \alpha_1 \lambda_1 + \alpha_2 \lambda_2$ with $\alpha_i \in \ZZ$ and thus $pd_1 \mid \langle (a,0),\mu_i \rangle = \alpha_i$.
 It follows $\A \subset pd_1 \OD$ and therefore $p\OD \mid \prod \PP_i^{k_i}$, which is impossible as $(d_1,d_2)$ satisfies the prime factor condition in $\OD$.
\end{proof}

For any fractional ideal $I$ of $K$, we have $K = \QQ \otimes_{\ZZ} I$.
Therefore, if $I_1,I_2$ are fractional ideals of $K$, any symplectic form on $I_1 \oplus I_2$ extends naturally by tensoring with $\QQ$ to a unique alternating $\QQ$-bilinear form on $K^2$.
We denote this form also by~$E$.

\bigskip
The next step is the construction of symplectic bases.
\begin{lem}\label{Standardbasis}
 Let $\A$ be an ideal of $\OD$ and let $(\eta_1,\eta_2)$ be a $\ZZ$-basis of $\A$. Then, we have possibly after multiplying one of the $\eta_i$ with $-1$ the following identity.
 \[ \I(\eta_1 \eta_2^{\sigma}) = -\tfrac{1}{2}\I(\sqrt{D})\mathfrak{N}(\A). \] In particular, $(\lambda_1,\lambda_2,\mu_1,\mu_2)$ with
 \[ \lambda_1 := (\eta_1,0),~ \lambda_2 := (\eta_2,0),~ \mu_1 := (0,\tfrac{1}{\sqrt{D}}\tfrac{1}{\mathfrak{N}(\A)}\eta_2^{\sigma}),~
 \mu_2 := (0,\tfrac{-1}{\sqrt{D}}\tfrac{1}{\mathfrak{N}(\A)}\eta_1^{\sigma}) \]
 is a symplectic basis of $(\A \oplus \ADual,\langle \cdot,\cdot \rangle_{\QQ})$ and $\langle \cdot,\cdot \rangle_{\QQ}$ is on $\A \oplus \ADual$ of type $(1,1)$.
\end{lem}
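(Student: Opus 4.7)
The plan is to prove this lemma by direct computation, in two stages.

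\emph{Stage 1: the identity on $\I(\eta_1\eta_2^\sigma)$.} First I would observe that the map $B: K \times K \to \QQ$, $(x,y) \mapsto \I(xy^\sigma)$, is $\QQ$-bilinear and alternating, the latter because $B(y,x) = \I((xy^\sigma)^\sigma) = -\I(xy^\sigma)$. Writing $\eta_i = c_{i1} + c_{i2}\gamma$ with $c_{ij} \in \ZZ$ and $C = (c_{ij})$, bilinearity gives $B(\eta_1,\eta_2) = \det(C) \cdot B(1,\gamma)$. Since $(1,\gamma)$ and $(\eta_1,\eta_2)$ are $\ZZ$-bases of $\OD$ and $\A$, we have $|\det C| = [\OD:\A] = \mathfrak{N}(\A)$. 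A direct calculation, handling the two cases $d \equiv 1$ and $d \equiv 2,3 \bmod 4$ using the explicit formulas for $\gamma$ in the $\sqrt{d}$-basis given earlier, yields $B(1,\gamma) = \I(\gamma^\sigma) = -\tfrac{1}{2}\I(\sqrt{D})$. Combining these gives $\I(\eta_1\eta_2^\sigma) = \pm\tfrac{1}{2}\I(\sqrt{D})\mathfrak{N}(\A)$, and replacing $\eta_1$ by $-\eta_1$ if necessary flips the sign of $\det C$, producing the claimed minus sign.

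\emph{Stage 2: verifying the symplectic basis.} Since $\ADual = \tfrac{1}{\sqrt{D}\mathfrak{N}(\A)}\A^\sigma$ has $\ZZ$-basis $\big(\tfrac{\eta_1^\sigma}{\sqrt{D}\mathfrak{N}(\A)},\tfrac{\eta_2^\sigma}{\sqrt{D}\mathfrak{N}(\A)}\big)$, the tuple $(\mu_1,\mu_2)$ is (up to sign and reordering) a $\ZZ$-basis of $\{0\} \oplus \ADual$, so $(\lambda_1,\lambda_2,\mu_1,\mu_2)$ is a $\ZZ$-basis of $\A \oplus \ADual$. The pairings $\langle \lambda_i,\lambda_j\rangle$ and $\langle \mu_i,\mu_j\rangle$ vanish trivially since the trace pairing is zero on each summand. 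For the mixed pairings, the key identity is
\[ \tr(x/\sqrt{D}) \;=\; (x - x^\sigma)/\sqrt{D} \;=\; 2\I(x)/\I(\sqrt{D}), \]
valid because $\sqrt{D} = \I(\sqrt{D})\sqrt{d}$. Applying this to $\langle \lambda_i,\mu_j\rangle = \tr(-\lambda_i\mu_j/\sqrt{D}\mathfrak{N}(\A))$ (appropriately placed) reduces each entry to an expression in $\I(\eta_k\eta_\ell^\sigma)$. The off-diagonal entries $\langle \lambda_1,\mu_2\rangle$ and $\langle \lambda_2,\mu_1\rangle$ vanish because $\I(\mathcal{N}(\eta_i)) = 0$; the diagonal entries $\langle \lambda_1,\mu_1\rangle$ and $\langle \lambda_2,\mu_2\rangle$ both equal $1$ by the sign-corrected identity from Stage 1.

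The main obstacle is bookkeeping of signs and normalizations: one has to track carefully that $\I$ is taken with respect to $\sqrt{d}$ rather than $\sqrt{D}$, which forces the factor $\I(\sqrt{D})$ into the identity and uniformly absorbs the distinction between $d \equiv 1$ and $d \equiv 2,3 \bmod 4$. Once this is fixed and the orientation of $(\eta_1,\eta_2)$ is chosen to give the minus sign, all pairings fall out by inspection, so the lemma reduces to the observation that $B(\eta_1,\eta_2)$ is a determinant in the change of basis from $\OD$ to $\A$.
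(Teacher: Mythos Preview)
Your proof is correct and follows essentially the same approach as the paper: both write $(\eta_1,\eta_2)$ in terms of the standard basis $(1,\gamma)$ via an integer matrix, identify $\I(\eta_1\eta_2^\sigma)$ as $\det$ of that matrix times $\I(\gamma^\sigma)=-\tfrac{1}{2}\I(\sqrt{D})$, and then compute the pairings directly. The paper's version is extremely terse, so your expansion (in particular the identity $\tr(x/\sqrt{D}) = 2\I(x)/\I(\sqrt{D})$ and the explicit verification of each $\langle\lambda_i,\mu_j\rangle$) supplies exactly the details the paper leaves implicit.
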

\begin{proof}
 Because of $(\eta_1,\eta_2)^t = A (1,\gamma)^t$ with $A \in GL_2(\QQ) \cap \ZZ^{2 \times 2}$, we have
 \[ \I(\eta_1\eta_2^{\sigma}) = -\frac{1}{2} \I(\sqrt{D})\det(A), \]
 and thus
 \[ \langle \lambda_1,\mu_1 \rangle = \langle \lambda_2,\mu_2 \rangle = \det(A). \]
 Using $\mathfrak{N}(\A) = |\det(A)|$, an easy computation gives the second assertion.
\end{proof}
A direct consequence of Lemma~\ref{Lagrange2}, Lemma~\ref{Standardbasis} and the fact, that the Galois conjugation is an isomorphism of $\OD$, is the following corollary,
which allows us to choose a symplectic basis and a basis for the Smith normal form simultaneously.
\begin{cor}
 Let $\A \trianglelefteq \OD$ be an ideal such that the trace pairing on $\A \oplus \ODual$ (respectively $\OD \oplus \frac{1}{\sqrt{D}} \A$) is of type $(d_1,d_2)$.
 Then, there exists a $\ZZ$-basis $(\eta_1,\eta_2)$ of $\OD$ such that \[ ((\eta_1,0),(\eta_2,0),(0,\tfrac{1}{\sqrt{D}}\eta_2^{\sigma}),(0,\tfrac{-1}{\sqrt{D}}\eta_1^{\sigma})) \]
 is a symplectic basis of $(\OD \oplus \ODual,\langle \cdot,\cdot \rangle)$ and \[ ((d_1 \eta_1,0),(d_2 \eta_2,0),(0,\tfrac{1}{\sqrt{D}}\eta_2^{\sigma}),(0,\tfrac{-1}{\sqrt{D}}\eta_1^{\sigma})) \]
 \[ (\mbox{respectively } ((\eta_1,0),(\eta_2,0),(0,\tfrac{d_1}{\sqrt{D}}\eta_2^{\sigma}),(0,\tfrac{-d_2}{\sqrt{D}}\eta_1^{\sigma}))) \]
 is a symplectic basis of $(\A \oplus \ODual,\langle \cdot,\cdot \rangle)$ (respectively $(\OD \oplus \frac{1}{\sqrt{D}}\A,\langle \cdot,\cdot \rangle)$).
\end{cor}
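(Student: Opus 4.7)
The corollary is a direct synthesis of Lemma~\ref{Lagrange2} and Lemma~\ref{Standardbasis}: the former provides a basis of~$\OD$ compatible with an ideal~$\A$, the latter turns a basis of an ideal into a symplectic basis. I argue the case $\A\oplus\ODual$ in detail; the case $\OD\oplus\tfrac{1}{\sqrt{D}}\A$ follows by the symmetric argument, applied to the second Lagrangian factor and exploiting that Galois conjugation is a $\ZZ$-automorphism of $\OD$.

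\emph{Step 1.} Lemma~\ref{Standardbasis} applied with $\A=\OD$ exhibits $(\OD\oplus\{0\},\{0\}\oplus\ODual)$ as a Lagrangian decomposition of the unimodular symplectic $\ZZ$-module $(\OD\oplus\ODual,\langle\cdot,\cdot\rangle)$. The sublattice $\A\oplus\{0\}$ of $\OD\oplus\{0\}$ has full rank, and by hypothesis the induced form on $(\A\oplus\{0\})\oplus(\{0\}\oplus\ODual)=\A\oplus\ODual$ is of type $(d_1,d_2)$. Lemma~\ref{Lagrange2} then produces a $\ZZ$-basis $(\eta_1,\eta_2)$ of~$\OD$ such that $(d_1\eta_1,d_2\eta_2)$ is a $\ZZ$-basis of~$\A$.

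\emph{Step 2.} Feed the basis $(\eta_1,\eta_2)$ of the ideal~$\OD$ into Lemma~\ref{Standardbasis} and, if necessary, negate one of the $\eta_i$ so that the sign-normalised identity $\I(\eta_1\eta_2^{\sigma})=-\tfrac12\I(\sqrt{D})$ holds (recall $\mathfrak{N}(\OD)=1$). The sign flip only changes the signs of the corresponding $d_i\eta_i$ and so preserves the property that $(d_1\eta_1,d_2\eta_2)$ generates~$\A$. Lemma~\ref{Standardbasis} then directly identifies the first quadruple in the corollary as a symplectic basis of $(\OD\oplus\ODual,\langle\cdot,\cdot\rangle)$ of type $(1,1)$.

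\emph{Step 3.} For the second quadruple, the first two vectors generate $\A\oplus\{0\}$ over~$\ZZ$ by Step~1, and the last two generate $\{0\}\oplus\ODual$ since $\ODual=\tfrac{1}{\sqrt{D}}\OD$ and $\sigma$ is a $\ZZ$-automorphism of~$\OD$. The within-factor pairings vanish trivially. Using the definition of the trace pairing and the identity $\tr(x/\sqrt{D})=2\I(x)/\I(\sqrt{D})$, the cross-pairings compute to
\[ \langle (d_i\eta_i,0),(0,\pm\eta_j^{\sigma}/\sqrt{D})\rangle = \mp\frac{2d_i\,\I(\eta_i\eta_j^{\sigma})}{\I(\sqrt{D})}. \]
For $j=i$ this vanishes since $\eta_i\eta_i^{\sigma}=\mathcal{N}(\eta_i)\in\QQ$ has zero antiinvariant part, and for $j\ne i$ the normalisation of Step~2 (together with $\I(\eta_1^{\sigma}\eta_2)=-\I(\eta_1\eta_2^{\sigma})=+\tfrac12\I(\sqrt{D})$) yields precisely~$\pm d_i$ with the correct signs. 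Hence the Gram matrix has the block form of type~$(d_1,d_2)$ and the quadruple is a symplectic basis of $\A\oplus\ODual$.

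\textbf{Main obstacle.} There is no conceptual difficulty; the delicate point is the bookkeeping that one and the same $(\eta_1,\eta_2)$ simultaneously witnesses both symplectic bases. This works because the sign flip required by Lemma~\ref{Standardbasis} only alters the generating set of~$\A$ by signs. For the variant with $\OD\oplus\tfrac{1}{\sqrt{D}}\A$ one applies Lemma~\ref{Lagrange2} to the sublattice $\{0\}\oplus\tfrac{1}{\sqrt{D}}\A$ of the second Lagrangian factor; denoting the resulting basis of~$\OD$ by $(\xi_1,\xi_2)$ with $(d_1\xi_1,d_2\xi_2)$ a basis of~$\A$, the $(\eta_1,\eta_2)$ needed for the corollary is recovered via $\eta_1=-\xi_2^{\sigma}$, $\eta_2=\xi_1^{\sigma}$, and the cross-pairing computation proceeds exactly as before.
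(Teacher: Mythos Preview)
Your proof is correct and follows exactly the route the paper indicates: it combines Lemma~\ref{Lagrange2}, Lemma~\ref{Standardbasis}, and the fact that Galois conjugation is a $\ZZ$-automorphism of $\OD$, with the same logical flow (first extract the elementary-divisor basis, then normalise signs, then read off the symplectic pairings). Your explicit verification in Step~3 could even be shortened by simply invoking bilinearity once Step~2 has established the type~$(1,1)$ pairings, but this is purely cosmetic.
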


The goal of this section is to show that each proper symplectic $\OD$-module, presupposing that the quotient of the type is relatively prime to the conductor, is isomorphic to one of those constructed
in Proposition~\ref{Primfaktorbedingung}. For that we need the following three technical lemmata.
\begin{lem}\label{Zerlegung}
 Let $I_1,I_2$ be fractional ideals of $K$ and let $E$ be a symplectic form on $I_1 \oplus I_2$ compatible with the $\OD$-module structure.
 Then, $(I_1 \oplus \{ 0 \},\{ 0 \} \oplus I_2)$ is a decomposition for $(I_1 \oplus I_2,E)$.
\end{lem}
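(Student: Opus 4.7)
The decomposition claim has three parts by the definition of a decomposition and of Lagrangian subgroups: (i) $I_1 \oplus I_2 = (I_1\oplus\{0\}) \oplus (\{0\}\oplus I_2)$, which is immediate from the direct sum structure; (ii) each summand is a free $\ZZ$-module of rank $g=2$ with torsion-free quotient, which follows because fractional ideals in $K$ are free $\ZZ$-modules of rank~$2$ and the quotient $(I_1 \oplus I_2)/(I_1\oplus\{0\}) \cong I_2$ is therefore torsion-free; (iii) each summand is $E$-isotropic. Only (iii) requires work.

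The plan is to exploit the self-adjointness of the $\OD$-action with respect to $E$ and turn it into self-adjointness of the $K$-action on the extended form. As noted just before the lemma, $E$ extends uniquely to a $\QQ$-bilinear alternating form on $K^2$, and since the extension is $\QQ$-linear and every $x \in K$ can be written as $y/n$ with $y \in \OD$, $n \in \ZZ$, the relation
\[
E(x\lambda,\lambda') = E(\lambda, x\lambda') \qquad (x \in K,\ \lambda,\lambda' \in K^2)
\]
holds throughout $K^2$.

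Now consider $\lambda = (a,0)$ and $\lambda' = (b,0)$ with $a,b \in K$. Writing $(a,0) = a\cdot(1,0)$ and applying self-adjointness,
\[
E\bigl((a,0),(b,0)\bigr) = E\bigl(a\cdot(1,0),(b,0)\bigr) = E\bigl((1,0),a\cdot(b,0)\bigr) = E\bigl((1,0),(ab,0)\bigr).
\]
Thus the function $\phi \colon K \to \QQ$, $\phi(c) := E((1,0),(c,0))$, satisfies $E((a,0),(b,0)) = \phi(ab)$. Antisymmetry of $E$ forces $\phi(ab) = -\phi(ba) = -\phi(ab)$, hence $\phi \equiv 0$ and $E$ vanishes on $I_1 \oplus \{0\}$. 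The same argument applied in the second slot shows $E$ vanishes on $\{0\} \oplus I_2$, finishing (iii).

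The only potential obstacle is justifying that the self-adjointness relation, initially given only for $\OD$-scalars on the integral lattice, extends to $K$-scalars on the $\QQ$-extension; but this is handled cleanly by the preceding paragraph of the excerpt, which constructs the $\QQ$-bilinear extension, together with $\QQ$-linearity of $E$. Once that extension is in place, the rest of the argument is the short symmetry manipulation above.
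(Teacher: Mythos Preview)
Your proof is correct and follows essentially the same approach as the paper: extend $E$ to a $\QQ$-bilinear form on $K^2$, use self-adjointness of the $K$-action to reduce $E((a,0),(b,0))$ to $E((1,0),(ab,0))$, and then invoke antisymmetry to conclude this equals its own negative. The only difference is cosmetic---you name the auxiliary function $\phi$ and spell out the Lagrangian conditions (i) and (ii) explicitly, whereas the paper dispatches those in a single parenthetical remark.
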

\begin{proof}
 It remains to show that $E|_{I_i \oplus I_i} \equiv 0$ for $i \in \{ 1,2 \}$ (here, we identify $I_1$ with $I_1 \oplus \{ 0 \}$ and $I_2$ with $\{ 0 \} \oplus I_2$).
 We extend $E$ to the natural $\QQ$-bilinearform on $K^2$.
 Since the multiplication with elements of $K$ is self-adjoint with respect to $E$ and $E$ is alternating, we have for all $k_1,k_2 \in K$:
 \begin{eqnarray*}
  E((k_1,0),(k_2,0)) & = & E((1,0),(k_1 k_2,0)) \\
  & = & -E((k_1 k_2,0),(1,0)) \\
  & = & -E((k_1,0),(k_2,0))
 \end{eqnarray*}
 This yields $E_{I_1 \oplus I_1} \equiv 0$ and analogous~$E_{I_2 \oplus I_2} \equiv 0$.
\end{proof}

\begin{lem}\label{Prinzipalisierung}
 Let $I_1,I_2$ be fractional ideals of $K$ and let $E$ be a symplectic form on $I_1 \oplus I_2$ compatible with the $\OD$-module structure.
 Then, there exists a fractional ideal $I$ of $K$ with $I_1 \subset I$ (respectively $I_2 \subset I$) such that on $\Lambda := I \oplus I_2$
 (respectively $\Lambda := I_1 \oplus I$) the alternating form $E$ is symplectic of type $(1,1)$.
\end{lem}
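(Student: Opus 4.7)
The plan is to first give an explicit description of $E$ using the $\OD$-compatibility, and then read off $I$ directly.

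By Lemma \ref{Zerlegung}, $E$ vanishes on each of the summands $I_1 \oplus \{0\}$ and $\{0\} \oplus I_2$, so the form is entirely encoded in the pairing $I_1 \times I_2 \to \ZZ$, $(x,y) \mapsto E((x,0),(0,y))$. I would extend $E$ $\QQ$-bilinearly to $K \oplus K$. Self-adjointness of the $\OD$-action extends to self-adjointness for the $K$-action, so for $x,y \in K$ one has
\[ E((x,0),(0,y)) = E((1,0),(0,xy)); \]
hence the pairing factors through the multiplication $K \times K \to K$ and produces a $\QQ$-linear functional $\phi : K \to \QQ$ with $E((x,0),(0,y)) = \phi(xy)$. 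Since the trace pairing is non-degenerate on $K$, there is a unique $\alpha \in K$ with $\phi = \tr(\alpha\,\cdot\,)$, and $\alpha \neq 0$ because $E$ is non-degenerate. Consequently
\[ E\bigl((x_1,y_1),(x_2,y_2)\bigr) = \tr\bigl(\alpha(x_1 y_2 - x_2 y_1)\bigr). \]

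Next, I would introduce the trace-dual $I_2^\vee := \{z \in K : \tr(zI_2) \subset \ZZ\}$ and set $I := \alpha^{-1} I_2^\vee$, which is a fractional ideal of $K$. The inclusion $I_1 \subset I$ is equivalent to $\alpha I_1 I_2 \subset \ODual$, which is precisely the integrality of the original $E$ on $I_1 \oplus I_2$. To see that the extended $E$ on $I \oplus I_2$ is of type $(1,1)$, note that the adjoint map
\[ I \longrightarrow \Hom_\ZZ(I_2, \ZZ),\quad x \mapsto \bigl(y \mapsto \tr(\alpha x y)\bigr) \]
factors as the composition of the $\ZZ$-module isomorphism $I \xrightarrow{\cdot \alpha} I_2^\vee$ with the canonical isomorphism $I_2^\vee \xrightarrow{\sim} \Hom_\ZZ(I_2, \ZZ)$ furnished by the trace pairing, and is therefore itself an isomorphism. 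This exhibits $E$ as unimodular on $I \oplus I_2$. The symmetric statement, enlarging $I_2$ to $I := \alpha^{-1} I_1^\vee$, is proved in exactly the same way.

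The main obstacle is establishing the concrete formula $E((x,0),(0,y)) = \tr(\alpha x y)$; once this representation of $\OD$-compatible symplectic forms is secured, the choice $I := \alpha^{-1} I_2^\vee$ is essentially forced on us, and the remaining verifications reduce to standard properties of the trace-dual of fractional ideals in~$\OD$.
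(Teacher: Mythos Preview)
Your proof is correct and, in fact, arrives at exactly the same fractional ideal as the paper's: unwinding the paper's definition
\[ I = \{ x \in K : E((x,0),(0,y)) \in \ZZ \text{ for all } y \in I_2 \} \]
through your formula $E((x,0),(0,y)) = \tr(\alpha xy)$ gives precisely $I = \alpha^{-1} I_2^\vee$. The route, however, is genuinely different. The paper never writes down the closed form of $E$; instead it fixes a symplectic basis $((\lambda_1,0),(\lambda_2,0),(0,\mu_1),(0,\mu_2))$ of $I_1 \oplus I_2$ of type $(d_1,d_2)$, reads off $I = \langle \tfrac{1}{d_1}\lambda_1, \tfrac{1}{d_2}\lambda_2 \rangle_\ZZ$ directly, and exhibits the resulting symplectic basis of type $(1,1)$ on $I \oplus I_2$. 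Your argument is basis-free and more structural: it isolates the key fact that every $\OD$-compatible symplectic form on a rank-two module is a twisted trace pairing, after which the dual-lattice machinery does all the work. The paper's approach is more elementary in that it avoids invoking non-degeneracy of the trace form and the identification $I_2^\vee \cong \Hom_\ZZ(I_2,\ZZ)$, at the cost of a coordinate computation. Both are short; yours gives a cleaner conceptual picture, and the explicit $\alpha$ you extract is morally the same datum that reappears later in Lemma~\ref{Hexenlemma}.
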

\begin{proof}
 We define \[ I := \{ x \in K : \forall y \in I_2: E((x,0),(0,y)) \in \ZZ \}. \]
 Obviously, we have $I_1 \subset I \subset K$ and, since the action of $\OD$ is self-adjoint with respect to $E$, we have that $I$ is an $\OD$-module. \\
 Let $(d_1,d_2)$ be the type of $E$ and let $((\lambda_1,0),(\lambda_2,0),(0,\mu_1),(0,\mu_2))$ be a symplectic basis of $(I_1 \oplus I_2,E)$.
 We can write every $v \in K$ as \[ v = q_1 \tfrac{1}{d_1} \lambda_1 + q_2 \tfrac{1}{d_2} \lambda_2 \] with $q_1,q_2 \in \QQ$.
 This leads to \[ I = \langle \tfrac{1}{d_1} \lambda_1 , \tfrac{1}{d_2} \lambda_2 \rangle_{\ZZ}. \] In particular, $I$ is a fractional ideal and on $\Lambda := I \oplus I_2$ the form $E$ is clearly
 symplectic of type $(1,1)$ with symplectic basis \[ (\tfrac{1}{d_1}(\lambda_1,0),\tfrac{1}{d_2}(\lambda_2,0),(0,\mu_1),(0,\mu_2)) \]
\end{proof}

\begin{lem}\label{Hexenlemma}
 Let $\A$ be an ideal of $\OD$, let $I$ be a fractional ideal of $K$ and let~$E$ be a symplectic form on $\A \oplus I$ of type $(1,1)$ compatible with the $\OD$-module structure.
 Then, there exists an isomorphism of symplectic $\OD$-modules \[ \theta :~ (\A \oplus I,E) \to (\A \oplus \ADual,\langle \cdot , \cdot \rangle) \] with
 $\theta((\lambda,0))=(\lambda,0)$ for all $\lambda \in \A$ and $\theta(\{ 0 \} \oplus \ADual) = \{ 0 \} \oplus \ADual$.
\end{lem}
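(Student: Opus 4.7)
My plan is to realize the isomorphism $\theta$ by pairing with the second coordinate. The key observation is that both symplectic forms (namely $E$ on $\A \oplus I$ and the trace pairing on $\A \oplus \ADual$) are of type $(1,1)$ and have $\A \oplus \{0\}$ as one component of a Lagrangian decomposition, so in both cases the second factor sits inside $\Hom_\ZZ(\A,\ZZ)$ via the induced pairing. Matching these two realizations up will produce $\theta$.

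First, by Lemma~\ref{Zerlegung} the pair $(\A\oplus\{0\},\{0\}\oplus I)$ is a decomposition for $(\A\oplus I,E)$, and since $E$ has type $(1,1)$, Lemma~\ref{Lagrange1} gives a symplectic basis compatible with this decomposition, which shows that the $\ZZ$-bilinear pairing
\[
\A \times I \longrightarrow \ZZ, \qquad (\lambda,\mu) \mapsto E((\lambda,0),(0,\mu))
\]
is unimodular. Consequently
\[
\phi_E:~ I \to \Hom_\ZZ(\A,\ZZ), \qquad \phi_E(\mu)(\lambda):= E((\lambda,0),(0,\mu))
\]
is an isomorphism of $\ZZ$-modules. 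Because the $\OD$-action is self-adjoint with respect to $E$, one checks that $\phi_E$ is $\OD$-linear when $\Hom_\ZZ(\A,\ZZ)$ is given the contragredient $\OD$-module structure $(x\cdot f)(\lambda) := f(x\lambda)$. The exact same construction applied to the trace pairing, together with the definition $\ADual = \{x\in K: \tr(x\A)\subseteq\ZZ\}$ and the fact that the trace pairing is unimodular (Lemma~\ref{Standardbasis}), yields an $\OD$-linear isomorphism
\[
\phi_{\langle\cdot,\cdot\rangle}:~ \ADual \to \Hom_\ZZ(\A,\ZZ),\qquad \nu\mapsto \bigl(\lambda\mapsto\langle(\lambda,0),(0,\nu)\rangle\bigr)=\bigl(\lambda\mapsto -\tr(\lambda\nu)\bigr).
\]

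Now I set $\psi := \phi_{\langle\cdot,\cdot\rangle}^{-1}\circ \phi_E : I \to \ADual$ and define
\[
\theta:~ \A\oplus I \to \A\oplus\ADual,\qquad (\lambda,\mu)\mapsto (\lambda,\psi(\mu)).
\]
By construction $\theta$ is an $\OD$-linear isomorphism, it restricts to the identity on $\A\oplus\{0\}$, and it sends $\{0\}\oplus I$ onto $\{0\}\oplus\ADual$.

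The only remaining point is that $\theta$ preserves the symplectic forms. Since both forms vanish on each Lagrangian summand, bilinearity reduces the verification to comparing the cross terms $E((\lambda,0),(0,\mu'))$ with $\langle(\lambda,0),(0,\psi(\mu'))\rangle$, and this identity is precisely the defining property of $\psi$. I expect this to be the main (but ultimately routine) step, mostly requiring care with the sign convention in the trace pairing: the negative sign appearing in $\phi_{\langle\cdot,\cdot\rangle}$ is absorbed on both sides of the comparison, so the equality follows.
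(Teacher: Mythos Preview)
Your proposal is correct and follows essentially the same approach as the paper: both identify the second summand with the $\ZZ$-dual of $\A$ via the unimodular cross-pairing, then match the two identifications. The only difference is presentational---the paper picks compatible symplectic bases (via Lemma~\ref{Lagrange1} and Lemma~\ref{Standardbasis}) and defines $\theta$ on those bases, then verifies $\OD$-linearity by a direct self-adjointness computation, whereas you phrase the same construction coordinate-free through $\Hom_\ZZ(\A,\ZZ)$ and get $\OD$-linearity for free from the contragredient structure.
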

\begin{proof}
 By Lemma~\ref{Zerlegung} the pair $(\A \oplus \{ 0 \},\{ 0 \} \oplus I)$ is a decomposition for $(\Lambda ,E)$ with $\Lambda := \A \oplus I$. Hence, by Lemma~\ref{Lagrange1},
 there are $\ZZ$-bases $(\lambda_1,\lambda_2)$ of $\A$ and $(\mu_1,\mu_2)$ of $I$, such that \[ ((\lambda_1,0),(\lambda_2,0),(0,\mu_1),(0,\mu_2)) \]
 is a symplectic basis of $(\Lambda,E)$. By Lemma~\ref{Standardbasis}, there is a $\ZZ$-basis $(\tilde \mu_1,\tilde \mu_2)$ of~$\ADual$ such that
 \[ ((\lambda_1,0),(\lambda_2,0),(0,\tilde \mu_1),(0,\tilde \mu_2)) \] is a symplectic basis of $\A \oplus \ADual$. Then, the assignment
 \[ (\lambda_i,0) \mapsto (\lambda_i,0),~ (0,\mu_i) \mapsto (0,\tilde \mu_i) \mbox{ for all } i \in \{ 1,2 \} \]
 defines a symplectic isomorphism of groups \[ \theta :~ (\Lambda,E) \to (\A \oplus \ADual,\langle \cdot , \cdot \rangle) \]
 with $\theta((\lambda,0))=(\lambda,0)$ for all $\lambda \in \A$ and $\theta(\{ 0 \} \oplus I) = \{ 0 \} \oplus \ADual$.
 It remains to show that $\theta$ is $\OD$-linear. \\
 By construction, we clearly have $\theta(x(\lambda,0)) = x\theta((\lambda,0))$ for all $x \in \OD,\lambda \in \A$.
 Now, let $x \in \OD$ and $\mu \in I$. Then, we have
 \[ \theta (x(0,\mu)) = \alpha_1 \tilde \mu_1 + \alpha_2 \tilde \mu_2 \mbox{ and } x \theta ((0,\mu)) = \beta_1 \tilde \mu_1 + \beta_2 \tilde \mu_2 \]
 with $\alpha_i,\beta_i \in \ZZ$. Since multiplying by $x$ is self-adjoint, we get for all $i \in \{ 1,2 \}$
 \begin{eqnarray*}
  \alpha_i & = & \langle (\lambda_i,0),\theta (x(0,\mu)) \rangle \\
  & = & E((\lambda_i,0),x(0,\mu)) \\
  & = & E((x \lambda_i,0),(0,\mu)) \\
  & = & \langle (x \lambda_i,0),\theta (0,\mu) \rangle \\
  & = & \langle (\lambda_i,0),x \theta (0,\mu) \rangle \\
  & = & \beta_i
 \end{eqnarray*}
 and, therefore, \[ \theta (x(0,\mu)) = x \theta ((0,\mu)). \]
 Because $\theta$ is $\ZZ$-linear, this finishes the proof.
\end{proof}

A direct consequence of the previous two lemmata and Lemma~\ref{Primfaktorbedingung} is, that in this situation there do not exist symplectic forms of any arbitrary type.
\begin{cor}
 Let $I$ be a fractional ideal of $K$ and let $E$ be a symplectic form on $\OD \oplus I$ of type $(d_1,d_2)$ compatible with the $\OD$-module structure.
 If the quotient $\tfrac{d_2}{d_1}$ is relatively prime to the conductor of $D$, then $(d_1,d_2)$ satisfies the prime factor condition in $\OD$.
\end{cor}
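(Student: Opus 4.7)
The plan is to reduce the statement to Proposition~\ref{Primfaktorbedingung} by normalizing $(\OD \oplus I, E)$ to the standard form $(\OD \oplus \tfrac{1}{\sqrt{D}}\A, \langle\cdot,\cdot\rangle)$ for some ideal $\A \trianglelefteq \OD$, while keeping track of the type.

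First, I would apply Lemma~\ref{Prinzipalisierung} in its ``enlarge the second factor'' version: there exists a fractional ideal $J \supset I$ of $K$ such that $E$ is symplectic of type $(1,1)$ on $\OD \oplus J$. Then Lemma~\ref{Hexenlemma} provides an isomorphism of symplectic $\OD$-modules
\[ \theta :~ (\OD \oplus J, E) \to (\OD \oplus \ODual, \langle \cdot , \cdot \rangle) \]
fixing the first factor and mapping $\{0\} \oplus J$ onto $\{0\} \oplus \ODual$. Restricting to the sublattice $\OD \oplus I \subset \OD \oplus J$, I obtain an $\OD$-linear symplectic embedding of $(\OD \oplus I, E)$ onto $\OD \oplus \theta(I)$, where $\theta(I) \subset \ODual$ is an $\OD$-submodule.

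Since $\ODual = \tfrac{1}{\sqrt{D}}\OD$, I can write $\theta(I) = \tfrac{1}{\sqrt{D}}\A$ for a unique fractional ideal $\A$ of $\OD$, and the inclusion $\theta(I) \subset \ODual$ forces $\A \subseteq \OD$, so $\A$ is a genuine ideal of $\OD$. Because $\theta$ is symplectic, the trace pairing $\langle \cdot,\cdot\rangle$ on $\OD \oplus \tfrac{1}{\sqrt{D}}\A$ is of type $(d_1,d_2)$.

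Now the hypothesis that $\tfrac{d_2}{d_1}$ is relatively prime to the conductor of $D$ is exactly what is needed to invoke Proposition~\ref{Primfaktorbedingung}: the existence of such an ideal $\A$ with the trace pairing of type $(d_1,d_2)$ on $\OD \oplus \tfrac{1}{\sqrt{D}}\A$ implies that $(d_1,d_2)$ satisfies the prime factor condition in $\OD$, which is the conclusion.

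The only subtle point, and essentially the main obstacle, is verifying that the ``enlargement'' $J$ produced by Prinzipalisierung does not lose control of the arithmetic data: one must check that $\theta(I)$, the image of the original lattice under the isomorphism to the standard model, is really of the form $\tfrac{1}{\sqrt{D}}\A$ for an honest (integral) ideal $\A$ rather than just a fractional one. This follows because $\theta(\{0\}\oplus I) \subseteq \theta(\{0\}\oplus J) = \{0\} \oplus \ODual$, so the inclusion $\A \subseteq \OD$ is automatic from the construction. Once this is noted, the proof is a direct reduction.
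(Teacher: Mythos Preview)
Your proof is correct and follows exactly the route the paper indicates: the corollary is stated as a direct consequence of Lemma~\ref{Prinzipalisierung}, Lemma~\ref{Hexenlemma} and Proposition~\ref{Primfaktorbedingung}, and you have simply spelled out that deduction in detail. The only remark is that your ``subtle point'' is indeed harmless, since $\theta$ is an $\OD$-module isomorphism mapping $\{0\}\oplus J$ onto $\{0\}\oplus\ODual$, so $\theta(\{0\}\oplus I)$ is automatically of the form $\{0\}\oplus\tfrac{1}{\sqrt{D}}\A$ with $\A$ an $\OD$-submodule of $\OD$, i.e.\ an integral ideal.
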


For torsion-free $\OD$-modules of rank one (i.e. fractional ideals), properness is equivalent to invertibility and also equivalent to projectivity.
In the rank two case, projectivity is a strictly stronger condition if $\OD$ is not maximal, since $\Lambda$ is isomorphic to the direct sum of two fractional ideals (see the proof below).
Nevertheless, our lattices carry a symplectic form of a type relatively prime to the conductor of $D$.
The next theorem shows in particular that this additional condition is enough to guarantee $\Lambda$ to be projective.

\begin{theo}\label{Struktursatz}
 Let $(\Lambda,E)$ be a proper symplectic $\OD$-module of type $(d_1,d_2)$ with $\tfrac{d_2}{d_1}$ relatively prime to the conductor of $D$.
 Then, there is a unique ideal $\A$ of $\OD$ such that \[ (\Lambda,E) \cong (\OD \oplus \tfrac{1}{\sqrt{D}}\A, \langle \cdot , \cdot \rangle) \] as symplectic $\OD$-modules.
 In particular, $\Lambda$ is a projective $\OD$-module and $\tfrac{d_2}{d_1}$ satisfies the prime factor condition in~$\OD$.
\end{theo}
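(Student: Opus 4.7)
The plan is to use Lemmas~\ref{Prinzipalisierung} and~\ref{Hexenlemma} to reduce $(\Lambda, E)$ to the standard trace-pairing model, after first obtaining an $\OD$-module decomposition of $\Lambda$ itself. Concretely, I would first show that $\Lambda \cong \OD \oplus I$ as $\OD$-modules for some fractional ideal $I$ of $K$. To produce such a splitting, I would pick a nonzero $v \in \Lambda$, form its $\OD$-saturation $L := Kv \cap \Lambda$ (a rank-one fractional $\OD$-ideal), and after replacing $v$ by a suitable $\OD$-multiple arrange $L \cong \OD$. The quotient $\Lambda/L$ is then torsion-free of rank one, hence a fractional ideal $I$, and the resulting short exact sequence $0 \to L \to \Lambda \to I \to 0$ must be split as a sequence of $\OD$-modules. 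This splitting is the key technical point; I would derive it from the self-adjointness and non-degeneracy of $E$, which together force the existence of a complementary Lagrangian $\OD$-summand.

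With $\Lambda = \OD \oplus I$ of type $(d_1,d_2)$ in hand, I would apply Lemma~\ref{Prinzipalisierung} in the form that enlarges the second summand to obtain a fractional ideal $J \supseteq I$ such that $\OD \oplus J$ carries $E$ of type $(1,1)$. Lemma~\ref{Hexenlemma}, applied with $\mathfrak{a} = \OD$, then provides a symplectic $\OD$-isomorphism
\[ \theta : (\OD \oplus J, E) \xrightarrow{\sim} (\OD \oplus \ODual, \langle \cdot, \cdot \rangle) \]
that fixes the first summand pointwise and maps $\{0\} \oplus J$ onto $\{0\} \oplus \ODual$. The restriction $\theta|_J : J \to \ODual$ is a nonzero $\OD$-linear isomorphism of fractional ideals, hence multiplication by some $\alpha \in K^\times$ satisfying $\alpha J = \ODual$. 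Setting $\A := \sqrt{D}\,\alpha I$, the inclusion $I \subseteq J$ gives $\alpha I \subseteq \alpha J = \tfrac{1}{\sqrt{D}}\OD$, so $\A \subseteq \OD$ is an integral ideal, and $\theta$ restricts to the desired symplectic $\OD$-isomorphism $\Lambda \cong \OD \oplus \tfrac{1}{\sqrt{D}}\A$.

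The remaining claims follow relatively directly. The prime factor condition on $(d_1,d_2)$, and in particular on $\tfrac{d_2}{d_1}$, is immediate from Proposition~\ref{Primfaktorbedingung}, since we have exhibited an ideal $\A$ of $\OD$ on which the trace pairing realizes the prescribed type. Projectivity of $\Lambda$ reduces via the direct sum decomposition to projectivity of $\tfrac{1}{\sqrt{D}}\A$, which is equivalent to $\A$ being proper (invertible); properness of $\A$ is in turn forced by properness of $\Lambda$, since any larger order in $K$ stabilizing $\tfrac{1}{\sqrt{D}}\A$ would also stabilize $\OD \oplus \tfrac{1}{\sqrt{D}}\A \cong \Lambda$. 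For uniqueness, any symplectic $\OD$-isomorphism between two models $\OD \oplus \tfrac{1}{\sqrt{D}}\A_1$ and $\OD \oplus \tfrac{1}{\sqrt{D}}\A_2$ must send the $\OD$-summand to a rank-one summand of the target, and tracking the induced pairing against the fixed type identifies the images of the $\tfrac{1}{\sqrt{D}}\A_i$ inside $\ODual$ and forces $\A_1 = \A_2$.

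The hardest step is the splitting of $0 \to \OD \to \Lambda \to I \to 0$ in the first paragraph. Over a non-maximal order such as $\OD$, a rank-two torsion-free $\OD$-module need not decompose as a direct sum of rank-one ideals, so the argument must genuinely use both properness of the $\OD$-action on $\Lambda$ and the structure provided by the symplectic form. Everything afterwards is a careful but essentially mechanical assembly of the preceding lemmas, with the main bookkeeping concern being the scalar $\alpha$ in the second paragraph: it is precisely the inclusion $I \subseteq J$ built into Lemma~\ref{Prinzipalisierung} that ensures the resulting $\A$ lands inside $\OD$ rather than merely in $K$.
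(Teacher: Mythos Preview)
Your overall architecture---decompose $\Lambda$ as $\OD \oplus I$, then apply Lemmas~\ref{Prinzipalisierung} and~\ref{Hexenlemma}---matches the paper's, but the crucial first step has a genuine gap that you yourself flag without resolving.

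The claim that one can ``arrange $L \cong \OD$'' by replacing $v$ with an $\OD$-multiple does not make sense: the saturation $L = Kv \cap \Lambda$ depends only on the line $Kv$, not on the representative $v$, and for a generic line it is a non-principal fractional ideal. More seriously, even granting some saturated rank-one submodule $L \subset \Lambda$, the splitting of $0 \to L \to \Lambda \to I \to 0$ as $\OD$-modules is exactly the point where the non-maximality of $\OD$ bites. Your suggestion to extract the splitting from the symplectic form does not work directly: Lemmas~\ref{Prinzipalisierung} and~\ref{Hexenlemma} already \emph{presuppose} a direct-sum decomposition $I_1 \oplus I_2$, so they cannot be invoked to produce one, and when $E$ has type $(d_1,d_2) \neq (1,1)$ the pairing $\Lambda \to \Hom_\ZZ(L,\ZZ)$ is not surjective, so it does not hand you a complement.

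The paper closes this gap with two ingredients you are missing. First, it invokes a theorem of Bass (quadratic orders are Bass rings: every ideal is $2$-generated) to obtain \emph{some} decomposition $\Lambda \cong \A_0 \oplus \B_0$ into ideals, with no control yet over whether $\A_0, \B_0$ are proper. Second---and this is where the hypothesis that $d_2/d_1$ is coprime to the conductor is actually used---it carries out an explicit computation in $\ZZ$-bases to show that if a larger order $\OO_E$ stabilises $\A_0$ then it also stabilises the companion ideal $I$ in the trace-pairing model $\A_0 \oplus I$, contradicting properness of $\Lambda$. Only after both $\A_0,\B_0$ are known to be invertible does the paper replace them by coprime representatives and split the resulting surjection $\A_0 \oplus \B_0 \to \OD$ to reach $\OD \oplus I$.

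Two smaller points. Your argument for properness of $\A$ is incorrect as stated: a larger order $\OO'$ stabilising $\tfrac{1}{\sqrt{D}}\A$ does \emph{not} stabilise the summand $\OD$ (since $\OO' \cdot 1 = \OO' \not\subseteq \OD$), so it need not stabilise $\Lambda$. The conclusion is nevertheless true, because $\mathfrak{N}(\tfrac{1}{d_1}\A) = d_2/d_1$ is coprime to the conductor, forcing $\A$ to be invertible; but that is a different argument. Your uniqueness sketch is also too vague; the paper's argument is short and worth internalising: any symplectic $\OD$-isomorphism is given by a matrix $A$ with entries in prescribed fractional ideals and $\det A = 1$, and $1 \in \B\A^{-1}$ forces $\A \subseteq \B$, hence $\A = \B$ by comparing norms.
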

\begin{proof}
 First we show the existence. If we had $(\Lambda,\tfrac{1}{d_1}E) \cong (\OD \oplus \tfrac{1}{\sqrt{D}}\A, \langle \cdot , \cdot \rangle)$ for some ideal $\A$ of $\OD$, then also
 $(\Lambda,E) \cong (\OD \oplus \tfrac{1}{\sqrt{D}}(d_1\A), \langle \cdot , \cdot \rangle)$ would hold.
 Hence, we may assume without loss of generality that $(d_1,d_2) = (1,d)$.

 As an $\OD$-module, $\Lambda$ is torsion-free, every ideal of $\OD$ can be generated by at most two elements and
 the integral closure of $\OD$ is the maximal order $\OK$. Hence, it follows by \cite{Bass}, Theorem 1.7, that $\Lambda$ is isomorphic to $\A_0 \oplus \B_0$ as an $\OD$-module,
 where $\A_0$ and $\B_0$ are ideals of $\OD$. We can choose $\A_0$ and $\B_0$ to be primitive.
 We denote the induced alternating form on $\A_0 \oplus \B_0$ also by~$E$.

 The first step is, to show that $\A_0$ and $\B_0$ are proper as $\OD$-ideals. By Lemma~\ref{Prinzipalisierung} and Lemma~\ref{Hexenlemma}, we know that there is a symplectic $\OD$-module isomorphism
 \[ (\A_0 \oplus \B_0 , E) \cong (\A_0 \oplus I , \langle \cdot , \cdot \rangle) \] for some fractional ideal $I \subseteq \A_0^\vee$.
 The ideal $\A_0$ is not proper as an $\OD$-module if and only if there is a prime number $p$ dividing the conductor of $D$ such that for the order $\mathcal{O}_E \supsetneq \OD$ with $E=p^{-2}D$
 we have $\mathcal{O}_E \A_0 \subseteq \A_0$. We will show that such an order would also act on $I$.
 If we write $\A_0 = \langle n, a+\gamma_D \rangle_\ZZ$, we see that $\mathcal{O}_E \A_0$ is contained in $\A_0$ if and only if
 \begin{align*}
  \gamma_E \cdot n & = (\tfrac{E(1-p)}{2}-\tfrac{a}{p}) \cdot n + \tfrac{n}{p} \cdot (a+\gamma_D)\quad {\rm and} \\
  \gamma_E \cdot (a+\gamma_D) & = -\tfrac{\mathcal{N}(a+\gamma_D)}{pn} \cdot n + (\tfrac{a+D}{p}+\tfrac{E(1-p)}{2}) \cdot (a+\gamma_D)
 \end{align*}
 are both elements of $\A_0$. Using \[ n \mid \mathcal{N}(a+\gamma_D) = a^2 + aD + \tfrac{D(D-1)}{2} \] and considering the case $p=2$ separately (we have $p \mid a$ for $p$ odd), we deduce that this is equivalent to
 \begin{equation}\label{pn}
   p \mid n \quad {\rm and} \quad pn \mid \mathcal{N}(a+\gamma_D).
 \end{equation}
 We want to deduce that $\mathcal{O}_E \A_0 \subseteq \A_0$ implies $\mathcal{O}_E I \subseteq I$.
 Since $(\A_0 \oplus \A_0^{\vee} , \langle \cdot , \cdot \rangle)$ is of type $(1,1)$ and $\A_0^\vee = \tfrac{1}{\sqrt{D}}\tfrac{1}{\mathcal{N}(\A_0)}\A^\sigma$,
 the fractional ideal $I$ is of the form \[ I = \tfrac{1}{\sqrt{D}}\tfrac{1}{\mathcal{N}(\A_0)}J^\sigma \] for an $\OD$-ideal $J \subseteq \A_0$ with $[\A_0 : J]=d$.
 Thus, we can write \[ J = \langle m, b+c\gamma_D \rangle_\ZZ \] for some $m,b,c \in \NN_0$ with $n \mid m$ and $mc = nd$.
 We have to show that the conditions in \eqref{pn} hold for the $\OD$-ideal $\langle \tfrac{m}{c}, \tfrac{b}{c}+\gamma_D \rangle_\ZZ$.
 The first one is clear, as $p \mid n \mid m$, $c \mid d$ and $p \nmid d$.
 For the second one, we remark that \eqref{pn} implies that the norm of every element in $\A_0$ is a multiple of $pn$.
 Indeed, for $\lambda=kn+l(a+\gamma_D)$ with $k,l \in \ZZ$, we compute \[ \mathcal{N}(\lambda) = l^2\mathcal{N}(a+\gamma_D) + kn(kn+2la+lD) \] and use $p \mid a$ for $p$ odd.
 Since $J \subseteq \A_0$, we have $pn \mid \mathcal{N}(b+c\gamma_D)$. Together with $\tfrac{m}{c} \mid \mathcal{N}(\tfrac{b}{c}+\gamma_D)$ by Lemma~\ref{Idealnormalform}
 and $p \nmid d$, we get \[ p\tfrac{m}{c} \mid \mathcal{N}(\tfrac{b}{c}+\gamma_D). \]
 We have shown that $\mathcal{O}_E \A_0 \subseteq \A_0$ implies $\mathcal{O}_E I \subseteq I$ and thus $\mathcal{O}_E \Lambda \subseteq \Lambda$.
 In particular, if $\Lambda$ is proper as an $\OD$-module, then $\A_0$ and analogously $\B_0$ are also proper and $\Lambda$ is projective.

 Now, we are in the situation that $\Lambda$ is isomorphic to $\A_0 \oplus \B_0$, where~$\A_0$ and~$\B_0$ are proper ideals of~$\OD$.
 By Corollary~\ref{Idealaequiv}, we may choose $\A_0$ and $\B_0$ to be relatively prime.
 Hence, the $\OD$-module homomorphism \[ \A_0 \oplus \B_0 \to \OD ,\quad (x,y) \mapsto x+y \] is surjective, and as $\OD$ is free, this epimorphism splits.
 Thus, $\Lambda$ is isomorphic to $\OD \oplus I$ for some fractional ideal $I$ of $\OD$.
 Finally, by Lemma~\ref{Prinzipalisierung} and Lemma~\ref{Hexenlemma}, there is a symplectic $\OD$-module isomorphism
 \[ (\OD \oplus I, E) \cong (\OD \oplus \tfrac{1}{\sqrt{D}} \A, \langle \cdot,\cdot \rangle) \] for some ideal $\A \trianglelefteq \OD$.
 In particular, $(d_1,d_2)$ satisfies the prime factor condition in $\OD$.

 \bigskip
 It remains to show that the choice of the ideal $\A$ is unique. Let us assume that there is any ideal $\B$ with
 \[ (\OD \oplus \tfrac{1}{\sqrt{D}}\A, \langle \cdot,\cdot \rangle) \cong (\OD \oplus \tfrac{1}{\sqrt{D}} \B, \langle \cdot,\cdot \rangle) \] as symplectic $\OD$-modules.
 An isomorphism $\varphi$ between the underlying $\OD$-modules is given by multiplication with a matrix
 \[ A \in \GL \begin{pmatrix} \OD & \sqrt{D}\A^{-1} \\ \tfrac{1}{\sqrt{D}}\B & \B\A^{-1} \end{pmatrix}. \]
 Since \[ \langle \varphi(x,y),\varphi(\tilde x,\tilde y) \rangle = \tr(\det(A)(\tilde x y - x \tilde y)), \]
 it follows that $\varphi$ is symplectic if and only if $\det(A)=1$. Hence, we have $1 \in \B\A^{-1}$ and thus $\A \subseteq \B$.
 Since $\A$ and $\B$ have the same norm, they must coincide.
\end{proof}

\begin{cor}\label{2hochs}
 Let $D>0$ be a discriminant and let $d_1 \mid d_2 \in \NN$ such that $d=\tfrac{d_2}{d_1}$ is relatively prime to the conductor of $D$ and satisfies the prime factor condition in~$\OD$.
 Furthermore, we denote by $s$ the number of splitting prime divisors of $d$ in $\OD$.
 Then there are precisely $2^s$ isomorphism classes of proper symplectic $\OD$-modules of type $(d_1,d_2)$.
\end{cor}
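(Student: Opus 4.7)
The plan is to combine the classification from Theorem~\ref{Struktursatz} with an explicit bookkeeping of which ideals $\A \trianglelefteq \OD$ produce the type $(d_1,d_2)$, and then count these ideals via the prime ideal factorization in $\OK$.

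First I would use Theorem~\ref{Struktursatz} to reduce the problem: the isomorphism classes of proper symplectic $\OD$-modules of type $(d_1,d_2)$ with $d=d_2/d_1$ coprime to the conductor are in bijection with the set of those ideals $\A \trianglelefteq \OD$ for which $(\OD \oplus \tfrac{1}{\sqrt{D}}\A,\langle\cdot,\cdot\rangle)$ has type exactly $(d_1,d_2)$. The key structural observation I would then prove is that these are precisely the ideals of the form $\A = d_1\B$, where $\B \trianglelefteq \OD$ is a primitive ideal of norm $d$ (necessarily coprime to the conductor since $d$ is). One direction uses Lemma~\ref{Gradindex} to see that $\mathfrak{N}(\A)=d_1d_2$, and then a Smith-normal-form argument analogous to the one in the proof of Proposition~\ref{Primfaktorbedingung} (or a direct application of Lemma~\ref{Lagrange2} with the roles of the two Lagrangians swapped) shows that if $\A = m\B$ with $\B$ primitive, then the form on $\OD \oplus \tfrac{1}{\sqrt{D}}\A$ has type $(m,m\mathfrak{N}(\B))$; matching with $(d_1,d_2)$ forces $m=d_1$ and $\mathfrak{N}(\B)=d$. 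The converse direction (scaling a type $(1,d)$-module by $d_1$ in one Lagrangian gives type $(d_1,d_2)$) is immediate from the same Smith normal form.

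Next I would use Lemma~\ref{Idealkorrespondenz} to transfer the counting problem: primitive $\OD$-ideals of norm $d$ coprime to the conductor correspond bijectively, via $\B \mapsto \B\OK$, to primitive $\OK$-ideals of norm $d$. Primitivity is preserved under this correspondence, since $\B \subseteq c\OD$ with $\gcd(c,f)=1$ is equivalent to $\B\OK \subseteq c\OK$. It then remains to count primitive ideals of $\OK$ of norm $d = \prod p_i^{k_i}$, where the prime factor condition guarantees that no $p_i$ is inert and that every ramified $p_i$ has $k_i=1$.

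Finally I would run the count prime by prime using the unique factorization in $\OK$ and the norm-multiplicativity. For a ramified $p_i$ with $p_i\OK = \PP_i^2$ and $k_i=1$, the unique ideal of norm $p_i$ is $\PP_i$, which is primitive, contributing a factor of $1$. For a split $p_i$ with $p_i\OK = \PP_i\PP_i^\sigma$, the ideals of norm $p_i^{k_i}$ are $\PP_i^a(\PP_i^\sigma)^{k_i-a}$ for $0 \leq a \leq k_i$; such an ideal is primitive precisely when $a=0$ or $a=k_i$, since otherwise $p_i\OK$ divides it. This yields exactly $2$ primitive ideals of norm $p_i^{k_i}$ for each splitting prime divisor $p_i$ of $d$. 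Multiplying over all prime divisors gives $2^s$ as required. The only step that requires real care, and which I expect to be the main technical obstacle, is the clean characterization of the ideals $\A$ producing the prescribed type $(d_1,d_2)$: it is essential to separate the ``scalar part'' $d_1$ (which governs both invariants simultaneously) from the primitive part $\B$ (which governs only the quotient $d$), and this must be done carefully because Lemma~\ref{Lagrange2} is stated for submodules of one Lagrangian summand and one has to apply it to the second summand $\tfrac{1}{\sqrt{D}}\A \subseteq \ODual$.
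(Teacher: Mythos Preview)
Your proposal is correct and follows the same route as the paper: the paper's one-line proof simply invokes the uniqueness statement of Theorem~\ref{Struktursatz} together with the construction in Proposition~\ref{Primfaktorbedingung} via the prime ideal decomposition, and your write-up is precisely a detailed unpacking of that reference (reduction to primitive ideals of norm $d$ by stripping off the factor $d_1$, then the $2^s$-count of primitive ideals of norm $d$ prime by prime). The technical worry you flag about which Lagrangian summand Lemma~\ref{Lagrange2} is applied to is already addressed in the paper by the remark that Proposition~\ref{Primfaktorbedingung} holds for $\OD \oplus \tfrac{1}{\sqrt{D}}\A$ as well as for $\A \oplus \ODual$, so no extra work is needed there.
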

\begin{proof}
 This follows immediately from the uniqueness statement in Theorem~\ref{Struktursatz} and
 the construction in Proposition~\ref{Primfaktorbedingung} using the prime ideal decomposition.
\end{proof}

\clearpage

\section{The real multiplication loci in dimension two and three}

The moduli space of principally polarized Abelian surfaces with some choice of real multiplication by $\OD$ is a single Hilbert modular surface $\Gamma \backslash \HH^2$.
In this section we construct the moduli space of principally polarized Abelian varieties of dimension three with some choice of real multiplication by $\OD$ on a two-dimensional subvariety,
such that the induced polarization on this subvariety is of a fixed type. This space turns out to be the disjoint union of Hilbert modular varieties $X_i = \Gamma_i \backslash \HH^3$,
where the $\Gamma_i$ are certain subgroups of $\SL_2(\QQ(\sqrt{D})) \times \SL_2(\QQ)$.
Furthermore, we will find lower and upper bounds for $\Gamma_i$ in $\SL_2(\QQ(\sqrt{D})) \times \SL_2(\QQ)$, both being direct products of a subgroup of $\SL_2(\QQ(\sqrt{D}))$ and a subgroup of 
$\SL_2(\QQ)$, and with both inclusions of finite index. Finally, we interpret the resulting quotients of $\HH^3$ by these groups as the moduli spaces of products of varieties with level structures.

\subsection{The moduli space in dimension two}\label{dim2}

First, we want to parameterize all polarized Abelian surfaces with real multiplication by~$\OD$, where the quotient of the type is relatively prime to the conductor of~$D$.
In accordance with the classification of symplectic~$\OD$-modules in the previous section, we will show that the moduli space of polarized Abelian surfaces of a fixed type together with the choice of
real multiplication by a fixed order~$\OD$ is the union of~$2^s$ Hilbert modular surfaces, where~$s$ is determined by the type and the discriminant.

\bigskip
Let us first specify the definition of real multiplication from Section~\ref{AbelianVarieties} to the two-dimensional case.
\begin{defi}
 Let $(A,H)$ with $A=V/\Lambda$ be a polarized Abelian surface.
 \emph{Real multiplication on $A$ by $\OD$} is an injective ring homomorphism \[ \rho :~ \OD \hookrightarrow {\rm End}(A), \] such that
 \begin{enumerate}
  \item the action of $\OD$ is self-adjoint with respect to $H$\!, i.e. \[ H(\rho (x)(v),w) = H(v,\rho (x)(w)) \] for all $x \in \OD$ and $v,w \in V$
        (here we identify $\rho (x)$ with its analytic representation).
  \item $\rho$ is proper, i.e. there is no injective ring homomorphism \[ \rho': \mathcal{O}_E \hookrightarrow {\rm End}(A) \]
        with $\rho'|_{\OD} = \rho$ for some order $\mathcal{O}_E \supsetneqq \OD$ in $K$.
 \end{enumerate}
 A \emph{polarized Abelian surface with real multiplication by $\OD$} is a triple \[ (A,H,\rho), \] where $(A,H)$ is a polarized Abelian surface and
 $\rho : \OD \hookrightarrow {\rm End}(A)$ a choice of real multiplication on $A$ by $\OD$.
\end{defi}
{\bf Construction of polarized Abelian surfaces with real multiplication.}
Let $(d_1,d_2) \in \NN^2$ with $d_1 \mid d_2$, such that $\tfrac{d_2}{d_1}$ is relatively prime to the conductor of $D$ and satisfies the prime factor condition in $\OD$.
Then by Lemma~\ref{Primfaktorbedingung}, there is an ideal $\A \trianglelefteq \OD$, such that the trace pairing on $\OD \oplus \frac{1}{\sqrt{D}} \A$ is of type $(d_1,d_2)$.
The first step is to construct polarized Abelian varieties with real multiplication by $\OD$ for any such ideal.

\begin{prop}\label{KonstruktionDim2}
 Let $(d_1,d_2) \in \NN^2$ and let $\A \subseteq \OD$ as above.
 Then for each pair~$z = (z_1,z_2) \in \HH^2$, the embedding of groups \[ \phi_z:~ \OD \oplus \tfrac{1}{\sqrt{D}}\A \hookrightarrow \CC^2,\quad (x,y) \mapsto (x+yz_1,x^{\sigma}+y^{\sigma}z_2)\]
 yields an Abelian variety $A_z := \CC^2/ \Lambda_z$ for $\Lambda_z := \phi_z(\OD \oplus \tfrac{1}{\sqrt{D}} \A)$. A polarization~$H_z$ of type $(d_1,d_2)$ is induced by the trace pairing.
 The polarized Abelian variety $(A_z,H_z)$ admits real multiplication by~$\OD$, given by the $\OD$-module structure on $\OD \oplus \tfrac{1}{\sqrt{D}}\A$.
\end{prop}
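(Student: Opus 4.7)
The plan is to verify in turn the four geometric assertions: that $\Lambda_z$ is a lattice in $\CC^2$, that the Hermitian form induced by the trace pairing is positive definite with the correct imaginary part, that the type is $(d_1,d_2)$, and that $\OD$ acts by self-adjoint endomorphisms properly.

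First I would fix a $\ZZ$-basis $(1,\gamma,\mu_1,\mu_2)$ of $\OD\oplus\tfrac{1}{\sqrt D}\A$ (that is, $1,\gamma$ a basis of $\OD$ and $\mu_1,\mu_2$ of $\tfrac{1}{\sqrt D}\A$) and write down the period matrix $\Pi_z$ whose columns are the images under $\phi_z$. To check that $\Lambda_z$ is a lattice it suffices to verify $\det\bigl(\begin{smallmatrix}\Pi_z\\\overline{\Pi_z}\end{smallmatrix}\bigr)\neq 0$. Row-reducing by subtracting the first two rows from the last two factors the determinant as the product $(\gamma^\sigma-\gamma)(\bar z_1-z_1)(\bar z_2-z_2)(\mu_1\mu_2^\sigma-\mu_2\mu_1^\sigma)$, which is non-zero because $\gamma\notin\QQ$, $(\mu_1,\mu_2)$ is a $\QQ$-basis of $K$, and $z_1,z_2\in\HH$.

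Next I would define the Hermitian form
\[ H_z\bigl((v_1,v_2),(w_1,w_2)\bigr) \;:=\; \frac{v_1\bar w_1}{\mathrm{Im}(z_1)}+\frac{v_2\bar w_2}{\mathrm{Im}(z_2)}. \]
Positive definiteness is immediate from $\mathrm{Im}(z_i)>0$. For the imaginary part, plug in $v_i=\phi_z(x,y)_i$ and $w_i=\phi_z(\tilde x,\tilde y)_i$ and expand: the real cross-terms cancel, leaving
\[ \mathrm{Im}\!\left(\frac{v_1\bar w_1}{\mathrm{Im}(z_1)}\right)=\tilde x y - x\tilde y,\qquad \mathrm{Im}\!\left(\frac{v_2\bar w_2}{\mathrm{Im}(z_2)}\right)=\tilde x^\sigma y^\sigma - x^\sigma\tilde y^\sigma, \]
whose sum is $\tr(\tilde x y-x\tilde y)$, the trace pairing. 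This identifies $\mathrm{Im}(H_z)|_{\Lambda_z\times\Lambda_z}$ with the trace pairing on $\OD\oplus\tfrac{1}{\sqrt D}\A$, which by Proposition~\ref{Primfaktorbedingung} is a symplectic form of type $(d_1,d_2)$; hence $H_z$ is a polarization of that type.

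Finally, for real multiplication I would transport the $\OD$-action on $\OD\oplus\tfrac{1}{\sqrt D}\A$ through $\phi_z$: multiplication by $\alpha\in\OD$ sends $(x+yz_1,x^\sigma+y^\sigma z_2)$ to $(\alpha x+\alpha y z_1,\alpha^\sigma x^\sigma+\alpha^\sigma y^\sigma z_2)$, so its analytic representation is the diagonal matrix $\mathrm{diag}(\alpha,\alpha^\sigma)$, which is $\CC$-linear and preserves $\Lambda_z$. Self-adjointness with respect to $H_z$ follows because $\alpha,\alpha^\sigma\in\RR$ act by scalars. For properness, any order $\mathcal O_E\supsetneq\OD$ acting on $\Lambda_z$ via the diagonal embedding would satisfy $\mathcal O_E\cdot\OD\subseteq\OD$ on the first summand, contradicting the fact that $\OD$ is proper as a module over itself; so $\rho$ is proper. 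The main obstacle is really just the bookkeeping for the imaginary-part calculation — once the explicit formula for $H_z$ is guessed, everything else is a straightforward verification.
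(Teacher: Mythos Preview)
Your proof is correct and takes a genuinely different route from the paper. The paper works via the Riemann relations: it chooses a symplectic basis for the trace pairing, writes down the period matrix $\Pi$ with respect to the standard $\CC$-basis of $\CC^2$, then changes the $\CC$-basis to bring $\Pi$ into the standard form $(Z,\diag(d_1,d_2))$ with $Z=B^t\diag(-z_1^{-1},-z_2^{-1})B$; the observation $Z\in\HH_2$ then yields positive definiteness of $H_z$ by the general theory from Section~\ref{AbelianVarieties}. Your approach instead guesses the explicit diagonal Hermitian form $H_z=\diag(\mathrm{Im}(z_1)^{-1},\mathrm{Im}(z_2)^{-1})$ in the standard basis and verifies directly that its imaginary part restricts to the trace pairing on $\Lambda_z$. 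Your route is more elementary and self-contained (no appeal to the Siegel upper half-space machinery), and it also makes self-adjointness of the diagonal $\OD$-action transparent; the paper's route, on the other hand, produces the period matrix in the normalized form $(Z,D)$ that is reused in the proofs of Propositions~\ref{SurjektivDim2} and~\ref{WohldefiniertheitDim2}, so it is better suited to the subsequent moduli-space arguments. Your properness argument (that any larger order acting diagonally would have to stabilize $\OD$ in the first summand) is a point the paper leaves implicit.
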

\begin{proof}
 By Lemma~\ref{Standardbasis} we can take a $\ZZ$-basis $(\eta_1,\eta_2)$ of $\OD$ satisfying the sign convention $\I (\eta_1 \eta_2^{\sigma}) = - \tfrac{1}{2}\I(\sqrt{D})$.
 Then defining  \[ \lambda_1 := (\eta_1,0),~ \lambda_2 := (\eta_2,0),~ \mu_1 := (0,\tfrac{d_1}{\sqrt{D}}\eta_2^{\sigma}),~ \mu_2 := (0,\tfrac{-d_2}{\sqrt{D}}\eta_1^{\sigma}), \]
 implies that $(\lambda_1,\lambda_2,\mu_1,\mu_2)$ is a symplectic basis of $(\OD \oplus \frac{1}{\sqrt{D}} \A, \langle \cdot,\cdot \rangle)$.

 The images of $\lambda_1,\lambda_2,\mu_1,\mu_2$ under $\phi_z$ are linearly independent over $\RR$, hence $A_z = \CC^2 / \Lambda_z$ is a complex torus.
 Let $E_z$ be the extension of the trace pairing to~$\CC^2$, i.e. the unique $\RR$-bilinear form on~$\CC^2$ with \[ E_z(\phi_z(\lambda),\phi_z(\lambda')) = \langle \lambda,\lambda' \rangle \]
 for all $\lambda,\lambda' \in \OD \oplus \frac{1}{\sqrt{D}} \A$.
 The period matrix for $A_z$ with respect to the $\ZZ$-basis \[ (\phi_z(\lambda_1),\phi_z(\lambda_2),\phi_z(\mu_1),\phi_z(\mu_2)) \] of the lattice $\Lambda_z$ and the standard $\CC$-basis
 $((1,0),(0,1))$ of $\CC^2$ is \[ \Pi = \begin{pmatrix} \eta_1 & \eta_2 & \frac{d_1}{\sqrt{D}}\eta_2^{\sigma} z_1 & \frac{-d_2}{\sqrt{D}}\eta_1^{\sigma} z_1 \\
 \eta_1^{\sigma} & \eta_2^{\sigma} & \frac{-d_1}{\sqrt{D}}\eta_2 z_2 & \frac{d_2}{\sqrt{D}}\eta_1 z_2 \end{pmatrix}. \]
 Setting \[ B := \begin{pmatrix} \eta_1 & \eta_2 \\ \eta_1^{\sigma} & \eta_2^{\sigma} \end{pmatrix} \] and then multplying $\Pi$ from the left with
 \[ B^t \begin{pmatrix} -z_1^{-1} & 0 \\ 0 & -z_2^{-1} \end{pmatrix} \] yields
 \[ \Pi' = \left( B^t \begin{pmatrix} -z_1^{-1} & 0 \\ 0 & -z_2^{-1} \end{pmatrix} B , \begin{pmatrix} d_1 & 0 \\ 0 & d_2 \end{pmatrix} \right)\]
 as a period matrix for $A_z$ with respect to the same $\ZZ$-basis of $\Lambda_z$. Let \[ Z := B^t \begin{pmatrix} -z_1^{-1} & 0 \\ 0 & -z_2^{-1} \end{pmatrix} B. \]
 Since our fixed $\ZZ$-basis of $\Lambda_z$ is symplectic with respect to the alternating form~$E_z$, the matrix ${\rm Im}(Z)^{-1}$ is the matrix of the Hermitian form $H_z$ corresponding to $E_z$,
 with respect to the new $\CC$-basis of $\CC^2$. Obviously, we have $Z \in \HH_2$, so $H_z$ is indeed a polarization on $A_z$.

 Real multiplication $\rho_z :~ \OD \hookrightarrow {\rm End}(A_z)$ on $A_z$ by $\OD$ is defined by
 \[ \rho :~ \OD \hookrightarrow {\rm End}_{\CC}(\CC^2),\quad \rho(k)(c_1,c_2) := (kc_1,k^{\sigma}c_2) \] for all $k \in \OD$ and $(c_1,c_2) \in \CC^2$.
 Then we obviously have \[ \phi_z(k(x,y)) = \rho(k) \phi_z((x,y)) \] for all $k \in \OD$ and $(x,y) \in \OD \oplus \frac{1}{\sqrt{D}} \A$.
 In particular, $\rho (k)$ acts on $\Lambda_z$ and is self-adjoint with respect to $E_z$ for all $k \in \OD$, and thus the triple $(A_z,H_z,\rho_z)$ is indeed a
 polarized Abelian surface with real multiplication by $\OD$.
\end{proof}

 An \emph{isomorphism of polarized Abelian surfaces with real multiplication by $\OD$}, say $(A,H,\rho)$ and $(A',H',\rho')$, is an isomorphism of polarized Abelian varieties
 \[ f:~ (A,H) \to (A',H') \] that commutes with the action of $\OD$, i.e. the diagram
 \[ \begin{CD}  A @>f>> A' \\
  @VV\rho(k)V @VV\rho'(k)V \\
  A @>f>> A' \end{CD} \]
 is commutative for all $k \in \OD$.

\begin{prop}\label{SurjektivDim2}
 The triples $(A_z,H_z,\rho_z)$ defined above are up to isomorphism all polarized Abelian surfaces with real multiplication by $\OD$,
 where the quotient of the type is relatively prime to the conductor of $D$.
\end{prop}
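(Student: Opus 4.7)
Let $(A,H,\rho)$ be a polarized Abelian surface with real multiplication by $\OD$ of type $(d_1,d_2)$ with $d_2/d_1$ relatively prime to the conductor of $D$, and write $A = V/\Lambda$ and $E = E_H|_{\Lambda \times \Lambda}$. The goal is to exhibit some $z \in \HH^2$ together with an isomorphism $(A_z,H_z,\rho_z) \cong (A,H,\rho)$. First I observe that $(\Lambda,E)$ is a proper symplectic $\OD$-module of type $(d_1,d_2)$: torsion-freeness and rank two are clear from $\Lambda \subset V \cong \CC^2$, compatibility of $E$ with the $\OD$-action is precisely the self-adjointness condition in the definition of real multiplication, and properness as an $\OD$-module is equivalent to properness of $\rho$ because the action of any strictly larger order would extend $\QQ$-linearly to $\Lambda_\QQ$ and thence $\RR$-linearly to $V$, commuting with the complex structure $J$ (since $J$ already commutes with the $\OD$-action) and thus yielding an endomorphism of $A$. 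Applying Theorem~\ref{Struktursatz} produces a unique ideal $\A \trianglelefteq \OD$ together with an isomorphism $\phi: \OD \oplus \tfrac{1}{\sqrt{D}}\A \xrightarrow{\sim} \Lambda$ of symplectic $\OD$-modules.

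Next I analyze $V$ using the $\OD$-action. The real algebra $\OD \otimes_\ZZ \RR \cong \RR \oplus \RR$, via the two real embeddings $\iota_1, \iota_2$ of $K$, acts on $V$, and its two orthogonal idempotents yield a direct sum decomposition $V = V_1 \oplus V_2$ into complex subspaces ($\CC$-linearity uses that $J$ commutes with the $\OD$-action), each of complex dimension one by the rank count $\dim_\CC V = 2$. After choosing $\CC$-linear identifications $V_i \cong \CC$, the $K$-linear extension of $\phi$ to $K \oplus K$ takes the form $\phi_\RR(x,y) = (\iota_1(x) p_1 + \iota_1(y) q_1,\ \iota_2(x) p_2 + \iota_2(y) q_2)$, with nonzero $p_i, q_i \in \CC$; nonvanishing of the $p_i$ follows since the image of $K \cdot (1,0)$ must span a full two-dimensional $\RR$-subspace of $V$, hence hit both $V_i$. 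The scaling freedom in the identifications $V_i \cong \CC$ allows me to arrange $p_1 = p_2 = 1$, putting $\phi_\RR$ exactly into the shape of the embedding $\phi_z$ of Proposition~\ref{KonstruktionDim2} with $z = (q_1, q_2) \in \CC^2$.

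The final and main step is to verify $z \in \HH^2$ from the positivity of $H$. Self-adjointness of the $\OD$-action combined with the distinctness of $\iota_1, \iota_2$ forces the decomposition $V = V_1 \oplus V_2$ to be orthogonal with respect to $H$, so $H$ is block diagonal with $H|_{V_i}(v,w) = c_i v \overline{w}$ for some constants $c_i > 0$. Computing the pullback $\phi_\RR^*({\rm Im}\, H)$ on $K \oplus K$ and equating it with the trace pairing $\langle\cdot,\cdot\rangle$ on $\OD \oplus \tfrac{1}{\sqrt{D}}\A$ yields two linear equations in the products $c_i \cdot {\rm Im}(z_i)$ whose unique solution forces ${\rm Im}(z_1) > 0$ and ${\rm Im}(z_2) > 0$ (and also pins down the constants $c_i$). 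Once this positivity is secured, $\phi_\RR$ is a $\CC$-linear isomorphism $\CC^2 \to V$ which identifies the lattice $\Lambda_z$ with $\Lambda$, intertwines $\rho_z$ with $\rho$ and respects the Hermitian forms, hence descends to the desired isomorphism $(A_z,H_z,\rho_z) \cong (A,H,\rho)$. The principal obstacle is the bookkeeping of sign conventions in the positivity calculation (inherited from Lemma~\ref{Standardbasis} and the definition of the trace pairing); once the conventions are aligned, the calculation is short and the result follows directly.
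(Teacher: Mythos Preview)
Your proof is correct and follows essentially the same strategy as the paper: apply Theorem~\ref{Struktursatz} to identify $(\Lambda,E)$ with $(\OD \oplus \tfrac{1}{\sqrt{D}}\A,\langle\cdot,\cdot\rangle)$, diagonalize the $\OD$-action on $V$, and then deduce $z_i \in \HH$ from positive definiteness of $H$. The only difference is packaging: the paper fixes a specific $k=\eta_2/\eta_1$, shows its eigenvalues are $\{k,k^\sigma\}$ via the rational characteristic polynomial, then carries out an explicit period-matrix computation and reads off $z_i \in \HH$ from the Siegel upper half-space condition; you instead use the idempotents of $\OD \otimes_\ZZ \RR$ to split $V$ and verify $c_i\,{\rm Im}(z_i)=1$ directly by matching ${\rm Im}\,H$ with the trace pairing, which is a cleaner coordinate-free version of the same calculation.
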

\begin{proof}
 Let $(A=V/\Lambda,H,\rho)$ be a polarized Abelian surface with real multiplication by~$\OD$, with the quotient of the type $(d_1,d_2)$ of $(A,H)$ relatively prime to the conductor of~$D$. 
 We identify $\rho$ with its analytic representation \[ \rho:~ \OD \hookrightarrow {\rm End}_{\CC}(V). \]
 Then $\rho$ defines a proper $\OD$-module structure on $\Lambda$. Let $E=E_H$ be the alternating form on $\Lambda$ induced by $H$.
 As an $\OD$-module, $\Lambda$ is torsion-free, and we have $\dim_\QQ(\Lambda \otimes_\ZZ \QQ)=4$.
 Hence, by Theorem~\ref{Struktursatz} there is a unique ideal $\A$ in $\OD$, such that there is an isomorphism
 \[ \phi:~ (\OD \oplus \tfrac{1}{\sqrt{D}} \A, \langle \cdot,\cdot \rangle) \cong (\Lambda, E). \]
 In particular, $(d_1,d_2)$ satisfies the prime factor condition in $\OD$.
 As in the proof of the previous proposition, let $(\eta_1,\eta_2)$ be a $\ZZ$-basis of $\OD$ satisfying the sign convention $\I (\eta_1 \eta_2^{\sigma}) = - \tfrac{1}{2}\I \sqrt{D}$.
 Again we define \[ \lambda_1 := (\eta_1,0),~ \lambda_2 := (\eta_2,0),~ \mu_1 := (0,\tfrac{d_1}{\sqrt{D}}\eta_2^{\sigma}),~ \mu_2 := (0,\tfrac{-d_2}{\sqrt{D}}\eta_1^{\sigma}) \] 
 and get $(\lambda_1,\lambda_2,\mu_1,\mu_2)$ as a symplectic basis of $(\OD \oplus \frac{1}{\sqrt{D}} \A, \langle \cdot,\cdot \rangle)$.
 The ring homomorphism $\rho$ extends uniquely to a homomorphism of $\QQ$-algebras \[ \rho_{\QQ}:~ K \hookrightarrow {\rm End}_{\CC}(V).\]
 
 We will now determine the eigenvalues of $\rho_{\QQ}(k)$. If $k$ is rational, we have $\rho_{\QQ}(k) = k {\rm id}_V$ and $k=k^{\sigma}$ is the only eigenvalue. \\
 Otherwise, if $k \in K \smallsetminus \QQ$, the minimal polynomial of $k$ with respect to the field extension $K / \QQ$ is \[ m(x) = (X-k)(X-k^{\sigma}) = X^2-\tr(k)X + \mathcal{N}(k) \in \QQ[X] \]
 and because of $m(\rho_{\QQ}(k))=0$, we have $\mu_{\rho_{\QQ}(k)}(X) \mid m(X)$, where $\mu_{\rho_{\QQ}(k)}(X)$ denotes the minimal polynomial of $\rho_{\QQ}(k) \in {\rm End}_{\CC}(V)$.
 As the characteristic polynomial of $\rho_{\QQ}(k)$ as an $\RR$-linear map of the 4-dimensional $\RR$-vector space $V$ is equal to the characteristic polynomial of the rational representation
 of $\rho_{\QQ}(k)$ and therefore a rational polynomial, both, $k$ and $k^{\sigma}$, must be eigenvalues of $\rho_{\QQ}(k)$.

 Summarizing, we have seen that for every $k \in K$ the $\CC$-linear map $\rho_{\QQ}(k)$ is diagonalizable and the set of eigenvalues is $\{ k,k^{\sigma} \}$.

 With $\eta_1,\eta_2$ from above, we fix $k:=\frac{\eta_2}{\eta_1} \in K \smallsetminus \QQ$ and choose a $\CC$-basis of~$V$, such that the matrix of $\rho_{\QQ}(k)$
 with respect to this basis is \[ D(k) := \begin{pmatrix} k & 0 \\ 0 & k^{\sigma} \end{pmatrix}. \]
 Since $\rho_{\QQ}$ is a $\QQ$-algebra homomorphism, the matrix of $\rho_{\QQ}(k^{\sigma})$ with respect to the chosen basis is
 \[ \mathcal{N}(k) D(k)^{-1} = \begin{pmatrix} k^{\sigma} & 0 \\ 0 & k \end{pmatrix}. \]
 With respect to this basis we write \[ \phi (\lambda_1) = \begin{pmatrix} r\eta_1 \\ s\eta_1^{\sigma} \end{pmatrix} \mbox{ and }
 \phi (\mu_2) = \begin{pmatrix} t\frac{d_2}{\sqrt{D}}\eta_1^{\sigma} \\ -u\frac{d_2}{\sqrt{D}}\eta_1 \end{pmatrix} \] with $r,s,t,u \in \CC$.
 Using the representation matrices of $\rho_{\QQ}(k)$ and $\rho_{\QQ}(k^{\sigma})$, we deduce that the period matrix of $A$ with respect to the symplectic basis
 $(\phi(\lambda_1),\phi(\lambda_2),\phi(\mu_1),\phi(\mu_2))$ is \[ \Pi = \begin{pmatrix} r\eta_1 & r\eta_2 & t\frac{-d_1}{\sqrt{D}}\eta_2^{\sigma} & t\frac{d_2}{\sqrt{D}}\eta_1^{\sigma} \\
 s\eta_1^{\sigma} & s\eta_2^{\sigma} & u\frac{d_1}{\sqrt{D}}\eta_2 & u\frac{-d_2}{\sqrt{D}}\eta_1 \end{pmatrix}. \]
 The columns have to be linearly independent over $\RR$, so $r,s,t,u \not= 0$.
 Again we set \[ B := \begin{pmatrix} \eta_1 & \eta_2 \\ \eta_1^{\sigma} & \eta_2^{\sigma} \end{pmatrix}, \] and changing the $\CC$-basis of $V$ while keeping the same $\ZZ$-basis of $\Lambda$ yields
 the period matrix \[ \Pi' = \left( B^t \begin{pmatrix} -z_1^{-1} & 0 \\ 0 & -z_2^{-1} \end{pmatrix} B , \begin{pmatrix} d_1 & 0 \\ 0 & d_2 \end{pmatrix} \right) \]
 with $z_1 := -\frac{t}{r}$ and $z_2 := -\frac{u}{s}$. The matrix \[ Z := B^t \begin{pmatrix} -z_1^{-1} & 0 \\ 0 & -z_2^{-1} \end{pmatrix} B \] has to be in the Siegel upper half space because
 ${\rm Im}(Z)^{-1}$ represents the Hermitian form $H$\!, and therefore $z_1,z_2 \in \HH$. Thus we get an isomorphism \[ (A,H) \cong (A_z,H_z) \] with $z:=(z_1,z_2) \in \HH^2$.
 Since for all $k \in \OD$ the matrix $D(k)$ commutes with diagonal matrices, this isomorphism is also compatible with the action of $\OD$, which yields \[ (A,H,\rho) \cong (A_z,H_z,\rho_z). \]
\end{proof}

We have seen that polarized Abelian surfaces of a fixed type with real multiplication by~$\OD$ are classified by certain ideals of $\OD$, and this leads to the following definition.

\begin{defi}
 Let $D \in \NN$ be a non-square discriminant and let $d_1,d_2 \in \NN$ with $d_1 \mid d_2$. Then we denote by~$X_{D,(d_1,d_2)}$ the moduli space of all isomorphism classes $[(A,H,\rho)]$,
 where $(A,H)$ is a polarized Abelian surface of type $(d_1,d_2)$ that admits real multiplication by $\OD$ and $\rho : \OD \hookrightarrow {\rm End}(A)$ is some choice of real multiplication.

 Furthermore, if $\tfrac{d_2}{d_1}$ is relatively prime to the conductor of~$D$, we denote by \[ X_\A \subseteq X_{D,(d_1,d_2)} \] the locus of those isomorphism classes coming from the ideal $\A$
 in the construction above. If $[(A,H,\rho)]$ is in $X_\A$, then we say that $(A,H,\rho)$ is \emph{of type~$\A$}. (Notice that the type $\A$ uniquely determines the type $(d_1,d_2)$.)
\end{defi}

We have seen that for $\tfrac{d_2}{d_1}$ relatively prime to the conductor, $X_{D,(d_1,d_2)}$ is non-empty if and only if $(d_1,d_2)$ satisfies the prime factor condition in $\OD$.
In this case the previous proposition, Theorem~\ref{Struktursatz} and the proof of Lemma~\ref{Primfaktorbedingung} tell us that $X_{D,(d_1,d_2)}$ is the disjoint union of components $X_\A$,
where $\A$ runs through the ideals of the form $\A = d_1\A'$ with $\A'$ a primitive ideal of $\OD$ of norm~$\tfrac{d_2}{d_1}$.
For~$\A'$ we have $2^s$ choices, where $s$ is the number of splitting prime factors of $\tfrac{d_2}{d_1}$.

For every such ideal $\A$, there is a surjective map \[ \widetilde \Phi:~ \HH^2 \twoheadrightarrow X_\A,\quad z \mapsto [(A_z,H_z,\rho_z)]. \]
We now want to make this map injective. To do this, consider the group
\[ {\rm SL}_2(\OD \oplus \tfrac{1}{\sqrt{D}}\A) := {\rm SL} \begin{pmatrix} \OD & (\tfrac{1}{\sqrt{D}}\A)^{-1} \\ \tfrac{1}{\sqrt{D}}\A & \OD \end{pmatrix}
:= {\rm SL}(K) \cap \begin{pmatrix} \OD & (\tfrac{1}{\sqrt{D}}\A)^{-1} \\ \tfrac{1}{\sqrt{D}}\A & \OD \end{pmatrix}. \]
Then $M \in K^{2 \times 2}$ is in ${\rm SL}_2(\OD \oplus \tfrac{1}{\sqrt{D}}\A)$ if and only if the map \[ (x,y) \mapsto (x,y) \cdot M^t \]
is a symplectic $\OD$-module automorphism of $\OD \oplus \tfrac{1}{\sqrt{D}}\A$.

There is also an action of ${\rm SL}_2(\OD \oplus \tfrac{1}{\sqrt{D}}\A)$ on $\HH^2$ via
\[ M.(z_1,z_2) := (Mz_1,M^{\sigma}z_2) = \left( \frac{az_1+b}{cz_1+d},\frac{a^{\sigma}z_2+b^{\sigma}}{c^{\sigma}z_2+d^{\sigma}} \right) \]
for all $M = \begin{pmatrix} a & b \\ c & d \end{pmatrix} \in {\rm SL}_2(\OD \oplus \tfrac{1}{\sqrt{D}}\A)$ and $(z_1,z_2) \in \HH^2$.
The kernel of this action is $\{ I_2,-I_2 \}$, thus the \emph{Hilbert modular group}
\[ \Gamma_D(\A) := {\rm PSL}_2(\OD \oplus \tfrac{1}{\sqrt{D}}\A) := {\rm SL}_2(\OD \oplus \tfrac{1}{\sqrt{D}}\A) / \{ I_2,-I_2 \} \] acts faithfully on $\HH^2$.

\begin{prop}\label{WohldefiniertheitDim2}
 For all $M \in {\rm SL}_2(\OD \oplus \tfrac{1}{\sqrt{D}}\A)$ and $z \in \HH^2$ we have \[ (A_z,H_z,\rho_z) \cong (A_{Mz},H_{Mz},\rho_{Mz}). \]
\end{prop}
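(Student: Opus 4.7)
Write $M = \begin{pmatrix} a & b \\ c & d \end{pmatrix}$ with $a, d \in \OD$, $b \in \sqrt{D}\A^{-1}$, $c \in \tfrac{1}{\sqrt{D}}\A$, and $ad - bc = 1$. The plan is to construct a $\CC$-linear isomorphism of $\CC^2$ that descends to the desired isomorphism of polarized Abelian surfaces with real multiplication. Since $K \hookrightarrow \RR$ and $z_1, z_2 \in \HH$, the quantities $cz_1 + d$ and $c^\sigma z_2 + d^\sigma$ are nonzero (if $c \neq 0$ they have nonzero imaginary part; if $c = 0$ then $d \neq 0$ since $\det M = 1$), so
\[ F \colon \CC^2 \to \CC^2,\quad (w_1, w_2) \mapsto \left(\tfrac{w_1}{cz_1 + d},\, \tfrac{w_2}{c^\sigma z_2 + d^\sigma}\right) \]
is a well-defined $\CC$-linear automorphism.

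The key identity, obtained by plugging $Mz_1 = (az_1 + b)/(cz_1+d)$ and its Galois conjugate into $\phi_{Mz}$ and simplifying, is
\[ F \circ \phi_z = \phi_{Mz} \circ \psi_M,\quad \text{where } \psi_M(x, y) := (ax - by,\, -cx + dy). \]
The containment conditions on the entries of $M$ together with invertibility of $\A$ (recall $\mathfrak{N}(\A)$ is coprime to the conductor) ensure that $\psi_M$ sends $\OD \oplus \tfrac{1}{\sqrt{D}}\A$ into itself; it is manifestly $\OD$-linear, and a direct check using $\det M = 1$ shows that its inverse is $\psi_{M^{-1}}(x,y) = (dx + by, cx + ay)$. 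Hence $\psi_M$ is an $\OD$-module automorphism, and the intertwining identity yields $F(\Lambda_z) = \Lambda_{Mz}$, so $F$ induces an isomorphism of complex tori $A_z \xrightarrow{\sim} A_{Mz}$.

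It remains to check compatibility with the real multiplication and the polarizations. Since $\rho_z(k)$ is represented analytically by the diagonal matrix $\mathrm{diag}(k, k^\sigma)$ and $F$ is also diagonal, the two commute, giving $F \circ \rho_z(k) = \rho_{Mz}(k) \circ F$ for all $k \in \OD$. For the polarizations, $H_z$ is the unique Hermitian form on $\CC^2$ whose associated alternating form restricts via $\phi_z$ to the trace pairing on $\OD \oplus \tfrac{1}{\sqrt{D}}\A$, and analogously for $H_{Mz}$. Via the intertwining identity $F \circ \phi_z = \phi_{Mz} \circ \psi_M$, it therefore suffices to check that $\psi_M$ preserves the trace pairing, which is a direct computation:
\[ \langle \psi_M(x,y),\, \psi_M(\tilde x,\tilde y)\rangle \;=\; \tr\bigl((ad-bc)(\tilde x y - x \tilde y)\bigr) \;=\; \langle (x,y),(\tilde x,\tilde y)\rangle, \]
using $ad - bc = 1$ and the fact that the mixed $ac$- and $bd$-terms cancel by skew-symmetry. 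The main obstacle is the sign bookkeeping: $\psi_M$ is not literally the symplectic automorphism $(x,y) \mapsto (x,y)M^t$ from the definition of $\SL_2(\OD \oplus \tfrac{1}{\sqrt{D}}\A)$, but rather its conjugate by $(x, y) \mapsto (x, -y)$. Identifying the correct $\psi_M$ so that both the intertwining identity with $F$ and the symplectic property hold simultaneously is the delicate step of the argument.
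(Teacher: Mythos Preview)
Your proof is correct and takes essentially the same approach as the paper: your diagonal map $F$ is the paper's $D(M,z)$, and your $\psi_M$ is precisely the paper's action by $M^* := \left(\begin{smallmatrix} a & -b \\ -c & d \end{smallmatrix}\right)$, yielding the same commutative diagram $F \circ \phi_z = \phi_{Mz} \circ \psi_M$. You are in fact more thorough than the paper, which states the diagram and concludes immediately, whereas you explicitly verify the polarization (via the trace-pairing computation) and the compatibility with real multiplication (via diagonality).
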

\begin{proof}
 For \[ M = \begin{pmatrix} a & b \\ c & d \end{pmatrix} \in {\rm SL}_2(\OD \oplus \tfrac{1}{\sqrt{D}}\A) \] and $(z_1,z_2) \in \HH^2$ we define
 \[ M^* := \begin{pmatrix} a & -b \\ -c & d \end{pmatrix} \in {\rm SL}_2(\OD \oplus \tfrac{1}{\sqrt{D}}\A) \] and
 \[ D(M,z) := \begin{pmatrix} (cz_1+d)^{-1} & 0 \\ 0 & (c^{\sigma}z_2+d^{\sigma})^{-1} \end{pmatrix} \in {\rm GL}_2(\CC).\] Then we have
 \[ \phi_z ((x,y)) \cdot D(M,z)^t = \left( \frac{yz_1+x}{cz_1+d},\frac{y^{\sigma}z_2+x^{\sigma}}{c^{\sigma}z_2+d^{\sigma}} \right) = \phi_{Mz}((x,y) \cdot (M^*)^t) \]
 for all $(x,y) \in \OD \oplus \tfrac{1}{\sqrt{D}}\A$, i.e. the diagram
 \[ \begin{CD}
  \OD \oplus \tfrac{1}{\sqrt{D}}\A @>\phi_z>> \CC^2 \\
  @VVM^*V @VVD(M,z)V \\
  \OD \oplus \tfrac{1}{\sqrt{D}}\A @>\phi_{Mz}>> \CC^2
 \end{CD} \]
 is commutative. Therefore, \[ \CC^2 \to \CC^2,\quad (x,y) \mapsto (x,y) \cdot D(M,z)^t \] is the desired isomormorphism.
\end{proof}

Thus we have a well-defined surjective map \[ \Phi:~ {\rm PSL}_2(\OD \oplus \tfrac{1}{\sqrt{D}}\A) \backslash \HH^2 \twoheadrightarrow X_\A,\quad [z] \mapsto [(A_z,H_z,\rho_z)] \]
and we can state the main theorem of this subsection.

\begin{theo}\label{ModulraumDim2}
 Let $D \in \NN$ be a non-square discriminant and let $d_1,d_2 \in \NN$ with $d_1 \mid d_2$ and $\tfrac{d_2}{d_1}$ relatively prime to the conductor of $D$.
 Then $X_{D,(d_1,d_2)}$ is non-empty if and only if $(d_1,d_2)$ satisfies the prime factor condition in $\OD$.
 In this case $X_{D,(d_1,d_2)}$ consists of $2^s$ irreducible components $X_\A$, where $s$ is the number of splitting prime divisors of $\tfrac{d_2}{d_1}$.
 For each component $X_\A$ defined by the ideal $\A$, we have an isomorphism
 \[ \Phi:~ {\rm PSL}_2(\OD \oplus \tfrac{1}{\sqrt{D}}\A) \backslash \HH^2 \cong X_\A,\quad [z] \mapsto [(A_z,H_z,\rho_z)] \]
 between the corresponding Hilbert modular surface and $X_\A$.
\end{theo}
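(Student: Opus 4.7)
The plan is to assemble the preceding building blocks and supply the one remaining ingredient, namely the injectivity of $\Phi$. Proposition~\ref{KonstruktionDim2} produces the triples $(A_z,H_z,\rho_z)$, Proposition~\ref{SurjektivDim2} shows that every element of $X_{D,(d_1,d_2)}$ arises in this way, and Proposition~\ref{WohldefiniertheitDim2} shows that $\Phi$ descends to the Hilbert modular quotient. What is left is the $X_\A$-decomposition of $X_{D,(d_1,d_2)}$ and the injectivity of $\Phi$.

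First I would deduce non-emptiness, the counting statement, and disjointness of the $X_\A$. Given any class $[(A,H,\rho)] \in X_{D,(d_1,d_2)}$, the lattice $\Lambda$ with the form induced by $H$ is a proper symplectic $\OD$-module of type $(d_1,d_2)$, so by Theorem~\ref{Struktursatz} there is a \emph{unique} ideal $\A \trianglelefteq \OD$ (necessarily of the form $d_1 \A'$ with $\A'$ primitive of norm $d_2/d_1$) such that $(\Lambda, E_H) \cong (\OD \oplus \tfrac{1}{\sqrt{D}}\A, \langle \cdot, \cdot \rangle)$. Existence of such an $\A$ is equivalent to the prime factor condition by Proposition~\ref{Primfaktorbedingung}, and Corollary~\ref{2hochs} then yields exactly $2^s$ possibilities for $\A$. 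The uniqueness of $\A$ forces $X_\A \cap X_{\A'} = \emptyset$ whenever $\A \neq \A'$.

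The heart of the argument is injectivity of
\[ \Phi:~\PSL_2(\OD \oplus \tfrac{1}{\sqrt{D}}\A) \backslash \HH^2 \to X_\A. \]
Assume $(A_z,H_z,\rho_z) \cong (A_{z'},H_{z'},\rho_{z'})$ via an isomorphism $f$ with analytic representation $F \in \GL_2(\CC)$. The compatibility of $F$ with the action of a non-rational $k \in \OD$ reads $F \cdot \diag(k,k^\sigma) = \diag(k,k^\sigma) \cdot F$; since $k \neq k^\sigma$, this forces $F$ to be diagonal. The rational representation of $f$ pulled back via $\phi_z$ and $\phi_{z'}$ gives a symplectic $\OD$-module automorphism $\gamma$ of $\OD \oplus \tfrac{1}{\sqrt{D}}\A$. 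Unwinding the definition of $\SL_2(\OD \oplus \tfrac{1}{\sqrt{D}}\A)$, every such $\gamma$ has the form $(x,y) \mapsto (x,y)(M^*)^t$ for a unique (up to sign) $M \in \SL_2(\OD \oplus \tfrac{1}{\sqrt{D}}\A)$. By construction we have $F \phi_z = \phi_{z'} \gamma$, while Proposition~\ref{WohldefiniertheitDim2} gives $D(M,z) \phi_z = \phi_{Mz} \gamma$. Eliminating $\gamma$ yields
\[ \phi_{z'} = \bigl(F \cdot D(M,z)^{-1}\bigr) \circ \phi_{Mz} \]
on all of $\OD \oplus \tfrac{1}{\sqrt{D}}\A$. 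Evaluating at $(1,0)$, where both $\phi_{z'}(1,0)$ and $\phi_{Mz}(1,0)$ equal $(1,1)$, forces the diagonal matrix $F \cdot D(M,z)^{-1}$ to be the identity; hence $F = D(M,z)$ and $\phi_{z'} = \phi_{Mz}$. Evaluating at any $(0,\eta)$ with $\eta \in \tfrac{1}{\sqrt{D}}\A \setminus \{0\}$ then extracts $z' = Mz$.

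Irreducibility of each $X_\A$ follows from the resulting isomorphism with the Hilbert modular surface $\PSL_2(\OD \oplus \tfrac{1}{\sqrt{D}}\A) \backslash \HH^2$, which is a quotient of the irreducible complex manifold $\HH^2$ by a properly discontinuous group. The main obstacle I expect is the injectivity step and, within it, the clean bookkeeping of the two commutative identities $F \phi_z = \phi_{z'} \gamma$ and $D(M,z) \phi_z = \phi_{Mz} \gamma$; the diagonality of $F$ (hence its commutativity with $D(M,z)^{-1}$) crucially depends on the existence of a non-rational element of $\OD$, which is where the hypothesis that $D$ is a non-square discriminant enters decisively.
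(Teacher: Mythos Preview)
Your argument is correct and complete. The decomposition and counting steps match the paper's use of Theorem~\ref{Struktursatz}, Proposition~\ref{Primfaktorbedingung} and Corollary~\ref{2hochs}, and your injectivity proof is sound.

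The injectivity step, however, is organized differently from the paper's. The paper works entirely in coordinates: it writes down the $4\times 4$ integer matrix $R=(r_{ij})$ of the rational representation with respect to the fixed symplectic basis, uses $K$-linearity of $F=\phi_{\tilde z}^{-1}\circ \rho_r(f)\circ\phi_z$ to derive algebraic relations among the $r_{ij}$, and from these relations synthesizes the matrix $M\in\SL_2(\OD\oplus\tfrac{1}{\sqrt{D}}\A)$ explicitly. It then computes the analytic representation as a diagonal matrix and compares entries in the period-matrix identity $A\Pi=\widetilde{\Pi}R$ to force $\tilde z=Mz$. Your route is more structural: you first observe that $F$ is diagonal (commutation with $\diag(k,k^\sigma)$), then invoke the already-stated characterization that symplectic $\OD$-automorphisms of $\OD\oplus\tfrac{1}{\sqrt{D}}\A$ are exactly the maps $(x,y)\mapsto(x,y)(M^*)^t$, and finally recycle the commutative square of Proposition~\ref{WohldefiniertheitDim2} to collapse everything to $\phi_{z'}=\phi_{Mz}$ by evaluating at $(1,0)$. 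This buys you a coordinate-free argument with no entry-chasing; the paper's explicit computation, on the other hand, is what makes the matrix $M$ visible as a concrete function of the $r_{ij}$, which is reused verbatim in the three-dimensional analogue (Proposition~\ref{WohldefiniertheitDim3} and Theorem~\ref{modulispacedim3}). One small remark: the $M$ you obtain is in fact unique, not only unique up to sign, since $N\mapsto N^*$ is a bijection on $\SL_2(\OD\oplus\tfrac{1}{\sqrt{D}}\A)$.
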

\begin{proof}
 It remains to show that $\Phi$ is injective. To do this, let $z=(z_1,z_2)$, $\tilde z=(\tilde z_1,\tilde z_2) \in \HH^2$ and let
 \[ f:~ (A_z,H_z,\rho_z) \to (A_{\tilde z},H_{\tilde z},\rho_{\tilde z}) \] be an isomorphism of polarized Abelian varieties with real multiplication by~$\OD$.
 The key idea of the proof is to compare the rational representation with the analytic representation.

 Let $\mathcal{B} = (\lambda_1,\lambda_2,\mu_1,\mu_2)$ be the symplectic basis of $(\OD \oplus \tfrac{1}{\sqrt{D}}\A,\langle \cdot,\cdot \rangle)$ as in the construction of $(A_z,H_z)$ above, i.e.
 \[ \lambda_1 = (\eta_1,0),~ \lambda_2 = (\eta_2,0),~ \mu_1 = (0,\tfrac{d_1 \eta_2^{\sigma}}{\sqrt{D}}),~ \mu_2 = (0,\tfrac{-d_2 \eta_1^{\sigma}}{\sqrt{D}}). \]
 Furthermore, let $R = (r_{ij})_{i,j} \in {\rm GL}_4(\ZZ)$ be the matrix of the rational representation \[ \rho_r(f):~ \Lambda_z \to \Lambda_{\tilde z} \] of $f$ with respect to the $\ZZ$-bases
 $\phi_z(\mathcal{B})$ and $\phi_{\tilde z}(\mathcal{B})$.
 Then $R$ is also the matrix of \[ F := \phi_{\tilde z}^{-1} \circ \rho_r(f) \circ \phi_z :~ \OD \oplus \tfrac{1}{\sqrt{D}}\A \to \OD \oplus \tfrac{1}{\sqrt{D}}\A \]
 with respect to the  $\ZZ$-basis $(\lambda_1,\lambda_2,\mu_1,\mu_2)$.
 Setting $k := \tfrac{\eta_2}{\eta_1}$ and $l:= -\tfrac{d_1}{d_2}k^{\sigma}$, we get $k\lambda_1 = \lambda_2$ and $l\mu_2 = \mu_1$.
 Let \[ \pi_1:~ K \oplus K \to K \oplus K,\quad (x,y) \mapsto (x,0) \] be the canonical projection onto the first factor and let $F_{\QQ} : K \oplus K \to K \oplus K$ be the extension of $F$ to
 $K \oplus K$ by tensoring with $\QQ$.
 For the composition $F_1 := \pi_1 \circ F_{\QQ}$ we have \[ F_1(\lambda_1) = (r_{11}+r_{21}k)\lambda_1 \mbox{ and } F_1(\lambda_2) = (r_{12}k^{-1}+r_{22})\lambda_2. \]
 As $F_1$ is $K$-linear and $\lambda_2 = k\lambda_1$, this yields \[ a := r_{11}+r_{21}k = r_{12}k^{-1}+r_{22}. \] Similarly, we get
 \[ \begin{array}{lclcl}
  b & := & r_{13}+r_{23}k & = & -\mathcal{N}(k)\frac{d_1}{d_2}(r_{14}k^{-1}+r_{24}) \\
  c & := & r_{31}+r_{41}l^{-1} & = & \frac{-1}{\mathcal{N}(k)}\frac{d_2}{d_1}(r_{32}l+r_{42}) \\
  d & := & r_{33}+r_{43}l^{-1} & = & r_{34}l+r_{44}.
 \end{array} \] Hence, we have 
 \[ F(\lambda_1) = r_{11}\lambda_1 + r_{21}\lambda_2 + r_{31}\mu_1 + r_{41}\mu_2 = a\lambda_1 + c\mu_1 \] and analogously
 \begin{eqnarray*}
  F(\lambda_2) & = & a\lambda_2 - \mathcal{N}(k)\tfrac{d_1}{d_2}c\mu_2 \\
  F(\mu_1) & = & b\lambda_1 + d \mu_1 \\
  F(\mu_2) & = & -\tfrac{1}{\mathcal{N}(k)}\tfrac{d_2}{d_1}b\lambda_2 + d\mu_2.
 \end{eqnarray*}
 If we define \[ M := \begin{pmatrix} a & -\frac{\sqrt{D}}{d_1}\frac{\eta_1}{\eta_2^{\sigma}}b \\
 -\frac{d_1}{\sqrt{D}}\frac{\eta_2^{\sigma}}{\eta_1}c & d \end{pmatrix} \] and $M^*$ in the same way as in the proof of Proposition~\ref{WohldefiniertheitDim2},
 one can easily compute that for all $(x,y) \in \OD \oplus \tfrac{1}{\sqrt{D}}\A$ we have \[ F((x,y)) = (x,y) (M^*)^t. \]
 Since $F$ is a symplectic $\OD$-module automorphism, $M^*$ lies in ${\rm SL}_2(\OD \oplus \tfrac{1}{\sqrt{D}}\A)$, hence $M \in {\rm SL}_2(\OD \oplus \tfrac{1}{\sqrt{D}}\A)$ and thus $ad-bc=1$.

 An elementary computation yields that the matrix of the analytic representation with respect to the standard $\CC$-basis $((1,0),(0,1))$ of $\CC^2$ is
 \[ A = \begin{pmatrix} a+\frac{d_1}{\sqrt{D}}\frac{\eta_2^{\sigma}}{\eta_1}c \tilde z_1 & 0 \\ 0 & a^{\sigma}-\frac{d_1}{\sqrt{D}}\frac{\eta_2}{\eta_1^{\sigma}}c^{\sigma} \tilde z_2 \end{pmatrix}. \]
 Now let $\Pi$ and $\widetilde \Pi$ be the period matrices of $A_z$ and $A_{\tilde z}$ with respect to the chosen bases, so we have $A \Pi = \widetilde \Pi R$.
 Comparing the matrix entries and using all the relations above, we see that this equation holds if and only if $\tilde z = Mz$.
 It follows that $\Phi$ is indeed injective.
\end{proof}
\subsection{Counting modular surfaces}

As varieties, two Hilbert modular surfaces $\Gamma_1 \backslash \HH^2$ and $\Gamma_2 \backslash \HH^2$ are obviously isomorphic, if the groups $\Gamma_1,\Gamma_2 \subset \SL_2(\QQ(\sqrt{D}))$
are conjugate. In this section we show that there is a bijection between the set of conjugacy classes of $\SL(\OD \oplus \tfrac{1}{\sqrt{D}}\A)$ and the genus class group of $\OD$.
\\[1em]
{\bf Genus of quadratic orders.}
Let $K \subset \RR$ be a real quadratic number field and let $K^+$ be the multiplicative group of all totally positive elements in $K$
(an element $x \in K$ is said to be \emph{totally positive, $x \gg 0$}, if both $x>0$ and $x^\sigma >0$).

Let $\OD$ be the order in a real quadratic number field $K$ of discriminant~$D>0$ and conductor~$f$.
Recall that ${\rm I}(\OD)$ is the group of invertible fractional ideals of~$\OD$.
We denote the ideal class group of~$\OD$ by \[ {\rm Cl}(\OD) := {\rm I}(\OD)/K^*. \]
Two ideals $\A_1, \A_2 \in {\rm I}(\OD)$ are defined to be \emph{equivalent in the narrow sense} or \emph{strictly equivalent} (denoted by $\A_1 \sim_+ \A_2$) if there is some $x \in K^+$ such that
$\A_1=x\A_2$. The equivalence classes form a group \[ {\rm Cl}^+(\OD) := {\rm I}(\OD)/K^+, \] the \emph{narrow ideal class group}.
Furthermore, the \emph{squaring map} is defined by \[ {\rm Sq}^+:~ {\rm Cl}^+(\OD) \to {\rm Cl}^+(\OD) ,\quad \A \mapsto \A^2. \]
\begin{defi}\label{genus}
 The factor group \[ {\rm G}(\OD) := {\rm Cl}^+(\OD)/{\rm Sq}^+({\rm Cl}^+(\OD)) \] is called the \emph{genus group of $\OD$},
 a coset of ${\rm Sq}^+({\rm Cl}^+(\OD))$ is called a \emph{genus} and ${\rm Sq}^+({\rm Cl}^+(\OD))$ is called the \emph{principal genus}.
\end{defi}

Now we want to count the number of conjugacy classes of groups of the form $\SL(\OD \oplus \tfrac{1}{\sqrt{D}}\A)$,
to get an upper bound for the number of isomorphy classes of the Hilbert modular varieties $X_\A = \SL_2(\OD \oplus \tfrac{1}{\sqrt{D}}\A) \backslash \HH^2$,
where $\A$ runs through the set ${\rm I}^+(\OO,f)$ of ideals relatively prime to the conductor~$f$ of~$D$ (without fixing the norm $d$).

\begin{lem}
Let $\A_1$ and $\A_2$ be two invertible, fractional ideals of $\OD$. Then the two matrix algebras 
 \[ \begin{pmatrix} \OD & \A_1^{-1} \\ \A_1 & \OD \end{pmatrix} \mbox{ and } \begin{pmatrix} \OD & \A_2^{-1} \\ \A_2 & \OD \end{pmatrix} \]
 are conjugate in $K^{2 \times 2}$ if and only if $\A_1$ and $\A_2$ belong to the same genus.
\end{lem}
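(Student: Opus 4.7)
My plan is to recognize each matrix algebra as the endomorphism ring of an $\OD$-lattice in~$K^2$ and then compare the two lattices via Steinitz theory. Writing elements of $K^2$ as column vectors and setting $L_\A := \OD \oplus \A \subset K^2$, a direct computation of the four $\Hom$-groups between the summands---invertibility of~$\A$ giving $\Hom_{\OD}(\A,\OD) = \A^{-1}$ and $\Hom_{\OD}(\A,\A) = \OD$---identifies the matrix algebra with $\End_{\OD}(L_\A) \subset K^{2\times 2}$. Consequently $M R_{\A_1} M^{-1} = \End_{\OD}(M L_{\A_1})$ for any $M \in \GL_2(K)$, and the two algebras are conjugate exactly when some translate $M L_{\A_1}$ carries $R_{\A_2}$ as its endomorphism ring.

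The next step is to classify the full-rank $\OD$-lattices $L \subset K^2$ with $\End_{\OD}(L) = R_{\A_2}$. Since $L_{\A_2}$ is a projective generator over $\OD$, Morita equivalence $N \mapsto L_{\A_2} \otimes_{\OD} N$ identifies such lattices---up to scaling by~$K^*$---with the sublattices $I \oplus I\A_2 \subset K^2$ as $I$ ranges over the invertible fractional $\OD$-ideals (non-invertible $I$ are excluded because $L_{\A_1}$ is projective). Thus conjugacy of the two matrix algebras reduces to the existence of an $\OD$-isomorphism $L_{\A_1} \cong I \oplus I\A_2$ for some invertible~$I$.

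Taking second exterior powers and applying Steinitz's theorem for projective $\OD$-modules---available for invertible ideals via the decomposition used in the proof of Theorem~\ref{Struktursatz}---such an isomorphism yields an identity $\A_1 = \alpha\,I^2\A_2$ with $\alpha \in K^*$. This scalar~$\alpha$ is essentially $\det M$ after the $K^*$-rescaling used to realize $M L_{\A_1} = I \oplus I\A_2$ on the nose; replacing $M$ by $\lambda M$ multiplies $\alpha$ by the totally positive square~$\lambda^2$, so the class of~$\alpha$ modulo totally positive squares is well defined. Interpreting ``conjugate in $K^{2\times 2}$'' inside the $\SL_2$-Hilbert modular framework of this section as conjugation by matrices of totally positive determinant, the identity upgrades to the narrow equivalence $\A_1 \sim_+ I^2 \A_2$, which is precisely equality of~$[\A_1]$ and~$[\A_2]$ in the genus group~${\rm G}(\OD)$. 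For the converse, Steinitz supplies an abstract isomorphism realizing the narrow ideal equality, and after rescaling it by a totally positive factor so that its wedge-square is multiplication by the given $\alpha \gg 0$, its $K$-linear extension is the required conjugator.

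The main obstacle is exactly this determinant-normalization step: the naive criterion coming straight out of Steinitz is the wide condition $[\A_1][\A_2]^{-1} \in {\rm Cl}(\OD)^2$, and tracking how the $K^*$-rescaling freedom for~$M$ moves~$\alpha$ only by totally positive squares is what refines the answer to the narrow condition defining~${\rm G}(\OD)$.
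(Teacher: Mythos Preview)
Your approach via endomorphism rings is genuinely different from the paper's, and the identification $\left(\begin{smallmatrix} \OD & \A^{-1} \\ \A & \OD \end{smallmatrix}\right) = \End_{\OD}(\OD \oplus \A)$ is a clean starting point. The paper instead argues entirely by explicit matrix manipulation: for the forward direction it writes $\A_2 = x\B^2\A_1$ with $x \gg 0$, produces a determinant-$1$ matrix $M$ with entries in $\left(\begin{smallmatrix} \B & \A_1^{-1}\B^{-1} \\ \A_1\B & \B^{-1} \end{smallmatrix}\right)$ (reducing via Corollary~\ref{Idealaequiv} to the case where the relevant ideals are coprime, so that $1$ lies in their sum), and checks by hand that conjugation by $M\cdot\left(\begin{smallmatrix} x & 0 \\ 0 & 1 \end{smallmatrix}\right)$ carries one algebra into the other; for the converse it simply computes the lower-left entry of the conjugated algebra and reads off $\A_2 = (\det M)^{-1}(d\OD + c\A_1^{-1})^2\A_1$. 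No Steinitz theory or Morita equivalence enters.

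Your argument has a genuine gap at exactly the point you flag. The Steinitz step correctly gives $\A_1 = \alpha\,I^2\A_2$ with $\alpha \in K^*$, i.e.\ the \emph{wide} condition $[\A_1][\A_2]^{-1} \in {\rm Cl}(\OD)^2$. To reach the narrow genus you reinterpret ``conjugate in $K^{2\times 2}$'' as conjugation by matrices of totally positive determinant---but the statement makes no such restriction, and nothing in your setup forces $\det M$ (equivalently $\alpha$) to be totally positive. Your observation that rescaling $M \mapsto \lambda M$ moves $\alpha$ only by the totally positive square $\lambda^2$ shows the obstruction is well-defined, not that it vanishes. As written, your proof therefore establishes the wide-genus criterion rather than the narrow one in the statement.
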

\begin{proof}
 If $\A_1$ and $\A_2$ belong to the same genus, then we have by definition \[ \A_2 = x \B^2 \A_1 \] for an invertible ideal $\B$ and some totally positive $x \in K$.
 We claim that there is a matrix \[ M \in \begin{pmatrix} \B & \A_1^{-1}\B^{-1} \\ \A_1\B & \B^{-1} \end{pmatrix} \] with determinant $1$.
 Indeed, by Corollary~\ref{Idealaequiv} it suffices to show that for any two invertible, relatively prime $\OD$-ideals $\A$ and $\B$ there is a matrix in
 \[ \begin{pmatrix} \B & \A^{-1} \\ \A & \B^{-1} \end{pmatrix} \] with determinant $1$. But this is clear, since $1$ lies in each of $\A + \B$, $\A^{-1}$ and $\B^{-1}$.

 Now we can compute directly 
 \[ \begin{pmatrix} x^{-1} & 0 \\ 0 & 1 \end{pmatrix} M^{-1} \begin{pmatrix} \OD & \A_1^{-1} \\ \A_1 & \OD \end{pmatrix} M \begin{pmatrix} x & 0 \\ 0 & 1 \end{pmatrix}
 \subseteq \begin{pmatrix} \OD & \A_2^{-1} \\ \A_2 & \OD \end{pmatrix} \] respectively
 \[ M \begin{pmatrix} x & 0 \\ 0 & 1 \end{pmatrix} \begin{pmatrix} \OD & \A_2^{-1} \\ \A_2 & \OD \end{pmatrix} \begin{pmatrix} x^{-1} & 0 \\ 0 & 1 \end{pmatrix} M^{-1}
 \subseteq \begin{pmatrix} \OD & \A_1^{-1} \\ \A_1 & \OD \end{pmatrix}, \] which shows equality.

 To prove the converse, let $M = \begin{pmatrix} a & b \\ c & d \end{pmatrix} \in \GL_2(K)$ with
 \[ M \begin{pmatrix} \OD & \A_1^{-1} \\ \A_1 & \OD \end{pmatrix} M^{-1} = \begin{pmatrix} \OD & \A_2^{-1} \\ \A_2 & \OD \end{pmatrix}. \]
 Comparing the lower left entries gives \[ \A_2 = \tfrac{1}{\det(M)}(cd\OD + d^2 \A_1 + c^2\A_1^{-1}) = \tfrac{1}{\det(M)}(d\OD + c\A_1^{-1})^2 \A_1 \]
 and the invertibility of $\B = d\OD + c\A_1^{-1}$ follows from the invertibility of the fractional ideal $\B^2 = \det(M) \A_1^{-1}\A_2$.
\end{proof}

\begin{cor}
 For two invertible fractional ideals $\A_1$ and $\A_2$, the two groups ${\rm SL}(\OD \oplus \A_1)$ and ${\rm SL}(\OD \oplus \A_2)$ are conjugate in ${\rm GL}_2(K)$
 if and only if~$\A_1$ and~$\A_2$ belong to the same genus.
\end{cor}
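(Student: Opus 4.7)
The plan is to deduce this corollary from the previous lemma by reducing conjugacy of the $\SL$-subgroups to conjugacy of the ambient matrix orders $O_i := \begin{pmatrix} \OD & \A_i^{-1} \\ \A_i & \OD \end{pmatrix}$.

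First I would handle the easy direction. If $\A_1$ and $\A_2$ lie in the same genus, the previous lemma supplies $P \in \GL_2(K)$ with $PO_1 P^{-1} = O_2$. Since conjugation by $P$ is a ring automorphism of $M_2(K)$ and preserves determinants, it restricts to a group isomorphism $O_1 \cap \SL_2(K) \to O_2 \cap \SL_2(K)$, i.e. $P \cdot \SL(\OD \oplus \A_1) \cdot P^{-1} = \SL(\OD \oplus \A_2)$.

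For the converse, the key claim is that $O$ \emph{equals} the $\ZZ$-subalgebra of $M_2(K)$ generated by $\SL(\OD \oplus \A)$. Granted this, a conjugation $P \cdot \SL(\OD \oplus \A_1) \cdot P^{-1} = \SL(\OD \oplus \A_2)$ automatically conjugates the generated $\ZZ$-subalgebras, so $P O_1 P^{-1} = O_2$, and the previous lemma finishes the proof. To prove the claim, note that for every $x \in \A^{-1}$ and $y \in \A$ the unipotent matrices
\[ U_{12}(x) = \begin{pmatrix} 1 & x \\ 0 & 1 \end{pmatrix},\qquad U_{21}(y) = \begin{pmatrix} 1 & 0 \\ y & 1 \end{pmatrix} \]
lie in $\SL(\OD \oplus \A)$. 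Subtracting the identity (itself an element of the group) inside the generated $\ZZ$-algebra yields the elementary matrices $E_{12}(x)$ and $E_{21}(y)$. Their product $E_{12}(x)E_{21}(y) = \diag(xy,0)$ sweeps out, by the invertibility hypothesis $\A^{-1}\A = \OD$, all matrices $\diag(z,0)$ with $z \in \OD$, and symmetrically all $\diag(0,z)$ with $z \in \OD$. Adding these to the $E_{12}(x)$ and $E_{21}(y)$ already produced, the generated $\ZZ$-subalgebra contains every matrix of $O$; the reverse containment is obvious from $\SL(\OD \oplus \A) \subset O$.

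The main (minor) obstacle is verifying that this generation argument really does reproduce all of $O$ rather than a proper suborder: this is exactly where the invertibility assumption on $\A_1, \A_2$ enters, through the identity $\A^{-1}\A = \OD$. Everything else is formal, so the argument should package cleanly into a short proof.
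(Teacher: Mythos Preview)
Your proof is correct and follows essentially the same approach as the paper: reduce to the preceding lemma via the fact that the matrix order $\begin{pmatrix} \OD & \A^{-1} \\ \A & \OD \end{pmatrix}$ is generated as a $\ZZ$-algebra by $\SL(\OD \oplus \A)$. The paper simply states this generation fact without proof, whereas you have spelled out the argument with unipotents and the identity $\A^{-1}\A = \OD$; this added detail is fine and the logic is sound.
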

\begin{proof}
 This follows from the same arguments as in the previous proof,
 together with the fact that $\left( \begin{smallmatrix} \OD & \A^{-1} \\ \A & \OD \end{smallmatrix} \right)$ is generated by ${\rm SL}_2(\OD \oplus \tfrac{1}{\sqrt{D}}\A)$ as a $\ZZ$-algebra.
\end{proof}
\subsection{The moduli space in dimension three}\label{dim3}

Now we will use the results of the previous section to parameterize all principally polarized Abelian varieties isogenous to a product of an Abelian surface with real multiplication and an elliptic
curve. We will show that the moduli space of principally polarized Abelian varieties together with the choice of a two-dimensional Abelian subvariety of a fixed type and a choice of real multiplication
by a fixed order $\OD$ is the union of $2^s$ three-dimensional Hilbert modular varieties, where the number $s$ is determined by the type and the discriminant.
For this, we will combine the techniques developed in the previous section and those in \cite{Xiao}, where Xiao parameterized principally polarized Abelian Surfaces isogenous to a product of
elliptic curves.

\begin{defi}
 Let $D \in \NN$ be a discriminant and let $d \in \NN$. A \emph{polarized Abelian variety of dimension three with real multiplication by $\OD$ of type $d$} is a quadruple \[ (A,H,S,\rho), \]
 where $(A,H)$ is a three-dimensional principally polarized Abelian variety, $S$ is a two-dimensional subvariety of $A$ of type $(1,d)$ and $\rho : \OD \to {\rm End}(S)$ is some choice of real
 multiplication by $\OD$ on $S$.
\end{defi}

Recall that the complementary subvariety of $S$ in $A$ in the definition above is of type $(d)$, as $(A,H)$ is principal.
\\[1em]
{\bf Construction of three-dimensional Abelian varieties with real multiplication on a subsurface.}
Again let $D$ be a squarefree discriminant and let~$d$ be relatively prime to the conductor of $D$.
We will see that there are different similar ways to construct polarized Abelian varieties of dimension three with real multiplication by $\OD$ of type $d$, corresponding to the choice of a $\ZZ$-basis
of certain ideals of $\OD$.

We already know that there is no such Abelian variety if $d$ does not satisfy the prime factor condition in $\OD$. In the other case, recall that there exists a primitive ideal~$\A$ in $\OD$
of norm $d$ and a $\ZZ$-basis $(\eta_1,\eta_2)$ of $\OD$ satisfying the sign convention $\I(\eta_1 \eta_2^{\sigma}) = - \tfrac{1}{2}\I(\sqrt{D})$, such that $(\eta_2^{\sigma},d\eta_1^{\sigma})$
is a $\ZZ$-basis of $\A$. We have seen that this means that \[ ((\eta_1,0),(\eta_2,0),(0,\tfrac{1}{\sqrt{D}}\eta_2^{\sigma}),(0,\tfrac{-d}{\sqrt{D}}\eta_1^{\sigma})) \]
is a symplectic basis of $(\OD \oplus \tfrac{1}{\sqrt{D}}\A,\langle \cdot,\cdot \rangle)$, where $\langle \cdot,\cdot \rangle$ is the trace pairing on $\OD \oplus \ODual$.
We call such a basis $(\eta_2^{\sigma},d\eta_1^{\sigma})$ a \emph{smart basis for $\A$}.

\begin{prop}\label{KonstruktionDim3}
 Let $d \in \NN$ be relatively prime to the conductor of $D$ and satisfying the prime factor condition in $\OD$. Furthermore, let $\A$ be a primitive ideal of $\OD$ of norm $d$.
 Then, fixing a smart basis $(\eta_2^{\sigma},d\eta_1^{\sigma})$ for~$\A$, each triple $z = (z_1,z_2,z_3) \in \HH^3$ determines a polarized Abelian variety $(A_z=\CC^3/\Lambda_z,H_z,S_z,\rho_z)$
 of dimension three with real multiplication by $\OD$ of type $d$. Identifying $\CC^2 \times \{ 0 \}$ with $\CC^2$, we have $S_z = \CC^2 / (\CC^2 \cap \Lambda_z)$, and the polarized Abelian surface
 with real multiplication $(S_z,H_z|_{\CC^2 \times \CC^2},\rho_z)$ is the one defined by
 \[ \phi_{(z_1,z_2)}:~ \OD \oplus \tfrac{1}{\sqrt{D}}\A \hookrightarrow \CC^2,\quad (x,y) \mapsto (x+yz_1,x^{\sigma}+y^{\sigma}z_2)\] as in Proposition~\ref{KonstruktionDim2}.
\end{prop}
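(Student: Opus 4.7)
The plan is to write down an explicit period matrix for $A_z$ that extends the one built in Proposition~\ref{KonstruktionDim2} by two columns encoding the third coordinate $z_3$, then to verify the Riemann relations, to identify $S_z$ as a complex subtorus, and to define $\rho_z$ by extending~$\rho$ trivially on the third factor. The smart basis for $\A$ is precisely what will make the bookkeeping go through: since $\A$ has norm~$d$, the basis $(\eta_2^{\sigma},d\eta_1^{\sigma})$ normalizes the trace pairing on $\OD \oplus \tfrac{1}{\sqrt{D}}\A$ to standard form of type $(1,d)$, which is the form complementary to a polarization of type $(d)$ on a one-dimensional factor.

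First I would define the lattice $\Lambda_z \subset \CC^3$ as the $\ZZ$-span of six generators: four are the images under $\phi_{(z_1,z_2)}$ of the symplectic basis of $\OD \oplus \tfrac{1}{\sqrt{D}}\A$ from the smart basis, embedded into the first two coordinates (with $0$ in the third), and two are additional generators whose third components are proportional to $1$ and $z_3$ respectively, together with rational ``gluing'' entries in the first two coordinates chosen to upgrade the natural product polarization of type $(1,d,d)$ on $S_z \times E_{z_3}$ to a principal one. These rational entries encode a Lagrangian subgroup of the finite group $K(S_z) \oplus K(E_{z_3}) \cong (\ZZ/d)^4$ on which the polarizations of the two factors pair non-degenerately; the smart basis makes this pairing diagonal in a convenient way. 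I would then verify that $\Lambda_z \cap (\CC^2 \times \{0\}) = \phi_{(z_1,z_2)}(\OD \oplus \tfrac{1}{\sqrt{D}}\A) \times \{0\}$, that the columns of the period matrix are $\RR$-linearly independent (from $z_i \in \HH$), and by a direct matrix computation that the induced alternating form on $\Lambda_z$ is symplectic of type $(1,1,1)$. The Hermitian form $H_z$ is then obtained as in Proposition~\ref{KonstruktionDim2} by reading off the matrix of ${\rm Im}(Z)^{-1}$ for the appropriately reduced period matrix, which will be positive definite because the upper-left $2{\times}2$ block is already the surface case and the $(3,3)$-entry is a positive rescaling of $\tfrac{1}{\I(z_3)}$.

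From the defining property of $\Lambda_z$, the subtorus $S_z := \CC^2/(\CC^2 \cap \Lambda_z)$ is exactly the polarized Abelian surface of Proposition~\ref{KonstruktionDim2}, so it is of type $(1,d)$ and carries real multiplication by~$\OD$. Finally, I would define $\rho_z$ by letting $k \in \OD$ act on $\CC^3$ as $\rho(k)$ on the first two coordinates (diagonal with eigenvalues $k,k^\sigma$) and as the identity on the third; this extends $\rho$ from $S_z$ and is automatically self-adjoint with respect to $H_z$ because the third coordinate is $H_z$-orthogonal to $\CC^2$ and the surface restriction is already self-adjoint. The only nontrivial point is to check that $\rho_z(k)$ preserves $\Lambda_z$, which reduces to checking it on the two ``gluing'' generators; this in turn follows from the $\OD$-stability of both the kernel $K(S_z)$ and the chosen Lagrangian, which is a direct consequence of using the smart basis (the action of $\OD$ respects the ideal structure of $\A$, whose norm is $d$). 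The main obstacle of the proof is precisely the simultaneous choice of the gluing data so that (i) the polarization is principal and (ii) the resulting lattice is $\OD$-stable; here the smart basis is the decisive ingredient, and is why the construction depends on it as opposed to on $\A$ alone.
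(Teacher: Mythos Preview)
Your overall strategy --- build an explicit lattice extending $\phi_{(z_1,z_2)}(\OD \oplus \tfrac{1}{\sqrt{D}}\A)$ by two ``gluing'' generators involving $z_3$, verify the Riemann relations, and read off $S_z$ as the intersection with $\CC^2 \times \{0\}$ --- matches the paper's proof, which simply writes down the six generators $\lambda_1,\lambda_2,\lambda_3,\mu_1,\mu_2,\mu_3$ explicitly and checks that the resulting period matrix has the form $(B, Z(B^{-1})^t)$ with $-B^{-1}Z(B^{-1})^t \in \HH_3$. Your description of the gluing as a choice of Lagrangian in $K(S_z)\oplus K(E_{z_3})\cong(\ZZ/d)^4$ is a correct conceptual gloss on what the paper does by hand.

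However, there is a genuine error in your treatment of $\rho_z$. You propose to let $k \in \OD$ act on all of $\CC^3$ by $(c_1,c_2,c_3)\mapsto(kc_1,k^\sigma c_2,c_3)$ and then assert that this preserves $\Lambda_z$. It does not: applied to a gluing generator such as $\mu_2=(\tfrac{-1}{\sqrt{D}}\eta_1^\sigma z_1,\tfrac{1}{\sqrt{D}}\eta_1 z_2,\tfrac{1}{d})$, the difference $k.\mu_2-\mu_2$ lies in $\CC^2\times\{0\}$ and equals $\phi_{(z_1,z_2)}\bigl(0,-(k-1)\tfrac{\eta_1^\sigma}{\sqrt{D}}\bigr)$, which is in $\Lambda_z$ only when $(k-1)\eta_1^\sigma\in\A$, i.e.\ essentially when $k-1\in\A$. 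For $d>1$ this fails for generic $k\in\OD$. (This is exactly the phenomenon the paper exploits later in Section~\ref{pseudorealmult}: the full action of $\OD\oplus\ZZ$ on $A_z$ restricts to the pseudo-cubic order $\OO_\A=\{(x,y):x-y\in\A\}$.) Your justification via ``$\OD$-stability of the chosen Lagrangian'' does not rescue this, because the Lagrangian sits diagonally in $K(S_z)\oplus K(E_{z_3})$ and $\OD$ acts nontrivially on the $S_z$-factor but trivially on the $E_{z_3}$-factor.

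The fix is that no such extension is needed. By the definition you are proving, $\rho_z$ is real multiplication on the \emph{subvariety} $S_z$, not on $A_z$. The paper accordingly defines $\rho:\OD\hookrightarrow\End_\CC(V_2)$ on $V_2=\CC^2\times\{0\}$ only, and the fact that this preserves $\Lambda_z\cap V_2=\phi_{(z_1,z_2)}(\OD\oplus\tfrac{1}{\sqrt{D}}\A)$ is immediate from Proposition~\ref{KonstruktionDim2}. Once you drop the attempted extension to $\CC^3$, the rest of your argument goes through.
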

\begin{proof}
 We start with constructing the three-dimensional principally polarized Abelian variety. In $\CC^3$ we define
 \[ \begin{array}{ll}
 \lambda_1 := (\eta_1,\eta_1^{\sigma},0), & \mu_1 := (\tfrac{1}{\sqrt{D}}\eta_2^{\sigma}z_1,\tfrac{-1}{\sqrt{D}}\eta_2z_2,0), \\
 \lambda_2 := (\eta_2,\eta_2^{\sigma},0), & \mu_2 := (\tfrac{-1}{\sqrt{D}}\eta_1^{\sigma}z_1,\tfrac{1}{\sqrt{D}}\eta_1z_2,\tfrac{1}{d}), \\
 \lambda_3 := (0,0,1), & \mu_3 := (\tfrac{1}{d}\eta_2,\tfrac{1}{d}\eta_2^{\sigma},-\tfrac{1}{d}z_3).
 \end{array} \]
 These vectors are linearly independent over $\RR$, hence \[ \Lambda_z := \langle \lambda_1,\lambda_2,\lambda_3,\mu_1,\mu_2,\mu_3 \rangle_{\ZZ} \] is a lattice in $\CC^3$
 and $A_z := \CC^3 / \Lambda_z$ is a complex torus.
 Let $E_z:~ \CC^3 \times \CC^3 \to \RR$ be the unique alternating $\RR$-bilinearform with $E_z(\lambda_i,\lambda_j)=E_z(\mu_i,\mu_j) = 0$ and $E_z(\lambda_i,\mu_j)= \delta_{ij}$ for all
 $i,j \in \{ 1,2,3 \}$, and let $H_z:~ \CC^3 \times \CC^3 \to \CC$ be the corresponding Hermitian form.
 Then $E_z$ is a symplectic form on $\Lambda_z$ of type $(1,1,1)$ with symplectic basis $(\lambda_1,\lambda_2,\lambda_3,\mu_1,\mu_2,\mu_3)$.
 The period matrix of $A_z$ with respect to this $\ZZ$-basis of $\Lambda_z$ and the standard $\CC$-basis $((1,0,0),(0,1,0),(0,0,1))$ of $\CC^3$ is
 \[ \Pi_z = \begin{pmatrix} \eta_1 & \eta_2 & 0 & \frac{1}{\sqrt{D}}\eta_2^{\sigma}z_1 & \frac{-1}{\sqrt{D}}\eta_1^{\sigma}z_1 & \tfrac{1}{d}\eta_2 \\
 \eta_1^{\sigma} & \eta_2^{\sigma} & 0 & \frac{-1}{\sqrt{D}}\eta_2z_2 & \frac{1}{\sqrt{D}}\eta_1z_2 & \tfrac{1}{d}\eta_2^{\sigma} \\
 0 & 0 & 1 & 0 & \tfrac{1}{d} & -\tfrac{1}{d}z_3 \end{pmatrix}. \]
 With \[ B := \begin{pmatrix} \eta_1 & \eta_2 & 0 \\ \eta_1^{\sigma} & \eta_2^{\sigma} & 0 \\ 0 & 0 & 1 \end{pmatrix} \mbox{ and } Z :=
 \begin{pmatrix} -z_1 & 0 & \tfrac{1}{d}\eta_2 \\ 0 & -z_2 & \tfrac{1}{d}\eta_2^{\sigma} \\ \tfrac{1}{d}\eta_2 & \tfrac{1}{d}\eta_2^{\sigma} & -\tfrac{1}{d}z_3 \end{pmatrix} \]
 we get \[ \Pi_z = (B,Z(B^{-1})^t), \] and as $-B^{-1}Z(B^{-1})^t$ is in the upper Siegel half space $\HH_2$, the form $H_z$ is indeed a polarization on $A_z$.

 With $V_2 := \CC^2 \times \{ 0 \}$ and \[ \hat\mu_2 := d\mu_2 - \lambda_3 = (\tfrac{-d}{\sqrt{D}}\eta_1^{\sigma}z_1,\tfrac{d}{\sqrt{D}}\eta_1z_2,0) \]
 we have \[ \Lambda_2 := V_2 \cap \Lambda_z = \langle \lambda_1,\lambda_2,\mu_1,\hat \mu_2 \rangle_{\ZZ}. \]
 Thus $\Lambda_2$ is a lattice in $V_2$ and $S_z := V_2 / \Lambda_2$ is a subvariety of $A_z$ of type $(1,d)$ with symplectic basis $(\lambda_1,\lambda_2,\mu_1,\hat \mu_2)$.
 Note that $\Lambda_2$ is just the image of the embedding \[ \phi_{(z_1,z_2)}:~ \OD \oplus \tfrac{1}{\sqrt{D}}\A \hookrightarrow \CC^3,\quad (x,y) \mapsto (x+yz_1,x^{\sigma}+y^{\sigma}z_2,0), \]
 which maps our symplectic basis $((\eta_1,0),(\eta_2,0),(0,\tfrac{1}{\sqrt{D}}\eta_2^{\sigma}),(0,\tfrac{-d}{\sqrt{D}}\eta_1^{\sigma}))$ of
 $(\OD \oplus \tfrac{1}{\sqrt{D}}\A,\langle \cdot,\cdot \rangle)$ to $(\lambda_1,\lambda_2,\mu_1,\hat \mu_2)$. \\
 As in the previous section, we define real multiplication $\rho_z:~ \OD \hookrightarrow {\rm End}(S_z)$ on~$S_z$ by~$\OD$ via
 \[ \rho:~ \OD \hookrightarrow {\rm End}_{\CC}(V_2),\quad \rho(k)(c_1,c_2,0) := (kc_1,k^{\sigma}c_2,0) \] for all $k \in \OD$ and $c_1,c_2 \in \CC$.

Collecting everything together, we have seen that $(A_z,H_z,S_z,\rho_z)$ is indeed a polarized Abelian variety of dimension three with real multiplication by $\OD$ of type $d$.
\end{proof}
We remark that in the proof above, the orthogonal complement of $V_2$ in $\CC^3$ with respect to $H_z$ is $V_1 := \{ 0 \}^2 \times \CC$, and that a symplectic basis of $\Lambda_1 := V_1 \cap \Lambda$
for $E_z|_{\Lambda_1 \times \Lambda_1}$ is given by $(\nu_1,\nu_2)$ with \[ \nu_1 := \lambda_3 = (0,0,1) \mbox{ and } \nu_2 := d \mu_3 - \lambda_2 = (0,0,-z_3). \]

An \emph{isomorphism of polarized Abelian varieties of dimension three with real multiplication by $\OD$}, say $(A,H,S,\rho)$ and $(A',H',S',\rho')$, is an isomorphism of polarized abelian varieties
\[ f:~ (A,H) \to (A',H') \] with $f(S) = S'$, such that $f|_{S}$ commutes with the action of $\OD$.

\begin{prop}\label{classifdim3}
 Let $d \in \NN$ be relatively prime to the conductor of $D$. Fixing a smart basis for each primitive ideal $\A \subseteq \OD$ of norm $d$,
 the quadruples $(A_z,H_z,S_z,\rho_z)$ defined above are up to isomorphism all polarized Abelian varieties of dimension three with real multiplication by $\OD$ of type $d$.
 More precisely, we define \[ A_z = \CC^3 / \Lambda_z \mbox{ with } \Lambda_z = \Pi_z \ZZ^6 \] and
 \[ \Pi_z = \begin{pmatrix} \eta_1 & \eta_2 & 0 & \frac{1}{\sqrt{D}}\eta_2^{\sigma}z_1 & \frac{-1}{\sqrt{D}}\eta_1^{\sigma}z_1 & \tfrac{1}{d}\eta_2 \\
 \eta_1^{\sigma} & \eta_2^{\sigma} & 0 & \frac{-1}{\sqrt{D}}\eta_2z_2 & \frac{1}{\sqrt{D}}\eta_1z_2 & \tfrac{1}{d}\eta_2^{\sigma} \\
 0 & 0 & 1 & 0 & \tfrac{1}{d} & -\tfrac{1}{d}z_3 \end{pmatrix} \] for a smart basis $(\eta_2^{\sigma},d \eta_1^{\sigma})$ of an ideal $\A$ in $\OD$
 and $z=(z_1,z_2,z_3) \in \HH^3$ and the polarization $H_z$ by symplectic pairing the columns of $\Pi_z$,
 \[ S_z = V_2 / (V_2 \cap \Lambda_z) \mbox{ with } V_2 = \CC^2 \times \{ 0 \} \] and the real multiplication $\rho_z$ on $S_z$ is defined by
 \[ \rho:~ \OD \hookrightarrow {\rm End}_{\CC}(V_2),\quad \rho(k)(c_1,c_2,0) := (kc_1,k^{\sigma}c_2,0) \]  for all $k \in \OD$ and $c_1,c_2 \in \CC$.
\end{prop}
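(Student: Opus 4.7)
The plan is to extend the strategy of the proof of Proposition~\ref{SurjektivDim2} to the three-dimensional setting, building the required isomorphism in stages: first the two-dimensional subvariety $S$, then its complement $S^\perp$, and finally the gluing data between them.

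Given an arbitrary quadruple $(A=V/\Lambda, H, S, \rho)$, I would apply Proposition~\ref{SurjektivDim2} to the triple $(S, H|_S, \rho)$. This yields a unique primitive ideal $\A \subseteq \OD$ of norm~$d$, a point $(z_1, z_2) \in \HH^2$, and a $\CC$-basis $(e_1, e_2)$ of the tangent space $V_S$ in which $\rho_\QQ(k)$ is diagonal for $k = \eta_2/\eta_1$ and the lattice $\Lambda_S := \Lambda \cap V_S$ is exactly the one spanned by the vectors $\lambda_1, \lambda_2, \mu_1, \hat\mu_2$ of Proposition~\ref{KonstruktionDim3}. By Lemma~\ref{complementarytypes}, the complementary subvariety $S^\perp$ of $S$ in $A$ is an elliptic curve with a polarization of degree~$d$, and by rescaling a $\CC$-basis $e_3$ of $V_{S^\perp}$ I can normalize so that $\Lambda_{S^\perp} := \Lambda \cap V_{S^\perp}$ is generated by $\lambda_3 = (0,0,1)$ and $\nu_2 = (0,0,-z_3)$ for some $z_3 \in \HH$. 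This already fixes the shape of four columns and the structure of the remaining two columns modulo their third entries in the period matrix of Proposition~\ref{KonstruktionDim3}.

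Next, I would analyze the quotient $K = \Lambda/(\Lambda_S \oplus \Lambda_{S^\perp})$. Since $\Lambda_S \oplus \Lambda_{S^\perp}$ carries the symplectic form of type $(1, d, d)$ while $\Lambda$ is of type $(1,1,1)$, Lemma~\ref{Gradindex} gives $|K| = d^2$, and a closer analysis of elementary divisors shows that every element of $K$ has order dividing~$d$. The two natural projections of $K$ into $S$ and into $S^\perp$ are both injective: if $[v] \in K$ has vanishing $S^\perp$-image, its $V_S$-component lies in $V_S \cap \Lambda = \Lambda_S$, forcing $[v] = 0$. By a cardinality count, the projection $K \hookrightarrow S^\perp[d] \cong (\ZZ/d)^2$ is then a bijection, so $K$ is the graph of an injective homomorphism $\psi: S^\perp[d] \to S[d]$.

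Finally, I would use the principality of $H$ together with the freedom of changing bases of $\Lambda_S$ (via automorphisms of the $\OD$-symplectic module structure) and of $\Lambda_{S^\perp}$ (via $\SL_2(\ZZ)$) to bring $\psi$ to the standard form implicit in the construction. The integrality of $E$ on $\Lambda$ translates into $K$ being isotropic for the $\tfrac{1}{d^2}\ZZ/\ZZ$-valued alternating form induced on $\tfrac{1}{d}(\Lambda_S \oplus \Lambda_{S^\perp})/(\Lambda_S \oplus \Lambda_{S^\perp})$. The main technical obstacle will be to prove that this isotropy condition, together with the constraints on the projections of $K$ already established, admits essentially a unique orbit under the automorphism groups above, corresponding to the specific generators $[\mu_2] = \tfrac{1}{d}([\hat\mu_2]+[\lambda_3])$ and $[\mu_3] = \tfrac{1}{d}([\lambda_2]+[\nu_2])$ of the construction. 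I expect this to reduce to a splitting/Lagrangian argument in the spirit of Lemmas~\ref{Lagrange2} and~\ref{Rationalbasis}, together with the $\OD$-equivariance imposed by $\rho$. Once this normalization is achieved, comparing period matrices produces the desired isomorphism $(A, H, S, \rho) \cong (A_z, H_z, S_z, \rho_z)$ with $z = (z_1, z_2, z_3) \in \HH^3$.
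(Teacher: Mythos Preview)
Your plan is the paper's own strategy: apply Proposition~\ref{SurjektivDim2} to $S$, normalize the elliptic complement, and analyze the gluing quotient $\Lambda/(\Lambda_S\oplus\Lambda_{S^\perp})\cong(\ZZ/d\ZZ)^2$ via the two projections, invoking Lemma~\ref{Rationalbasis} for the final normalization. Two points where the paper is more precise than your sketch. First, the projection of $K$ into $S$ lands in the specific rank-two subgroup $\langle \tfrac{1}{d}\lambda_2,\tfrac{1}{d}\hat\mu_2\rangle/\langle\lambda_2,\hat\mu_2\rangle$ (not all of $S[d]$, which has order $d^4$); this comes from the integrality constraint $E(\pi_2(\Lambda),\Lambda_S)\subseteq\ZZ$, not from $\OD$-equivariance---$\rho$ acts only on $S$ and imposes nothing on the gluing map itself, so that appeal is a red herring. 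Second, the ``unique orbit'' step you leave open is resolved concretely: Lemma~\ref{Rationalbasis} yields a basis $(\nu_1,\nu_2)$ of $\Lambda_{S^\perp}$ and integers $l,m$ coprime to $d$ with $\tfrac{1}{d}\lambda_2+\tfrac{l}{d}\nu_2,\ \tfrac{1}{d}\hat\mu_2+\tfrac{m}{d}\nu_1\in\Lambda$; integrality of $E_H(\mu_2,\mu_3)$ forces $m\equiv l^{-1}\pmod d$, and an explicit $\SL_2(\ZZ)$ change of basis on $\Lambda_{S^\perp}$ then reduces to $m=1$.
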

\begin{proof}
 Let $d \in \NN$ be relatively prime to the conductor of $D$ and let $(A,H,S,\rho)$ be a polarized Abelian variety of dimension three with real multiplication by~$\OD$ of type $d$.
 We have $A=V/\Lambda$ for a three-dimensional $\CC$-vector space $V$ and a lattice $\Lambda$ in $V$ and $S = V_2 / \Lambda_2$ with $V_2 \leq V$, $\dim_{\CC}(V_2)=2$ and $\Lambda_2 := V_2 \cap \Lambda$,
 and we write $H_2$ for $H|_{V_2 \times V_2}$.

 We have seen in Proposition~\ref{SurjektivDim2}, that there is an isomorphism of polarized Abelian varieties with real multiplication by $\OD$,
 \[ f:~ (A_z,H_z,\rho_z) \to (S,H_2,\rho), \] where $(A_z=\CC^2/\Lambda_z,H_z,\rho_z)$ is a polarized Abelian surface with real multiplication by $\OD$ for a primitive ideal $\A$ of norm $d$ in $\OD$
 and for some pair $z=(z_1,z_2) \in \HH^2$.

 Let $(\eta_2^{\sigma},d\eta_1^{\sigma})$ be any smart basis for $\A$ and let $F:=\rho_a(f)$ be the analytic representation of $f$.
 Then \[ (F((\eta_1,\eta_1^{\sigma})),F((\eta_2,\eta_2^{\sigma})),F((\tfrac{1}{\sqrt{D}}\eta_2^{\sigma} z_1,\tfrac{-1}{\sqrt{D}}\eta_2 z_2)),
 F((\tfrac{-d}{\sqrt{D}}\eta_1^{\sigma} z_1,\tfrac{d}{\sqrt{D}}\eta_1 z_2))) \] is a symplectic basis of $(S,H_2)$, denoted by $(\lambda_1,\lambda_2,\mu_1,\hat \mu_2)$.
 Since the two vectors $v_1:=F((1,0))$ and $v_2:=F((0,1))$ form a $\CC$-basis of $V_2$, we get
 \[ \begin{array}{ll}
  \lambda_1 = \eta_1 v_1 + \eta_1^{\sigma} v_2, & \mu_1 = (\tfrac{1}{\sqrt{D}}\eta_2^{\sigma}z_1) v_1 + (\tfrac{-1}{\sqrt{D}}\eta_2z_2) v_2, \\
  \lambda_2 = \eta_2 v_1 + \eta_2^{\sigma} v_2, & \hat \mu_2 = (\tfrac{-d}{\sqrt{D}}\eta_1^{\sigma}z_1) v_1 + (\tfrac{d}{\sqrt{D}}\eta_1z_2) v_2, \\
 \end{array} \]
 and the real multiplication is given by \[ \rho(k)(c_1v_1+c_2v_2) = (kc_1)v_1+(k^{\sigma}c_2)v_2 \] for all $k \in \OD$ and $c_1,c_2 \in \CC$, as desired.

 It is important to keep in mind, that we will neither change the symplectic basis $(\lambda_1,\lambda_2,\mu_1,\hat \mu_2)$ nor the $\CC$-basis $(v_1,v_2)$ during the rest of the proof.

 We denote by $V_1$ the orthogonal complement of $V_2$ in $V$ with respect to $H$ and by $H_1 := H|_{V_1 \times V_1}$ the induced polarization on $A_1 := V_1/\Lambda_1$
 for $\Lambda_1 := V_1 \cap \Lambda$. Furthermore, for $\Lambda' := \Lambda_1 \oplus \Lambda_2$ let $H'$ be the induced polarization on $A':=V/\Lambda'$.
 Since $(A_2,H_2)$ is of type $(1,d)$, we know that $(A',H')$ is of type $(1,d,d)$, and so by Lemma~\ref{Gradindex} we have \[ |\Lambda/\Lambda'| = d^2. \]
 For $i \in \{ 1,2 \}$, let $\pi_i: V \to V_i$  be the canonical projection and let $\underline{\Lambda}_i := \pi_i(\Lambda)$ be the induced lattice in $V_i$.
 For the alternating form $E_2$ corresponding to $H_2$, we have $E_2(\underline{\Lambda}_2,\Lambda_2) \subseteq \ZZ$ and therefore
 $\underline{\Lambda}_2 \subseteq \langle \lambda_1,\tfrac{1}{d} \lambda_2,\mu_1,\tfrac{1}{d} \hat \mu_2 \rangle_{\ZZ}$.
 Because \[ \varphi_i:~ \Lambda/\Lambda' \to \underline{\Lambda}_i/\Lambda_i ,\quad [\lambda] \mapsto [\pi_i(\lambda)] \] is an isomorphism of groups
 and hence $|\underline{\Lambda}_i/\Lambda_i| = d^2$, we even have \[ \underline{\Lambda}_2 = \langle \lambda_1,\tfrac{1}{d} \lambda_2,\mu_1,\tfrac{1}{d} \hat \mu_2 \rangle_{\ZZ}, \]
 in particular \[ \Lambda/\Lambda' \cong \underline{\Lambda}_i/\Lambda_i \cong \ZZ_d^2. \]
 There are $\lambda',\mu' \in \underline{\Lambda}_1$, unique up to $\Lambda_1$, with $\tfrac{1}{d}\lambda_2+\lambda',\tfrac{1}{d}\mu_2+\mu' \in \Lambda$.
 As $\varphi_1$ and $\varphi_2$ are isomorphisms, we know that $([\lambda'],[\mu'])$ is a basis (in the sense of Lemma~\ref{Rationalbasis}) of $\underline{\Lambda}_1/\Lambda_1$ and so by
 Lemma~\ref{Rationalbasis} there is a basis $(\nu_1,\nu_2)$ of $\Lambda_1$ and $l,m \in \NN$ with $1 \leq l,m \leq d$ and $(l,d)=(m,d)=1$, such that $\lambda'=\tfrac{l}{d}\nu_2$ and
 $\mu'=\tfrac{m}{d}\nu_1$. We can write $\nu_2 = -z_3\nu_1$ with $z_3 \in \CC \setminus \RR$. If ${\rm Im}(z_3)<0$,
 we can choose $-\nu_2$ and $\lambda'-\nu_2 = \tfrac{d-l}{d}(-\nu_2)$ instead of $\nu_2$ and $\lambda'$, so without loss of generality we may assume $z_3 \in \HH$.
 Now we take $n \in \NN$ with $1 \leq n \leq d$ and $nl \equiv 1 \mod d$ and define
 \[ \mu_2 := \tfrac{1}{d}(\hat \mu_2 + m\nu_1) ,~ \mu_3 := \tfrac{1}{d}(n\lambda_2 + \nu_2) = n(\tfrac{1}{d}\lambda_2+\lambda')+\tfrac{1-nl}{d}\nu_2 \in \Lambda. \]
 Then we have $\langle \lambda_1, \lambda_2, \mu_1, \hat \mu_2, \nu_1, \nu_2 \rangle_{\ZZ} = \Lambda'$ and $\langle [\mu_2], [\mu_3] \rangle = \Lambda/\Lambda'$ and hence
 \[ \langle \lambda_1, \lambda_2, \mu_1, \hat \mu_2, \nu_1, \nu_2, \mu_2, \mu_3 \rangle_{\ZZ} = \Lambda. \]
 As ${\rm Im}(z_3) >0$, it follows \[ E_H(\nu_1,\nu_2) = {\rm Im}(z_3)H(\nu_1,\nu_1) = +d \] and \[ E_H(\mu_2,\mu_3) = \tfrac{1}{d}(m-n). \]
 Because $E_H(\mu_2,\mu_3)$ is an integer, we see by the choice of $m$ and $n$, that $m = n$ and $E_H(\mu_2,\mu_3) = 0$, so with $\lambda_3:=\nu_1$, we have that
 $(\lambda_1,\lambda_2,\lambda_3,\mu_1,\mu_2,\mu_3)$ is a symplectic basis of~$(A,H)$.

 To show that we can always choose $m=1$, let us assume $m\neq1$. Since $m$ and $d^2$ are relatively prime, there are $a,b \in \ZZ$ with $am-bd^2=1$. Hence
 \[ \mu'' := \tfrac{1}{d}(\lambda_2+a\nu_2) = a\mu_3-\tfrac{am-1}{d}\lambda_2 \] is in $\Lambda$ and we define \[ \mu_2':=\mu_2 +b\nu_2 ,~ \mu_3':=\mu''+\nu_1 \in \Lambda \]
 and \[ \begin{pmatrix} \nu_1' \\ \nu_2' \end{pmatrix} := \begin{pmatrix} m & bd \\ d & a \end{pmatrix} \begin{pmatrix} \nu_1 \\ \nu_2 \end{pmatrix}. \]
 We get a new symplectic basis $(\nu_1',\nu_2')$ for $(A_1,H_1)$ with $\nu_2' = -z_3' \nu_1'$ for some $z_3 \in \HH$ and define $\lambda_3':=\nu_1'$. Also note that we have now
 \[ \mu_2' = \tfrac{1}{d}(\hat \mu_2 + \nu_1') ,~ \mu_3' = \tfrac{1}{d}(\lambda_2 + \nu_2') \] and can replace the symplectic basis $(\nu_1,\nu_2)$ of $(A_1,H_1)$ by $(\nu_1',\nu_2')$
 and the symplectic basis $(\lambda_1,\lambda_2,\lambda_3,\mu_1,\mu_2,\mu_3)$ of $(A,H)$ by $(\lambda_1,\lambda_2,\lambda_3',\mu_1,\mu_2',\mu_3')$.
 This shows that we can assume without loss of generality $m=n=1$. Then with $v_1,v_2$ defined at the beginning of the proof and $v_3:=\nu_1$, our period matrix of $A$
 with respect to this symplectic basis and the $\CC$-basis $(v_1,v_2,v_3)$ of $V$ is indeed
 \[ \Pi_z = \begin{pmatrix} \eta_1 & \eta_2 & 0 & \frac{1}{\sqrt{D}}\eta_2^{\sigma}z_1 & \frac{-1}{\sqrt{D}}\eta_1^{\sigma}z_1 & \tfrac{1}{d}\eta_2 \\
 \eta_1^{\sigma} & \eta_2^{\sigma} & 0 & \frac{-1}{\sqrt{D}}\eta_2z_2 & \frac{1}{\sqrt{D}}\eta_1z_2 & \tfrac{1}{d}\eta_2^{\sigma} \\
 0 & 0 & 1 & 0 & \tfrac{1}{d} & -\tfrac{1}{d}z_3 \end{pmatrix}, \] which ends the proof.
\end{proof}

As in the two-dimensional case, we have seen that polarized Abelian varieties of dimension three with real multiplication by $\OD$ of a fixed type $d$ are classified by certain ideals of $\OD$,
and this leads to the following definition.
\begin{defi}
 Let $D \in \NN$ be a non-square discriminant and let $d \in \NN$. Then we denote by $X_{D,d}^{(3)}$ the set of all isomorphism classes $[(A,H,S,\rho)]$, where $(A,H,S,\rho)$ is a polarized
 Abelian variety of dimension three with real multiplication by $\OD$ of type $d$.

 Furthermore, if $d$ is relatively prime to the conductor of $D$, we denote by \[ X_\A^{(3)} \subseteq X_{D,d}^{(3)} \] the locus of those isomorphism classes $[(A,H,S,\rho)]$,
 where $(S,H|_S,\rho)$ is of type~$\A$. If $[(A,H,S,\rho)]$ is in $X_\A^{(3)}$, then we also say that $(A,H,S,\rho)$ is of type~$\A$.
\end{defi}

As in the two-dimensional case, it follows that for $d$ relatively prime to the conductor, $X_{D,d}^{(3)}$ is non-empty if and only if $d$ satisfies the prime factor condition in $\OD$.
In this case it is the disjoint union of $2^s$ components $X_\A^{(3)}$, where $\A$ runs through all primitive ideals of $\OD$ of norm $d$ and $s$ is the number of splitting prime factors of~$d$.

\bigskip
We fix an ideal $\A$ of $\OD$ of norm $d$ and a smart basis $(\eta_2^{\sigma},d\eta_1^{\sigma})$ for $\A$.
Again we want to make the surjective map \[ \widetilde \Phi:~ \HH^3 \twoheadrightarrow X_\A^{(3)},\quad z \mapsto [(A_z,H_z,S_z,\rho_z)] \] injective by factoring out the appropriate group action.

The group ${\rm SL}_2(\OD \oplus \tfrac{1}{\sqrt{D}}\A) \times {\rm SL}_2(\ZZ)$ acts on $\HH^3$ via \[ (A,B).(z_1,z_2,z_3) := (Az_1,A^{\sigma}z_2,Bz_3) \] for all
$(A,B) \in {\rm SL}_2(\OD \oplus \tfrac{1}{\sqrt{D}}\A) \times {\rm SL}_2(\ZZ)$ and $(z_1,z_2,z_3) \in \HH^3$, where the action on the components is given by M\"obius transformations.
We will see that this group is slightly to big, and that the correct group involves the choice of the smart basis.
With this in mind, we set \[ S:= S(D,d,\eta_1,\eta_2) := \begin{pmatrix} 0 & \tfrac{\sqrt{D}}{d}\tfrac{\eta_2}{\eta_1^{\sigma}} \\ 1 & 0 \end{pmatrix} \] and
\[ M_{D,d}(\eta_1,\eta_2) := \begin{pmatrix} \tfrac{d}{\eta_2}\OD & \tfrac{\sqrt{D}}{\eta_1^{\sigma}}\OD \\ \tfrac{d}{\sqrt{D}\eta_2}\A & \tfrac{1}{\eta_1^{\sigma}}\A \end{pmatrix} \subset \bM_2(K). \]
\begin{lem}
 \[ \Gamma_{D,d}(\eta_1,\eta_2) = \{ (A,B) \in {\rm SL}_2(\OD \oplus \tfrac{1}{\sqrt{D}}\A) \times {\rm SL}_2(\ZZ) : A-SBS^{-1} \in M_{D,d}(\eta_1,\eta_2) \} \]
 is a subgroup of ${\rm SL}_2(\OD \oplus \tfrac{1}{\sqrt{D}}\A) \times {\rm SL}_2(\ZZ)$.
\end{lem}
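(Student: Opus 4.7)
First, the identity $(I_2, I_2)$ lies in $\Gamma_{D,d}(\eta_1,\eta_2)$ since $I_2 - S I_2 S^{-1} = 0 \in M$, where I write $M := M_{D,d}(\eta_1,\eta_2)$ for brevity. The remainder of the proof is to check closure under products and inverses, which I will reduce to two bimodule closure properties of $M$.

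The main step is to establish: (i) $A \cdot M \subseteq M$ for every $A \in \SL_2(\OD \oplus \tfrac{1}{\sqrt{D}}\A)$, and (ii) $M \cdot SBS^{-1} \subseteq M$ for every $B \in \SL_2(\ZZ)$. Both reduce to checking the four entries of the product. Property (i) uses that $\OD$ is a ring, $\A$ is an $\OD$-ideal, and the identity $\A \A^\sigma = d \OD$ (equivalently, $\tfrac{1}{\sqrt{D}}\A$ and $\tfrac{\sqrt{D}}{d}\A^\sigma = (\tfrac{1}{\sqrt{D}}\A)^{-1}$ multiply to $\OD$), which ensures that the off-diagonal contributions coming from entries of $A$ meeting entries of $M$ land in the correct entries of $M$. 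Property (ii) uses the explicit form
\begin{equation*}
SBS^{-1} = \begin{pmatrix} \delta & \tfrac{\sqrt{D}\eta_2}{d\eta_1^\sigma}\gamma \\ \tfrac{d\eta_1^\sigma}{\sqrt{D}\eta_2}\beta & \alpha \end{pmatrix} \quad \text{for } B = \begin{pmatrix} \alpha & \beta \\ \gamma & \delta \end{pmatrix} \in \SL_2(\ZZ),
\end{equation*}
where the reciprocal off-diagonal prefactors $\tfrac{\sqrt{D}\eta_2}{d\eta_1^\sigma}$ and $\tfrac{d\eta_1^\sigma}{\sqrt{D}\eta_2}$ combine with the prefactors appearing in the four entries of $M$ exactly so as to map $M$ back into itself.

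Given (i) and (ii), closure under products follows from the algebraic identity
\begin{equation*}
A_1 A_2 - S(B_1 B_2) S^{-1} = A_1 (A_2 - S B_2 S^{-1}) + (A_1 - S B_1 S^{-1}) \, S B_2 S^{-1},
\end{equation*}
where the first summand lies in $M$ by (i) and the second in $M$ by (ii). Closure under inverses follows analogously from
\begin{equation*}
A^{-1} - S B^{-1} S^{-1} = -A^{-1} (A - S B S^{-1}) \, S B^{-1} S^{-1},
\end{equation*}
which lies in $M$ by first applying (ii) with $B^{-1} \in \SL_2(\ZZ)$ and then (i) with $A^{-1} \in \SL_2(\OD \oplus \tfrac{1}{\sqrt{D}}\A)$. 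The only obstacle is the bookkeeping in the eight entry-wise checks establishing (i) and (ii); each one reduces to a single application of $\A\A^\sigma = d\OD$ or to $\OD$ being a ring containing $\ZZ$.
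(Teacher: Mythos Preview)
Your proof is correct, and for the product closure it coincides with the paper's argument: the same bimodule inclusions (i) and (ii) are stated there, and the same telescoping identity $A_1A_2 - \overline{B_1B_2} = A_1(A_2-\overline{B_2}) + (A_1-\overline{B_1})\overline{B_2}$ is used.

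Where you diverge is in the inverse step, and your route is genuinely shorter. The paper instead exploits that for $2\times 2$ matrices of determinant one the inverse equals the adjugate, which is $\ZZ$-linear, so that if $A-\overline{B} = \bigl(\begin{smallmatrix}\alpha&\beta\\\gamma&\delta\end{smallmatrix}\bigr)$ then $A^{-1}-\overline{B}^{-1} = \bigl(\begin{smallmatrix}\delta&-\beta\\-\gamma&\alpha\end{smallmatrix}\bigr)$. The off-diagonal entries are then immediately in the right places, but the diagonal entries have been swapped: one knows $\alpha\in\tfrac{d}{\eta_2}\OD\cap\OD$ and $\delta\in\tfrac{1}{\eta_1^\sigma}\A\cap\OD$, and one now needs these in the \emph{other} diagonal slot. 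The paper therefore proves the non-obvious equality of ideals $\tfrac{d}{\eta_2}\OD\cap\OD = \tfrac{1}{\eta_1^\sigma}\A\cap\OD$, which takes up the bulk of its argument and uses the specific form of the smart basis. Your factorization $A^{-1}-\overline{B^{-1}} = -A^{-1}(A-\overline{B})\,\overline{B^{-1}}$ sidesteps this entirely, reducing the inverse case to the already-established (i) and (ii). The trade-off is that the paper's computation yields, as a by-product, an explicit ideal identity that may be of independent use; your argument is cleaner but does not surface it.
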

\begin{proof}
 For all $B \in {\rm SL}_2(\ZZ)$, we define $\overline{B} := SBS^{-1}$.

 First, let $(A_1,B_1),(A_2,B_2) \in \Gamma_{D,d}(\eta_2,\eta_2)$.
 From the inclusions \[ {\rm SL}_2(\OD \oplus \tfrac{1}{\sqrt{D}}\A) \cdot M_{D,d}(\eta_1,\eta_2) \subseteq M_{D,d}(\eta_1,\eta_2) \] and
 \[ M_{D,d}(\eta_1,\eta_2) \cdot S \cdot {\rm SL}_2(\ZZ) \cdot S^{-1} \subseteq M_{D,d}(\eta_1,\eta_2), \] we get
 \[ A_1 A_2 - \overline{B_1 B_2} = A_1 (A_2 - \overline{B_2}) + (A_1 - \overline{B_1}) \overline{B_2} \in M_{D,d}(\eta_1,\eta_2), \]
 and therefore $(A_1,B_1) \cdot (A_2,B_2)$ is in $\Gamma_{D,d}(\eta_1,\eta_2)$.

 It is obvious that $(I_2,I_2)$ lies in $\Gamma_{D,d}(\eta_1,\eta_2)$, because the zero matrix is in~$M_{D,d}(\eta_1,\eta_2)$.

 It remains to show that for any $(A,B) \in \Gamma_{D,d}(\eta_1,\eta_2)$, its inverse $(A^{-1},B^{-1})$ also lies in~$\Gamma_{D,d}(\eta_1,\eta_2)$.
 We write \[ A-\overline{B} = \begin{pmatrix} \alpha & \beta \\ \gamma & \delta \end{pmatrix} \] with
 \[ \alpha \in \tfrac{d}{\eta_2}\OD,~ \beta \in \tfrac{\sqrt{D}}{\eta_1^{\sigma}}\OD,~ \gamma \in \tfrac{d}{\sqrt{D}\eta_2}\A,~ \delta \in \tfrac{1}{\eta_1^{\sigma}}\A. \]
 As $A \in {\rm SL}_2(\OD \oplus \tfrac{1}{\sqrt{D}}\A)$ and $B \in {\rm SL}_2(\ZZ)$, we even get
 \[ \alpha \in \tfrac{d}{\eta_2}\OD \cap \OD \mbox{ and } \delta \in \tfrac{1}{\eta_1^{\sigma}}\A \cap \OD. \]
 Since \[ A^{-1}-\overline{B}^{-1} = \begin{pmatrix} \delta & -\beta \\ -\gamma & \alpha \end{pmatrix}, \] it is sufficient to show that
 \[ \tfrac{d}{\eta_2}\OD \cap \OD = \langle d, d\tfrac{\eta_1}{\eta_2} \rangle_{\ZZ} \cap \OD \] and
 \[ \tfrac{1}{\eta_1^{\sigma}}\A \cap \OD = \langle d, \tfrac{\eta_2^{\sigma}}{\eta_1^{\sigma}} \rangle_{\ZZ} \cap \OD \] are the same ideals of $\OD$.
 Recall that we can write \[ \begin{pmatrix} \eta_1 \\ \eta_2 \end{pmatrix} = \begin{pmatrix} a_1 & a_2 \\ a_3 & a_4 \end{pmatrix} \begin{pmatrix} 1 \\ \gamma \end{pmatrix} \]
 with $\gamma = \tfrac{D+\sqrt{D}}{2}$ and $a_1a_4-a_2a_3 = 1$. Then an elementary computation yields \[ \eta_1 \eta_2^{\sigma} = (a_1 a_3 + a_2 a_4 \mathcal{N}(\gamma) + a_1 a_4 D) - \gamma.\]
 In particular $\eta_1 \eta_2^{\sigma}$ is primitive in~$\OD$.
 Now we define \[ I := \{ x \in \QQ : x\tfrac{\eta_2^{\sigma}}{\eta_1^{\sigma}} \in \OD \} \subseteq \QQ. \]
 We have $\tfrac{\eta_2^{\sigma}}{\eta_1^{\sigma}} = \tfrac{1}{\mathcal{N}(\eta_1)}\eta_1\eta_2^{\sigma}$ and $\eta_1\eta_2^{\sigma}$ is primitive in $\OD$, hence $I = \mathcal{N}(\eta_1)\ZZ$.
 Now we take an arbitrary $z \in \langle d, d\tfrac{\eta_1}{\eta_2} \rangle_{\ZZ} \cap \OD$, i.e.
 \[ z = xd + yd\tfrac{\eta_1}{\eta_2} =  xd + yd\mathcal{N}(\tfrac{\eta_1}{\eta_2})\tfrac{\eta_2^{\sigma}}{\eta_1^{\sigma}} \] with $x,y \in \ZZ$ and the additional condition that $z$ lies in $\OD$.
 Then $yd\mathcal{N}(\tfrac{\eta_1}{\eta_2})$ is in $I \subseteq \ZZ$, and thus $z$ is in $\langle d, \tfrac{\eta_2^{\sigma}}{\eta_1^{\sigma}} \rangle_{\ZZ}$. \\
 For the reverse inclusion, note that $[\OD : \A] = d$ and $\eta_2^{\sigma} \in \A$ implies \[ \mathcal{N}(\eta_2) = [\OD : \eta_2^{\sigma}\OD] \in d\ZZ. \]
 For an arbitrary $z \in \langle d, \tfrac{\eta_2^{\sigma}}{\eta_1^{\sigma}} \rangle_{\ZZ} \cap \OD$,
 \[ z = xd + y\tfrac{\eta_2^{\sigma}}{\eta_1^{\sigma}} = xd + \tfrac{y}{\mathcal{N}(\eta_1)}\mathcal{N}(\eta_2)\tfrac{\eta_1}{\eta_2} \]
 with $x,y \in \ZZ$ and the additional condition that $z$ lies in $\OD$, the fact that $y \in I \mbox{ and } d \mid \mathcal{N}(\eta_2)$ implies
 that $z$ is in $\langle d, d\tfrac{\eta_1}{\eta_2} \rangle_{\ZZ}$.
\end{proof}

Thus we have a group action of $\Gamma_{D,d}(\eta_1,\eta_2)$ on $\HH^3$, and the following proposition states that the map $\widetilde \Phi:~ \HH^3 \twoheadrightarrow X_\A^{(3)}$
factorizes through the corresponding quotient map.

\begin{prop}\label{WohldefiniertheitDim3}
 For each $\gamma \in \Gamma_{D,d}(\eta_1,\eta_2)$ and $z \in \HH^3$, we have \[ (A_z,H_z,S_z,\rho_z) \cong (A_{\gamma.z},H_{\gamma.z},S_{\gamma.z},\rho_{\gamma.z}). \]
\end{prop}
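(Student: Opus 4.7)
The plan is to mimic the explicit construction from Proposition~\ref{WohldefiniertheitDim2} by building a single diagonal $\CC$-linear map on $\CC^3$ whose blocks on $V_2 = \CC^2 \times \{0\}$ and on $V_1 = \{0\}^2 \times \CC$ implement the $A$-action and the $B$-action respectively, and then to show that the compatibility condition $A - SBS^{-1} \in M_{D,d}(\eta_1,\eta_2)$ is exactly what makes the two blocks fit together along the ``gluing" generators $\mu_2, \mu_3$ of $\Lambda_z$. Concretely, writing
\[ \gamma = (A,B),\quad A = \begin{pmatrix} a & b \\ c & d \end{pmatrix},\quad B = \begin{pmatrix} p & q \\ r & s \end{pmatrix}, \]
I would define
\[ D(\gamma,z) := \diag\bigl((cz_1+d)^{-1},\ (c^\sigma z_2+d^\sigma)^{-1},\ (rz_3+s)^{-1}\bigr) \in \GL_3(\CC), \]
and let $F \colon \CC^3 \to \CC^3$ be the $\CC$-linear map $v \mapsto v \cdot D(\gamma,z)^t$. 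The candidate isomorphism is the induced map $A_z \to A_{\gamma.z}$.

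The first and main step is to verify $F(\Lambda_z) = \Lambda_{\gamma.z}$. Denote by $\lambda_i', \mu_i'$ the basis vectors of $\Lambda_{\gamma.z}$ at $\gamma.z$ constructed as in Proposition~\ref{KonstruktionDim3}. For $\lambda_1, \lambda_2, \mu_1, \hat{\mu}_2 := d\mu_2 - \lambda_3$, which all lie in $V_2$, the restriction $F|_{V_2}$ coincides with the 2D isomorphism of Proposition~\ref{WohldefiniertheitDim2} for the pair $(A, (z_1, z_2))$, so these vectors are sent to integer combinations of $\lambda_1', \lambda_2', \mu_1', \hat{\mu}_2'$. For $\lambda_3 = (0,0,1)$, the identity $ps - qr = 1$ gives
\[ F(\lambda_3) = (0,0,(rz_3+s)^{-1}) = p\,\lambda_3' - r\,\lambda_2' + rd\,\mu_3' \in \Lambda_{\gamma.z}. \]
The delicate cases are $F(\mu_2)$ and $F(\mu_3)$. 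Matching the third coordinate forces the coefficients of $\lambda_3', \mu_2', \mu_3'$ modulo $\ZZ$, leaving a residue in $V_2$; this residue has to lie in $V_2 \cap \Lambda_{\gamma.z}$. Unpacking, the four entries of the matrix condition $A - SBS^{-1} \in M_{D,d}(\eta_1,\eta_2)$ translate (via the explicit form of $S$ and $M_{D,d}(\eta_1,\eta_2)$ computed in the lemma preceding the proposition) into exactly the four integrality statements needed to express these residues as $\ZZ$-combinations of $\lambda_1', \lambda_2', \mu_1', \hat{\mu}_2'$. The reverse inclusion $\Lambda_{\gamma.z} \subseteq F(\Lambda_z)$ then follows either by applying the same argument to $\gamma^{-1}$ (using that $\Gamma_{D,d}(\eta_1,\eta_2)$ is a group by the preceding lemma) or by a direct index comparison.

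It remains to verify the polarization, the subsurface, and the real multiplication. The matrix $R \in \GL_6(\ZZ)$ of the rational representation of $F$ with respect to our chosen symplectic bases inherits a block structure: the $V_2$-block is the $4\times 4$ symplectic matrix from Proposition~\ref{WohldefiniertheitDim2} (with determinant $1$, and symplectic since $\det A = 1$), the $V_1$-block is the standard $\SL_2(\ZZ)$-action from $B$ (symplectic since $\det B = 1$), and the off-diagonal gluing entries arise from the terms $rd\,\mu_3', -r\,\lambda_2'$ etc.\ appearing above. A direct computation, using that both diagonal blocks are symplectic, shows $R \in \Sp_6(\ZZ)$, so $F$ preserves the polarization. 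Since $F$ is diagonal, it preserves $V_2$ and hence $F(S_z) = S_{\gamma.z}$; and since $F|_{V_2}$ is the 2D isomorphism, Proposition~\ref{WohldefiniertheitDim2} already shows that it intertwines $\rho_z$ and $\rho_{\gamma.z}$.

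The main obstacle is the bookkeeping in the second paragraph: showing that the ``gluing residues" of $F(\mu_2)$ and $F(\mu_3)$ in $V_2$ actually lie in the sublattice $V_2 \cap \Lambda_{\gamma.z}$. The appearance of the matrices $S$ and $M_{D,d}(\eta_1,\eta_2)$ in the very definition of $\Gamma_{D,d}(\eta_1,\eta_2)$ is not accidental: these are engineered so that this step succeeds, and the substitution $A \leftrightarrow SBS^{-1}$ is precisely the change of basis converting the $\OD$-module coordinates of the $V_2$-block into the $\ZZ^2$-coordinates of the $V_1$-block along the generators $\mu_2, \mu_3$ that mix the two subspaces.
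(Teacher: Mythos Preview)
Your proposal is correct and follows essentially the same strategy as the paper: define the diagonal map $D(\gamma,z)$, reduce the $V_2$-part to Proposition~\ref{WohldefiniertheitDim2} and the $V_1$-part to the standard $\SL_2(\ZZ)$-action on elliptic curves, and identify the condition $A-SBS^{-1}\in M_{D,d}(\eta_1,\eta_2)$ as precisely the integrality constraint needed for the images of the two ``mixed'' generators $\mu_2,\mu_3$ to land in $\Lambda_{\gamma.z}$.

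One small efficiency point is worth noting. You verify the polarization by arguing that the rational representation $R$ lies in $\Sp_6(\ZZ)$ via a direct computation with its block structure. The paper reverses the order: it first observes that $F$ preserves the Hermitian form $H_z$ on all of $\CC^3$ (immediate from the orthogonal decomposition $V=V_1\oplus V_2$ and the 1D/2D cases), and only then turns to the lattice. This buys two things at once: the symplecticity of $R$ is automatic, and the lattice \emph{inclusion} $F(\Lambda_z)\subseteq\Lambda_{\gamma.z}$ already forces \emph{equality} (equal covolume), so no appeal to $\gamma^{-1}$ or an index argument is needed. Your route works, but the paper's ordering avoids the bookkeeping of checking $R\in\Sp_6(\ZZ)$ for a matrix whose off-diagonal blocks (with respect to the symplectic basis $(\lambda_1,\lambda_2,\lambda_3,\mu_1,\mu_2,\mu_3)$, not the $(\Lambda_1\oplus\Lambda_2)$-basis) are nontrivial.
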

\begin{proof}
 We write $z = (z_1,z_2,z_3)$, $\gamma.z = (\tilde z_1,\tilde z_2,\tilde z_3)$
 and \[ \gamma = (A,B) = \left( \begin{pmatrix} a_1 & a_2 \\ a_3 & a_4 \end{pmatrix} , \begin{pmatrix} b_1 & b_2 \\ b_3 & b_4 \end{pmatrix} \right). \]
 Furthermore, we define \[ D(\gamma,z) := \begin{pmatrix} (a_3z_1+a_4)^{-1} & 0 & 0 \\ 0 & (a_3^{\sigma}z_2+a_4^{\sigma})^{-1} & 0 \\ 0 & 0 & (b_3z_3+b_4)^{-1} \end{pmatrix} \]
 and \[ F:~ \CC^3 \to \CC^3 ,\quad v \mapsto D(\gamma,z) \cdot v. \]
 We want to show that $F$ is the analytic representation of an isomorphism \[ (A_z=\CC^3/\Lambda_z,H_z,S_z,\rho_z) \cong (A_{\gamma.z}=\CC^3/\Lambda_z,H_{\gamma.z},S_{\gamma.z},\rho_{\gamma.z}). \]
 Identifying $\CC^2 \times \{ 0 \}$ with $\CC^2$ and using the notations from Proposition~\ref{KonstruktionDim2}, we have $S_z=\CC^2/\Lambda_{(z_1,z_2)}$ and
 $S_{\gamma.z}=\CC^2/\Lambda_{(\tilde z_1,\tilde z_2)}$.
 As $A$ is in ${\rm SL}_2(\OD \oplus \tfrac{1}{\sqrt{D}}\A)$, we have seen in the proof of Proposition~\ref{WohldefiniertheitDim2}, that \[ F_2 := F|_{\CC^2}:~ \CC^2 \to \CC^2 \]
 defines an isomorphism of polarized Abelian surfaces with real multiplication \[ f_2:~ S_z \to S_{\gamma.z} ,\quad [x] \mapsto [F_2(x)]. \]
 We also identify $\CC$ with $\{ (0,0) \} \times \CC$. Denoting the complementary subvariety of $S_z$ respectively $S_{\gamma.z}$ by $C$ respectively $\widetilde C$, we get $C=\CC/\Lambda_{z_3}$
 and $\widetilde C=\CC/\Lambda_{\tilde z_3}$ with $\Lambda_{z_3} = \langle 1,z_3 \rangle_\ZZ$ and $\Lambda_{\tilde z_3} = \langle 1,\tilde z_3 \rangle_\ZZ$.
 Since $B$ is in ${\rm SL}_2(\ZZ)$, the $\CC$-linear map $F_1 := F|_{\CC}:~ \CC \to \CC$
 induces an isomorphism of polarized elliptic curves \[ f_1:~ C \to \widetilde C,\quad [x] \mapsto [F_1(x)]. \]
 Thus $F$ is an isomorphism of $\CC$-vectorspaces preserving the Hermitian forms and the $\OD$-module structure.

 It remains to prove that $F$ maps $\Lambda_z$ onto $\Lambda_{\gamma.z}$. As $F$ preserves the Hermitian forms, it suffices to show that $F(\Lambda_z)$ is contained in $\Lambda_{\gamma.z}$.

 Recall that $\ZZ$-bases of $\Lambda_{(z_1,z_2)}$ and $\Lambda_{(\tilde z_1,\tilde z_2)}$ are given by $(\lambda_1,\lambda_2,\mu_1,\mu_2')$ and $(\lambda_1,\lambda_2,\tilde \mu_1,\tilde \mu_2')$
 with \[ \begin{array}{ll}
 \lambda_1 := (\eta_1,\eta_1^{\sigma},0), & \lambda_2 := (\eta_2,\eta_2^{\sigma},0), \\
 \mu_1 := (\tfrac{1}{\sqrt{D}}\eta_2^{\sigma}z_1,\tfrac{-1}{\sqrt{D}}\eta_2z_2,0), &
 \tilde \mu_1 := (\tfrac{1}{\sqrt{D}}\eta_2^{\sigma}\tilde z_1,\tfrac{-1}{\sqrt{D}}\eta_2\tilde z_2,0), \\
 \mu_2' := (\tfrac{-d}{\sqrt{D}}\eta_1^{\sigma}z_1,\tfrac{d}{\sqrt{D}}\eta_1z_2,0) , &
 \tilde \mu_2' := (\tfrac{-d}{\sqrt{D}}\eta_1^{\sigma}\tilde z_1,\tfrac{d}{\sqrt{D}}\eta_1\tilde z_2,0). \end{array} \]
 Let $R=(r_{ij})_{i,j} \in \GL_4(\ZZ)$ be the matrix of the rational representation of~$f_2$ with respect to these bases.
 We have seen in the proof of Proposition~\ref{WohldefiniertheitDim2}, that
 \[ F_2' := \phi_{(\tilde z_1,\tilde z_2)}^{-1} \circ F_2|_{\Lambda_{(z_1,z_2)}} \circ \phi_{(z_1,z_2)}:~ \OD \oplus \tfrac{1}{\sqrt{D}}\A \to \OD \oplus \tfrac{1}{\sqrt{D}}\A \]
 is given by multiplication with the matrix $A^* = \left( \begin{smallmatrix} a_1 & -a_2 \\ -a_3 & a_4 \end{smallmatrix} \right) \in \SL_2(\OD \oplus \tfrac{1}{\sqrt{D}}\A)$.
 Moreover, we have seen in the proof of Theorem~\ref{ModulraumDim2} that
\[ A = \begin{pmatrix} r_{12}\tfrac{\eta_1}{\eta_2} + r_{22} &
 r_{14}\tfrac{\sqrt{D}}{d}\tfrac{\eta_1}{\eta_1^{\sigma}} + r_{24}\tfrac{\sqrt{D}}{d}\tfrac{\eta_2}{\eta_1^{\sigma}} \\
 -r_{32}\tfrac{1}{\sqrt{D}}\tfrac{\eta_2^{\sigma}}{\eta_2} + r_{42}\tfrac{d}{\sqrt{D}}\tfrac{\eta_1^{\sigma}}{\eta_2} &
 -r_{34}\tfrac{1}{d}\tfrac{\eta_2^{\sigma}}{\eta_1^{\sigma}} + r_{44} \end{pmatrix}. \]
 The matrix of the rational representation $f_1$ with respect to the symplectic bases $(\nu_1,\nu_2):=((0,0,1),(0,0,-z_3))$ of $\Lambda_{z_3}$ and
 $(\tilde \nu_1,\tilde \nu_2):=((0,0,1),(0,0,-\tilde z_3))$ of~$\tilde \Lambda_{\tilde z_3}$ is just~$B$.

 Now consider the $\ZZ$-basis $(\lambda_1,\lambda_2,\lambda_3,\mu_1,\mu_2,\mu_3)$ of $\Lambda_z$ with
 \[ \lambda_3 := \nu_1,~ \mu_2 := \tfrac{1}{d}(\mu_2'+\nu_1),~ \mu_3 := \tfrac{1}{d}(\lambda_2+\nu_2) \] and 
 the $\ZZ$-basis $(\lambda_1,\lambda_2,\lambda_3,\tilde \mu_1,\tilde \mu_2,\tilde \mu_3)$ of $\Lambda_{\tilde z}$ with
 \[ \tilde \mu_2 := \tfrac{1}{d}(\tilde \mu_2'+\nu_1),~ \tilde \mu_3 := \tfrac{1}{d}(\lambda_2+\tilde \nu_2). \]
 We already know that $F(\lambda_1)$, $F(\lambda_2)$, $F(\lambda_3)$ and $F(\mu_1)$ lie in $\Lambda_{(\tilde z_1,\tilde z_2)} + \Lambda_{\tilde z_3} \subseteq \Lambda_{\gamma.z}$.
 Therefore, it remains to show that both
 \begin{eqnarray*}
  F(\mu_2) & = & \tfrac{1}{d}(r_{14}\lambda_1+r_{24}\lambda_2+r_{34}\tilde \mu_1+r_{44}\tilde \mu_2'+b_1\nu_1+b_3\tilde \nu_2) \\
  & = & \tfrac{1}{d}(r_{14}\lambda_1+(r_{24}-b_3)\lambda_2+r_{34}\tilde \mu_1+(b_1-r_{44})\lambda_3) + r_{44}\tilde \mu_2 + b_3 \tilde \mu_3
 \end{eqnarray*} and
 \begin{eqnarray*}
  F(\mu_3) & = & \tfrac{1}{d}(r_{12}\lambda_1+r_{22}\lambda_2+r_{32}\tilde \mu_1+r_{42}\tilde \mu_2'+b_2\nu_1+b_4\tilde \nu_2) \\
  & = & \tfrac{1}{d}(r_{12}\lambda_1+(r_{22}-b_4)\lambda_2+r_{32}\tilde \mu_1+(b_2-r_{42})\lambda_3) + r_{42}\tilde \mu_2 + b_4 \tilde \mu_3
 \end{eqnarray*}
 are in $\Lambda_{\tilde z}$, and this holds if and only if
 \begin{equation}\label{Kongruenzbedingung} r_{14},r_{24}-b_3,r_{34},r_{44}-b_1,r_{12},r_{22}-b_4,r_{32},r_{42}-b_2 \in d\ZZ. \end{equation}
 Then, computing
 \[ A-SBS^{-1} = \begin{pmatrix} \tfrac{d}{\eta_2}(\tfrac{r_{12}}{d}\eta_1 + \tfrac{r_{22}-b_4}{d}\eta_2) & \tfrac{\sqrt{D}}{\eta_1^{\sigma}} (\tfrac{r_{14}}{d}\eta_1 + \tfrac{r_{24}-b_3}{d}\eta_2) \\
 \tfrac{d}{\sqrt{D}\eta_2}(\tfrac{-r_{32}}{d}\eta_2^{\sigma} + \tfrac{r_{42}-b_2}{d}(d\eta_1^{\sigma})) &
 \tfrac{1}{\eta_1^{\sigma}}(\tfrac{-r_{34}}{d}\eta_2^{\sigma} + \tfrac{r_{44}-b_1}{d}(d\eta_1^{\sigma})) \end{pmatrix}, \]
 we see that condition \eqref{Kongruenzbedingung} holds if and only if $A-SBS^{-1}$ is in $M_{D,d}(\eta_1,\eta_2)$.
 But this holds since $(A,B)$ is in $\Gamma_{D,d}(\eta_1,\eta_2)$, and thus \[ f:~ A_z \to A_{\tilde z},\quad [x] \mapsto [F(x)] \]
 is indeed the desired isomorphism of polarized Abelian varieties of dimension three with real multiplication by $\OD$ of type $d$.
\end{proof}

 Again it follows that we have a well-defined surjective map \[ \Phi:~ \Gamma_{D,d}(\eta_1,\eta_2) \backslash \HH^3 \twoheadrightarrow X_\A^{(3)},\quad [z] \mapsto [(A_z,H_z,S_z,\rho_z)], \]
 and we can state the main theorem of this subsection.

\begin{theo}\label{modulispacedim3}
 Let $D \in \NN$ be a non-square discriminant and let $d \in \NN$ be relatively prime to the conductor of~$D$.
 Then $X_{D,d}^{(3)}$ is non-empty if and only if~$d$ satisfies the prime factor condition in~$\OD$.
 In this case $X_{D,d}^{(3)}$ consists of $2^s$ irreducible components $X_\A^{(3)}$, where~$s$ is the number of splitting prime divisors of~$d$.
 For each component $X_\A^{(3)}$ defined by the ideal $\A$ we have an isomorphism \[ \Phi:~ \Gamma_{D,d}(\eta_1,\eta_2) \backslash \HH^3 \cong X_\A^{(3)},\quad [z] \mapsto [(A_z,H_z,s_z,\rho_z)] \]
 between the corresponding Hilbert modular variety and $X_\A^{(3)}$.
\end{theo}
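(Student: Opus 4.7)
The plan is to assemble the three pieces---non-emptiness, the component count, and the isomorphism $\Phi$---essentially from results already in hand, with the only substantive remaining work being injectivity of $\Phi$. Non-emptiness and the decomposition into $2^s$ components reduce immediately to the two-dimensional case: any $(A,H,S,\rho) \in X_{D,d}^{(3)}$ yields by restriction a polarized Abelian surface $(S,H|_S,\rho)$ admitting real multiplication by $\OD$ of type $(1,d)$, hence an element of $X_{D,(1,d)}$. By Theorem~\ref{ModulraumDim2}, such an element exists iff $d$ satisfies the prime factor condition, and it lies in a unique $X_\A$. Conversely, Proposition~\ref{KonstruktionDim3} provides, for each primitive ideal $\A$ of norm $d$ and each smart basis $(\eta_2^\sigma, d\eta_1^\sigma)$, an explicit family of elements in $X_\A^{(3)}$. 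Thus $X_{D,d}^{(3)}$ is non-empty iff the prime factor condition holds and decomposes as the disjoint union of the $X_\A^{(3)}$.

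For the isomorphism $\Phi:~ \Gamma_{D,d}(\eta_1,\eta_2) \backslash \HH^3 \to X_\A^{(3)}$, surjectivity of the associated $\widetilde{\Phi}:~ \HH^3 \twoheadrightarrow X_\A^{(3)}$ is exactly the content of Proposition~\ref{classifdim3}, and well-definedness on the quotient is Proposition~\ref{WohldefiniertheitDim3}. The remaining task is injectivity: given $z, \tilde z \in \HH^3$ and an isomorphism $f:~ (A_z,H_z,S_z,\rho_z) \to (A_{\tilde z},H_{\tilde z},S_{\tilde z},\rho_{\tilde z})$, produce $\gamma \in \Gamma_{D,d}(\eta_1,\eta_2)$ with $\tilde z = \gamma.z$.

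First, $f$ restricts to an isomorphism of $(S_z,H_z|_{S_z},\rho_z)$ with $(S_{\tilde z},H_{\tilde z}|_{S_{\tilde z}},\rho_{\tilde z})$ in $X_\A$, so by the injectivity in Theorem~\ref{ModulraumDim2} there is a unique $A \in \SL_2(\OD \oplus \tfrac{1}{\sqrt{D}}\A)$ with $(\tilde z_1,\tilde z_2) = A.(z_1,z_2)$, and the analytic representation of $f|_{S_z}$ is the diagonal matrix $D(A,(z_1,z_2))$ appearing in the proof of Proposition~\ref{WohldefiniertheitDim2}. The orthogonal complement of $S_z$ in $(A_z,H_z)$ with respect to $H_z$ is the elliptic curve $C_z = \CC/\langle 1,z_3\rangle_\ZZ$ (and analogously for $C_{\tilde z}$); since $f$ preserves $H_z$, it sends $C_z$ isomorphically onto $C_{\tilde z}$, and the standard computation for elliptic curves produces $B \in \SL_2(\ZZ)$ with $\tilde z_3 = B.z_3$, with analytic representation $(b_3 z_3 + b_4)^{-1}$. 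Combining, the analytic representation of $f$ itself is the diagonal matrix $D((A,B),z)$ used in Proposition~\ref{WohldefiniertheitDim3}.

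The main obstacle is showing that the pair $(A,B)$ so obtained actually lies in $\Gamma_{D,d}(\eta_1,\eta_2)$, i.e. that $A - SBS^{-1} \in M_{D,d}(\eta_1,\eta_2)$. The point is that the lattice $\Lambda_z$ is strictly larger than $\Lambda_{(z_1,z_2)} \oplus \Lambda_{z_3}$: the glue vectors $\mu_2, \mu_3$ mix the two factors, and $f$ being a well-defined isomorphism of $A_z$ with $A_{\tilde z}$ forces $F(\mu_2), F(\mu_3) \in \Lambda_{\tilde z}$, not merely in the direct sum of the two smaller lattices. Expanding $F(\mu_2)$ and $F(\mu_3)$ in the basis $(\lambda_1,\lambda_2,\lambda_3,\tilde\mu_1,\tilde\mu_2,\tilde\mu_3)$ of $\Lambda_{\tilde z}$, exactly as in the proof of Proposition~\ref{WohldefiniertheitDim3} but read in reverse, yields the congruences $r_{14},r_{34},r_{12},r_{32}\equiv 0$ and $r_{24}\equiv b_3$, $r_{44}\equiv b_1$, $r_{22}\equiv b_4$, $r_{42}\equiv b_2$ modulo $d$, where $R=(r_{ij})$ is the rational representation of $f|_{S_z}$ in the standard basis. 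The explicit shape of $A$ in terms of the $r_{ij}$ (recorded in the proof of Theorem~\ref{ModulraumDim2}) and the explicit shape of $SBS^{-1}$ then translate these congruences precisely into the four entry-wise membership conditions defining $M_{D,d}(\eta_1,\eta_2)$. This is a direct computation along the same lines as the verification in Proposition~\ref{WohldefiniertheitDim3}, and it completes the proof.
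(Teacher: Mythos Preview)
Your proposal is correct and follows essentially the same route as the paper: reduce non-emptiness and the $2^s$ component count to the two-dimensional case, cite Propositions~\ref{classifdim3} and~\ref{WohldefiniertheitDim3} for surjectivity and well-definedness, and for injectivity extract $A$ and $B$ from the restrictions of $f$ to $S_z$ and its complementary elliptic curve, then verify $(A,B)\in\Gamma_{D,d}(\eta_1,\eta_2)$ by reading the computation of Proposition~\ref{WohldefiniertheitDim3} in reverse via the congruence conditions on the $r_{ij}$. The paper's own proof does exactly this, even explicitly stating that the last step ``works exactly as in the proof of Proposition~\ref{WohldefiniertheitDim3}.''
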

\begin{proof}
 It remains to show that $\Phi$ is injective. To do this, let $z = (z_1,z_2,z_3)$, $\tilde z = (\tilde z_1,\tilde z_2,\tilde z_3) \in \HH^3$ such that there exists an isomorphism
 \[ f:~ (A_z,H_z,S_z,\rho_z) \to (A_{\tilde z},H_{\tilde z},S_{\tilde z},\rho_{\tilde z}) \] of polarized abelian varieties of dimension three with real multiplication by $\OD$ of type $d$.
 As in the proof of the previous proposition, we relate the representations of the induced isomorphisms on the complementary subvarieties $(S_z,C)$ and~$(\widetilde S_z,\widetilde C)$.

 Again we identify $\CC^2 \times \{ 0 \}$ with $\CC^2$ and $\{ (0,0) \} \times \CC$ with $\CC$, and thus we get $S_z = \CC^2/\Lambda_{(z_1,z_2)}$,
 $S_{\tilde z} = \CC^2/\tilde \Lambda_{(\tilde z_1,\tilde z_2)}$, $C=\CC/\Lambda_{z_3}$ and $\widetilde C=/\Lambda_{\tilde z_3}$.

 We denote by \[ F := \rho_a(f):~ \CC^3 \to \CC^3 \] the analytic representation of $f$, hence $F_2 := F|_{\CC^2}\!: \CC^2 \to \CC^2$ is an isomorphism of $\CC$-vector spaces.
 The symplectic $\OD$-module isomorphism
 \[ F_2' := \phi_{(\tilde z_1,\tilde z_2)}^{-1} \circ F|_{\Lambda_2} \circ \phi_{(z_1,z_2)}:~ \OD \oplus \tfrac{1}{\sqrt{D}}\A \to \OD \oplus \tfrac{1}{\sqrt{D}}\A \]
 is given by multiplication with $A^*$ for some matrix  $A \in {\rm SL}_2(\OD \oplus \tfrac{1}{\sqrt{D}}\A)$.
 Using the $\CC$-linearity of $F_2$, computations yield \[ (\tilde z_1,\tilde z_2) = (Az_1,A^{\sigma}z_2). \]

 Analogously, the $\CC$-linear isomorphism $F_1 := F|_{\CC}\!: \CC \to \CC$ induces a symplectic $\ZZ$-module isomorphism
 $F_1' := F_1|_{\Lambda_{z_3}}\!: \Lambda_{z_3} \to \tilde \Lambda_{\tilde z_3}$. Hence the matrix~$B$ of~$F_1'$ with respect to the symplectic bases
 $(\nu_1,\nu_2)=(1,-z_3)$ of~$\Lambda_{z_3}$ and $(\tilde \nu_1,\tilde \nu_2)=(1,-\tilde z_3)$ of~$\Lambda_{\tilde z_3}$ is in~$\SL_2(\ZZ)$.
 Again we can use $\CC$-linearity to compute \[ \tilde z_3 = B z_3. \]

 Now we have $\tilde z = (A,B).z$ for some $(A,B) \in {\rm SL}_2(\OD \oplus \tfrac{1}{\sqrt{D}}\A) \times {\rm SL}_2(\ZZ)$, and it remains to show that $(A,B)$ lies in $\Gamma_{D,d}(\eta_1,\eta_2)$.
 But this works exactly as in the proof of Proposition~\ref{WohldefiniertheitDim3}.
 We take the same bases $(\lambda_1,\lambda_2,\lambda_3,\mu_1,\mu_2,\mu_3)$ and $(\lambda_1,\lambda_2,\lambda_3,\tilde \mu_1,\tilde \mu_2,\tilde \mu_3)$ and denote the matrix of the rational
 representation with respect to these bases by $R=(r_{ij})_{i,j}$. Then we conclude with the same arguments, that $F$ maps $\Lambda_z$ into $\Lambda_{\tilde z}$ if and only if the congruence condition
 \eqref{Kongruenzbedingung} holds. Again, this is the case if and only if $A-SBS^{-1}$ lies in $M_{D,d}(\eta_1,\eta_2)$.
\end{proof}
\subsection{The universal family}\label{MatrixdesMonats}

Moduli spaces come with universal families, so the next natural step is to construct such a family for each connected component $X_\A^{(3)}$ of $X_{D,d}^{(3)}$.
\\[1em]
For a given ideal $\A$ of $\OD$, we choose a smart basis $(\eta_2^\sigma,d\eta_1^\sigma)$ for $\A$.
Over each $z \in \HH^3$, we have an Abelian variety $\CC^3 / \Pi_z \ZZ^6 \in X_\A^{(3)} \cong \Gamma_{D,d}(\eta_1,\eta_2) / \HH^3$, thus our aim is to construct a suitable semi-direct product structure
of the form $\Gamma_{D,d}(\eta_1,\eta_2) \ltimes \ZZ^6$ acting on $\HH^3 \times \CC^3$.

\begin{lem}
 For each $(A,B) \in \SL_2(\OD \oplus \tfrac{1}{\sqrt{D}}\A) \times \SL_2(\ZZ)$, there exists a unique matrix~$M(A,B) \in \bM_{6 \times 6}(\QQ(\sqrt{D}))$ independent of $z$ and which satisfies
 \begin{equation}\label{36Gleichungen} \Pi_z \cdot M(A,B) = D(A,B,z)^{-1} \cdot \Pi_{(A,B).z}. \end{equation}
\end{lem}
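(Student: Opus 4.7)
The plan is to establish uniqueness first by a linear-independence argument on the columns of $\Pi_z$, then derive existence by a column-by-column check whose ingredients are already implicit in the proof of Proposition~\ref{WohldefiniertheitDim3}.

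For uniqueness, I would show that the six columns $c_1, \ldots, c_6$ of $\Pi_z$ are $\QQ(\sqrt{D})$-linearly independent when viewed as $3$-component column vectors whose entries are affine in $z_1, z_2, z_3$ with coefficients in $\QQ(\sqrt{D})$. A vanishing combination $\sum_{i=1}^{6}\alpha_i c_i = 0$ with $\alpha_i \in \QQ(\sqrt{D})$ first forces $\alpha_6 = 0$ from the coefficient of $z_3$ in row $3$. The coefficients of $z_1$ and $z_2$ in rows $1$ and $2$ then give the system $\alpha_4\eta_2^\sigma = \alpha_5\eta_1^\sigma$ and $\alpha_4\eta_2 = \alpha_5\eta_1$, whose determinant $\eta_1\eta_2^\sigma - \eta_1^\sigma\eta_2$ is nonzero (by the sign convention on the smart basis it equals $-f\sqrt{d}$, where $f$ is the conductor of $D$), so $\alpha_4 = \alpha_5 = 0$; the constant parts then force $\alpha_1 = \alpha_2 = \alpha_3 = 0$ by the same nonvanishing determinant. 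Consequently, if $M_1, M_2 \in \bM_{6\times 6}(\QQ(\sqrt{D}))$ both satisfy \eqref{36Gleichungen}, then $\Pi_z(M_1 - M_2) = 0$ forces $M_1 = M_2$.

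For existence, the key observation is that the diagonal entries of $D(A,B,z)^{-1}$, namely $a_3z_1+a_4$, $a_3^\sigma z_2+a_4^\sigma$, and $b_3z_3+b_4$, are exactly the Möbius denominators appearing in the entries of $\Pi_{(A,B).z}$. Multiplying through clears these denominators, so $D(A,B,z)^{-1}\,\Pi_{(A,B).z}$ is a $3 \times 6$ matrix whose entries are again affine in $z_1, z_2, z_3$ with coefficients in $\QQ(\sqrt{D})$, living in the same ambient $\QQ(\sqrt{D})$-module as $\Pi_z$. By the uniqueness above, existence reduces to the assertion that each of these six columns lies in the $\QQ(\sqrt{D})$-span of $c_1, \ldots, c_6$, a finite linear-algebra problem whose solution coefficients form the columns of $M(A,B)$; this can in principle be solved directly by identifying which $c_i$ contribute to each column of $D(A,B,z)^{-1}\,\Pi_{(A,B).z}$ and inverting a small linear system.

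A practical shortcut, to avoid grinding through all $36$ entries, is to invoke the proof of Proposition~\ref{WohldefiniertheitDim3}. For $(A,B) \in \Gamma_{D,d}(\eta_1,\eta_2)$, that proof exhibits an integer rational-representation matrix encoding the isomorphism $A_z \to A_{(A,B).z}$ induced by multiplication by $D(A,B,z)$, together with explicit formulas relating its entries to $a_1, \ldots, a_4$, $b_1, \ldots, b_4$, and the structure constants $\eta_1, \eta_2, d, \sqrt{D}$. The same polynomial formulas (or their inversion, according to the convention relating $M$ to the rational representation) give a well-defined candidate $M(A,B) \in \bM_{6 \times 6}(\QQ(\sqrt{D}))$ for arbitrary $(A,B) \in \SL_2(\OD \oplus \tfrac{1}{\sqrt{D}}\A) \times \SL_2(\ZZ)$, now with entries generally in $\QQ(\sqrt{D})$ rather than $\ZZ$, and the required identity follows by repeating the coefficient comparison carried out there. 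The main obstacle is therefore organizational rather than conceptual: confirming that those polynomial formulas remain valid without the integrality constraints imposed by membership in $\Gamma_{D,d}(\eta_1,\eta_2)$, and presenting the resulting entries of $M(A,B)$ in a form usable for the construction of the universal family that follows.
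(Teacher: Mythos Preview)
Your approach is correct but genuinely different from the paper's. The paper does not separate uniqueness from existence conceptually: it simply writes out the equation $\Pi_z \cdot M(A,B) = D(A,B,z)^{-1}\cdot\Pi_{(A,B).z}$ entrywise, observes that each of the $18$ entries is affine in $z_1,z_2,z_3$, splits into $36$ scalar equations by comparing constant and linear coefficients, and then solves this linear system by brute force (using $\eta_1\eta_2^\sigma-\eta_1^\sigma\eta_2=-\sqrt{D}$, which is the correct value rather than your $-f\sqrt{d}$). The output is an explicit $6\times 6$ matrix with every entry written as a trace expression in the $a_i,b_i,\eta_j$.

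Your uniqueness argument via linear independence of the columns of $\Pi_z$ over $\QQ(\sqrt{D})$ as affine functions of $z$ is cleaner than the paper's implicit handling, and your reduction of existence to the computation already carried out in Proposition~\ref{WohldefiniertheitDim3} is legitimate: the formulas there for the action of $D(A,B,z)$ on the lattice generators are polynomial in the entries of $A$ and $B$, and the restriction to $\Gamma_{D,d}(\eta_1,\eta_2)$ is only used at the very end to force integrality. What your route does not deliver, however, is the explicit matrix itself. The paper needs that explicit form immediately afterwards --- to verify that the entries are integers when $(A,B)\in\Gamma_{D,d}(\eta_1,\eta_2)$, and to check the cocycle relation $M(\tilde A,\tilde B)M(A,B)=M(A\tilde A,B\tilde B)$ by direct multiplication --- so in the context of Section~\ref{MatrixdesMonats} the brute-force computation is not merely organizational but is the actual content the subsequent lemmas rely on.
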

\begin{proof}
 Let $A = \left( \begin{smallmatrix} a_1 & a_2 \\ a_3 & a_4 \end{smallmatrix} \right)$ and $B = \left( \begin{smallmatrix} b_1 & b_2 \\ b_3 & b_4 \end{smallmatrix} \right)$.
 Recall that by the definition of the matrix $D(A,B,z)$ in the proof of Proposition~\ref{WohldefiniertheitDim3} we have
 \[ D(A,B,z)^{-1} = {\rm diag}(a_3 z_1 + a_4 , a_3^\sigma z_2 + a_4^\sigma , b_3 z_3 + b_4). \]
 Comparing the entries in \eqref{36Gleichungen}, we get 18 equations. But since 'independent of~$z$' just means that we can compare the coefficients of~$z$ separately, we have in fact 36 equations.
 Using the fact that the sign convention $\I(\eta_1 \eta_2^{\sigma}) = -\tfrac{1}{2}\I(\sqrt{D})$ for smart bases is equivalent to \[ \eta_1\eta_2^\sigma-\eta_1^\sigma\eta_2 = -\sqrt{D}, \]
 patiently solving these equations yield that the $(6 \times 6)$-matrix
\[ \left( \begin{smallmatrix}
 \tr \left( \frac{\eta_1\eta_2^\sigma a_4}{-\sqrt{D}} \right) &
 \tr \left( \frac{\mathcal{N}(\eta_2) a_4}{-\sqrt{D}} \right) &
 0 &
 \tr \left( \frac{(\eta_2^\sigma)^2 a_2}{-D} \right) &
 \tr \left( \frac{\eta_1^\sigma \eta_2^\sigma a_2}{D} \right) &
 \tr \left( \frac{\mathcal{N}(\eta_2) a_4}{-d\sqrt{D}} \right)
 \\
 \tr \left( \frac{\mathcal{N}(\eta_1) a_4}{\sqrt{D}} \right) &
 \tr \left( \frac{\eta_1^\sigma\eta_2 a_4}{\sqrt{D}} \right) &
 b_3 &
 \tr \left( \frac{\eta_1^\sigma \eta_2^\sigma a_2}{D} \right) &
 \tr \left( \frac{1}{2d}b_3 - \frac{(\eta_1^\sigma)^2 a_2}{D} \right) &
 \tr \left( -\frac{1}{2d}b_1 + \frac{\eta_1^\sigma \eta_2 a_4}{d\sqrt{D}} \right)
 \\
 \tr \left( \frac{\eta_1\eta_2 a_3}{d} \right) &
 \tr \left( \frac{\eta_2^2 a_3}{d} \right) &
 b_4 &
 \tr \left( \frac{\mathcal{N}(\eta_2) a_1}{d\sqrt{D}} \right) &
 \tr \left( \frac{1}{2d}b_4 - \frac{\eta_1^\sigma \eta_2 a_1}{d\sqrt{D}} \right) &
 \tr \left( -\frac{1}{2d}b_2 + \frac{\eta_2^2 a_3}{d^2} \right)
 \\
 \tr (-\eta_1^2 a_3) &
 \tr (-\eta_1\eta_2 a_3) &
 0 &
 \tr \left( \frac{\eta_1\eta_2^\sigma a_1}{-\sqrt{D}} \right) &
 \tr \left( \frac{\mathcal{N}(\eta_1) a_1}{\sqrt{D}} \right) &
 \tr \left( \frac{\eta_1\eta_2 a_3}{-d} \right)
 \\
 \tr (-\eta_1\eta_2 a_3) &
 \tr (-\eta_2^2 a_3) &
 0 &
 \tr \left( \frac{\mathcal{N}(\eta_2) a_1}{-\sqrt{D}} \right) &
 \tr \left( \frac{\eta_1^\sigma \eta_2 a_1}{\sqrt{D}} \right) &
 \tr \left( \frac{\eta_2^2 a_3}{-d} \right)
 \\
 0 & 0 & -db_3 & 0 & -b_3 & b_1
  \end{smallmatrix} \right) \]
 is the unique solution for $M(A,B)$.
\end{proof}

We want to define a group action of $\Gamma_{D,d}(\eta_1,\eta_2)$ on $\ZZ^6$ by \[ (A,B) \mapsto (r \mapsto M(A,B) \cdot r). \]
Therefore, we first check that $M(A,B)$ is indeed in $\bM_{6\times 6}(\ZZ)$.
\begin{lem}
 For all $(A,B) \in \Gamma_{D,d}(\eta_1,\eta_2)$, the entries of $M(A,B)$ are integers.
\end{lem}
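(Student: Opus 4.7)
My plan is to identify $M(A,B)$ with the rational representation of an explicit isomorphism of Abelian varieties, which automatically forces integer entries, rather than verifying integrality of each of the thirty-six displayed entries directly.

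To set this up, I would invoke Proposition~\ref{WohldefiniertheitDim3}, which says that for $(A,B) \in \Gamma_{D,d}(\eta_1,\eta_2)$, the diagonal matrix $D(A,B,z)$ is the analytic representation of an isomorphism $A_z \to A_{(A,B).z}$. Consequently, the $\CC$-linear map $D(A,B,z)^{-1}$ sends the lattice $\Lambda_{(A,B).z}$ into $\Lambda_z$. Since the columns of $\Pi_{(A,B).z}$ give a $\ZZ$-basis of $\Lambda_{(A,B).z}$ and the columns of $\Pi_z$ give a $\ZZ$-basis of $\Lambda_z$, for each $z$ there exists an integer matrix $R(z) \in \bM_{6 \times 6}(\ZZ)$ satisfying $\Pi_z \cdot R(z) = D(A,B,z)^{-1} \cdot \Pi_{(A,B).z}$.

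I would next show that $R(z)$ is in fact independent of $z$. Uniqueness of $R(z)$ at each $z$ follows because the columns of $\Pi_z$ are $\RR$-linearly independent in $\CC^3$ (equivalently, form a basis of the real vector space $\CC^3 \cong \RR^6$), as was verified in Proposition~\ref{KonstruktionDim3}. Since $\Pi_z$, $D(A,B,z)^{-1}$, and $\Pi_{(A,B).z}$ are continuous in $z$, this uniquely determined solution $R(z)$ is itself continuous in $z$. Taking values in the discrete set $\ZZ^{6 \times 6}$ on the connected space $\HH^3$, the function $R(z)$ must then be constant; call its value $R$.

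To conclude, the constant matrix $R \in \bM_{6\times 6}(\ZZ) \subset \bM_{6\times 6}(\QQ(\sqrt{D}))$ is $z$-independent and, by construction, satisfies equation~\eqref{36Gleichungen}. By the uniqueness clause of the previous lemma, $R$ must coincide with $M(A,B)$, so $M(A,B) \in \bM_{6\times 6}(\ZZ)$. I expect the main subtlety to be the rigidity step (continuity plus discreteness on a connected domain); a brute-force alternative would substitute $A = SBS^{-1} + N$ with $N \in M_{D,d}(\eta_1,\eta_2)$ into each entry and verify integrality using $\mathfrak{N}(\A) = d$ (which yields $\eta_2^\sigma \in \A$ and hence $d \mid \mathcal{N}(\eta_2)$) together with $\tr(\OD / \sqrt{D}) \subseteq \ZZ$, but this is considerably more tedious.
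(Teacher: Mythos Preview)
Your proof is correct and takes a genuinely different route from the paper's.

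The paper proceeds by brute force exactly as you anticipate in your final remark: it writes out the condition $A - SBS^{-1} \in M_{D,d}(\eta_1,\eta_2)$ explicitly, parametrizes $a_1,a_2,a_3,a_4$ in terms of $b_1,b_2,b_3,b_4$ and free integers $x_i,y_i$, substitutes into the displayed $6\times 6$ matrix, and simplifies each entry using $d \mid \mathcal{N}(\eta_2)$, $a_2 \in \tfrac{\sqrt{D}}{d}\A^\sigma$, and the trace-pairing identities. Your approach instead recognizes $M(A,B)$ as the matrix of the rational representation of the isomorphism $A_z \cong A_{(A,B).z}$ from Proposition~\ref{WohldefiniertheitDim3}, which is integral by definition. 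Your argument is more conceptual and avoids the entry-by-entry bookkeeping; the paper's computation, on the other hand, produces an explicit simplified form of $M(A,B)$ in terms of the parameters $x_i,y_i,b_i$, which makes the structure of the entries visible even if it is not strictly needed downstream.

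One small observation: your continuity-plus-discreteness step, while correct, is not actually needed. Since $M(A,B)$ has entries in $\QQ(\sqrt{D}) \subset \RR$, it is itself a real matrix satisfying \eqref{36Gleichungen} at every fixed $z$; by the uniqueness of the real solution at each $z$ (your step~2), you get $M(A,B)=R(z)$ directly for every $z$, hence $M(A,B)\in\bM_{6\times 6}(\ZZ)$. This shortcuts the rigidity argument entirely.
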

\begin{proof}
 Writing out the condition $A-SBS^{-1} \in M_{D,d}(\eta_1,\eta_2)$ from the definition of $\Gamma_{D,d}(\eta_1,\eta_2)$ gives
\[ \begin{array}{ll} a_1 \in b_4 + \tfrac{d}{\eta_2}\OD, & a_2 \in \tfrac{\sqrt{D}}{d}\tfrac{\eta_2}{\eta_1^\sigma}b_3 + \tfrac{\sqrt{D}}{\eta_1^\sigma}\OD, \\
a_3 \in \tfrac{d}{\sqrt{D}}\tfrac{\eta_1^\sigma}{\eta_2}b_2 + \tfrac{d}{\sqrt{D}}\tfrac{1}{\eta_2} \A, & a_4 \in b_1 + \tfrac{1}{\eta_1^\sigma}\A. \end{array} \]
Writing
\[ \begin{array}{ll} a_1 = b_4 + \tfrac{d}{\eta_2}(x_1 \eta_1 + y_1 \eta_2) \\
                     a_2 = \tfrac{\sqrt{D}}{d}\tfrac{\eta_2}{\eta_1^\sigma}b_3 + \tfrac{\sqrt{D}}{\eta_1^\sigma}(x_2 \eta_1 + y_2 \eta_2), \\
                     a_3 = \tfrac{d}{\sqrt{D}}\tfrac{\eta_1^\sigma}{\eta_2}b_2 + \tfrac{d}{\sqrt{D}}\tfrac{1}{\eta_2} (x_3 \eta_2^\sigma + y_3 (d \eta_1^\sigma)), \\
                     a_4 = b_1 + \tfrac{1}{\eta_1^\sigma}(x_4 \eta_2^\sigma + y_4 (d \eta_1^\sigma))
\end{array} \]
with $x_i,y_i \in \ZZ$, we get
\[ M(A,B) = \left( \begin{smallmatrix}
 \tr \left( \frac{\eta_1\eta_2^\sigma a_4}{-\sqrt{D}} \right) 
 & \tr \left( \frac{\mathcal{N}(\eta_2) a_4}{-\sqrt{D}} \right) & 0
 & \tr \left( \frac{(\eta_2^\sigma)^2 a_2}{-D} \right) & -x_2 & \tr \left( \frac{\mathcal{N}(\eta_2) a_4}{-d\sqrt{D}} \right) 
 \\
 -x_4 & b_1 + dy_4 & b_3 & -x_2 & -y_2 & y_4
 \\
 -x_3 & b_2 + dy_3 & b_4 & -x_1 & -y_1 & y_3
 \\
 \tr (-\eta_1^2 a_3) & dx_3 & 0
 & \tr \left( \frac{\eta_1\eta_2^\sigma a_1}{-\sqrt{D}} \right) 
 & \tr \left( \frac{\mathcal{N}(\eta_1) a_1}{\sqrt{D}} \right) & x_3
 \\
 dx_3 & -db_2 - d^2y_3 & 0 & dx_1 & b_4 + dy_1 & -b_2 - dy_3
 \\
 0 & 0 & -db_3 & 0 & -b_3 & b_1
  \end{smallmatrix} \right). \]
From the description in Lemma~\ref{Idealnormalform} we see that $d$ divides the norm of every element in $\A$, in particular $d \mid \mathcal{N}(\eta_2)$.
Using the fact that $a_1, a_4 \in \OD$ and $a_3 \in \frac{1}{\sqrt{D}}\OD$ we see that the remaining entries containing $a_1$, $a_3$ or $a_4$ are integers.

Finally, as $a_2$ is in $(\frac{1}{\sqrt{D}}\A)^{-1} = \frac{\sqrt{D}}{d}\A^\sigma$, we can set $a_2 = \frac{\sqrt{D}}{d} (x\eta_2 + yd\eta_1)$ with $x,y \in \ZZ$ and get
\[ \tr \left( \tfrac{(\eta_2^\sigma)^2 a_2}{-D} \right) =
 -x \tfrac{\mathcal{N}(\eta_2)}{d} \tr \left( \tfrac{\eta_2^\sigma}{\sqrt{D}} \right) - y\tr \left( \tfrac{\eta_1(\eta_2^\sigma)^2}{\sqrt{D}} \right) \in \ZZ. \]
\end{proof}

\begin{lem}
 The assignment \[ ((\tilde A,\tilde B),\tilde r) \cdot ((A,B),r) := ((\tilde A A,\tilde B B),M(A,B) \cdot \tilde r+r)) \]
 defines a semi-direct group structure on the Cartesian product $\Gamma_{D,d}(\eta_1,\eta_2) \times \ZZ^6$.
\end{lem}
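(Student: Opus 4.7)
The plan is to verify the group axioms for the proposed operation, which will exhibit $\Gamma_{D,d}(\eta_1,\eta_2) \ltimes \ZZ^6$ as a semi-direct product with normal factor $\{(I_2,I_2)\}\times \ZZ^6$ and complement $\Gamma_{D,d}(\eta_1,\eta_2)\times\{0\}$. Closure of the operation inside $\Gamma_{D,d}(\eta_1,\eta_2) \times \ZZ^6$ is immediate from the preceding lemma, which guarantees that $M(A,B)\tilde r + r$ lies in $\ZZ^6$.

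The key technical step is the cocycle identity
\[ M(A,B)\cdot M(\tilde A,\tilde B) \;=\; M(\tilde A A,\,\tilde B B) \qquad\text{for all } (A,B),(\tilde A,\tilde B)\in\Gamma_{D,d}(\eta_1,\eta_2). \]
I would prove this by applying the defining relation $\Pi_z \cdot M(\cdot,\cdot) = D(\cdot,\cdot,z)^{-1}\Pi_{(\cdot,\cdot).z}$ twice to obtain
\[ \Pi_z\cdot M(A,B)\,M(\tilde A,\tilde B) \;=\; D(A,B,z)^{-1}\,D(\tilde A,\tilde B,(A,B).z)^{-1}\,\Pi_{(\tilde A A,\tilde B B).z}, \]
and then invoking the elementary cocycle $j(NM,w) = j(N,M.w)\cdot j(M,w)$ for the M\"obius automorphy factor $j(M,w)=cw+d$, applied coordinatewise on $\HH^3$. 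This gives $D(\tilde A A,\tilde B B,z) = D(\tilde A,\tilde B,(A,B).z)\cdot D(A,B,z)$ and collapses the right-hand side to $D(\tilde A A,\tilde B B,z)^{-1}\Pi_{(\tilde A A,\tilde B B).z} = \Pi_z\cdot M(\tilde A A,\tilde B B)$. The uniqueness clause of the previous lemma then forces the equality of the coefficient matrices.

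With the cocycle in hand, the remaining checks are formal. Setting $(A,B)=(\tilde A,\tilde B)=(I_2,I_2)$ directly in the defining equation forces $M(I_2,I_2)=I_6$, so $((I_2,I_2),0)$ is a two-sided identity. Associativity becomes a direct substitution: both bracketings of a triple product $((A_1,B_1),r_1)\cdot((A_2,B_2),r_2)\cdot((A_3,B_3),r_3)$ collapse via the cocycle to the tuple $\bigl((A_1A_2A_3,B_1B_2B_3),\,M(A_2A_3,B_2B_3)r_1 + M(A_3,B_3)r_2 + r_3\bigr)$. The inverse of $((A,B),r)$ is $((A,B)^{-1},-M(A^{-1},B^{-1})r)$, which lies in $\Gamma_{D,d}(\eta_1,\eta_2)\times\ZZ^6$ because $\Gamma_{D,d}(\eta_1,\eta_2)$ is a group and by the preceding integrality lemma applied at $(A^{-1},B^{-1})$. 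Finally, conjugating $((I_2,I_2),r')$ by $((A,B),0)$ yields $((I_2,I_2),M(A^{-1},B^{-1})r')$, so $g\mapsto M(g^{-1})$ is a genuine homomorphism $\Gamma_{D,d}(\eta_1,\eta_2)\to\GL_6(\ZZ)$ (the inversion converts the anti-homomorphism $M$ into a homomorphism), and this is the action that realizes the semi-direct product structure.

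The main obstacle is purely bookkeeping: the formula in the lemma has $M(A,B)$ (rather than $M(\tilde A,\tilde B)$) acting on $\tilde r$ — the right-hand group element acting on the left-hand vector — which forces $M$ to be an anti-homomorphism, and the $D$-factors in the cocycle step must be combined in the order dictated by the inner-to-outer pass $\Pi_z \to \Pi_{(A,B).z} \to \Pi_{(\tilde A A,\tilde B B).z}$. Once this convention is pinned down, everything else is mechanical.
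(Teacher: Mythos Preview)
Your proof is correct, and the route you take to the cocycle identity is genuinely different from the paper's. The paper establishes $M(\tilde A,\tilde B)\cdot M(A,B)=M(A\tilde A,B\tilde B)$ by a direct entry-by-entry computation of the matrix product, using the trace identity $\tr(x)\tr(y)=\tr(x(y+y^\sigma))$ to collapse each entry to the corresponding entry of $M(A\tilde A,B\tilde B)$; one entry is worked out in full and the rest are declared analogous. You instead iterate the defining relation $\Pi_z\,M(A,B)=D(A,B,z)^{-1}\Pi_{(A,B).z}$, combine the two $D$-factors via the M\"obius automorphy cocycle, and invoke the uniqueness clause of the preceding lemma. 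Your argument is cleaner and more conceptual, and in fact the two ingredients you use---the defining relation for $M$ and the multiplicativity of $D$---are exactly what the paper invokes in the \emph{next} lemma to verify that the semi-direct product acts on $\HH^3\times\CC^3$; you have effectively front-loaded that machinery. The paper's brute-force approach buys nothing extra here beyond independence from the uniqueness statement, at the cost of a page of trace manipulations. Your remark that the anti-homomorphism convention is forced by the shape of the product law matches the paper's own comment immediately after the proof.
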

\begin{proof}
 It remains to show that $f_{(A,B)}\!: r \mapsto M(A,B) \cdot r$ defines an automorphism of~$\ZZ^6$ for each $(A,B) \in \Gamma_{D,d}(\eta_1,\eta_2)$.
 But as $M(I_2,I_2) = I_6$ holds, this would follow from compatibility of $(A,B) \mapsto f_{(A,B)}$ with the group structure.

 Now let $(A,B), (\tilde A,\tilde B) \in \Gamma_{D,d}(\eta_1,\eta_2)$ and write $A = \left( \begin{smallmatrix} a_1 & a_2 \\ a_3 & a_4 \end{smallmatrix} \right)$,
 $\tilde A = \left( \begin{smallmatrix} \tilde a_1 & \tilde a_2 \\ \tilde a_3 & \tilde a_4 \end{smallmatrix} \right)$, $B = \left( \begin{smallmatrix} b_1 & b_2 \\ b_3 & b_4 \end{smallmatrix} \right)$
 and $\tilde B = \left( \begin{smallmatrix} \tilde b_1 & \tilde b_2 \\ \tilde b_3 & \tilde b_4 \end{smallmatrix} \right)$.
 By using the relation \[ \tr(x)\tr(y) = \tr (x(y+y^\sigma)) \] for all $x,y \in \QQ(\sqrt{D})$, we see that the upper left entry of $M(\tilde A,\tilde B) \cdot M(A,B)$ is
\begin{eqnarray*}
 &  &   \tr \left( \frac{\eta_1\eta_2^\sigma \tilde a_4}{-\sqrt{D}} \right) \tr \left( \frac{\eta_1\eta_2^\sigma a_4}{-\sqrt{D}} \right)
      + \tr \left( \frac{\mathcal{N}(\eta_2) \tilde a_4}{-\sqrt{D}} \right) \tr \left( \frac{\mathcal{N}(\eta_1) a_4}{\sqrt{D}} \right)
      + 0 \\ 
 &  & + \tr \left( \frac{(\eta_2^\sigma)^2 \tilde a_2}{-D} \right) \tr (-\eta_1^2 a_3)
      + \tr \left( \frac{\eta_1^\sigma \eta_2^\sigma \tilde a_2}{D} \right) \tr (-\eta_1\eta_2 a_3)
      + 0 \\ 
 & =  & \tr \left( \frac{\eta_1\eta_2^\sigma \tilde a_4}{-\sqrt{D}} \left( \frac{\eta_1\eta_2^\sigma a_4}{-\sqrt{D}} + \frac{\eta_1^\sigma \eta_2 a_4^\sigma}{\sqrt{D}} \right) \right)
      + \tr \left( \frac{\eta_2 \eta_2^\sigma \tilde a_4}{-\sqrt{D}} \left( \frac{\eta_1 \eta_1^\sigma a_4}{\sqrt{D}} + \frac{\eta_1^\sigma \eta_1 a_4^\sigma}{-\sqrt{D}} \right) \right) \\
 &  & + \tr \left( \frac{\eta_2^\sigma \eta_2^\sigma \tilde a_2}{-D} (-\eta_1 \eta_1 a_3 - \eta_1^\sigma \eta_1^\sigma a_3^\sigma) \right)
      + \tr \left( \frac{\eta_1^\sigma \eta_2^\sigma \tilde a_2}{D} (-\eta_1\eta_2 a_3 - \eta_1^\sigma \eta_2^\sigma a_3^\sigma) \right) \\
 & =  & \tr \left( \frac{\eta_1\eta_2^\sigma \tilde a_4}{-\sqrt{D}} \frac{\eta_1\eta_2^\sigma a_4}{-\sqrt{D}}
      + \frac{\eta_2 \eta_2^\sigma \tilde a_4}{-\sqrt{D}} \frac{\eta_1 \eta_1^\sigma a_4}{\sqrt{D}}
      + \frac{\eta_2^\sigma \eta_2^\sigma \tilde a_2}{D} \eta_1 \eta_1 a_3
      + \frac{\eta_1^\sigma \eta_2^\sigma \tilde a_2}{-D} \eta_1\eta_2 a_3 \right) \\
 & =  & \tr \left( \frac{\eta_1 \eta_2^\sigma}{D} ((\eta_1\eta_2^\sigma - \eta_2 \eta_1^\sigma) \tilde a_4 a_4 + (\eta_1 \eta_2^\sigma - \eta_1^\sigma \eta_2) \tilde a_2 a_3) \right) \\
 & =  & \tr \left( \frac{\eta_1 \eta_2^\sigma (a_3 \tilde a_2 + a_4 \tilde a_4)}{-\sqrt{D}} \right).
\end{eqnarray*}
 Analogous computations for the other entries yield \begin{equation}\label{twist} M(\tilde A,\tilde B) \cdot M(A,B) = M (A \tilde A , B \tilde B). \end{equation}
\end{proof}

The twist in Equation~\eqref{twist} is no problem. It just means that $\Gamma_{D,d}(\eta_1,\eta_2)$ acts on $\ZZ^6$ from the right by multiplication from the left.
Nevertheless, we obtain a well-defined group structure and set \[ \widetilde \Gamma_{D,d}(\eta_1,\eta_2) := \Gamma_{D,d}(\eta_1,\eta_2) \ltimes \ZZ^6. \]
Moreover, for all $((A,B),r) \in \widetilde \Gamma_{D,d}(\eta_1,\eta_2)$ and $(z,u) \in \HH^3 \times \CC^3$ we define
\begin{equation}\label{UniverselleOperation} ((A,B),r).(z,u) := ((A,B).z,D(A,B,z) \cdot (u + \Pi_z r)). \end{equation}

\begin{lem}
 The assignment \eqref{UniverselleOperation} defines an action of $\widetilde \Gamma_{D,d}(\eta_1,\eta_2)$ on~$\HH^3 \times \CC^3$.
\end{lem}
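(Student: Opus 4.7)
The plan is to verify the two group-action axioms: the identity of $\widetilde\Gamma_{D,d}(\eta_1,\eta_2)$ acts as the identity on $\HH^3 \times \CC^3$, and the composition of two actions agrees with the group multiplication.

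The identity check is immediate: $((I_2,I_2),0).(z,u) = ((I_2,I_2).z,\, D(I_2,I_2,z)(u + \Pi_z \cdot 0)) = (z,u)$ since $D(I_2,I_2,z) = I_3$ and $(I_2,I_2).z = z$. For the compatibility with the product, fix $((\tilde A,\tilde B),\tilde r),\,((A,B),r) \in \widetilde\Gamma_{D,d}(\eta_1,\eta_2)$ and $(z,u) \in \HH^3 \times \CC^3$. Writing $(z',u') := ((A,B),r).(z,u)$, I will expand both sides of the desired equality
\[ ((\tilde A,\tilde B),\tilde r).(z',u') \;=\; \bigl(((\tilde A,\tilde B),\tilde r) \cdot ((A,B),r)\bigr).(z,u) \]
and compare the two coordinates separately.

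For the $\HH^3$-coordinate, the equality $(\tilde A,\tilde B).((A,B).z) = (\tilde A A,\tilde B B).z$ is the standard fact that Möbius transformations give an action of $\SL_2$ (applied componentwise to $z_1,z_2,z_3$ via the three factors $\tilde A A$, $(\tilde A A)^\sigma$, $\tilde B B$). For the $\CC^3$-coordinate, the key input is the cocycle identity for the automorphy factors: a direct calculation with the Möbius formula shows that $D(\tilde A,\tilde B,(A,B).z) \cdot D(A,B,z) = D(\tilde A A,\tilde B B,z)$, since for any matrices $\left(\begin{smallmatrix} a_1 & b_1 \\ c_1 & d_1 \end{smallmatrix}\right)$, $\left(\begin{smallmatrix} a_2 & b_2 \\ c_2 & d_2 \end{smallmatrix}\right)$ with product $\left(\begin{smallmatrix} a & b \\ c & d \end{smallmatrix}\right)$ one has $(c_1 \cdot \tfrac{a_2 z+b_2}{c_2 z+d_2} + d_1)(c_2 z + d_2) = cz+d$. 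Using this cocycle, the terms involving $u$ and involving $\Pi_z r$ already match automatically on both sides.

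The only remaining equality, which carries the key content, is for the $\tilde r$-terms; expanding, it reads
\[ D(\tilde A,\tilde B,(A,B).z) \cdot \Pi_{(A,B).z}\,\tilde r \;=\; D(\tilde A A,\tilde B B,z) \cdot \Pi_z M(A,B)\,\tilde r. \]
Here I substitute the defining equation $\Pi_z M(A,B) = D(A,B,z)^{-1} \Pi_{(A,B).z}$ of $M(A,B)$, reducing the right-hand side to $D(\tilde A A,\tilde B B,z) D(A,B,z)^{-1} \Pi_{(A,B).z} \tilde r$, and then apply the cocycle identity once more to conclude. I expect no real obstacle: the entire content has already been encoded into the definition of the semidirect product via Equation~\eqref{twist} and into the defining relation of $M(A,B)$. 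The main care is purely bookkeeping, making sure that the order in which $M(A,B)$ twists $\tilde r$ in the group law matches the order in which $D(A,B,z)^{-1}$ appears when rewriting $\Pi_z M(A,B)$, which is exactly why the group law was set up with $M(A,B)\tilde r + r$ rather than $\tilde r + M(\tilde A,\tilde B)r$.
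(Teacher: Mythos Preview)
Your proof is correct and follows essentially the same approach as the paper: both verify the identity axiom directly and then establish compatibility by combining the cocycle identity $D(\tilde A,\tilde B,(A,B).z)\,D(A,B,z) = D(\tilde A A,\tilde B B,z)$ with the defining relation $\Pi_z\,M(A,B) = D(A,B,z)^{-1}\,\Pi_{(A,B).z}$. The paper presents this as a single chain of equalities, while you unpack the $u$-, $r$-, and $\tilde r$-contributions separately, but the logical content is identical.
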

\begin{proof}
 Clearly we have \[ ((I_2,I_2),0).(z,u) = (z,u) \] and using the two relations
 \[ \Pi_z \cdot M(A,B) = D(A,B,z)^{-1} \cdot \Pi_{(A,B).z} \] and \[ D(\tilde A,\tilde B,(A,B).z) \cdot D(A,B,z) = D(\tilde A A,\tilde B B,z), \] we get
 \begin{eqnarray*}
  &   & ((\tilde A, \tilde B),\tilde r).(((A,B),r).(z,u)) \\
  & = & ((\tilde A, \tilde B),\tilde r).((A,B).z,D(A,B,z) \cdot (u + \Pi_z r)) \\
  & = & ((\tilde A A,\tilde B B).z  ,D(\tilde A,\tilde B,(A,B).z) \cdot (D(A,B,z) \cdot (u + \Pi_z r) + \Pi_{(A,B).z} \tilde r)) \\
  & = & ((\tilde A A,\tilde B B).z , D(\tilde A A,\tilde B B,z) \cdot (u + \Pi_z r + D(A,B,z)^{-1} \cdot \Pi_{(A,B).z} \tilde r))) \\
  & = & ((\tilde A A,\tilde B B).z , D(\tilde A A,\tilde B B,z) \cdot (u + \Pi_z (r + M(A,B) \cdot \tilde r))) \\
  & = & ((\tilde A A, \tilde B B),M(A,B) \cdot \tilde r + r).(z,u).
 \end{eqnarray*}
\end{proof}

Now we can prove the final result of this subsection.

\begin{theo}
 The universal family (with singularities) for the irreducible component $X^{(3)}_\A \subseteq X^{(3)}_{D,d}$ defined by the ideal
 $\A = \langle \eta_2^\sigma,d\eta_1^\sigma \rangle_\ZZ$ is
 \[ \pi:~ \widetilde \Gamma_{D,d}(\eta_1,\eta_2) \backslash (\HH^3 \times \CC^3) \to X^{(3)}_\A,\quad [(z,u)] \mapsto (A_z,H_z,S_z,\rho_z). \]
\end{theo}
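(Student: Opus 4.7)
The plan is to unwind the quotient $\widetilde\Gamma_{D,d}(\eta_1,\eta_2)\backslash(\HH^3\times\CC^3)$ in two stages corresponding to the semi-direct product decomposition $\widetilde\Gamma = \Gamma_{D,d}(\eta_1,\eta_2) \ltimes \ZZ^6$, and then appeal to Theorem~\ref{modulispacedim3} to identify the base of the resulting family.

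First I would quotient by the normal subgroup $\{(I_2,I_2)\}\ltimes\ZZ^6 \cong \ZZ^6$. Since $D(I_2,I_2,z) = I_3$ and $M(I_2,I_2) = I_6$, the action reduces for each fixed $z$ to $r.(z,u) = (z,u+\Pi_z r)$. Hence the partial quotient $\ZZ^6\backslash(\HH^3\times\CC^3)$ is a holomorphic family over $\HH^3$ whose fiber over $z$ is exactly $\CC^3/\Pi_z\ZZ^6 = A_z$. The symplectic basis of $\Lambda_z$ given by the columns of $\Pi_z$ guarantees that the polarization, the subvariety $S_z$ and the real multiplication $\rho_z$ all come from the defining data of Proposition~\ref{classifdim3} and vary algebraically in~$z$.

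Next, I would analyze the residual action of $\Gamma_{D,d}(\eta_1,\eta_2)$ on this family. By the very defining identity $\Pi_z\cdot M(A,B) = D(A,B,z)^{-1}\cdot\Pi_{(A,B).z}$, multiplication by $D(A,B,z)$ sends $\Pi_z\ZZ^6$ to $\Pi_{(A,B).z}\ZZ^6$, so it descends to an isomorphism $A_z \to A_{(A,B).z}$. A direct comparison with the proof of Proposition~\ref{WohldefiniertheitDim3} shows that this descended map is precisely the isomorphism of polarized Abelian threefolds with real multiplication constructed there; in particular it preserves $H$, $S$ and $\rho$. Consequently the total quotient $\widetilde\Gamma\backslash(\HH^3\times\CC^3)$ is a family over $\Gamma_{D,d}(\eta_1,\eta_2)\backslash\HH^3$, and under the isomorphism $\Gamma_{D,d}(\eta_1,\eta_2)\backslash\HH^3 \cong X^{(3)}_\A$ from Theorem~\ref{modulispacedim3} the fiber over $[(A_z,H_z,S_z,\rho_z)]$ is canonically identified with $A_z$ itself.

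Finally, universality follows formally: given any family $\mathcal{A}\to T$ whose fibers are polarized Abelian threefolds of type $\A$ with real multiplication by $\OD$, Theorem~\ref{modulispacedim3} yields a classifying map $T \to X^{(3)}_\A$; locally lifting to $\HH^3$ and using the explicit period matrix $\Pi_z$ exhibits $\mathcal{A}$ as the pullback of $\pi$. The main obstacle, and the reason for the parenthetical ``with singularities'', is that elements like $(-I_2,-I_2,0)$ act trivially on the base but as $-\id$ on each fiber, and more generally automorphisms of individual triples $(A_z,H_z,S_z,\rho_z)$ produce orbifold fixed loci; this is an unavoidable coarse-versus-fine moduli phenomenon rather than a gap in the construction, and does not affect the identification of fibers.
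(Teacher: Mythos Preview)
Your proposal is correct and follows essentially the same approach as the paper: the paper's proof is a single sentence observing that left multiplication by $D(A,B,z)$ is the analytic representation of the isomorphism $(A_z,H_z,S_z,\rho_z)\cong(A_{(A,B).z},H_{(A,B).z},S_{(A,B).z},\rho_{(A,B).z})$ from Proposition~\ref{WohldefiniertheitDim3}, so the fibre over each point is $\CC^3/\Pi_z\ZZ^6$ with the prescribed data. Your two-stage decomposition (first by $\ZZ^6$, then by $\Gamma_{D,d}(\eta_1,\eta_2)$) and your explicit discussion of universality and of the ``with singularities'' caveat simply flesh out what the paper leaves implicit.
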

\begin{proof}
 Left multiplication with $D(A,B,z)$ is just the analytic representation of the isomorphism \[ (A_z,H_z,S_z,\rho_z) \cong (A_{(A,B).z},H_{(A,B).z},S_{(A,B).z},\rho_{(A,B).z}), \]
 and thus the fibre over each $(A_z,H_z,S_z,\rho_z)$ is isomorphic to $\CC^3/\Pi_z\ZZ^6$, equipped with the usual polarization and $\OD$-structure.
\end{proof}
\subsection{Lower and upper bounds}

The group $\Gamma_{D,d}(\eta_1,\eta_2)$ is much to unwieldy to use it for problems like counting cusps.
For this reason, we construct a lower and an upper bound for $\Gamma_{D,d}(\eta_1,\eta_2)$, both being direct products and independent of the choice of the smart basis
and with both inclusions of finite index.
Our focus will be on the lower bound, as this defines a covering space of $X_\A^{(3)}$, and we will show that this bound is optimal with respect to the requirements above.

\bigskip
Again, let $d$ be a natural number satisfying the prime factor condition in~$\OD$, let $\A$ be a primitive ideal in $\OD$ of norm $d$ and let $(\eta_2^\sigma,d\eta_1^\sigma)$ be a smart basis of $\A$.
Recall that our model for the irreducible component $X^{(3)}_\A$ defined by $\A$ of our moduli space $X^{(3)}_{D,d}$ is \[ \Gamma_{D,d}(\eta_2,\eta_2) \backslash \HH^3 \cong X^{(3)}_\A, \]
where $\Gamma_{D,d}(\eta_1,\eta_2)$ was defined to be the group \[ \{ (A,B) \in \SL_2(\OD \oplus \tfrac{1}{\sqrt{D}}\A) \times \SL_2(\ZZ) :~ A-SBS^{-1} \in M_{D,d}(\eta_1,\eta_2) \}. \]
Unfortunately, this group depends on the choice of the pair $(\eta_1,\eta_2)$.
We handle this problem by finding two groups independent of this choice, an upper and a lower bound for $\Gamma_{D,d}(\eta_1,\eta_2)$, such that both inclusions have finite indices.
Additionaly, for better applicability, we require these two groups to be direct products, so they would be much better to handle with. To be concrete, setting
\[ \widetilde \Gamma_{{\rm lb}}(D,\A) := \SL\begin{pmatrix} 1+\A & \sqrt{D} \OD \\ \tfrac{1}{\sqrt{D}}\A^2 & 1+\A \end{pmatrix} \] and denoting by
$\Gamma(d) = \{ B \in {\rm SL}_2(\ZZ) : B \equiv I_2 \mod d \}$ the principal congruence subgroup of level $d$, we define
\[ \Gamma_{{\rm lb}}(D,\A) := \widetilde \Gamma_{{\rm lb}}(D,\A) \times \Gamma(d), \]
and \[ \Gamma_{{\rm ub}}(D,\A) := \SL_2(\OD \oplus \tfrac{1}{\sqrt{D}}\A) \times \SL_2(\ZZ). \]
\begin{prop}
 There is a chain of group inclusions of finite indices \[ \Gamma_{{\rm lb}}(D,\A) \subseteq \Gamma_{D,d}(\eta_1,\eta_2) \subseteq \Gamma_{{\rm ub}}(D,\A). \]
 The lower bound $\Gamma_{{\rm lb}}(D,\A)$ is optimal in the sense that it is the largest group of the form $\Gamma_1 \times \Gamma_2 \subseteq \Gamma_{D,d}(\eta_1,\eta_2)$ with
 $\Gamma_1 \subseteq \SL_2(\OD \oplus \tfrac{1}{\sqrt{D}}\A)$ and $\Gamma_2 \subseteq \SL_2(\ZZ)$ independent of the choice of the smart basis $(\eta_2^\sigma,d\eta_1^\sigma)$.
\end{prop}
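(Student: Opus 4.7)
The upper-bound inclusion holds by construction. For the lower-bound inclusion, the plan is to first compute
\[ SBS^{-1}=\begin{pmatrix} b_4 & \tfrac{\sqrt{D}}{d}\tfrac{\eta_2}{\eta_1^\sigma}b_3 \\ \tfrac{d}{\sqrt{D}}\tfrac{\eta_1^\sigma}{\eta_2}b_2 & b_1 \end{pmatrix} \]
for $B=\left(\begin{smallmatrix}b_1&b_2\\ b_3&b_4\end{smallmatrix}\right)\in\SL_2(\ZZ)$, and then verify entrywise that $A-SBS^{-1}\in M_{D,d}(\eta_1,\eta_2)$ whenever $A\in\widetilde\Gamma_{\rm lb}(D,\A)$ and $B\in\Gamma(d)$. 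The four conditions reduce to two arithmetic facts about the smart basis: $d\in\A$ (so that $d\OD\subseteq\A$), and $\eta_2\A\subseteq\A^\sigma\A=d\OD$ (which follows from $\eta_2\in\A^\sigma$ together with $\A\A^\sigma=\mathfrak{N}(\A)\OD$).

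Finiteness of the indices reduces to $[\Gamma_{\rm ub}(D,\A):\Gamma_{\rm lb}(D,\A)]<\infty$. This factors as
\[ [\SL_2(\ZZ):\Gamma(d)]\cdot[\SL_2(\OD\oplus\tfrac{1}{\sqrt{D}}\A):\widetilde\Gamma_{\rm lb}(D,\A)]. \]
The first factor is the classical index of a principal congruence subgroup. The second is finite because $\widetilde\Gamma_{\rm lb}(D,\A)$ is cut out by congruence-type conditions modulo the finite quotients $\OD/\A$ (on the diagonal entries), $\A^{-1}/\OD$ (on the $(1,2)$-entry), and $\A/\A^2$ (on the $(2,1)$-entry).

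For the optimality claim, assume $\Gamma_1\times\Gamma_2\subseteq\Gamma_{D,d}(\eta_1,\eta_2)$ for every smart basis. Exploiting the product structure, both $(A,I_2)$ and $(I_2,B)$ belong to each such group, so $A-I_2$ and $I_2-SBS^{-1}$ must lie in $\bigcap_{(\eta_1,\eta_2)}M_{D,d}(\eta_1,\eta_2)$. The main technical obstacle is producing a sufficiently rich family of smart bases to make this intersection tight. Writing the primitive ideal $\A^\sigma=\langle d,a_0+\gamma_D\rangle_\ZZ$, one checks that $(\eta_1,\eta_2)=(1,\,a_0+\gamma_D+kd)$ satisfies all the requirements (a basis of $\OD$, $\eta_2\in\A^\sigma$, and the sign convention $\I(\eta_1\eta_2^\sigma)=-\tfrac{1}{2}\I(\sqrt{D})$) for every $k\in\ZZ$, and as $k$ varies these $\eta_2$'s $\ZZ$-span $\A^\sigma$.

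Combining this with the identity $(\A^\sigma)^{-1}=\tfrac{1}{d}\A$, the four intersected entry conditions on $A-I_2$ translate to $a_1-1,a_4-1\in\A$, $a_2\in\sqrt{D}\OD$, and $a_3\in\tfrac{1}{\sqrt{D}}\A^2$, i.e., $A\in\widetilde\Gamma_{\rm lb}(D,\A)$. Analogously, the conditions on $I_2-SBS^{-1}$ force $b_1-1,b_4-1,b_2,b_3\in\A\cap\ZZ=d\ZZ$, so that $B\in\Gamma(d)$. Together these give $\Gamma_1\times\Gamma_2\subseteq\Gamma_{\rm lb}(D,\A)$, establishing optimality.
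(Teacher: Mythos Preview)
Your proof is correct. The inclusion and finiteness arguments match the paper's in substance, but your optimality argument takes a genuinely different route.

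For optimality the paper first computes, for an \emph{arbitrary} fixed smart basis, the maximal product $\Gamma_1\times\Gamma_2$ inside $\Gamma_{D,d}(\eta_1,\eta_2)$. It shows that the $(1,1)$, $(1,2)$, $(2,2)$ entry conditions on $\Gamma_1$ already yield $1+\A$, $\sqrt{D}\,\OD$, $1+\A$ independently of the basis (via the auxiliary observation that $(\eta_1\eta_2^\sigma,d)$ is again a smart basis of $\A$), while the $(2,1)$ entry yields $\tfrac{1}{\sqrt{D}}\bigl(\A\cap\tfrac{d}{\eta_2}\A\bigr)\supseteq\tfrac{1}{\sqrt{D}}\A^2$. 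Equality for the last inclusion is then established for the \emph{single} basis $\eta_1=1$, $\eta_2=\omega$ by an explicit lattice-index computation showing $[\A:\A\cap\tfrac{d}{\omega}\A]=d$.

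You instead intersect over the one-parameter family $\eta_1=1$, $\eta_2=a_0+\gamma_D+kd$. Since these $\eta_2$ span $\A^\sigma$ over $\ZZ$, the entrywise conditions $\eta_2 x\in d\OD$ (resp.\ $\eta_2 x\in d\A$) become $\A^\sigma x\subseteq d\OD$ (resp.\ $\A^\sigma x\subseteq d\A$), and multiplying by $(\A^\sigma)^{-1}=\tfrac{1}{d}\A$ gives $x\in\A$ (resp.\ $x\in\A^2$) directly. This ideal-theoretic argument is cleaner and bypasses the index count entirely. The paper's approach does yield slightly more information---namely that a single smart basis already realises the optimum, not just the intersection over a family---but for the statement as phrased, your argument suffices.
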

\begin{proof}
 It is clear that $\Gamma_{D,d}(\eta_1,\eta_2)$ is a finite index subgroup of $\Gamma_{{\rm ub}}(D,\A)$.

 For the lower bound, recall that we defined
 \[ M_{D,d}(\eta_1,\eta_2) = \begin{pmatrix} \tfrac{d}{\eta_2}\OD & \tfrac{\sqrt{D}}{\eta_1^{\sigma}}\OD \\ \tfrac{d}{\sqrt{D}}\tfrac{1}{\eta_2}\A & \tfrac{1}{\eta_1^{\sigma}}\A \end{pmatrix}
 \mbox{ and } S = S(D,d,\eta_1,\eta_2) = \begin{pmatrix} 0 & \tfrac{\sqrt{D}}{d}\tfrac{\eta_2}{\eta_1^{\sigma}} \\ 1 & 0 \end{pmatrix}. \]

 For $A = \left( \begin{smallmatrix} a_1 & a_2 \\ a_3 & a_4 \end{smallmatrix} \right) \in \SL_2(\OD \oplus \tfrac{1}{\sqrt{D}}\A)$ and
 $B = \left( \begin{smallmatrix} b_1 & b_2 \\ b_3 & b_4 \end{smallmatrix} \right) \in \SL_2(\ZZ)$ we get \[ A-SBS^{-1} =
 \begin{pmatrix} a_1-b_4 & a_2-\tfrac{\sqrt{D}}{d}\tfrac{\eta_2}{\eta_1^{\sigma}}b_3 \\ a_3-\tfrac{d}{\sqrt{D}}\tfrac{\eta_1^{\sigma}}{\eta_2}b_2  & a_4-b_1 \end{pmatrix}. \]
 Hence the largest subgroups $\Gamma_1 \subseteq \SL_2(\OD \oplus \tfrac{1}{\sqrt{D}}\A)$ and $\Gamma_2 \subseteq \SL_2(\ZZ)$
 with $\Gamma_1 \times \Gamma_2$ contained in $\Gamma_{D,d}(\eta_1,\eta_2)$ are
 \[ \Gamma_1 = {\rm SL}\begin{pmatrix} 1 + (\OD \cap \tfrac{d}{\eta_2}\OD) & \tfrac{\sqrt{D}}{d}\A^\sigma \cap \tfrac{\sqrt{D}}{\eta_1^{\sigma}}\OD \\ 
    \tfrac{1}{\sqrt{D}}\A \cap \tfrac{d}{\sqrt{D}}\tfrac{1}{\eta_2}\A & 1 + (\OD \cap \tfrac{1}{\eta_1^{\sigma}}\A) \end{pmatrix} \] and
 \[ \Gamma_2 =  {\rm SL}_2(\ZZ) \cap \begin{pmatrix} 1 + \tfrac{1}{\eta_1^{\sigma}}\A, & \tfrac{1}{\eta_1^\sigma}\A \\ \tfrac{d}{\eta_2}\OD, & 1 + \tfrac{d}{\eta_2}\OD \end{pmatrix} \]
 We will compare these groups with those in the definition of $\Gamma_{{\rm lb}}(D,\A)$.
 Let us first look at $\Gamma_2$. Because of
 \[ \begin{array}{lllll} \tfrac{1}{\eta_1^{\sigma}}\A \cap \ZZ & = & \langle \tfrac{\eta_2^\sigma}{\eta_1^\sigma},d \rangle_\ZZ \cap \ZZ & = & d\ZZ \quad \mbox{ and} \\
    \tfrac{d}{\eta_2}\OD \cap \ZZ & = & \langle d \tfrac{\eta_1}{\eta_2},d \rangle_\ZZ \cap \ZZ & = & d\ZZ \end{array} \]
 we get $\Gamma_2 = \Gamma(d)$.

 Now consider $\Gamma_1$. We claim that $(\eta_1\eta_2^\sigma,d)$ is also a smart basis for $\A$.
 To check this, just use $\I(\eta_1 \eta_2^{\sigma}) = -\tfrac{1}{2}\I(\sqrt{D})$ and the normal form $\A = \langle d,\omega \rangle_\ZZ$ of
 Lemma~\ref{Idealnormalform} to see that a matrix in $\ZZ^{2 \times 2}$ transforming $(d,\omega)$ to $(d,\eta_1\eta_2^\sigma)$ is necessarily in ${\rm GL}_2(\ZZ)$.

 Now we see that
 \begin{eqnarray*}
  \OD \cap \tfrac{d}{\eta_2}\OD & = & \OD \cap \tfrac{d}{\eta_2} \langle \eta_1, \eta_2 \rangle_\ZZ \\
   & = & \langle \eta_1\eta_2^\sigma, 1 \rangle_\ZZ \cap \langle \tfrac{1}{\mathcal{N}(\eta_2)/d}\eta_1\eta_2^\sigma, d \rangle_\ZZ \\
   & = & \langle \eta_1\eta_2^\sigma, d \rangle_\ZZ \\
   & = & \A,
 \end{eqnarray*}
 \begin{eqnarray*}
  \tfrac{\sqrt{D}}{d}\A^\sigma \cap \tfrac{\sqrt{D}}{\eta_1^{\sigma}}\OD
   & = & \tfrac{\sqrt{D}}{d} \langle \eta_2,d\eta_1 \rangle_\ZZ \cap \tfrac{\sqrt{D}}{\eta_1^{\sigma}} \langle 1,\eta_1^\sigma\eta_2 \rangle_\ZZ \\
   & = & \sqrt{D}~ \left( \langle \mathcal{N}(\eta_1) \tfrac{1}{\eta_1^{\sigma}}, \tfrac{1}{d}\eta_2 \rangle_\ZZ \cap \langle \tfrac{1}{\eta_1^{\sigma}},\eta_2 \rangle_\ZZ \right) \\
   & = & \sqrt{D}~ \langle \mathcal{N}(\eta_1) \tfrac{1}{\eta_1^{\sigma}}, \eta_2 \rangle_\ZZ \\
   & = & \sqrt{D}~ \OD
 \end{eqnarray*} and
 \begin{eqnarray*}
  \OD \cap \tfrac{1}{\eta_1^{\sigma}}\A & = & \OD \cap \tfrac{1}{\eta_1^{\sigma}} \langle \eta_2^\sigma,d\eta_1^\sigma \rangle_\ZZ \\
   & = & \langle 1,\eta_1\eta_2^\sigma \rangle_\ZZ \cap \langle d,\tfrac{1}{\mathcal{N}(\eta_1)} \eta_1 \eta_2^\sigma \rangle_\ZZ \\
   & = & \langle d,\eta_1\eta_2^\sigma \rangle_\ZZ \\
   & = & \A.
 \end{eqnarray*}
 The tricky part is to determine $\tfrac{1}{\sqrt{D}}\A \cap \tfrac{d}{\sqrt{D}}\tfrac{1}{\eta_2}\A$.
 Writing \[ \tfrac{d}{\eta_2}\A = \langle \tfrac{d}{\mathcal{N}(\eta_2)}(\eta_2^\sigma)^2 , \tfrac{d^2}{\mathcal{N}(\eta_2)}\eta_1^\sigma\eta_2^\sigma \rangle_\ZZ \] and
 \[ \A^2 = \langle (\eta_2^\sigma)^2 , d \eta_1^\sigma \eta_2^\sigma , d^2 (\eta_1^\sigma)^2 \rangle_\ZZ, \]
 we see, using $d \mid \mathcal{N}(\eta_2)$, that $\A^2$ is contained in $\tfrac{d}{\eta_2}\A$ and thus we have $\A^2 \subseteq \A \cap \tfrac{d}{\eta_2}\A$.

 Collecting everything together, so far we have shown that $\Gamma_{{\rm lb}}(D,\A)$ is contained in $\Gamma_{D,d}(\eta_1,\eta_2)$,
 and that this group is optimal in the sense of the proposition, if at least for one choice of the smart basis we have $\A^2 = \A \cap \tfrac{d}{\eta_2}\A$.
 Therefore, we choose a smart basis of the form $(\eta_2^\sigma,d\eta_1^\sigma) = (\omega^\sigma,d)$ with $\omega = a+\gamma_D$ for $\A$ and show that the index of
 $\A \cap \tfrac{d}{\omega}\A$ in $\A$ is $d$.
 We have
\[ \tfrac{d}{\omega} \A = \tfrac{d}{\mathcal{N}(\omega)}\omega^\sigma \A  = \langle \tfrac{d\tr(\omega)}{\mathcal{N}(\omega)}\omega^\sigma-d, \tfrac{d^2}{\mathcal{N}(\omega)}\omega^\sigma \rangle_\ZZ. \]
 Now, given any $x,y \in \ZZ$ we see that the linear combination
 \[ x \left( \tfrac{d\tr(\omega)}{\mathcal{N}(\omega)}\omega^\sigma-d \right) + y \left( \tfrac{d^2}{\mathcal{N}(\omega)}\omega^\sigma \right) \]
 is an element of $\A$ if and only if \[ x\tr(\omega) + yd \equiv 0 \mod \tfrac{\mathcal{N}(\omega)}{d}. \]
 Next we identify the $\ZZ$-module $\A$ with $\ZZ^2$ by the coordinate isomorphism given by the basis $(\omega^\sigma,d)$.
 Under this isomorphism, $\A \cap \tfrac{d}{\eta_2}\A$ is the image of \[ L := \{ (x,y) \in \ZZ^2 : x\tr(\omega) + yd \equiv 0 \mod \tfrac{\mathcal{N}(\omega)}{d} \} \]
 under \[ \varphi:~ L \to \ZZ^2 ,\quad (x,y) \mapsto \left( (x\tr(\omega) + yd)\tfrac{d}{\mathcal{N}(\omega)}, -x \right). \]
 This gives rise to the two chains of inclusions
 \[ \ZZ^2 \supseteq L \supseteq \tfrac{\mathcal{N}(\omega)}{d} \varphi(L)\quad \mbox{and}\quad \ZZ^2 \supseteq \varphi(L) \supseteq \tfrac{\mathcal{N}(\omega)}{d} \varphi(L). \]
 Three of the four occuring indices are known:
 We have $\gcd(d,\tr(\omega),\tfrac{\mathcal{N}(\omega)}{d}) = 1$ by Lemma~\ref{Idealnormalform}, and hence $[\ZZ^2 : L] = \tfrac{\mathcal{N}(\omega)}{d}$.
 For the second one, we mention that we have
 \[ \tfrac{\mathcal{N}(\omega)}{d} \varphi(L) = \left( \begin{smallmatrix} \tr(\omega) & d \\ -\tfrac{\mathcal{N}(\omega)}{d} & 0 \end{smallmatrix} \right) L \]
 and thus we get the index $[L:\tfrac{\mathcal{N}(\omega)}{d} \varphi(L)] = \mathcal{N}(\omega)$.
 Therefore, we have \[ [\ZZ^2:\tfrac{\mathcal{N}(\omega)}{d} \varphi(L)] = \tfrac{\mathcal{N}(\omega)^2}{d} \] and comparing with
 $[\varphi(L) : \tfrac{\mathcal{N}(\omega)}{d} \varphi(L)] = \left( \tfrac{\mathcal{N}(\omega)}{d} \right)^2$, we see that
 \[ [\A : \A \cap \tfrac{d}{\eta_2}\A] = [\ZZ^2 : \tfrac{\mathcal{N}(\omega)}{d} \varphi(L)] = d. \]

 It follows that $\Gamma_{{\rm lb}}(D,\A) = \widetilde \Gamma_{{\rm lb}}(D,\A) \times \Gamma(d)$ indeed has the desired properties.
\end{proof}

We can give at least an estimate for the indices.
\begin{prop}
 Let $\A$ be a primitive ideal of $\OD$ with norm $d$ relatively prime to the conductor of $D$.
 Then there is a group homomorphism \[ \varphi:~ \SL_2(\OD \oplus \tfrac{1}{\sqrt{D}}\A) \to \SL_2(\ZZ/d\ZZ) \] with $\ker(\varphi) = \widetilde \Gamma_{{\rm lb}}(D,\A)$.
 In particular, we have the index estimate \[ [\Gamma_{{\rm ub}}(D,\A) : \Gamma_{{\rm lb}}(D,\A)] \leq |\SL_2(\ZZ/d\ZZ)|^2 = ( d^3 \cdot \prod_{p \mid d} (1-p^{-2}) )^2. \]
\end{prop}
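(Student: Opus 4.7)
The plan is to realize $\varphi$ as the action of $\SL_2(\OD \oplus \tfrac{1}{\sqrt{D}}\A)$ on the finite torsion quotient $\Lambda/\A\Lambda$, where $\Lambda := \OD \oplus \tfrac{1}{\sqrt{D}}\A$. The first step is to identify $\OD/\A$ with $\ZZ/d\ZZ$ as rings: by Lemma~\ref{Idealnormalform}, the primitive ideal $\A$ of norm $d$ admits a $\ZZ$-basis of the form $(d,\omega)$, so that $(1,\omega)$ is a $\ZZ$-basis of $\OD$, and the composition $\ZZ \hookrightarrow \OD \twoheadrightarrow \OD/\A$ is a ring isomorphism onto $\ZZ/d\ZZ$.

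Next I would show that $\Lambda/\A\Lambda$ is a free $(\OD/\A)$-module of rank two. Since $d$ is coprime to the conductor, $\A$ is invertible by Lemma~\ref{Idealnormalform}, so both summands of $\Lambda$ are projective $\OD$-modules of rank one; hence $\Lambda/\A\Lambda \cong \OD/\A \oplus (\tfrac{1}{\sqrt{D}}\A)/(\tfrac{1}{\sqrt{D}}\A^2)$ is a projective $(\OD/\A)$-module of rank two. By the Chinese Remainder Theorem, $\ZZ/d\ZZ$ decomposes into a product of local rings $\ZZ/p^k\ZZ$, over each of which every finitely generated projective module is free; therefore $\Lambda/\A\Lambda \cong (\ZZ/d\ZZ)^2$.

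I would then verify that the right-multiplication action of $M = \left(\begin{smallmatrix} a & b \\ c & d' \end{smallmatrix}\right) \in \SL_2(\OD \oplus \tfrac{1}{\sqrt{D}}\A)$ on $\Lambda$ preserves the submodule $\A\Lambda = \A \oplus \tfrac{1}{\sqrt{D}}\A^2$: this is a short check using $(\tfrac{1}{\sqrt{D}}\A)^{-1} = \tfrac{\sqrt{D}}{d}\A^\sigma$ and $\A\A^\sigma = d\OD$ to see that each of the products $ax$, $by$, $cx$, $d'y$ lands in the intended ideal. The induced $(\OD/\A)$-linear automorphism $\overline{M}$, expressed in a fixed basis of $\Lambda/\A\Lambda$, defines $\varphi$ with values in $\GL_2(\ZZ/d\ZZ)$; since $\det(M)=1 \in K$ and the reduced module is free, the determinant of $\overline{M}$ is $1 \in \ZZ/d\ZZ$, so $\varphi$ lands in $\SL_2(\ZZ/d\ZZ)$.

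To identify the kernel, $M \in \ker\varphi$ is equivalent to $(M-I)\Lambda \subseteq \A\Lambda$; testing on elements of the form $(1,0)$ and $(0,y)$ for $y \in \tfrac{1}{\sqrt{D}}\A$, and using the same ideal identities, this unravels to the four conditions $a, d' \in 1+\A$, $c \in \tfrac{1}{\sqrt{D}}\A^2$, and $b \in \A \cdot (\tfrac{1}{\sqrt{D}}\A)^{-1} = \sqrt{D}\OD$, which are exactly the defining conditions of $\widetilde{\Gamma}_{{\rm lb}}(D,\A)$. The index estimate then follows from the first isomorphism theorem combined with the classical surjection $\SL_2(\ZZ) \twoheadrightarrow \SL_2(\ZZ/d\ZZ)$ of kernel $\Gamma(d)$, while the product formula for $|\SL_2(\ZZ/d\ZZ)|$ is standard. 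The main subtlety I expect is justifying that $\Lambda/\A\Lambda$ is genuinely free of rank two over $\OD/\A$: the lattice $\Lambda$ is in general only projective and not free as an $\OD$-module, so the freeness of the reduction depends essentially on $\ZZ/d\ZZ$ being a finite product of local principal ideal rings.
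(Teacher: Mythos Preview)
Your argument is correct and takes a genuinely different route from the paper. The paper constructs $\varphi$ by an explicit coordinate formula: writing $\A = \langle d,\omega\rangle_\ZZ$, it expresses each entry of $A \in \SL_2(\OD \oplus \tfrac{1}{\sqrt{D}}\A)$ in the $\ZZ$-basis and sends $A$ to a specific $2\times 2$ matrix of residues modulo $d$; well-definedness and the homomorphism property are checked by direct computation of $\det(A)$ and of products. The kernel identification is then done entry by entry, the delicate part being the lower-left entry, which the paper handles by an explicit description of $\A^2$ using the decomposition $d=PQ$ into ramified and split parts together with the resulting congruence conditions on $\tr(\omega)$ and $\mathcal{N}(\omega)$.

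Your approach replaces all of this with the natural action on $\Lambda/\A\Lambda$. The gain is conceptual clarity: the existence of $\varphi$, its target, and the shape of the kernel are forced by the module structure, and the explicit $\A^2$-computation is subsumed by the single line $(M-I)\Lambda\subseteq\A\Lambda$. What this buys the paper, conversely, is that it never needs to argue that $\Lambda/\A\Lambda$ is free over $\OD/\A$; you correctly flag this as the one nontrivial step, and your justification via projectivity of invertible ideals and the local structure of $\ZZ/d\ZZ$ is the right one. One small point you pass over quickly is why $\det(\overline M)=1$: since $\Lambda$ is not free over $\OD$, this is cleanest seen after localizing at each prime above $\A$ (where $\Lambda$ becomes free and the matrix of $M$ is a $\GL_2(K)$-conjugate of $M$, hence has determinant $1$), or equivalently via the action on $\wedge^2_{\OD}\Lambda$.
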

\begin{proof}
 The primitive ideal $\A$ is of the form $\A = \langle d,\omega \rangle_\ZZ$ for some $\omega = a+\gamma_D$ with~$a \in \ZZ$.
 Therefore, we can write \[ \SL_2(\OD \oplus \tfrac{1}{\sqrt{D}}\A) = \SL \begin{pmatrix} \langle 1,\omega \rangle_\ZZ & \sqrt{D} \langle 1, \tfrac{1}{d}\omega^\sigma \rangle_\ZZ \\
  \tfrac{1}{\sqrt{D}}\langle d,\omega \rangle_\ZZ & \langle 1,\omega \rangle_\ZZ \end{pmatrix}. \]
  We define the map $\varphi: \SL_2(\OD \oplus \tfrac{1}{\sqrt{D}}\A) \to \SL_2(\ZZ/d\ZZ)$ by
 \[ A = \begin{pmatrix} x_1+y_1\omega & \sqrt{D}(x_2+y_2\tfrac{1}{d}\omega^\sigma) \\ \tfrac{1}{\sqrt{D}}(x_3d+y_3\omega) & x_4+y_4\omega \end{pmatrix} \mapsto
  \begin{pmatrix} [x_1]_d & [y_2]_d \\ [x_3\tr(\omega)+y_3 \tfrac{\mathcal{N}(\omega)}{d}]_d & [x_4]_d \end{pmatrix} \] with $x_i,y_i \in \ZZ$.
 Since we have \begin{align*} 1 = \det(A) = & x_1x_4-y_2(x_3\tr(\omega)+y_3\tfrac{\mathcal{N}(\omega)}{d})-(y_1y_4\tfrac{\mathcal{N}(\omega)}{d}-x_2x_3)d \\
  & +(x_1y_4+y_1x_4+y_1y_4\tr(\omega)-x_2y_3+y_2x_3)\omega, \end{align*} we get $x_1x_4-y_2(x_3\tr(\omega)+y_3\tfrac{\mathcal{N}(\omega)}{d}) \equiv 1 \mod d$, hence $\varphi$ is well-defined.
 A straight-forward computation using $d \mid \mathcal{N}(\omega)$ yields that $\varphi$ is a homomorphism.

 It remains to show that the kernel is $\widetilde \Gamma_{{\rm lb}}(D,\A)$. We start with determining the ideal $\A^2$.
 Recall that $d$ satisfies the prime factor condition in $\OD$, as $\A$ is primitive.
 Therefore, we can write $d=PQ$, where~$P$ is the ramified part and~$Q$ is the splitting part in the prime factorization of $d$. Then we get \[ \A^2 = \langle PQ^2,P\omega+nPQ \rangle_\ZZ \]
 for some $n \in \NN_0$ with \[ Q^2 \mid \mathcal{N}(\omega+nPQ) = \mathcal{N}(\omega)+n^2Q^2+nQ\tr(\omega). \]
 Since $\A^2$ is also equal to $\langle P^2Q^2,PQ\omega, \tr(\omega)\omega-\mathcal{N}(\omega) \rangle_\ZZ$, we get the three additional conditions
 \[ P \mid \tr(\omega),\quad \gcd(Q,\tr(\omega))=1,\quad {\rm and}\quad \gcd(P^2,\mathcal{N}(\omega))=P. \]
 Now consider the lower left entry of $\varphi(A)$. Using these four conditions, we deduce
 \begin{align*}
  d \mid x_3\tr(\omega)+y_3 \tfrac{\mathcal{N}(\omega)}{d}  \Leftrightarrow & P^2Q^2 \mid x_3PQ\tr(\omega)+y_3\mathcal{N}(\omega) \\
   \Leftrightarrow & P^2 \mid y_3\mathcal{N}(\omega)~ \wedge~ Q^2 \mid x_3PQ\tr(\omega)-y_3(n^2Q^2+nQ\tr(\omega)) \\
   \Leftrightarrow & P \mid y_3~ \wedge~ Q \mid x_3P-y_3n,
 \end{align*}
 and this holds if and only if $x_3PQ+y_3\omega = (x_3P-y_3n)Q+y_3(\omega +nQ)$ is in $\A^2$.
 Hence, we have shown that the lower left entry of $\varphi(A)$ is zero in $\ZZ/d\ZZ$ if and only if the lower left entry of~$A$ is in~$\tfrac{1}{\sqrt{D}}\A^2$.
 The other three entries are easy to check, thus we get \[ \ker(\varphi) = \SL\begin{pmatrix} 1+\A & \sqrt{D} \OD \\ \tfrac{1}{\sqrt{D}}\A^2 & 1+\A \end{pmatrix} = \widetilde \Gamma_{{\rm lb}}(D,\A). \]
 Finally, the estimate follows from the well known fact \[ |\SL_2(\ZZ/d\ZZ)| = [\Gamma(1) : \Gamma(d)] = d^3 \prod_{p \mid d} (1-p^{-2}). \]
\end{proof}

A direct consequence is the following corollary, which could be used amongst other things for computating boundaries of the number of cusps of the Hilbert modular varieties $X_\A^{(3)}$.
\begin{cor}
  The identity on $\HH^3$ yields a chain of coverings of finite degrees
 \[ \Gamma_{{\rm lb}}(D,\A) \backslash \HH^3 \twoheadrightarrow \Gamma_{D,d}(\eta_2,\eta_2) \backslash \HH^3 \twoheadrightarrow \Gamma_{{\rm ub}}(D,\A) \backslash \HH^3. \]
 For the composition $\pi\!: \Gamma_{{\rm lb}}(D,\A) \backslash \HH^3 \twoheadrightarrow \Gamma_{{\rm ub}}(D,\A) \backslash \HH^3$, we can estimate the degree by
 \[ \deg(\pi) \leq \left \{ \begin{array}{ll} \tfrac{1}{4}( d^3 \prod_{p \mid d} (1-p^{-2}) )^2 & \mbox{ if } d>2 \\ 36 & \mbox{ if } d=2. \end{array} \right. \]
\end{cor}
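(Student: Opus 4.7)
The chain of coverings is essentially formal. By the penultimate proposition, we have the finite-index inclusions
\[ \Gamma_{{\rm lb}}(D,\A) \subseteq \Gamma_{D,d}(\eta_1,\eta_2) \subseteq \Gamma_{{\rm ub}}(D,\A), \]
all of these groups acting on $\HH^3$ via the usual action of $\SL_2(\OD \oplus \tfrac{1}{\sqrt{D}}\A) \times \SL_2(\ZZ)$ described in Section~\ref{dim3}. Since all three actions are properly discontinuous with finite stabilizers, the inclusions induce covering maps of finite degree between the three quotients, and it remains only to estimate the total degree $\deg(\pi)$.

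The subtlety is that these actions are not faithful: the kernel in $\Gamma_{{\rm ub}}(D,\A)$ is the subgroup $K = \{\pm I_2\} \times \{\pm I_2\}$ of order four. Hence the degree equals the index of the effective groups
\[ \deg(\pi) = \bigl[\Gamma_{{\rm ub}}(D,\A)/K ~:~ \Gamma_{{\rm lb}}(D,\A)/(K \cap \Gamma_{{\rm lb}}(D,\A))\bigr] \,=\, \frac{[\Gamma_{{\rm ub}}(D,\A) : \Gamma_{{\rm lb}}(D,\A)]}{|K : K \cap \Gamma_{{\rm lb}}(D,\A)|}. \]
By the preceding proposition we already know the numerator is bounded above by $|\SL_2(\ZZ/d\ZZ)|^2 = (d^3\prod_{p\mid d}(1-p^{-2}))^2$, so the remaining task is to determine the denominator.

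The key observation for the denominator is to decide when $-I_2$ belongs to $\widetilde\Gamma_{{\rm lb}}(D,\A)$ and to $\Gamma(d)$. In both cases, this amounts to testing whether $-I_2 \equiv I_2$ modulo the relevant ideal, i.e.\ whether $-2 \in \A$ respectively $-2 \in d\ZZ$. Since $\A$ is primitive of norm $d$, we have $\A \cap \ZZ = d\ZZ$, so both conditions reduce to $d \mid 2$. Thus:
\begin{itemize}
\item if $d > 2$, then $-I_2 \notin \widetilde\Gamma_{{\rm lb}}(D,\A)$ and $-I_2 \notin \Gamma(d)$, so $K \cap \Gamma_{{\rm lb}}(D,\A) = \{(I_2,I_2)\}$, giving the factor $\tfrac{1}{4}$ in the bound;
\item if $d = 2$, then $-I_2 \in \widetilde\Gamma_{{\rm lb}}(D,\A)$ and $-I_2 \in \Gamma(2)$, so $K \cap \Gamma_{{\rm lb}}(D,\A) = K$, and the denominator is~$1$; combined with $|\SL_2(\ZZ/2\ZZ)|=6$, this yields the bound~$36$.
\end{itemize}

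The only step requiring any care is the computation of $K \cap \widetilde\Gamma_{{\rm lb}}(D,\A)$, but this is a direct inspection of whether the matrix entries $-1 \in 1+\A$, $0 \in \sqrt{D}\OD$ and $0 \in \tfrac{1}{\sqrt{D}}\A^2$ are satisfied. Nothing beyond this should be required; the main ingredient is the combination of the already established index bound from the previous proposition with the arithmetic of the kernel of the action on $\HH^3$.
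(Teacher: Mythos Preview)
Your proof is correct and follows essentially the same approach as the paper: the paper's proof is the one-sentence observation that the result follows from the previous proposition together with the fact that $\PSL_2(\OD \oplus \tfrac{1}{\sqrt{D}}\A) \times \PSL_2(\ZZ)$ acts faithfully on $\HH^3$ and that $-I_2 \in \widetilde\Gamma_{{\rm lb}}(D,\A)$ (respectively $\Gamma(d)$) if and only if $d=2$. You have simply spelled out the kernel computation and the index formula in more detail.
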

\begin{proof}
 This is a direct consequence of the previous proposition, the fact that $\PSL_2(\OD \oplus \tfrac{1}{\sqrt{D}}\A) \times \PSL_2(\ZZ)$ acts faithfully on $\HH^3$ and that $-I_2$ is in
 $\widetilde \Gamma_{{\rm lb}}(D,\A)$ respectively $\Gamma(d)$ if and only if $d=2$.
\end{proof}
\subsection{Level structures}

By what we have seen in Section~\ref{dim2}, the Hilbert modular variety \[ \Gamma_{{\rm ub}}(D,\A) \backslash \HH^3 \cong X_\A \times \SL_2(\ZZ) \backslash \HH \] is the moduli space of isomorphism classes of products
of polarized complex Abelian surfaces with real multiplication by $\OD$ of type $\A$ and principally polarized elliptic curves.

Recall that we have $\Gamma_{{\rm lb}}(D,\A) = \widetilde \Gamma_{{\rm lb}}(D,\A) \times \Gamma(d)$.
To see what the space \[ \Gamma_{{\rm lb}}(D,\A) \backslash \HH^3 = \widetilde \Gamma_{{\rm lb}}(D,\A) \backslash \HH^2 \times \Gamma(d) \backslash \HH \] is parameterizing,
we define level structures for each of the two factors.

\bigskip
For the one-dimensional factor, we follow \cite{LangeBirke} Chapter~8.3.1. We equip~$\ZZ^2$ with the alternating form
\[ B:~ \ZZ^2 \times \ZZ^2 \to \ZZ,\quad ((x_1,y_1),(x_2,y_2)) \mapsto x_1y_2 - x_2y_1. \]
Then, for every $\ZZ$-module $\Lambda \cong \ZZ^2$ with an alternating form $E: \Lambda \times \Lambda \to \ZZ$ and every $d \in \NN$, we define the multiplicative alternating form
\[ \exp_d(E):~ (\Lambda/d\Lambda) \times (\Lambda/d\Lambda) \to \CC^*,\quad (\lambda_1,\lambda_2) \mapsto \exp \left(2\pi i \tfrac{E(\lambda_1,\lambda_2)}{d} \right). \]
\begin{defi}
 Let $(C=V/\Lambda,H)$ be a principally polarized elliptic curve and let $d \in \NN$. A \emph{level $d$-structure} on $(C,H)$ is a symplectic $\ZZ$-module isomorphism
 \[ \psi:~ (\Lambda/d\Lambda,\exp_d(E_H)) \to (\ZZ^2/d\ZZ^2,\exp_d(B)). \]
\end{defi}
 An \emph{isomorphism of principally polarized elliptic curves with level $d$-structure}, say $(C=V/\Lambda,H,\psi)$ and $(C'=V'/\Lambda',H',\psi')$, is an isomorphism \[ f:~ (C,H) \to (C',H') \]
 of polarized Abelian varieties, such that for the induced isomorphism  \[ [\rho_r(f)]:~ \Lambda/d\Lambda \to \Lambda'/d\Lambda' \] we have \[ \psi = \psi' \circ [\rho_r(f)]. \]
\begin{prop}
 The quotient $\Gamma(d) \backslash \HH$ is a moduli space for isomorphism classes of principally polarized elliptic curves with level $d$-structure.
\end{prop}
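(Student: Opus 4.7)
The plan is to follow the classical argument for Siegel modular varieties with level structure, adapted to genus one. First, for every $z \in \HH$ I would define the principally polarized elliptic curve $(C_z, H_z) := (\CC/\Lambda_z, H_z)$ with $\Lambda_z = \ZZ z + \ZZ$ and with $H_z$ the principal polarization whose alternating form~$E_{H_z}$ has the symplectic basis $(z,1)$. Equip $(C_z,H_z)$ with the level $d$-structure
\[ \psi_z :~ \Lambda_z/d\Lambda_z \to \ZZ^2/d\ZZ^2,\quad [z] \mapsto [(1,0)],\quad [1] \mapsto [(0,1)]. \]
A direct computation shows $\exp_d(E_{H_z})([z],[1]) = \exp(2\pi i/d) = \exp_d(B)([(1,0)],[(0,1)])$, so $\psi_z$ is symplectic, giving a well-defined map $\widetilde{\Phi} : \HH \twoheadrightarrow \{(C,H,\psi)\}/\cong$.

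Next, I would prove surjectivity of $\widetilde{\Phi}$ on isomorphism classes. Given any principally polarized elliptic curve $(C=V/\Lambda,H)$ with level structure~$\psi$, pick any symplectic basis $(\lambda,\mu)$ of $(\Lambda, E_H)$. Then $\psi$ is represented, relative to $(\lambda,\mu)$, by a matrix $N \in \SL_2(\ZZ/d\ZZ)$ (the symplectic condition on $\psi$ reduces to $\det N = 1$, since $\Sp_2 = \SL_2$). By the surjectivity of reduction $\SL_2(\ZZ) \twoheadrightarrow \SL_2(\ZZ/d\ZZ)$, lift $N^{-1}$ to some $M \in \SL_2(\ZZ)$ and let $(\lambda',\mu')^t = M \cdot (\lambda,\mu)^t$; this is again a symplectic basis of $\Lambda$, and by construction $\psi([\lambda']) = (1,0)$, $\psi([\mu']) = (0,1)$. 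As in the proof of Proposition~\ref{SurjektivDim2}, writing $z = \lambda'/\mu' \in \HH$ (after rescaling the $\CC$-coordinate on $V$ by $\mu'$) yields an isomorphism $(C,H,\psi) \cong (C_z,H_z,\psi_z)$.

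Finally, for injectivity I would identify precisely when $(C_z,H_z,\psi_z) \cong (C_{z'},H_{z'},\psi_{z'})$. The classical argument (exactly parallel to the proof of Theorem~\ref{ModulraumDim2} for $g=1$) shows that any isomorphism of the underlying polarized elliptic curves comes from a matrix $M = \left(\begin{smallmatrix} a & b \\ c & d \end{smallmatrix}\right) \in \SL_2(\ZZ)$ with $z' = Mz$; the analytic representation is multiplication by $(cz+d)^{-1}$, and the induced isomorphism on the lattices sends $(z,1)$ to $(M^* \cdot (z',1)^t)$ componentwise. Tracing through the identifications shows that the induced map on $\Lambda/d\Lambda \to \Lambda/d\Lambda$ is represented by $M$ mod $d$, so $\psi_z = \psi_{z'} \circ [\rho_r(f)]$ holds if and only if $M \equiv I_2 \mod d$, that is, $M \in \Gamma(d)$. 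Thus the map $\Phi:~ \Gamma(d)\backslash \HH \to \{\text{iso classes}\}$ is a bijection.

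The only non-routine input is the surjectivity of $\SL_2(\ZZ) \to \SL_2(\ZZ/d\ZZ)$, which is the main technical point but a standard fact proved, e.g., by explicit row operations over $\ZZ/d\ZZ$ and lifting elementary matrices. Everything else reduces to bookkeeping with symplectic bases analogous to Section~\ref{dim2}.
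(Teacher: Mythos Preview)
Your proof is correct and follows essentially the same approach as the paper: construct a standard triple $(C_z,H_z,\psi_z)$ for each $z\in\HH$, use surjectivity of $\SL_2(\ZZ)\to\SL_2(\ZZ/d\ZZ)$ to show every level structure can be moved to a standard one, and identify the stabilizer as $\Gamma(d)$. One small bookkeeping point: the rational representation of the isomorphism $C_z\to C_{Mz}$ with respect to the bases $(z,1)$ and $(Mz,1)$ is $(M^t)^{-1}$ rather than $M$ (or $M^*$), but since $M\in\Gamma(d)\iff (M^t)^{-1}\in\Gamma(d)$ this does not affect your conclusion.
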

\begin{proof}
 We use the usual convention for ${\rm SL}_2(\ZZ) \backslash \HH$ as the moduli space of principally polarized elliptic curves as follows.
 We associate to every $z \in \HH$ the principally polarized elliptic curve $(C_z=\CC/\Lambda_z,H_z)$ with $\Lambda_z = \langle z,1 \rangle_\ZZ$ and the Hermitian form $H_z$ given by Im$(z)^{-1}$.
 It follows, that the rational representation of the corresponding isomorphism $C_z \cong C_{Az}$ for $A \in {\rm SL}_2(\ZZ)$ with respect to the latter bases $(z,1)$ and $(Az,1)$ is multiplication
 with $(A^t)^{-1}$ (\cite{LangeBirke} for example).

 Now we equip for every $z \in \HH$ the curve $(C_z=\CC/\Lambda_z,H_z)$ with the level $d$-structure \[ \psi_z:~ \Lambda_z/d\Lambda_z \to \ZZ^2/d\ZZ^2,\quad xz+y \mapsto (x,y). \]
 It is clear that every principally polarized elliptic curve with level $d$-structure is isomorphic to $(C_z,H_z,\psi)$ for some $z \in \HH$ and some $\psi: \Lambda_z/d\Lambda_z \to \ZZ^2/d\ZZ^2$.
 Because the isomorphism $\psi_z \circ \psi^{-1}: \ZZ^2/d\ZZ^2 \to \ZZ^2/d\ZZ^2$ is symplectic, it is given by a matrix $B \in {\rm SL}_2(\ZZ/d\ZZ)$. We lift $B$ to a matrix $A^t \in {\rm SL}_2(\ZZ)$,
 denote by $f: C_z \to C_{Az}$ the corresponding isomorphism and get a commutative diagram
 \[ \begin{CD} \Lambda_z/d\Lambda_z @> \id >> \Lambda_z/d\Lambda_z @>[\rho_r(f)]>> \Lambda_{Az}/d\Lambda_{Az} \\
  @VV \psi V @VV \psi_z V @VV \psi_{Az} V \\
  \ZZ^2/d\ZZ^2 @>B>> \ZZ^2/d\ZZ^2 @> B^{-1} >> \ZZ^2/d\ZZ^2 \end{CD} \]
 and deduce that $f: (C_z,H_z,\psi) \to (C_{Az},H_{Az},\psi_{Az})$ is an isomorphism of principally polarized elliptic curves with level $d$-structure.
 Thus every principally polarized elliptic curve with level $d$-structure is isomorphic to $(C_z,H_z,\psi_z)$ for some $z \in \HH$.
 Taking $A \in {\rm SL}_2(\ZZ)$, we see that $(C_z,H_z,\psi_z)$ and $(C_{Az},H_{Az},\psi_{Az})$ are isomorphic if and only if $(A^t)^{-1}$ induces the identity on $\ZZ^2/d\ZZ^2$,
 and this holds if and only if $A$ is in $\Gamma(d)$.
\end{proof}

Now we consider the two-dimensional factor $X_\A$, the moduli space of polarized Abelian surfaces $(A,H,\rho)$ with real multiplication by $\OD$ of type $\A$.
For abbreviation we just say that $(A,H,\rho)$ is a polarized Abelian surface of type $\A$.
\begin{defi}
 Let $(A=V/\Lambda,H,\rho)$ be a polarized Abelian surface of type~$\A$. A \emph{level $\A$-structure} on $(A,H,\rho)$ is a symplectic $\OD$-module isomorphism
 \[ \psi:~ (\Lambda,E_H) \to (\OD \oplus \tfrac{1}{\sqrt{D}} \A,\langle \cdot , \cdot \rangle). \] 
\end{defi}
We denote the induced isomorphism on the quotient $\OD$-modules by
\[ [\psi]:~ \Lambda / \A\Lambda \to (\OD \oplus \tfrac{1}{\sqrt{D}} \A) / (\A \oplus \tfrac{1}{\sqrt{D}}\A^2) ,\quad [\lambda] \mapsto [\psi(\lambda)]. \]
(Here $\A\Lambda$ is of course defined to be the $\ZZ$-algebra generated by the products $a\lambda$ with $a \in \A$ and $\lambda \in \Lambda$, which is also an $\OD$-module.)

An \emph{isomorphism of polarized Abelian surfaces of type $\A$ with level $\A$-structure}, say $(A=V/\Lambda,H,\rho,\psi)$ and $(A'=V'/\Lambda',H',\rho',\psi')$,
is an isomorphism \[ f:~ (A,H,\rho) \to (A',H',\rho') \] of polarized Abelian surfaces of type~$\A$, such that \[ [\psi] = [\psi' \circ \rho_r(f)]. \]

\begin{prop}
 The quotient $\widetilde \Gamma_{{\rm lb}}(D,\A) \backslash \HH^2$ is a moduli space for isomorphism classes of polarized Abelian surfaces of type~$\A$ with level $\A$-structure.
\end{prop}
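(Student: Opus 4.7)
The plan is to parallel the proof of Theorem~\ref{ModulraumDim2} and the elliptic-curve version immediately above. For every $z \in \HH^2$ the embedding $\phi_z$ from Proposition~\ref{KonstruktionDim2} endows $(A_z,H_z,\rho_z)$ with the canonical level $\A$-structure
\[ \psi_z := \phi_z^{-1}:~ \Lambda_z \xrightarrow{\sim} \OD \oplus \tfrac{1}{\sqrt{D}}\A, \]
which is a symplectic $\OD$-module isomorphism by construction. I would then show that $z \mapsto [(A_z,H_z,\rho_z,\psi_z)]$ descends to a $\widetilde \Gamma_{{\rm lb}}(D,\A)$-invariant surjection onto the moduli set, and that two points of $\HH^2$ yield isomorphic quadruples exactly when they lie in the same $\widetilde \Gamma_{{\rm lb}}(D,\A)$-orbit.

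For surjectivity, given any polarized Abelian surface $(A,H,\rho,\psi)$ of type $\A$ with level $\A$-structure, Proposition~\ref{SurjektivDim2} provides an isomorphism $f_0:(A_z,H_z,\rho_z) \xrightarrow{\sim} (A,H,\rho)$ for some $z \in \HH^2$, and $\psi_0 := \psi \circ \rho_r(f_0)$ is a second symplectic $\OD$-module isomorphism $\Lambda_z \to \OD \oplus \tfrac{1}{\sqrt{D}}\A$. Consequently $\psi_0 \circ \psi_z^{-1}$ is a symplectic $\OD$-automorphism of $\OD \oplus \tfrac{1}{\sqrt{D}}\A$, which by the argument in the proof of Theorem~\ref{ModulraumDim2} is exactly right-multiplication by $(N^*)^t$ for a unique $N \in \SL_2(\OD \oplus \tfrac{1}{\sqrt{D}}\A)$. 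Setting $z' := Nz$, Proposition~\ref{WohldefiniertheitDim2} produces an isomorphism $g_N:(A_z,H_z,\rho_z) \xrightarrow{\sim} (A_{z'},H_{z'},\rho_{z'})$ satisfying $\psi_{z'} \circ \rho_r(g_N) = F_N \circ \psi_z = \psi_0$, so that $f_0 \circ g_N^{-1}$ exhibits the desired isomorphism $(A_{z'},H_{z'},\rho_{z'},\psi_{z'}) \xrightarrow{\sim} (A,H,\rho,\psi)$ of quadruples.

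For the quotient analysis, suppose $f:(A_z,H_z,\rho_z,\psi_z) \to (A_{z'},H_{z'},\rho_{z'},\psi_{z'})$ is an isomorphism. By Theorem~\ref{ModulraumDim2} there is $M = \bigl(\begin{smallmatrix} a & b \\ c & d \end{smallmatrix}\bigr) \in \SL_2(\OD \oplus \tfrac{1}{\sqrt{D}}\A)$ with $z' = Mz$, and the identity $\psi_{z'} \circ \rho_r(f) = F_M \circ \psi_z$, where $F_M(x,y) = (x,y)(M^*)^t$, reduces the level-structure condition $[\psi_z] = [\psi_{z'} \circ \rho_r(f)]$ to the requirement that $F_M$ act as the identity on $(\OD \oplus \tfrac{1}{\sqrt{D}}\A)/(\A \oplus \tfrac{1}{\sqrt{D}}\A^2)$. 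Since
\[ (x,y)(M^*)^t - (x,y) = \bigl( x(a-1) - yb,\ -xc + y(d-1) \bigr), \]
demanding that the right-hand side lie in $\A \oplus \tfrac{1}{\sqrt{D}}\A^2$ for all $x \in \OD$ and $y \in \tfrac{1}{\sqrt{D}}\A$ is equivalent to $a-1,\, d-1 \in \A$, $c \in \tfrac{1}{\sqrt{D}}\A^2$ and $b \cdot \tfrac{1}{\sqrt{D}}\A \subseteq \A$. The main technical hurdle is rewriting the last containment as $b \in \sqrt{D}\OD$: using invertibility of $\A$ (guaranteed because $\A$ is built from $d_1\OD$ and from primes above $\tfrac{d_2}{d_1}$, all relatively prime to the conductor of $D$) one has $\A\A^{-1} = \OD$, hence the condition becomes $b \in \A \cdot (\tfrac{1}{\sqrt{D}}\A)^{-1} = \sqrt{D}\A\A^{-1} = \sqrt{D}\OD$. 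These four conditions are precisely the ones defining $\widetilde \Gamma_{{\rm lb}}(D,\A)$, and the converse implication is immediate from the same formulas, completing the proof.
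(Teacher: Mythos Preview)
Your proof is correct and follows essentially the same approach as the paper: define $\psi_z := \phi_z^{-1}$, use the commutative diagram from Proposition~\ref{WohldefiniertheitDim2} to identify the transported level structure with the action of $M^*$ on $\OD \oplus \tfrac{1}{\sqrt{D}}\A$, and read off that compatibility of level structures is exactly the condition $M \in \widetilde \Gamma_{{\rm lb}}(D,\A)$. The paper's proof states this last equivalence without computation, whereas you work it out entrywise; one small point to make explicit is that the condition on the $(2,2)$-entry is initially $(d-1)\A \subseteq \A^2$ rather than $d-1 \in \A$, and this simplification also uses invertibility of $\A$, just as you invoke it for the $b$-condition.
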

\begin{proof}
 In Section~\ref{dim2} we associated to each $z=(z_1,z_2) \in \HH^2$ the polarized Abelian surface $(A_z=\CC^2/\Lambda_z,H_z,\rho_z)$ of type $\A$.
 Therefore we used the embedding \[ \phi_z:~ \OD \oplus \tfrac{1}{\sqrt{D}}\A \hookrightarrow \CC^2,\quad (x,y) \mapsto (x+yz_1,x^{\sigma}+y^{\sigma}z_2) \] to get the lattice
 $\Lambda_z = \phi_z(\OD \oplus \tfrac{1}{\sqrt{D}}\A)$.
 It turned out that two such polarized Abelian surfaces of type $\A$, say $(A_z,H_z,\rho_z)$ and $(A_{\tilde z},H_{\tilde z},\rho_{\tilde z})$, are isomorphic if and only if $\tilde z = Mz$
 for some matrix \[ M = \begin{pmatrix} a_1 & a_2 \\ a_3 & a_4 \end{pmatrix} \in {\rm SL}_2(\OD \oplus \tfrac{1}{\sqrt{D}}\A). \]
 Furthermore, we pointed out that the analytic representation of the corresponding isomorphism is given by the diagonal matrix $D(M,z)$, and that we have a commutative diagram
 \[ \begin{CD} \OD \oplus \tfrac{1}{\sqrt{D}}\A @>\phi_z>> \CC^2 \\
  @VVM^*V @VVD(M,z)V \\
  \OD \oplus \tfrac{1}{\sqrt{D}}\A @>\phi_{Mz}>> \CC^2 \end{CD} \]
 with \[ M^* = \begin{pmatrix} a_1 & -a_2 \\ -a_3 & a_4 \end{pmatrix} \in {\rm SL}_2(\OD \oplus \tfrac{1}{\sqrt{D}}\A). \]
 Now we define a level $\A$-structure on $(A_z,H_z,\rho_z)$ simply by setting \[ \psi_z := \phi_z^{-1}:~ \Lambda_z \to \OD \oplus \tfrac{1}{\sqrt{D}}\A. \]
 By taking a look at the diagram above it is clear that this isomorphism is also an isomorphism of polarized Abelian surfaces of type $\A$ with level $\A$-structure if and only if $M^*$ induces
 the identity on $(\OD \oplus \tfrac{1}{\sqrt{D}} \A) / (\A \oplus \tfrac{1}{\sqrt{D}}\A^2)$. Moreover, this holds if and only if $M^*$ and thus $M$ is in $\widetilde \Gamma_{{\rm lb}}(D,\A)$.

 It remains to show that every polarized Abelian surface $(A,H,\rho,\psi)$ of type~$\A$ with level $\A$-structure is isomorphic to one of these.
 We may assume without loss of generality that $(A,H,\rho,\psi) = (A_z,H_z,\rho_z,\psi)$ for some $z \in \HH^2$.
 Because~$\psi$ is symplectic, the isomorphism \[ \psi \circ \psi_z^{-1}:~ \OD \oplus \tfrac{1}{\sqrt{D}} \A \to \OD \oplus \tfrac{1}{\sqrt{D}} \A \] is given by multiplication with~$M^*$
 for some matrix $M \in {\rm SL}_2(\OD \oplus \tfrac{1}{\sqrt{D}}\A)$.
 Writing $f: A_z \to A_{Mz}$ for the corresponding isomorphism and denoting the multiplication with $M^*$ also with $M^*$, we get
 \begin{eqnarray*}
  (A_z,H_z,\rho_z,\psi) & \cong & (A_{Mz},H_{Mz},\rho_{Mz},\psi \circ \rho_r(f)^{-1}) \\
  & = & (A_{Mz},H_{Mz},\rho_{Mz},\psi \circ \phi_z \circ (M^*)^{-1} \circ \phi_{Mz}^{-1}) \\
  & = & (A_{Mz},H_{Mz},\rho_{Mz},\psi_{Mz}).
 \end{eqnarray*}
\end{proof}

\begin{rem}
 We defined level $\A$-structures as isomorphisms on the lattices instead of on the quotients as in the one-dimensional case.
 This was necessary because in general the submodule $\A\Lambda \subseteq \Lambda$ is not, unlike $d\Lambda \subseteq \Lambda$ in the one-dimensional case,
 the submodule defined by $\{ \lambda \in \Lambda :~ E_H(\lambda,\Lambda) \subseteq d\ZZ \}$ - this would be precisely $\A^\sigma\Lambda$.
\end{rem}

We finish this subsection with summing up both propositions.
\begin{theo}
 The space \[ \Gamma_{{\rm lb}}(D,\A) \backslash \HH^3 = \widetilde \Gamma_{{\rm lb}}(D,\A) \backslash \HH^2 \times \Gamma(d) \backslash \HH \]
 is a moduli space of products of polarized Abelian surfaces of type $\A$ with level~$\A$-structure and principally polarized elliptic curves with level~$d$-structure.
\end{theo}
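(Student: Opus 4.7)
The plan is to reduce this theorem to the two preceding propositions by observing that both the moduli problem and the group action split as products. Since by construction
\[ \Gamma_{{\rm lb}}(D,\A) \;=\; \widetilde \Gamma_{{\rm lb}}(D,\A) \times \Gamma(d) \]
acts diagonally on $\HH^3 = \HH^2 \times \HH$ (the first factor via the usual $\SL_2$-action on $\HH^2$, the second via M\"obius transformations on the last coordinate), the quotient tautologically decomposes as a Cartesian product of $\widetilde \Gamma_{{\rm lb}}(D,\A) \backslash \HH^2$ and $\Gamma(d) \backslash \HH$. By the two preceding propositions, these are exactly the moduli spaces of polarized Abelian surfaces of type $\A$ with level~$\A$-structure and of principally polarized elliptic curves with level~$d$-structure, respectively.

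First I would define the classifying assignment: to each $z=(z_1,z_2,z_3) \in \HH^3$ associate the pair
\[ \bigl((A_{(z_1,z_2)},H_{(z_1,z_2)},\rho_{(z_1,z_2)},\psi_{(z_1,z_2)}),\;(C_{z_3},H_{z_3},\psi_{z_3})\bigr), \]
where the first entry is the polarized Abelian surface of type $\A$ with level $\A$-structure produced by Proposition on $\widetilde \Gamma_{{\rm lb}}(D,\A) \backslash \HH^2$, and the second entry is the principally polarized elliptic curve with level $d$-structure produced by the proposition on $\Gamma(d) \backslash \HH$. Surjectivity onto isomorphism classes of such products is immediate from the surjectivity of each factor classification.

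For well-definedness and injectivity on the quotient, I would argue as follows. If $\gamma = (\gamma_1,\gamma_2) \in \Gamma_{{\rm lb}}(D,\A)$, then $\gamma_1$ produces an isomorphism of the Abelian surface factor (with its level structure) by the second proposition, and $\gamma_2$ produces an isomorphism of the elliptic factor by the first proposition, hence the product is isomorphic, so the map descends to the quotient. Conversely, any isomorphism of the product objects decomposes as an isomorphism of each factor individually; then the injectivity statements in the two propositions supply elements $\gamma_1 \in \widetilde \Gamma_{{\rm lb}}(D,\A)$ and $\gamma_2 \in \Gamma(d)$ relating the base points.

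The only genuinely substantive point—and the step I would expect to deserve the most care—is the decomposition of an arbitrary isomorphism of products into a product of isomorphisms of each factor. This is where the two factors need to be intrinsically characterised: the Abelian surface factor is distinguished as the image of the idempotent associated to the real multiplication subring $\OD \subset \End(A)$, whereas the elliptic factor is its complementary subvariety in the sense of the polarization $H$. Any isomorphism of the product objects must preserve both the $\OD$-action and the polarization, hence must preserve this decomposition and split as a pair $(f_S,f_C)$. Once this is established, the theorem follows formally from the two preceding propositions applied separately to each factor.
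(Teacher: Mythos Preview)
Your proposal is correct and matches the paper's approach exactly: the paper presents this theorem with no proof beyond the sentence ``We finish this subsection with summing up both propositions,'' i.e.\ it is the literal product of the two preceding moduli interpretations. Your extra paragraph on splitting an isomorphism of products into a pair of factor isomorphisms is more care than is strictly required here, since the moduli problem is phrased for \emph{pairs} $\bigl((S,H_S,\rho,\psi_S),(C,H_C,\psi_C)\bigr)$ rather than for the three-dimensional product variety with extra structure, so an isomorphism is by definition a pair of isomorphisms and no decomposition argument is needed.
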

\clearpage

\section{Pseudo-real multiplication, cusps and extension classes}

For an order $\OO$ in a totally real number field $F$ of degree $g$, let $X_\OO$ be the moduli space of $g$-dimensional principally polarized complex Abelian varieties with a choice of
real multiplication by $\OO$. It is the disjoint union of certain Hilbert modular varieties, one component for each isomorhism class of proper rank two symplectic $\OO$-modules.
Bainbridge and M\"oller have shown in \cite{MartinMatt}, that any such symplectic $\OO$-module can be completely described (up to isomorphism) by a pair formed by a lattice in $F$ and a
$\QQ$-endomorphism of $F$ compatible with the trace pairing. Such a pair modulo an action of $F^*$ is called a \emph{cusp packet for $\OO$}.
They proved that there is a natural bijection between the set of cusp packets for $\OO$, the set of cusps of the Baily-Borel compactification of $X_\OO$ and the set of isomorphism classes of
certain symplectic extensions over $\OO$.

In the case we are interested in, the analogue of real multiplication by a cubic number field on a three-dimensional Abelian variety is pseudo-real multiplication by a pseudo-cubic number field.
We will introduce this concept in the first subsection and show that it corresponds to a choice of real multiplication on an Abelian subsurface.
In the other two subsections, we define symplectic extensions over pseudo-cubic orders and cusp packets.
We will see that we defined all these objects exactly in this way, that the same bijections as for totally real cubic fields hold.

\subsection{Pseudo-real multiplication}\label{pseudorealmult}
 
We defined real multiplication on Abelian surfaces, but we are interested in three-dimensional Abelian varieties.
We have to consider endomorphisms on those three-dimensional objects, and this leads to the concept of pseudo-real multiplication.
We will see that that the choice of an Abelian subsurface with some choice of real multiplication on it is equivalent to the choice of some pseudo-real multiplication on the three-dimensional variety.

\bigskip
 
If $A$ is a polarized Abelian variety, recall that $\End^+(A)$ denotes the submodule of $\End(A)$ consisting of those endomorphisms which are self-adjoint with respect to the polarization of~$A$.
\begin{defi}
 Let $K$ be a real quadratic number field. Then the $\QQ$-Algebra \[ F = K \oplus \QQ \] is called a \emph{pseudo-cubic field}.
 If $A$ is a three-dimensional principally polarized Abelian variety, then a $\QQ$-Algebra monomorphism \[ \rho:~ F \hookrightarrow \End_\QQ^+(A) := \End^+(A) \otimes_{\ZZ} \QQ \]
 is called \emph{pseudo-real multiplication by $F$ on $A$}.
\end{defi}
The choice of such pseudo-real multiplication $\rho$ is not unique. If $\sigma$ denotes the Galois automorphism of~$K$,
then \[ \rho^{\sigma} := \rho \circ (\sigma \oplus \id_{\QQ}) \] is another choice of real multiplication by~$F$ on~$A$.

\begin{lem}
 Let $F=K \oplus \QQ$ be a pseudo-cubic field and let $\rho$ be pseudo-real multiplication by $F$ on a three-dimensional principally polarized Abelian variety~$A$.
 Then there is a unique pair $(S,\rho_2)$, where $S$ is a two-dimensional subvariety of $A$ with the induced polarization and $\rho_2: K \hookrightarrow \End_\QQ^+(S)$ is real
 multiplication by~$K$ induced by the embedding $K \hookrightarrow F$ and the restriction of $\rho$ to~$S$.
\end{lem}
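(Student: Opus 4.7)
The plan is to use the orthogonal idempotents $e_1 = (1,0)$ and $e_2 = (0,1)$ of $F = K \oplus \QQ$. Setting $p_i := \rho(e_i) \in \End_\QQ^+(A)$, one obtains self-adjoint idempotents with $p_1 + p_2 = \id_A$ and $p_1 p_2 = 0$. I would then write $A = V/\Lambda$ and view $p_1, p_2$ as $\CC$-linear endomorphisms of $V$ via the analytic representation. Setting $V_i := p_i(V)$ yields a $\CC$-linear decomposition $V = V_1 \oplus V_2$, and self-adjointness of the $p_i$ forces $V_1 \perp V_2$ with respect to the polarization $H$. For any $N \in \NN$ with $Np_i \in \End(A)$, the image $Np_i(\Lambda)$ lies in $V_i \cap \Lambda$ and has $\RR$-span equal to $V_i$, so $\Lambda_i := V_i \cap \Lambda$ is a lattice in $V_i$. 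Define $S := V_1/\Lambda_1$; then $H|_{V_1 \times V_1}$ is a positive definite Hermitian form with integral imaginary part on $\Lambda_1 \subseteq \Lambda$, hence an induced polarization.

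Next I would construct $\rho_2$. For $k \in K$ the endomorphism $\rho(k,0)$ annihilates $V_2$ and restricts to a self-adjoint $\CC$-linear map on $V_1$. Since $\rho(k,0) \in \End_\QQ(A)$, this restriction descends to an element $\rho_2(k) \in \End_\QQ^+(S)$, giving a $\QQ$-algebra homomorphism $\rho_2: K \to \End_\QQ^+(S)$. Its kernel is an ideal of the field $K$; since $\rho_2 \equiv 0$ would contradict injectivity of $\rho$ (combined with $\rho(K,0)|_{V_2} = 0$), $\rho_2$ is injective.

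To verify $\dim_\CC V_1 = 2$, I would decompose $\Lambda \otimes_\ZZ \QQ$ as an $F$-module. Since $F = K \oplus \QQ$ is étale over $\QQ$, this module decomposes as $\Lambda \otimes \QQ \cong K^a \oplus \QQ^b$ with $2a + b = 6$. Injectivity of $\rho$ gives $a, b \geq 1$, and $b = \dim_\RR V_2$ must be even, so the remaining possibilities are $(a,b) \in \{(1,4), (2,2)\}$. If $a = 1$, then $S$ would be a polarized elliptic curve equipped with a $\QQ$-algebra embedding of the real quadratic field $K$ into $\End_\QQ^+(S)$. But for any polarized elliptic curve the Rosati involution identifies $\End_\QQ^+(S)$ with $\QQ$, so no real quadratic field can embed. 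Hence $a = 2$ and $\dim_\CC V_1 = 2$.

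For uniqueness, let $(S', \rho_2')$ be another such pair and write $V_{S'}$ for its universal cover. The compatibility between $\rho_2'$ and $\rho$ forces $\rho_2'(k) = \rho(k,0)|_{S'}$ for all $k \in K$; in particular $\rho_2'(1) = p_1|_{V_{S'}}$. Since $\rho_2'$ is a unital $\QQ$-algebra monomorphism, $\rho_2'(1) = \id_{S'}$, so $p_1$ acts as the identity on $V_{S'}$, i.e. $V_{S'} \subseteq V_1$. Both spaces have complex dimension $2$, forcing $V_{S'} = V_1$, $S' = S$, and consequently $\rho_2' = \rho_2$. The main obstacle is the dimension count $\dim_\CC V_1 = 2$; the $F$-module decomposition and the construction of $S$ are routine, but excluding $a = 1$ crucially uses the fact that the fixed part of the Rosati involution on the endomorphism algebra of an elliptic curve is $\QQ$.
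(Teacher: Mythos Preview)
Your proof is correct and follows essentially the same route as the paper. Both arguments use the orthogonal idempotents of $F=K\oplus\QQ$ to split $V$ into two complex subspaces, observe that self-adjointness makes them $H$-orthogonal, and rule out the possibility that the $K$-piece is one-dimensional by noting that a real quadratic field cannot embed into $\End_\QQ^+$ of an elliptic curve. The only cosmetic differences are that the paper defines $S$ and $C$ as connected components of kernels of $\rho(0,n)$ and $\rho(m,0)$ rather than as images of the idempotents (these coincide since $p_1+p_2=\id$ and $p_1p_2=0$), and that you spell out the Rosati argument on elliptic curves explicitly while the paper simply asserts it in one clause.
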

\begin{proof}
 Consider such a pseudo-real multiplication $\rho:~ F \hookrightarrow \End_\QQ^+(A)$.
 The structure of $F = K \oplus \QQ$ as a direct product naturally induces two subvarieties of~$V$, namely the kernels of the two factors.
 To be more precise, choose $m,n \in \NN$ such that $\rho(m,0), \rho(0,n) \in \End^+(A)$ and set \[ C := \ker(\rho(m,0))_0,\quad S :=  \ker(\rho(0,n))_0, \]
 where the index $0$ denotes the connected component of $0$. We want to show that $C$ and $S$ are complementary subvarieties of $A$.
 If we write $A = V/\Lambda$ with a three dimensional $\CC$-vector space $V$, then for every $x \in F$ the image $\rho(x)$ acts on $V$ by linear transformations.
 Writing $C = V_1/\Lambda_1$ and $S = V_2/\Lambda_2$, we get
 \begin{align*}
  V_1 & = \{ v \in V : \rho(x)(v)=0 \mbox{ for all } x \in K \} = \ker(\rho(1,0)) \mbox{ and}\\ V_2 & = \{ v \in V : \rho(x)(v)=0 \mbox{ for all } x \in \QQ \} = \ker(\rho(0,1)).
 \end{align*}
 For all $x \in K$ and $q \in \QQ$ we have \begin{equation}\label{null} (x,0)(0,q)=0=(0,q)(x,0) \end{equation} and thus $\rho(K)(V_2) \subset V_2$ and $\rho(\QQ)(V_1) \subset V_1$ by the definition
 of~$V_1$ and~$V_2$. Hence, we have at least a homomorphism $\rho_2: K \to \End_\QQ^+(S)$ induced by the embedding of $K$ and the restriction of $\rho$.

 The $V_i$ are non-trivial ($V_1 = \{ 0 \}$ would imply $V_2=V$ by equation \eqref{null} and vice versa, in contradiction to the assumption that $\rho$ is injective). From $\rho(1,1) = \id$
 it follows that $V_1 \cap V_2 = \{ 0 \}$ and thus $\rho(x)(v) \not= 0$ for every $x \in K^*$, $v \in V_2 \setminus \{ 0 \}$ (respectively $x \in \QQ^*$, $v \in V_1 \setminus \{ 0 \}$).
 Hence we get monomorphisms \[ \rho_1:~ \QQ \hookrightarrow \End_\QQ^+(C),\quad q \mapsto \id_C \otimes q \] and \[ \rho_2:~ K \hookrightarrow\End_\QQ^+(S). \]
 As $K$ is a real quadratic field, the subvariety~$S$ cannot be an elliptic curve, hence $\dim(V_2)=2$, $\dim(V_1)=1$ and thus $V = V_1 \oplus V_2$.
 Since the action of $\rho$ is self-adjoint with respect to the polarization of $A$, the subvarieties $C$ and $S$ are indeed complementary by the definition of $V_1$ and $V_2$.

 It remains to prove the uniqueness statement. As $\rho_2(1)=\rho(1,0)$ and Equation~\eqref{null} holds in $F$, any such subvariety $S$ of $A$ must be contained in the kernel~$\ker(\rho(0,n))_0$,
 and for dimension reasons they coincide.
\end{proof}

Consider the monomorphisms $\rho_1,\rho_2$ from the proof above.
While we have $\rho_1^{-1}(\End(C)) = \ZZ$, the preimage $\rho_2^{-1}(\End(S))$ is an order in $K$ of some discriminant $D>0$, hence we get real multiplication
\[ \rho_2 :~ \OD \hookrightarrow \End^+(S). \]
Since $S$ and $\rho_2$ were uniquely determined, we also say that $\rho$ is \emph{pseudo-real multiplication with discriminant~$D$}.
Moreover, we get \[ \rho^{-1}(\End(A)) \subset \OD \oplus \ZZ. \] Hence $\rho^{-1}(\End(A))$ is a subring of $F$ with $1$, such that its additive group is free Abelian of rank three.

Now we can refine our definition of pseudo-real multiplication.
\begin{defi}
 Let $F$ be a pseudo-cubic field. A subring of $F$ with $1$, such that its additive group is free Abelian of rank three, is called a \emph{pseudo-cubic order} in~$F$.
 If $A$ is a three-dimensional principally polarized Abelian variety and $\OO \subset F$ is a pseudo-cubic order, then \emph{pseudo-real multiplication by $\OO$ on $A$} is a ring monomorphism
 \[ \rho:~ \OO \hookrightarrow \End^+(A) \] that is proper in the sense that it does not extend to a ring monomorphism $\rho:~ R \hookrightarrow \End^+(A)$ for any ring $R$ with
 $\OO \subsetneqq R \subset \QQ(\sqrt{D}) \oplus \QQ$.
 The \emph{degree of $\rho$} is defined to be the type of the polarization on the elliptic curve induced by~$\rho$.

 We denote by $X_\OO$ the moduli space of pairs $(A,\rho)$, where $A$ is a principally polarized Abelian variety and~$\rho$ is pseudo-real multiplication by $\OO$ on $A$.
\end{defi}

Each ideal $\A$ of $\OD$ gives rise to the pseudo-cubic order \[ \OO_\A := \A + \ZZ (1,1) = \{ (x,y) \in \OD \oplus \ZZ : x-y \in \A \}. \]
It turns out that these examples are all the pseudo-cubic orders we are interested in.

\begin{lem}
 Let  $\OO$ be a pseudo-cubic order and let $\rho:~ \OO \hookrightarrow \End^+(A)$ be pseudo-real multiplication by~$\OO$ on~$A$ with discriminant $D$.
 If the degree $d$ of~$\rho$ is relatively prime to the conductor of~$D$, then there is a primitive ideal $\A \subset \OD$ of norm $d$, such that~$\OO = \OO_\A$.
\end{lem}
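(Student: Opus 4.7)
The plan is to extract the candidate ideal $\A$ from the two-dimensional factor produced by the previous lemma, and then show that $\OO$ must coincide with $\OO_\A$. That previous lemma yields a two-dimensional subvariety $S \subset A$ together with real multiplication $\rho_2 \colon \OD \hookrightarrow \End^+(S)$; since the complementary elliptic curve has type $(d)$ and $A$ is principally polarized, Proposition~\ref{complementarytoritypes} forces $(S, H|_S)$ to be of type $(1, d)$. Applying Theorem~\ref{Struktursatz} to the lattice underlying $S$ (a proper rank-two symplectic $\OD$-module of type $(1, d)$, whose quotient is relatively prime to the conductor of $D$) produces a unique primitive ideal $\A \subset \OD$ with $\mathfrak{N}(\A) = d$ such that $(S, H|_S, \rho_2)$ is of type $\A$.

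Next I would identify $\OO$ as $\OO_{\A'}$ for the candidate $\OD$-ideal $\A' := \{x \in \OD : (x, 0) \in \OO\}$. The inclusion $\OO \subseteq \OD \oplus \ZZ$ recorded before the lemma, combined with $(1, 1) \in \OO$, gives via the decomposition $(x, q) = (x - q, 0) + q \cdot (1, 1)$ in $F$ the equivalence $(x, q) \in \OO \Leftrightarrow (x - q, 0) \in \OO$. To see that $\A'$ is an $\OD$-ideal, I observe that for every $x \in \A'$, the endomorphism $\rho_\QQ((x, 0))$ factors through $S$: since $\rho_\QQ((0, 1)) \cdot \rho_\QQ((x, 0)) = 0$, its image lies in $\ker(\rho_\QQ((0, 1)))_0 = S$, so $\rho_\QQ((x, 0))(\Lambda) \subseteq \Lambda_2$. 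Consequently, for any $a \in \OD$, the composition $\rho_\QQ((ax, 0)) = \rho_2(a) \circ \rho_\QQ((x, 0))$ sends $\Lambda$ into $\rho_2(a)(\Lambda_2) \subseteq \Lambda_2 \subseteq \Lambda$, proving $\rho_\QQ((ax, 0)) \in \End(A)$, whence $ax \in \A'$. Together with the equivalence above, this yields $\OO = \OO_{\A'}$.

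It remains to check that $\A' = \A$, and this is the main obstacle. By properness of $\rho$, the ring $\OO$ is the maximal subring of $\OD \oplus \ZZ$ whose $\rho_\QQ$-image lies in $\End(A)$, so the equality amounts to computing this maximal subring explicitly. I would pass to the universal model of Proposition~\ref{classifdim3}: fix a smart basis $(\eta_2^\sigma, d\eta_1^\sigma)$ of $\A$ and realise $(A, H, S, \rho_2)$ as $(A_z, H_z, S_z, \rho_z)$ for some $z \in \HH^3$. In this model $\rho_\QQ((x, 0))$ acts on $\CC^3$ as $\diag(x, x^\sigma, 0)$, killing the third coordinate, so preservation of $\Lambda = \Pi_z \ZZ^6$ reduces to checking that the images of the six generating columns of $\Pi_z$ lie in $\Lambda_2$. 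The images of $\lambda_1, \lambda_2, \lambda_3, \mu_1$ do so automatically, and the conditions imposed by $\mu_2$ and $\mu_3$ translate, via $\phi_{(z_1, z_2)}$, to $x\eta_1^\sigma \in \A$ and $x\eta_2/d \in \OD$ respectively. Using the identifications $\tfrac{1}{\eta_1^\sigma}\A \cap \OD = \A$ and $\tfrac{d}{\eta_2}\OD \cap \OD = \A$, which are exactly the computations already carried out in the proof of the upper/lower bound proposition for $\Gamma_{D,d}(\eta_1,\eta_2)$, both conditions are equivalent to $x \in \A$, giving $\A' = \A$. The genuine subtlety here is that the intermediate lattice $\pi_2(\Lambda) \subset V_2$ is in general not stable under $\rho_2(\OD)$, so $\A'$ cannot be described simply as the annihilator of an $\OD$-module, and the passage from the abstract definition of $\A'$ to the explicit ideal classifying $(S, \rho_2)$ seems to require the smart-basis period-matrix model rather than a purely module-theoretic argument.
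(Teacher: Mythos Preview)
Your proof is correct and follows essentially the same route as the paper: reduce to the explicit period-matrix model of Proposition~\ref{classifdim3} and check, column by column, when $\rho_\QQ((x,0)) = \diag(x,x^\sigma,0)$ preserves the lattice $\Lambda = \Pi_z \ZZ^6$. The paper's version is terser (it writes only ``one can easily compute'' and chooses the particular smart basis $(\eta_2^\sigma,d)$ with $\eta_1 = 1$), whereas you carry out the computation for a general smart basis and recognise the two nontrivial conditions as the intersection identities $\OD \cap \tfrac{1}{\eta_1^\sigma}\A = \A$ and $\OD \cap \tfrac{d}{\eta_2}\OD = \A$ established later in the lower/upper-bound proposition; this is a nice cross-reference but not a different argument. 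Your intermediate step of introducing $\A' := \{x \in \OD : (x,0) \in \OO\}$ and proving abstractly that it is an $\OD$-ideal is logically sound but redundant once the period-matrix computation identifies $\A'$ with $\A$ directly.
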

\begin{proof}
 Let $(S,\rho_2)$ be the induced polarized Abelian subsurface with real multiplication by $\OD$.
 This polarized Abelian variety is of type $(1,d)$, hence $(S,\rho_2)$ is of type $\A$ for some primitive ideal $\A \subseteq \OD$ of norm $d$.
 Let $\rho_\QQ$ be the extension of $\rho$ to $\rho_\QQ: F \hookrightarrow \End^+(A) \otimes_\ZZ \QQ$.
 For any $(x,y) \in \OD \oplus \ZZ$, the image $\rho_\QQ(x,y)$ is an endomorphism of $A=V/\Lambda$ if and only if $\rho_\QQ(x,y)(\Lambda) \subset \Lambda$. Since $y$ is an integer,
 this inclusion holds if and only if $\rho_\QQ(x-y,0)(\Lambda) \subset \Lambda$. Using a smart basis $(\eta_2^\sigma,d)$ for $\A$ and the explicit description of $A$ in 
 Section~\ref{dim3}, one can easily compute that this holds if and only if $x-y$ is in $\A$.
 Since $\rho$ is proper, we have \[ \OO = \rho_\QQ^{-1}(\End^+(A)) = \{ (x,y) \in \OD \oplus \ZZ : x-y \in \A \}. \]
\end{proof}
We will also refer to the number $[\OD \oplus \ZZ : \OO]$ as the \emph{degree of $\OO$}.
We have just seen that it is equal to the degree~$d$ of~$\rho$, if~$d$ is relatively prime to the conductor of~$D$.
Furthermore, we say that pseudo-real multiplication by~$\OO$ is \emph{of type~$\A$}, if $\OO = \OO_\A$.
Thus, pseudo-real multiplication with discriminant~$D$ of type~$\A$ is the same as pseudo-real multiplication by~$\OO_\A$.

The final theorem of this subsection states that the choice of an Abelian subsurface with some choice of real multiplication on it is equivalent to the choice of pseudo-real multiplication on the
three-dimensional variety, and thus we will use these concepts interchangeably.
\begin{theo}
 Let $D>0$ be a non-square discriminant.
 \begin{enumerate}
  \item There are natural bijections between the set \[ X_D^{(3)} := \coprod_{d \in \NN} X_{D,d}^{(3)} \] of isomorphism classes of quadruples $(A,H,S,\rho)$ of principally polarized Abelian varieties
        of dimension three with real multiplication by $\OD$ described in Section~\ref{dim3} and the set of isomorphism classes of pairs $(A,\rho)$, where~$A$ is a principally polarized Abelian variety
        of dimension three and~$\rho$ is pseudo-real multiplication on~$A$ with discriminant~$D$.
  \item If $d$ is relatively prime to the conductor of $D$, then
        there are natural bijections between the set \[ X_\A^{(3)} \subset X_{D,d}^{(3)} \] of isomorphism classes of quadruples $(A,H,S,\rho)$ of principally polarized Abelian varieties of dimension
        three with real multiplication by~$\OD$ of type~$\A$ described in Section~\ref{dim3} and the set $X_{\OO_\A}$ of isomorphism classes of pairs~$(A,\rho)$, where~$A$ is a principally polarized
        Abelian variety of dimension three and $\rho$ is pseudo-real multiplication on~$A$ by $\OO_\A$. 
 \end{enumerate}
\end{theo}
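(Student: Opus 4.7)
The plan is to produce maps in both directions and verify they are mutually inverse and natural in isomorphisms. Given a quadruple $(A,H,S,\rho)$ representing a class in $X_D^{(3)}$, let $C \subset A$ be the complementary subvariety of $S$ with respect to $H$; by Proposition~\ref{complementarytoritypes}, $C$ is an elliptic curve. Writing $A=V/\Lambda$ with $V=V_S \oplus V_C$ the orthogonal decomposition and extending $\rho$ to $\rho_\QQ \colon K \hookrightarrow \End_\QQ^+(S)$, I would define pseudo-real multiplication
\[ \tilde\rho \colon F=K \oplus \QQ \hookrightarrow \End_\QQ^+(A),\quad (x,q) \mapsto \rho_\QQ(x)|_{V_S} \oplus q\cdot\id_{V_C}. \]
This is manifestly a $\QQ$-algebra monomorphism, its image lies in $\End_\QQ^+(A)$ since each summand is self-adjoint on its factor and $V_S$ is $H$-orthogonal to $V_C$, and its induced quadratic part on $S$ is the original $\rho$, so the discriminant is $D$.

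The inverse assignment is exactly the content of the lemma preceding the statement: it extracts from $(A,\tilde\rho)$ the subvariety $S=\ker(\tilde\rho(0,1))_0$ together with the induced real multiplication $\rho_2 \colon \OD \hookrightarrow \End^+(S)$. Mutual inversion is a direct check: starting from a quadruple, one has $\tilde\rho(0,1)=0 \oplus \id_{V_C}$, whose connected kernel is precisely $V_S/(V_S \cap \Lambda)=S$, and the induced $\rho_2$ agrees with $\rho$ since $\tilde\rho(x,0)|_{V_S}=\rho_\QQ(x)$; the opposite composition is immediate because the orthogonal decomposition $V=V_S \oplus V_C$ extracted by the preceding lemma is the same one used to build $\tilde\rho$. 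Naturality under isomorphisms follows because any polarized isomorphism $f \colon (A,H) \to (A',H')$ sending $S$ to $S'$ automatically preserves the orthogonal complements $C,C'$, hence intertwines $\tilde\rho$ and $\tilde\rho'$ summand-wise; conversely, an isomorphism of pairs intertwining the $F$-actions sends $\ker(\tilde\rho(0,1))_0$ to $\ker(\tilde\rho'(0,1))_0$ and induces the matching $\OD$-equivariant isomorphism on these subvarieties.

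For part (2), assume $d=\deg(\rho)$ is coprime to the conductor of $D$. The preceding lemma identifies $\tilde\rho^{-1}(\End^+(A))$ as the pseudo-cubic order $\OO_\A$, where $\A$ is the unique primitive ideal of norm $d$ making $(S,H|_S,\rho_2)$ of type $\A$, so the bijection of part (1) restricts to the claimed bijection between $X_\A^{(3)}$ and $X_{\OO_\A}$. The main technical point, already carried out in the preceding lemma, is verifying the integral identification $\tilde\rho^{-1}(\End^+(A))=\OO_\A$: using a smart basis of $\A$ and the explicit parameterization of Section~\ref{dim3}, one checks that $\tilde\rho(x,y)$ preserves $\Lambda$ precisely when $x-y \in \A$, and properness of $\tilde\rho|_{\OO_\A}$ then follows because any strictly larger subring of $\QQ(\sqrt{D}) \oplus \QQ$ would map an element outside this integrality condition into $\End_\QQ^+(A)\setminus \End^+(A)$.
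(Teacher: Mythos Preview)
Your proposal is correct and follows essentially the same approach as the paper: the paper's proof is a one-sentence appeal to the two preceding lemmata together with the observation that real multiplication on $S$ extends to pseudo-real multiplication on $A$ via the orthogonal decomposition $V=V_S\oplus V_C$, which is exactly the $\tilde\rho$ you write down. Your version simply fills in the mutual-inverse and naturality checks that the paper leaves implicit.
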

\begin{proof}
 This follows immediately from the previous two lemmata and the fact that real multiplication $\rho\!: \QQ(\sqrt{D}) \hookrightarrow \End_\QQ^+(S)$ on an Abelian subsurface~$S$ of~$A$ can be naturally
 extended to pseudo-real multiplication \[ \rho'\!: \QQ(\sqrt{D}) \oplus \QQ \hookrightarrow \End_\QQ^+(A) = \End_\QQ^+(S) \oplus \End_\QQ^+(C), \] where~$C$ denotes the complementary subvariety
 of~$S$ in~$A$.
\end{proof}
\subsection{Cusps and symplectic extensions}

Three-dimensional complex Abelian varieties with a choice of pseudo-real multiplication are parameterized by certain Hilbert modular varieties.
In the following subsection we will introduce the Baily-Borel-compactification of these objects and show that the cusps correspond to symplectic extensions over pseudo-cubic orders.
\\[1em]
For preparation, we first treat the two-dimensional case.
\begin{defi}\label{Symplextension1}
 Let $D>0$ be a non-square discriminant. A \emph{symplectic $\OD$-extension} is a short exact sequence of $\OD$-modules \[ E:\quad 0 \to I_1 \to M \to I_2 \to 0, \]
 where $I_1,I_2$ are torsion-free $\OD$-modules of rank one and $M$ is a proper symplectic $\OD$-module in the sense of Definition~\ref{symplecticmodule}.
 An \emph{isomorphism of symplectic $\OD$-extensions} is an isomorphism of exact sequences such that the isomorphism on the rank two modules is symplectic.
\end{defi}
We want to parameterize these extension classes for a fixed symplectic module in the middle.

First we define for every $L \in \PK$ a symplectic $\OD$-extension. We can see $L$ as a line in $K^2$, so with the natural inclusion
$\OD \oplus \tfrac{1}{\sqrt{D}}\A \subset K^2$ we can take the intersection
\[ I_L := L \cap (\OD \oplus \tfrac{1}{\sqrt{D}}\A) \] and the $\OD$-factor module \[ \widetilde I_L := (\OD \oplus \tfrac{1}{\sqrt{D}}\A) / I_L. \]
The natural inclusion $\iota: I_L \hookrightarrow \OD \oplus \tfrac{1}{\sqrt{D}}\A$ and the natural projection
$ \pi: \OD \oplus \tfrac{1}{\sqrt{D}}\A \twoheadrightarrow \widetilde I_L$ yield the short exact sequence of $\OD$-modules
\[ 0 \to I_L \to \OD \oplus \tfrac{1}{\sqrt{D}}\A \to \widetilde I_L \to 0. \]
Finally we define $E_L$ to be this short exact sequence together with the trace pairing on $\OD \oplus \tfrac{1}{\sqrt{D}}\A$.

The matrix group $\GL_2(K)$ acts on $\PK$ by \[ A.[\alpha : \beta] := [a_1\alpha+a_2\beta : a_3\alpha+a_4\beta ] \] for all
$A = \left( \begin{smallmatrix} a_1 & a_2 \\ a_3 & a_4 \end{smallmatrix} \right) \in \GL_2(K)$ and $[\alpha : \beta] \in \PK$.

\begin{prop}\label{cusps1}
 Let $d \in \NN$ be relatively prime to the conductor of $D$.
 The map \[ L \mapsto\quad \left( E_L:~ 0 \to I_L \to (\OD \oplus \tfrac{1}{\sqrt{D}}\A,\langle \cdot , \cdot \rangle) \to \widetilde I_L \to 0 \right) \] induces a well-defined bijection between
 \[ \coprod_{[\OD:\A]=d} \PK / {\rm SL}_2(\OD \oplus \tfrac{1}{\sqrt{D}}\A) \] and the set of those isomorphism classes of symplectic $\OD$-extensions, where the symplectic form on the module in
 the middle is of type $(1,d)$.
\end{prop}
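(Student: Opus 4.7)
The strategy is to use Theorem~\ref{Struktursatz} to fix a model for the middle module, and then identify isomorphism classes of extensions with orbits on a certain set of saturated submodules. First I would apply Theorem~\ref{Struktursatz} to the middle term $M$ of any symplectic $\OD$-extension $0 \to I_1 \to M \to I_2 \to 0$ of the given type: since $d$ is relatively prime to the conductor, $M$ is symplectically isomorphic to $(\OD \oplus \tfrac{1}{\sqrt{D}}\A, \langle \cdot, \cdot \rangle)$ for a \emph{unique} ideal $\A \trianglelefteq \OD$ of norm~$d$. This uniqueness produces the disjoint union over ideals $\A$ on the source side, and lets me fix the middle term and work inside one Hilbert modular factor at a time.

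For a fixed such $\A$, I would next show that the rank-one $\OD$-submodules $I \subset \OD \oplus \tfrac{1}{\sqrt{D}}\A$ with torsion-free quotient are in natural bijection with $\PK$ via $I \mapsto I \otimes_\ZZ \QQ$ (a $K$-line in $K^2$) and $L \mapsto I_L = L \cap (\OD \oplus \tfrac{1}{\sqrt{D}}\A)$. Saturation is automatic in the intersection direction, and on the other side it precisely characterizes those submodules recoverable from their $\QQ$-span. Combined with the first step, this implies surjectivity of the assignment $L \mapsto [E_L]$: given any symplectic $\OD$-extension of type $(1,d)$, pick $\A$ from the classification of its middle, and observe that torsion-freeness of $I_2$ forces the image of $I_1$ to be saturated, hence to equal $I_L$ for a unique line $L$.

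The final step is to show that two extensions $E_L$ and $E_{L'}$ with the same middle $\OD \oplus \tfrac{1}{\sqrt{D}}\A$ are isomorphic if and only if $L$ and $L'$ lie in one $\SL_2(\OD \oplus \tfrac{1}{\sqrt{D}}\A)$-orbit. In one direction, any $A \in \SL_2(\OD \oplus \tfrac{1}{\sqrt{D}}\A)$ acts as a symplectic $\OD$-automorphism sending $I_L$ onto $I_{A.L}$, yielding an isomorphism $E_L \cong E_{A.L}$. Conversely, any isomorphism of extensions restricts to a symplectic $\OD$-automorphism of the middle module that identifies $I_L$ with $I_{L'}$, and as observed in Section~\ref{dim2} the group of such automorphisms is precisely $\SL_2(\OD \oplus \tfrac{1}{\sqrt{D}}\A)$.

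The main obstacle will be the bookkeeping between the matrix action on $\PK$ stated in the proposition and the corresponding action on the lattice, which may differ by a transposition convention (compare the use of $M^*$ in the proof of Proposition~\ref{WohldefiniertheitDim2}), and secondly ensuring that any isomorphism of extensions really reduces to the datum of an isomorphism $f_M$ of the middle terms with the outer maps $f_1$ and $f_2$ determined automatically; this last point is a standard diagram chase once the submodule $I_L$ and the quotient $\widetilde I_L$ are intrinsically defined from the data of $M$ and $L$.
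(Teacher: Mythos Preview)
Your proposal is correct and follows essentially the same approach as the paper's proof: use Theorem~\ref{Struktursatz} to normalize the middle term (its uniqueness giving the disjoint union over~$\A$), observe that torsion-freeness of the quotient forces $I_1 = I_L$ for a line~$L$, and identify symplectic $\OD$-automorphisms of the middle with $\SL_2(\OD \oplus \tfrac{1}{\sqrt{D}}\A)$. Your two anticipated obstacles are non-issues: the $M \leftrightarrow M^*$ convention is an automorphism of $\SL_2$ and does not change orbits, and in an isomorphism of exact sequences the outer maps are determined by the middle one.
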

\begin{proof}
 Given $A \in {\rm SL}_2(\OD \oplus \tfrac{1}{\sqrt{D}}\A)$, multiplication by $A$ is a symplectic $\OD$-module automorphism of $\OD \oplus \tfrac{1}{\sqrt{D}}\A$ that maps $I_L$ onto $I_{A.L}$,
 so it defines an isomorphism of symplectic $\OD$-extensions between $E_L$ and $E_{A.L}$. Conversely, any symplectic $\OD$-module automorphism of $\OD \oplus \tfrac{1}{\sqrt{D}}\A$ is given by a
 matrix $A \in {\rm SL}_2(\OD \oplus \tfrac{1}{\sqrt{D}}\A)$, and this automorphism maps $I_L$ onto $I_{A.L}$. Thus we have seen that two such lines, say $L$ and $L'$, give isomorphic symplectic
 $\OD$-extensions~$E_L$ and~$E_{A.L}$ if and only if $L'=A.L$ for some $A \in \SL_2(\OD \oplus \tfrac{1}{\sqrt{D}}\A)$.

 It remains to show that the map is surjective. Given any symplectic $\OD$-extension $E:~ 0 \to I_1 \to M \to I_2 \to 0$, where the symplectic form on $M$ is of type $(1,d)$ with $d$ relatively prime
 to the conductor, the module $M$ is isomorphic to $(\OD \oplus \tfrac{1}{\sqrt{D}}\A,\langle \cdot , \cdot \rangle)$ as a symplectic $\OD$-module for some ideal $\A \trianglelefteq \OD$ of norm~$d$ and
 the trace pairing $\langle \cdot , \cdot \rangle$. So without loss of generality we can replace~$M$ by~$(\OD \oplus \tfrac{1}{\sqrt{D}}\A,\langle \cdot , \cdot \rangle)$ and~$I_1$ by its image in
 $\OD \oplus \tfrac{1}{\sqrt{D}}\A$. Because~$I_1$ has rank one, it is contained in a line $L \subset K^2$, so $I_1 \subseteq I_L = L \cap (\OD \oplus \tfrac{1}{\sqrt{D}}\A)$ and since~$I_2$
 is torsion-free, we even have $I_1 = I_L$ and thus $E \cong E_L$.
\end{proof}

\noindent
{\bf The symplectic pseudo-trace pairing}
We are interested in short exact sequences of~$\OO$-modules, where~$\OO$ is a pseudo-cubic order and the module in the middle comes from the lattice defining a complex Abelian variety with pseudo-real
multiplication by~$\OO$. We will see that these lattices are not of arbitrary form, but isomorphic to lattices in~$F^2$ for some pseudo-cubic number field~$F$, together with the symplectic form defined
as follows. \\
For all $x=(x_1,x_2) \in F$, the rational number $\tr_p(x) := \tr(x_1)+x_2$ is called the \emph{pseudo-trace} of $x$.
The pairing \[ F \times F \to \QQ,\quad (x,y) \mapsto \tr_p(xy) \] is called the \emph{pseudo-trace pairing} and the symplectic form
\[ \langle \cdot,\cdot \rangle_p:\quad  F^2 \times F^2 \to \QQ, \quad ((x_1,y_1),(x_2,y_2)) \mapsto \tr_p(x_2y_1-x_1y_2)  \] is called the \emph{symplectic pseudo-trace pairing}.
\begin{defi}\label{Symplmodule}
 Let $\OO$ be a pseudo-cubic order in some pseudo-cubic number field~$F$.
 A \emph{symplectic $\OO$-module} is an $\OO$-module together with a symplectic form of type $(1,1,1)$, isomorphic to a lattice in~$F^2$ together with the symplectic pseudo-trace pairing.
\end{defi}
\begin{prop}\label{latticeiso}
 Let $D>0$ be a non-square discriminant and let $d \in \NN$ be relatively prime to the conductor of~$D$. If $(X=V/\Lambda,H,\rho) \in X_{D,d}^{(3)}$ is a principally polarized complex Abelian variety
 with pseudo-real multiplication by a pseudo-cubic order $\OO \subset F$ of degree $d$, then the lattice $(\Lambda,E_H)$ is a proper symplectic $\OO$-module.
\end{prop}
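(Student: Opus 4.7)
The plan is to identify $\Lambda \otimes_\ZZ \QQ$ as a free $F$-module of rank two via the idempotent decomposition of $F = K \oplus \QQ$, then classify the transported symplectic form on $F^2$ and normalize it to the pseudo-trace pairing by an element of $\GL_2(F)$.

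First I would exploit the first lemma of Section~\ref{pseudorealmult}: pseudo-real multiplication $\rho$ produces a subsurface $S \subset A$ with complementary elliptic curve $C$, such that the idempotents $e_K := (1,0)$ and $e_\QQ := (0,1)$ of $F$ act as the projections of $V$ onto $V_2$ and $V_1$. This yields the decomposition $\Lambda \otimes \QQ = (\Lambda_2 \otimes \QQ) \oplus (\Lambda_1 \otimes \QQ)$ with $\Lambda_i = V_i \cap \Lambda$. The first summand is a torsion-free $\OD$-module of $K$-rank two, hence isomorphic to $K^2$ as a $K$-module by Theorem~\ref{Struktursatz}; the second has $\QQ$-dimension two, hence equals $\QQ^2$. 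Since $F$ acts through its $K$-projection on the first summand and through its $\QQ$-projection on the second, this assembles to an $F$-module isomorphism $\Lambda \otimes \QQ \cong K^2 \oplus \QQ^2 \cong F^2$.

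Next I would transport $E_H$ (extended $\QQ$-bilinearly) to a symplectic form $E$ on $F^2$. Orthogonality and self-adjointness of $e_K$ and $e_\QQ$ force $E$ to split as $E_K \oplus E_\QQ$ on $K^2 \oplus \QQ^2$. A short argument identifies the space of $\QQ$-bilinear, $K$-self-adjoint, skew-symmetric forms on $K^2$ with the $K$-line spanned by $(u,v) \mapsto \tr(\det(u,v))$; the key point is that any symmetric $K$-bilinear form $K^2 \times K^2 \to K$ taking values in $\ker(\tr)$ must vanish by $K$-linearity. Similarly on $\QQ^2$ one has $E_\QQ = q \det$ for some $q \in \QQ$. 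Hence $E(u,v) = \tr_p(f \det(u,v))$ for $f = (c,q) \in F$, and non-degeneracy of $E_H$ on $S$ and $C$ forces $f \in F^*$. Post-composing the isomorphism with $\diag(f^{-1},1) \in \GL_2(F)$ scales $\det$ by $f^{-1}$ and converts $E$ into $\langle \cdot,\cdot \rangle_p$ exactly.

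Under this adjusted isomorphism $\varphi$, the image $\varphi(\Lambda) \subset F^2$ is a full-rank $\ZZ$-lattice stable under $\OO$ (since $\varphi$ is $F$-linear and $\OO$ preserves $\Lambda$), carrying the pseudo-trace pairing as its symplectic form; the type is $(1,1,1)$ because $H$ is principal. This verifies Definition~\ref{Symplmodule}. Properness of the $\OO$-action is inherited directly from properness of $\rho$: any ring $\OO' \subsetneq F$ strictly containing $\OO$ and stabilizing $\Lambda$ would extend $\rho$ to a ring monomorphism $\OO' \hookrightarrow \End^+(A)$. The main obstacle is the classification step — verifying that $F$-self-adjoint skew-symmetric forms on $F^2$ all take the shape $\tr_p(f \det(\cdot,\cdot))$ with $f \in F$. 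The direct-sum structure of $F = K \oplus \QQ$ differs from the cubic-field setting used in \cite{MartinMatt}, and the subtlety about trace-zero symmetric $K$-bilinear forms on $K^2$ is where the bulk of the writing effort resides.
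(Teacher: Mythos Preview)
Your argument is correct and complete, but it follows a different strategy from the paper. The paper's proof is entirely computational: it invokes the normal form established in Proposition~\ref{classifdim3}, so that $\Lambda$ is literally the $\ZZ$-span of the columns $\lambda_1,\dots,\lambda_6$ of the explicit period matrix $\Pi_z$ attached to a smart basis $(\eta_2^\sigma,d\eta_1^\sigma)$ of an ideal $\A$ and a point $z \in \HH^3$. It then writes down six explicit vectors $\alpha_1,\dots,\alpha_6 \in F^2$ and checks by inspection that $\alpha_i \mapsto \lambda_i$ is a symplectic $\OO$-isomorphism onto $\Lambda$. Your approach instead bypasses the period matrix: you use the idempotent decomposition of $F$ to identify $\Lambda \otimes \QQ$ with $F^2$ abstractly, classify all $F$-self-adjoint alternating $\QQ$-forms on $F^2$ as $\tr_p(f\det(\cdot,\cdot))$ for some $f \in F^*$, and normalize $f$ away by an element of $\GL_2(F)$. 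The classification step is essentially the content of Lemma~\ref{Zerlegung} applied to each factor, so there is no real obstacle there; your remark about trace-zero symmetric $K$-bilinear forms is more machinery than needed. Your route has the advantage of being coordinate-free and of not depending on the lengthy construction in Section~\ref{dim3}; the paper's route has the advantage of producing an explicit lattice $M \subset F^2$, which is useful later when one wants to work with concrete symplectic bases. A minor remark: your appeal to Theorem~\ref{Struktursatz} for the isomorphism $\Lambda_2 \otimes \QQ \cong K^2$ is overkill, since this is just the statement that a four-dimensional $\QQ$-vector space with a $K$-action is two-dimensional over $K$.
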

\begin{proof}
 By the description in Proposition~\ref{classifdim3} we may assume without loss of generality, that $(X=V/\Lambda,H,\rho)$ is of the following form. There is a primitive ideal~$\A$ in~$\OD$ of norm~$d$,
 a smart basis $(\eta_2^\sigma,d\eta_1^\sigma)$ of $\A$ and some $z=(z_1,z_2,z_3) \in \HH^3$, such that $V=\CC^3$, and that a symplectic basis of $\Lambda$ for $E_H$ is given by the columns
 $\lambda_1,...,\lambda_6$ of \[ \Pi_z = \begin{pmatrix} \eta_1 & \eta_2 & 0 & \frac{1}{\sqrt{D}}\eta_2^{\sigma}z_1 & \frac{-1}{\sqrt{D}}\eta_1^{\sigma}z_1 & \tfrac{1}{d}\eta_2 \\
 \eta_1^{\sigma} & \eta_2^{\sigma} & 0 & \frac{-1}{\sqrt{D}}\eta_2z_2 & \frac{1}{\sqrt{D}}\eta_1z_2 & \tfrac{1}{d}\eta_2^{\sigma} \\ 0 & 0 & 1 & 0 & \tfrac{1}{d} & -\tfrac{1}{d}z_3 \end{pmatrix}. \]
 The pseudo-real multiplication $\rho: \OO \to \End^+(\Lambda)$ is given by \[ \rho(x,r).(c_1,c_2,c_3)^T = (xc_1,x^\sigma c_2,rc_3)^T. \]
 Consider the lattice $M$ in $F^2$ spanned by the columns $\alpha_1,...,\alpha_6$ of \[ \begin{pmatrix} (\eta_1,0) & (\eta_2,0) & (0,1) & (0,0) & (0,\tfrac{1}{d}) & (\tfrac{1}{d}\eta_2,0)
 \\ (0,0) & (0,0) & (0,0) & (\tfrac{1}{\sqrt{D}}\eta_2^\sigma,0) & (\tfrac{-1}{\sqrt{D}}\eta_1^\sigma,0) & (0,-1) \end{pmatrix}. \]
 Then $\alpha_i \mapsto \lambda_i$ defines a symplectic $\OO$-isomorphism $(M,\langle \cdot , \cdot \rangle_p) \cong (\Lambda,E_H)$.
\end{proof}

\noindent
{\bf The Baily-Borel-Satake-compactification and Cusps.}
Let~$D \in \NN$ be a non-square discriminant, $K=\QQ(\sqrt{D})$ and let $d \in \NN$ be relatively prime to the conductor of $D$.
We have seen in Theorem~\ref{modulispacedim3}, that the moduli space $X_{D,d}^{(3)}$ is the disjoint union of certain Hilbert modular varieties
\[ X_{D,d}^{(3)} = \coprod_\A X_\A^{(3)}, \] where $\A$ runs over all primitive ideals in $\OD$ of norm $d$.
The Hilbert modular varieties $X_\A^{(3)}$ are of the form \[ X_\A^{(3)} = X(M) := \SL(M) \backslash \HH^3, \] where $M \leq (F^2,\langle \cdot,\cdot \rangle_p)$ is a certain symplectic $\OO_\A$-module
and $\SL(M)$ is the subgroup of $\SL_2(F) = \SL_2(K) \times \SL_2(\QQ)$ preserving $M$, i.e. the group of symplectic automorphisms of $M$. We say that \emph{$M$ represents $\OO_\A$}.

More generally, given any pseudo-cubic order $\OO$ and any symplectic $\OO$-module~$M$, we define $\SL(M)$ and $X(M)=\SL(M) \backslash \HH^3$ analogously and set \[ X_\OO := \coprod_{M \leq F^2} X(M), \]
where $M$ runs over a set of representatives of all isomorphism classes of proper symplectic $\OO$-modules.

We compactify each $X(M)$ in the following way.
Consider the two embeddings \begin{align*} \PK \hookrightarrow (\RR \cup \{ \infty \})^2, & \quad [x:y] \mapsto (\tfrac{x}{y},\tfrac{x^\sigma}{y^\sigma}) \quad {\rm and} \\
\PQ \hookrightarrow (\RR \cup \{ \infty \}), & \quad [q:r] \mapsto \tfrac{q}{r}. \end{align*}
Each of the factors of \[ \HH_F^3 := (\HH^2 \cup \PK) \times (\HH \cup \PQ) \] carries a topology induced by the toplogy on~$\HH^2$ and~$\HH$ respectively, and certain neighbourhood
bases for each cusp in $\PK$ and $\PQ$ respectively. For more details, see \cite{vdGeer}. 
Then the action of $\GL_2(F) = \GL_2(K) \times \GL_2(\QQ)$ by M\"obius transformations extends naturally to an action on $\HH_F^3$.
The (Baily-Borel-)compactification of $X(M)$ is then defined by \[ \overline{X(M)}^{BB} := \SL(M) \backslash \HH_F^3, \] which is a complex analytic space.
Writing $\PF := \PK \times \PQ$, the embeddings above yield a well-defined embedding \[ \SL(M) \backslash \PF \hookrightarrow \overline{X(M)}^{BB}. \]
The image of $\PF$ in $\overline{X(M)}^{BB}$ is denoted by $\mathcal{C}(M)$, the set of \emph{cusps of $\overline{X(M)}^{BB}$}.
The (Baily-Borel-)compactification $\overline{X}_\OO^{BB}$ of $X_\OO$ is defined to be the disjoint union of the $\overline{X(M)}^{BB}$, and the union of the sets of cusps~$\mathcal{C}(M)$ is denoted
by~$\mathcal{C}_\OO$, the set of \emph{cusps of $\overline{X}_\OO^{BB}$}.

In the cases we are interested in, it turns out that $X_\OO$ has just one component.
\begin{prop}
 Let $\A$ be a primitive ideal in $\OD$ of norm $d$ relatively prime to the conductor of $D>0$. Then we have exactly one isomorphism class of proper symplectic $\OO_\A$-modules.
 In other words, we have \[ X_{\OO_\A} = X_\A^{(3)} = X(M) \] for some $M$ representing $\OO_\A$.
\end{prop}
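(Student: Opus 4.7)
The plan is to reduce the uniqueness of the isomorphism class of proper symplectic $\OO_\A$-modules to Proposition~\ref{latticeiso}, by showing that any such module $M$ is realized as the underlying lattice of some Abelian variety in $X_\A^{(3)}$, forcing $M$ to be isomorphic to the standard reference module produced in that proposition.

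First I would equip the real vector space $M \otimes_\ZZ \RR$ with a positive, $(F \otimes \RR)$-linear complex structure $J$ compatible with the pseudo-trace pairing. Using the algebra splitting $F \otimes \RR = (K \otimes \RR) \oplus \RR \cong \RR^2 \oplus \RR$, the set of such $J$ is parametrized by $\HH^2 \times \HH = \HH^3$, in complete analogy with the construction of Proposition~\ref{KonstruktionDim3}, and is therefore non-empty. For any such $J$, the quotient $A := (M \otimes \RR, J) / M$ is a three-dimensional principally polarized Abelian variety (principality follows from the type $(1,1,1)$ assumption on the symplectic form on $M$), and the $\OO_\A$-action on $M$ descends to pseudo-real multiplication $\rho \colon \OO_\A \hookrightarrow \End^+(A)$. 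This $\rho$ is proper: any strictly larger subring $\OO' \supsetneq \OO_\A$ of $F$ acting on $A$ by endomorphisms would preserve $M$, contradicting the properness of $M$ as an $\OO_\A$-module.

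By the correspondence at the end of Section~\ref{pseudorealmult}, the pair $(A, \rho)$ corresponds to a quadruple $(A,H,S,\rho_S) \in X_{D,d}^{(3)}$, and by Theorem~\ref{modulispacedim3} this quadruple lies in some component $X_{\A'}^{(3)}$ with $\A'$ a primitive ideal of $\OD$ of norm $d$. Since elements of $X_{\A'}^{(3)}$ admit proper pseudo-real multiplication by $\OO_{\A'}$ while $\rho$ was just shown to be proper by $\OO_\A$, we obtain $\OO_{\A'} = \OO_\A$, forcing $\A' = \A$ from the bijective correspondence $\A \leftrightarrow \OO_\A = \A + \ZZ(1,1)$. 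Proposition~\ref{latticeiso} applied to $(A,H,S,\rho_S) \in X_\A^{(3)}$ then exhibits a symplectic $\OO_\A$-isomorphism between its underlying lattice $M$ and the standard reference module $M_0$ constructed there, so every proper symplectic $\OO_\A$-module is isomorphic to $M_0$, giving $X_{\OO_\A} = X(M_0) = X_\A^{(3)}$. The main technical obstacle is ensuring that the induced pseudo-real multiplication on $A$ is indeed proper rather than inherited from a strictly larger subring of $F$; otherwise Theorem~\ref{modulispacedim3} could place $(A,H,S,\rho_S)$ in a different component $X_{\A'}^{(3)}$ with $\A' \neq \A$, and the standard module for $\A'$ need not coincide with that for $\A$. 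This properness is however a direct consequence of the properness assumption on the $\OO_\A$-module $M$.
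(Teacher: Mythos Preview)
Your argument is correct and takes a genuinely different route from the paper. The paper dissects the given proper symplectic $\OO_\A$-module $\Lambda\subset F^2$ purely algebraically: it intersects $\Lambda$ with $K^2$ and with $\QQ^2$ to obtain $\Lambda_2$ and $\Lambda_1$, shows (and this is where coprimality of $d$ to the conductor enters) that $\Lambda_2$ is a \emph{proper} $\OD$-module, reads off its type $(1,c)$ from Lemma~\ref{complementarytypes}, applies Theorem~\ref{Struktursatz} to get $\Lambda_2\cong\OD\oplus\tfrac{1}{\sqrt D}\B$ for a unique ideal $\B$, and then replays the explicit basis construction from the proof of Proposition~\ref{classifdim3} to put $\Lambda$ itself into standard form; properness over $\OO_\A$ then forces $\B=\A$. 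Your route is transcendental rather than combinatorial: you realize $M$ as the lattice of an actual principally polarized Abelian threefold with proper pseudo-real multiplication by $\OO_\A$, and then let the already-established bijection $X_{\OO_\A}\cong X_\A^{(3)}$ from the end of Section~\ref{pseudorealmult} together with Proposition~\ref{latticeiso} do all the work. This is shorter and more conceptual, and it nicely recycles the classification machinery; the paper's hand-built argument, by contrast, is self-contained and makes the module structure visible without ever invoking a complex structure. One minor remark: once you invoke part~(2) of that theorem, the pair $(A,\rho)$ lands directly in $X_\A^{(3)}$, so your detour through an a~priori unknown $\A'$ followed by ``$\OO_{\A'}=\OO_\A\Rightarrow\A'=\A$'' is redundant---though harmless.
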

\begin{proof}
 Let $d$ be the norm of $\A$ and let $(\eta_2^\sigma,d\eta_1^\sigma)$ be a smart basis of $\A$. We have to show that any symplectic $\OO_\A$-module $(\Lambda,E)$ is isomorphic to
 $(M,\langle \cdot , \cdot \rangle_p)$, where~$M$ is the lattice in $F^2$ described in the proof of Proposition~\ref{latticeiso}. To be more precise, the columns of
 \[ \begin{pmatrix} (\eta_1,0) & (\eta_2,0) & (0,1) & (0,0) & (0,\tfrac{1}{d}) & (\tfrac{1}{d}\eta_2,0)
    \\ (0,0) & (0,0) & (0,0) & (\tfrac{1}{\sqrt{D}}\eta_2^\sigma,0) & (\tfrac{-1}{\sqrt{D}}\eta_1^\sigma,0) & (0,-1) \end{pmatrix} \] form a symplectic basis for $M$.

 Consider some arbitrary proper symplectic $\OO_\A$-module $(\Lambda,E)$ with $\Lambda \subset F^2$.
 We denote by~$\pi_K$ (respectively~$\pi_\QQ$) the natural projections from~$F = K \oplus \QQ$ to $K=\QQ(\sqrt{D})$ (respectively~$\QQ$).
 Moreover, we define $\Lambda_1 := \Lambda \cap \QQ^2$ and $\Lambda_2 := \Lambda \cap K^2$.
 We have $\pi_K(\OO_\A)=\OD$, thus~$\OD$ acts on $\Lambda_2$, and we claim that~$\Lambda_2$ is proper as an~$\OD$-module.
 Otherwise, there would be some $e\in \NN_{>1}$ dividing the conductor~$f$ of~$D$, such that the order~$\OO_E$ with conductor $e$ acts on~$\Lambda_2$.
 Writing $\OK = \langle 1,\gamma \rangle_\ZZ$, we have $\OD = \langle 1,f\gamma \rangle_\ZZ$ and $\OO_E = \langle 1,e\gamma \rangle_\ZZ$.
 As $(d,0)$ maps $\Lambda$ to $\Lambda_2$, we deduce that $(de\gamma,0)$ acts on $\Lambda$. But $d$ and $f$ are relatively prime, hence $de\gamma$ is not in~$\OD$ and consequently
 $(de\gamma,0)$ cannot lie in~$\OO_\A$, in contradiction to the fact that $\Lambda$ is proper as an $\OO_\A$-module.

 Writing $E_i := E|_{\Lambda_i \times \Lambda_i}$ for $i \in \{ 1,2 \}$, the symplectic $\ZZ$-module $(\Lambda_1,E_1)$ is a priori of some type~$(c)$.
 As~$K^2$ is the orthogonal complement of~$\QQ^2$ in~$F^2$, the symplectic $\OO_D$-module $(\Lambda_2,E_2)$ is of type $(1,c)$ by Lemma~\ref{complementarytypes}.
 Since $(d,0)\Lambda \subseteq \Lambda_2$ and $(0,d)\Lambda \subseteq \Lambda_1$, we have $\Lambda \subseteq \tfrac{1}{d}(\Lambda_1 \oplus \Lambda_2)$.
 Considering the symplectic form $d^2E$, the multiplication formula for the degree of sublattices from Lemma~\ref{Gradindex} gives
 \[ d^8 = |\tfrac{1}{d}(\Lambda_1 \oplus \Lambda_2) / \Lambda| \cdot c^2. \]
 Thus $c \mid d^4$, in particular $c$ is relatively prime to the conductor of $D$.
 Therefore, by Theorem~\ref{Struktursatz}, $c$ satisfies the prime factor condition in $\OD$ and there is a unique ideal $\B$ of norm $c$ in $\OD$, such that $(\Lambda_2,E_2)$ is isomorphic to
 $(\OD \oplus \tfrac{1}{\sqrt{D}}\B, \langle \cdot , \cdot \rangle)$.
 We choose a smart basis $(\theta_2^\sigma,d\theta_1^\sigma)$ for $\B$.
 As \[ (\lambda_1,\lambda_2,\mu_1,\hat{\mu}_2) = \left( (\theta_1,0),(\theta_2,0),(0,\tfrac{1}{\sqrt{D}}\theta_2^\sigma),(0,\tfrac{-c}{\sqrt{D}}\theta_1^\sigma) \right) \]
 is a symplectic basis of $(\OD \oplus \tfrac{1}{\sqrt{D}}\B) \subset K^2$ and $E({(1,0)\Lambda \times \Lambda_2}) \subset \ZZ$, we see that $(1,0)\Lambda$ is contained in
 $\langle \lambda_1,\tfrac{1}{c}\lambda_2,\mu_1,\tfrac{1}{c}\hat{\mu}_2 \rangle_\ZZ$.
 Now we are exactly in the same situation as in the proof of Proposition~\ref{classifdim3}, where we have shown that we can choose a basis $(\nu_1,\nu_2)$ of $\Lambda_1$, such that
 \[ \left( \lambda_1,\lambda_2,\nu_1,\mu_1,\tfrac{1}{c}(\hat{\mu}_2+\nu_1),\tfrac{1}{c}(\lambda_2+\nu_2) \right) \] is a symplectic basis of $\Lambda$, which has the desired form.
 Hence $\Lambda$ is a proper $\OO_\B$-module. But then $\A$ und $\B$ must equal, and we can choose $(\eta_2^\sigma,d\eta_1^\sigma)$ for our smart basis, which finishes the proof.
\end{proof}

\noindent
{\bf Symplectic $\OO$-extensions}
Now we will classify the extension classes coming from complex Abelian varieties with pseudo-real multiplication by a pseudo-cubic number field
$F=\QQ(\sqrt{D}) \oplus \QQ$.

\begin{defi}\label{Symplextension2}
 Let $\OO$ be a pseudo-cubic order in a pseudo-cubic numberfield~$F$.
 A \emph{symplectic $\OO$-extension} is a short exact sequence of $\OO$-modules \[ E:\quad 0 \to \II_1 \to M \to \II_2 \to 0, \] where $\II_1,\II_2$ are $\OO$-modules isomorphic to lattices
 in~$F$ and~$M$ is a proper symplectic $\OO$-module. (In particular, $\II_1$ maps onto a Lagrangian subgroup of~$M$.)

 An \emph{isomorphism of symplectic $\OO$-extensions} is an isomorphism of exact sequences such that the isomorphism on the rank two modules is symplectic.
\end{defi}
Analogous to the quadratic case, we want to parameterize these extension classes for a fixed symplectic module in the middle.
We identify each point $L = ([x_1:x_2],[q_1:q_2]) \in \PF$ with the line $\langle (v_1,v_2) \rangle_F$ in $F^2$, where $v_i := (x_i,q_i)$.
Again, given any lattice $M$ in $F^2$, we set $\II_L := L \cap M$.
Finally, we get the symplectic $\OO$-extension \[ E_L:~ 0 \to \II_L \to (M,\langle \cdot,\cdot \rangle_p) \to M/\II_L \to 0. \]

\begin{prop}
 Let $\OO$ be a pseudo-cubic order and let $M_i$ be lattices in $F^2$, such that $\{ (M_i,\langle \cdot , \cdot \rangle_p) \}$ is a set of representatives
 of all isomorphism classes of proper symplectic $\OO$-modules.
 Then the maps \[ L \mapsto\quad \left( E_L:~ 0 \to \II_L \to (M_i, \langle \cdot , \cdot \rangle_p) \to M_i/\II_L \to 0 \right) \]
 induce a well-defined bijection between the set of cusps \[ \mathcal{C}_\OO = \coprod_i \SL(M_i) \backslash \PF \] and the set of isomorphism classes of symplectic
 $\OO$-extensions.
\end{prop}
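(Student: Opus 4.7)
The plan is to mimic the structure of Proposition~\ref{cusps1} in the pseudo-cubic setting: handle well-definedness and injectivity together using the $F$-linear action of $\SL_2(F) = \SL_2(K) \times \SL_2(\QQ)$ on $F^2$, and treat surjectivity as the one nontrivial step.

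For well-definedness I would observe that every $A = (A_1,A_2) \in \SL(M_i)$ acts $F$-linearly on $F^2$ and by definition preserves $M_i$ symplectically; hence it sends the line $L \in \PF$ to $A.L$, restricts to an isomorphism $\II_L \to \II_{A.L}$, and induces an isomorphism of the associated short exact sequences $E_L \cong E_{A.L}$. Injectivity would follow by reversing this step: an isomorphism $E_L \cong E_{L'}$ forces the middle modules to be isomorphic, hence $i=j$ by the choice of representatives; the induced symplectic $\OO$-isomorphism of $M_i$ extends $\QQ$-linearly to an $F$-linear symplectic automorphism of $F^2 = M_i \otimes_\ZZ \QQ$, which is multiplication by some $A \in \SL(M_i)$, and tracing through the isomorphism of the left-hand terms reads off $A.L = L'$.

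The real work lies in surjectivity. Given any symplectic $\OO$-extension $0 \to \II_1 \to M \to \II_2 \to 0$, I would first identify $M$ with some $(M_i,\langle \cdot,\cdot \rangle_p)$ via Definition~\ref{Symplmodule}, and then aim to write $\II_1 = \II_L$ for a unique $L \in \PF$. The key geometric fact to verify is that the $\QQ$-span $V$ of $\II_1$ inside $F^2$ is an $F$-submodule of $\QQ$-dimension three: $F$-invariance is automatic because $\OO$ contains a $\QQ$-basis of $F$, and the dimension equals $3$ because $\II_1$ is isomorphic to a lattice in $F$. Using the decomposition $F^2 = K^2 \oplus \QQ^2$, any such $F$-submodule takes the form $V_K \oplus V_\QQ$ with $V_K \subseteq K^2$ a $K$-subspace and $V_\QQ \subseteq \QQ^2$ a $\QQ$-subspace, and the dimension count forces $\dim_K V_K = 1$, $\dim_\QQ V_\QQ = 1$, producing a unique point $L \in \PK \times \PQ = \PF$. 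Torsion-freeness of $\II_2$ upgrades the a priori inclusion $\II_1 \subseteq V \cap M_i = \II_L$ to the required equality $\II_1 = \II_L$, so $E \cong E_L$.

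The main obstacle is this last step. Unlike the classical cubic case treated in \cite{MartinMatt}, $F = K \oplus \QQ$ is not a field, so one cannot invoke the elementary fact that a proper rank-one $F$-submodule of $F^2$ is cut out by a single coordinate line in a projective space over a field. The dimension decomposition across the two factors of $F$ has to be carried out by hand, and one must check that the symplectic pseudo-trace pairing $\langle \cdot,\cdot \rangle_p$ interacts correctly with the decomposition $F^2 = K^2 \oplus \QQ^2$ so that the Lagrangian condition on $\II_1$ is faithfully reflected by the point $L \in \PF$; this is what makes the notion of cusp packet (to be introduced next) behave as cleanly as in the cubic setting.
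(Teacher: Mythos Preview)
Your proposal is correct and follows essentially the same route as the paper's proof. The paper is even terser: it simply says the argument ``works identically like the proof of Proposition~\ref{cusps1}'' and adds only the remark that the elements of $F^2$ whose $F$-span has $\QQ$-dimension three are exactly those $((x_1,q_1),(x_2,q_2))$ with $(x_1,x_2)\neq 0$ and $(q_1,q_2)\neq 0$. Your idempotent decomposition $V=V_K\oplus V_\QQ$ together with the dimension count $2\dim_K V_K+\dim_\QQ V_\QQ=3$ is precisely an unpacking of that one sentence, and your use of torsion-freeness of $\II_2$ to upgrade $\II_1\subseteq \II_L$ to equality is verbatim the step from Proposition~\ref{cusps1}. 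The closing paragraph about the symplectic pseudo-trace pairing interacting with the decomposition is commentary rather than a needed step: the Lagrangian property of $\II_1$ is built into Definition~\ref{Symplextension2} and plays no further role in identifying $L$.
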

\begin{proof}
 The proof works identically like the proof of Proposition~\ref{cusps1}.
 We just note that the elements of $F^2$, whose $F$-span is of $\QQ$-dimension three, are exactly the elements of the form $((x_1,q_1),(x_2,q_2))$ with $(x_1,x_2) \not= 0$ and
 $(q_1,q_2) \not= 0$.
\end{proof}

\subsection{Extension classes and cusp packets}\label{extensionclasses}

There is another viewpoint of symplectic extensions over pseudo-cubic orders, namely that of cusp packets.
Analogous to \cite{MartinMatt}, Section~2, we will show that extension classes correspond to certain classes of $\QQ$-linear endomorphisms of the pseudo-cubic field.

\bigskip
For a lattice $\II$ in a pseudo-cubic number field $F$ we denote by  \[ \OO(\II) := \{ x \in F : x \cdot \II \subset \II \} \] its \emph{coefficient ring} and by
\[ \II^{\vee} := \{ x \in F : \tr_p(x\II) \subset \ZZ \} \cong \Hom_\ZZ(\II;\ZZ) \] its dual with respect to the pseudo-trace pairing.
\begin{defi}\label{extension}
 Let $\II$ be a lattice in $F$ such that its coefficient ring contains a pseudo-cubic order $\OO$. A \emph{symplectic $(\OO,\II)$-extension} is a
 short exact sequence of $\OO$-modules \[ 0 \to \II \to M \to \II^\vee \to 0, \] where $M$ is a proper $\OO$-module together with a symplectic form of type $(1,1,1)$, such that
 \begin{enumerate}
  \item the $\OO$-module structure on $M$ is self-adjoint with respect to this form (in particular, $\II$ maps onto a Lagrangian subgroup of $M$),
  \item if $s: \II^\vee \to M$ is a $\ZZ$-splitting that maps $\II^\vee$ onto a Lagrangian complement of $\II$ in $M$, then the induced $\ZZ$-isomorphism
        $\II \oplus \II^\vee \cong M$ is symplectic with respect to the symplectic pseudo-trace pairing on $\II \oplus \II^\vee$ (this property does not depend on the choice of the splitting).
 \end{enumerate}
\end{defi}

We note that the coefficient ring of a lattice in $F$ is always a pseudo-cubic order. An isomorphism between two such symplectic extensions is an isomorphism of exact
sequences of $\OO$-modules that is symplectic on the module in the middle. Let $\EE(\II)$ be the set of all symplectic $(\OO,\II)$-extensions for some pseudo-cubic
order $\OO \subset \OO(\II)$ up to isomorphism which are the identity on $\II$ and $\II^\vee$.
With a slight expansion of the Baer sum of extensions, see \cite{Weibel}, we get the following.
\begin{lem}
 The set $\EE(\II)$ carries the structure of an Abelian group.
\end{lem}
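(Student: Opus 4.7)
The plan is to define a Baer-style sum on $\EE(\II)$ and verify the abelian group axioms, with the main new ingredient being compatibility with the symplectic pseudo-trace pairing.

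First I would define the sum. Given two classes represented by
\[ E_i:\quad 0 \to \II \xrightarrow{\iota_i} M_i \xrightarrow{\pi_i} \II^\vee \to 0\quad (i=1,2), \]
form the pullback $P = \{(m_1,m_2) \in M_1 \oplus M_2 : \pi_1(m_1)=\pi_2(m_2)\}$ and then the quotient $M := P/N$, where $N = \{(\iota_1(x),-\iota_2(x)) : x \in \II\}$ is the antidiagonal copy of~$\II$. The induced maps $\II \to M,~ x \mapsto [(\iota_1(x),0)] = [(0,\iota_2(x))]$ and $M \to \II^\vee,~ [(m_1,m_2)] \mapsto \pi_1(m_1)$ fit into a short exact sequence $E_1 + E_2$ of $\OO$-modules, with the $\OO$-action induced componentwise from $M_1 \oplus M_2$. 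Standard diagram chases show that this construction is well-defined on isomorphism classes that fix the identity on $\II$ and $\II^\vee$.

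Next I would equip $M$ with a symplectic form. Choose $\ZZ$-splittings $s_i\colon \II^\vee \to M_i$ onto Lagrangian complements of $\II$; by condition~(2) of Definition~\ref{extension} each $s_i$ induces a symplectic $\ZZ$-isomorphism $(M_i, E_i) \cong (\II \oplus \II^\vee, \langle \cdot,\cdot \rangle_p)$. The diagonal map $s\colon \II^\vee \to M$, $x \mapsto [(s_1(x),s_2(x))]$, is then a $\ZZ$-splitting whose image is a Lagrangian complement of~$\II$ in~$M$, and the resulting identification $(M, E) \cong (\II \oplus \II^\vee, \langle \cdot,\cdot \rangle_p)$ is symplectic by construction. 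One checks that the form so defined is independent of the splittings chosen (two such splittings differ by a map $\II^\vee \to \II$, which preserves the pseudo-trace pairing on $\II \oplus \II^\vee$). The $\OO$-action on~$M$ is still self-adjoint since self-adjointness is inherited componentwise from the $M_i$, and $M$ is proper as an $\OO$-module because any endomorphism extending the $\OO$-action on~$M$ would, after composition with $\pi_1,\pi_2$ and $\iota_1,\iota_2$, extend the action on the proper modules $M_1$ and $M_2$.

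For the group structure, the neutral element is the trivial extension $0 \to \II \to \II \oplus \II^\vee \to \II^\vee \to 0$ with the symplectic pseudo-trace pairing and the diagonal $\OO$-action; indeed, summing any $E$ with this trivial extension produces a pullback/pushout canonically isomorphic to $E$. The inverse of~$E$ is represented by the same underlying $\OO$-module with the injection $\iota$ replaced by~$-\iota$ (equivalently, by negating the splitting); the Baer sum $E + (-E)$ admits a global splitting and is therefore trivial. Commutativity follows from the canonical flip $M_1 \oplus M_2 \cong M_2 \oplus M_1$, and associativity from the canonical identification of iterated pullbacks, exactly as in the classical Baer construction.

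The main obstacle is verifying that condition~(2) of Definition~\ref{extension} is preserved under the Baer sum, since this links the $\OO$-module structure and the symplectic form rigidly via the pseudo-trace pairing on $\II \oplus \II^\vee$. The construction of the splitting~$s$ above is designed precisely for this purpose: the key computation reduces to checking that for $x,y \in \II^\vee$ and $a,b \in \II$, the induced form satisfies $E(s(x)+a,\,s(y)+b) = \tr_p(xb - ya) = \langle (a,x),(b,y) \rangle_p$, which follows from the analogous identities in each~$(M_i,E_i)$ and the cancellation of the antidiagonal~$N$. Once this is in hand, all remaining axioms are formal.
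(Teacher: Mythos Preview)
Your approach is essentially the paper's: form the Baer sum $\ker(\pi)/\operatorname{im}(\iota)$ and equip it with the sum of the two symplectic forms, which (as you observe via the diagonal splitting $s=(s_1,s_2)$) pulls back to the pseudo-trace pairing on $\II\oplus\II^\vee$. One small correction on the inverse: negating $\iota$ alone (or, equivalently, negating the splitting) flips the sign of the induced form and hence violates condition~(2) of Definition~\ref{extension}; the paper instead represents the inverse by keeping $\iota$ and negating both the form $E$ and the projection $\pi$.
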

\begin{proof}
 We recall the definition of the Baer sum of extensions, see \cite{Weibel}. Given any two symplectic $(\OO,\II)$-extensions
 \[ 0 \to \II \xrightarrow[]{\iota_i} M_i \xrightarrow[]{\pi_i} \II^\vee \to 0, \] then $M_1 \oplus M_2$ is a proper $\OO_1 \cap \OO_2$-module.
 We define $\iota: \II \to M_1 \oplus M_2$ by $\iota(x) := (\iota_1(x),-\iota_2(x))$ and $\pi : M_1 \oplus M_2 \to \II^\vee$ by $\pi(m_1,m_2) := \pi_1(m_1)-\pi_2(m_2)$.
 The sum of the two extensions is defined as \[ 0 \to \II \to \ker(\pi)/{\rm im}(\iota) \to \II^\vee \to 0 \] with \[ x \mapsto (\iota_1(x),0) = (0,\iota_2(x)) \] for all $x \in \II$ and
\[ (m_1,m_2) \mapsto \pi_1(m_1) = \pi_2(m_2) \] for all $(m_1,m_2) \in \ker(\pi)/{\rm im}(\iota)$.

 In our context we have to extend this definition by declaring a symplectic form on $\ker(\pi)/{\rm im}(\iota)$.
 If $\langle \cdot,\cdot \rangle_i$ are the forms on $M_i$, the symplectic form on $\ker(\pi)/{\rm im}(\iota)$ is simply defined as their sum,
 \[ \langle (m_1,m_2),(n_1,n_2) \rangle := \langle m_1,n_1 \rangle_1 + \langle m_2,n_2 \rangle_2. \]
 That this form satisfies the conditions in Definition~\ref{extension} can be easily seen as follows.
 Taking two splittings $s_i : \II^\vee \to M_i$ as in the definition, a splitting of the Baer sum is given by $s := s_1 \oplus s_2$.
 Thus the induced isomorphism $\II \oplus \II^\vee \cong \ker(\pi)/{\rm im}(\iota)$ is given by \[ (x,y) \mapsto (\iota_1(x)+s_1(y),s_2(y)) = (s_1(y),\iota_2(x)+s_2(y)). \]
 Here we can see that the pairing is well-defined and that it is just the form induced by the symplectic pseudo-trace pairing on $\II \oplus \II^\vee$.

 The Baer sum gives $\EE(\II)$ the structure of an Abelian group.
 The identity element of $\EE(\II)$ is the trivial extension \[ E_0(\II) :~ 0 \to \II \to (\II \oplus \II^\vee, \langle \cdot,\cdot \rangle_p) \to \II^\vee \to 0. \]
 The inverse of any symplectic $\OO$-extension \[ 0 \to \II \xrightarrow[]{\iota} (M,E) \xrightarrow[]{\pi} \II^\vee \to 0 \]
 is given by \[ 0 \to \II \xrightarrow[]{\iota} (M,-E) \xrightarrow[]{-\pi} \II^\vee \to 0. \]
\end{proof}

The next step is to classify the extension classes in $\EE(\II)$, and we start with the construction of certain extensions.
For this we define
\begin{align*} \Hom_\QQ^+(F,F) := \{ & h \in \Hom_\QQ(F,F) : \\ & \tr_p(h(\mu_1)\mu_2) = \tr_p(\mu_1h(\mu_2)) \mbox{ for all } \mu_1,\mu_2 \in F \}, \end{align*}
which is a $\QQ$-vector space of dimension six, and
\begin{align*} \Hom_\QQ^-(F,F) := \{ & h \in \Hom_\QQ(F,F) : \\ & \tr_p(h(\mu_1)\mu_2) = -\tr_p(\mu_1h(\mu_2)) \mbox{ for all } \mu_1,\mu_2 \in F \}, \end{align*}
which is a $\QQ$-vector space of dimension three. 
For each $x \in F$, we denote by $M_x \in \Hom_F(F,F)$ multiplication by~$x$.
Now given any $h \in \Hom_\QQ^+(F,F)$, we assign to each $x \in F$ the $\QQ$-endomorphism $[M_x,h] \in \Hom_\QQ^-(F,F)$, where $[X,Y]:= XY-YX$ denotes the commutator of~$X$ and~$Y$ wherever it makes
sense.
We have the pseudo-cubic order \[ \OO_h(\II) := \{ x \in \OO(\II) : [M_x,h](\II^\vee) \subset \II \} \] and can define a proper $\OO_h(\II)$-module structure on $\II \oplus \II^\vee$ that is
self-adjoint with respect to $\langle \cdot,\cdot \rangle_p$ by setting \[ x.(\lambda,\mu) := (x\lambda+[M_x,h](\mu),x\mu) \] for all $x \in \OO_h(\II)$ and $(\lambda,\mu) \in \II \oplus \II^\vee$.
We denote $\II \oplus \II^\vee$ together with this module structure and the symplectic pseudo-trace pairing by $(\II \oplus \II^\vee)_h$.
Together with the natural inclusion of $\II$ and the natural projection onto $\II^\vee$ this becomes a symplectic $(\OO_h(\II),\II)$-extension, which we will denote by~$E_h(\II)$.
\begin{lem}\label{HomExtSur}
 Each class in $\EE(\II)$ has a representative \[ E_h(\II):~ 0 \to \II \to (\II \oplus \II^\vee)_h \to \II^\vee \to 0 \] for some $h \in \Hom_\QQ^+(F,F)$.
\end{lem}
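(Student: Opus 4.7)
The plan is to produce, from an arbitrary symplectic $(\OO,\II)$-extension, a canonical $h \in \Hom_\QQ^+(F,F)$ such that the extension is isomorphic to $E_h(\II)$. The argument splits naturally into four steps: choosing a Lagrangian $\ZZ$-splitting, interpreting the twisted $\OO$-action as a derivation, showing that derivation is inner, and correcting the primitive $h$ into $\Hom_\QQ^+(F,F)$.

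First I would start with a symplectic $(\OO,\II)$-extension $0 \to \II \xrightarrow{\iota} M \xrightarrow{\pi} \II^\vee \to 0$. Property~(2) of Definition~\ref{extension} guarantees a $\ZZ$-splitting $s\!: \II^\vee \to M$ whose image is a Lagrangian complement of $\iota(\II)$; fix such an $s$, thereby identifying $M$ as a symplectic $\ZZ$-module with $(\II \oplus \II^\vee, \langle \cdot,\cdot\rangle_p)$. Under this identification, since $\iota(\II)$ is $\OO$-stable and the quotient action on $\II^\vee$ is the tautological one, the $\OO$-action on $M$ must take the form
\[
x.(\lambda, \mu) \;=\; (x\lambda + \phi_x(\mu),\, x\mu)
\]
for unique $\ZZ$-linear maps $\phi_x\!: \II^\vee \to \II$.

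Second, I would translate the axioms into conditions on $\phi$. The multiplicativity $(xy).v = x.(y.v)$ yields the cocycle identity $\phi_{xy} = M_x \circ \phi_y + \phi_x \circ M_y$, i.e.\ the assignment $x \mapsto \phi_x$ is a $\QQ$-derivation from $F$ to $\Hom_\QQ(F,F)$ (after tensoring with $\QQ$), where the $F$-bimodule structure is left and right multiplication. A direct computation of $\langle x.v,w\rangle_p = \langle v, x.w\rangle_p$ yields the skew-symmetry condition
\[
\tr_p(\phi_x(\mu_1)\mu_2) + \tr_p(\mu_1\,\phi_x(\mu_2)) \;=\; 0,
\]
so that each $\phi_x$ lies in $\Hom_\QQ^-(F,F)$ after extending $\QQ$-linearly.

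Third, and this is the main obstacle, I need to show the derivation $\phi$ is inner, i.e.\ $\phi_x = [M_x, h]$ for some $h \in \Hom_\QQ(F,F)$. Equivalently, $HH^1(F/\QQ,\, \Hom_\QQ(F,F)) = 0$. This holds because $F = \QQ(\sqrt{D}) \oplus \QQ$ is a separable commutative $\QQ$-algebra: being a finite product of fields in characteristic zero, $F$ is projective as an $F \otimes_\QQ F$-module, so $\Ext^n_{F\otimes F}(F,-)$ vanishes for $n \geq 1$. A concrete separability idempotent can be written using the three $\QQ$-algebra maps $F \to \overline{\QQ}$, and then the standard formula $h := \sum a_i \phi_{b_i}$ (with $\sum a_i b_i = 1$ and $\sum x a_i \otimes b_i = \sum a_i \otimes b_i x$) produces the required $h$. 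Since $\phi_x(\II^\vee) \subset \II$ by construction, $\OO \subset \OO_h(\II)$ is automatic.

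Finally, I would adjust $h$ into $\Hom_\QQ^+(F,F)$. The nondegeneracy of the pseudo-trace pairing gives the direct sum decomposition $\Hom_\QQ(F,F) = \Hom_\QQ^+(F,F) \oplus \Hom_\QQ^-(F,F)$ (dimensions $6$ and $3$ summing to $9$, zero intersection). A quick calculation shows that the commutator $[M_x,\cdot]$ swaps the two summands. Writing $h = h^+ + h^-$, the fact that $\phi_x = [M_x,h]$ lies in $\Hom_\QQ^-(F,F)$ forces $[M_x, h^-] = 0$ for all $x \in F$. Hence $h^-$ commutes with all multiplication operators, so $h^- \in F$ (viewed as multiplications), but multiplication operators lie in $\Hom_\QQ^+(F,F)$, forcing $h^- \in \Hom_\QQ^+(F,F) \cap \Hom_\QQ^-(F,F) = 0$. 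Thus $h = h^+ \in \Hom_\QQ^+(F,F)$, and the constructed extension $E_h(\II)$ agrees with the given one under the chosen identification. The only subtle point is the Hochschild vanishing in step three; everything else reduces to bookkeeping with the pseudo-trace pairing.
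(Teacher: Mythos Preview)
Your proof is correct but takes a different route from the paper. Both arguments begin identically: choose a Lagrangian splitting, write the $\OO$-action as $x.(\lambda,\mu) = (x\lambda + \phi_x(\mu), x\mu)$, and observe that $x \mapsto \phi_x$ is a derivation valued in $\Hom_\QQ^-(F,F)$. The divergence is at the key step. You invoke vanishing of $HH^1$ for the separable $\QQ$-algebra $F$ to conclude $\phi_x = [M_x,h]$ for some $h \in \Hom_\QQ(F,F)$, and then run a separate symmetry correction: decompose $h = h^+ + h^-$, use that $[M_x,\cdot]$ swaps $\Hom_\QQ^\pm$, and deduce $h^- \in \Hom_F(F,F) \cap \Hom_\QQ^-(F,F) = 0$. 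The paper instead argues by elementary dimension counting: evaluation at a single element $x_0$ with $F = \langle 1, x_0, x_0^2\rangle_\QQ$ shows the space of such derivations has dimension at most $3$, while $h \mapsto (x \mapsto [M_x,h])$ is an injection from the $3$-dimensional space $\Hom_\QQ^+(F,F)/\Hom_F(F,F)$, hence a bijection onto it. This produces $h \in \Hom_\QQ^+(F,F)$ directly, with no correction step needed.

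Your approach is more conceptual and would generalize to any separable base algebra without a primitive element, at the cost of importing the Hochschild machinery and the extra correction argument. The paper's approach is entirely elementary---just linear algebra---and is tailored to the specific structure of $F$, which also makes the subsequent Lemma~\ref{HomExt} (identifying $\EE(\II)$ with $\Hom_\QQ^+(F,F)/(\Hom_F(F,F) + \Hom_\ZZ^+(\II^\vee,\II))$) fall out immediately.
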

\begin{proof}
 Let $[E:~ 0 \to \II \to M \to \II^\vee \to 0] \in \EE(\II)$.
 Again we use a $\ZZ$-splitting $s: \II^\vee \to \ M$ mapping $\II^\vee$ onto a Lagrangian complement of $\II$ in $M$ and obtain a $\ZZ$-isomorphism of short exact sequences
 \[ \begin{CD} 0 @>>> \II @> \iota >> M @> \pi >> \II^\vee @>>> 0 \\
  @. @| @VV \phi V @| @. \\
  0 @>>> \II @>>> \II \oplus \II^\vee @>>> \II^\vee @>>> 0, \end{CD} \]
 where $\phi = (\iota + s)^{-1}$ is symplectic by the definition of symplectic extensions. Equipping $\II \oplus \II^\vee$ with the $\OO$-module structure of $M$ induced by $\phi$,
 the lower row becomes $\OO$-linear and we get indeed an isomorphism of symplectic $(\OO,\II)$-extensions.
 From the $\OO$-linearity of the lower row it also follows that the $\OO$-structure on $\II \oplus \II^\vee$ is of the form \[ x.(\lambda,\mu) = (x\lambda + f_x(\mu),x\mu) \] for all $x \in \OO$
 and $(\lambda,\mu) \in \II \oplus \II^\vee$.
 Moreover, we have \[ (x \mapsto f_x) \in \Hom_\ZZ(\OO,\Hom_\ZZ(\II^\vee,\II)) \] and \[ f_{xy} = M_xf_y + f_xM_y \] for all $x,y \in \OO$.
 The property that the $\OO$-structure is self-adjoint with respect to the pairing $\langle \cdot,\cdot \rangle_p$ on $\II \oplus \II^\vee$ is equivalent to
 \[ \tr_p(f_x(\mu_1)\mu_2) = -\tr_p(\mu_1f_x(\mu_2)) \] for all $x \in \OO$ and $\mu_1,\mu_2 \in \II^\vee$.
 Thus, after tensoring with $\QQ$, the map $x \mapsto f_x$ is an element of
\begin{align*} \Hom_\QQ^*(F,\Hom_\QQ^-(F,F)) := \{ & f \in \Hom_\QQ(F,\Hom_\QQ^-(F,F)): \\ & f(xy) = M_xf(y)+M_yf(x) \mbox{ for all } x,y \in F \}, \end{align*}
 Since the $\OO$-module structure is proper, it remains to show that there is some $h \in \Hom_\QQ^+(F,F)$ with $f_x = [M_x,h]$ for all $x \in F$.
 We choose some $x_0 \in F$, such that $F = \langle (1,1),x_0,x_0^2 \rangle_\QQ$. Then the $\QQ$-homomorphism
 \begin{align*} \Hom_\QQ^*(F,\Hom_\QQ^-(F,F)) & \to \Hom_\QQ^-(F,F) \\ f & \mapsto f(x_0) \end{align*}
 is injective by $f(x_0^2)=M_{x_0}f(x_0)+f(x_0)M_{x_0}$ and $f(1,1) \equiv 0$ and thus \[ \dim_\QQ(\Hom_\QQ^*(F,\Hom_\QQ^-(F,F))) \leq 3. \]
 Since \[ \dim_\QQ(\Hom_\QQ^+(F,F)/(\Hom_F(F,F)) = 6-3 = 3 \] and the $\QQ$-linear map
 \begin{align*} \Hom_\QQ^+(F,F)/(\Hom_F(F,F) & \to \Hom_\QQ^*(F,\Hom_\QQ^-(F,F)) \\ h & \mapsto (x \mapsto [M_x,h]) \end{align*} is injective, it is also an isomorphism and hence
 $x \mapsto f_x$ lies in its image.
\end{proof}

Analogously to \cite{MartinMatt}, Theorem 2.1., we get for the pseudo-cubic case the following identification.
\begin{lem}\label{HomExt}
 The map $h \mapsto E_h(\II)$ induces a group isomorphism
 \[ \Hom_\QQ^+(F,F) / (\Hom_F(F,F)+\Hom_\ZZ^+(\II^\vee,\II)) \to \EE(\II). \]
\end{lem}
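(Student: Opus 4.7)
The plan is to verify three things in turn: that the assignment $h \mapsto [E_h(\II)]$ descends to a well-defined map from the quotient, that it is a group homomorphism with respect to the Baer sum, and that it is bijective. Surjectivity is already established by Lemma~\ref{HomExtSur}, so the heart of the argument is well-definedness and injectivity.

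First I would check that the map is a group homomorphism. Given $h_1, h_2 \in \Hom_\QQ^+(F,F)$, by unpacking the definition of the Baer sum recalled above and using that the module structures on $(\II \oplus \II^\vee)_{h_1}$ and $(\II \oplus \II^\vee)_{h_2}$ depend linearly on $h_i$ through the commutators $[M_x,h_i]$, the explicit $\ZZ$-isomorphism $(\lambda_1,\mu,\lambda_2,\mu) \mapsto (\lambda_1+\lambda_2,\mu)$ identifies the Baer sum $E_{h_1}(\II)+E_{h_2}(\II)$ with $E_{h_1+h_2}(\II)$, and one checks it respects the module structures and the symplectic form.

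Next I would verify that the subgroup $\Hom_F(F,F)+\Hom_\ZZ^+(\II^\vee,\II)$ lies in the kernel. If $h=M_y$ is $F$-linear, then $[M_x,h]=0$ for every $x \in F$, so the module structure on $(\II\oplus\II^\vee)_h$ is the diagonal one and the extension is trivially $E_0(\II)$. If instead $g \in \Hom_\ZZ^+(\II^\vee,\II)$, I would construct an explicit isomorphism of symplectic $(\OO_g(\II),\II)$-extensions between $E_g(\II)$ and $E_0(\II)$ by the $\ZZ$-splitting change $\varphi:(\lambda,\mu)\mapsto(\lambda+g(\mu),\mu)$, which is the identity on $\II$ and $\II^\vee$. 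The self-adjointness of $g$ with respect to $\langle\cdot,\cdot\rangle_p$ is exactly what is needed for $\varphi$ to be symplectic, and $\OO$-linearity is automatic because the module structure on $(\II\oplus\II^\vee)_g$ was designed precisely so that $\varphi$ intertwines it with the diagonal one.

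The hard part is injectivity. Suppose $E_h(\II)$ is isomorphic to the trivial extension $E_0(\II)$ via an isomorphism $\varphi$ which is the identity on $\II$ and $\II^\vee$. Then $\varphi$ must be of the form $(\lambda,\mu)\mapsto(\lambda+g(\mu),\mu)$ for some $g \in \Hom_\ZZ(\II^\vee,\II)$, since these are the only $\ZZ$-automorphisms of $\II\oplus\II^\vee$ inducing the identity on the outer terms. That $\varphi$ is symplectic with respect to $\langle\cdot,\cdot\rangle_p$ forces $g\in\Hom_\ZZ^+(\II^\vee,\II)$, by the same computation as in the proof of the previous lemma applied in reverse. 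Finally, that $\varphi$ intertwines the $h$-twisted and diagonal $\OO$-module structures translates into the identity $[M_x,h]=[M_x,g]$ on $\II^\vee$ for every $x\in\OO$, i.e.\ $[M_x,h-g]\equiv 0$ on $\II^\vee$ for all $x$ in a full-rank sublattice of $F$. By $\QQ$-linearity this forces $[M_x,h-g]=0$ for every $x \in F$, and by the dimension count at the end of the proof of Lemma~\ref{HomExtSur} the kernel of the map $h'\mapsto(x\mapsto[M_x,h'])$ on $\Hom_\QQ^+(F,F)$ is exactly $\Hom_F(F,F)$. Hence $h-g \in \Hom_F(F,F)$, so $h \in \Hom_F(F,F)+\Hom_\ZZ^+(\II^\vee,\II)$ as required. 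The main obstacle I anticipate is keeping track of the compatibility of the $\OO$-module structure with the change of splitting, because the subring $\OO$ itself can change with $h$; this is handled by passing to $F$-coefficients so that $\OO_h(\II)$ always contains a common finite-index subring, reducing the $\OO$-linearity condition to the single equation $[M_x,h-g]=0$ on $F$.
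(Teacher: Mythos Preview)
Your proposal is correct and follows essentially the same approach as the paper: the paper verifies well-definedness via the explicit splitting-change $(\lambda,\mu)\mapsto(\lambda+\varphi(\mu),\mu)$, checks the homomorphism property by an explicit isomorphism $(\II\oplus\II^\vee)_{h_1+h_2}\to\ker(\pi)/\mathrm{im}(\iota)$, $(\lambda,\mu)\mapsto((\lambda,\mu),(0,\mu))$ (your map is its inverse), proves injectivity by exactly the computation you give, and cites the previous lemma for surjectivity. The only cosmetic difference is that the paper observes directly that $\OO_h(\II)=\OO(\II)$ when $E_h(\II)\cong E_0(\II)$ (by properness of both module structures), whereas you reach the same conclusion by tensoring up to $F$; both are fine.
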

\begin{proof}
 We have already seen that $E_h(\II)$ lies in $\EE(\II)$. It is easy to check that this map is well-defined, we just mention that we have $\OO_h(\II) = \OO_{h+\varphi}(\II)$ for all
 $h \in \Hom_\QQ^+(F,F)$ and $\varphi \in \Hom_\ZZ^+(\II^\vee,\II)$, and that in this case an isomorphism of symplectic extensions over $\OO_h(\II)$ is given by
 \begin{align*} (\II \oplus \II^\vee)_{h+\varphi} & \to (\II \oplus \II^\vee)_h \\ (\lambda,\mu) & \mapsto (\lambda+\varphi(\mu),\mu). \end{align*}

 To see that our map is a group homomorphism, let $h_1,h_2 \in \Hom_\QQ^+(F,F)$. Then with $\iota$ and $\pi$ defined as in the definition of the Baer sum of~$E_{h_1}(\II)$ and~$E_{h_2}(\II)$,
 an isomorphism of symplectic $\OO$-extensions is given by \begin{align*} (\II \oplus \II^\vee)_{h_1+h_2} & \to \ker(\pi)/{\rm im}(\iota) \\ (\lambda,\mu) & \mapsto ((\lambda,\mu),(0,\mu)) \end{align*}
 and thus $E_{h_1+h_2}(\II) = E_{h_1}(\II) + E_{h_2}(\II)$ in $\EE(\II)$.

 To show that our map is injective, let $h \in \Hom_\QQ^+(F,F)$ such that there is an isomorphism $\phi: E_h(\II) \to E_0(\II)$ to the trivial extension.
 This isomorphism must be of the form $(\lambda,\mu) \mapsto (\lambda+f(\mu),\mu)$ for some $f \in \Hom_\QQ^+(\II^\vee,\II)$ with $[M_x,h-f]=0$ for all $x \in \OO_h(\II) = \OO(\II)$.
 The latter condition comes from the $\OO_h(\II)$-linearity of $\phi$ and is equivalent to $h-f \in \Hom_F(F,F)$, and thus $h \in \Hom_F(F,F) + \Hom_\QQ^+(\II^\vee,\II)$.

 Since we have already proven surjectivity in the previous lemma, the result follows.
\end{proof}

\noindent
{\bf Cusp packets.} Given any $h \in \Hom_\QQ^+(F,F)$ and $a \in F^*$, we denote by $h^a \in \Hom_\QQ^+(F,F)$ the endomorphism $x \mapsto ah(ax)$.
For each lattice $\II$ in $F$, we obviously have $\OO(\II) = \OO(a\II)$ and $\OO_h(\II) = \OO_{h^a}(a\II)$.
For any pseudo-cubic order $\OO$ in $F$, let $\mathcal{L}_\OO$ be the set of lattices $\II$ in $F$ with $\OO \subseteq \OO(\II)$.
We denote by $\EE_\OO(\II) \subset \EE(\II)$ the set of classes of symplectic $(\OO,\II)$-extensions.
Moreover, we denote by $\widetilde{\mathcal{C}}(\OO)$ the set of pairs $(\II,E)$ with $\II \in \mathcal{L}_\OO$ and $E \in \EE_\OO(\II)$.
Using the identification from Lemma~\ref{HomExt}, we have seen that each $a \in F^*$ acts on the disjoint union $\EE_\OO := \coprod_{\II \in \mathcal{L}_\OO} \EE_\OO(\II)$,
by sending $E=E_h(\II) \in \EE_\OO(\II)$ to $E^a:=E_{h^a}(a\II) \in \EE_\OO(a\II)$, as well as on $\widetilde{\mathcal{C}}(\OO)$ by sending $(\II,E)$ to $(a\II,E^a)$.
Finally, a \emph{cusp packet for $\OO$} is defined to be an element of \[ \mathcal{C}(\OO) := \widetilde{\mathcal{C}}(\OO) / F^*. \]
\begin{prop}
 Let $\OO$ be a pseudo-cubic order. The assignment \[ (\II,E_h(\II)) \mapsto \left( 0 \to \II \to (\II \oplus \II^\vee)_h \to \II^\vee \to 0 \right) \] induces a well-defined
 bijection between the set of cusp packets $\mathcal{C}(\OO)$ and the set of isomorphism classes of symplectic $(\OO,\II)$-extensions,
 where $\II$ runs over all lattices in $\mathcal{L}_\OO$.
\end{prop}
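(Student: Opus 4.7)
The plan is to verify that the stated map is well-defined on the $F^*$-quotient $\mathcal{C}(\OO)$, surjective, and injective. For well-definedness I need that replacing $(\II, E_h(\II))$ by $(a\II, E_{h^a}(a\II))$ with $a \in F^*$ yields an isomorphic extension. The natural candidate isomorphism is
\[ \phi:~ (\II \oplus \II^\vee)_h \to (a\II \oplus a^{-1}\II^\vee)_{h^a},\quad (\lambda,\mu) \mapsto (a\lambda, a^{-1}\mu), \]
using the duality identity $(a\II)^\vee = a^{-1}\II^\vee$, which is immediate from the definition of $\vee$ with respect to the pseudo-trace pairing. Preservation of $\langle \cdot,\cdot \rangle_p$ is a direct computation, and $\OO$-linearity reduces to the conjugation identity $[M_x,h^a](a^{-1}\mu) = a[M_x,h](\mu)$. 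Surjectivity is provided directly by Lemma~\ref{HomExtSur}, which shows that every class in $\EE_\OO(\II)$ is represented by some $E_h(\II)$.

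For injectivity, suppose an isomorphism of symplectic $(\OO,\II)$-extensions joins $E_h(\II)$ and $E_{h'}(\II')$. Because $\II, \II', \II^\vee, \II'^\vee$ are rank-one torsion-free $\OO$-modules and $\OO$ generates $F$ over $\QQ$, the isomorphisms on the ends are multiplications by some $a,b \in F^*$, so $\II' = a\II$ and $\II'^\vee = b\II^\vee$. Commutativity of the extension diagram forces the middle map to have the form $(\lambda,\mu) \mapsto (a\lambda + g(\mu), b\mu)$ for some $\ZZ$-linear $g: \II^\vee \to \II'$. The symplectic condition then gives $b = a^{-1}$ and that $\tilde g := a^{-1}g: \II^\vee \to \II$ is symmetric with respect to the pseudo-trace pairing, i.e. it lies in $\Hom_\ZZ^+(\II^\vee, \II)$. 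Rewriting $\OO$-linearity using the conjugation identity will then show
\[ h' - h^a + \tilde g^a \in \Hom_F(F,F), \]
with $\tilde g^a \in \Hom_\ZZ^+(\II'^\vee, \II')$. In view of Lemma~\ref{HomExt}, this means $E_{h'}(\II') = E_h(\II)^a$ in $\EE_\OO$, so the two cusp packets coincide in $\mathcal{C}(\OO)$.

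The main obstacle will be the bookkeeping in the injectivity step: carefully tracking the interaction of the conjugation $h \mapsto h^a$ with the off-diagonal component $g$ of $\phi$, and verifying that the correction term $\tilde g^a$ lies in $\Hom_\ZZ^+(\II'^\vee, \II')$ rather than merely in $\Hom_\QQ^+(F,F)$. Everything else is a direct reduction to Lemma~\ref{HomExtSur} and the explicit description provided by Lemma~\ref{HomExt}.
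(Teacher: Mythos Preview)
Your proposal is correct and follows essentially the same strategy as the paper: well-definedness via the diagonal map $(\lambda,\mu)\mapsto(a\lambda,a^{-1}\mu)$, surjectivity from Lemma~\ref{HomExtSur}, and injectivity by analyzing the upper-triangular shape of an isomorphism of extensions. The only difference is in the injectivity step: where you carry out the explicit computation with the commutators $[M_x,h]$ to show $h'-h^a+\tilde g^a\in\Hom_F(F,F)$ and then invoke Lemma~\ref{HomExt}, the paper takes a shortcut by composing $\varphi^{-1}$ with the canonical isomorphism $\varphi_a:E_h(\II)\to E_{h^a}(a\II)$, so that $\varphi_a\circ\varphi^{-1}:E_{h'}(\II')\to E_h(\II)^a$ is an isomorphism of symplectic $\OO$-extensions which is the identity on both ends, hence gives equality in $\EE_\OO(a\II)$ directly by definition. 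Your route works but is a bit more laborious; the paper's composition trick avoids unpacking Lemma~\ref{HomExt} and the verification that $\tilde g^a\in\Hom_\ZZ^+(\II'^\vee,\II')$.
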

\begin{proof}
 Given any $(\II,[E_h(\II)]) \in \widetilde{\mathcal{C}}(\OO)$ and any $a \in F^*$, there is a natural isomorphism $\varphi_a$ of symplectic $\OO_h(\II)$-extensions between the sequences
 $E_h(\II)$ and $E_{h^a}(\II)$. To be more precise, $\varphi_a$ is given by
 \[ \begin{CD} 0 @>>> \II @>>> (\II \oplus \II^\vee)_h @>>> \II^\vee @>>> 0 \\
  @. @VV M_a V @VV M_a \oplus M_{a^{-1}} V @VV M_{a^{-1}} V @. \\
  0 @>>> a\II @>>> (a\II \oplus (a\II)^\vee)_{h^a} @>>> (a\II)^\vee @>>> 0. \end{CD} \]
 Thus the map is well-defined. Surjectivity follows from Lemma~\ref{HomExt}, so it remains to show injectivity.
 Let $(\II_1,[E_1]), (\II_2,[E_2]) \in \widetilde{\mathcal{C}}(\OO)$ with $E_i = E_{h_i}(\II_i)$, such that there is an isomorphism of symplectic $\OO$-extensions $\varphi: E_1 \cong E_2$.
 On rank one modules, an isomorphism between two lattices in $F$ is just multiplication with an invertible element of $F$.
 Hence there are $a,d \in F^*$ and $b \in F$ with $\II_2 = a\II_1$, such that $\varphi$ is of the form
 \[ \begin{CD} 0 @>>> \II_1 @>>> (\II_1 \oplus \II_1^\vee)_{h_1} @>>> \II^\vee @>>> 0 \\
  @. @VV M_a V @VV \psi V @VV M_d V @. \\
  0 @>>> a\II_1 @>>> (a\II_1 \oplus (a\II_1)^\vee)_{h_2} @>>> (a\II_1)^\vee @>>> 0 \end{CD} \]
 with $\psi(\lambda,\mu)=(a\lambda+b\mu,d\lambda)$ for all $(\lambda,\mu) \in \II \oplus \II^\vee$.
 As $\psi$ is symplectic, we have $d=a^{-1}$ and thus the isomorphism of symplectic $\OO$-extensions \[ \varphi_a \circ \varphi^{-1}: E_2 = E_{h_2}(a\II_1) \cong E_1^a \]
 is the identity on $a\II_1$ and $(a\II_1)^\vee$. Hence $E_2$ and $E_1^a$ represent the same elements in $E_\OO(a\II)$ and we get \[ (\II_1,E_1) \sim (a\II_1,E_1^a) = (\II_2,E_2). \] 
\end{proof}

\begin{cor}\label{BijektionCuspPacketsCusps}
 Let $\OO$ be a pseudo-cubic order. The assignment \[ (\II,E_h(\II)) \mapsto \left( 0 \to \II \to (\II \oplus \II^\vee)_h \to \II^\vee \to 0 \right), \]
 where $\II$ runs over all lattices in $\mathcal{L}_\OO$, induces a well-defined bijection between the set of cusp packets $\mathcal{C}(\OO)$ and the set of isomorphism classes
 of symplectic $\OO$-extensions (or equivalently the set of cusps $\mathcal{C}_\OO$).
\end{cor}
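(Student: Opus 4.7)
The plan is to reduce the statement to the preceding proposition, which already identifies $\mathcal{C}(\OO)$ with isomorphism classes of symplectic $(\OO,\II)$-extensions (where the isomorphisms are required to restrict to the identity on $\II$ and $\II^\vee$), together with the earlier identification of cusps $\mathcal{C}_\OO$ with isomorphism classes of symplectic $\OO$-extensions via the map $L \mapsto E_L$. Under these two identifications, what remains is to compare the rigid notion of an $(\OO,\II)$-extension (where the outer terms are literally $\II$ and $\II^\vee$, identified by the identity) with the looser notion of a general symplectic $\OO$-extension from Definition~\ref{Symplextension2}.

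First I would show that every symplectic $\OO$-extension $0 \to \II_1 \to M \to \II_2 \to 0$ is isomorphic, as an $\OO$-extension, to some $(\OO,\II_1)$-extension. Since $\II_1$ maps onto a Lagrangian subgroup of $M$ and the symplectic form on $M$ is of type $(1,1,1)$, pairing against elements of $\II_1$ yields a map $M \to \Hom_\ZZ(\II_1,\ZZ) \cong \II_1^\vee$ that descends to a $\ZZ$-module isomorphism $M/\II_1 \xrightarrow{\sim} \II_1^\vee$. The self-adjointness of the $\OO$-action with respect to the symplectic form promotes this to an $\OO$-module isomorphism, and composing with the projection $M \to \II_2$ yields $\II_2 \cong \II_1^\vee$ as $\OO$-modules. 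Substituting $\II_1^\vee$ for $\II_2$ via this identification turns the sequence into an $(\OO,\II_1)$-extension, so the forgetful map from $(\OO,\II)$-extensions to general $\OO$-extensions is surjective on isomorphism classes.

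Second, I would match up the equivalence relations. If $E_h(\II)$ and $E_{h'}(\II')$ give rise to isomorphic symplectic $\OO$-extensions, any such isomorphism restricts on $\II$ to multiplication by some $a \in F^*$ with $\II' = a\II$; by symplecticity, the induced isomorphism $\II^\vee \to (a\II)^\vee = a^{-1}\II^\vee$ is then $M_{a^{-1}}$. The canonical morphism $\varphi_a$ constructed in the proof of the previous proposition realizes exactly the passage $E_h(\II) \leadsto E_{h^a}(a\II)$, so two $(\OO,\II)$-extensions determine isomorphic symplectic $\OO$-extensions precisely when they lie in the same $F^*$-orbit. Conversely, $\varphi_a$ manifestly produces an isomorphism of $\OO$-extensions between $E_h(\II)$ and $E_{h^a}(a\II)$, showing that the $F^*$-orbit of a pair gives only one isomorphism class of $\OO$-extensions. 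Hence the bijection of the previous proposition descends through the quotient by $F^*$ to the asserted bijection with symplectic $\OO$-extensions, which in turn matches $\mathcal{C}_\OO$ by the $L \mapsto E_L$ correspondence.

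The main obstacle is the $\OO$-linearity of the isomorphism $M/\II_1 \cong \II_1^\vee$ in the first step: this is precisely where the self-adjointness built into Definition~\ref{symplecticmodule} and Definition~\ref{Symplmodule} is crucial, since a priori the pairing only yields a $\ZZ$-linear isomorphism. A secondary subtlety lies in verifying that the $F^*$-action on cusp packets, which is defined in terms of the parameterization $h \mapsto E_h(\II)$ from Lemma~\ref{HomExt}, corresponds at the level of underlying extensions to the explicit isomorphism $\varphi_a = M_a \oplus M_{a^{-1}}$; this follows from the identity $\OO_h(\II) = \OO_{h^a}(a\II)$ and a direct check that $\varphi_a$ intertwines the twisted $\OO$-actions coming from $h$ and $h^a$.
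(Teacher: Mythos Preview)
Your approach is essentially the same as the paper's: both reduce to the preceding proposition and then establish surjectivity by showing that every symplectic $\OO$-extension $0 \to \II_1 \to M \to \II_2 \to 0$ is isomorphic to an $(\OO,\II_1)$-extension via the pairing $M \to \Hom_\ZZ(\II_1,\ZZ) \cong \II_1^\vee$, with self-adjointness providing $\OO$-linearity. Your injectivity argument in the third paragraph is redundant with the proposition (the paper simply writes ``Injectivity follows from the last proposition''), but it is not wrong.

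There is one small gap. You do not verify that the map is well-defined in the first place, i.e.\ that $E_h(\II)$ is actually a symplectic $\OO$-extension in the sense of Definition~\ref{Symplextension2}. That definition requires the middle term to be a proper symplectic $\OO$-module, meaning isomorphic to a lattice in $F^2$ with the pseudo-trace pairing. While $\II \oplus \II^\vee$ sits naturally inside $F \oplus F$, the $h$-twisted $\OO$-action on $(\II \oplus \II^\vee)_h$ is not the coordinatewise action on $F^2$, so this is not automatic. The paper handles this by writing down the explicit symplectic $F$-isomorphism $F^2 \to (F \oplus F)_h$, $(x,y) \mapsto (x - [M_y,h](1,1), y)$, which untwists the action. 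You should add this check; without it the forward direction of the correspondence is not yet established.
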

\begin{proof}
 To show that the map is well-defined, we have to prove that each~$(\II \oplus \II)_h$ is isomorphic to some lattice in $F^2$ as a symplectic $\OO$-module.
 But $(\II \oplus \II)_h$ is a lattice in $(\II \oplus \II)_h \otimes_\ZZ \QQ = (F \oplus F)_h$, and a symplectic $F$-isomorphism $F^2 \cong (F \oplus F)_h$ is given by
 $(x,y) \mapsto (x-[M_y,h](1,1),y)$.

 Injectivity follows from the last proposition, so it remains to show that any symplectic $\OO$-extension \[ E:\quad 0 \to \II_1 \to M \to \II_2 \to 0 \] is isomorphic to one
 of the desired form. By definition, we may assume
 without loss of generality that $M$ is a lattice in $F^2$ equipped with the symplectic pseudo-trace pairing and that $\II_1 = \II$ is a lattice in $F$.
 Since the image of $\II$ is a Lagrangian subgroup of $M$, the symplectic pseudo-trace pairing on $M$ and a choice of a $\ZZ$-splitting $s: \II_2 \hookrightarrow M$ onto a
 Lagrangian complement of $\II$ induces a $\ZZ$-module isomorphism \[ \II_2 \cong \Hom_\ZZ(\II;\ZZ). \]
 The pseudo-trace pairing on $\II$ induces also a $\ZZ$-module isomorphism \[ \II^{\vee} \cong \Hom_\ZZ(\II;\ZZ). \]
 Moreover, since the $\OO$-module structure on $M$ is self-adjoint with respect to the symplectic pseudo-trace pairing and for each $x \in \OO$ the commutator $[s,M_x]$ maps to
 $\II \subset M$, the resulting isomorphism $\varphi: \II_2 \cong \II^\vee$ is in fact $\OO$-linear. It follows that the sequence $E$ of $\OO$-modules is isomorphic to the
 sequence \[ 0 \to \II \to M \to \II^\vee \to 0. \] Since the $\ZZ$-splitting $s \circ \varphi^{-1} : \II^\vee \hookrightarrow M$ induces a symplectic isomorphism
 $\II \oplus \II^\vee \cong M$, the result follows from Lemma~\ref{HomExtSur}.
\end{proof}

{\bf Remark.} We have seen in the last proof, that in Definition~\ref{Symplextension2} we can replace the condition that the $\OO$-module $\II_2$ in the sequence
\[ E:\quad 0 \to \II_1 \to M \to \II_2 \to 0 \] is a lattice in $F$, with the condition that $\II_1$ maps onto a Lagrangian subgroup of~$M$.
\clearpage

\section{Boundary strata}\label{boundstrat}

In \cite{MartinMatt}, Bainbridge and M\"oller introduced a generalization of the classical period matrices on the moduli space $\cMM_g(L)$ of $L$-weighted stable curves via a meromorphic function on
$\cMM_g(L)$. They used this function to give a necessary and, for $g=3$, sufficient condition for a stable curve for lying in the boundary of the real multiplication locus.
Furthermore, they described how the corresponding weighted stable curves can be cut out by equations in terms of cross-ratios of projective coordinates.

In the following section, we transfer these techniques to our pseudo-cubic case.
We will see that one has to be very careful about the subtleties and that not everything works as fine as in the cubic case.
Nevertheless, we get the same necessary condition and very similar cross-ratio equations.

\subsection{Eigenforms for pseudo-real multiplication}

Now we introduce eigenforms for pseudo-real multiplication on Jacobians of Riemann surfaces.
We will see that, once we have fixed a quadratic pseudo-embedding, the choice of pseudo-real multiplication corresponds to the choice of an eigenform.
In fact, we will do this for a larger class then just Riemann surfaces, namely for stable curves with compact Jacobians.
\\[1em]
{\bf Deligne-Mumford compactification.}
A \emph{stable Riemann surface} or \emph{stable curve} is by definition a connected, compact, complex analytic variety of dimension one with only finitely many singularities,
such that each singularity is a node - i.e. a singularity of the form~$xy=0$ - and such that each connected component of the complement of the nodes  is hyperbolic.
The \emph{geometric genus} of a stable curve~$X$ is the sum of the genera of the connected components of the normalization of~$X$.
The \emph{arithmetic genus} of~$X$ is the genus of the Riemann surface obtained by thickening each node to an annulus.
If we speak just of the genus, we mean the arithmetic genus.
We give now a brief description, how the moduli space~$\cMM_g$ of genus~$g$ stable curves can be constructed.
For more details concerning this construction of~$\cMM_g$ as an analytic space, see~\cite{HubbardKoch}.
\begin{figure}[ht]
\centering
\begin{tikzpicture}[scale=1]
 \coordinate (A) at (0,0); \coordinate (B) at (8,0); \coordinate (C) at (4,1); \coordinate (D) at (4,-1);
 \draw[thick] (A) .. controls (0,2) and (3.5,1.5) .. (C);
 \draw[thick] (A) .. controls (0,-2) and (3.5,-1.5) .. (D);
 \draw[thick] (B) .. controls (8,2) and (4.5,1.5) .. (C);
 \draw[thick] (B) .. controls (8,-2) and (4.5,-1.5) .. (D);
 \draw[thick] (A) .. controls (0.5,0.5) and (1.5,0.5) .. (2,0);
 \draw[thick] (A) .. controls (0.5,-0.5) and (1.7,-0.25) .. (2.2,0.1);
 \draw[thick] (5.2,0) .. controls (5.6,0.3) and (6.4,0.3) .. (6.8,0);
 \draw[thick] (5,0.1) .. controls (5.4,-0.3) and (6.6,-0.3) .. (7,0.1);
 \draw[thick] (C) .. controls (3.5,0.5) and (3.5,-0.5) .. (D);
 \draw[thick] (C) .. controls (4.5,0.5) and (4.5,-0.5) .. (D);
\end{tikzpicture}
\setlength{\abovecaptionskip}{0cm}
\caption{A stable curve of arithmetic genus three and geometric genus one}\label{StableCurveg2}
\end{figure}
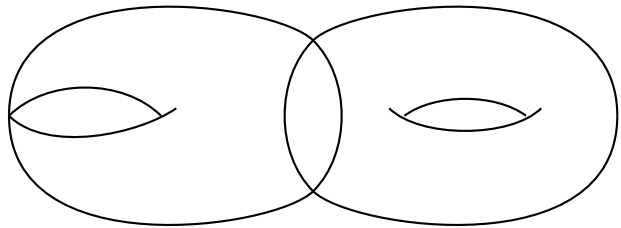

Fix a compact, oriented surface $\Sigma_g$ of genus $g$.
For any genus~$g$ stable curve~$X$, a \emph{marking by $\Sigma_g$} or \emph{collapse} is a continuous surjective map $f\!: \Sigma_g \to X$, such that the preimage of each node is a simple closed curve
in $\Sigma_g$ and such that $f$ induces an orientation preserving homeomorphism on the complement of these curves. We denote the set of preimages of the nodes under~$f$ by~$\Gamma_f$.
A pair $(X,f)$, where $X$ is a stable curve and $f: \Sigma_g \to X$ is a collapse, is called a \emph{$\Sigma_g$-marked} stable curve.
Two $\Sigma_g$-marked stable curves, say $(X_1,f_1)$ and $(X_2,f_2)$, are defined to be equivalent, if there is an isomorphism $\phi\!: X_1 \to X_2$, such that there is an homeomorphism
$h\!: \Sigma_g \to \Sigma_g$ isotopic to the identity on~$\Sigma_g$ with $\phi \circ f_1 = f_2 \circ h$.
The \emph{augmented Teichm\"uller space} $\cTT_g(\Sigma_g)$ is defined to be the set of all $\Sigma_g$-marked stable curves up to this equivalence relation
and it carries a topology defined as follows.
For each simple closed curve $\gamma$ in $\Sigma_g$, there is the well-defined function $l_\gamma\!: \cTT_g \to \RR_{\geq 0} \cup \{ \infty \}$ assigning to each $[(X,f)] \in \cTT_g(\Sigma_g)$
the hyperbolic length of the unique geodesic in~$X$ homotopic to~$f(\gamma)$.
Note that $l(\gamma)=0$, if $\gamma$ is homotopic to a curve in $\Gamma_f$, and that $l(\gamma)= \infty$, if $\gamma$ is transversal to a curve in~$\Gamma_f$.
Then $\cTT_g(\Sigma_g)$ is equipped with the smallest topology, such that each $f_\gamma$ is continuous.
It contains the \emph{Teichm\"uller space} $\TT_g(\Sigma_g)$ of classes of $\Sigma_g$-marked Riemann surfaces as an open, dense subspace.

The \emph{mapping class group} $\Mod(\Sigma_g)$ is defined to be the group of orientation preserving homeomorphisms of $\Sigma_g$ up to isotopy.
It acts on $\cTT_g(\Sigma_g)$ via \[ h.[(X,f)]:=[(X,f\circ h^{-1})] \] and we get $\cMM_g = \cTT_g(\Sigma_g)/\Mod(\Sigma_g)$ for the moduli space of genus~$g$ stable curves.
It compactifies the moduli space $\MM_g = \TT_g(\Sigma_g)/\Mod(\Sigma_g)$ of genus~$g$ Riemann surfaces and is called the \emph{Deligne-Mumford compactification} of $\MM_g$.
\\[1em]
{\bf Stable Abelian differentials and Jacobians.}
For any Riemann surface $X$ of genus ~$g$, we denote by $\Omega(X)$ the $g$-dimensional $\CC$-vector space of holomorphic one-forms (or Abelian differentials) and by $\Omega\MM_g \to \MM_g$
the vector bundle of holomorphic one-forms.
We extend this bundle to a vector bundle $\Omega\cMM_g \to \cMM_g$ over the moduli space of stable forms as in~\cite{MartinMatt} as follows.
For any stable curve $X$, we denote by $X_0$ the complement of the nodes in $X$. To a pair of cusps of $X_0$ coming from the same node of $X$ we will refer as \emph{opposite cusps}.
A \emph{stable Abelian differential} or for short just \emph{stable form} on $X$ is defined to be a holomorphic one-form $\omega$ on $X_0$ with at most simple poles at each cusp of $X_0$,
such that for any pair $(c_1,c_2)$ of opposite cusps, for the residues we have \[ \res_{c_1}(\omega) = -\res_{c_2}(\omega). \]
If~$X$ has arithmetic genus~$g$, then the space~$\Omega(X)$ of stable Abelian differentials on~$X$ is a $\CC$-vector space of dimension~$g$.
An isomorphism between two pairs $(X_1,\omega_1)$, $(X_2,\omega_2)$, where each $\omega_i$ is a stable form on the stable curve $X_i$, is an isomorphism $f\!: X_1 \to X_2$ with $f^*(\omega_2)=\omega_1$.
Defining~$\Omega\cMM_g$ to be the space of isomorphism classes of stable curves with stable forms, we get a vector bundle \[ \Omega\cMM_g \to \cMM_g, \quad [(X,\omega)] \to [X], \]
the \emph{bundle of stable forms}, that induces the usual bundle $\Omega\MM_g \to \MM_g$ of holomorphic one-forms.

The homology group $H_1(X_0;\ZZ)$, a free Abelian group of rank at most~$2g$, can be embedded into the $\CC$-vector space $\Omega(X)^* := \Hom_\CC(\Omega(X);\CC)$ via
\[ H_1(X_0;\ZZ) \hookrightarrow \Omega(X)^*,\quad \gamma \mapsto \left( \omega \mapsto \int_\gamma \omega \right). \]
The \emph{Jacobian} of $X$ is then defined as the quotient \[ \Jac(X):=\Omega(X)^*/H_1(X_0;\ZZ). \]
It is an Abelian variety if and only if the geometric genus of~$X$ is~$g$, and this holds if and only if every node is separating.
Otherwise it is not compact, since the $\ZZ$-rank of $H_1(X_0;\ZZ)$ is strictly smaller than~$2g$.
We denote by $\widetilde \MM_g \subset \cMM_g$ the locus of stable curves with compact Jacobians.
For $X \in \widetilde \MM_g$, the intersection pairing on the irreducible components induces a principal polarization~$\Theta_X$ on~$\Jac(X)$, and thus the Torelli map extends to a map
\[ t:~ \widetilde \MM_g \to \Ag,\quad X \mapsto (\Jac(X),\Theta_X), \] which is surjective for $g \leq 3$.

We will often just write $\Jac(X)$ instead of the pair $(\Jac(X),\Theta_X)$.
\\[1em]
{\bf Eigenforms for pseudo-real multiplication.}
Consider a genus three stable curve~$X$ with compact Jacobian $\Jac(X)$.
The embedding \[ H_1(X_0;\ZZ) \hookrightarrow \Omega(X)^*,\quad \gamma \mapsto (\omega \mapsto \smallint_{\gamma}\omega) \] extends $\QQ$-linear to an embedding
$H_1(X_0;\QQ) \hookrightarrow \Omega(X)^*$, therefore the action of~$\End(\Jac(X))$ on~$H_1(X_0;\ZZ)$ extends naturally to an action of $\End_\QQ(\Jac(X))$ on~$H_1(X_0;\QQ)$.
As always, let $\OO$ be a pseudo-cubic order in a pseudo-cubic number field $F = K \oplus \QQ$.
We say that $X$ has \emph{pseudo-real multiplication} by~$\OO$ (respectively~$F$), if~$\Jac(X)$ has pseudo-real multiplication by~$\OO$ (respectively~$F$).
We denote by \[ \RM_{\OO} \subset \MM_3\quad (\mbox{respectively } \widetilde{\RM}_\OO \subset \widetilde \MM_3) \] the locus of Riemann surfaces (respectively stable curves with compact Jacobian)
having pseudo-real multiplication by $\OO$ and by \[ \cRMO \subset \cMM_3\quad \] its closure in the Deligne-Mumford compactification of $\MM_3$.
We define $\RM_F$, $\widetilde{\RM}_F$ and $\cRMF$ analogously.

Now we want to discuss eigenforms for pseudo-real multiplication. In the case of a real cubic number field, eigenforms are defined by using the three field embeddings into
$\RR$. The analogue to these three maps in the pseudo-cubic case is the following.
Let $\iota_1,\iota_2: K \hookrightarrow \RR$ be the two real embeddings of $K$ into $\RR$. The two maps \[ F \to \RR,\quad (x,q) \mapsto \iota_i(x) \] will also be denoted
by $\iota_i$ for $i \in \{ 1,2 \}$. The third map is simply \[ \iota_3:~ F \to \RR,\quad (x,q) \mapsto q. \]
We call these three (non-injective) maps $\iota_1,\iota_2,\iota_3 : F \to \RR$ the three \emph{real pseudo-embeddings} of $F$ to $\RR$.
We also refer to the first two of these maps,~$\iota_1$ and~$\iota_2$, as the two \emph{real quadratic pseudo-embeddings}.
Now consider some choice of pseudo-real multiplication \[ \rho:~ F \hookrightarrow \End_\QQ^+(\Jac(X)). \]
This choice induces a monomorphism $F \hookrightarrow \End_\CC(\Omega(X)^*)$ and by duality a monomorphism $F \hookrightarrow \End_\CC(\Omega(X))$,
both also denoted by $\rho$. Once we have fixed the choice of pseudo-real multiplication, we will also write $x.\cdot$ instead of $\rho(x)(\cdot)$ wherever it makes sense.
\begin{defi}
 Let $X \in \widetilde{\RM}_F$ for a pseudo-cubic number field $F$ and let $\rho$ be some choice of pseudo-real multiplication by~$F$.
 Furthermore, let $\iota : F \to \RR$ be some real pseudo-embedding of $F$.
 Then a non-zero holomorphic one-form $\omega \in \Omega(X)$ is called a \emph{$\iota$-eigenform for $\rho$}, if \[ x.\omega = \iota(x)\omega \] for all $x \in F$.
 The form $\omega$ is called an \emph{eigenform for $\rho$} if it is a $\iota$-eigenform for some real pseudo-embedding $\iota: F \to \RR$
 and it is just called an \emph{eigenform} if it is an eigenform for some choice of pseudo-real multiplication.
\end{defi}
By the definition of the embedding of $H_1(X_0;\ZZ)$ into $\Omega(X)^*$ by integrating forms and the definition of the induced action of $F$ on $\Omega(X)$ by duality,
one sees immediately that $\omega \in \Omega(X)$ is a $\iota$-eigenform for $\rho$ if and only if we have \[ \smallint_{x.\gamma}\omega = \iota(x) \smallint_{\gamma}\omega \]
for all $x \in \OO$ and $\gamma \in H_1(X_0;\ZZ)$.

For every real pseudo-embedding $\iota$ we denote by $\Omega^\iota(X) \leq \Omega(X)$ the one-dimensional subspace of $\iota$-eigenforms for $\rho$.
Analogous to the case of totally real number fields, we have the decomposition of $\Omega(X)$ into eigenspaces,
\begin{equation}\label{Eigenraumzerlegung} \Omega(X) = \bigoplus_{i=1}^3 \Omega^{\iota_i}(X), \end{equation}
as can be easily seen by the description of principally polarized Abelian varieties with pseudo-real multiplication in Section~\ref{dim3}.
On $\Omega(X)$ there is the Hermitian form \[ \langle \omega_1,\omega_2 \rangle = \frac{i}{2} \int_X \omega_1 \wedge \overline{\omega_2}, \]
and the action of $F$ on $\Omega(X)$ is also self-adjoint with respect to this form.
Therefore, the decomposition (\ref{Eigenraumzerlegung}) is in fact orthogonal.

We denote by \[ \Omega^\iota\RM_{\OO} \subset \Omega\MM_3\quad (\mbox{respectively } \Omega^\iota\widetilde{\RM}_\OO \subset \Omega \widetilde \MM_3) \] the locus of Riemann surfaces
(respectively stable curves with compact Jacobian) admitting pseudo-real multiplication by $\OO$ together with the choice of a~$\iota$-eigenform for some choice of such a pseudo-real multiplication,
and by \[ \Omega^\iota\cRMO \subset \Omega \cMM_3\quad \] we denote its closure in the space of stable Abelian differentials.
We define $\Omega^\iota\RM_F$, $\Omega^\iota\widetilde{\RM}_F$ and $\Omega^\iota\cRMF$ analogously.

\begin{lem}\label{eigenrealmult}
 Let $\OO \subset F$ be a pseudo-cubic order and let $\iota:~ F \to \RR$ be one of the two real quadratic pseudo-embeddings. Moreover, let~$X \in \widetilde{\RM}_\OO$.
 Then, for each choice of pseudo-real multiplication $\rho$ by $\OO$, there is, up to scalar multiples, a unique $\iota$-eigenform $\omega$ for $\rho$.
 Conversely, if $\omega \in \Omega(X)$ is any~$\iota$-eigenform, then there is a unique choice of pseudo-real multiplication~$\rho$ realizing~$\omega$ as a~$\iota$-eigenform.
\end{lem}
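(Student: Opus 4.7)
The first assertion follows from the orthogonal eigenspace decomposition $\Omega(X) = \bigoplus_{i=1}^3 \Omega^{\iota_i}(X)$ stated just above. I would first verify that each summand is one-dimensional by choosing any $(x,q) \in \OO$ with $\iota_1(x), \iota_2(x), q$ pairwise distinct: then $\rho(x,q)$ acts on $\Omega(X)$ with three distinct real eigenvalues, so each eigenspace must be one-dimensional. Existence and uniqueness up to scalar multiples of the $\iota$-eigenform are then immediate.

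For the converse the plan is to reconstruct $\rho$ entirely from $\omega$; without loss of generality take $\iota = \iota_1$. I would recover, in order, the elliptic subvariety $C_\rho \subset \Jac(X)$ determined by $\rho$, its complementary subsurface $S_\rho$, and the real multiplication $\rho_2\colon \OD \hookrightarrow \End^+(S_\rho)$. For $C_\rho$, I would study the period map $P_\omega\colon H_1(X_0;\ZZ) \to \CC$, $\gamma \mapsto \int_\gamma \omega$. The eigenform condition yields $P_\omega(\rho(x,q).\gamma) = \iota_1(x) P_\omega(\gamma)$, so $\mathrm{im}\,P_\omega$ is an $\iota_1(\OO)$-submodule of $\CC$. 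Since $\rho_\QQ(0,1).\omega = 0$ forces $\omega \in \Omega(S_\rho)$, the form $\omega$ restricts to zero on $C_\rho$, giving the inclusion $H_1(C_\rho;\QQ) \subseteq \ker(P_\omega \otimes \QQ)$. For equality it suffices to verify that $\mathrm{im}\,P_\omega$ has $\QQ$-rank four, and I would do this by reading off the first row of the normal-form period matrix $\Pi_z$ from Proposition~\ref{classifdim3}: the periods $\eta_1, \eta_2, \tfrac{\eta_2^\sigma}{\sqrt{D}}z_1, -\tfrac{\eta_1^\sigma}{\sqrt{D}}z_1$ are $\QQ$-linearly independent in $\CC$ by a short argument separating real and imaginary parts, using only that $\eta_1, \eta_2$ form a $\ZZ$-basis of $\OD$ (so $\eta_1^\sigma, \eta_2^\sigma$ are $\QQ$-linearly independent real numbers) and that $z_1 \in \HH$ is non-real. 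Saturating $\ker P_\omega$ in $H_1(X_0;\ZZ)$ then recovers $H_1(C_\rho;\ZZ)$ and hence $C_\rho$. The subsurface $S_\rho$ is uniquely forced as the complementary subvariety with respect to $\Theta_X$. Finally on $\Omega(S_\rho) = \CC\omega \oplus (\CC\omega)^\perp$, orthogonal with respect to the Hermitian form on $S_\rho$, the $K$-action is forced to be by $\iota_1$ on the first factor and by $\iota_2$ on the second; dualising and intersecting with the lattice $H_1(S_\rho;\ZZ)$ pins down $\rho_2$, and $\rho$ itself is now determined on all of $F$.

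The main obstacle will be confirming that $\mathrm{im}\,P_\omega$ really has $\QQ$-rank four. This is exactly where the hypothesis that $\iota$ is a \emph{quadratic} pseudo-embedding enters essentially: if $\iota$ were the rational one $\iota_3$, the corresponding eigenform would lie in the one-dimensional space $\Omega(C_\rho)$ and its periods would span only a rank-two lattice, so $\omega$ alone could not distinguish $\rho$ from its Galois conjugate or from alternative complementary decompositions. The $\QQ$-linear independence of the four displayed periods exploits both the non-reality of $z_1 \in \HH$ and the simultaneous faithfulness of the two real embeddings $\iota_1, \iota_2$ on $\OD$, which are precisely the features that distinguish the quadratic pseudo-embeddings from the rational one.
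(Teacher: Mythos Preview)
Your proof is correct and follows essentially the same route as the paper: both identify the elliptic factor $C_\rho$ as the kernel of the period map $P_\omega$, then obtain $S_\rho$ as its complement, and finally observe that the remaining eigenspace $\Omega^{\iota_2}(X)$ is forced as the orthogonal complement of $\CC\omega$ in $\Omega(S_\rho)$. The paper simply asserts the key equality $\Lambda_i = \{\gamma : \int_\gamma\omega = 0\}$ without justification, whereas you make this rigorous by the rank-four computation from the first row of $\Pi_z$; your explicit verification here is a genuine improvement in clarity, and your closing remark about why the argument would fail for $\iota_3$ is a nice addition the paper omits.
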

\begin{proof}
 Without loss of generality, we may assume that $\iota=\iota_1$.
 The first statement follows from the decomposition~\eqref{Eigenraumzerlegung}. To prove the second statement, let $\omega \in \Omega(X)$ be a $\iota$-eigenform for two choices $\rho_1, \rho_2$
 of pseudo-real multiplication. We have to show that they induce the same decomposition~\eqref{Eigenraumzerlegung}.
 A priori, $\Omega^{\iota_3}(X) = \Omega_{\rho_i}^{\iota_3}(X)$ depends on $\rho_i$.
 We have seen in Section~\ref{pseudorealmult}, that each $\rho_i$ defines a one-dimensional subvariety $C_i$ of $X$, namely the kernel of $\rho_i(\QQ{\sqrt{D}})$.
 These kernels are just $C_i = \Omega_{\rho_i}^{\iota_3}(X) / \Lambda_i$ with $\Lambda_i = \Omega_{\rho_i}^{\iota_3}(X) \cap H_1(X;\ZZ)$.
 On the other hand, as $\omega$ is a $\iota_1$-eigenform for both $\rho_1$ and $\rho_2$, each lattice $\Lambda_i$ is generated by those cycles $\gamma \in H_1(X;\ZZ)$,
 where $\smallint_\gamma \omega = 0$. Thus $\Omega_{\rho_i}^{\iota_3}(X) = \Lambda_i \otimes_\ZZ \QQ$ depends only on $\omega$.
 Finally, $\Omega^{\iota_2}(X)$ is the orthogonal complement of $\Omega^{\iota_1}(X) \oplus \Omega^{\iota_3}(X)$ in $\Omega(X)$.
\end{proof}
 Recall that $X_\OO$ was defined to be the moduli space of pairs~$(A,\rho)$, where~$A$ is a principally polarized Abelian variety and~$\rho$ is pseudo-real multiplication by~$\OO$ on~$A$.
 Since the extended Torelli-map $t:~ \widetilde\MM_3 \to \mathcal{A}_3$ is surjective, we immediately get the following corollary. 
\begin{cor}\label{eigenrealmultmap}
 The map \[ \mathbb{P} \Omega^\iota \widetilde{\RM}_\OO \to X_\OO,\quad (X,[\omega]) \mapsto (\Jac(X),\rho), \] where $\rho$ denotes the choice of pseudo-real multiplication by $\OO$ realizing
 $\omega$ as a~$\iota$-eigenform form, is surjective. Furthermore, restricted to the dense subset $\mathbb{P} \Omega^\iota {\RM}_{\OO}$, it is injective.
\end{cor}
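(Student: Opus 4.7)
The plan is to deduce both assertions more or less directly from Lemma~\ref{eigenrealmult} together with the Torelli theorem and the surjectivity of the extended Torelli map onto $\mathcal{A}_3$ mentioned earlier. First I would note that the map is well-defined: given $(X,[\omega])\in\mathbb{P}\Omega^\iota\widetilde{\RM}_\OO$, the second part of Lemma~\ref{eigenrealmult} singles out a unique choice of pseudo-real multiplication $\rho$ by $\OO$ on $\Jac(X)$ that realizes $\omega$ as a $\iota$-eigenform, and this is independent of the scalar chosen to represent $[\omega]$, since $\rho$ depends only on the line.

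For surjectivity, I would start with a pair $(A,\rho)\in X_\OO$. Since the extended Torelli map $t:\widetilde{\MM}_3\to\mathcal{A}_3$ is surjective, I can pick $X\in\widetilde{\MM}_3$ with an isomorphism $\Jac(X)\cong A$ of principally polarized Abelian varieties. Transporting $\rho$ through this isomorphism endows $\Jac(X)$ with a choice of pseudo-real multiplication by $\OO$, so $X\in\widetilde{\RM}_\OO$. By the first half of Lemma~\ref{eigenrealmult} applied to the pseudo-cubic embedding $\iota$, there is, up to a scalar, a unique $\iota$-eigenform $\omega\in\Omega(X)$ for $\rho$. The pair $(X,[\omega])$ lies in $\mathbb{P}\Omega^\iota\widetilde{\RM}_\OO$ and, by construction, maps to $(A,\rho)$.

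For the injectivity assertion on the dense subset $\mathbb{P}\Omega^\iota{\RM}_\OO$, I would take $(X_1,[\omega_1]),(X_2,[\omega_2])$ having the same image $(A,\rho)$. Then both $\Jac(X_i)$ are isomorphic to $(A,\rho)$ as principally polarized Abelian varieties, so in particular $\Jac(X_1)\cong\Jac(X_2)$ as principally polarized Abelian varieties. Since $X_1,X_2\in\RM_\OO\subset\MM_3$ are smooth, Theorem~\ref{Torelli} (injectivity of the Torelli map on $\MM_g$) lifts this to an isomorphism $f:X_1\to X_2$ of Riemann surfaces, which can be chosen to realize the given isomorphism of Jacobians. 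Pulling back, $f^*\omega_2\in\Omega(X_1)$ is a $\iota$-eigenform for the pseudo-real multiplication transported from $\rho$, which is the same one that realizes $\omega_1$ as a $\iota$-eigenform. The uniqueness-up-to-scalar part of Lemma~\ref{eigenrealmult} then forces $f^*\omega_2=c\,\omega_1$ for some $c\in\CC^*$, so $(X_1,[\omega_1])$ and $(X_2,[\omega_2])$ represent the same point in $\mathbb{P}\Omega^\iota\RM_\OO$.

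The argument is essentially a bookkeeping exercise once Lemma~\ref{eigenrealmult} is in hand, and I do not expect a serious obstacle. The only delicate point to check carefully is that the whole construction really passes through quotienting by isomorphism on both sides, i.e.\ that one may transport the choice of pseudo-real multiplication along isomorphisms of Jacobians and that the resulting identifications respect the eigenform conditions; this is what allows us to invoke classical Torelli on the smooth locus for the injectivity statement and to restrict to that locus rather than to all of $\widetilde{\RM}_\OO$, where an analogous statement would need a Torelli-type theorem for stable curves with compact Jacobian.
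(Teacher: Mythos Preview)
Your proposal is correct and follows essentially the same approach as the paper, which simply states that the corollary follows immediately from Lemma~\ref{eigenrealmult} together with the surjectivity of the extended Torelli map $t:\widetilde{\MM}_3\to\mathcal{A}_3$. You have spelled out in detail exactly the argument the paper leaves implicit, including the use of classical Torelli for the injectivity on the smooth locus.
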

\subsection{Lagrangian boundary strata}

Here we recall the concept of Lagrangian boundary strata, see~\cite{MartinMatt}, Section~3.
\\[1em]
For any stable curve~$X$ of arithmetic genus~$g$, let~$N(X)$ be the set of nodes of~$X$, let $C(X)$ be the set of cusps of $X$ (so $|C(X)|=2|N(X)|$), and let~$X_0$ again be the complement of the nodes.
For each cusp $c$ of $X$ we denote by $c'$ the opposite cusp and by $\alpha_c \in H_1(X_0;\ZZ)$ the class of a positively oriented simple closed curve around~$c$.
We define the subgroup \[ I := \langle \alpha_c + \alpha_{c'} : c \in C(X) \rangle \leq H_1(X_0;\ZZ), \] the factor group \[ \widehat H_1(X;\ZZ) := H_1(X_0;\ZZ)/I \] and its subgroup
\[ \widehat C(X) := \langle [\alpha_c] : c \in C(X) \rangle \leq \widehat H_1(X;\ZZ). \]
If $\widetilde X$ denotes any smooth curve obtained by thickening each node of $X$ to an annulus, we get the natural homomorphism \[ h: \widehat H(X;\ZZ) \to H_1(\widetilde X;\ZZ). \]

\begin{defi}
 Let $X$ be a stable curve of arithmetic genus $g$.
 \begin{enumerate}
  \item A subgroup $L \leq \widehat H_1(X;\ZZ)$ is called a \emph{Lagrangian subgroup}, if the following three conditions hold.
        \begin{enumerate}
         \item The group $L$ is free Abelian of rank $g$.
         \item The factor group $\widehat H_1(X;\ZZ)/L$ is torsion-free.
         \item The intersection form of $H_1(\widetilde X;\ZZ)$ is trivial on $h(L)$. (This is independent from the choice of the thickening.)
        \end{enumerate}
  \item A \emph{Lagrangian marking} of $X$ by $L$, where $L$ is any free Abelian group of rank~$g$,
        is a group monomorphism \[ \varrho : L \hookrightarrow \widehat H_1(X;\ZZ), \] such that the image $\varrho(L)$ is a Lagrangian subgroup.
 \end{enumerate}
\end{defi}
Remark that $\varrho(L)$ necessarily contains $\widehat C(X)$.

\bigskip
An \emph{isomorphism} (respectively \emph{topological equivalence}) of pairs $(X_1,\varrho_1)$ and $(X_2,\varrho_2)$ of stable curves with Lagrangian markings by $L$ is by definition an isomorphism
(resp. homeomorphism) $\varphi\!: X_1 \to X_2$, such that the induced group isomorphism $\varphi_*\!: \widehat H_1(X_1;\ZZ) \to \widehat H_1(X_2;\ZZ)$ is compatible with the Lagrangian markings,
i.e. $\varphi_* \circ \varrho_1 = \varrho_2$. We denote by $\cMM_g(L)$ the space of isomorphism classes of arithmetic genus $g$ stable curves with Lagrangian markings by $L$ and by
$\MM_g(L) \subset \cMM_g(L)$ the subspace of isomorphism classes of genus $g$ Riemann surfaces with Lagrangian markings by $L$.

We can realize those spaces by taking quotients of the Teichm\"uller spaces in the following way.
Fix a genus $g$ surface $\Sigma_g$ and a Lagrangian subgroup $L \leq H_1(\Sigma_g;\ZZ)$. Let ${\rm Mod}(\Sigma_g)$ be the mapping class group of $\Sigma_g$. We define
\begin{align*}
 {\rm Mod}(\Sigma_g,L) & := \{ \sigma \in {\rm Mod}(\Sigma_g) : \sigma_*|_L = \id_L \},\\
 \cTT(\Sigma_g,L) & := \{ (X,f) \in \cTT(\Sigma_g): f \mbox{ collapses only curves } \gamma \mbox{ with } [\gamma] \in L \}
\end{align*}
and get
\begin{align*}
 \MM_g(L) & = \TT(\Sigma_g)/{\rm Mod}(\Sigma_g,L),\\
 \cMM_g(L) & = \cTT(\Sigma_g,L)/{\rm Mod}(\Sigma_g,L).
\end{align*}
A topological equivalence class in $\partial \cMM_g(L) = \cMM_g(L) \setminus \MM_g(L)$ is called a \emph{Lagrangian boundary stratum}.

If $\gamma \in L \leq H_1(\Sigma_g;\ZZ)$ is any non-zero simple closed curve, then we denote by $D_{\gamma} \subset \cMM_g(L)$ the divisor of stable curves obtained by pinching a curve homologous
to $\gamma$.
If the geometric genus of $X$ is zero, then there is another viewpoint of the boundary strata which we will describe next.
\begin{defi}
 Let $L$ be a free Abelian group of rank $g$ and let $X$ be a stable curve of arithmetic genus $g$ and geometric genus $0$.
 A \emph{$L$-weighting} of $X$ is a map $w: C(X) \to L$ satisfying the following three conditions.
 \begin{enumerate}
  \item $w(c)=-w(c')$ for all $c \in C(X)$.
  \item $\sum\limits_{c \in C(Y)}w(c)=0$ for all irreducible components $Y \subseteq X$.
  \item $\langle w(C(X)) \rangle_\ZZ$ = L
 \end{enumerate}
 The image $w(c)$ is called the \emph{weight of the cusp $c$}.
\end{defi}
It follows that cusps of separating nodes have weight zero.
There is an obvious correspondence between $L$-weighted stable curves $(X,w)$ and genus zero stable curves with Lagrangian markings $(X,\varrho)$.
If $c_1,c_1',...,c_n,c_n'$ are the cusps of $X$ coming from non-separating nodes, then the weights~$w(c_i)$ generate~$L$.
Hence the $L$-weighting~$w$ yields the well-defined marking $\varrho$ mapping each $w(c_i)$ to $\alpha_{c_i} \in \widehat H_1(X;\ZZ)$.

Given a stable curve~$X$ of arbitrary geometric genus with a Lagrangian marking~$\varrho$, and given a cusp~$c \in C(X)$ we will also call the element~$\varrho^{-1}(c) \in L$ the weight of the cusp~$c$.

\bigskip
An \emph{isomorphism} (respectively \emph{topological equivalence}) of pairs $(X_1,w_1)$ and $(X_2,w_2)$ of $L$-weighted stable curves is by definition a weight preserving isomorphism
(respectively homeomorphism). These correspond to the isomorphisms (respectively topological equivalences) of genus zero stable curves with Lagrangian markings defined above.
In terms of $L$-weighted stable curves we call a geometric genus zero Lagrangian boundary stratum a \emph{$L$-weighted boundary stratum}.

For an arbitrary Lagrangian boundary stratum (not necessarily consisting of geometric genus zero curves) or $L$-weighted boundary stratum $\mathcal{S}$ we denote by
${\rm Weight}(\mathcal{S})$ the set of weights of $\mathcal{S}$.
\\[1em]
{\bf Pseudo-embeddings of weighted boundary strata.}
In the case of a totally real cubic number field, we have natural embeddings of weighted boundary strata into $\Omega \cMM_3$ by using the embeddings of the field
into the real numbers. In our pseudo-cubic case we define in the same way a (not necessarily injective) map as follows.
Let $\II$ be a lattice in $F$ and let $\mathcal{S} \subset \cMM_3(\II)$ be an $\II$-weighted boundary stratum.
Furthermore, let $\iota: F \to \RR$ be one of the three real pseudo-embeddings into the real numbers.
For every $X \in \mathcal{S}$ and for each cusp $c$ of $X$, we denote by~$w(c)$ the weight of the cusp. 
Then we define \[ \pi_\iota:~ \mathcal{S} \to \Omega\cMM_3,\quad (X,\varrho) \mapsto (X,\omega), \]
where $\omega \in \Omega(X)$ is the unique stable form which satisfies ${\rm res}_c(\omega) = \iota(w(c))$ for each cusp~$c$.
We will denote by $p_\iota$ the composition of $\pi_\iota$ with the natural projection $\Omega(X) \to \mathbb{P}\Omega(X)$ and refer to~$p_\iota$ as the \emph{$\iota$-pseudo-embedding of $\mathcal{S}$}.
\subsection{Families with pseudo-real multiplication}\label{pseudorealmultfamilies}

We are interested in the boundary in $\cMM_3$ of the pseudo-real multiplication locus in $\MM_3$. We will use the concept of vanishing cycles, described in \cite{MartinMatt}, Section~5,
and will first give a short repetition. Then we will show that a smooth family of stable curves at a geometric zero stable curve with pseudo-real multiplication corresponds to a cusp packet.

\bigskip
We call a family $\mathcal{X} \to \Delta = \{ z \in \CC : |z|<1 \}$ of stable curves of arithmetic genus three \emph{smooth}, if for every
$z \in \Delta^* = \Delta \setminus \{ 0 \}$ the fiber $X_z$ is smooth.
Such a family defines via $z \mapsto X_z$ a holomorphic map $\Delta \to \cMM_3$. Conversely, any holomorphic map $\Delta \to \cMM_3$ sending $\Delta^*$ to $\MM_3$,
after possibly taking a cover of~$\Delta$ ramified only over~$0$, arises from such a family.

For any two points $z_1,z_2 \in \Delta^*$ and any path $\gamma$ joining $z_1$ with $z_2$, we denote by $f_\gamma: X_{z_1} \to X_{z_2}$ the lifted homeomorphism.
If $\gamma$ is a closed path in $\Delta^*$, starting and ending at $z$, then $f_\gamma$ is just a product of Dehn twists.
There is a collection of isotopy classes of simple closed curves in $X_{z_1}$, called \emph{vanishing curves}, such that the images under $f_\gamma$ are pinched as $z_2 \to 0$.
This collection does not depend on the choice of the path. The homology classes in the subgroup $V_z \leq H_1(X_z;\ZZ)$ generated by the vanishing curves are called \emph{vanishing cycles}.

Let $\OO$ be a pseudo-cubic order. A smooth family $\mathcal{X} \to \Delta$ has \emph{pseudo-real multiplication by $\OO$}, if we can equip each smooth fiber $X_z$ with pseudo-real multiplication
\[ \rho_z : \OO \hookrightarrow \End^+(\Jac(X_z)) \] for each $z \in \Delta^*$, such that the isomorphisms $H_1(X_{z_1};\ZZ) \to H_1(X_{z_2};\ZZ)$ arising from the Gauss-Manin connection commute
with the action of $\OO$. This is equivalent to the condition that for each $z_1,z_2 \in \Delta^*$ and for each path $\gamma$ in $\Delta^*$ joining $z_1$ with $z_2$, the action of $\OO$ on the
homology groups $H_1(X_{z_i};\ZZ)$ commutes with the induced isomorphism $(f_\gamma)_* : H_1(X_{z_1};\ZZ) \to H_1(X_{z_2};\ZZ)$.
Analogous to~\cite{MartinMatt}, Proposition~5.5, we have the following lemma.
\begin{lem}\label{vanishingcurves}
 Let $\mathcal{X} \to \Delta$ be a smooth family of stable curves of arithmetic genus three with pseudo-real multiplication by $\OO$.
 Then for each $z \in \Delta^*$, the action of $\OO$ on $H_1(X_z;\ZZ)$ preserves the subgroup $V_z \leq H_1(X_z;\ZZ)$ of vanishing cycles.
\end{lem}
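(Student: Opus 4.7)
The plan is to exploit the Picard--Lefschetz description of the monodromy together with the commutation hypothesis to identify $V_z\otimes\QQ$ as a canonical invariant that the $\OO$-action must respect. First I would use the setup recalled just before the lemma: after passing to a generator of $\pi_1(\Delta^*,z)$, the vanishing cycles can be realised by pairwise disjoint simple closed curves $\delta_1,\ldots,\delta_k$ on $X_z$ such that the monodromy $T=(f_{\gamma_0})_*$ factors as a product of Dehn twists along the $\delta_i$. Since disjoint curves have vanishing mutual intersection, the Picard--Lefschetz formula together with a short induction yields
\[ (T-I)\gamma \;=\; \sum_{i=1}^k \langle \gamma,\delta_i\rangle\,\delta_i \qquad \text{for every } \gamma\in H_1(X_z;\ZZ), \]
so the image $W:=(T-I)(H_1(X_z;\ZZ))$ is automatically contained in $V_z$.

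Next, the hypothesis that $\mathcal{X}\to\Delta$ carries pseudo-real multiplication by $\OO$ says precisely that the $\OO$-action commutes with every $(f_\gamma)_*$, in particular with $T$ and hence with $T-I$. Consequently $W$ is an $\OO$-submodule of $H_1(X_z;\ZZ)$. To finish, one upgrades $W\subseteq V_z$ to $W\otimes\QQ=V_z\otimes\QQ$: the induced map $\gamma\mapsto(\langle\gamma,\delta_i\rangle)_i$ on $H_1(X_z;\QQ)$ surjects onto the image of the coordinates in $\QQ^k$ determined by the linear relations among the $\delta_i$, and non-degeneracy of the intersection pairing on $H_1(X_z;\QQ)$ together with an easy linear-algebra check (which one sees on a basis $\delta_{i_1},\dots,\delta_{i_m}$ of $V_z\otimes\QQ$) shows that the composition with the sum map $(c_i)\mapsto\sum c_i\delta_i$ is surjective onto $V_z\otimes\QQ$. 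Hence $W\otimes\QQ=V_z\otimes\QQ$ is $\OO$-stable.

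Finally, $V_z$ is a primitive sublattice of $H_1(X_z;\ZZ)$: the thickening map $h$ from the preceding subsection identifies $H_1(X_z;\ZZ)/V_z$ with a subgroup of the torsion-free group $H_1(\widetilde X;\ZZ)$. Therefore $V_z=(V_z\otimes\QQ)\cap H_1(X_z;\ZZ)$, and combining the $\OO$-invariance of $V_z\otimes\QQ$ just established with the (automatic) $\OO$-invariance of $H_1(X_z;\ZZ)$ gives $\OO\cdot V_z\subseteq V_z$.

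The main obstacle is the surjectivity $W\otimes\QQ=V_z\otimes\QQ$: a single Dehn twist would give only $\QQ\delta_i$, so it is essential that $T$ is the \emph{full} product of Dehn twists along all vanishing curves around $0$ and that the non-degeneracy of the intersection pairing is used in the right place. Everything else is either a direct application of Picard--Lefschetz or a formal consequence of the definition of pseudo-real multiplication on a family.
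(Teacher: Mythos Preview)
Your approach is correct and genuinely different from the paper's. The paper (following \cite{MartinMatt}, Proposition~5.5) argues analytically: it equips $H_1(X_z;\RR)$ with the Hodge norm, shows via the eigenform decomposition $\Omega(X_z)=\bigoplus_i\Omega^{\iota_i}(X_z)$ that each $x\in\OO$ acts with operator norm at most $\|x\|_\infty=\sum_i|\iota_i(x)|$, and then uses that for $z$ near $0$ the vanishing curves have arbitrarily small extremal length (hence small Hodge norm), so that $x\cdot\gamma_i$ is again representable by curves of small extremal length and therefore by vanishing curves. Your monodromy argument is purely topological and strictly more general---it works for any ring acting compatibly with the Gauss--Manin connection, not just pseudo-cubic orders, and requires no extremal-length or Hodge-norm estimates.

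Two minor points deserve tightening. For $W\otimes\QQ=V_z\otimes\QQ$, the clean statement is that in a basis $e_1,\dots,e_m$ of $V_z\otimes\QQ$ with $\delta_i=\sum_j a_{ij}e_j$, the map $T-I$ (restricted to $\gamma\mapsto(\langle\gamma,e_j\rangle)_j$, which is surjective by non-degeneracy) is followed by multiplication with the Gram matrix $A^tA$ (or $A^tDA$ if powers of Dehn twists occur), and this matrix is positive definite since $A$ has full column rank; your phrasing obscures this. Your appeal to $h$ for primitivity of $V_z$ is misdirected: in the paper $h$ goes from $\widehat H_1(X_0;\ZZ)$ \emph{to} $H_1(\widetilde X;\ZZ)=H_1(X_z;\ZZ)$, not the other way. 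Primitivity holds nonetheless, e.g.\ because collapsing the $\delta_i$ gives a map $H_1(X_z;\ZZ)\to H_1(X_0;\ZZ)$ with kernel exactly $V_z$ and with torsion-free target, or simply because disjoint simple closed curves on a surface always span a primitive subgroup of $H_1$. With these adjustments your proof goes through.
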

\begin{proof}
 By the definition of pseudo-real multiplication on a family, it suffices to show that there is at least one $z \in \Delta^*$, such that $\OO$ preserves $V_z$.

 Given any Riemann surface $X$, the \emph{Hodge-norm} of $\gamma \in H_1(X;\RR)$ is defined as
 \[ ||\gamma||_X = \sup_{\substack{\omega \in \Omega(X) \\ ||\omega||=1}} \left| \int_\gamma \omega \right|, \] where $||\omega|| = (\int_X|\omega|^2)^{\frac{1}{2}}$ is the norm induced by the
 form $\langle \omega_1,\omega_2 \rangle = \frac{i}{2} \smallint_X \omega_1 \wedge \overline{\omega_2}$.
 The \emph{extremal length} of the family of curves homotopic to a fixed closed curve $\gamma$ in $X$ is defined as \[ {\rm Ext}(\gamma) := \sup_\rho \frac{L(\rho)^2}{A(\rho)}, \]
 where the supremum is taken over all conformal metrics $\rho(z)dz$ with $\rho$ nonnegative and measurable,
 \[ L(\rho) := \inf_{\delta \simeq \gamma} \int_\delta \rho(z)|dz| \] and \[ A(\rho) := \int_X \rho(z)^2|dz|^2. \]
 Due to \cite{MartinMatt}, Proposition~3.1, We have \[ ||\gamma||_X^2 \leq {\rm Ext}(\gamma) \] for every closed curve $\gamma$ in $X$.
 Furthermore, by \cite{MartinMatt}, Proposition~3.2, there is a constant $C>0$ independent of $X$, such that any $\gamma \in H_1(X;\ZZ)$ is represented by a sum of simple closed curves
 $\delta_1,...,\delta_n$ with \[ {\rm Ext}(\delta_i) \leq C||\gamma||_X^2 \] for each $i$.

 For each $x \in F$ we define \[ ||x||_\infty := |\iota_1(x)|+|\iota_2(x)|+|\iota_3(x)|, \] where $\iota_1,\iota_2,\iota_3$ again denote the three real pseudo-embeddings.
 Now let $x \in \OO$ be primitive. We choose some $\omega_0 \in \Omega(X)$ with $||\omega_0||=1$ and $||x.\gamma||_{X_z}=|\int_{x.\gamma} \omega_0|$.
 By the decomposition (\ref{Eigenraumzerlegung}) of $\Omega(X)$, we can write $\omega_0 = \omega_1 + \omega_2 + \omega_3$ with $\omega_i \in \Omega^{\iota_i}(X)$.
 Since this decomposition is orthogonal, we have $||\omega_i|| \leq ||\omega_0|| = 1$ and get the estimate
 \begin{align*} ||x.\gamma||_{X_z} & \leq \sum_{i=1}^3 \left| \int_{x.\gamma}\omega_i \right| \\
                 & = \sum_{i=1}^3 \left( |\iota_i(x)| \cdot ||\omega_i|| \cdot \left|\int_\gamma \frac{\omega_i}{||\omega_i||}\right| \right) \\
                 & \leq \sum_{i=1}^3 \left( |\iota_i(x)| \cdot ||\gamma||_{X_z} \right) \\
                 & = ||x||_\infty ||\gamma||_{X_z} \end{align*}
 for every $\gamma \in H_1(X_z;\ZZ)$. For any $\varepsilon > 0$, we may choose $z$ sufficiently small, such that for each vanishing curve $\gamma_i$ in $X_z$ we have ${\rm Ext}(\gamma_i) < \varepsilon$
 and thus \[ ||x.\gamma_i|| < ||x||_\infty \varepsilon^{1/2}. \]
 So in $H_1(X_z;\ZZ)$ each $x.\gamma_i$ is represented by a sum of simple closed curves $\delta_1,...,\delta_n$ with \[ {\rm Ext}(\delta_i) < C||x||_\infty^2 \varepsilon. \]
 There is a constant $D>0$, such that ${\rm Ext}(\gamma)>D$ for every $z \in \Delta^*$ and every curve $\gamma$ in $X_z$ that is not a vanishing curve.
 Thus, the $\delta_i$ must be vanishing curves if we choose $\varepsilon$ sufficiently small, and the claim follows.
\end{proof}
Recall that in $\cMM_3(L)$ only curves in $L$ are collapsed. Therefore, if the stable curve $X_0$ from the family in the lemma above is of geometric genus zero, then in this case the Lagrangian
subgroup of $H_1(X_z;\ZZ)$ marked by $L$ is exactly the subgroup of vanishing cycles, and we will identify these two groups.

\begin{prop}\label{FamilyCuspPacket}
 Let~$\OO$ be a pseudo-cubic order with discriminant~$D$ of degree~$d$ relatively prime to the conductor of~$D$. Then each smooth family of genus three stable curves $\mathcal{X} \to \Delta$
 with pseudo-real multiplication by $\OO$ and with the stable curve $X_0$ of geometric genus zero, determines a unique cusp packet in~$\mathcal{C}(\OO)$.
\end{prop}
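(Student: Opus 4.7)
The plan is to produce the cusp packet by reading off a symplectic $(\OO,\II)$-extension from the lattice of vanishing cycles of the family, and then showing that the only ambiguity in the construction is precisely the action of $F^*$ on $\widetilde{\mathcal{C}}(\OO)$.

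First, I would pick any $z \in \Delta^*$ and consider the subgroup $V_z \leq H_1(X_z;\ZZ)$ of vanishing cycles. Since $X_0$ has geometric genus zero, $V_z$ is a Lagrangian subgroup of rank three. By Lemma~\ref{vanishingcurves} the action of~$\OO$ on $H_1(X_z;\ZZ)$ preserves~$V_z$, so $V_z$ inherits an $\OO$-module structure. Extending scalars, $V_z \otimes_{\ZZ} \QQ$ becomes an $F$-module; I would check that under the decomposition $F = \QQ(\sqrt{D}) \oplus \QQ$ coming from the idempotents, the $\QQ(\sqrt{D})$-part has dimension two and the $\QQ$-part has dimension one (this uses that pseudo-real multiplication induces a splitting of $\Jac(X_z)$ into a real-multiplication surface and an elliptic curve, so $V_z \otimes \QQ$ meets each factor's rational homology in a Lagrangian of the correct rank). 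Thus $V_z \otimes \QQ$ is free of rank one over $F$, and choosing any $F$-module isomorphism identifies $V_z$ with a lattice $\II$ in $F$ whose coefficient ring contains~$\OO$.

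Next, I would use the intersection pairing on $H_1(X_z;\ZZ)$, which is unimodular and with respect to which the $\OO$-action is self-adjoint (this is the content of $\rho \subset \End^+$), to identify the quotient $H_1(X_z;\ZZ)/V_z$ with $\Hom_{\ZZ}(V_z,\ZZ)$. Composing with the isomorphism $\Hom_{\ZZ}(\II,\ZZ) \cong \II^\vee$ induced by the pseudo-trace pairing (which is $\OO$-equivariant since $\OO$ is self-adjoint with respect to $\tr_p$), the short exact sequence
\begin{equation*}
 0 \to V_z \to H_1(X_z;\ZZ) \to H_1(X_z;\ZZ)/V_z \to 0
\end{equation*}
equipped with the intersection form becomes a symplectic $(\OO,\II)$-extension in the sense of Definition~\ref{extension}. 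By Lemma~\ref{HomExtSur} this extension is represented by some $E_h(\II)$ with $h \in \Hom_\QQ^+(F,F)$, yielding an element $(\II,E_h(\II)) \in \widetilde{\mathcal{C}}(\OO)$.

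Finally, I would verify independence of the choices involved. Independence of the base point $z \in \Delta^*$ is automatic, because the Gauss--Manin parallel transport along $\Delta^*$ is $\OO$-equivariant by the definition of a family with pseudo-real multiplication, and it preserves both $V_z$ and the intersection form; for different base points we thus obtain isomorphic symplectic $(\OO,\II)$-extensions which represent the same class in $\EE_\OO(\II)$. The remaining choice is the identification $V_z \cong \II$ as $\OO$-modules in $F$: two such identifications differ by an $\OO$-linear automorphism of $F$ as a rank-one $F$-module, which is multiplication by some $a \in F^*$. Chasing this through the construction above replaces $(\II,E_h(\II))$ by $(a\II,E_{h^a}(a\II))$, which is exactly the $F^*$-action defining $\mathcal{C}(\OO) = \widetilde{\mathcal{C}}(\OO)/F^*$. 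Hence the cusp packet is well-defined.

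The main obstacle is the first step: showing that $V_z$ is actually isomorphic to a lattice in $F$ as an $\OO$-module, i.e.\ that $V_z \otimes \QQ$ has the correct $F$-module type and that the $\OO$-action on $V_z$ is proper in the sense needed to apply Corollary~\ref{BijektionCuspPacketsCusps}. Everything afterwards is a diagram chase using the general machinery of Section~\ref{extensionclasses}, but this dimension-counting for the eigenspace decomposition of $V_z$ under the two idempotents of $F$ requires genuinely using that the subsurface with real multiplication has dimension two and the complementary factor is an elliptic curve.
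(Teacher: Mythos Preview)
Your approach is correct and matches the paper's strategy: extract the short exact sequence of $\OO$-modules from the vanishing cycles, identify $V_z$ with a lattice in $F$, and invoke the cusp-packet machinery of Section~\ref{extensionclasses}. The one substantive difference is in the step you flag as the main obstacle. The paper shows $V_z \otimes_\ZZ \QQ \cong F$ not via your idempotent dimension count but by observing that if some nonzero $x \in F$ annihilated $V_z \otimes_\ZZ \QQ$, then by self-adjointness $\rho_z(x)$ would send all of $H_1(X_z;\QQ)$ into the Lagrangian $V_z \otimes_\ZZ \QQ$, so $\rho_z(x^2)$ would annihilate $H_1(X_z;\QQ)$ --- impossible since $F$ has no nonzero nilpotents and $\rho_z$ is injective. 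Both arguments are valid; the paper's is slightly cleaner in that it does not invoke the geometric splitting of $\Jac(X_z)$. The paper also cites Proposition~\ref{latticeiso} to identify $H_1(X_z;\ZZ)$ with a lattice in $F^2$ and then appeals to Corollary~\ref{BijektionCuspPacketsCusps} (and the remark following it) for uniqueness, rather than tracking the $F^*$-ambiguity by hand as you do.
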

\begin{proof}
 Since $X_0$ has geometric genus zero, each subgroup of vanishing cycles $V_z \leq H_1(X_z;\ZZ)$ is free Abelian of rank three.
 The action of $\OO$ on the groups $H_1(X_z;\ZZ)$ is compatible with the Gauss-Mannin connection and it preserves the subgroup of vanishing cycles.
 Hence, we obtain pairwise isomorphic short exact sequences of $\OO$-modules \begin{equation}\label{Sequenz} 0 \to L \to H_1(X_z;\ZZ) \to H_1(X_z;\ZZ)/L \to 0 \end{equation} mapping $L$ onto $V_z$,
 where $L$ is as a $\ZZ$-module of rank three.
 Because the intersection pairing on $H_1(X_z;\ZZ) \otimes_\ZZ \QQ$ restricted to $L \otimes_\ZZ \QQ$ is identically zero, real multiplication is self-adjoint with respect to the intersection pairing
 and $F \setminus \{ 0 \}$ has no nilpotent elements, there is no $x \in F \setminus \{ 0 \}$ with $\rho_z(x)|_{L \otimes_\ZZ \QQ} \equiv 0$.
 Together with the fact that $L  \otimes_\ZZ \QQ$ and $F$ both have $\QQ$-dimension three, it follows that there exists an $F$-torsion-free element $\lambda \in L$ and thus $\lambda$ generates
 $L \otimes_\ZZ \QQ$ as an $F$-module. In particular, there is an $\OO$-module isomorphism $L \cong \II$,
 where~$\II$ is a lattice in~$F$ such that the coefficient ring~$\OO(\II)$ of~$\II$ contains~$\OO$.
 Since $H_1(X_z;\ZZ)$ is isomorphic to a lattice in $F^2$ by Proposition~\ref{latticeiso}, it follows from the remark after Corollary~\ref{BijektionCuspPacketsCusps},
 that the sequence \eqref{Sequenz} is a symplectic~$\OO$-extension in the sense of Definition~\ref{Symplextension2}.
 Hence the choice of pseudo-real multiplication by~$\OO$ on the family $\mathcal{X} \to \Delta$ determines a unique cusp packet $(\II,E) \in \mathcal{C}(\OO)$.
\end{proof}
\subsection{Generalized period coordinates}\label{periodnonmatrices}

Here we follow \cite{MartinMatt}, Section~4, and introduce certain meromorphic functions on the closure $\cMM_g(L)$, generalizing the classical period matrices.

\bigskip
First we will introduce some notations and identifications. Let $R$ be a commutative ring with $1$ and $M$ a $R$-module.
Then the symmetric group $S_2 = \{ {\rm id}, \theta \}$ acts on the tensor product $M \otimes_R M$ by $\theta(x \otimes y) := y \otimes x$ for all $x,y \in M$,
so we can define \[ {\Sym}_R(M) := \{ \lambda \in M \otimes_R M : \theta(\lambda) = \lambda \} \] and \[ \bS_R(M) := (M \otimes_R M) / S_2. \]
Furthermore, we denote for a $\ZZ$-module $M$ by \[ \langle \cdot , \cdot \rangle:~ \bS_{\ZZ}(\Hom(M,\ZZ)) \times \Sym(M) \to \ZZ \] the natural pairing defined by
\begin{equation}\label{pairing} \langle [f_1 \otimes f_2],\sum_{i,j}x_i \otimes x_j \rangle := \sum_{i,j}f_1(x_i)f_2(x_j). \end{equation}

We fix a genus $g$ surface $\Sigma_g$ and a Lagrangian subgroup $L \leq H_1(\Sigma_g;\ZZ)$.
We choose a Lagrangian complement of $L$, which is by definition a Lagrangian subgroup $M \leq H_1(\Sigma_g;\ZZ)$, such that \[ H_1(\Sigma_g;\ZZ) = L \oplus M. \]
For every $(X,f) \in \TT(\Sigma_g)$, this splitting yields the two isomorphisms \[ P_L^X:~ \Omega(X) \to \Hom_{\ZZ}(L,\CC),\quad P_M^X:~ \Omega(X) \to \Hom_{\ZZ}(M,\CC), \]
both given by $\omega \mapsto (\gamma \mapsto \smallint_{f(\gamma)}\omega)$.
It follows that we get a holomorphic map
\begin{align*} \TT(\Sigma_g) & \to \Hom_{\CC}(\Hom_{\ZZ}(L,\CC),\Hom_{\ZZ}(M,\CC)),\\ (X,f) & \mapsto P_M^X \circ (P_L^X)^{-1}. \end{align*}
There is the natural isomorphism
\begin{align*} \Hom_{\CC}(L^* \otimes_{\ZZ} \CC,L \otimes_{\ZZ} \CC) & \cong L \otimes_{\ZZ} L \otimes_{\ZZ} \CC \\ \psi & \mapsto \sum_{i,j} c_{ij} \lambda_i \otimes \lambda_j, \end{align*}
where $(c_{ij})_{i,j} \in \CC^{g \times g}$ is the representation matrix of $\psi$ with respect to the basis $(\lambda_1,...,\lambda_g)$ of $L$ and its dual basis $(\lambda_1^*,...,\lambda_g^*)$
of $L^* = \Hom_{\ZZ}(L,\ZZ)$ (this isomorphism does not depend on the choice of the basis).

Using the isomorphism $M^* = \Hom_{\ZZ}(M,\ZZ) \cong L$ given by the intersection form on $\Sigma_g$, we obtain a holomorphic map \[ \Phi:~ \TT(\Sigma_g) \to L \otimes_{\ZZ} L \otimes_{\ZZ} \CC. \]
The embedding $H_1(X;\ZZ) \hookrightarrow \Omega(X)^*$ maps a given $\ZZ$-basis $(\lambda_1,...,\lambda_g)$ of $L$ to a $\CC$-basis of $\Omega(X)^*$, and we denote by $(\omega_1,...,\omega_g)$ its
dual basis of $\Omega(X)$. Moreover, let $(\mu_1,...,\mu_g)$ be its dual basis of $M$ with respect to the intersection form. With these notations and the identification
$\Hom_{\ZZ}(M,\CC) \cong L \otimes_{\ZZ} \CC$, we get \[ (P_M^X \circ (P_L^X)^{-1})(\lambda_j^*) = (\mu \mapsto \smallint_{f(\mu)}\omega_j) = \sum_i(\smallint_{f(\mu_i)}\omega_j)\lambda_i \] and thus
\[ \Phi(X,f) = \sum_{i,j} (\smallint_{f(\mu_i)}\omega_j) \lambda_i \otimes \lambda_j. \]
Therefore, the coefficients are the period coordinates of $\Jac(X)$ with respect to our chosen bases. To be more precise, the period matrix with respect to the symplectic basis
$(f(\lambda_1),...,f(\lambda_g),f(\mu_1),...,f(\mu_g))$ of $H_1(X,\ZZ)$ and the $\CC$-basis $(\omega_1^*,...,\omega_g^*)$ of $\Omega(X)^*$ is $(I_g,Z)$ with \[ Z=(\smallint_{f(\mu_j)}\omega_i)_{i,j}.\]
In particular, $Z^{-1}$ lies in the upper Siegel half space and thus $Z$ is symmetric.
Hence $\Phi$ maps in fact just to the symmetric tensors, \[ \Phi:~ \TT(\Sigma_g) \to \Sym_{\ZZ}(L) \otimes_{\ZZ} \CC. \] We get the dual group homomorphism
\begin{align*} \Phi^*:~ \bS_{\ZZ}(L^*) \otimes_{\ZZ} \CC & \to \Hol(\TT(\Sigma_g),\CC),\\ [f_1 \otimes f_2] & \mapsto ((X,f) \mapsto [f_1 \otimes f_2](\Phi(X,f))), \end{align*}
which is just given by \[ \Phi^*(\lambda_i^* \otimes \lambda_j^*)(X,f) = \smallint_{f(\mu_j)}\omega_i. \]

The map $\Phi^*$ depends on the choice of the Lagrangian complement $M$. Another choice, say $M'$, yields another basis $(\mu_1',...,\mu_g')$ of $M'$ dual to $(\lambda_1,...,\lambda_g)$ with respect
to the intersection form. Since $L$ is Lagrangian, the duality condition gives that the new basis elements must be of the form $\mu_i' = \mu_i + l_i$ for some $l_i \in L$.
It follows that the restriction to $\bS_{\ZZ}(L^*)$ of the new map differs from the restriction of the old one just by values in $\ZZ$. Thus, we get a well-defined homomorphism
\begin{align*} \Psi:~ \bS_{\ZZ}(L^*) & \to \Hol^*(\MM_g(L)),\\ a & \mapsto \exp(2\pi i \Phi^*(a)(\cdot)). \end{align*}

Recall that for any non-zero simple closed curve $\gamma \in L$, we defined $D_{\gamma} \subset \cMM_g(L)$ to be the divisor of stable curves obtained by pinching a curve
homologous to $\gamma$.
Using the notation of the pairing (\ref{pairing}), we have the following theorem.

\begin{theo}[Bainbridge, M\"oller]\label{Martin4.1}
 For each $a \in \bS_{\ZZ}(L^*)$, the function $\Psi(a)$ is meromorphic on $\cMM_g(L)$. For each $\gamma \in L \setminus \{ 0 \}$, the order of vanishing of $\Psi(a)$ along $D_{\gamma}$ is
 \[ {\rm ord}_{D_{\gamma}}(\Psi(a)) = \langle a,\gamma \otimes \gamma \rangle. \]
 The function $\Psi(a)$ is holomorphic and nowhere vanishing along any Lagrangian boundary stratum obtained by pinching a curve homologous to zero.
 If $\mathcal{S}$ is a Lagrangian boundary stratum with \[ \langle a,\gamma \otimes \gamma \rangle \geq 0 \] for all $\gamma \in {\rm Weight}(\mathcal{S})$, then $\Psi(a)$
 is holomorphic on $\mathcal{S}$. If $\langle a,\gamma \otimes \gamma \rangle = 0$ for all $\gamma \in {\rm Weight}(\mathcal{S})$, then $\Psi(a)$ is nowhere vanishing on $\mathcal{S}$.
 Otherwise, if $\langle a,\gamma \otimes \gamma \rangle \geq 0$ for all $\gamma \in {\rm Weight}(\mathcal{S})$ and $\langle a,\gamma_0 \otimes \gamma_0 \rangle > 0$ for at least one $\gamma_0$,
 we have $\Psi(a) \equiv 0$ on $\mathcal{S}$.
\end{theo}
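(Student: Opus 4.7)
The plan is to reduce all claims to a local asymptotic expansion of periods near each boundary point, and then to read off every statement of the theorem from a single factorization formula for $\Psi(a)$.

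First I would exploit multiplicativity. Since $\Psi\colon \bS_\ZZ(L^*)\to\Hol^*(\MM_g(L))$ is a group homomorphism, one has $\Psi(a_1+a_2)=\Psi(a_1)\Psi(a_2)$, and orders of vanishing along a divisor as well as holomorphicity and non-vanishing along a stratum are additive under taking products of holomorphic functions. It therefore suffices to carry out the local analysis once, producing a formula whose behaviour under the pairing $\langle\cdot,\cdot\rangle$ is linear in $a$. Since all claims of the theorem concern either the order of $\Psi(a)$ along a boundary divisor $D_\gamma$ or the extension of $\Psi(a)$ to a Lagrangian boundary stratum, the whole argument boils down to calibrating the leading singular behaviour of $\Phi^*(a)(X,f)=\langle a,\Phi(X,f)\rangle$ as $(X,f)$ approaches a boundary point of $\cMM_g(L)$.

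Next I would work in plumbing coordinates. Around a boundary point of $\cMM_g(L)$ at which the pinched vanishing cycles are $\gamma_1,\dots,\gamma_k\in L$, there exist local holomorphic parameters $t_1,\dots,t_k$, together with additional moduli for the smooth components, such that $D_{\gamma_j}=\{t_j=0\}$ and the stratum of simultaneously pinched curves $\mathcal{S}$ is locally cut out by $t_1=\cdots=t_k=0$. The core analytic input I would import is the classical asymptotic expansion of periods in a degenerating family (as in Fay's book, or equivalently via the nilpotent orbit theorem applied to the local monodromy around each $D_{\gamma_j}$): for the dual basis $(\mu_i)$ and the dual basis $(\omega_j)$ of stable forms, one has
\[
\int_{f_z(\mu_i)}\omega_j(z) \;=\; \frac{1}{2\pi i}\sum_{k=1}^K (\mu_i\cdot\gamma_k)(\mu_j\cdot\gamma_k)\log t_k \;+\; h_{ij}(t,\dots),
\]
with $h_{ij}$ holomorphic on the polydisk. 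Writing $\gamma_k=\sum_i c_{k,i}\lambda_i$ so that $\mu_i\cdot\gamma_k=c_{k,i}$, this expansion sums up, via the identity $\sum_{i,j}c_{k,i}c_{k,j}\,\lambda_i\otimes\lambda_j=\gamma_k\otimes\gamma_k$, to
\[
\Phi(X_z,f_z) \;=\; \sum_{k=1}^K \frac{\log t_k}{2\pi i}\,\gamma_k\otimes\gamma_k \;+\; H(t),
\]
where $H$ is a holomorphic map with values in $\Sym_\ZZ(L)\otimes_\ZZ\CC$. Establishing this expansion, together with the correct leading coefficient $(\mu_i\cdot\gamma_k)(\mu_j\cdot\gamma_k)$, is the technically demanding step and the main obstacle; once it is in place, the remainder of the argument is algebraic.

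Finally I would read off every claim of the theorem from exponentiating the formula above. Pairing with $a$ and applying $\exp(2\pi i\,\cdot)$ gives the factorisation
\[
\Psi(a)(X_z,f_z) \;=\; \prod_{k=1}^K t_k^{\langle a,\,\gamma_k\otimes\gamma_k\rangle}\cdot \exp\!\bigl(2\pi i\,\langle a,H(t)\rangle\bigr),
\]
in which the second factor is holomorphic and nowhere vanishing. This exhibits $\Psi(a)$ as a meromorphic function on $\cMM_g(L)$ whose order along each $D_{\gamma_k}$ equals $\langle a,\gamma_k\otimes\gamma_k\rangle$, proving the first two assertions. For a curve $\gamma$ that is homologous to zero the image $\varrho(\gamma)$ in $L$ vanishes (by the weight-sum condition in the definition of an $L$-weighting), so $\gamma\otimes\gamma=0$ and the corresponding monomial factor disappears, yielding holomorphicity and non-vanishing along such a divisor. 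On a general Lagrangian boundary stratum $\mathcal{S}$ with weights $\gamma_1,\dots,\gamma_k$, the factorisation immediately implies that $\Psi(a)$ is holomorphic on $\mathcal{S}$ when every exponent $\langle a,\gamma_k\otimes\gamma_k\rangle$ is non-negative, is in addition nowhere vanishing when all exponents are zero, and restricts to the zero function on $\mathcal{S}$ as soon as at least one exponent is strictly positive. This exhausts the statements of the theorem.
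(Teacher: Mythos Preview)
The paper does not give its own proof of this theorem: immediately after the statement it says ``This is Theorem 4.1.\ in \cite{MartinMatt} and a proof using plumbing coordinates can be found there.'' Your proposal follows precisely this plumbing-coordinates approach --- local coordinates $t_k$ with $D_{\gamma_k}=\{t_k=0\}$, the asymptotic expansion of the period matrix with logarithmic singularities governed by the monodromy (equivalently, Fay's expansion or the nilpotent orbit theorem), and the resulting factorisation $\Psi(a)=\prod_k t_k^{\langle a,\gamma_k\otimes\gamma_k\rangle}\cdot(\text{unit})$ --- so it is essentially the same argument as the one the paper cites.
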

This is Theorem 4.1. in \cite{MartinMatt} and a proof using plumbing coordinates can be found there.
\subsection{Admissibility}\label{admissibility}

Our aim is, analogous to \cite{MartinMatt}, Section~5, to formulate a necessary condition for geometric genus zero stable curves for lying in the boundary of the pseudo-real multiplication locus.

\bigskip
In all that follows, $F = K \oplus \QQ$ is again a pseudo-cubic number field.
As $\QQ$-vector spaces, we can identify $F$ with its dual $\Hom_{\QQ}(F,\QQ)$ via the isomorphism given by the pseudo-trace pairing
\begin{eqnarray*}
 F & \cong & \Hom_{\QQ}(F,\QQ),\\ x & \mapsto & \left(y \mapsto \tr_p(xy) \right) 
\end{eqnarray*}
and the tensor product $F \otimes_{\QQ} F$ with $\Hom_{\QQ}(F,F)$ via the isomorphism
\begin{eqnarray*}
 F \otimes_{\QQ} F & \cong & \Hom_{\QQ}(F,F),\\
 x_1 \otimes x_2 & \mapsto & \left( \psi_{x_1 \otimes x_2} : y \mapsto \tr_p(x_1y) \cdot x_2 \right) 
\end{eqnarray*}
We call a homomorphism $f \in \Hom_{\QQ}(F,F)$ \emph{self-adjoint}, if \[ \tr_p(f(x)y) = \tr_p(xf(y)) \] holds for all $x,y \in F$.
Thus, $\Hom_{\QQ}^+(F,F) \leq \Hom_{\QQ}(F,F)$ defined in Section~\ref{extensionclasses} is the subspace of all self-adjoint homomorphisms.

The tensor product $F \otimes_{\QQ} F$ carries the structure of an $F$-bimodule, and we can define the submodule
\[ \Lambda_1 := \{ \lambda \in F \otimes_{\QQ} F : \forall x \in F : x\lambda = \lambda x \}. \]
Under the identification of $ F \otimes_{\QQ} F$ with $\Hom_{\QQ}(F,F)$ above, ${\Sym}_\QQ(F)$ corresponds to $\Hom_{\QQ}^+(F,F)$ and~$\Lambda_1$ corresponds to $\Hom_F(F,F)$.
In particular, we have $\Lambda_1 \subset \Sym_{\QQ}(F)$.

Using the identification $F \cong F^* = \Hom_{\QQ}(F,\QQ)$, we have a pairing \[ \langle \cdot , \cdot \rangle :~ \bS_{\QQ}(F) \times \Sym_{\QQ}(F) \to \QQ, \]
defined by \[ \langle [\varphi_1 \otimes \varphi_2] , \sum_{i,j}q_{ij}x_i \otimes x_j \rangle := \sum_{i,j}q_{ij}\varphi_1(x_i) \varphi_2(x_j) \] on the simple tensors of
$\bS_{\QQ}(F)$. Since this pairing is perfect, we can identify the dual of $\Sym_{\QQ}(F)$ with $\bS_{\QQ}(F)$.
Finally, we denote by $\Ann(\Lambda_1) \subset \bS_{\QQ}(F)$ the annihilator of $\Lambda_1$ with respect to this pairing.

\bigskip
Now let~$\II$ be a lattice in~$F$. For each $\II$-weighted boundary stratum $\mathcal{S}$ we define
\begin{align*}
 \mathcal{C}(\mathcal{S}) & := \{ a \in \bS_{\QQ}(F) : \forall w \in {\rm Weight}(\bS) : \langle a,w \otimes w \rangle \geq 0 \}\quad {\rm and} \\
 \mathcal{N}(\mathcal{S}) & := \{ a \in \bS_{\QQ}(F) : \forall w \in {\rm Weight}(\bS) : \langle a,w \otimes w \rangle = 0 \}.
\end{align*}

\begin{defi}
 An $\II$-weighted boundary stratum $\mathcal{S}$ is called \emph{admissible} if \[ \mathcal{C}(\mathcal{S}) \cap \Ann(\Lambda_1) \subset \mathcal{N}(\mathcal{S}). \]
\end{defi}

Restriction gives a surjective map of algebraic tori
\[ p:~ \Hom(\mathcal{N}(\mathcal{S}) \cap \bS_{\ZZ}(\II^{\vee}),\CC^*) \to \Hom(\mathcal{N}(\mathcal{S}) \cap \Ann(\Lambda_1) \cap \bS_{\ZZ}(\II^{\vee}),\CC^*). \]
For each non-zero $a \in \mathcal{N}(\mathcal{S}) \cap \bS_{\ZZ}(\II^{\vee})$, the function $\Psi(a)$ is holomorphic and nowhere vanishing on $\mathcal{S}$ by Theorem~\ref{Martin4.1},
hence we get a morphism \[ {\rm CR}:~ \mathcal{S} \to \Hom(\mathcal{N}(\mathcal{S}) \cap \bS_{\ZZ}(\II^{\vee}),\CC^*),\quad X \mapsto \left( a \mapsto \Psi(a)(X) \right) \]
For every $a \in \Ann(\Lambda_1) \cap \bS_{\ZZ}(\II^{\vee})$ and $h \in \Sym_{\QQ}(F)$, we define \[ q(h)(a) := \exp(-2\pi i \langle a,h \rangle). \]
As $q(h)(a)=1$ for $h \in \Lambda_1$ or $h \in \Sym_{\ZZ}(\II)$, we get a well-defined homomorphism
\[ q:~ \Sym_\QQ(F) / (\Lambda_1+\Sym_{\ZZ}(\II)) \to \Hom(\mathcal{N}(\mathcal{S}) \cap \Ann(\Lambda_1) \cap \bS_{\ZZ}(\II^{\vee}),\CC^*). \]
Using the three maps above, every $h \in \Sym_{\QQ}(F) / (\Lambda_1+\Sym_{\ZZ}(\II))$ yields a subvariety of $\mathcal{S}$,
namely \[ \mathcal{S}(h) := {\rm CR}^{-1}(p^{-1}(q(h))), \] consisting of those $X \in \mathcal{S}$,
where $\Psi(\cdotp)(X)$ and $\exp(-2\pi i \langle \cdotp ,h \rangle)$ coincide on $\mathcal{N}(\mathcal{S}) \cap \Ann(\Lambda_1) \cap \bS_{\ZZ}(\II^{\vee})$.

\bigskip
Now we fix a genus three surface $\Sigma_3$. Given a lattice~$\II$ in~$F$, a homomorphism $h \in \Hom_\QQ^+(F,F) \cong \Sym_\QQ(F)$ and a symplectic
$\ZZ$-isomorphism $\eta: \II \oplus \II^\vee \to H_1(\Sigma_3;\ZZ)$, we define \[ \mathcal{R}\TT(\II,h,\eta) \subset \TT(\Sigma_3)\] to be the locus of those marked Riemann
surfaces $(X,f) \in \TT(\Sigma_3)$, such that the Jacobian of~$X$ has pseudo-real multiplication by $\OO_h(\II)$ and the symplectic $\ZZ$-module isomorphism
\[ f_* \circ \eta : (\II \oplus \II^\vee)_h \to H_1(X;\ZZ) \] is $\OO_h(\II)$-linear.
The Lagrangian decomposition induced by $\eta$ determines the holomorphic function \[ \Phi:~ \TT(\Sigma_3) \to \Sym_{\ZZ}(\II) \otimes_{\ZZ} \CC \]
from Section~\ref{periodnonmatrices}, and analogous to \cite{MartinMatt} we get the following proposition.

\begin{lem}
 The locus $\mathcal{R}\TT(\II,h,\eta)$ is the preimage under $\Phi$ of the coset of $\Lambda_1 \otimes_\ZZ \CC$ represented by~$-h$,
 \[ \mathcal{R}\TT(\II,h,\eta) = \Phi^{-1}(\Lambda_1 \otimes_\QQ \CC - h). \]
\end{lem}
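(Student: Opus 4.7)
First I would use the Lagrangian decomposition coming from $\eta$ to identify $\Omega(X)^*$ with $F \otimes_\QQ \CC$, under which $\Phi(X,f)$ becomes a self-adjoint $\CC$-linear endomorphism $\widetilde\Phi$ of this space via the standard identifications $\Sym_\QQ(F) \cong \Hom_\QQ^+(F,F)$ and $\II^\vee \cong \Hom_\ZZ(\II,\ZZ)$, the latter given by the pseudo-trace pairing. Under these, $\Lambda_1 \otimes_\QQ \CC$ becomes $\Hom_F(F,F) \otimes_\QQ \CC$, so the coset condition $\Phi(X,f) \in -h + \Lambda_1 \otimes_\QQ \CC$ translates into the single statement that $\widetilde\Phi + h$ commutes with $M_x$ for every $x \in F$.

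Next I would trace through how $\eta$, followed by the period embedding $H_1(X;\ZZ) \hookrightarrow \Omega(X)^*$, realises $\II \oplus \II^\vee$ inside $F \otimes_\QQ \CC$. The Lagrangian $\II \oplus 0$ maps to $\II \subset F \subset F \otimes \CC$ tautologically. For the second summand, I would check that the intersection-pairing dual $\mu_k \in M$ of $\lambda_k \in L$ corresponds under $\eta^{-1}$ to $-s_k$, where $s_k \in \II^\vee$ is the pseudo-trace dual of $\lambda_k$; the sign is forced by the convention $\langle (x_1,y_1),(x_2,y_2)\rangle_p = \tr_p(x_2y_1 - x_1y_2)$. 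Combined with the formula $f(\mu_k) = \Phi(X,f)(\lambda_k^*)$ from Section 7.4, this yields the embedding $(\lambda,\mu) \mapsto \lambda - \widetilde\Phi(\mu)$.

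Then I would translate the pseudo-real multiplication condition into a commutator identity. Any endomorphism in the image of $\rho$ acts $\CC$-linearly on $\Omega(X)^* \cong F \otimes_\QQ \CC$, and the $\OO_h(\II)$-linearity of $\eta$ applied to elements $(\lambda,0)$ forces $\rho(x)$ to agree with multiplication by $x$ on $\II$; by $\CC$-linearity $\rho(x) = M_x$ everywhere. The remaining compatibility—that $\eta$ is $\OO_h(\II)$-linear on elements $(0,\mu)$—then becomes
\[
 M_x\bigl(\lambda - \widetilde\Phi(\mu)\bigr) = \bigl(x\lambda + [M_x,h](\mu)\bigr) - \widetilde\Phi(x\mu)
\]
for all $x \in \OO_h(\II)$ and all $(\lambda,\mu) \in \II \oplus \II^\vee$, which after cancellation reduces to $[M_x, h + \widetilde\Phi] = 0$. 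Since $\OO_h(\II)$ spans $F$ over $\QQ$, this is equivalent to $h + \widetilde\Phi \in \Hom_F(F,F) \otimes_\QQ \CC = \Lambda_1 \otimes_\QQ \CC$.

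Finally, the converse direction follows by running the same computation backwards: given $\Phi(X,f) + h \in \Lambda_1 \otimes_\QQ \CC$, the defining property $[M_x,h](\II^\vee) \subset \II$ of $x \in \OO_h(\II)$ ensures multiplication by $x$ preserves the sublattice $\II - \widetilde\Phi(\II^\vee) \subset F \otimes \CC$ representing $H_1(X;\ZZ)$, while self-adjointness of $M_x$ with respect to the symplectic pseudo-trace pairing is automatic, so the action descends to pseudo-real multiplication by $\OO_h(\II)$ on $\Jac(X)$ compatible with $\eta$. The main obstacle is the sign-bookkeeping between the intersection pairing on $H_1(\Sigma_3;\ZZ)$, the symplectic pseudo-trace pairing on $\II \oplus \II^\vee$, and the two competing identifications of $M \cong L^*$ and $\II^\vee \cong L^*$; once the embedding $(\lambda,\mu) \mapsto \lambda - \widetilde\Phi(\mu)$ is pinned down with the correct sign, the algebra reducing pseudo-real multiplication to the commutator identity is purely mechanical.
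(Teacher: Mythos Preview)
Your proposal is correct and follows essentially the same route as the paper. The paper frames the argument by embedding $\Omega(X)^*$ as the graph of $\phi=\Phi(X,f)$ inside $(\II\oplus\II^\vee)\otimes\CC$ and checking that the complexified twisted action $f_x(\lambda,\mu)=(x\lambda+[M_x,h](\mu),x\mu)$ preserves this graph, whereas you take the dual viewpoint of embedding $H_1(X;\ZZ)$ into $\Omega(X)^*\cong F\otimes\CC$ and checking that $M_x$ restricts compatibly; both unwind immediately to the identity $(\phi+h)(x\mu)=x(\phi+h)(\mu)$, i.e.\ $[M_x,\phi+h]=0$, and hence to $\Phi(X,f)\in -h+\Lambda_1\otimes_\QQ\CC$.
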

\begin{proof}
 Again we will identify $F \otimes_\QQ F$ with $\Hom_\QQ(F,F)$ via the pseudo-trace pairing, so in particular we may regard $\phi := \Phi(X,f)$ from
 Section~\ref{periodnonmatrices} as an element of $\Hom_\CC^+(\II^\vee \otimes_\ZZ \CC,\II \otimes_\ZZ \CC)$.
 Consider the graph \[ {\rm Graph}(\phi) = \{ (\phi(\mu),\mu) : \mu \in \II^\vee \otimes_\ZZ \CC \} \subset (\II \otimes_\ZZ \CC) \oplus (\II^\vee \otimes_\ZZ \CC). \]
 By the Hodge decomposition, the $\CC$-linear map \[ {\rm Graph}(\phi) \to \Hom_\CC(\Omega(X);\CC), \] defined as
 \[ (\phi(\mu),\mu) \mapsto (\omega \mapsto \smallint_\mu\omega) = (\omega \mapsto \smallint_{\phi(\mu)}\omega), \] is an isomorphism.
 The last identity follows from the construction of $\Phi$, and therefore $\phi(\mu) \in \II \otimes_\ZZ \CC$ is the same element as $\mu \in \II^\vee \otimes_\ZZ \CC$ in $H_1(X;\RR)$.
 For every $x \in F$ we define $f_x \in \End_\CC((\II \otimes_\ZZ \CC) \oplus (\II^\vee \otimes_\ZZ \CC))$ by \[ f_x(\lambda,\mu) := (x\lambda+[M_x,h](\mu),x\mu), \]
 which is just the $\CC$-linear continuation of the action of $F$ on \[ H_1(X;\RR) \cong (\II \otimes_\ZZ \RR) \oplus (\II^\vee \otimes_\ZZ \RR). \] The marked surface $(X,f)$
 is in $\mathcal{R}\TT(\II,h,\eta)$ if and only if the $\OO_h(\II)$-module structure on $H_1(X;\ZZ)$ induced by $f_* \circ \eta$ extends to pseudo-real multiplication on $\Jac(X)$.
 This happens if and only if for each $x \in \OO_h(\II)$ the $\CC$-linear map $f_x$ preserves ${\rm Graph}(\phi)$, i.e. \[ f_x(\phi(\mu),\mu) = (\phi(x\mu),x\mu) \] for all $\mu \in \II^\vee$.
 This holds if and only if $\phi(x\mu) = x\phi(\mu)+[M_x,h](\mu)$, or equivalently $(\phi+h)(x\mu) = x(\phi+h)(\mu)$ for all $\mu \in \II^\vee$.
 Finally, this condition is equivalent to \[ \Phi(X,f)+h \in \Lambda_1 \otimes_\QQ \CC. \]
\end{proof}

Given a lattice $\II$ in $F$ and a homomorphism $h \in \Hom_\QQ^+(F,F) \cong \Sym_\QQ(F)$, we define \[ \mathcal{R}\MM(\II,h) \subset \MM_3(\II) \] to be the locus of pairs $(X,\varrho)$
of a genus three Riemann surface $X$ and a Lagrangian marking $\varrho: \II \hookrightarrow H_1(X;\ZZ)$, such that $\Jac(X)$ has pseudo-real multiplication by a pseudo-cubic order
$\OO \subseteq \OO_h(\II)$, the marking $\varrho$ is $\OO$-linear and the symplectic $(\OO,\II)$-extension \[ 0 \to \II \to H_1(X;\ZZ) \to \II^\vee \to 0 \] induced by $\varrho$ is isomorphic to
$E_h(\II)$ (in particular $\OO = \OO_h(\II)$).

For every $(X,\varrho) \in \mathcal{R}\MM(\II,h)$, we can choose a symplectic $\ZZ$-isomorphism $\eta: \II \oplus \II^\vee \to H_1(\Sigma_3;\ZZ)$ and a marking $f: \Sigma_3 \to X$,
such that $(X,\varrho)$ can be represented by some $(X,f) \in \mathcal{R}\TT(\II,h,\eta)$  in the Teichm\"uller space $\TT(\Sigma_3)$.
\begin{cor}\label{Gleichung}
 Let $\II$ be a lattice in $F$ and $h \in \Sym_\QQ(F)$.
 \begin{enumerate}
  \item  For every $a \in \Ann(\Lambda_1) \cap \bS_{\ZZ}(\II^{\vee})$ and $h \in \Sym_{\QQ}(F)$ we have \[ \Psi(a)|_{\mathcal{R}\MM(\II,h)} \equiv \exp(-2\pi i \langle a,h \rangle) = q(h)(a). \]
  \item If $\mathcal{S} \subset \cMM_3(\II)$ is an $\II$-weighted boundary stratum with $\mathcal{S} \cap \overline{\mathcal{R}\MM(\II,h)} \not= \emptyset$,
        then $\mathcal{S}$ is admissible and we have $\mathcal{S} \cap \overline{\mathcal{R}\MM(\II,h)} \subseteq \mathcal{S}(h)$.
 \end{enumerate}
\end{cor}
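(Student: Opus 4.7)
The plan is to read part (1) directly off the preceding lemma, and then use Theorem~\ref{Martin4.1} together with continuity to deduce part (2).

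For part (1), fix $(X,\varrho) \in \mathcal{R}\MM(\II,h)$ and lift it to some $(X,f) \in \mathcal{R}\TT(\II,h,\eta) \subset \TT(\Sigma_3)$ with a suitable symplectic $\ZZ$-isomorphism $\eta\colon \II \oplus \II^\vee \to H_1(\Sigma_3;\ZZ)$. By the preceding lemma, $\Phi(X,f) = \lambda - h$ for some $\lambda \in \Lambda_1 \otimes_\QQ \CC$. Unwinding the definition of $\Psi$ from Section~\ref{periodnonmatrices}, we get
\[ \Psi(a)(X,f) \;=\; \exp\!\bigl(2\pi i \langle a,\Phi(X,f)\rangle\bigr) \;=\; \exp\!\bigl(2\pi i \langle a,\lambda\rangle - 2\pi i \langle a,h\rangle\bigr). \]
Because the pairing $\langle\cdot,\cdot\rangle$ is $\QQ$-bilinear and extends $\CC$-linearly in the second argument, the assumption $a \in \Ann(\Lambda_1)$ makes $\langle a,\lambda\rangle$ vanish. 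So $\Psi(a)(X,f) = \exp(-2\pi i \langle a,h\rangle) = q(h)(a)$ as claimed.

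For part (2), suppose $\mathcal{S} \cap \overline{\mathcal{R}\MM(\II,h)} \neq \emptyset$ and pick $X_0$ in this intersection. To show admissibility, take any non-zero $a \in \mathcal{C}(\mathcal{S}) \cap \Ann(\Lambda_1) \cap \bS_{\ZZ}(\II^\vee)$. The cone condition defining $\mathcal{C}(\mathcal{S})$ is exactly the hypothesis of Theorem~\ref{Martin4.1} guaranteeing that $\Psi(a)$ is holomorphic on $\mathcal{S}$. By part (1), $\Psi(a)$ is the non-zero constant $\exp(-2\pi i \langle a,h\rangle)$ of modulus one on $\mathcal{R}\MM(\II,h)$, so by continuity it takes a non-zero value at $X_0$. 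The alternative in Theorem~\ref{Martin4.1}, namely $\Psi(a)\equiv 0$ on $\mathcal{S}$, is therefore excluded, leaving $a \in \mathcal{N}(\mathcal{S})$. To upgrade from integral to rational $a$, note that $\mathcal{C}(\mathcal{S})$, $\Ann(\Lambda_1)$ and $\mathcal{N}(\mathcal{S})$ are $\QQ$-homogeneous, so scaling any rational $a \in \mathcal{C}(\mathcal{S}) \cap \Ann(\Lambda_1)$ into $\bS_\ZZ(\II^\vee)$ reduces to the integral case. This yields admissibility.

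For the second inclusion, fix any $X \in \mathcal{S} \cap \overline{\mathcal{R}\MM(\II,h)}$. For every $a \in \mathcal{N}(\mathcal{S}) \cap \bS_\ZZ(\II^\vee)$, Theorem~\ref{Martin4.1} gives that $\Psi(a)$ is holomorphic and nowhere vanishing on $\mathcal{S}$, and in particular $\mathrm{CR}(X)(a) = \Psi(a)(X)$ is well-defined. If in addition $a \in \Ann(\Lambda_1)$, then by part (1) and continuity $\Psi(a)(X) = \exp(-2\pi i \langle a,h\rangle) = q(h)(a)$, so $p(\mathrm{CR}(X)) = q(h)$ in $\Hom(\mathcal{N}(\mathcal{S})\cap \Ann(\Lambda_1)\cap \bS_\ZZ(\II^\vee),\CC^*)$. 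By definition this places $X$ in $\mathcal{S}(h)$.

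The only delicate point is the rationality/density step in the admissibility argument: one has to make sure that the constraint defining $\mathcal{N}(\mathcal{S})$ really propagates from the lattice $\bS_\ZZ(\II^\vee)$ to all of $\bS_\QQ(F)$, which is where the $\QQ$-homogeneity of all three cones is used. Everything else is a mechanical combination of the preceding lemma, the annihilator identity, and the case analysis in Theorem~\ref{Martin4.1}.
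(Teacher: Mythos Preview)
Your proof is correct and follows essentially the same approach as the paper: part (1) is read off the preceding lemma via $\Phi(X,f)\in\Lambda_1\otimes_\QQ\CC-h$ and the annihilator condition, and part (2) uses Theorem~\ref{Martin4.1} together with the constancy from part (1) to rule out the vanishing alternative. You are in fact more explicit than the paper, which argues admissibility by contradiction and leaves the inclusion $\mathcal{S}\cap\overline{\mathcal{R}\MM(\II,h)}\subseteq\mathcal{S}(h)$ and the rational-to-integral scaling step implicit.
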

\begin{proof}
 The first statement follows immediately from the last proposition and the definitions of $\Phi$ and $\Psi$.
 To prove the second one, let $\mathcal{S}$ be an $\II$-weighted boundary stratum with $\mathcal{S} \cap \overline{\mathcal{R}\MM(\II,h)} \not= \emptyset$.
 If $\mathcal{S}$ were non-admissible, there would be some $a \in \Ann(\Lambda_1) \cap \bS_{\ZZ}(\II^{\vee})$ with $\langle a, w \otimes w \rangle \geq 0$ for all $w \in {\rm Weight}(\mathcal{S})$
 and $\langle a, w_0 \otimes w_0 \rangle > 0$ for some specific $w_0 \in {\rm Weight}(\mathcal{S})$.
 But then by Theorem~\ref{Martin4.1}, we would have $\Psi(a)|_{\mathcal{S}} \equiv 0$, hence a contradiction as $\Psi(a) \equiv q(h)(a) \not= 0$ on $\overline{\mathcal{R}\MM(\II,h)}$.
\end{proof}

Let $\widetilde \MM_g \subset \cMM_g$ be the locus of stable curves with compact Jacobians. For $g \in \{ 2,3 \}$, the Torelli-map $t: \widetilde \MM_g \to \mathcal{A}_g$ is surjective.
We denote by $\RA_\OO \subset \mathcal{A}_3$ the locus of principally polarized complex Abelian varieties with pseudo-real multiplication by a pseudo-cubic order $\OO$,
hence $\widetilde \RM_\OO = t^{-1}(\RA_\OO) \subset \cMM_3$. Now fix one of the two real quadratic pseudo-embeddings $\iota: F \hookrightarrow \RR$.
By Corollary~\ref{eigenrealmultmap}, we have a surjective map \[ \mathbb{P}\Omega^\iota\widetilde{\RM}_\OO \to X_\OO,\quad (X,[\omega]) \mapsto (\Jac(X),\rho), \] that is injective on
$\mathbb{P}\Omega^\iota{\RM}_\OO$.
It is a consequence of~\cite{Namikawa2}, Corollary~(18.9), that this map extends continuously to \[ \mathbb{P}\overline{\Omega^\iota\RM_\OO} \to \overline{X}_\OO^{BB}. \]
If the geometric genus of a stable curve $X$ is zero, then $(X,[\omega])$ is mapped to a cusp of $\overline{X}_\OO^{BB}$.
If the geometric genus is one (respectively two), then it is mapped to an element of $\overline{X}_\OO^{BB}$ with representatives in $\PK \times \HH$ (respectively $\HH^2 \times \PQ$).
Each cusp of $\overline{X}_\OO^{BB}$ corresponds to a unique cusp packet~$(\II,E)$ in~$\mathcal{C}(\OO)$, and we call the preimage of $(\II,E)$ in $\mathbb{P}\overline{\Omega^\iota\RM_\OO}$
the \emph{cusp of $\mathbb{P}\overline{\Omega^\iota\RM_\OO}$ associated to $(\II,E)$}.

\begin{theo}\label{admissible}
 Let $\OO \subset F$ be a pseudo-cubic order of degree relatively prime to the conductor of $D$, and let $(\II,E_h(\II)) \in \mathcal{C}(\OO)$ be a cusp packet for $\OO$.
 Furthermore, let $\iota: F \hookrightarrow \RR$ be one of the two real quadratic pseudo-embeddings.
 Then the closure of the cusp of $\mathbb{P}\overline{\Omega^\iota\RM_\OO}$ associated to $(\II,E_h(\II))$ is contained in the union of all $p_\iota(\mathcal{S}(h))$, the $\iota$-pseudo-embeddings
 of the subvarieties~$\mathcal{S}(h)$ of $\mathcal{S}$, where~$\mathcal{S}$ runs over all admissible $\II$-weighted boundary strata.
\end{theo}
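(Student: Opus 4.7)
The strategy is to take an arbitrary boundary point in the cusp closure, approximate it by a one-parameter family from the interior, and use Proposition~\ref{FamilyCuspPacket} together with Corollary~\ref{Gleichung} to identify the resulting boundary data with a subvariety $\mathcal{S}(h)$ of some admissible $\II$-weighted boundary stratum.

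First I would pick $(X_0,[\omega_0])$ in the closure of the cusp and choose a holomorphic arc $(X_z,[\omega_z])_{z\in\Delta}$ in $\mathbb{P}\overline{\Omega^\iota\RM_\OO}$ with $(X_z,[\omega_z])\in\mathbb{P}\Omega^\iota\RM_\OO$ for $z\neq 0$, specializing to $(X_0,[\omega_0])$ at $z=0$. Because the extended map to $\overline{X}_\OO^{BB}$ (from \cite{Namikawa2}) sends $(X_0,[\omega_0])$ to a cusp of the Hilbert modular variety, the geometric genus of $X_0$ is zero. By Lemma~\ref{eigenrealmult}, for each $z\ne 0$ the class $[\omega_z]$ singles out a unique pseudo-real multiplication $\rho_z$ by $\OO$ making $\omega_z$ into a $\iota$-eigenform, and these fit together into pseudo-real multiplication by $\OO$ on the smooth family $\mathcal{X}\to\Delta$.

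Next I apply Proposition~\ref{FamilyCuspPacket}: the family determines a cusp packet, and by continuity of the extended map to $\overline{X}_\OO^{BB}$ together with the bijection of Corollary~\ref{BijektionCuspPacketsCusps}, this cusp packet must equal $(\II,E_h(\II))$. By Lemma~\ref{vanishingcurves} the subgroup of vanishing cycles $V_z\subset H_1(X_z;\ZZ)$ is $\OO$-stable, and after a suitable rescaling by an element of $F^*$ (which is what a cusp packet is defined up to) I get a monodromy-invariant $\OO$-linear identification $V_z\cong\II$ together with an isomorphism of the full symplectic extension with $E_h(\II)$. This identification descends to a Lagrangian marking $\varrho:\II\hookrightarrow\widehat{H}_1(X_0;\ZZ)$, i.e.\ an $\II$-weighting placing $X_0$ on a unique $\II$-weighted boundary stratum $\mathcal{S}$. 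Since the arc $(X_z,\varrho_z)_{z\ne 0}$ lies inside $\mathcal{R}\MM(\II,h)$ by construction, Corollary~\ref{Gleichung} forces $\mathcal{S}$ to be admissible and $(X_0,\varrho)\in\mathcal{S}(h)$.

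The last step, which I expect to be the main obstacle, is to verify that $[\omega_0]=p_\iota(X_0,\varrho)$, so that $(X_0,[\omega_0])\in p_\iota(\mathcal{S}(h))$. For each cusp $c$ of $X_0$ with weight $w(c)\in\II$, take the corresponding vanishing loop $\alpha_{c,z}$ in $X_z$; the $\iota$-eigenform property gives $\int_{x\cdot\alpha_{c,z}}\omega_z=\iota(x)\int_{\alpha_{c,z}}\omega_z$ for all $x\in\OO$, so the restriction of the period map $V_z\otimes\QQ\to\CC$ agrees with $\iota$ up to a single $\CC$-scalar. Letting $z\to 0$, the period $\int_{\alpha_{c,z}}\omega_z$ converges to $2\pi i\cdot\res_c(\omega_0)$, so after projectivisation the residues of $\omega_0$ are proportional to $\iota(w(c))$, which is exactly the definition of $p_\iota$. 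The delicate point is to track the interplay between the $F^*$-freedom in the cusp packet (which rescales the weights $w(c)$ by an element $a\in F^*$) and the $\CC^*$-freedom in $[\omega_0]$: one must check that any such rescaling can be absorbed into the projective class, so that the identification of the residues with $\iota(w(c))$ is consistent with the identification $(X_0,\varrho)\in\mathcal{S}(h)$ chosen in the previous step.
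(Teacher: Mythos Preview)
Your proposal is correct and follows essentially the same route as the paper: choose a holomorphic arc into the cusp, lift it to a smooth family with pseudo-real multiplication, invoke Proposition~\ref{FamilyCuspPacket} to identify the cusp packet as $(\II,E_h(\II))$, factor through $\cMM_3(\II)$ landing in $\mathcal{R}\MM(\II,h)$, apply Corollary~\ref{Gleichung} for admissibility and membership in $\mathcal{S}(h)$, and finally compute the residues via the eigenform relation. The paper handles the residue step by writing $\int_{\varrho_z(\lambda)}\omega_z=\iota(\lambda)\int_{(1,1)}\omega_z$ (extending $\varrho_z$ $\QQ$-linearly to $F$), which is the same computation you sketch; your worry about the $F^*$-ambiguity is harmless since rescaling $\II$ by $a\in F^*$ multiplies all residues by the common scalar $\iota(a)$, absorbed by projectivization.
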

\begin{proof}
 Let $(X,[\omega])$ be in the closure of the cusp of $\mathbb{P}\overline{\Omega^\iota\RM_\OO}$ associated to $(\II,E_h(\II))$.
 Since the locus $\mathbb{P}\overline{\Omega^\iota\RM_\OO}$ is a variety, we can choose a holomorphic map \[ \Delta \to \mathbb{P}\overline{\Omega^\iota\RM_\OO},\quad z \mapsto (X_z,[\omega_z]) \]
 with $(X_0,\omega_0)=(X,\omega)$ and which sends $\Delta^*$ to $\mathbb{P}\Omega^\iota\RM_\OO$. Let $f$ be the composition of this map with the forgetful map to $\cMM_3$.
 By what we have said in Section~\ref{pseudorealmultfamilies}, the map $f$ can be chosen such that it arises from a smooth family $\mathcal{X} \to \Delta$ with pseudo-real multiplication by~$\OO$.
 In the proof of Proposition~\ref{FamilyCuspPacket}, we have seen that $f$ factorizes through a map $\Delta \to \cMM_3(\II)$, mapping $\Delta^*$ to $\RM(\II,h)$ and~$0$ to some $\II$-weighted boundary
 stratum~$\mathcal{S}$ with $\mathcal{S} \cap \overline{\mathcal{R}\MM(\II,h)} \not= \emptyset$.
 Thus by Corollary~\ref{Gleichung}, the stratum~$\mathcal{S}$ is admissible and $(X_0,\varrho_0)$ lies in~$\mathcal{S}(h)$.

 It remains to show that the residues of the stable form~$\omega$ are common multiples of the images under~$\iota$ of the weights of the corresponding cusps.
 For every~$z \in \Delta^*$, the Lagrangian marking~$\varrho_z$ maps~$\II$ onto the group of vanishing cycles of~$X_z$. Therefore, for each vanishing curve $\lambda \in \II$, we have
 \[ \iota(\lambda)\smallint_{(1,1)}\omega_z = \smallint_{\varrho_z(\lambda)}\omega_z \equiv r(\lambda) , \] where $r(\lambda)$ is the residuum of~$\omega$ at the cusp represented by $\lambda$,
 and the statement follows.
\end{proof}
What we have just proven, is a refinement of the following corollary.
\begin{cor}\label{admissible1}
 Let $\OO$ be a pseudo-cubic order of degree relatively prime to the conductor of $D$.
 Then each geometric genus zero stable curve $X \in \overline{\mathcal{R}\MM_\OO}$ lies in the image under the forgetful map of some admissible $\II$-weighted boundary stratum
 $\mathcal{S} \subset \cMM_3(\II)$.
\end{cor}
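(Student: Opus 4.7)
The plan is to reduce Corollary~\ref{admissible1} directly to Theorem~\ref{admissible} by lifting the bare stable curve $X$ to a point of the projectivized eigenform locus $\mathbb{P}\overline{\Omega^\iota\RM_\OO}$ and then reading off what the theorem says about it. Pick either real quadratic pseudo-embedding $\iota : F \to \RR$. Since $X \in \overline{\RM_\OO}$ and this locus is a subvariety of $\cMM_3$, I can, after possibly pulling back by a finite cover ramified only at $0$, choose a holomorphic one-parameter family $\mathcal{X} \to \Delta$ with $X_0 = X$ and $X_z \in \RM_\OO$ for $z \neq 0$, together with a choice of pseudo-real multiplication $\rho_z : \OO \hookrightarrow \End^+(\Jac(X_z))$ on each smooth fibre which is flat with respect to the Gauss--Manin connection; this is the usual setup for a smooth family with pseudo-real multiplication discussed in Section~\ref{pseudorealmultfamilies}.

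Next, by Lemma~\ref{eigenrealmult}, each smooth fibre $X_z$ carries a $\iota$-eigenform $\omega_z$ for $\rho_z$ that is unique up to scalar. Varying $z$, these eigenspaces form a holomorphic line subbundle of the Hodge bundle over $\Delta^\ast$ (the eigenspace decomposition~\eqref{Eigenraumzerlegung} is orthogonal for the Hodge metric and the $\rho_z$ are flat), and this line subbundle extends across $z=0$ to a holomorphic line in the bundle of stable forms $\Omega\cMM_3 \to \cMM_3$ over $\Delta$; pick a nonzero section, giving a stable form $\omega_0$ on $X_0 = X$ with $(X,[\omega_0]) \in \mathbb{P}\overline{\Omega^\iota\RM_\OO}$.

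Now Proposition~\ref{FamilyCuspPacket} applies to the family $\mathcal{X} \to \Delta$ (here we use the hypothesis that the degree of $\OO$ is relatively prime to the conductor of $D$ and that $X_0$ has geometric genus zero), producing a cusp packet $(\II,E_h(\II)) \in \mathcal{C}(\OO)$; the lattice $\II$ is the group $V_z$ of vanishing cycles of the family, equipped with its inherited $\OO$-action. Because $X$ has geometric genus zero, the vanishing cycles give a full rank-three Lagrangian subgroup of $\widehat{H}_1(X;\ZZ)$, so the induced Lagrangian marking $\varrho : \II \hookrightarrow \widehat{H}_1(X;\ZZ)$ realizes $(X,\varrho)$ as a point of some $\II$-weighted boundary stratum $\mathcal{S} \subset \cMM_3(\II)$, and by construction $(X,[\omega_0])$ lies in the closure of the cusp of $\mathbb{P}\overline{\Omega^\iota\RM_\OO}$ associated to $(\II,E_h(\II))$.

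Theorem~\ref{admissible} then states that $(X,[\omega_0])$ belongs to $p_\iota(\mathcal{S}'(h))$ for some admissible $\II$-weighted boundary stratum $\mathcal{S}'$; in particular its underlying stable curve $X$ lies in the image of $\mathcal{S}'$ under the forgetful map $\cMM_3(\II) \to \cMM_3$, which is exactly the desired conclusion. The only nontrivial step in this plan is verifying that the eigenforms $\omega_z$ extend holomorphically as a nonzero stable form across $z = 0$; this is essentially the definition of $\overline{\Omega^\iota\RM_\OO}$ as the closure of $\Omega^\iota\RM_\OO$ inside the bundle $\Omega\cMM_3$ of stable forms, combined with the fact that, once we rescale so that a nontrivial period stays bounded and nonzero, standard results on degenerations of Hodge structures force the limiting form to be a nonzero stable Abelian differential.
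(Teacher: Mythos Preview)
Your proof is correct and follows the paper's approach: the paper simply states that Corollary~\ref{admissible1} is a weakening of Theorem~\ref{admissible} (``What we have just proven, is a refinement of the following corollary'') and gives no separate argument, so the only thing to supply is exactly what you do, namely lift $X \in \overline{\RM_\OO}$ to a point $(X,[\omega_0]) \in \mathbb{P}\overline{\Omega^\iota\RM_\OO}$ and invoke the theorem. One small redundancy: once you have built the family $\mathcal{X} \to \Delta$ and obtained the cusp packet via Proposition~\ref{FamilyCuspPacket}, you have essentially reproduced the proof of Theorem~\ref{admissible} itself, so you could equally well finish by citing Corollary~\ref{Gleichung} directly rather than the theorem; but invoking the theorem is certainly not wrong.
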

\subsection{A reformulation of admissibility}

For a totally real cubic field, Bainbridge and M\"oller constructed in \cite{MartinMatt}, Section~6, a quadratic map $Q$ from the field to itself, which allows to test explicitly if a weighted boundary
stratum is admissible or not, by checking a geometric condition for the $Q$-values of the weights of the stratum.
In our pseudo-cubic case, this map map turns out to be a little more unwieldy, as it just maps to $\QQ^3$. However, we can construct it in such a way, that we get the same geometric condition.

\bigskip
Recall that we identified $F$ with $F^*=\Hom_\QQ(F,\QQ)$ using the pseudo-trace pairing and that we defined a perfect pairing on \[ \bS_\QQ(F) \times \Sym_\QQ(F) \to \QQ \] by
\[ \langle [\varphi_1 \otimes \varphi_2] , \sum_{i,j}q_{ij}x_i \otimes x_j \rangle = \sum_{i,j}q_{ij}\varphi_1(x_i) \varphi_2(x_j). \]
There is another perfect pairing \[ \langle \cdot,\cdot \rangle:~ (F \otimes_\QQ F) \times (F \otimes_\QQ F) \to \QQ \]
defined by \[ \langle x_1 \otimes x_2,y_1 \otimes y_2 \rangle := \tr_p(x_1y_1) \tr_p(x_2y_2) \] on the simple tensors.
The latter is in fact also symmetric and positive definite, and its restriction induces a symmetric and positive definite pairing \[ \Sym_\QQ(F) \times \Sym_\QQ(F) \to \QQ. \]
Therefore, we get isomorphisms \[ \bS_\QQ(F) \cong \Hom_{\QQ}(\Sym_\QQ(F),\QQ) \cong \Sym_\QQ(F). \]
These isomorphisms identify $\Ann(\Lambda_1) \subset \bS_\QQ(F)$ with $\Lambda_1^\perp \subset \Sym_\QQ(F)$.
Since $\Lambda_1$ is isomorphic to $\Hom_F(F,F)$, it has $\QQ$-dimension three, and thus \[ \dim_\QQ(\Lambda_1^\perp) = \dim_\QQ(\Sym_\QQ(F))-\dim_\QQ(\Lambda_1) = 3. \]
By using suitable coordinates we can identify $\Lambda_1^\perp$ with $\QQ^3$ and state a reformulation of admissibility.
\begin{theo}\label{Q}
 Let $Q$ be the function defined by
 \[ Q:~ F \to \QQ^3,\quad  (x,q) \mapsto \begin{pmatrix} \mathcal{N}(\sqrt{D}x) \\ \tr(x)q \\ \tr(\sqrt{D}x) q \end{pmatrix} \]
 and let $\langle \cdot , \cdot \rangle_s$ be the standard scalar product in $\QQ^3$.
 Then an $\mathcal{I}$-weighted boundary stratum $\mathcal{S}$ is admissible if and only if for every  $v \in \QQ^3$ the following condition holds.
 If we have \[ \langle v,Q(w) \rangle_s \geq 0 \] for all $w \in {\rm Weight}(\mathcal{S})$, then we also have \[ \langle v,Q(w) \rangle_s = 0 \] for all $w \in {\rm Weight}(\mathcal{S})$.

 Equivalently, an $\mathcal{I}$-weighted boundary stratum $\mathcal{S}$ is admissible if and only if the set \[ Q(\mathcal{S}) := \{ Q(w) : w \in {\rm Weight}(\mathcal{S}) \} \]
 is not contained in a closed half-space of its $\QQ$-span, and this holds if and only if~$0$ lies in the interior of the convex hull of~$Q(\mathcal{S})$.
\end{theo}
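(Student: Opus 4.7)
The plan is to render the identifications of the excerpt explicit in coordinates, compute $\Lambda_1^\perp$ in a convenient basis of $\Sym_\QQ(F)$, and verify that pairing with $w\otimes w$ reproduces the three coordinates of $Q(w)$. Recall that the symmetric positive definite pairing on $F\otimes_\QQ F$ identifies $\bS_\QQ(F)$ with $\Sym_\QQ(F)$ and sends $\Ann(\Lambda_1)$ onto $\Lambda_1^\perp$, a three-dimensional $\QQ$-subspace of $\Sym_\QQ(F)$ (since $\Lambda_1\cong\Hom_F(F,F)\cong F$ has $\QQ$-dimension three). Once $\Lambda_1^\perp$ is identified with $\QQ^3$ by a basis for which $a\mapsto\langle a,w\otimes w\rangle$ becomes the entries of $Q(w)$, the first stated criterion drops out immediately, and the geometric reformulations follow from convexity.

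Concretely, I would fix the $\QQ$-basis $e_1=(1,0),\ e_2=(\sqrt{D},0),\ e_3=(0,1)$ of $F$. A direct calculation gives the Gram matrix of the pseudo-trace pairing as $\diag(2,2D,1)$, so the induced symmetric pairing on $F\otimes_\QQ F$ is diagonal in the product basis $(e_i\otimes e_j)$. Expressing the generators of $\Lambda_1$ via the pseudo-trace dual basis yields
\[ M_1=\tfrac{1}{2}e_1\otimes e_1+\tfrac{1}{2D}e_2\otimes e_2+e_3\otimes e_3,\ \ M_{\sqrt{D}}=\tfrac{1}{2}(e_1\otimes e_2+e_2\otimes e_1),\ \ M_{(0,1)}=e_3\otimes e_3, \]
and solving the three orthogonality equations furnishes the basis of $\Lambda_1^\perp$
\[ B_1=-D\,e_1\otimes e_1+e_2\otimes e_2,\quad B_2=e_1\otimes e_3+e_3\otimes e_1,\quad B_3=e_2\otimes e_3+e_3\otimes e_2. \]
For $w=ae_1+be_2+qe_3$ corresponding to $(x,q)\in F$ with $x=a+b\sqrt{D}$, expanding $w\otimes w$ and using the diagonality of the pairing gives
\[ \langle B_1,w\otimes w\rangle=-4D(a^2-Db^2)=-4\,\mathcal{N}(\sqrt{D}x),\quad \langle B_2,w\otimes w\rangle=4aq=2\tr(x)q, \]
\[ \langle B_3,w\otimes w\rangle=4Dbq=2\tr(\sqrt{D}x)q. \]
After rescaling $(B_1,B_2,B_3)$ by $(-1/4,1/2,1/2)$, the resulting $\QQ$-basis identifies $\Lambda_1^\perp$ with $\QQ^3$ in such a way that $\langle a,w\otimes w\rangle=\langle v(a),Q(w)\rangle_s$ for every $w\in F$ and every $a\in\Lambda_1^\perp$ with coordinate vector $v(a)\in\QQ^3$.

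With this identification the condition $\mathcal{C}(\mathcal{S})\cap\Ann(\Lambda_1)\subseteq\mathcal{N}(\mathcal{S})$ translates verbatim into the first formulation of the theorem: every $v\in\QQ^3$ satisfying $\langle v,Q(w)\rangle_s\geq 0$ for all $w\in{\rm Weight}(\mathcal{S})$ already satisfies $\langle v,Q(w)\rangle_s=0$ for all such $w$. This in turn is simply the assertion that $Q(\mathcal{S})$ is not contained in any closed half-space of $\mathrm{span}_\QQ(Q(\mathcal{S}))$, and the equivalence with "$0$ lies in the relative interior of the convex hull of $Q(\mathcal{S})$" is a standard consequence of the separating-hyperplane theorem applied to the finite rational set $Q({\rm Weight}(\mathcal{S}))$. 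The one place that requires genuine care is the explicit coordinate computation above: representing the $M_x\in\Lambda_1$ as symmetric tensors via the correct pseudo-trace dual basis, and tracking the factors of $D$ that arise from $\sqrt{D}$, so that the three pairings land on exactly $(\mathcal{N}(\sqrt{D}x),\tr(x)q,\tr(\sqrt{D}x)q)$ rather than on some $\GL_3(\QQ)$-equivalent triple (which would still give the same geometric criterion but not the $Q$ stated in the theorem).
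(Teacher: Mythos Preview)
Your approach is essentially identical to the paper's: both fix the basis $e_1=(1,0),\,e_2=(\sqrt{D},0),\,e_3=(0,1)$ of $F$, write down explicit bases of $\Lambda_1$ and $\Lambda_1^\perp$ in the resulting tensor basis, and then verify by direct computation that pairing elements of $\Lambda_1^\perp$ against $w\otimes w$ recovers the three coordinates of $Q(w)$. The paper's $(\mu_1,\mu_2,\mu_3)$ are exactly your $(B_1,B_2,B_3)$ rescaled by $(\tfrac14,\tfrac12,\tfrac12)$.

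There is one small sign slip in precisely the spot you flagged as needing care. With $x=a+b\sqrt D$ one has $\mathcal N(\sqrt{D}x)=\mathcal N(\sqrt D)\,\mathcal N(x)=-D(a^2-Db^2)=D^2b^2-Da^2$, so
\[
\langle B_1,\,w\otimes w\rangle=-D(2a)^2+(2bD)^2=4\bigl(D^2b^2-Da^2\bigr)=+4\,\mathcal N(\sqrt{D}x),
\]
not $-4\,\mathcal N(\sqrt{D}x)$. The correct rescaling is therefore $(+\tfrac14,\tfrac12,\tfrac12)$, matching the paper's $\mu_i$. This does not affect the validity of the argument (any $\GL_3(\QQ)$-change of $Q$ yields the same half-space criterion), but it does matter for landing on the exact $Q$ in the statement.
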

\begin{proof}
 A $\QQ$-basis of $\Lambda_1$ is given by $\mathcal{B}_1 = (\lambda_1,\lambda_2,\lambda_3)$ with
 \begin{align*}
  \lambda_1 & = (1,0) \otimes (1,0) + \tfrac{1}{D} [(\sqrt{D},0) \otimes (\sqrt{D},0)], \\
  \lambda_2 & = (\sqrt{D},0) \otimes (1,0) + (1,0) \otimes (\sqrt{D},0), \\
  \lambda_3 & = (0,1) \otimes (0,1).
 \end{align*}
 A $\QQ$-basis of the orthogonal complement $\Lambda_1^\perp$ is given by $\mathcal{B}_\perp = (\mu_1,\mu_2,\mu_3)$ with
 \begin{align*}
  \mu_1 & = \tfrac{1}{4}((\sqrt{D},0) \otimes (\sqrt{D},0) - D((1,0) \otimes (1,0))), \\
  \mu_2 & = \tfrac{1}{2} ((1,0) \otimes (0,1) + (0,1) \otimes (1,0)), \\
  \mu_3 & = \tfrac{1}{2} ((\sqrt{D},0) \otimes (0,1) + (0,1) \otimes (\sqrt{D},0)).
 \end{align*}
 We want to see what admissibility means in terms of our new spaces. Let~$a \in \Lambda_1^\perp$ and let $w \in F$.
 We write $a = \sum_{i=1}^3 q_i \mu_i$ with $q=(q_1,q_2,q_3) \in \QQ^3$ and we write $w = (x,q)$ with $x =r+s\sqrt{D}$, $q,r,s \in \QQ$.
 An elementary computation gives
 \begin{align*}
  \langle a, w \otimes w \rangle & = q_1 (s^2D^2 - r^2D) + q_2 r q + q_3 sD q \\
  & = \left\langle \begin{pmatrix} q_1 \\ q_2 \\ q_3 \end{pmatrix} ,
  \begin{pmatrix} \mathcal{N}(\sqrt{D}x) \\ \tr(x) q \\ \tr(\sqrt{D}x) q \end{pmatrix} \right\rangle_s \\
  & = \langle q,Q(w) \rangle_s
 \end{align*}
 and this finishes the proof.
\end{proof}

\begin{rem}
 In \cite{MartinMatt}, the pseudo-cubic field $F$ is replaced by a totally real number field. The Galois closure of $F$ is denoted by $K$.
 The orthogonal decomposition \[ \Sym_\QQ(F) = \Lambda_1 \oplus \Lambda_1^\perp \] is induced by the decomposition
 \[ K \otimes_\QQ K = \bigoplus_{\sigma \in {\rm Gal}(K/\QQ)}\Lambda_\sigma \] with
 \[ \Lambda_\sigma := \{ \lambda \in K \otimes_\QQ K : x \cdot \lambda = \lambda \cdot \sigma(x) \mbox{ for all } x \in K \}. \]
 This does not work in the pseudo-cubic case, because for our non-trivial automorphism $\sigma : \QQ(\sqrt{D}) \oplus \QQ \to \QQ(\sqrt{D}) \oplus \QQ$, one can easy compute
 that \[ \Lambda_1 \cap \Lambda_\sigma = \langle (0,1) \otimes (0,1) \rangle_\QQ, \] hence the intersection is non-trivial.
 The reason is that $\sigma$ acts on $\{ 0 \} \oplus \QQ$ as the identity.
\end{rem}
\subsection{Period coordinates in terms of cross-ratios}

Each $\II$-weighted boundary stratum $\mathcal{S}$ can be described in projective coordinates.
Analogously to \cite{MartinMatt}, we construct explicit equations in terms of cross-ratios of these coordinates which cut out the subvarieties $\mathcal{S}(h)$.
\\[1em]
In Section~\ref{periodnonmatrices} we introduced the group homomorphism
\[ \Psi:~ \bS_\ZZ(\II^\vee) \to \Mer(\cMM_3(\II)),\quad a \mapsto \exp(2\pi i \Phi^*(a)(\cdot)), \]
where $\Mer(\cMM_3(\II))$ denotes the multiplicative group of meromorphic functions on $\cMM_3(\II)$.
Furthermore, in Section~\ref{admissibility} we defined on a given $\II$-weighted boundary stratum $\mathcal{S}$ the map
\[ {\rm CR}:~ \mathcal{S} \to \Hom(\mathcal{N}(\mathcal{S}) \cap \bS_{\ZZ}(\II^{\vee}),\CC^*),\quad X \mapsto \left( a \mapsto \Psi(a)(X) \right). \]
For each $h \in \Sym_{\QQ}(F) / (\Lambda_1+\Sym_{\ZZ}(\II))$, the subvariety $\mathcal{S}(h) \subset \mathcal{S}$ was defined to be the locus of those $X \in \mathcal{S}$ satisfying the equations
\[ \Psi(a)(X) = \exp(-2\pi i \langle a,h \rangle) \] for all $a \in \mathcal{N}(\mathcal{S}) \cap \Ann(\Lambda_1) \cap \bS_{\ZZ}(\II^{\vee})$.
We want to describe these functions in terms of cross-ratios for certain types of Lagrangian boundary strata to give a more explicit version of Corollary~\ref{Gleichung}.

\bigskip
The type of a boundary stratum $\mathcal{S}$ is given by the combinatoric of its dual graph defined as follows.
Take an arbitrary $X \in \mathcal{S}$. Then the dual graph~$\Gamma(\mathcal{S})$ of~$\mathcal{S}$ consists of a vertex for each irreducible component of $X$ and an edge for each node of $X$.
The endpoints of an edge coming from a node $n \in N(X)$ are just the vertices coming from the irreducible components connected by $n$ (particularly $\Gamma(\mathcal{S})$ may have loops as edges).

We are interested in strata of geometric genus zero stable curves. The dual graphs of these strata must have Euler characteristic $-2$ (roughly spoken, one needs three nodes to
kill the holes and one node for producing an additional irreducible component) and the degree of each vertex is at least three.
It follows that such a dual graph has at most four vertices and that there exist exactly 15 types of strata.
\\[1em]
{\bf The irreducible stratum.}
First we will discuss the irreducible strata, i.e. the strata where the dual graph is the unique graph with just one vertex (and thus three loops as edges), as can be seen in Figure~\ref{Irreduzibel}.
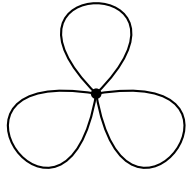
\begin{figure}[ht]
\centering
\begin{tikzpicture}[scale=2]
 \coordinate (A) at (0,0);
 \fill (A) circle (1pt);
 \draw[thick] (A) .. controls (-0.8,0.8) and (0.8,0.8) .. (A);
 \draw[thick] (A) .. controls (-1.2,0.2) and (-0.2,-1.2) .. (A);
 \draw[thick] (A) .. controls (1.2,0.2) and (0.2,-1.2) .. (A);
\end{tikzpicture}
\setlength{\abovecaptionskip}{-1cm}
\caption{The dual graph of an irreducible stable curve of geometric genus zero}\label{Irreduzibel}
\end{figure}

Given a $\ZZ$-basis $r=(r_1,r_2,r_3)$ of $\II$, let $\mathcal{S}_r \subset \partial\cMM_3(\II)$ be the $\II$-weighted irreducible boundary stratum with weights $r_1,r_2,r_3$.
Every $\II$-weighted stable curve in $\mathcal{S}_r$ can be realized in the following way. Choose a tuple of six distinct points $p = (p_1,p_2,p_3,q_1,q_2,q_3) \in \PC^6$
and identify each~$p_i$ with~$q_i$. Then define an $\II$-weighting of the resulting stable curve $X_p$ by setting $w(p_i)=r_i$ and $w(q_i)=-r_i$.
Two such $\II$-weighted stable curves represent the same element in~$\cMM_3(\II)$ if and only if there is an isomorphism between~$X_p$ and~$X_{p'}$ preserving the ordered sextuples of chosen points.
Thus $\mathcal{S}_r$ is isomorphic to $\MM_{0,6}$, the moduli space of six ordered marked points on~$\PC$.
Our aim is to determine $\Psi(a)(X_p)$ for arbitrary $X_p \in \mathcal{S}_r$ and $a \in \bS_\ZZ(\II^\vee)$, and we do this in terms of a basis for $\bS_\ZZ(\II^\vee)$.
\begin{lem}\label{N(S)-basis}
 Let $r=(r_1,r_2,r_3)$ be a $\ZZ$-basis of $\II$ and let $s=(s_1,s_2,s_3)$ be its dual basis of $\II^\vee$ with respect to the pseudo-trace pairing.
 If $\mathcal{S}$ is any $\II$-weighted boundary stratum with weigths \[ {\rm Weight}(\mathcal{S}) = \{ \pm r_1, \pm r_2, \pm r_3 \}, \] then $(s_1 \otimes s_2,s_1 \otimes s_3,s_2 \otimes s_3)$
 is a $\QQ$-basis of $\mathcal{N}(\mathcal{S})$ and thus a $\ZZ$-basis of the intersection $\mathcal{N}(\mathcal{S}) \cap \bS_\ZZ(\II^\vee)$.
\end{lem}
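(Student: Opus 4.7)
The plan is to verify the lemma by a direct computation, using the fact that $s$ is the basis dual to $r$ with respect to the pseudo-trace pairing. Under the identification $F \cong F^* = \Hom_\QQ(F,\QQ)$ coming from the pseudo-trace pairing, each $s_i \in \II^\vee$ corresponds to the $\QQ$-linear functional determined by $s_i(r_j) = \delta_{ij}$. Consequently, $\bS_\QQ(F) = \bS_\QQ(F^*)$ admits the $\QQ$-basis $\{[s_i \otimes s_j] : i \leq j\}$ of six elements, and similarly $\bS_\ZZ(\II^\vee)$ has the $\ZZ$-basis $\{[s_i \otimes s_j] : i \leq j\}$ since $(s_1,s_2,s_3)$ is a $\ZZ$-basis of $\II^\vee$.

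First I would write an arbitrary $a \in \bS_\QQ(F)$ as
\[ a = \sum_{i \leq j} c_{ij}\, [s_i \otimes s_j] \]
with $c_{ij} \in \QQ$, and then apply the defining formula for the pairing to the weights of $\mathcal{S}$. Since ${\rm Weight}(\mathcal{S}) = \{\pm r_1, \pm r_2, \pm r_3\}$ and the pairing is quadratic in $w$, the condition defining $\mathcal{N}(\mathcal{S})$ reduces to the three equations $\langle a, r_k \otimes r_k \rangle = 0$ for $k = 1, 2, 3$. A direct computation using the dual basis identity gives
\[ \langle [s_i \otimes s_j], r_k \otimes r_k \rangle = s_i(r_k)\, s_j(r_k) = \delta_{ik}\,\delta_{jk}, \]
so only the term $c_{kk}$ contributes, yielding $\langle a, r_k \otimes r_k \rangle = c_{kk}$. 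Hence $a$ lies in $\mathcal{N}(\mathcal{S})$ if and only if $c_{11} = c_{22} = c_{33} = 0$, which shows that $(s_1\otimes s_2,\; s_1\otimes s_3,\; s_2\otimes s_3)$ spans $\mathcal{N}(\mathcal{S})$ over $\QQ$; linear independence is immediate from the independence of the six basis elements of $\bS_\QQ(F)$.

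Finally, the integral statement follows at once: since each of the three listed elements already belongs to $\bS_\ZZ(\II^\vee)$, and since the integrality of $a \in \bS_\ZZ(\II^\vee)$ in the basis $\{[s_i \otimes s_j]\}_{i \leq j}$ means exactly that all coefficients $c_{ij}$ are integers, the intersection $\mathcal{N}(\mathcal{S}) \cap \bS_\ZZ(\II^\vee)$ consists precisely of the integer combinations of $[s_1\otimes s_2]$, $[s_1\otimes s_3]$, $[s_2\otimes s_3]$. No genuine obstacle arises here; the only subtlety is keeping the distinction between $\bS_\QQ$ (quotient by $S_2$) and $\Sym_\QQ$ ($S_2$-invariants) straight when computing the pairing, which is why the cross terms do not appear for diagonal inputs $r_k \otimes r_k$.
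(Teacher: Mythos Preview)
Your proof is correct and follows essentially the same approach as the paper's own proof: writing $a$ in the basis $\{[s_i\otimes s_j]\}_{i\le j}$ and observing that $\langle a, r_k\otimes r_k\rangle = a_{kk}$. The paper states this in two lines, whereas you spell out the dual-basis computation and the integral case more explicitly, but there is no substantive difference.
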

\begin{proof}
 A $\QQ$-basis of $\bS_\QQ(F)$ is given by the simple tensors $s_i \otimes s_j$ with $1 \leq i \leq j \leq 3$ and for any $a = \sum_{i \leq j}a_{ij}  s_i \otimes s_j$ we have
 $\langle a,r_i \otimes r_i \rangle = a_{ii}$.
\end{proof}
For $z_1,z_2,z_3,z_4 \in \PC$ we denote by \[ (z_1,z_2;z_3,z_4) = \frac{(z_1-z_3)(z_2-z_4)}{(z_1-z_4)(z_2-z_3)} \in \PC \] their cross-ratio
and for $p = (p_1,p_2,p_3,q_1,q_2,q_3) \in \PC^6$ with pairwise distinct entries we set \[ p_{jk} := (p_j,q_j;q_k,p_k) \in \CC \setminus \{ 0,1 \} \] for all $j \not= k$.
\begin{lem}\label{Psi1}
 Let $r=(r_1,r_2,r_3)$ be a $\ZZ$-basis of $\II$ and let $s=(s_1,s_2,s_3)$ be its dual basis of $\II^\vee$ with respect to the pseudo-trace pairing.
 Then we have \[ \Psi(s_j \otimes s_k)(X_p) = p_{jk} \] for all $X_p \in \mathcal{S}_r$ and $j \not= k$.
\end{lem}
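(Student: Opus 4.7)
For $j\neq k$ we have $\langle s_j\otimes s_k,r_i\otimes r_i\rangle=\delta_{ji}\delta_{ki}=0$ for every $i$, so Theorem~\ref{Martin4.1} guarantees that $\Psi(s_j\otimes s_k)$ is holomorphic and nowhere vanishing on the whole stratum $\mathcal{S}_r$. Hence its value at $X_p$ can be computed as $\lim_{t\to 0}\Psi(s_j\otimes s_k)(X_t)$ along any plumbing family $X_t$ of smooth genus three curves degenerating to $X_p$. Concretely, I would fix local coordinates $z_i=z-p_i$ near $p_i$ and $w_i=z-q_i$ near $q_i$ on the normalization $\PC$ of $X_p$ and plumb the three nodes by parameters $t=(t_1,t_2,t_3)\in\Delta^3$ via $z_iw_i=t_i$. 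On $X_t$ the vanishing cycle $\lambda_i$ at the $i$-th node is the circle $|z_i|=|t_i|^{1/2}$ (representing the weight $r_i$ under the Lagrangian marking), and a Lagrangian complement can be chosen so that its symplectic dual $\mu_j$ degenerates, as $t_j\to 0$, to the oriented arc on $\PC$ running from $p_j$ to $q_j$.

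Let $(\omega_1,\omega_2,\omega_3)$ be the basis of $\Omega(X_t)$ dual to $(\lambda_i)$, characterised by $\int_{\lambda_i}\omega_k=\delta_{ik}$. As $t\to 0$ the form $\omega_k$ converges to the unique stable form on $X_p$ whose residues are $\tfrac{1}{2\pi i}$ at $p_k$, $-\tfrac{1}{2\pi i}$ at $q_k$, and zero at every other cusp; since the normalization is $\PC$ this form is forced to be
\[ \omega_k \;=\; \frac{1}{2\pi i}\left(\frac{1}{z-p_k}-\frac{1}{z-q_k}\right)dz. \]

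For $j\neq k$ the form $\omega_k$ is holomorphic at $p_j$ and $q_j$, so the contribution to $\int_{\mu_j}\omega_k$ coming from the plumbing region at the $j$-th node is bounded uniformly in $t$ and vanishes as $t_j\to 0$. The limit of the period therefore reduces to the elementary partial-fraction integral on $\PC$,
\[ \lim_{t\to 0}\int_{\mu_j}\omega_k \;=\; \int_{p_j}^{q_j}\omega_k \;=\; \frac{1}{2\pi i}\log\frac{(q_j-p_k)(p_j-q_k)}{(p_j-p_k)(q_j-q_k)} \;=\; \frac{1}{2\pi i}\log p_{jk}. \]
Since $\Phi^*(s_j\otimes s_k)(X_t)=\int_{\mu_j}\omega_k$, applying $\exp(2\pi i\,\cdot\,)$ yields $\Psi(s_j\otimes s_k)(X_p)=p_{jk}$, and the $2\pi i\ZZ$ ambiguity of the logarithm is absorbed by the exponential. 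The main technical point is to verify that the $t$-dependence of the forms contributes no extra logarithmic term, but for $j\neq k$ the poles of $\omega_k$ stay away from the $j$-th plumbing region, so this is standard; the only direction in which a logarithmic divergence persists (namely $j=k$) is the one excluded by the hypothesis.
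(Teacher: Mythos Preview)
Your argument is correct and lands on the same computation as the paper, but you take a more roundabout route. The paper works directly on the stable curve $X_p$: using the boundary description of $\Psi$ implicit in Theorem~\ref{Martin4.1}, one simply identifies the limiting normalised differential $\omega_j$ and integrates it over an arc from $p_k$ to $q_k$ on the normalization $\PC$. The paper also normalises by a M\"obius transformation ($p_j=0$, $q_j=\infty$, $q_k=1$), so the integral collapses to $\exp\!\bigl(\int_{p_k}^{1}\tfrac{dz}{z}\bigr)=p_k^{-1}=p_{jk}$ in one line. You instead set up an explicit plumbing family $X_t\to X_p$, argue that the period $\int_{\mu_j}\omega_k$ converges to the corresponding integral of the stable form, and compute that integral without normalising coordinates via the full partial-fraction expression. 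Both are valid; your version is essentially re-deriving the boundary identification already contained in Theorem~\ref{Martin4.1}, while the paper simply invokes it.

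Two minor remarks. First, you write $\Phi^*(s_j\otimes s_k)=\int_{\mu_j}\omega_k$, whereas the paper's convention from Section~\ref{periodnonmatrices} is $\Phi^*(s_j\otimes s_k)=\int_{\mu_k}\omega_j$; this is harmless because the period matrix is symmetric (and indeed $p_{jk}=p_{kj}$), but it is worth being aware of. Second, the sentence ``the main technical point is to verify that the $t$-dependence of the forms contributes no extra logarithmic term'' is exactly the content that Theorem~\ref{Martin4.1} already provides, so in the context of this paper you could simply cite it rather than sketch the plumbing estimate.
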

\begin{proof}
 M\"obius transformations preserve cross-ratios, so without loss of generality we may assume that $p_j=0$, $q_j=\infty$ and~$q_k=1$.
 By the description in Section~\ref{periodnonmatrices} of~$\Phi^*$ in terms of period coordinates we see that $\Psi(s_j \otimes s_k)(X_p)$ is computed in the following way.

 For each cusp~$c$, let $\gamma(c)$ be a positively oriented, simple closed curve winding once around~$c$.
 Then let~$\omega_j$ be the stable Abelian differential with $\smallint_{\gamma(p_i)}\omega_j = \delta_{ij}$ and $\smallint_{\gamma(q_i)}\omega_j = -\delta_{ij}$ for every~$i$.
 Since~$p_j=0$ and~$q_j=\infty$, we get $\omega_j = \tfrac{1}{2\pi i z}dz$. Now we have to integrate~$\omega_j$ over a path~$\gamma_k$ starting at~$p_k$ and ending at $q_k=1$.
 This yields \[ \Psi(s_j \otimes s_k)(X_p) = \exp(2\pi i\smallint_{\gamma_k}\omega_j) = \exp(\smallint_{\gamma_k}\tfrac{1}{z}dz) = p_k^{-1} = p_{jk}. \]
\end{proof}

\begin{theo}\label{CRequation}
 Let $r=(r_1,r_2,r_3)$ be a $\ZZ$-basis of $\II$, let $s=(s_1,s_2,s_3)$ be its dual basis of $\II^\vee$ with respect to the pseudo-trace pairing and
 let~$h$ be an element of $\Sym_{\QQ}(F) / (\Lambda_1+\Sym_{\ZZ}(\II))$.
 Writing \[ h = \sum_{i,j = 1}^3 b_{ij}(r_i \otimes r_j)\] with $b_{ij} \in \QQ$ and $b_{ij}=b_{ji}$, and identifying the $\II$-weighted boundary stratum $\mathcal{S}_r$ with $\MM_{0,6}$ as above,
 the subvariety $\mathcal{S}_r(h) \subset \mathcal{S}_r$ is cut out by the equations \[ p_{23}^{a_1} \cdot p_{13}^{a_2} \cdot p_{12}^{a_3} = \exp(-2\pi i (a_1b_{23}+a_2b_{13}+a_3b_{12})), \]
 where $(a_1,a_2,a_3)$ runs over the integral solutions of \begin{equation}\label{Ann} a_1s_2s_3+a_2s_1s_3+a_3s_1s_2=0. \end{equation}
\end{theo}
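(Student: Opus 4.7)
The plan is to unwind the definition of $\mathcal{S}_r(h)$ into explicit equations by combining the three inputs already established in the preceding subsections, and then to identify $\Ann(\Lambda_1)$ in coordinates. Recall that by the definition given just before the statement, $\mathcal{S}_r(h)$ is the locus inside $\mathcal{S}_r$ where $\Psi(a)(X) = \exp(-2\pi i \langle a,h\rangle)$ for every $a \in \mathcal{N}(\mathcal{S}_r)\cap\Ann(\Lambda_1)\cap\bS_{\ZZ}(\II^\vee)$. By Lemma~\ref{N(S)-basis}, the intersection $\mathcal{N}(\mathcal{S}_r)\cap\bS_\ZZ(\II^\vee)$ is the free $\ZZ$-module with basis $[s_2\otimes s_3],[s_1\otimes s_3],[s_1\otimes s_2]$, so I may write every such $a$ as $a = a_1[s_2\otimes s_3]+a_2[s_1\otimes s_3]+a_3[s_1\otimes s_2]$ with $a_i\in\ZZ$.

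First I would compute both sides of the cutting equation in these coordinates. The multiplicativity of $\Psi$ combined with Lemma~\ref{Psi1} immediately gives
\[ \Psi(a)(X_p) = p_{23}^{a_1}\cdot p_{13}^{a_2}\cdot p_{12}^{a_3}. \]
For the right hand side, the explicit formula for the pairing yields $\langle [s_j\otimes s_k],r_i\otimes r_l\rangle = s_j(r_i)s_k(r_l) = \delta_{ji}\delta_{kl}$, and combined with $b_{ij}=b_{ji}$ this gives $\langle [s_j\otimes s_k],h\rangle = b_{jk}$. Hence $\langle a,h\rangle = a_1b_{23}+a_2b_{13}+a_3b_{12}$, which is exactly the exponent on the right of the claimed equation.

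The main obstacle — and really the only non-formal step — is to characterize, in these coordinates, when such an $a$ lies in $\Ann(\Lambda_1)$. To handle this, I would introduce the multiplication map $\mu\colon \bS_\QQ(F)\to F$, $[x\otimes y]\mapsto xy$, which is well-defined because $F$ is commutative, and prove that $\Ann(\Lambda_1)=\ker(\mu)$. Given this equality, $a\in\Ann(\Lambda_1)$ is equivalent to $\mu(a)=0$, i.e.\ to the integral linear relation $a_1 s_2s_3+a_2 s_1s_3+a_3 s_1s_2=0$ in~$F$, which is Equation~\eqref{Ann}.

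To establish $\Ann(\Lambda_1)=\ker(\mu)$, I would exhibit $\Lambda_1$ as the image of $\mu^*\colon F^*\hookrightarrow \Sym_\QQ(F)$ under the identifications fixed in Section~\ref{admissibility}. Concretely, for $c\in F$ the element $\lambda_c=\sum_k s_k\otimes (cr_k)\in F\otimes_\QQ F$ represents multiplication by~$c$ under $F\otimes_\QQ F\cong\Hom_\QQ(F,F)$, so its class in $\Sym_\QQ(F)$ spans $\Lambda_1$ as $c$ runs over $F$. Using the formula $\varphi=\sum_k \tr_p(\varphi s_k)\,r_k$ coming from duality of the bases $(r_i),(s_i)$ under the pseudo-trace pairing, a direct calculation gives
\[ \langle [\varphi_1\otimes\varphi_2],\lambda_c\rangle \;=\; \sum_k \tr_p(\varphi_1 s_k)\,\tr_p(\varphi_2 c r_k) \;=\; \tr_p(c\varphi_1\varphi_2), \]
which depends on $(\varphi_1,\varphi_2)$ only through the product $\varphi_1\varphi_2$. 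Thus every $\lambda_c$ annihilates $\ker(\mu)$, so $\ker(\mu)\subseteq\Ann(\Lambda_1)$; since both subspaces have $\QQ$-codimension $\dim_\QQ\Lambda_1=3$ in $\bS_\QQ(F)$, they must coincide. Putting everything together yields the stated cross-ratio equations indexed by the integral solutions of~\eqref{Ann}.
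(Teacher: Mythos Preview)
Your proof is correct and follows essentially the same route as the paper. The only cosmetic difference is in the treatment of $\Ann(\Lambda_1)$: the paper writes $\Lambda_1$ as $F\cdot\lambda$ with $\lambda=\sum_i r_i\otimes s_i$ the identity element and computes $\langle a,x\lambda\rangle=\tr_p\!\big(x(a_1s_2s_3+a_2s_1s_3+a_3s_1s_2)\big)$ directly, whereas you phrase the same calculation as $\Ann(\Lambda_1)=\ker(\mu)$ for the multiplication map and then apply a dimension count; the underlying identity $\langle [\varphi_1\otimes\varphi_2],\lambda_c\rangle=\tr_p(c\,\varphi_1\varphi_2)$ is the same in both arguments.
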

\begin{proof}
 By Lemma~\ref{N(S)-basis}, each $a \in \mathcal{N}(\mathcal{S}_r) \cap \bS_\ZZ(\II^\vee)$ can be written as \[ a = a_1 s_2 \otimes s_3+a_2s_1 \otimes s_3+a_3s_1 \otimes s_2 \]
 with $a_i \in \ZZ$. Then the term $a_1b_{23}+a_2b_{13}+a_3b_{12}$ is just $\langle a,h \rangle$, so by the last lemma it remains to show that Equation~(\ref{Ann}) describes
 the intersection of $\mathcal{N}(\mathcal{S}_r) \cap \bS_\ZZ(\II^\vee)$ with $\Ann(\Lambda_1)$.
 We define $\lambda = r_1 \otimes s_1 + r_2 \otimes s_2 + r_3 \otimes s_3$. Under the identification $F \otimes_\QQ F \cong \Hom_\QQ(F,F)$ this element corresponds to the
 identity, hence it generates $\Lambda_1$ as a $F$-module. Thus $a$ lies in $\Ann(\Lambda_1)$ if and only if $\langle a,x\lambda \rangle =0$ for all $x \in F$.
 Since \[ \langle a,x\lambda \rangle = \langle a,\lambda x \rangle = \tr_p(x(a_1s_2s_3+a_2s_1s_3+a_3s_1s_2)), \] this holds if and only if
 \[ a_1s_2s_3+a_2s_1s_3+a_3s_1s_2 = 0. \]
\end{proof}
In particular, we have shown that $\mathcal{S}_r(h) \subset \mathcal{S}_r$ is cut out by~$r$ cross-ratio equations,
where $r = \rk_\ZZ\left( \mathcal{N}(\mathcal{S}_r) \cap \bS_\ZZ(\II^\vee) \cap \Ann(\Lambda_1) \right)$ is also the rank of the $\ZZ$-module of the solutions of Equation~(\ref{Ann}).

\begin{ex}\label{ranktwo}
 In \cite{MartinMatt} it is shown that for a cubic number field and an admissible stratum $\mathcal{S}_r$, the $\ZZ$-module $\mathcal{N}(\mathcal{S}_r) \cap \bS_\ZZ(\II^\vee) \cap \Ann(\Lambda_1)$
 has rank one. Here in the pseudo-cubic case, there is more than one possibility for this rank.
 \begin{enumerate}
  \item Set $r_1=(1,0)$, $r_2=(\sqrt{D},0)$ and $r_3=(0,1)$. Then $Q(r_1)=(-D,0,0)^t$, $Q(r_2)=(D^2,0,0)^t$ and $Q(r_3)=0$, thus $S_r$ is admissible.
        Moreover, we have $s_1s_2=(\tfrac{1}{4\sqrt{D}},0)$ and $s_2s_3=s_3s_1=0$, hence \[ \rk_\ZZ(\mathcal{N}(\mathcal{S}_r) \cap \bS_\ZZ(\II^\vee) \cap \Ann(\Lambda_1))=1.\]
  \item Set $r_1=(1,\tfrac{1}{2}D)$, $r_2=(1,-\tfrac{1}{2}D)$ and $r_3=(\sqrt{D},0)$. Then we get $Q(r_1)=(-D,D,0)^t$, $Q(r_2)=(-D,-D,0)^t$ and $Q(r_3)=(D^2,0,0)$, thus $S_r$ is admissible.
        Moreover, we have $s_1s_2=(\tfrac{1}{16},-\tfrac{1}{D^2})$ and $s_2s_3 = s_3s_1 = (\tfrac{1}{8D}\sqrt{D},0)$,
        hence \[ \rk_\ZZ(\mathcal{N}(\mathcal{S}_r) \cap \bS_\ZZ(\II^\vee) \cap \Ann(\Lambda_1))=2.\]
 \end{enumerate}
\end{ex}

\noindent
{\bf A non-irreducible stratum.}
Another type of stratum is the one where the dual graph has two vertices, one loop at each vertex and two edges connecting the two vertices, as can bee seen in Figure~\ref{NichtIrreduzibel}.
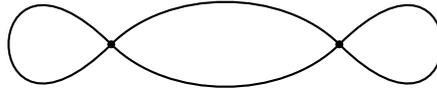
\begin{figure}[ht]
\centering
\begin{tikzpicture}[scale=1.5]
 \coordinate (A) at (0,0); \coordinate (B) at (2,0);
 \fill (A) circle (1pt); \fill (B) circle (1pt);
 \draw[thick] (A) .. controls (-1.2,1.2) and (-1.2,-1.2) .. (A);
 \draw[thick] (A) .. controls (0.5,0.5) and (1.5,0.5) .. (B);
 \draw[thick] (A) .. controls (0.5,-0.5) and (1.5,-0.5) .. (B);
 \draw[thick] (B) .. controls (3.2,1.2) and (3.2,-1.2) .. (B);
\end{tikzpicture}
\setlength{\abovecaptionskip}{-1cm}
\caption{The dual graph of a certain non-irreducible stable curve}\label{NichtIrreduzibel}
\end{figure}

For a given $\ZZ$-basis $(r_1,r_2,r_3)$ of $\II$, let $\bar{r} = ((r_1,r_3),(r_2,r_3))$ and let $\mathcal{S}_{\bar{r}}$ be the $\II$-weighted boundary stratum of this type
with weights $\pm r_1,\pm r_3$ on one irreducible component and weights $\pm r_2,\pm r_3$ on the other.
Every $\II$-weighted stable curve in $\mathcal{S}_{\bar{r}}$ can be realized in the following way.
Take two copies of $\PC$, then choose a tuple $(p_1,q_1,p_3^+,q_3^-)$ of four distinct points in the first copy and a tuple $(p_2,q_2,q_3^+,p_3^-)$ of four distinct points in the second copy.
Now identify $p_1$ with $q_1$, $p_2$ with $q_2$, $p_3^+$ with $q_3^+$ and $p_3^-$ with $q_3^-$.
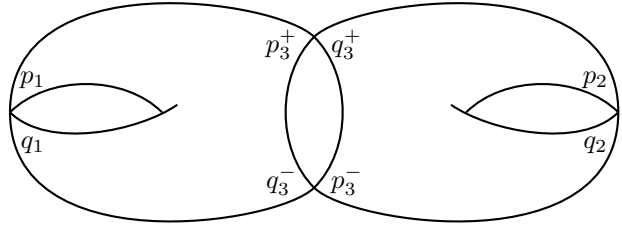
\begin{figure}[ht]
\centering
\begin{tikzpicture}[scale=1]
 \coordinate (A) at (0,0); \coordinate (B) at (8,0); \coordinate (C) at (4,1); \coordinate (D) at (4,-1);
 \draw[thick] (A) .. controls (0,2) and (3.5,1.5) .. (C);
 \draw[thick] (A) .. controls (0,-2) and (3.5,-1.5) .. (D);
 \draw[thick] (B) .. controls (8,2) and (4.5,1.5) .. (C);
 \draw[thick] (B) .. controls (8,-2) and (4.5,-1.5) .. (D);
 \draw[thick] (A) .. controls (0.5,0.5) and (1.5,0.5) .. (2,0);
 \draw[thick] (A) .. controls (0.5,-0.5) and (1.7,-0.25) .. (2.2,0.1);
 \draw[thick] (B) .. controls (7.5,0.5) and (6.5,0.5) .. (6,0);
 \draw[thick] (B) .. controls (7.5,-0.5) and (6.3,-0.25) .. (5.8,0.1);
 \draw[thick] (C) .. controls (3.5,0.5) and (3.5,-0.5) .. (D);
 \draw[thick] (C) .. controls (4.5,0.5) and (4.5,-0.5) .. (D);
 \coordinate[label=above:$p_1$] (p1) at (0.3,0.2);
 \coordinate[label=below:$q_1$] (q1) at (0.3,-0.2);
 \coordinate[label=above:$p_2$] (p2) at (7.7,0.2);
 \coordinate[label=below:$q_2$] (q1) at (7.7,-0.2);
 \coordinate[label=left:$p_3^+$] (p3+) at (3.9,0.9);
 \coordinate[label=left:$q_3^-$] (q3-) at (3.9,-0.9);
 \coordinate[label=right:$q_3^+$] (q3+) at (4.1,0.9);
 \coordinate[label=right:$p_3^-$] (p3-) at (4.1,-0.9);
\end{tikzpicture}
\setlength{\abovecaptionskip}{0cm}
\caption{The stable curve $X_{\bar{p}}$}\label{StableCurve}
\end{figure}

Then define an $\II$-weighting on the resulting stable curve $X_{\bar{p}}$ by $w(p_i^{(\pm)})=r_i$ and $w(q_i^{(\pm)})=-r_i$.
Therefore, the stratum $\mathcal{S}_{\bar{r}}$ is isomorphic to $\MM_{0,4} \times \MM_{0,4}$.
Analogous to the irreducible stratum we define the cross-ratios \[ \bar{p}_{13} = (p_1,q_1;q_3^-,p_3^+),\quad \bar{p}_{23} = (p_2,q_2;q_3^+,p_3^-) \]
and get the following lemma.
\begin{lem}\label{Psi2}
 Let $r=(r_1,r_2,r_3)$ be a $\ZZ$-basis of $\II$ and let $s=(s_1,s_2,s_3)$ be its dual basis of $\II^\vee$ with respect to the pseudo-trace pairing.
 Then we have \begin{align*}  \Psi(s_1 \otimes s_2)(X_{\bar{p}}) & = 1,\\ \Psi(s_1 \otimes s_3)(X_{\bar{p}}) & = \bar{p}_{13}, \\
 \Psi(s_2 \otimes s_3)(X_{\bar{p}}) & = \bar{p}_{23} \end{align*} for all $X_{\bar{p}} \in \mathcal{S}_{\bar{r}}$.
\end{lem}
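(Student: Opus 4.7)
The plan is to adapt the argument of Lemma~\ref{Psi1}, exploiting the fact that the stable curve $X_{\bar p}$ decomposes into two irreducible components $X^{(1)}, X^{(2)} \cong \PC$, each carrying only some of the cusps. As in Section~\ref{periodnonmatrices}, one has
\[ \Psi(s_j \otimes s_k)(X_{\bar p}) = \exp\!\left(2\pi i \int_{\mu_k}\omega_j\right), \]
where $\omega_j \in \Omega(X_{\bar p})$ is the stable Abelian differential determined by $\int_{\gamma(c)}\omega_j = s_j(w(c))$ for every cusp $c$, and $\mu_k$ is the limit of a cycle on the smoothing dual to the vanishing cycle marked by $r_k$. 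Theorem~\ref{Martin4.1} guarantees that $\Psi(s_j \otimes s_k)$ is holomorphic and nowhere vanishing on $\mathcal{S}_{\bar r}$ for $j \neq k$, since $\langle s_j \otimes s_k, w \otimes w\rangle = 0$ for every weight $w \in \{\pm r_1, \pm r_2, \pm r_3\}$, so the limiting integral is unambiguously defined modulo~$\ZZ$.

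First I would observe that, because $s_1$ pairs non-trivially only with $r_1$, the form $\omega_1$ has residues exclusively at the $r_1$-cusps $p_1, q_1 \in X^{(1)}$; its restriction to $X^{(2)}$ is a holomorphic one-form on $\PC$, hence identically zero. The symmetric statement holds for $\omega_2|_{X^{(1)}}$. For the first identity this immediately gives the result: the cycle $\mu_2$ admits a representative joining $p_2$ and $q_2$ on $X^{(2)}$, so $\int_{\mu_2}\omega_1 = 0$ and $\Psi(s_1 \otimes s_2)(X_{\bar p}) = 1$.

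For the second identity, I would represent $\mu_3$ by a path on $X^{(1)}$ from the cusp $p_3^+$ (weight $r_3$) to the cusp $q_3^-$ (weight $-r_3$). Exactly as in Lemma~\ref{Psi1}, M\"obius-normalizing on $X^{(1)}$ so that $p_1 = 0$, $q_1 = \infty$ and $q_3^- = 1$ yields $\omega_1 = \tfrac{1}{2\pi i z}\,dz$, and the computation $\int_{p_3^+}^{1}\tfrac{dz}{2\pi i z} = -\tfrac{\log p_3^+}{2\pi i}$ gives $\Psi(s_1 \otimes s_3)(X_{\bar p}) = 1/p_3^+$. Comparing with
\[ \bar p_{13} = (p_1,q_1;q_3^-,p_3^+) = \frac{(0-1)(\infty-p_3^+)}{(0-p_3^+)(\infty-1)} = \frac{1}{p_3^+} \]
then proves the identity. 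The third claim follows by the symmetric computation on $X^{(2)}$, using $\omega_2$ and the path from $p_3^-$ to $q_3^+$.

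The main obstacle is the choice of representative for $\mu_3$. Since $r_3$ is the weight of two distinct non-separating nodes of $X_{\bar p}$, the cycle $\mu_3$ is naturally dual to a combined vanishing cycle and a priori passes through both components. The verification that it admits a representative on $X^{(1)}$ (making the computation above legitimate) requires a plumbing-coordinate analysis showing that alternative representatives differ from this one by cycles whose $\omega_1$-periods lie in $\ZZ$; the vanishing of $\omega_1$ on $X^{(2)}$ is essential in trivialising all contributions arising from portions of the path crossing through the second component.
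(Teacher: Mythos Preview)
Your approach matches the paper's. The one place you over-complicate matters is the handling of $\mu_3$: rather than trying to represent it entirely on $X^{(1)}$ and invoking a plumbing-coordinate analysis, the paper simply takes the natural path from $p_3^+$ to $q_3^-$ on $X^{(1)}$, through the node $q_3^- \sim p_3^-$ into $X^{(2)}$, and on to $q_3^+$ there; since $\omega_1|_{X^{(2)}} \equiv 0$, the second-component portion of the integral vanishes directly, and only the $X^{(1)}$ contribution you computed survives.
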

\begin{proof}
 To compute $\Psi(s_j \otimes s_k)$ with $j \in \{ 1,2 \}$, we have to integrate a stable Abelian differential $\omega_j$ that has two simple poles at $p_j,q_j$
 and is holomorphic everywhere else. Thus $\omega_j$ must vanish on the component not containing $p_j$. In the case $k=2$ we integrate over a path $\gamma_2$ joining $p_2$ with
 $q_2$ in the component where $\omega_1$ is vanishing, so \[ \Psi(s_1 \otimes s_2)(X_{\bar{p}}) = \exp(2\pi i \smallint_{\gamma_2}\omega_1) = 1. \]
 In the case $i=1,k=3$ we have to integrate over a path starting at $p_3^+$, running over $q_3^-=p_3^-$ and ending at $q_3^+$. The second part of this path lies in the
 component where $\omega_1$ is vanishing, so we just have to integrate from $p_3^+$ to $q_3^-$. Again by using M\"obius transformations we may assume that $p_1=0$, $q_1=\infty$
 and $q_3^-=1$ and get \[ \Psi(s_1 \otimes s_3)(X_p) = \exp(2\pi i\smallint_{\gamma_3}\omega_1) = \exp(\smallint_{p_3^+}^1\tfrac{1}{z}dz) = (p_3^+)^{-1} = \bar{p}_{13}. \]
 Finally, by replacing $1$ with $2$, we get $\Psi(s_2 \otimes s_3)(X_p) =\bar{p}_{23}$.
\end{proof}

\begin{prop}
 Let $r=(r_1,r_2,r_3)$ be a $\ZZ$-basis of $\II$, let $s=(s_1,s_2,s_3)$ be its dual basis of $\II^\vee$ with respect to the pseudo-trace pairing and let
 ~$h$ be an element of $\Sym_{\QQ}(F) / (\Lambda_1+\Sym_{\ZZ}(\II))$.
 Writing \[ h = \sum_{i,j = 1}^3 b_{ij}(r_i \otimes r_j)\] with $b_{ij} \in \QQ$ and $b_{ij}=b_{ji}$ and identifying the $\II$-weighted boundary stratum $\mathcal{S}_{\bar{r}}$ with
 $\MM_{0,4} \times \MM_{0,4}$ as above, the subvariety $\mathcal{S}_{\bar{r}}(h) \subset \mathcal{S}_{\bar{r}}$ is cut out by the equations
 \[ (\bar{p}_{23})^{a_1} \cdot (\bar{p}_{13})^{a_2} = \exp(-2\pi i (a_1b_{23}+a_2b_{13}+a_3b_{12})), \] where $(a_1,a_2,a_3)$ runs over the integral solutions of
 \begin{equation*} a_1s_2s_3+a_2s_1s_3+a_3s_1s_2=0. \end{equation*}
\end{prop}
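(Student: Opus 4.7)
The plan is to mirror the proof of Theorem~\ref{CRequation} essentially verbatim, replacing the computation of $\Psi$ on the irreducible stratum by the one given in Lemma~\ref{Psi2}. The first observation is that although $\mathcal{S}_{\bar{r}}$ lies in a different topological stratum of $\partial\cMM_3(\II)$ than $\mathcal{S}_r$, the set of weights is unchanged: the four cusps $p_3^\pm, q_3^\pm$ all carry weight $\pm r_3$, while the cusps $p_i, q_i$ for $i \in \{1,2\}$ carry weight $\pm r_i$. Hence $\mathrm{Weight}(\mathcal{S}_{\bar{r}}) = \{\pm r_1, \pm r_2, \pm r_3\}$, and Lemma~\ref{N(S)-basis} applies without modification: $(s_2 \otimes s_3, s_1 \otimes s_3, s_1 \otimes s_2)$ is a $\ZZ$-basis of $\mathcal{N}(\mathcal{S}_{\bar{r}}) \cap \bS_\ZZ(\II^\vee)$, and every element can be written uniquely as $a = a_1 s_2 \otimes s_3 + a_2 s_1 \otimes s_3 + a_3 s_1 \otimes s_2$ with $a_i \in \ZZ$.

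Next I would import the description of $\Ann(\Lambda_1) \cap \bS_\ZZ(\II^\vee)$ from the proof of Theorem~\ref{CRequation}. Because $\lambda = r_1 \otimes s_1 + r_2 \otimes s_2 + r_3 \otimes s_3$ corresponds to the identity under $F \otimes_\QQ F \cong \Hom_\QQ(F,F)$, it generates $\Lambda_1$ as an $F$-module, so that $a \in \Ann(\Lambda_1)$ if and only if $\tr_p(x(a_1 s_2 s_3 + a_2 s_1 s_3 + a_3 s_1 s_2)) = 0$ for all $x \in F$, equivalently $a_1 s_2 s_3 + a_2 s_1 s_3 + a_3 s_1 s_2 = 0$. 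This characterization is intrinsic to $F$ and $\II$ and does not see the topological type of the stratum, so no new work is needed here.

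The key new input is Lemma~\ref{Psi2}, which by the multiplicativity of the group homomorphism $\Psi$ gives, for $X_{\bar{p}} \in \mathcal{S}_{\bar{r}}$,
\[ \Psi(a)(X_{\bar{p}}) = \Psi(s_2 \otimes s_3)(X_{\bar{p}})^{a_1} \cdot \Psi(s_1 \otimes s_3)(X_{\bar{p}})^{a_2} \cdot \Psi(s_1 \otimes s_2)(X_{\bar{p}})^{a_3} = \bar{p}_{23}^{a_1} \cdot \bar{p}_{13}^{a_2}, \]
since $\Psi(s_1 \otimes s_2) \equiv 1$ on the whole stratum. The pairing computation $\langle a, h \rangle = a_1 b_{23} + a_2 b_{13} + a_3 b_{12}$ is literally the same as in Theorem~\ref{CRequation}, because it only uses the duality between $r$ and $s$. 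Substituting both into the defining equation $\Psi(a)(X) = \exp(-2\pi i \langle a, h \rangle)$ of $\mathcal{S}_{\bar{r}}(h)$ yields precisely the asserted cross-ratio equations, as $a$ ranges over the integer solutions of the annihilation relation.

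The only mild subtlety worth noting is that $a_3$ disappears on the left-hand side but survives on the right: any integer solution of the annihilation relation of the form $(0,0,a_3)$ produces a pointwise constant constraint $\exp(-2\pi i a_3 b_{12}) = 1$ on $\mathcal{S}_{\bar{r}}(h)$. This is not an obstacle to the argument but rather an automatic admissibility condition on $h$; whenever $\mathcal{S}_{\bar{r}}(h)$ is non-empty it is forced to hold, and the proof carries through without further modification.
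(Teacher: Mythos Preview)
Your proposal is correct and follows exactly the approach the paper takes: the paper's own proof simply states that, using the $\Psi$-values from Lemma~\ref{Psi2}, the argument works precisely as the proof of Theorem~\ref{CRequation}. Your write-up spells out those steps in detail (including the observation that $\mathrm{Weight}(\mathcal{S}_{\bar r})$ coincides with $\mathrm{Weight}(\mathcal{S}_r)$, so Lemma~\ref{N(S)-basis} applies), and your remark about the residual constraint when $(a_1,a_2)=(0,0)$ is a helpful clarification not made explicit in the paper.
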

\begin{proof}
 Using the $\Psi$-values from the previous lemma, this proof works precisely as the proof of Theorem~\ref{CRequation}.
\end{proof}

We call the type of the boundary stratum given by the graph in Figure~\ref{Irreduzibel} the \emph{trinodal type} and the type of the boundary stratum given by the graph in Figure~\ref{NichtIrreduzibel}
the \emph{nice non-trinodal type}. We denote by~$\mathcal{S}_{nice} \subset \PcM_3$ the locus of stable forms~$(X,[\omega])$, where~$X$ is a stable curve in a stratum of one of these types.
By the cross-ratio equations and the techniques developed in~\cite{MartinMatt}, it follows that the necessary condition in Theorem~\ref{admissible} is also sufficient for stable forms
in~$\mathcal{S}_{nice}$.
\begin{theo}\label{hinreichend}
 Let $\OO \subset F$ be a pseudo-cubic order of degree relatively prime to the conductor of $D$, and let $(\II,E_h(\II)) \in \mathcal{C}(\OO)$ be a cusp packet for $\OO$.
 Furthermore, let $\iota: F \hookrightarrow \RR$ be one of the two real quadratic pseudo-embeddings.
 
 Then the intersection of the closure of the cusp of $\mathbb{P}\overline{\Omega^\iota\RM_\OO}$ associated to $(\II,E_h(\II))$ with~$\mathcal{S}_{nice}$ is equal to the union of all
 $p_\iota(\mathcal{S}(h))$, the $\iota$-pseudo-embeddings of the subvarieties~$\mathcal{S}(h)$ of~$\mathcal{S}$, where~$\mathcal{S}$ runs over all admissible $\II$-weighted boundary strata of trinodal
 and nice non-trinodal type.
\end{theo}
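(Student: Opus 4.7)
The inclusion ``$\subseteq$'' is a direct consequence of Theorem~\ref{admissible}: any $(X_0,[\omega_0])$ in the closure of the cusp of $\mathbb{P}\overline{\Omega^\iota\RM_\OO}$ associated to $(\II,E_h(\II))$ is of the form $p_\iota(X_0,\varrho_0)$ for some $(X_0,\varrho_0) \in \mathcal{S}(h)$ and some admissible $\II$-weighted boundary stratum $\mathcal{S}$; the assumption $(X_0,[\omega_0]) \in \mathcal{S}_{nice}$ then forces $\mathcal{S}$ to be of trinodal or nice non-trinodal type.

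For the reverse inclusion ``$\supseteq$'', fix $(X_0,\varrho_0) \in \mathcal{S}(h)$ with $\mathcal{S}$ admissible of one of the two listed types. The plan is to construct a holomorphic family $\{X_t\}_{t \in \Delta^*} \subset \RM(\II,h)$ degenerating to $(X_0,\varrho_0)$, and then attach the unique $\iota$-eigenform to each smooth fibre via Lemma~\ref{eigenrealmult} so as to land in $\mathbb{P}\Omega^\iota\RM_\OO$ with limit $p_\iota(X_0,\varrho_0)$. Introduce plumbing coordinates $t_1,\ldots,t_n$ at the nodes of $X_0$ (with $n=3$ for trinodal and $n=4$ for nice non-trinodal) together with local moduli $\tau$ on $\mathcal{S}$ near $\tau_0=(X_0,\varrho_0)$. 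By Theorem~\ref{Martin4.1}, every $a \in \bS_\ZZ(\II^\vee)$ factors as
\[
  \Psi(a)(\tau,t) \;=\; u_a(\tau,t) \prod_{i=1}^n t_i^{m_{i,a}}, \qquad m_{i,a}:=\langle a,w(c_i)\otimes w(c_i)\rangle,
\]
with $u_a$ holomorphic and nowhere vanishing across $\mathcal{S}$. Choosing a $\ZZ$-basis of $V := \Ann(\Lambda_1) \cap \bS_\ZZ(\II^\vee)$ compatible with $V_0 := V \cap \mathcal{N}(\mathcal{S})$, the equations $\Psi(a) = q(h)(a)$ defining $\RM(\II,h)$ split: those indexed by $V_0$ are precisely the cross-ratio equations of Theorem~\ref{CRequation} (and its nice non-trinodal analogue), already satisfied on the component of $\mathcal{S}(h)$ through $\tau_0$, while the remaining $r := \rk_\ZZ(V/V_0)$ equations carry a non-trivial monomial in the $t_i$.

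The main obstacle is producing, for any $(\tau_0,0)$ with $\tau_0 \in \mathcal{S}(h)$, a family of solutions to those remaining equations with all $|t_i|>0$ arbitrarily small, and this is exactly where admissibility of $\mathcal{S}$ is used. By Theorem~\ref{Q}, the origin lies in the interior of the convex hull of $\{Q(w(c_i))\}$; Gordan's alternative applied to the integer matrix $(m_{i,a})_{i,a}$ (indexed by $i=1,\ldots,n$ and by $a$ in a chosen complement of $V_0$ in $V$) then produces a strictly positive rational vector $(\lambda_1,\ldots,\lambda_n)$ with $\sum_i \lambda_i m_{i,a}=0$ for every such $a$. Writing $t_i = \varepsilon^{\lambda_i} \zeta_i$ with $\zeta_i \in \CC^*$ kills the leading $\varepsilon$-scaling of each remaining equation, which at $\varepsilon=0$ reduces to an equation on the phases of the $\zeta_i$ that is always solvable; the residual holomorphic system in $(\tau,\zeta,\varepsilon)$ has a non-degenerate derivative in the directions transverse to $\mathcal{S}(h)$ thanks to the explicit product decomposition $\mathcal{S} \cong \prod_i \MM_{0,n_i}$ available for the two listed types, so the holomorphic implicit function theorem extends any such base solution to a genuine family over a small disc in~$\varepsilon$. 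By construction every fibre lies in $\RM(\II,h)$ and, by Proposition~\ref{FamilyCuspPacket}, induces the cusp packet $(\II,E_h(\II))$; attaching the $\iota$-eigenforms and letting $\varepsilon\to 0$ yields the desired convergence to $p_\iota(X_0,\varrho_0)$. The absence of such a clean product decomposition of $\mathcal{S}$ for the remaining stratum types is precisely what restricts the theorem to $\mathcal{S}_{nice}$.
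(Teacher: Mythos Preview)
Your handling of the inclusion ``$\subseteq$'' agrees with the paper. For ``$\supseteq$'' the paper does not argue directly at all: it simply invokes \cite{MartinMatt}, Section~8, whose method is to show that suitably chosen $\Psi(\tau_1),\dots,\Psi(\tau_n)$ are local coordinates on a neighbourhood $U(\mathcal{S})$ (possibly after passing to a degree-two quotient of $\mathcal{S}$), and then to reduce the statement $\mathcal{S}(h)\subseteq\overline{\RM(\II,h)}$ to a purely toric one about closures of subtori in a toric variety. Your route---plumbing coordinates, the Gordan direction $(\lambda_i)$, and the implicit function theorem---is a concrete unpacking of that same toric argument, and your use of admissibility to produce strictly positive $\lambda_i$ with $\sum_i\lambda_iQ(w(c_i))=0$ is correct and is exactly the combinatorial core.

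There are, however, two points where your write-up is imprecise. First, the implicit function step hinges on the Jacobian of the system having full rank at the base point; with your block structure this requires that the $\Psi(a)$ for $a\in V_0$ (equivalently the cross-ratios $p_{jk}$, resp.\ $\bar p_{jk}$) have independent differentials on $\mathcal{S}$. That is precisely the statement ``$\Psi(\tau_i)$ form local coordinates'' that the paper imports from \cite{MartinMatt} via Lemmas~\ref{Psi1} and~\ref{Psi2}; it is not a consequence of the product decomposition $\mathcal{S}\cong\prod_i\MM_{0,n_i}$ alone, and you should cite or prove it. Second, your diagnosis of why the theorem is restricted to $\mathcal{S}_{nice}$ is off: every geometric-genus-zero stratum decomposes as a product of $\MM_{0,n_i}$'s. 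The actual reason is that the paper has only computed $\Psi|_{\mathcal{S}}$ in terms of cross-ratios (and hence verified the local-coordinate property) for the trinodal and nice non-trinodal types; the argument would go through for any stratum once the analogue of Lemmas~\ref{Psi1}--\ref{Psi2} is established there.
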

\begin{proof}
 By Corollary~\ref{Gleichung} and Theorem~\ref{admissible}, it remains to show that the converse inclusion $\mathcal{S}(h) \subseteq \mathcal{S} \cap \overline{\mathcal{R}\MM(\II,h)}$ holds for all
 admissible boundary strata~$\mathcal{S}$ of trinodal or nice non-trinodal type.
 But this proof is already done in~\cite{MartinMatt}, Section~8. A sketch of this proof is as follows.

 The stratum $\mathcal{S} \subset \cMM_3(\II) = \cTT(\Sigma_3,\II)/\Mod(\Sigma_3,\II)$ is obtained by pinching curves $\gamma_1,...,\gamma_m$ in~$\Sigma_3$.
 Let $U(\mathcal{S})$ be the union of $\MM_3(\II)$, $\mathcal{S}$ and all boundary strata obtained by pinching only curves $\gamma_k$.
 Thus $U(\mathcal{S})$ is an open subset of~$\cMM_3(\II)$. Then one can find convenient $\tau_1,...,\tau_n \in \bS_\ZZ(\II^{\vee})$, such that after possibly switching to a degree two quotient
 $\mathcal{S}'$ of~$\mathcal{S}$, the maps $\Psi(\tau_1),...,\Psi(\tau_n)$ form a system of local coordinates on~$\mathcal{S}$ respectively~$\mathcal{S}'$. 
 Here the description of~$\Psi$ in terms of cross-ratios was used, in our cases given by Lemma~\ref{Psi1} and Lemma~\ref{Psi2}.
 Finally, one can prove the analogous statement for algebraic tori, and the claim follows. We refer the reader to~\cite{MartinMatt} for more details. 
\end{proof}
\clearpage

\section{Prym Teichm\"uller curves}

In his famous work \cite{McMBilliards}, McMullen showed that in genus two, the Riemann surfaces (with a suitable choice of a holomorphic one-form) generating primitive Teichm\"uller curves are
precisely those Riemann surfaces admitting real multiplication on their Jacobian.
A few years later, he discovered in~\cite{McMPrym} infinite many examples of primitive Teichm\"uller curves in genus three.
Indeed, he constructed infinitely many flat surfaces with a holomorphic involution, such that the resulting Prym variety admits real multiplication.
For a crosscheck, we check the admissibility-condition from Theorem~\ref{admissible} on a class of these surfaces obtained by the Thurston-Veech construction.
Moreover, we will determine the cross-ratio equations for these examples.

\subsection{Prym varieties}

In this subsection we give a brief introduction to Prym varieties. We will use the definition of McMullen in~\cite{McMPrym} and present some of his results.
\\[1em]
{\bf Prym varieties.}
Fix a Riemann surface $X \in \MM_g$ and a non-trivial involution~$\tau$ of~$X$, i.e. some $\tau \in \Aut(X) \setminus \{ \id \}$ with $\tau^2 = \id$. We will call such a pair $(X,\tau)$ a
\emph{Prym surface}. The pullback $\tau^* \in \Aut_\CC(\Omega(X)) \setminus \{ \id \}$ is an involution of $\CC$-vector spaces and hence it defines a decomposition
\[ \Omega(X) = \Omega(X)^+ \oplus \Omega(X)^-, \] where $\Omega(X)^\pm$ is the eigenspace of $\tau^*$ to the eigenvalue $\pm 1$.
We will call the elements of $\Omega(X)^-$ the \emph{Prym forms of $X$ with respect to $\tau$}.
This yields a decomposition of the dual vector spaces \[ \Omega(X)^* = (\Omega(X)^+)^* \oplus (\Omega(X)^-)^*, \] where $(\Omega(X)^+)^*$ is the space of linear forms
vanishing on $\Omega(X)^-$ and vice versa.

Now consider the eigenspaces $H_1(X;\ZZ)^\pm \subset H_1(X;\ZZ)$ to the eigenvalues $\pm 1$ of the induced $\ZZ$-module automorphism $\tau_* \in \Aut_\ZZ(H_1(X;\ZZ))$.
We have the following well known decomposition of $\Jac(X)$.
\begin{lem}
 Let $(X,\tau)$ be a Prym surface.
 Then the quotients \[ \Jac(X)^+ := (\Omega(X)^+)^* / H_1(X;\ZZ)^+\quad {\rm and}\quad  \Jac(X)^- := (\Omega(X)^-)^* / H_1(X;\ZZ)^- \] are complementary subvarieties of $\Jac(X)$.
\end{lem}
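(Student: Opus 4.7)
The plan is to verify two things: first, that $\Jac(X)^+$ and $\Jac(X)^-$ really are subvarieties of $\Jac(X)$ (i.e., that the subtori $(\Omega(X)^\pm)^*/H_1(X;\ZZ)^\pm$ embed into $\Jac(X)$ via the obvious maps), and second, that with respect to the principal polarization $\Theta_X$ the pair $(\Jac(X)^+,\Jac(X)^-)$ is actually a complementary pair in the sense of Section~\ref{AbelianVarieties}.

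For the first task, I would first check the compatibility of the eigenspace decompositions on $H_1(X;\ZZ)$ and on $\Omega(X)^*$. The key computation is that the natural pairing $H_1(X;\ZZ)\times\Omega(X)\to\CC$ given by $(\gamma,\omega)\mapsto\int_\gamma\omega$ is $\tau$-equivariant: for $\gamma\in H_1(X;\ZZ)^{\varepsilon_1}$ and $\omega\in\Omega(X)^{\varepsilon_2}$ with $\varepsilon_i\in\{\pm\}$, we have $\int_\gamma\omega=\int_{\tau_*\gamma}\omega=\int_\gamma\tau^*\omega$, so the integral vanishes whenever $\varepsilon_1\neq\varepsilon_2$. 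Under the embedding $H_1(X;\ZZ)\hookrightarrow\Omega(X)^*$ this shows $H_1(X;\ZZ)^\pm\subseteq(\Omega(X)^\pm)^*$, where I use that $(\Omega(X)^+)^*$ has been defined as the annihilator of $\Omega(X)^-$ (and symmetrically).

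Next I must argue that $H_1(X;\ZZ)^\pm$ is actually a lattice of full rank in $(\Omega(X)^\pm)^*$. Because $\tau_*$ acts on $H_1(X;\ZZ)$ as an involution, tensoring with $\QQ$ gives $H_1(X;\QQ)=H_1(X;\QQ)^+\oplus H_1(X;\QQ)^-$, so each $H_1(X;\ZZ)^\pm$ has rank equal to the $\QQ$-dimension of the corresponding eigenspace. Comparing with the Hodge decomposition $H_1(X;\CC)\cong\Omega(X)^*\oplus\overline{\Omega(X)^*}$, which is $\tau$-equivariant and thus restricts to a corresponding decomposition on each $\pm$-eigenspace, one finds $\dim_\QQ H_1(X;\QQ)^\pm=2\dim_\CC\Omega(X)^\pm$, matching the real dimension of $(\Omega(X)^\pm)^*$. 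Together with the inclusion above, this gives lattices of full rank and therefore honest complex subtori $\Jac(X)^\pm\subseteq\Jac(X)$.

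Finally, for complementarity with respect to $\Theta_X$, I would use that $\tau$ is a biholomorphism and hence orientation-preserving, so it preserves the intersection pairing: $\langle\tau_*\gamma_1,\tau_*\gamma_2\rangle=\langle\gamma_1,\gamma_2\rangle$. For $\gamma_1\in H_1(X;\ZZ)^+$ and $\gamma_2\in H_1(X;\ZZ)^-$ this yields $\langle\gamma_1,\gamma_2\rangle=-\langle\gamma_1,\gamma_2\rangle$, hence zero. Equivalently, the $\RR$-linear extension of the intersection pairing to $\Omega(X)^*$ vanishes on $(\Omega(X)^+)^*\times(\Omega(X)^-)^*$, so the Hermitian form $\Theta_X$ also pairs these two summands trivially, and by the dimension count $(\Omega(X)^-)^*$ is exactly the orthogonal complement of $(\Omega(X)^+)^*$. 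This is precisely the condition for $\Jac(X)^+$ and $\Jac(X)^-$ to be complementary subvarieties, finishing the proof. The most delicate point is really the second paragraph, matching lattice ranks with eigenspace dimensions on the analytic side; the rest is essentially bookkeeping with the involution.
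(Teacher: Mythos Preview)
Your proof is correct and follows essentially the same route as the paper: both use the equivariance formula $\int_{\tau_*\gamma}\omega=\int_\gamma\tau^*\omega$ to place $H_1(X;\ZZ)^\pm$ inside $(\Omega(X)^\pm)^*$, the rational splitting $H_1(X;\QQ)=H_1(X;\QQ)^+\oplus H_1(X;\QQ)^-$ to get full-rank lattices, and the $\tau$-invariance of the intersection pairing to obtain orthogonality. Your explicit appeal to the Hodge decomposition for the dimension count is a bit more detailed than the paper's treatment, which leaves that step implicit, but the structure of the argument is the same.
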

\begin{proof}
The involution~$\tau_*$ is self-adjoint with respect to the intersection pairing and thus $H_1(X;\ZZ)^+$ and $H_1(X;\ZZ)^-$ are orthogonal submodules.
From the formula $\smallint_{\tau_*(\gamma)}\omega = \smallint_\gamma \tau^*(\omega)$ for all $\gamma \in H_1(X;\ZZ)$ and $\omega \in \Omega(X)$,
it follows that each group $H_1(X;\ZZ)^\pm$ is mapped into $(\Omega(X)^\pm)^*$ under the usual embedding $H_1(X;\ZZ) \hookrightarrow \Omega(X)^*$.
Since \[ H_1(X;\QQ) = H_1(X;\QQ)^+ \oplus H_1(X;\QQ)^-, \] we have that $H_1(X;\ZZ)^\pm$ is a lattice in $(\Omega(X)^\pm)^*$, and that $(\Omega(X)^+)^*$ and $(\Omega(X)^-)^*$ are
orthogonal with respect to the intersection form.
It follows that $(\Jac(X)^+,\Jac(X)^-)$ is indeed a pair of complementary subvarieties of~$\Jac(X)$.
\end{proof}
\begin{defi}
 Let $(X,\tau)$ be a Prym surface. The subvariety \[ \Prym(X,\tau) := \Jac(X)^- \] of $\Jac(X)$ is called the \emph{Prym variety of $(X,\tau)$}.
\end{defi}

The pullback of the canonical projection $\pi: X \to X/\tau$ is an isomorphism $\pi^*: \Omega(X/\tau) \cong \Omega(X)^+$, and thus \[ p(X,\tau) := \dim(\Prym(X,\tau)) = g(X)-g(X/\tau). \]
The projection $\pi$ is of degree two, so by the Riemann-Hurwitz formula we have \[ g(X)=2p(X,\tau)+1-\tfrac{1}{2} |\Fix(\tau)|, \] where $\Fix(\tau) \subset X$ denotes the set of fixpoints of $\tau$.
Therefore, we get \begin{align}\label{dimprym} p(X,\tau) \leq g(X) \leq 2p(X,\tau)+1. \end{align}
These inequalities are sharp. This can be seen by defining $\tau$ to be the canonical involution of a hyperelliptic surface $X$ which yields $\Omega(X)=\Omega(X)^-$,
respectively by defining $\tau$ to be the non-trivial deck transformation of a degree-two unramified double covering $X \to Y$, which yields $\Fix(\tau)=\emptyset$.
If $p(X,\tau)=2$, then according to \eqref{dimprym} the genus of~$X$ lies in $\{ 2,3,4,5 \}$.
\\[1em]
{\bf Prym varieties with real multiplication.}
We are interested in genus three Prym surfaces~$(X,\tau)$, such that~$\Prym(X,\tau)$ is of dimension two and has real multiplication by a real quadratic order~$\OD$.
Given such a Prym surface, we call a Prym form $\omega \in \Omega(X)^-$ a \emph{Prym eigenform}, if it is an eigenform for some choice of real multiplication on~$\Prym(X,\tau)$ by~$\OD$
(i.e. an eigenform for the corresponding pseudo-real multiplication).
We denote by $\Omega \EE_D^g \subseteq \Omega \MM_g$ the locus of all pairs~$(X,\omega)$, such that there exists a non-trivial involution $\tau$ of $X$, the corresponding Prym variety $\Prym(X,\tau)$
is of dimension two and has real multiplication by the real quadratic order~$\OD$, and such that $\omega \not= 0$ is a Prym eigenform.
Furthermore, we denote by $\Omega\WW_D^g \subseteq \Omega \EE_D^g$ the locus of those $(X,\omega)$, where $\omega$ has a unique zero (of degree $2g-2$), and by $\EE_D^g$ respectively $\WW_D^g$ the image
of~$\Omega\EE_D^g$ respectively $\Omega\WW_D^g$ in~$\MM_g$.

McMullen showed in \cite{McMPrym}, that the locus $\WW_D^g$ is the union of finitely many Teichm\"uller curves, and that for non-square $D$, each such curve is primitive.
He used the following theorem we will need in the next subsection.
\begin{theo}[McMullen]\label{closedinvariantsubset}
 The locus $\Omega \EE_D^g$ of Prym eigenforms for real multiplication by $\OD$ is a closed, ${\rm SL}_2(\RR)$-invariant subset of $\Omega \MM_g$.
\end{theo}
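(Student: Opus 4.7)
The statement factors into two essentially independent claims, $\SL_2(\RR)$-invariance and closedness, and I would treat them in turn.

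For the $\SL_2(\RR)$-invariance, my approach is a direct verification against the defining conditions. Let $(X,\omega) \in \Omega\EE_D^g$ with witnessing data consisting of an involution $\tau\!: X \to X$ and a choice of real multiplication $\rho\!: \OD \hookrightarrow \End^+(\Prym(X,\tau))$ such that $\omega$ is an eigenform. For $A \in \SL_2(\RR)$, write $(X^A,\omega^A) = A.(X,\omega)$. The involution $\tau$ is affine with constant derivative in translation charts (involutions preserving $\omega$ up to sign necessarily act as $\pm\id$), so it remains holomorphic after postcomposing the atlas with $A$ and gives an involution of $X^A$. The eigenspace decomposition $H_1(X;\ZZ) = H_1(X;\ZZ)^+ \oplus H_1(X;\ZZ)^-$ is purely topological, and the lattice $H_1(X;\ZZ)^-$ underlying $\Prym(X,\tau)$ is unchanged. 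The endomorphism $\rho(t)$, encoded by its action on $H_1(X;\ZZ)^-$ as an integer matrix self-adjoint with respect to the intersection pairing, persists as a $\ZZ$-endomorphism and is still $\CC$-linear with respect to the new complex structure precisely because $\omega^A$ remains an eigenvector of its dual action on $(\Omega(X^A)^-)^*$. The eigenform identity $\int_{\rho(t)_*\gamma}\omega^A = \iota(t)\int_\gamma \omega^A$ then follows from the corresponding identity for $\omega$ by $\RR$-linearity of the $A$-action on real and imaginary parts.

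For closedness, let $(X_n,\omega_n) \to (X,\omega)$ in $\Omega\MM_g$ with $(X_n,\omega_n)\in\Omega\EE_D^g$, and fix witnesses $(\tau_n,\rho_n)$. I would extract the limit witness in two compactness steps. First, the affine automorphism group of a flat surface in a stratum is discrete and varies continuously with the flat structure; after passing to a subsequence and trivializing the deformation via local markings on the reference surface $\Sigma_g$, the $\tau_n$ converge to an affine involution $\tau$ of $(X,\omega)$. Second, fixing a primitive generator $t$ of $\OD$, the matrices $T_n$ representing $\rho_n(t)$ on the (stabilized) $\ZZ$-lattice $H_1(X;\ZZ)^-$ are integer, self-adjoint with respect to the intersection form, and satisfy the fixed minimal polynomial of $t$; there are only finitely many such matrices, so along a further subsequence $T_n$ stabilizes to a single integer matrix $T$. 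This $T$ extends uniquely to a ring embedding $\rho\!: \OD \hookrightarrow \End^+(\Prym(X,\tau))$, and the eigenform identity passes to the limit by continuity of period integrals in $\Omega\MM_g$, yielding $(X,\omega)\in\Omega\EE_D^g$.

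The main obstacle is the first compactness step, namely arguing that the action of the varying involutions $\tau_n$ on the underlying topological surface stabilizes despite the $\tau_n$ living on different complex curves. The standard remedy is to pull everything back via local Teichmüller sections to maps on a fixed surface $\Sigma_g$ and to observe that the resulting mapping classes eventually lie in the finite $2$-torsion of $\Mod(\Sigma_g)$ compatible with the topological type of the degeneration. A secondary subtlety is that the limit embedding $\rho$ need only contain $\OD$ in its image: if it extends to a larger order $\mathcal{O}_E$, then $E \mid D$ with $E \neq D$, but this poses no issue for membership in $\Omega\EE_D^g$ since the existence of a choice of real multiplication by $\OD$ is all that is required, and the eigenform condition is preserved under restriction of $\mathcal{O}_E$ to $\OD$.
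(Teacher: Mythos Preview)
The paper does not prove this theorem; it is quoted from McMullen~\cite{McMPrym} without argument and used as a black box. Your two-part outline---direct verification of $\SL_2(\RR)$-invariance followed by a sequential-compactness argument for closedness using finiteness of integer matrices with a fixed quadratic minimal polynomial---is exactly McMullen's own proof in that reference, so there is nothing in the present paper to compare against.

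Two comments on the substance. For invariance, the assertion that a self-adjoint $T$ on $H_1^-$ with $T^*\omega^A=\lambda\omega^A$ is automatically holomorphic for the \emph{new} complex structure is the genuine content, and it holds specifically because $\dim\Prym=2$: self-adjointness for the symplectic form makes $T^*$ self-adjoint for the induced Hermitian pairing on $H^1(\Prym;\CC)$, which forces $T^*$ to preserve the orthogonal complement of $\omega^A$ inside $\Omega^-$ as well. You invoke this correctly but tersely.

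On properness, your final dismissal is in tension with this paper's conventions. Real multiplication by $\OD$ is defined here to be \emph{proper} (Definition~\ref{RealMultO} and its two-dimensional analogue), so ``restriction of $\mathcal{O}_E$ to $\OD$'' does not yield real multiplication by $\OD$ in that sense. Under a strict proper reading the locus is only locally closed, since a family of Prym varieties with endomorphism ring exactly $\OD$ can specialize to one with a larger order. McMullen's closedness statement and proof go through for the inclusive reading (an embedding $\OD\hookrightarrow\End^+(\Prym)$ with $\omega$ an eigenform, without insisting on maximality), and your argument is correct under that reading; just be aware that you are silently relaxing the definition the paper uses elsewhere.
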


Furthermore, he developed the following very useful sufficient condition for holomorphic one-forms for lying in~$\Omega \EE_D^g$.
\begin{theo}[McMullen]\label{McM1}
 Let $(X,\tau)$ be a Prym surface, such that the dimension of $\Prym(X,\tau)$ is two and let $\omega \in \Omega(X)^- \setminus \{ 0 \}$.
 If ${\rm SL}(X,\omega)$ contains a hyperbolic element $A$, then we have $(X,\omega) \in \Omega \EE_D^g$ for some $D \in \NN_{>1}$ satisfying $\QQ(\sqrt{D})=\QQ(\tr(A))$.
\end{theo}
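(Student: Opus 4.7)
The plan is to extract the real multiplication on $\Prym(X,\tau)$ from the hyperbolic affine automorphism $\phi\!: X \to X$ corresponding to $A$, and then identify the discriminant via the trace field. I would begin by arranging that $\phi$ commutes with $\tau$: since $\tau$ is biholomorphic with derivative $-I$, the conjugate $\tau \phi \tau^{-1}$ has the same derivative $A$ as $\phi$, so the two differ by a translation of the flat surface $(X,\omega)$. The group of translations of a compact flat surface of genus $\geq 2$ is finite, so after replacing $A$ by a suitable power (which does not change $\QQ(\tr(A))$ up to a real quadratic extension), $\phi$ may be assumed to commute with $\tau$. Consequently $\phi$ preserves the eigenspace decomposition $H_1(X;\ZZ) = H_1(X;\ZZ)^+ \oplus H_1(X;\ZZ)^-$, so its restriction $T := \phi_*|_{H_1(X;\ZZ)^-}$ is a symplectic automorphism of $\Prym(X,\tau)$ with its induced polarization.

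Next I would analyze $T$ through the flat structure. Writing $\phi^*(dz) = \tfrac12[(a{+}d){+}i(b{-}c)]\,dz + \tfrac12[(a{-}d){+}i(b{+}c)]\,d\bar z$ for $A=\left(\begin{smallmatrix}a & b\\ c & d\end{smallmatrix}\right)$, one sees that $\phi^*$ acts on the real $2$-plane $V_\omega := \langle \mathrm{Re}[\omega], \mathrm{Im}[\omega]\rangle \subset H^1(X;\RR)^-$ by a matrix conjugate to $A$; in particular its eigenvalues on $V_\omega \otimes \CC$ are $\lambda$ and $\lambda^{-1}$, where $\lambda+\lambda^{-1}=\tr(A)$. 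Since $T$ is an automorphism of the polarized Prym, its Rosati adjoint is $T^\vee = T^{-1}$, so $S := T + T^{-1}$ lies in $\End^+(\Prym(X,\tau))$ and acts as multiplication by $\tr(A)$ on $V_\omega \otimes \CC$. Thus $\tr(A)$ is an eigenvalue of the integral self-adjoint endomorphism $S$, so $\QQ(\tr(A)) \subseteq \QQ \otimes_\ZZ \ZZ[S]$; since $\tr(A)$ is real and $\dim_\QQ \End_\QQ^+(\Prym(X,\tau)) \leq 3$, the ring $\ZZ[S]$ is an order in a real quadratic field $\QQ(\sqrt D) = \QQ(\tr(A))$, whose saturation we denote $\OD$. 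This produces a proper real multiplication $\OD \hookrightarrow \End^+(\Prym(X,\tau))$.

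It remains to recognize $\omega$ as a Prym eigenform for this real multiplication. By construction the real and imaginary parts of $[\omega]$ span $V_\omega$, and the holomorphic line $\CC\cdot[\omega]$ is carried into one of the two $\phi^*$-eigenlines inside $V_\omega\otimes \CC$ (namely the expanding one, eigenvalue $\lambda$), so $S$ acts on $\CC\cdot\omega$ by the scalar $\lambda+\lambda^{-1} = \tr(A)$; under the identification of $\QQ(\sqrt D)$ with $\QQ(\tr(A))$ via the chosen embedding, this says exactly that $\omega$ is an eigenform for $\rho:\OD\hookrightarrow \End^+(\Prym)$ in the sense of Section~\ref{pseudorealmult}, whence $(X,\omega)\in\Omega\EE_D^g$. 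The main obstacle I expect is the bookkeeping in step two: the $\phi^*$-invariant plane $V_\omega\otimes \CC$ is not a Hodge subspace of $H^1(X;\CC)^-$, so identifying which scalar $S$ acts by on the holomorphic line $\CC\cdot\omega$ requires the explicit pullback formula above and a careful passage between the rational representation on $H_1(X;\QQ)^-$ and the analytic representation on $(\Omega(X)^-)^*$. If this direct construction becomes too delicate, one can instead invoke Theorem~\ref{closedinvariantsubset}: the closedness and $\SL_2(\RR)$-invariance of $\Omega\EE_D^g$ means it suffices to exhibit a single accumulation point of the $A$-orbit of $(X,\omega)$ inside some $\Omega\EE_D^g$ with $\QQ(\sqrt D) = \QQ(\tr(A))$, which is a softer dynamical statement requiring only a limiting eigenform argument.
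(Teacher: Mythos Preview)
The paper does not give its own proof of this statement; it is McMullen's Theorem~3.5 in \emph{Prym varieties and Teichm\"uller curves}, quoted here without argument. Your outline follows McMullen's original strategy, so there is no in-paper comparison to make, but two points in your write-up need repair.

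First, the sentence ``$T := \phi_*|_{H_1(X;\ZZ)^-}$ is a symplectic automorphism of $\Prym(X,\tau)$ \ldots\ its Rosati adjoint is $T^\vee = T^{-1}$'' is not correct. Since $A$ is not a scalar, $\phi$ is not holomorphic, so $\phi_*$ is only an $\RR$-linear automorphism of $H_1(X;\RR)^-$; it is \emph{not} an automorphism of the complex torus $\Prym(X,\tau)$, and the Rosati involution is undefined for it. What is needed---and what you correctly flag as the crux at the end but do not carry out---is that $S = T + T^{-1}$ is $\CC$-linear even though $T$ is not. One clean argument: $T$ is a symplectic automorphism of the lattice, so $S$ is self-adjoint for the intersection form $E$; but an $E$-self-adjoint operator on a $2$-dimensional symplectic vector space is automatically a scalar, so $S$ acts as a scalar on each of the two $E$-orthogonal planes $V_\omega$ and $V_\omega^\perp \cap H^1(X;\RR)^-$. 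Hence $S$ commutes with any operator respecting this splitting, in particular with the complex structure $J$ (which preserves $V_\omega$ tautologically and preserves $H^1(X;\RR)^-$ because $\tau$ is holomorphic).

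Second, replacing $A$ by a power can strictly shrink the trace field: for instance $\tr(A)=\sqrt{5}$ gives $\tr(A^2)=3\in\QQ$, so the parenthetical ``does not change $\QQ(\tr(A))$ up to a real quadratic extension'' does not salvage the conclusion $\QQ(\sqrt{D})=\QQ(\tr(A))$. A fix that avoids powers entirely: with $\psi:=\tau\phi\tau^{-1}$ (same derivative $A$), the symmetrized operator
\[
S:=(\phi_*+\phi_*^{-1})+(\psi_*+\psi_*^{-1})
\]
on $H_1(X;\ZZ)$ commutes with $\tau_*$ on the nose (conjugation by $\tau_*$ swaps the two summands), is $E$-self-adjoint, and has $\omega$ as eigenform with eigenvalue $2\tr(A)$; now run the previous paragraph on $S|_{H_1(X;\ZZ)^-}$. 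Your fallback via Theorem~\ref{closedinvariantsubset} is not obviously easier: exhibiting an accumulation point of the $A$-orbit already inside some $\Omega\EE_D^g$ with the correct $D$ is essentially the original problem.
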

Note that in this theorem $D$ may be a square, or equivalently $\OD$ may be an order in $\QQ \oplus \QQ$.
\subsection{Cusps of Prym Teichm\"uller curves - an example}

In \cite{Thurston} Thurston constructed a large class of diffeomorphisms on a given surface by using generalized Dehn twists about multicurves.
In \cite{McMPrym}, McMullen used this construction to obtain infinitely many points in $\bigcup_D \WW_D \subset \MM_g$ for each $g \in \{ 2,3,4 \}$. This leads to infinitely many primitive Teichm\"uller
curves in $\MM_g$.
Since our focus is on the genus three case, we give a more detailed discussion of this construction for~$g=3$, including a computation of the discriminant of these examples.
Furthermore, we will use these examples for doing a crosscheck of Theorem~\ref{admissible} and determine the corresponding cross-ratio equations. 
\\[1em]
{\bf $S$-shaped polygons.}
We first explain the construction, established by McMullen in \cite{McMPrym}.
For each $n\in \NN$, we define the matrix
\[ A_n := \begin{pmatrix} 0 & 0 & 0 & 0 & 1 & 1 \\ 0 & 0 & 0 & 0 & n & 0 \\ 0 & 0 & 0 & 1 & 1 & 0 \\ 0 & 0 & n & 0 & 0 & 0 \\ 1 & 1 & 1 & 0 & 0 & 0 \\ n & 0 & 0 & 0 & 0 & 0\end{pmatrix}. \]
The characteristic polynomial of $A_n$ is \[ \chi_{A_n}(x) = (x^2-n)(x^4-2(n+1)x^2+n^2), \] and thus $A_n$ has six pairwise different real eigenvalues.
The largest one is \[ \mu = \sqrt{n+1+\sqrt{2n+1}}, \] and a generator for the one-dimensional eigenspace of $A_n$ to $\mu$ is
\[ \begin{pmatrix} h_1 \\ h_2 \\ h_3 \\ h_4 \\ h_5 \\ h_6 \end{pmatrix} = \begin{pmatrix} \mu/n \\ \mu-n/\mu \\ \mu/n \\ 1 \\ \mu^2/n -1 \\ 1 \end{pmatrix}
 = \begin{pmatrix} \mu/n \\ \sqrt{2} \\ \mu/n \\ 1 \\ \sqrt{2}\mu/n \\ 1 \end{pmatrix}. \]
Here we have used the beautiful identity \begin{equation} \frac{1+\sqrt{2n+1}}{\sqrt{n+1+\sqrt{2n+1}}} = \sqrt{2} \end{equation} for all $n \in \NN$.
Now let $S_n$ be the $S$-shaped polygon with sides $s_i,s_i'$ of lengths~$h_i$ as shown in Figure~\ref{S-Flaeche} below
and let $T_n$ be the translation surface obtained from $S_n$ by identifying $s_i$ with $s_i'$ by translations. The numbers on the right picture denote the lenghts of the edges.

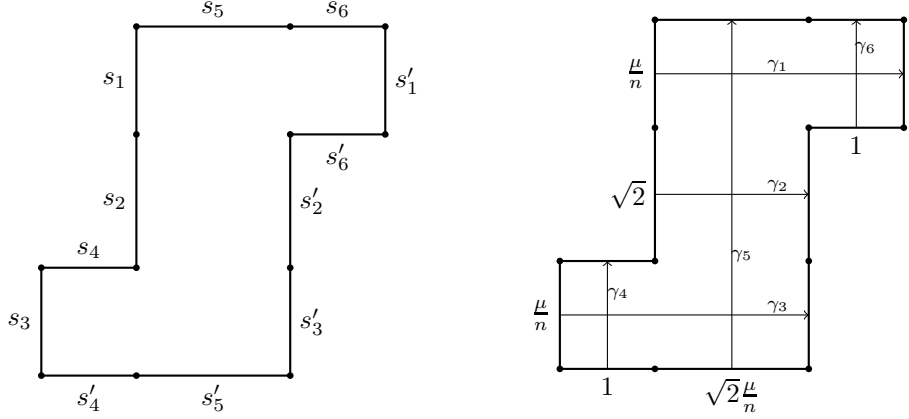
\begin{figure}[ht]
\centering
\begin{tikzpicture}[scale=1.25]
 \coordinate (A) at (0,0); \coordinate[label=below:$s_4'$] (AB) at (0.5,0);
 \coordinate (B) at (1,0); \coordinate[label=below:$s_5'$] (BC) at (1.809,0);
 \coordinate (C) at (2.618,0); \coordinate[label=right:$s_3'$] (CD) at (2.618,0.572);
 \coordinate (D) at (2.618,1.144); \coordinate[label=right:$s_2'$] (DE) at (2.618,1.851);
 \coordinate (E) at (2.618,2.558); \coordinate[label=below:$s_6'$] (EF) at (3.118,2.558);
 \coordinate (F) at (3.618,2.558); \coordinate[label=right:$s_1'$] (FG) at (3.618,3.130);
 \coordinate (G) at (3.618,3.702); \coordinate[label=above:$s_6$] (GH) at (3.118,3.702);
 \coordinate (H) at (2.618,3.702); \coordinate[label=above:$s_5$] (HI) at (1.809,3.702);
 \coordinate (I) at (1,3.702); \coordinate[label=left:$s_1$] (IJ) at (1,3.130);
 \coordinate (J) at (1,2.558); \coordinate[label=left:$s_2$] (JK) at (1,1.851);
 \coordinate (K) at (1,1.144); \coordinate[label=above:$s_4$] (KL) at (0.5,1.144);
 \coordinate (L) at (0,1.144); \coordinate[label=left:$s_3$] (LA) at (0,0.572);
 \fill (A) circle (1pt); \fill (B) circle (1pt); \fill (C) circle (1pt); \fill (D) circle (1pt); \fill (E) circle (1pt); \fill (F) circle (1pt);
 \fill (G) circle (1pt); \fill (H) circle (1pt); \fill (I) circle (1pt); \fill (J) circle (1pt); \fill (K) circle (1pt); \fill (L) circle (1pt);
 \draw[thick] (A) -- (B) -- (C) -- (D) -- (E) -- (F) -- (G) -- (H) -- (I) -- (J) -- (K) -- (L) -- (A);
\end{tikzpicture}
\hspace{1cm}
\centering
\begin{tikzpicture}[scale=1.25]
 \coordinate (A) at (0,0); \coordinate[label=below:$1$] (AB) at (0.5,0);
 \coordinate (B) at (1,0); \coordinate[label=below:$\sqrt{2}\tfrac{\mu}{n}$] (BC) at (1.809,0);
 \coordinate (C) at (2.618,0); \coordinate (CD) at (2.618,0.572);
 \coordinate (D) at (2.618,1.144); \coordinate (DE) at (2.618,1.851);
 \coordinate (E) at (2.618,2.558); \coordinate[label=below:$1$] (EF) at (3.118,2.558);
 \coordinate (F) at (3.618,2.558); \coordinate (FG) at (3.618,3.130);
 \coordinate (G) at (3.618,3.702); \coordinate (GH) at (3.118,3.702);
 \coordinate (H) at (2.618,3.702); \coordinate (HI) at (1.809,3.702);
 \coordinate (I) at (1,3.702); \coordinate[label=left:$\tfrac{\mu}{n}$] (IJ) at (1,3.130);
 \coordinate (J) at (1,2.558); \coordinate[label=left:$\sqrt{2}$] (JK) at (1,1.851);
 \coordinate (K) at (1,1.144); \coordinate (KL) at (0.5,1.144);
 \coordinate (L) at (0,1.144); \coordinate[label=left:$\tfrac{\mu}{n}$] (LA) at (0,0.572);
 \fill (A) circle (1pt); \fill (B) circle (1pt); \fill (C) circle (1pt); \fill (D) circle (1pt); \fill (E) circle (1pt); \fill (F) circle (1pt);
 \fill (G) circle (1pt); \fill (H) circle (1pt); \fill (I) circle (1pt); \fill (J) circle (1pt); \fill (K) circle (1pt); \fill (L) circle (1pt);
 \draw[thick] (A) -- (B) -- (C) -- (D) -- (E) -- (F) -- (G) -- (H) -- (I) -- (J) -- (K) -- (L) -- (A);
 \draw[->] (IJ) -- (FG); \draw[->] (JK) -- (DE); \draw[->] (LA) -- (CD);
 \draw[->] (AB) -- (KL); \draw[->] (BC) -- (HI); \draw[->] (EF) -- (GH);
 \coordinate[label=above:$_{\gamma_1}$] (g1) at (2.3,3.05);
 \coordinate[label=above:$_{\gamma_2}$] (g2) at (2.3,1.78);
 \coordinate[label=above:$_{\gamma_3}$] (g3) at (2.3,0.5);
 \coordinate[label=right:$_{\gamma_4}$] (g4) at (0.4,0.8);
 \coordinate[label=right:$_{\gamma_5}$] (g5) at (1.7,1.2);
 \coordinate[label=right:$_{\gamma_6}$] (g6) at (3,3.4);
\end{tikzpicture}
\caption{S-shaped surface}\label{S-Flaeche}
\end{figure}

The rotation around the centre of this polygon by the angle $\pi$ induces an involution  $\tau \in \Aut(S_n) \setminus \{ \id \}$.
Since the genus of $S_n/\tau$ is one, we have $\dim(\Prym(S_n,\tau))=2$.
Let $\omega \in \Omega(S_n)$ be the holomorphic form represented by~$dz$ on the charts of the translation atlas.
Then clearly~$\omega$ is a Prym form of~$T_n$ with respect to~$\tau$ having a single zero, and we want to show that $(T_n,\omega)$ is in $\Omega \WW_D^3$.
\begin{prop}[McMullen]
 For each $n \in \NN$ with $2n+1$ not a square, let~$(T_n,\omega)$ be the translation surface constructed above.
 Then we have \[ (T_n,\omega) \in \Omega \WW_D^3 \] for some non-square discriminant $D>0$ and $\Prym(T_n,\tau)$ is of type $(1,2)$.
\end{prop}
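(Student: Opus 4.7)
The strategy is to apply McMullen's sufficient condition, Theorem~\ref{McM1}: it suffices to show that $(T_n,\tau)$ is a Prym surface with two-dimensional Prym variety, that $\omega$ is a non-zero Prym form, and that $\SL(T_n,\omega)$ contains a hyperbolic element whose trace (or, more robustly, whose eigenvalue) generates $\QQ(\sqrt{2n+1})$ over $\QQ$. The single-zero property of $\omega$ will then promote membership in $\Omega \EE_D^3$ to $\Omega \WW_D^3$, and the polarization type $(1,2)$ will follow from the general theory of Prym varieties of ramified double covers via Proposition~\ref{complementarytoritypes}.

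First I would collect the basic topology of $T_n$. A routine trace of the edge identifications shows that the twelve vertices of $S_n$ are glued to a single point of $T_n$; the sum of interior angles of the dodecagon $S_n$ is $10\pi$, so this point is a cone point of total angle $10\pi$, whence $\omega$ has a unique zero of order $4$ and $T_n$ has arithmetic genus $3$, placing $(T_n,\omega) \in \Omega\MM_3(4)$. The central rotation $\tau$ acts in every translation chart as $z \mapsto c-z$, so $\tau^*\omega = -\omega$ and $\omega \in \Omega(T_n)^- \setminus \{0\}$. A direct inspection of $S_n$ shows that $\tau$ has exactly four fixed points on $T_n$ (the centre of $S_n$ and three additional points produced by pointwise identifications of $\tau$-symmetric half-edges); Riemann--Hurwitz then gives $g(T_n/\tau) = 1$, so $\dim \Prym(T_n,\tau) = 2$.

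Next, I would produce the hyperbolic element. The surface $T_n$ decomposes into three maximal horizontal cylinders and three maximal vertical cylinders (directly visible on $S_n$), and with the natural ordering the matrix $A_n$ is precisely the intersection matrix of the six core curves; the eigenvector $(h_1,\ldots,h_6)$ records the reciprocals of the cylinder moduli. Since $A_n$ is symmetric with Perron--Frobenius eigenvalue $\mu$ and strictly positive eigenvector, the Thurston--Veech construction yields affine multi-twists in both cylinder directions, and a suitable composition is an affine pseudo-Anosov element of $\SL(T_n,\omega)$ with dilatation $\mu^2 = n+1+\sqrt{2n+1}$. Its expanding eigenvalue generates $\QQ(\mu^2) = \QQ(\sqrt{2n+1})$, which is a real quadratic field since $2n+1$ is not a square; consequently the trace field of the Veech group of $(T_n,\omega)$ is $\QQ(\sqrt{2n+1})$.

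Finally, Theorem~\ref{McM1} applied to this hyperbolic element yields a discriminant $D \in \NN_{>1}$ with $\QQ(\sqrt{D}) = \QQ(\sqrt{2n+1})$, which is non-square because $2n+1$ is; and $(T_n,\omega) \in \Omega\EE_D^3$. The fact that $\omega$ has a single zero upgrades this to $(T_n,\omega) \in \Omega\WW_D^3$. For the polarization type, note that $\pi\colon T_n \to T_n/\tau$ is a degree-two cover of an elliptic curve branched at four points; the standard identity $\pi^*\Theta_{T_n/\tau} = 2\,\Theta_{T_n}|_{\pi^*\Jac(T_n/\tau)}$ shows that the induced polarization on the one-dimensional subvariety $\pi^*(\Jac(T_n/\tau)) \subset \Jac(T_n)$ is of type $(2)$. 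Applying Proposition~\ref{complementarytoritypes} to the principally polarized threefold $(\Jac(T_n),\Theta_{T_n})$ with complementary subvarieties $\pi^*(\Jac(T_n/\tau))$ and $\Prym(T_n,\tau)$ gives that $\Prym(T_n,\tau)$ is of type $(1,2)$. The main obstacle is the careful verification that $A_n$ is the geometric intersection matrix of the cylinder decompositions of $T_n$ and that the resulting Thurston--Veech element has an eigenvalue (not merely lying in, but) generating $\QQ(\sqrt{2n+1})$; once this is established, everything else is either a routine combinatorial computation on $S_n$ or an application of the cited general results.
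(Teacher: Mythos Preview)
Your overall strategy coincides with the paper's: exhibit a hyperbolic element of $\SL(T_n,\omega)$ via cylinder twists and apply Theorem~\ref{McM1}. However, your description of the matrix $A_n$ is incorrect on two points. First, $A_n$ is \emph{not} the geometric intersection matrix of the six core curves: the entries equal to $n$ encode multi-twist weights, not intersection numbers (the core curves meet pairwise in at most one point). Second, $A_n$ is \emph{not} symmetric for $n>1$ (e.g.\ $A_{3,4}=1$ while $A_{4,3}=n$), so your appeal to ``$A_n$ symmetric with Perron--Frobenius eigenvalue $\mu$'' does not go through as stated. The paper sidesteps this by computing the twist derivatives directly: from the eigenvector $(h_1,\dots,h_6)$ one reads off the cylinder moduli, checks that the weighted horizontal multi-twist $\tau_1\circ\tau_2^n\circ\tau_3$ and the weighted vertical multi-twist $\tau_4^n\circ\tau_5\circ\tau_6^n$ are affine with derivatives $A_h=\left(\begin{smallmatrix}1&\mu\\0&1\end{smallmatrix}\right)$ and $A_v=\left(\begin{smallmatrix}1&0\\-\mu&1\end{smallmatrix}\right)$, and then observes that $\tr(A_h^kA_v^l)=2-kl\,\mu^2=2-kl(n+1+\sqrt{2n+1})$ is irrational and of absolute value $>2$ for large $kl$. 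This is the step where the hypothesis ``$2n+1$ not a square'' actually enters, and your sketch does not make that visible. Your asserted dilatation $\mu^2$ for ``a suitable composition'' is also not what the paper obtains and would need independent justification.

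For the polarization type you take a different route from the paper. The paper simply writes down $H_1(T_n;\ZZ)^-=\langle \gamma_2,\gamma_5,\gamma_1+\gamma_3,\gamma_4+\gamma_6\rangle_\ZZ$ and reads off the intersection form directly. Your argument via $\pi^*\Theta_{T_n/\tau}$ and Proposition~\ref{complementarytoritypes} is a legitimate alternative, but it requires knowing that $\pi^*\!\Jac(T_n/\tau)$ coincides with $\Jac(T_n)^+$ as defined in the paper (equivalently, that $(1+\tau_*)H_1(T_n;\ZZ)=H_1(T_n;\ZZ)^+$). This identification does hold here, but it is not automatic for ramified double covers in general and you should verify it rather than invoke it implicitly; once you write down the $\tau_*$-action on the $\gamma_i$ this is immediate, and at that point the paper's direct computation is no longer any more work.
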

\begin{proof}
 First we verify that the Prym surface $(T_n,\tau)$ satisfies the conditions in Theorem~\ref{McM1}.
 Consider the natural foliation of the polygon $S_n$ into the three horizontal cylinders $C_1,C_2,C_3$, where $C_i$ is the cylinder with the sides~$s_i$ and~$s_i'$ identified.
 Let~$\tau_i$ be the standard Dehn twist of $C_i$ along the cylinder midth, i.e. the affine automorphisms with derivatives given by
 \[ D\tau_1 = \begin{pmatrix} 1 & \tfrac{h_5+h_6}{h_1} \\ 0 & 1 \end{pmatrix} = \begin{pmatrix} 1 & \mu \\ 0 & 1 \end{pmatrix}, \]
 \[ D\tau_2 = \begin{pmatrix} 1 & \tfrac{h_5}{h_2} \\ 0 & 1 \end{pmatrix} = \begin{pmatrix} 1 & \mu/n \\ 0 & 1 \end{pmatrix} \] and
 \[ D\tau_3 = \begin{pmatrix} 1 & \tfrac{h_4+h_5}{h_3} \\ 0 & 1 \end{pmatrix} = \begin{pmatrix} 1 & \mu \\ 0 & 1 \end{pmatrix}.\]
 Therefore, the composition \[ \tau_h := \tau_1 \circ \tau_2^n \circ \tau_3 \] is an affine automorphism of $S_n$ with derivative \[ A_h := D\tau_h = \begin{pmatrix} 1 & \mu \\ 0 & 1 \end{pmatrix}. \]
 Moreover, there is also a natural foliation of this polygon into three vertical cylinders~$C_4,C_5,C_6$. We can define the standard Dehn twists $\tau_i$ for these vertical cylinders in an analogous way
 and obtain the derivatives \[ D\tau_4 = \begin{pmatrix} 1 & 0 \\ -\mu/n & 1 \end{pmatrix},\quad
 D\tau_5 = \begin{pmatrix} 1 & 0 \\ -\mu & 1 \end{pmatrix}\quad \mbox{and}\quad D\tau_6 = \begin{pmatrix} 1 & 0 \\ -\mu/n & 1 \end{pmatrix}. \]
 Therefore, the composition \[ \tau_v := \tau_4^n \circ \tau_5 \circ \tau_6^n \] is an affine automorphism of $S_n$ with derivative \[ A_v := D\tau_v = \begin{pmatrix} 1 & 0 \\ -\mu & 1 \end{pmatrix}. \]
 From now on we will use that $n \in \NN$ with $2n+1$ is not a square.
 We compute that \[ \tr(A_h^kA_v^l) = 2 - kl\mu^2 = 2-kl(n+1+\sqrt{2n+1}) \] for all $k,l \in \ZZ$.
 By choosing $k$ and $l$ sufficiently large, we deduce that ${\rm SL}(T_n,\omega)$ contains infinitely many hyperbolic elements.
 Thus, by Theorem~\ref{McM1} we get that $\Prym(T_n,\tau)$ has real multiplication by $\OD$ for some discriminant $D$ with $\QQ(\sqrt{D})=\QQ(\sqrt{2n+1})$ and that $\omega$ is a Prym eigenform.

 A $\ZZ$-basis of $H_1(T_n,\ZZ)$ is given by the homology classes of the simple closed curves $\gamma_i$ given by the cylinder midths as in Figure~\ref{S-Flaeche}.
 We have \[ H_1(T_n;\ZZ)^+ = \langle \gamma_1-\gamma_3,\gamma_4-\gamma_6 \rangle_\ZZ \] and \[ H_1(T_n;\ZZ)^- = \langle \gamma_2, \gamma_5, \gamma_1+\gamma_3, \gamma_4+\gamma_6 \rangle_\ZZ. \]
 It follows that the induced polarization on $\Prym(T_n,\tau)$ is of type $(1,2)$.
\end{proof}
 As McMullen didn't determine the discriminant of the order~$\OD$, we will do this here.
\begin{prop}
 For each $n \in \NN$ with $2n+1$ not a square, let $(T_n,\omega) \in \Omega \EE_D^3$ be the translation surface constructed above.
 Then we have $D=4(2n+1)$.
\end{prop}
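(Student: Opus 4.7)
Plan.

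Since the trace field equals $K := \QQ(\sqrt{2n+1})$ by the preceding proposition, $\OD$ is an order in $K$ and $D$ is a positive square multiple of the fundamental discriminant of $K$. The plan is to exhibit a self-adjoint endomorphism of $\Prym(T_n,\tau)$ whose minimal polynomial has discriminant $4(2n+1)$, and then to check that the resulting order is maximal among orders in $K$ acting.

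The pseudo-Anosov $\phi := \tau_h\tau_v$ has derivative $A_hA_v$ of trace $2 - \mu^2 = 1 - n - \sqrt{2n+1}$. Although $\phi_*$ does not itself commute with $\tau_*$ --- since $\tau$ conjugates each Dehn twist $\tau_i$ to $\tau_i^{-1}$, giving $\tau\phi\tau^{-1} = \tau_h^{-1}\tau_v^{-1} \neq \phi$ --- the symmetrization
\[ \widetilde T := \phi_* + \phi_*^{-1} + (\tau\phi\tau^{-1})_* + (\tau\phi^{-1}\tau^{-1})_* \]
commutes with $\tau_*$ and is self-adjoint with respect to the intersection pairing, so it restricts to a self-adjoint integer endomorphism of $\Prym(T_n,\tau)$. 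Computing the action of $\widetilde T$ on the Prym eigenform $\omega$ and its Galois conjugate $\omega^\sigma$ yields eigenvalues of $\widetilde T$ on $\Omega(T_n)^-$ of the form $\alpha \pm \beta\sqrt{2n+1}$ for explicit integers $\alpha,\beta$. After subtracting $\alpha\cdot\id$ and dividing by the integer $\beta$ (and verifying that the result remains integer-valued on $H_1^-(T_n;\ZZ)$), one obtains an endomorphism $T \in \End^+(\Prym(T_n,\tau))$ with minimal polynomial $X^2 - (2n+1)$. Hence $\ZZ[T] \subset \End^+(\Prym(T_n,\tau))$ identifies with $\ZZ[\sqrt{2n+1}] \subset K$, an order of discriminant $4(2n+1)$.

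It remains to verify that $\ZZ[T]$ is not contained in any strictly larger order in $K$ that also acts on $\Prym(T_n,\tau)$. This is automatic when $2n+1$ is squarefree and $n$ is odd (so $2n+1 \equiv 3 \pmod 4$), in which case $\ZZ[\sqrt{2n+1}]$ already coincides with the maximal order $\OK$. In the remaining cases one must rule out a conductor drop --- most importantly that $(T+\id)/2$ lifts to an integer endomorphism when $n$ is even and $2n+1$ is squarefree (so $2n+1 \equiv 1 \pmod 4$). This reduces to a mod-$2$ (or, in full generality, mod-$p$ for each prime $p$ dividing the square part of $2n+1$) computation of the matrix of $T$ in the basis $(\gamma_2,\gamma_5,\gamma_1+\gamma_3,\gamma_4+\gamma_6)$ of $H_1^-(T_n;\ZZ)$, using the explicit Dehn twist actions on homology, and is the main technical obstacle of the argument.
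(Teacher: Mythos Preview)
Your proposal is a plan rather than a proof, and the two steps you flag as ``to be verified'' are precisely the content of the proposition. Knowing that $K=\QQ(\sqrt{2n+1})$ acts on rational homology is already established in the preceding proposition; the whole point here is to pin down the exact order. You need (i) that $\ZZ[\sqrt{2n+1}]$ acts integrally on $H_1(T_n;\ZZ)^-$, and (ii) that no strictly larger order does. For (i) you construct $\widetilde T$ and then propose to divide by an unspecified integer $\beta$ ``verifying that the result remains integer-valued''; for (ii) you say outright that ruling out the conductor drop ``is the main technical obstacle'' and leave it as a mod-$p$ computation to be done. Neither computation is carried out, and neither is routine: for instance when $2n+1\equiv 1\pmod 4$ is squarefree, the claim $D=4(2n+1)$ asserts that the \emph{maximal} order of $K$ does \emph{not} act, so you genuinely must exhibit a lattice element on which $\tfrac{1+T}{2}$ fails to be integral.

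The paper's approach sidesteps both difficulties at once, and you may find it instructive. Rather than building the endomorphism from the pseudo-Anosov and then bounding it from both sides, the paper writes $D=q^2(2n+1)$ with $q\in\QQ_+$ unknown, and uses the eigenform equation
\[
\iota(\gamma_D)\!\int_{\lambda_j}\!\omega \;=\; \sum_i a_{ij}\!\int_{\lambda_i}\!\omega
\]
on the explicit symplectic basis $(\gamma_5-\gamma_4-\gamma_6,\;\gamma_4+\gamma_6,\;\gamma_2,\;\gamma_1+\gamma_3)$ of $H_1(T_n;\ZZ)^-$, with the periods read off directly from the side lengths of the $S$-shaped polygon. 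Because the periods lie in $\QQ(\sqrt{2},\sqrt{2n+1},i)$ and the relevant elements are $\QQ$-linearly independent, equating coefficients determines the matrix $A=(a_{ij})$ of $\gamma_D$ uniquely as a function of $q$; one finds the off-diagonal entries are $\pm q$, $\pm\tfrac{1}{2}qn$, $\pm\tfrac{1}{2}q$ and the diagonal entries $\tfrac{1}{2}(D\mp q)$, so $A\in M_4(\ZZ)$ iff $q\in 2\ZZ$. This single linear-algebra computation delivers both the existence of the $\ZZ[\sqrt{2n+1}]$-action and its properness simultaneously, with no case distinction on the arithmetic of $2n+1$.
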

\begin{proof}
 We use the same notations as in the previous proof. Since \[ K=\QQ(\sqrt{D})=\QQ(\sqrt{2n+1}), \] we deduce that $D=q^2(2n+1)$ for some $q \in \QQ_+$.
 We already know that $K$ acts on $H_1(T_n;\QQ)^-$ by $\QQ$-linear maps, and the order $\OD \subset K$ is just the coefficient ring of $H_1(T_n;\ZZ)^- \subset H_1(T_n;\QQ)^-$.
 The order $\OD = \langle 1,\gamma_D \rangle_\ZZ$ acts on $H_1(T_n;\ZZ)^-$ if and only if $\gamma_D$ preserves $H_1(T_n;\ZZ)^-$. So our aim is to choose a suitable $\ZZ$-basis
 for $H_1(T_n;\ZZ)$ and to verify for which choices of $q$ the matrix of $\gamma_D$ with respect to this basis has integral entries.
 A symplectic basis of $H_1(T_n;\ZZ)^-$ is given by \[ (\lambda_1,\lambda_2,\lambda_3,\lambda_4) = (\gamma_5-\gamma_4-\gamma_6,\gamma_4+\gamma_6,\gamma_2,\gamma_1+\gamma_3). \]
 Let $A=(a_{ij})_{i,j} \in {\rm GL}_4(\QQ)$ be the matrix of $\gamma_D$ with respect to this basis, so \[ \gamma_D.\lambda_j = \sum_{i=1}^4 a_{ij}\lambda_i. \]
 The form $\omega$ is a $\iota$-eigenform for one of the two embeddings $K \hookrightarrow \RR$ and hence
 \begin{equation}\label{Integral} \iota(\gamma_D)\int_{\lambda_j}\omega = \sum_{i=1}^4 a_{ij}\int_{\lambda_i}\omega.\end{equation}
 Since $\omega = dz$ on the charts of the translation atlas, the integrals over the $\lambda_i$ are given by the complex lengths of the cylindermidths defining the curves $\gamma_i$.
 More concretely, we have \[ \int_{\lambda_1}\omega = \sqrt{2}i,\quad \int_{\lambda_2}\omega = 2(\mu/n) i,\quad \int_{\lambda_3}\omega = \sqrt{2}\mu/n,\quad \int_{\lambda_4}\omega = 2\mu^2/n. \]
 Writing $\iota(\gamma_D) = \tfrac{1}{2}(D \pm \sqrt{D})$, multiplying with $n/\mu = \tfrac{1}{2}\sqrt{2}(\sqrt{2n+1}-1)$ and using the identities
 $\mu = \tfrac{1}{2}\sqrt{2}(\sqrt{2n+1}+1)$ and $D=q^2(2n+1)$, the four equations in \eqref{Integral} become
 \begin{eqnarray*}
  \tfrac{q^2(2n+1) \pm q\sqrt{2n+1}}{2}(\sqrt{2n+1}-1)i & = & a_{11}(\sqrt{2n+1}-1)i + a_{21}2i \\ & & + a_{31}\sqrt{2} + a_{41}\sqrt{2}(\sqrt{2n+1}+1) \\
  (q^2(2n+1) \pm q\sqrt{2n+1})i & = & a_{12}(\sqrt{2n+1}-1)i + a_{22}2i \\ & & + a_{32}\sqrt{2} + a_{42}\sqrt{2}(\sqrt{2n+1}+1) \\
  \tfrac{q^2(2n+1) \pm q\sqrt{2n+1}}{2}\sqrt{2} & = & a_{13}(\sqrt{2n+1}-1)i + a_{23}2i \\ & & + a_{33}\sqrt{2} + a_{43}\sqrt{2}(\sqrt{2n+1}+1) \\
  \tfrac{q^2(2n+1) \pm q\sqrt{2n+1}}{2}\sqrt{2}(\sqrt{2n+1}+1) & = & a_{14}(\sqrt{2n+1}-1)i + a_{24}2i \\ & & + a_{34}\sqrt{2} + a_{44}\sqrt{2}(\sqrt{2n+1}+1)
 \end{eqnarray*}
 We assumed that $2n+1$ is not a square, hence $\sqrt{2}$ and $\sqrt{2}\sqrt{2n+1}$ are $\QQ$-linearly independent. By equating the coefficients, we obtain
 \[ A = \begin{pmatrix} \tfrac{1}{2}(D \mp q) & \pm q & 0 & 0 \\ \pm \tfrac{1}{2}qn & \tfrac{1}{2}(D \pm q) & 0 & 0 \\
                        0 & 0 & \tfrac{1}{2}(D \mp q) & \pm qn \\ 0 & 0 & \pm \tfrac{1}{2}q & \tfrac{1}{2}(D \pm q) \end{pmatrix} \]
 All the entries of $A$ are integers if and only if $q$ is in $2\ZZ$, and therfore it follows~$D=4(2n+1)$.
 \end{proof}

{\bf Crosscheck for admissibility.}
We denote by $\Gamma T_n \subset \Omega \MM_3$ the orbit of~$(T_n,\omega)$ under $\Gamma={\rm SL}_2(\RR)$, and by $\mathbb{P}\Gamma T_n \subset \PM_3$ its image under the natural projection.
By Theorem~\ref{closedinvariantsubset}, we have $\Gamma T_n \subset \Omega \EE_D^3$.

It follows by Theorem~\ref{admissible}, that any geometric genus zero curve in the closure of~$\mathbb{P}\Gamma T_n$ in~$\PcM_3$ lies in the image $p_\iota(\mathcal{S})$ of some admissible
$\II$-weighted boundary stratum under the $\iota$-pseudo-embedding.
To obtain such a stable curve, notice that for any $r \in \RR_+$, the diagonal matrix ${\rm diag}(r^{-1/2},r^{1/2})$ is in $\Gamma$, and the two translation surfaces
$T_{n,r} := {\rm diag}(1,r).(T_n,\omega)$ and ${\rm diag}(r^{-1/2},r^{1/2}).(T_n,\omega)$ define the same points in $\PM_3$.
The surface $T_{n,r}$ is obtained by $T_n$ by remaining the horizontal lengths constant and by stretching the vertical length by the factor~$r$. As~$r$ tends to infinity, topologically the curves
$\gamma_1,\gamma_2,\gamma_3$ defined by the horizontal cylinder midths are pinched to zero. We see by Figure~\ref{S-Flaeche}, that the resulting stable curve is irreduzible of geometric genus zero.
During this process, the periods $\smallint_{\gamma_i}\omega$ for $i \in \{ 1,2,3 \}$ remain constant.
Thus $T_{n,r}$ converges to an irreducible stable form $(T,\omega) \in \PcM_3$ with three nodes $n_1,n_2,n_3$. Up to a common scalar multiple, each residuum $\tilde r_i$ at one of the cusps~$c_i$
of~$n_i$ is given by \[ \tilde r_1 = \tilde r_3 = \tfrac{1}{2}(1+\sqrt{2n+1})\quad \mbox{and}\quad \tilde r_2 = 1. \]

By Corollary~\ref{admissible1}, there is some admissible $\II$-weighted boundary stratum $\mathcal{S} \subset \cMM_3(\II)$ with $(T,\omega) \in \pi_\iota(\mathcal{S})$.
But with the aim of doing a crosscheck, we can verify this also directly. Choose
\[ r_1 := (\tfrac{1}{2}(1 \pm \sqrt{2n+1}),1),\quad r_2 := (1,0),\quad r_3 := (\tfrac{1}{2}(1 \pm \sqrt{2n+1}),-1) \] in $F = K \oplus \QQ$
and let $\I := \langle r_1,r_2,r_3 \rangle_\ZZ$. Moreover, let $\varrho$ be the $\I$-weighting of $T$ with weight $r_i$ at each cusp $c_i$
and let $\mathcal{S}$ be the corresponding $\I$-weighted boundary stratum. Then we have $\pi_\iota(T,\varrho) = (T,\omega)$ and compute
\[ Q(r_1) = \begin{pmatrix} 2n(2n+1) \\ 1 \\ \pm 2(2n+1) \end{pmatrix}, Q(r_2) = \begin{pmatrix} -4(2n+1) \\ 0 \\ 0 \end{pmatrix},
Q(r_3) = \begin{pmatrix} 2n(2n+1) \\ -1 \\ \mp 2(2n+1) \end{pmatrix}. \]
In each of the two possibilities for $\iota$ we see that \[ 0 = Q(r_1)+nQ(r_2)+Q(r_3) \] is in the interior of the convex hull of the $Q(r_i)$,
and thus $\mathcal{S}$ is admissible.
\\[1em]
{\bf Cross-ratio equations.}
Now want to determine the cross ratio equations from Theorem~\ref{CRequation}. The dual basis of $(r_1,r_2,r_3)$ is $(s_1,s_2,s_3)$ with
\[ s_1 = \tfrac{1}{2}\left(\pm\tfrac{\sqrt{2n+1}}{2n+1},1\right),\quad s_2 = \tfrac{1}{2}\left(1\mp\tfrac{\sqrt{2n+1}}{2n+1},0\right),\quad s_3 = \tfrac{1}{2}\left(\pm\tfrac{\sqrt{2n+1}}{2n+1},-1\right). \]
By computing the products $s_is_j$ for $s_i \not= s_j$, Equation~\eqref{Ann} becomes
\[ a_1\left(\tfrac{1}{D}(\pm\sqrt{2n+1}-1),0\right) + a_2\left(\tfrac{1}{D},-\tfrac{1}{4}\right) + a_3\left(\tfrac{1}{D}(\pm\sqrt{2n+1}-1),0\right) = 0, \] which is equivalent to $a_1+a_3=a_2=0$.
Therefore, for any given \[ h = \sum_{i,j = 1}^3 b_{ij}(r_i \otimes r_j) \in \Sym_{\QQ}(F) / (\Lambda_1+\Sym_{\ZZ}(\II)), \] the subvariety $\mathcal{S}_r(h) \subset \mathcal{S}_r = \MM_{0,6}$
is cut out by the single equation \[ p_{12}/p_{23} = \exp(-2\pi i (b_{12}-b_{23})). \]
Note that on the right side of this equation, all roots of unity $\exp(\pi i q)$, $q \in \QQ$ appear, as $h$ runs through $ \Sym_{\QQ}(F) / (\Lambda_1+\Sym_{\ZZ}(\II))$.
\clearpage

\bibliographystyle{alpha}
\bibliography{biblio}

\end{document}